	\newtheorem{theorem}{Theorem}[subsection]
	\newtheorem{lemma}[theorem]{Lemma}
	\newtheorem{proposition}[theorem]{Proposition}
	\newtheorem{corollary}[theorem]{Corollary}
	\newtheorem*{theorem*}{Theorem}
	\newtheorem*{lemma*}{Lemma}
	\newtheorem*{proposition*}{Proposition}
	\newtheorem*{corollary*}{Corollary}
		\theoremstyle{definition}
		\newtheorem{definition}[theorem]{Definition}
		\newtheorem{example}[theorem]{Example}
		\newtheorem{remark}[theorem]{Remark}
\newcommand{\Gal}{Gal}
\newcommand{\GL}{GL}
\newcommand{\surjto}{\twoheadrightarrow}
\newcommand{\Spec}{Spec} 
\newcommand{\ext}{ext} 
\newcommand{\Hom}{Hom} 
\newcommand{\End}{End} 
\newcommand{\Aff}{\textit{\emph{\text{Aff}}}}
\newcommand{\B}{\mathbf{B}}
\renewcommand{\a}{\mathfrak{a}}
\newcommand{\cA}{\mathcal{A}}
\newcommand{\cF}{\mathcal{F}}
\newcommand{\M}{\mathcal{M}}
\newcommand{\cB}{\mathcal{B}}
\newcommand{\hra}{\hookrightarrow}
\newcommand{\cS}{\mathcal{S}}
\renewcommand{\P}{\mathbf{P}}
\newcommand{\cT}{\mathcal{T}}
\newcommand{\Ad}{\text{Ad}}
\renewcommand{\O}{\mathcal{O}}
\renewcommand{\H}{\mathbf{H}}
\newcommand{\red}{red}
\newcommand{\aff}{\textit{\emph{\text{aff}}}}
\newcommand{\Hbf}{\mathbf{H}}
\newcommand{\Nbf}{\mathbf{N}}
\newcommand{\Pbf}{\mathbf{P}}
\newcommand{\Zbf}{\mathbf{Z}}
\newcommand{\Mbf}{\mathbf{M}}
\newcommand{\Sbf}{\mathbf{S}}
\newcommand{\der}{der}
\newcommand*\iso{
  \xrightarrow{\raisebox{-0.25 em}{\smash{\ensuremath{\sim}}}}%
}
\tikzset{%
  highlight/.style={rectangle,rounded corners,fill=blue!15,draw,fill opacity=0.5,thick,inner sep=0pt}
}
\def\1{\mathds{1}}
\def\cC{\mathcal{C}}
\def\cO{\mathcal{O}}
\def\cH{\mathcal{H}}
\def\cD{\mathcal{D}}
\def\cM{\mathcal{M}}
\def\A{\mathds{A}}
\def\Z{\mathds{Z}}
\def\N{\mathds{N}}
\def\Q{\mathds{Q}}
\def\R{\mathds{R}}
\def\C{\mathds{C}}
\def\T{\mathbf{T}}
\def\U{\mathbf{U}}
\def\G{\mathbf{G}}
\def\H{\mathbf{H}}
\def\Hom{\text{Hom}}
\def\Spec{\text{Spec}}
\def\Gal{\text{Gal}}
\newcommand{\authnote}[2][]{\noindent {\if!#1!  {\bf TODO} \else {\small \bf #1} \fi: #2}}
\newcounter{tasknumber}[subsection]
\newcommand{\task}[2][]{%
  \addtocounter{tasknumber}{1}%
  \begin{center}%
  \framebox[1.1\width]{\begin{minipage}{0.9\textwidth}%
  \textbf{Task \arabic{tasknumber}} \textit{\if!#1(unassigned)!\else (#1)\fi}: {#2}%
  \end{minipage}}%
  \end{center}%
}
\newcounter{assumptionnumber}
\newcommand{\assumption}[2][]{%
  \addtocounter{assumptionnumber}{1}%
  \begin{center}%
  \framebox[1.1\width]{\begin{minipage}{0.9\textwidth}%
  \textbf{Assumption \arabic{assumptionnumber}} \textit{\if!#1!\else (#1)\fi}: {#2}%
  \end{minipage}}%
  \end{center}%
}
\newenvironment{customtheorem}[1]
{\innercustomtheorem}
  {\endinnercustomtheorem}
 \newenvironment{customcorollary}[1]{\innercustomcorollary}{\endinnercustomcorollary}
 \newenvironment{customproposition}[1]{\innercustomproposition}{\endinnercustomproposition}
\def\@@@nomenclature[#1]#2#3{%
\def\@tempa{#2}\def\@tempb{#3}%
\protected@write\@glossaryfile{}%
{\string\glossaryentry{#1\nom@verb\@tempa @[{\nom@verb\@tempa}]%

nompageref{\begingroup\nom@verb\@tempb\protect\nomeqref{\theequation}}}%
{\thepage}}%
 \endgroup
 \@esphack}
\def\@@@nomenclature[#1]#2#3{%
\def\@tempa{#2}\def\@tempb{#3}%
\protected@write\@glossaryfile{}%
{\string\glossaryentry{#1\nom@verb\@tempa @[{\nom@verb\@tempa}]%

nompageref{\begingroup\nom@verb\@tempb\protect\nomeqref{\theequation}}}%
{\thepage}}%
 \endgroup
 \@esphack}
\begin{document}

\title{\textsc{A tale of parahoric--Hecke algebras, Bernstein and Satake homomorphisms.}}


\author{Reda Boumasmoud}
\email{reda.boumasmoud@imj-prg.fr \& reda.boumasmoud@gmail.com}
\address{Institut de Mathématiques de Jussieu-Paris Rive Gauche (IMJ-PRG)\\
Sorbonne Université and Université de Paris, CNRS, F-75006 Paris, France.}
\thanks{The author was supported by the Swiss National Science Foundation grant \#P2ELP2-191672.}

\makeatletter 
\@namedef{subjclassname@2020}{%
  \textup{2020} Mathematics Subject Classification}
\makeatother

\subjclass[2020]{11E95, 20E42, 20G25 and 20C08 (primary).}

\keywords{}

\date{}

\begin{abstract}
Let $\G$ be a connected reductive group over a {non-archimedean local field} $F$. Let $K_\cF$ be the parahoric subgroup attached to a facet $\cF$ in the Bruhat--Tits building of $\G$. The ultimate goal of the present paper is to describe the center of the parahoric--Hecke algebra $\cH(\G(F)\sslash {K_\cF},\Z[q^{-1}])$ with level $K_\cF$ and prove the compatibility of generalized (twisted) Bernstein and Satake homomorphisms. 
\end{abstract}


 \maketitle


\tableofcontents

\section{Introduction}
Congruences between modular forms, automorphic forms or automorphic representations have received a significant attention in the past few decades for several reasons. 
Their study has helped investigating numerous questions that have been wandering in number theory’s paysage, e.g. first and foremost, Andrew Wiles' proof of Fermat's last theorem.

An omnipresent theme occurring in this landscape are Hecke algebras. 
These are as fundamental as Galois theory is. 
Actually, in more than one respect, the two go hand in hand. 
These special algebras are concerned with the following kind of situation. We are given a reductive group $\G$ defined over a $p$-adic field $F$ and 
a smooth\footnote{Every vector $v\in V$ has an open stabilizer in $\G(F)$.} complex representation of $\G(F)$, say $(V,r)$. If $C\subset \G(F)$ is an open compact subgroup, then we introduce the Hecke algebra $\cH(\G (F)\sslash C,\C)$, which is the convolution of double cosets in $C \backslash \G (F) /C$. 
Similar to how the group algebra of a finite group allows us to translate representation theory questions into module-theoretic ones, $\cH(\G (F)\sslash C,\C)$ provides a mechanism that allows us to dissect the representations $(V,r)$. 
In more concrete terms, we have a bijection between the set of isomorphism classes of irreducible smooth complex $\G(F)$-representations $V$ such that $V^C \neq 0$ and the set of isomorphism classes of simple smooth $\cH(\G (F)\sslash C,\C)$-modules.  

The purpose of this process is to obtain algebraic or number theoretical information which may yield new insight about $\G(F)$ or about its representations theory.
Therefore, it is decisive to further deepen our understanding of Hecke algebras and classify their categories of modules. 

An important step toward this goal is offered by Satake's transform, which yields an explicit description of $\cH(\G(F)\sslash C,\C)$ for two crucial cases: maximal open compact subgroups by Satake \cite{Satake1963} and special maximal parahoric subgroups by Haines--Rostami \cite{HR10}. 
Besides, Hecke algebras for smaller levels also hold a special place and have intriguing applications, for instance when $C=K_\cF$ is a parahoric subgroup, Pappas and Zhu proved in \cite{papzhu2013} a conjecture of Kottwitz which asserts that the semi-simple trace of Frobenius on the nearby cycles of a Shimura variety gives a function which is central in $\cH(\G(F)\sslash K_\cF,\C)$. 
Thanks to the characterization of the minuscule central Bernstein basis functions by Haines \cite{H01}, one can actually compute these traces. 
However, there remains the task of comprehending the center of these parahoric Hecke algebras. 
Such an understanding, is provided by the framework of Bernstein presentations for Iwahori--Hecke algebra when $C=I$ is an Iwahori subgroup, which is due to \cite{Lu89} for the extended affine case and more recently to Rostami \cite{R15} and Vigneras \cite{vigneras_2016} for the general reductive case. 

This paper attempts to extend and generalize to general reductive $p$-adic groups various results known for Hecke algebras for split, unramified or semi-simple and simply connected $p$-adic groups.
In this direction, we are particularly interested in the study of these Hecke algebras when their ring of coefficients is as small as possible; $\Z$ or $\Z[q^{-1}]$. 
Our methods are of geometric flavor and involve extensively Bruhat--Tits theory. 
They provide a uniform treatment for Hecke algebras with parahoric levels and their $\flat$-avatars (\S \ref{flatpartout}). 
This allows us to take full advantage of Lusztig's work \cite{Lu89} and extend his results to the general reductive case. 
Following the ideas of Bernstein \cite{Deligne84}, 
we focus our attention on the universal unramified principal series module, which leads us naturally to the Bernstein elements $\dot{\Theta}_m$ and accordingly the Bernstein presentation of the Iwahori--Hecke algebras. 
The payoff for this, we obtain an explicit description of the center of general parahoric Hecke algebras with coefficients in $\Z[q^{-1}]$, and also integral twisted Satake isomorphism thus generalizing \cite{HR10} and \cite[\S 9]{Satake1963} together with compatibilities between integral twisted Satake isomorphism and twisted Bernstein isomorphism. 
Along the way, we also prove various auxiliary results that may be useful in other contexts. Furthermore, we give as much as we can original proofs for all the previously known results.

A hidden motivation for this paper, is to prepare the ground field for introducing and studying the ring of $\mathbb{U}$-operators, which is the subject of the sequel part II of this paper \cite{UoperatorsII2021}. 
This is the first step of an ongoing project exploring arithmetic application of these operators in Euler systems theory \cite{Unitarynormrelations2020,Vunitarynormrelations2020} and Eichler--Shimura congruence relations for general Shimura varieties \cite{Seedrelations2020,ESboumasmoud}.

\subsection{Main results}
Let $\G$ be a reductive group defined over a {non-archimedean local field} $F$. 
Let $\Sbf$ be maximal $F$-split subtorus of $\G$ and $\cA$ its corresponding apartment in the reduced Bruhat--Tits building of $\G$. 
Set $\Mbf$ and $\Nbf$ for the centralizer and the normalizer of $\Sbf$ in $\G$, respectively. We have a homomorphism of groups $\nu_N \colon \Nbf(F) \to \Aff(\cA)$. 
Let $\a\subset \cA$ be an alcove, and $a_\circ \in \overline{\a}$ a special point. 
Set $I\subset K$ for the Iwahori subgroup and the special maximal parahoric subgroup corresponding to  ${\a}$ and $a_\circ$, respectively. 

Let $M_1 = \ker \kappa_M$ and $G_1=\ker \kappa_G$ be the kernels of the Kottwitz map for $\Mbf$ and $\G$ (see \S \ref{Kottwitzhom}). 
Fix any open compact subgroup $ M_1 \subset M^{\flat}\subset M$. 
By \cite[Proposition 1.2]{Lan96}, $M^{\flat}$ is contained in the maximal open compact subgroup $M^{1}$. 

For any $X\subset G$, set $X^{\flat}=M^{\flat} X$. Whenever we use this notation, the structure of the object we are interested in is not altered, e.g. $K^{\flat}$ or $G_1^{\flat}$ are still a groups.

For any facet $\cF \subset \cA$, set $K_\cF$ for the parahoric subgroup attached to it, $N_{1, \cF}:=\Nbf(F)\cap K_\cF$ and $W_{\cF}^{\flat}=N_{1, \cF}^{\flat}/M^{\flat}$. Because $a_\circ$ is special, $W_{a_\circ}^{\flat}\simeq W:= \Mbf(F)/\Nbf(F)$. 

Set $\Lambda_M^{\flat}:=\Mbf(F)/M^{\flat}$, $\Lambda_{\aff}^{\flat}:=(\Mbf(F) \cap G_1)^{\flat}/M^{\flat}$, $ \mathcal{R}^{\flat}:=\Z[\Lambda_M^{\flat}]$ and $ \mathcal{R}_{\aff}^{\flat}:=\Z[\Lambda_{\aff}^{\flat}]$.

We write $\dot{W}$, for the Weyl group to specify that it acts on $\mathcal{R}^{\flat}$ by the dot-action, this is the standard action twisted by a factor coming form the modulus function (see Definition \ref{dotaction}).

If $A$ is a commutative ring and $H\subset \G(F)$ an open compact subgroup, define $\cH_H(A)$ to be the $A$-algebra of functions $f \colon \G(F) \to A$, that are $H$-invariant on the right and the left.

\subsubsection{The Iwahori--Matsumoto presentation}
We start with a well known result, at least for Iwahori--Hecke attached to extended affine Weyl groups. See for example \cite[\S 6.3]{garrett:buildings}. 
For the general case, it was showed in \cite[Theorem 2.1]{vigneras_2016} or \cite[Proposition 4.1.1]{R15} with $\C$-coefficients. 
We will reprove it here using a different argument.
\begin{customtheorem}{\ref{IMpresentation}}[The Iwahori--Matsumoto Presentation]
The Iwahori--Hecke ring $\cH_{I^{\flat}}(\Z)$ is the free $\Z$-module with basis $\{i_w={\bf 1}_{I^{\flat} w I^{\flat}}, {w\in \widetilde{W}^{\flat}}:= N/M^{\flat}\}$, endowed with the unique ring structure satisfying
\begin{itemize}[topsep=0pt]
\item
The braid relations: 
$i_w^{\flat} *_{I^{\flat}} i_{w'}^{\flat} = i_{ww'}^{\flat} \text{ if }w, w'\in \widetilde{W}^{\flat}\text{ such that }\ell(w) + \ell(w') = \ell(ww')$.
\item The quadratic relations: 
$(i_s^{\flat})^2 = q_s  i_1^{\flat}+ (q_s - 1)i_s^{\flat}\text{ if }s \in \cS^{\flat}$, where $ q_s:=[IsI:I]$ and $\cS^{\flat} \subset \widetilde{W}^{\flat}$ corresponds to the set of orthogonal reflections with respect to the walls of the fixed alcove.
\end{itemize}
\end{customtheorem}
\subsubsection{Freeness of the universal unramified principal series module}

Let $\cF \subset \cA$ be any fixed facet. 
For any commutative ring $A$, define the universal unramified principal series right $(\mathcal{R}^{\flat}, \cH_{K_{\cF}^{\flat}}(A))$-bi-module $\cM_{K_{\cF}^{\flat}}(A) = \cC_c (M^{\flat}U^+\backslash G/K_{\cF},A)$, this is the set of $A$-valued functions supported on finitely many double cosets.

\begin{customproposition}{\ref{HIRbasis}}
The $\mathcal{R}^{\flat}$-module $\cM_{K_\cF^{\flat}}(\Z)$ is free of rank $r_\cF:=|W/\jmath_W(W_\cF^\flat)|$, with basis $\{v_{w,\cF}^{\flat}:={\bf 1}_{U^+ w K_\cF^{\flat}},\, w\in D_\cF^{\flat}\}$ where $D_\cF^{\flat}\subset W_{a_\circ}$ is any fixed set of representatives for $W/ \jmath_W(W_{\cF}^\flat)$ and $\jmath_W\colon N \to W=N/M$ is the quotient map. 
In particular, $\{v_{w,\a}^{\flat},\, w\in W_{a_\circ}^\flat\}$ is a canonical basis for the $\mathcal{R}^{\flat}$-module $\cM_{I^{\flat}}(\Z)$.
\end{customproposition}

\subsubsection{Bernstein presentation of the Iwahori--Hecke algebra}
{Following the ideas of Bernstein \cite{Deligne84}, as exposed by the approach of \cite{HKP10} (which treats the split case)}, we use the universal unramified principal series module $\cM_{I^{\flat}}(\Z)$ to study the Iwahori--Hecke algebra. 
A first result into this direction is:
\begin{customtheorem}{\ref{structureMI}}
The following homomorphism of right $\cH_{I^{\flat}}(\Z[q^{-1}])$-modules is an isomorphism
\begin{align*}\hbar_{I^{\flat}}\colon \cH_{I^{\flat}}({\Z[q^{-1}]}) \longrightarrow \cM_{I^{\flat}}({\Z[q^{-1}]}), \quad h\longmapsto  v_{1,\a}^{\flat}*_{I^{\flat}}h.\end{align*}
\end{customtheorem}

In \S \ref{decomptwistbern}, we construct an embedding $\dot{\Theta}_{Bern}^{\flat}\colon \mathcal{R}^{\flat} \hra \cH_{I^{\flat}}(\Z[q^{-1}]), m \mapsto \dot{\Theta}_{m}^{\flat}$. This provides an alternative description for the Iwahori--Hecke algebra:

\begin{customcorollary}{\ref{HIbasis}}
The algebra $\cH_{{I^{\flat}}}({\Z[q^{-1}]})$ is a free left (and right) $\mathcal{R}^{\flat}\otimes_\Z{\Z[q^{-1}]}$-module, with canonical basis $\{i_w^{\flat} \colon w \in W_{a_\circ}^{\flat}\}$.
The sets $\{\dot{\Theta}_m^{\flat} *_{{I^{\flat}}} i_w^{\flat}\colon m \in \Lambda_M^{\flat}, w \in W_{a_\circ}^{\flat}\}$ 
and $\{ i_w^{\flat} *_{{I^{\flat}}}\dot{\Theta}_m^{\flat} \colon m \in \Lambda_M^{\flat}, w \in W_{a_\circ}^{\flat}\}$ are both ${\Z[q^{-1}]}$-basis for $\cH_{{I^{\flat}}}({\Z[q^{-1}]})$.
\end{customcorollary}

Set $\cH_{I^{\flat}}^{\aff}(\Z):=\cH(G_{1}^{\flat} \sslash I^{\flat},\Z)$ and let $\widetilde{\Upomega}^{\flat}$ denote the subgroup of stabilizers of $\a$ in $\widetilde{W}^{\flat}$. 
The following diagram is commutative (see Proposition \ref{generalizedIWdecomp})
$$
\begin{tikzcd}
\cH_{I}(\Z) \arrow{r}{\simeq}\arrow[two heads, swap]{d}{\iota^{\flat}} & \Z[ \widetilde{\Upomega}]\otimes^\prime \cH_{I}^{\aff}(\Z)\arrow[two heads]{d}{ \square^{\flat} \otimes \iota^{\flat}}  \\
\cH_{I^{\flat}}(\Z) \arrow{r}{\simeq } & \Z[ \widetilde{\Upomega}^{\flat}]\otimes^\prime \cH_{I^{\flat}}^{\aff}(\Z)
\end{tikzcd}$$
where, $\otimes^\prime$ indicates that the tensor modules is endowed with a "untwisted" multiplication: $(\sigma \otimes i_w)(\sigma' \otimes i_{w'})=\sigma\sigma' \otimes i_{\sigma'^{-1}w \sigma'}*_{I^{\flat}} i_{w'} $ for any $\sigma,\sigma' \in \widetilde{\Upomega}^{\flat}$ and any $w,w'\in \widetilde{W}^{\flat}$. 
The restriction of the surjective $\iota^{\flat}$ to $\cH_{I}^{\aff}(\Z)$ is injective (Remark \ref{commuwaffs}) and yields an isomorphism of algebras $\cH_{I}^{\aff}(\Z) \iso \cH_{I^{\flat}}^{\aff}(\Z)$.

This fact, allows us to extend various results due to Lusztig in \cite{Lu89} for the affine extended case ($M^\flat=M^1$) to the general reductive case:  
\begin{customproposition}{\ref{propoTheta}}
For any $s \in  \mathcal{T}_{a_\circ}^{\flat}= \cS^{\flat} \cap W_{a_\circ}^{\flat}$ and any $m\in \Lambda_M^{\flat}$, we have
$$ \dot\Theta_m^{\flat} *_{I^{\flat}}  (i_s^{\flat} +1)-(i_s^{\flat}+1)*_{I^{\flat}}  \dot\Theta_{\dot{s}( m)}^{\flat}= ( \dot\Theta_{m}^{\flat}-\dot\Theta_{\dot{s}( m)}^{\flat})\mathcal{G}(\alpha) \in \mathcal{R}^{\flat}$$
where, $\mathcal{G}(\alpha) \in \mathcal{L}_{\aff}^{\flat}$ is the element defined in \cite[\S 3.8 \& \S 3.13]{Lu89} for the algebra $ \cH(\cD_{\aff},{W}_{\aff},v=q, L)$ (see \ref{rootdatata}).
\end{customproposition}

This result was also obtained by Rostami \cite[Proposition 5.4.2]{R15} for the untwisted $\Theta$-elements, using a slightly different argument.

\subsubsection{Twisted Satake isomorphism with $\Z$-coefficients}

Motivated by arithmetic problems for Shimura varieties, Haines and Rostami established in \cite{HR10}, $\cS_M^G\colon \cH_K(\C)\iso \mathcal{R}^W\otimes_\Z \C$ for the untwisted Satake morphism. 
In \S \ref{sectionsatake}, considering the twisted Satake isomorphism, allows us to extend their result to the Hecke algebra with integral coefficients $\cH_K(\Z)$. 

The twisted Satake transform $\dot{\mathcal{S}}_M^{G^{\flat}} \colon \cH_{K^{\flat}} \to \mathcal{R}^{\flat}$ is characterized by $v_{1,a_\circ}^{\flat}*_{K^{\flat}} h = \dot{\mathcal{S}}_M^{G^{\flat}}(h)\cdot v_{1,a_\circ}^{\flat}, \text{ for all }{h\in \cH_{K^{\flat}}(\Z)}$. 
\begin{customtheorem}{\ref{Ztwistedsatakeisomorphism}}
The twisted Satake homomorphism $\dot{\cS}_M^{G^{\flat}}\colon \cH_{K^{\flat}}(\Z) \to {\mathcal{R}^{\flat}}^{\dot{W}}$ is a canonical isomorphism of ${\Z}$-algebras.
\end{customtheorem}
We propose two proofs for this result; the first is "algebraic" while the second is of "combinatorial" taste. 
Upon completion of the first version of this paper, the author learned that Henniart and Vignéras already proved (differently) this in \cite{HV15}.

We also show the natural generalization of Rapoport's positivity result for unramified groups \cite[Theorem 1.1]{Ra}, see also Haines'  \cite{Haines00}. 
The key tool is due to C. Schwer \cite[Corollary 4.8]{Schwer06}, in which he gives the desired positivity statement for the extended affine case. 

\begin{customtheorem}{\ref{positivitythm}}[Positivity of the twisted Satake isomorphism]
For any $x\in \Lambda_M^-$, write $\dot{\cS}_M^G(h_x)= \sum_{m \in \Lambda_M^-} s_{x,m} r_{m}$. The following statements are equivalent
\begin{enumerate}
\item $s_{x,m}>0$,
\item $U^+m \cap KxK \neq \emptyset$,
\item $U^+m \cap Iwxw'I \neq \emptyset$, for some $w,w' \in W_{a_\circ}$,
\item ${x} \succsim {m}$, i.e. $xm^{-1}\in \Mbf(F)\cap G_1$ and $\nu_N(x{m}^{-1})=\sum n_\alpha \alpha^\vee, n_\alpha \in \N$, the sum being over the simple relative coroots.
\end{enumerate}
\end{customtheorem}

\subsubsection{Centers of parahoric--Hecke algebras}
Using normalized intertwiners in \S \ref{normintertwin}, we prove the following result, which is the key ingredient to describe centers of parahoric--Hecke algebras.
\begin{customtheorem}{\ref{skewringLaff}\& \ref{sgrmatrix}\& \ref{Laffgalois}}
Let $\mathcal{L}_{\aff}^{\flat}$ be the fraction field of $ \mathcal{R}_{\aff}^{\flat}$ and $\mathcal{L}^{\flat}:=\mathcal{L}_{\aff}^{\flat} \otimes_{\mathcal{R}_{\aff}^{\flat}} \mathcal{R}^{\flat} $. 
\begin{enumerate}
\item We have an isomorphism of $\mathcal{L}^{\flat} $-algebras: 
$  \mathcal{L}^{\flat} *\dot{W}\to {\mathcal{L}_{\aff}^{\flat}}^{\dot{W}} \otimes_{{\mathcal{R}_{\aff}^{\flat}}^{\dot{W}}}  \mathcal{H}_{{I}^{\flat}}({\Z[q^{-1}]})$, where $\mathcal{L}^{\flat} *\dot{W}$ is the skew group ring of $\dot{W}$ over $\mathcal{L}^{\flat}$ (See Definition \ref{sgring})
\item The ring extension $\mathcal{L}^{\flat}/{\mathcal{L}^{\flat}}^{\dot{W}}$ is ${\dot{W}}$-Galois.
\item The $\mathcal{L}^{\dot{W}}$-algebra $ A=\mathcal{L}_{\aff}^{\flat} \otimes_{\mathcal{R}_{\aff}^{\flat}}  \mathcal{H}_{{I}^{\flat}}({\Z[q^{-1}]})$ is compressible, that is $Z(e A e) = eZ(A)=e\mathcal{L}^{\dot{W}}$
for any idempotent $e$ of $A$.
\end{enumerate}
\end{customtheorem}
With this result in hand, we generalize Haines's \cite[Theorem 3.1.1]{Haines09}  which treats the case of unramified $\G$: 
\begin{customtheorem}{\ref{center}}
For any facet $\cF\subset \overline{\a}$, we have
$$Z\left(\cH_{K_{\cF}^{\flat}}({\Z[q^{-1}]})\right)=\dot{\Theta}_{\text{Bern}}^{\flat}({\mathcal{R}^{\flat}}^{\dot{W}}\otimes_\Z\Z[q^{-1}])*_{I^{\flat}}{\bf 1}_{K_\cF^{\flat}}.$$
In particular, the set $\{r_m^{\flat}*_{I^{\flat}}{\bf 1}_{K_\cF^{\flat}}: m \in \Lambda_M^{-,\flat}\}$ forms a basis for the $\Z[q^{-1}]$-module $Z\left(\cH_{K_{\cF}^{\flat}}({\Z[q^{-1}]})\right)$.
\end{customtheorem}
{
{I would like to point out that once the first version of this article was completed, the author learned of Haines's paper \cite{Haines2014}. 
Within the Appendix of {\em loc. cit.}, 
some general facts about the center of parahoric Hecke algebras (i.e. $M^{\flat}=M_1$) with $\C$-coefficients, compatibility with constant terms and change of parahorics are treated based on the theory of Bushnell--Kutzko types. 
In {\em loc. cit.}, Haines focuses on Iwahori levels ($\cF=\a$) and affirms the passage to general parahoric levels should rely on other properties of intertwiners for principal series representations as in \cite{HKP10}. 
However, the only written reference for these properties that the author is aware of treat only the split case \cite{HKP10} or the unramified case \cite{Haines07}. 
The more elementary approach of the present paper treats all parahoric groups and their $\flat$-avatars directly and works with integral coefficients. 
}}
\subsubsection{Compatiblity of twisted Satake and Bernstein morphisms for parahoric levels}
For any facet $\cF \subset \overline{\a}$, define the generalized twisted Satake transform $\dot\cS_{\cF}^{\flat}\colon Z(\mathcal{H}_{K_{\cF}^{\flat}}({\Z[q^{-1}]})) \to {\mathcal{R}^{\flat}}^{\dot{W}}\otimes_\Z \Z[q^{-1}]$ to be the morphism of $\Z[q^{-1}]$-algebras that is characterized by $\dot\cS_{\cF}^{\flat}(h)*_{I^{\flat}}{\bf 1}_{K_{\cF}^{\flat}} = h$. 
When $\cF=\{a_\circ\}$, then $\dot\cS_{\cF}^{\flat}=\dot\cS_M^{G^{\flat}}$.
\begin{customtheorem}{\ref{compatibility}}
Let $\cF \subset \overline{\cF}' \subset  \overline{\a}$, be two facets, i.e. $K_{\cF}\supset K_{\cF'}\supset I$. 
The following diagram of $\Z[q^{-1}]$-algebras is commutative:
$$\begin{tikzcd}  Z(\cH_{K_\cF^{\flat}}(\Z[q^{-1}])) \arrow[]{rr}{\dot{\cS}_{\cF}^{\flat} }[swap]{\simeq}&& {\mathcal{R}^{\flat}}^{\dot{W}}\otimes_\Z\Z[q^{-1}]\arrow[]{d}{\dot\Theta_{Bern}^{\flat}}[swap]{\simeq}\\
Z(\cH_{K_{\cF'}^{\flat}}(\Z[q^{-1}])) \arrow[]{u}{-*_{K_{\cF'}^{\flat}}{\bf 1}_{K_\cF}^{\flat}}[swap]{\simeq} &&Z(\cH_{I^{\flat}}(\Z[q^{-1}]))\arrow{ll}{-*_{I^{\flat}}{\bf 1}_{K_{\cF'}^{\flat}}}[swap]{\simeq}
\end{tikzcd}$$
\end{customtheorem}
This commutative diagram is also compatible with change of $\flat$-avatars $M^\flat \subset {M^{{\flat}^\prime}}$ (See remark \ref{transitionhecke}). 
To our knowledge the first proof of the compatibility between the Satake and Bernstein isomorphism, i.e. commutativity of the diagram above is due to J.-F. Dat \cite[\S 3]{Dat99} for split reductive groups and for a fixed Iwahori level, i.e. $\cF=\cF'=\{a_\circ\}$. 

\subsection{Acknowledgements}
Some parts of this article originated (partially) from my doctoral thesis, directed by Dimitar Jetchev, to whom I am very grateful. I am thankful to Christophe Cornut for his support and meticulous reading, Anne-Marie Aubert and Marie-France Vigneras for many helpful comments and suggestions and Thomas Haines, for constructive advice and pointing me to his paper \cite{Haines2014}.

\section{A review of Bruhat--Tits theory}
Bruhat and Tits made in their seminal work \cite{BT72,BT84} a profound exploration of reductive groups over local fields by constructing for them a combinatorial avatar; for a reductive group $\G$ over a local field $F$, they associate to $\G^{ad}(F)$ an affine building $\cB(\G,F)_{\red}$ and to $\G(F)$ a building $\cB(\G,F)_{\ext}$. 
The first one is called the reduced Bruhat--Tits building, the latter one is called the extended Bruhat--Tits building. 
Here is a gentle mise-en-bouche: the building $\cB(\G,F)_{\red}$ is a complete metric space, that has a structure of a poly-simplicial complex. 
This building is obtained by "gluing" a family of distinguished subsets, called apartments, these are affine spaces for some real affine space. 
This inner poly-simplicial structure comes with an isometric action of the group $\G(F)$. 

The purpose of this section is to extract a reasonably brief exposition of Bruhat--Tits theory and expose the properties that will be needed in the proofs of our results. 
Moreover, most of the notations introduced for this goal will be used later. 

\subsection{Notations}\label{notationsCh3}
\begin{itemize}[nosep]
\item $F$ a finite extension of $\Q_p$ for some prime $p$, $\O_F$ its ring of integers, $\varpi$ a fixed uniformizer in $\O_F$, $\kappa(F)$ its residue field and $q$ its cardinality,

\item $\omega\colon F^\times \to \Z$ the normalized discrete valuation, i.e. $\omega(\varpi)=1$ and $|\cdot|_F=q^{-\omega(\cdot)}$ for the corresponding normalized absolute value,
\item $F^{sep}$ a fixed separable closure,
\item For any connected reductive $F$-group $\Hbf$, there exists a homomorphism of groups $\nu_{H}\colon \H(F) \to \Hom_\Z (X^*(\H)_F,\Z)$ characterized by $$\langle \nu_{H}(h) , \chi \rangle= \nu_{H}(h) (\chi)=-\omega(\chi(h)),\text{ for all }h\in \H(F)   \text{ and } \chi\in X^*(\H)_F.$$
\item Fix $\G$ a connected reductive group over $F$, we use the convention that all reductive groups are connected,
\item Let $\Zbf_c$ denote the maximal central F-torus of $\G$ and $\Zbf_{c,sp}$ its maximal $F$-split $F$-subtorus,
\item Let $\mathbf{S}$ be a maximal $F$-split subtorus of $\G$, 
\item For any algebraic subgroup $\H\subset \G$, we will denote by $N_\G(\H)$ and $Z_\G(\H)$ the normalizer and the centralizer of $\H$ in $\G$ respectively, 
\item The root system of $\G$ with respect to $\Sbf$ will be denoted $\Phi:=\Phi(\G,\mathbf{S})$, $\Phi_{\red}=\{\alpha \in \Phi\colon \frac{1}{2} \alpha \notin \Phi\}$, $\Phi^+$ a system of positive roots in $\Phi$, $\Delta$ the associated base 
of simple roots, $\Delta^{\vee}$ its dual and the Weyl group $W=W(\G,\mathbf{S})(F)=N_\G(\Sbf)(F)/Z_\G(\Sbf)(F)$, 
\item Let $\Psi$ be a closed\footnote{A subset $\Psi \subset\Phi_+ \cap \Phi_{\red}$ such that for any $\alpha,\beta \in \Psi$ one has $[\alpha,\beta]:=\{n\alpha+m \beta\colon \text{ for all }n,m \in\Z_{>0} \}\cap \Phi \subset \Psi$.} 
subset of $\Phi_{\red}^+:=\Phi^+ \cap \Phi_{\red}$, e.g. $\Psi=\{\alpha,2\alpha\}$ if $2\alpha \in \Phi$. There exists a unique closed, connected, unipotent $F$-subgroup $\U_\Psi\subset \G$, that is normalized by $ Z_\G(\Sbf)$ and such that the product morphism
\begin{equation}
\prod_{\alpha\in \Psi\cap \Phi_{\red}} \U_\alpha\to \U_\Psi,\tag{$\ast$}\end{equation}
is an isomorphism of $F$-varieties, here the product is taken in any order for $\Psi$ 
\item Any subset $J\subset \Delta$ is a base for a root system $\Phi_J:=\Z J \cap \Phi$. 
The subgroup $\langle Z_\G(\Sbf), \U_{\Phi_J\cup\Phi^+}\rangle$ of $ \G$ is a standard parabolic subgroup, we denote it by $\Pbf_J$. 
We have $R_u(\Pbf_J)=\U_{\Phi^+\setminus  \Phi_J^+}$\label{parabolicJ}, where $\Phi_J^\pm:= \Z J\cap  \Phi^\pm$.
\item The Levi factor $\Mbf_J$ of $\Pbf_J$ is the subgroup generated by $\langle Z_\G(\Sbf),\U_{\Phi_J}\rangle$. Actually, this Levi subgroup is the centralizer of the split torus $\Sbf_J:=(\cap_{\alpha\in \Phi_J}\ker \alpha)^{\circ}$. When $J=\emptyset$ we get a minimal parabolic subgroup $\Pbf_\emptyset$ with Levi factor $Z_\G(\Sbf)$. If $J=\Delta$, then $\Pbf_\Delta=\Mbf_\Delta=\G$ and $\Sbf_\Delta$ is the unique maximal split torus of the center $Z_\G$. We refer the reader to \cite[\S21]{Bor91} for more details.
\item Set $\Mbf:=\Mbf_\emptyset=Z_\G(\Sbf)$ for the centralizer (a minimal Levi $F$-subgroup of $\G$), 
$\Nbf:=N_\G(\Sbf)$ for the normalizer and $\B:=\Pbf_\emptyset$ for the minimal parabolic with Levi factor $\Mbf$ and unipotent radical $ \U^+=\U_{\Phi^+}$, we have $\B=\U_{\Phi^+} \rtimes \Mbf$.
\item We will sometimes use the notation $\square\otimes k=\square_k$ for a field $k$.
\end{itemize}

\subsection{The standard apartment}\label{emptyaprtment}
We have a commutative diagram
$$\begin{tikzcd}[column sep=small]
X_*(\Mbf)_F\arrow[equal]{d}{} 
  & \times 
  & X^*(\Mbf)_F  \arrow[hookrightarrow]{d}  \arrow{rr}
  && \Z \arrow[equal]{d} 
 \\
  X_*(\Sbf)
  & \times
  &X^*(\Sbf) \arrow{rr}{}
  && \Z
\end{tikzcd}$$
where only the bottom line is a perfect pairing. 
In the following lemma we will show that, the injective finite-cokernel group homomorphism $X^*(\Mbf)_F \hookrightarrow X^*(\Sbf)$ induces a unique homomorphism $\Mbf(F)\to X_*(\Sbf)\otimes_\Z\Q$ that extend $\nu_{M}\colon \Mbf(F) \to \Hom_\Z (X^*(\Mbf)_F,\Z)$.
\begin{lemma}\label{nu}There exists a unique homomorphism $\nu_{M}\colon \Mbf(F)\to X_*(\Sbf)\otimes_\Z\Q$\nomenclature[B]{$\nu_{M}$}{Translation action of $\Mbf(F)$ on extended apartment $X_*(\Sbf)\otimes \R$} such that $$\langle \nu_{M}(z),\chi|_{\Sbf}\rangle=-\omega(\chi(z)),\text{ for all }z\in \Mbf(F)   \text{ and } \chi\in X^*(\Mbf)_F (\hookrightarrow X^*(\Sbf)).$$\end{lemma}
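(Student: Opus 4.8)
The plan is to construct $\nu_M$ by dualizing the inclusion $X^*(\Mbf)_F \hookrightarrow X^*(\Sbf)$ and tensoring with $\Q$ to invert the finite cokernel. Concretely, the existing homomorphism $\nu_M \colon \Mbf(F) \to \Hom_\Z(X^*(\Mbf)_F, \Z)$ (characterized in the Notations by $\langle \nu_M(z), \chi\rangle = -\omega(\chi(z))$) composed with the natural map $\Hom_\Z(X^*(\Mbf)_F, \Z) \hookrightarrow \Hom_\Z(X^*(\Mbf)_F, \Q) = \Hom_\Q(X^*(\Mbf)_F \otimes \Q, \Q)$ lands in a space I can identify with $X_*(\Sbf) \otimes \Q$. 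First I would observe that since $X^*(\Mbf)_F \hookrightarrow X^*(\Sbf)$ has finite cokernel, the induced map on $\Q$-vector spaces $X^*(\Mbf)_F \otimes \Q \to X^*(\Sbf) \otimes \Q$ is an isomorphism; dualizing gives an isomorphism $X_*(\Sbf) \otimes \Q = \Hom_\Q(X^*(\Sbf)\otimes\Q, \Q) \xrightarrow{\sim} \Hom_\Q(X^*(\Mbf)_F \otimes \Q, \Q)$, using the perfect pairing between $X_*(\Sbf)$ and $X^*(\Sbf)$ from the bottom row of the diagram. Define $\nu_M$ (the new one) as the composite of the old $\nu_M$ with these identifications.

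The next step is to verify the defining property. For $z \in \Mbf(F)$ and $\chi \in X^*(\Mbf)_F$, viewing $\chi$ inside $X^*(\Sbf)$ as $\chi|_{\Sbf}$, I need $\langle \nu_M(z), \chi|_{\Sbf}\rangle = -\omega(\chi(z))$. Under the dualization, the pairing of $\nu_M(z) \in X_*(\Sbf)\otimes\Q$ with $\chi|_{\Sbf} \in X^*(\Sbf)$ is by construction equal to the pairing of the old $\nu_M(z) \in \Hom_\Z(X^*(\Mbf)_F,\Z)$ with $\chi \in X^*(\Mbf)_F$, which is $-\omega(\chi(z))$ by the characterization recalled in the Notations. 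That $\nu_M$ is a group homomorphism is immediate since it is a composite of the group homomorphism (old) $\nu_M$ with $\Q$-linear isomorphisms. For uniqueness: any two homomorphisms satisfying the stated property agree after pairing with every $\chi|_{\Sbf}$ for $\chi \in X^*(\Mbf)_F$; since these $\chi|_{\Sbf}$ span $X^*(\Sbf)\otimes\Q$ as a $\Q$-vector space (again because the cokernel of $X^*(\Mbf)_F \hookrightarrow X^*(\Sbf)$ is finite), and the pairing $(X_*(\Sbf)\otimes\Q) \times (X^*(\Sbf)\otimes\Q) \to \Q$ is perfect, the two homomorphisms coincide.

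There is essentially no hard part here — the statement is a routine linear-algebra extension argument — but the one point demanding a little care is checking that the image of the old $\nu_M$, after clearing denominators coming from the finite cokernel, actually lands in $X_*(\Sbf)\otimes\Q$ and not in some larger space, and that the characterizing identity transports correctly under the chain of identifications rather than merely up to the finite index. I would make this precise by fixing the perfect pairing $X_*(\Sbf) \times X^*(\Sbf) \to \Z$ at the outset and phrasing everything as equalities of $\Q$-valued pairings, so that the commutative diagram in \S\ref{emptyaprtment} does all the bookkeeping.
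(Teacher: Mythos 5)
Your proposal is correct and follows essentially the same route as the paper: both exploit the finite cokernel of $X^*(\Mbf)_F \hookrightarrow X^*(\Sbf)$ to get an isomorphism after tensoring with $\Q$, identify the $\Q$-dual with $X_*(\Sbf)\otimes_\Z\Q$ via the perfect pairing, and deduce uniqueness from the fact that the restricted characters span $X^*(\Sbf)\otimes_\Z\Q$. The paper merely makes the denominator-clearing explicit by writing the formula with the index $[X^*(\Sbf):X^*(\Mbf)_F]$ and the extended character $\widetilde{\chi}$, which is the same bookkeeping you describe abstractly.
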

\begin{proof}
Because the restriction map $X^*(\Mbf)_F\to X^*(\Sbf)$ is injective and has an image of finite index, i.e. for any $\chi \in X^*(\Sbf)$, the character  $[X^*(\Sbf):X^*(\Mbf)_F]\chi\in X^*(\Sbf)$ extends to a unique $\widetilde{\chi}\in X^*(\Mbf)_F$. 
Consider the following map $$M(F) \to \Hom(X^*(\Sbf)\otimes_\Z \Q,\Q), \quad z \mapsto \left(\chi\otimes r \mapsto -r\frac{1}{[X^*(\Sbf):X^*(\Mbf)_F]} \omega(\widetilde{\chi}(z))\right).$$
This is clearly a homomorphism of groups. 
Moreover, $X^*(\Sbf)\otimes_{\Z}\Q$ and $X_{*}(\Sbf)\otimes_{\Z}\Q$ remain a dual pair of $\Q$-modules for the canonical pairing $\langle\, , \rangle_{\Q}$ that extend the pairing $\langle\,,\rangle \colon X_{*}(\Sbf)\times X^{*}(\Sbf)\to \Z$. 
Thus, any homomorphism in $\Hom(X^*(\Sbf)\otimes_\Z \Q,\Q)$ is of the form $\chi\otimes r\mapsto \langle \chi, \lambda\rangle rr'$ for some $\lambda\otimes r' \in X_*(\Sbf)\otimes_\Z \Q$. 
This defines a unique homomorphism of groups $\nu_{M} \colon \Mbf(F) \to X^*(\Sbf)\otimes_\Z \Q$ verifying
$$\langle\nu_{M}(z) , \chi\otimes 1 \rangle_{\Q}=   -\frac{1}{[X^*(\Sbf):X^*(\Mbf)_F]} \omega(\widetilde{\chi}(z)) \quad \text{for all } z\in \Mbf(F) \text{ and } \chi \in X^*(\Sbf),$$
or equivalently (since $X^*(\Mbf)_F$ generates $X^*(\Sbf)\otimes_\Z \Q$)
\begin{align*} \langle\nu_{M}(z) , \chi_{|_{\Sbf}}\otimes 1 \rangle_{\Q}&= -\omega({\chi}(z))\quad\text{for all } z\in \Mbf(F) \text{ and } \chi \in X^*(\Mbf)_F.\qedhere\end{align*}
\end{proof}
By \cite[Proposition 1.2]{Lan96}, the subgroup $\Mbf(F)^1:=\ker \nu_{M}$\nomenclature[B]{$\Mbf(F)^1$}{The kernel of $\nu_M$} is the maximal compact open subgroup of $\Mbf(F)$. 
We then have a short exact sequence of groups
$$0\to \Mbf(F)/\Mbf(F)^1\to \Nbf(F)/\Mbf(F)^1 \to W= \Nbf(F)/\Mbf(F)\to 0,$$
in which $\Mbf(F)/\Mbf(F)^1$ is a free abelian group containing $X_*(\Sbf)$ and having same rank \cite[Lemma 1.3]{Lan96}.

Consider the $\R$-vector space $V=X_*(\Sbf/\Zbf_{c,sp})\otimes_\Z\R$\nomenclature[B]{$V$}{$X_*(\Sbf/\Zbf_{c,sp})\otimes_\Z\R$} and identify its dual space $V^{*}$ with $X^{*}(\Sbf/\Zbf_{c,sp})\otimes_{\Z}\R$ using the canonical extended pairing $\langle\, , \rangle_{\R}$. 
We denote by $$\nu\colon \Mbf(F) \to V,$$\nomenclature[B]{$\nu$}{Translation action of $\Mbf(F)$ on $V$} the composition of the map $\nu_{M}\colon\Mbf(F) \rightarrow X_{*}(\Sbf)\otimes_{\Z}\R$ with the natural projection $X_{*}(\Sbf)\otimes_{\Z}\R \twoheadrightarrow V$. 

The action of $W$ on $\Sbf$ by conjugation induces a faithful linear action on $X_*(\Sbf)$, which induces a canonical homomorphism of groups
$$j\colon W\to \GL(X_*(\Sbf)\otimes_\Z\R),$$ 
and accordingly a homomorphism $W\to \GL(V)$ since $\Nbf(F)$ acts trivially on $\Zbf_{c,sp}$.
\begin{proposition}\label{A(G,S)}
There is a canonical affine space $\A_{\red}(\G,\Sbf)$\nomenclature[B]{$\A_{\red}(\G,\Sbf)$}{Reduced standard apartment} under $V$ (unique up to a unique isomorphism of affine spaces) together with a group homomorphism $\nu_N\colon \Nbf(F)\to \Aff(\A_{\red}(\G,\Sbf))$\nomenclature[B]{$\nu_{N}$}{Extension of $\nu$ to the action of $\Nbf(F)$ on $\A_{\red}(\G,\Sbf)$} extending $\nu$. Accordingly, we have a commutative diagram
$$\begin{tikzcd}
     0  \arrow{r}  
  & \Mbf(F)/\Mbf(F)^1\arrow{r}\arrow[]{d}{\nu}
  & \Nbf(F)/\Mbf(F)^1 \arrow{r}\arrow[]{d}{\nu_{N}}
  & W\arrow[hook]{d}{j}\arrow{r}
  & 0 
  \\
  0  \arrow{r}  
  & V\arrow{r}
  & \Aff(\A_{\red}(\G,\Sbf)) \arrow{r}
  & \GL(V)\arrow{r}
  & 0
\end{tikzcd}$$
For all $z\in \Mbf(F)$, $\nu_N(z)$ is the translation $a \mapsto \nu(z)+a$ and for any $n\in \Nbf(F)$ the linear part of $\nu_N(n)$ is equal to $j(\jmath_W(n))$, where $\jmath_W(n)$ denotes the image of $n$ in $W$.
\end{proposition}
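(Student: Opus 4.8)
The plan is to construct $\A_{\red}(\G,\Sbf)$ as a torsor under $V$ and to extend the translation action $\nu$ of $\Mbf(F)$ to an affine action $\nu_N$ of $\Nbf(F)$, in a way that realizes the bottom exact sequence compatibly with the top one. First I would recall the abstract mechanism: given a group $\Nbf(F)$, a normal subgroup containing $\Mbf(F)^1$ with quotient a free abelian group mapping to $V$ via $\nu$, and an action of the outer quotient $W$ on $V$ through $j$, one wants a torsor $\A$ under $V$ on which $\Nbf(F)$ acts affinely so that $\Mbf(F)/\Mbf(F)^1$ acts by the translations prescribed by $\nu$ and the linear part of $n\in\Nbf(F)$ is $j(\jmath_W(n))$. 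The obstruction to such an extension lives in $H^2(W, V)$, which vanishes since $W$ is finite and $V$ is a $\Q$-vector space (hence uniquely divisible, so $H^i(W,V)=0$ for $i>0$); this both gives existence and, combined with $H^1(W,V)=0$, gives uniqueness up to a unique $V$-affine isomorphism.

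Concretely, I would proceed as follows. Pick a set-theoretic section $s\colon W\to \Nbf(F)/\Mbf(F)^1$ of the top row, so every element of $\Nbf(F)/\Mbf(F)^1$ is uniquely $\bar{m}\,s(w)$ with $\bar m\in\Mbf(F)/\Mbf(F)^1$ and $w\in W$. Define a $2$-cocycle $c\colon W\times W\to \Mbf(F)/\Mbf(F)^1$ by $s(w)s(w')=c(w,w')s(ww')$, push it to $V$ via $\nu$, and use the vanishing of $H^2(W,V)$ to write $\nu(c(w,w'))=\phi(w)+j(w)\phi(w')-\phi(ww')$ for some map $\phi\colon W\to V$. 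Then set $\A_{\red}(\G,\Sbf)=V$ as a $V$-torsor, and define $\nu_N(\bar m\,s(w))$ to be the affine map $a\mapsto \nu(\bar m)+\phi(w)+j(w)a$; the cocycle relation for $c$ is exactly what makes this a group homomorphism $\Nbf(F)/\Mbf(F)^1\to\Aff(V)$, and precomposition with $\Nbf(F)\twoheadrightarrow\Nbf(F)/\Mbf(F)^1$ gives the desired $\nu_N$ on $\Nbf(F)$. By construction $\nu_N$ restricted to $\Mbf(F)$ is translation by $\nu$, and the linear part of $\nu_N(n)$ is $j(\jmath_W(n))$, so the diagram commutes and the bottom row is exact (exactness of $V\to\Aff(V)\to\GL(V)$ being the standard split exact sequence of the affine group). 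For uniqueness: any two such data $(\A,\nu_N)$ and $(\A',\nu_N')$ differ by a choice of $\phi$ modulo coboundaries, and $H^1(W,V)=0$ forces the two to be related by translation by a unique element of $V$, i.e.\ a unique isomorphism of affine spaces intertwining the two $\Nbf(F)$-actions.

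The main obstacle I anticipate is not the cohomological vanishing — that is immediate — but the bookkeeping needed to check that the extended structure is genuinely canonical and that $\nu_N$ descends correctly through all the quotients (in particular that $\nu_N$ is well-defined on $\Nbf(F)$ itself and not merely on $\Nbf(F)/\Mbf(F)^1$, which requires $\Mbf(F)^1$ to act trivially — automatic since $\nu|_{\Mbf(F)^1}=0$ by definition of $\Mbf(F)^1=\ker\nu_M$ and the fact that $\nu$ factors through $\nu_M$). One should also double-check, using \cite[Lemma 1.3]{Lan96}, that $\Mbf(F)/\Mbf(F)^1$ is free of the same rank as $X_*(\Sbf)$ so that $\nu$ has the expected image and the torsor under $V$ has the right dimension; this is where the hypothesis that $\Sbf$ is a maximal split torus enters. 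The remainder is formal.
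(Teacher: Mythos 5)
Your cohomological construction is sound and is essentially the argument behind the reference the paper defers to (\cite[Proposition 1.8]{Lan96}); since the paper gives no proof beyond that citation, your write-up fills in what is outsourced. One routine point: for $\nu\circ c$ to be a $2$-cocycle valued in $V$, and for your formula $\nu_N(\bar m\,s(w))\colon a\mapsto \nu(\bar m)+\phi(w)+j(w)a$ to be multiplicative, you need $\nu\colon \Mbf(F)/\Mbf(F)^1\to V$ to be $W$-equivariant for the conjugation action upstairs and the action $j$ downstairs. This is true (it follows from $\langle\nu_M(nzn^{-1}),\chi\rangle=\langle\nu_M(z),\chi\circ\mathrm{ad}(n)\rangle$), but it is the one computation your argument actually leans on and it is nowhere checked.

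The genuine gap is in the uniqueness step, where you misattribute the key vanishing. $H^1(W,V)=0$ gives the \emph{existence} of a $V$-translation intertwining two extensions $(\A,\nu_N)$ and $(\A',\nu_N')$; it does not give its uniqueness. Once nonempty, the set of intertwining translations is a torsor under $H^0(W,V)=V^{j(W)}$, so ``unique up to a \emph{unique} isomorphism'' requires $V^{j(W)}=0$. That holds here precisely because $V=X_*(\Sbf/\Zbf_{c,sp})\otimes_\Z\R$ is spanned by the coroots, on which the pairing with the roots is nondegenerate --- i.e.\ this is exactly where the passage to the \emph{reduced} apartment is used, and where reductivity of $\G$ enters. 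As written, your argument would equally ``prove'' uniqueness up to unique isomorphism for the extended apartment, a torsor under $X_*(\Sbf)\otimes_\Z\R$, which is false: the paper's Remark \ref{remark13} notes that the extended system is only unique up to (non-unique) isomorphism, the ambiguity being translation by the $W$-fixed summand $V_G$. Supplying the observation $V^{j(W)}=0$ closes the gap.
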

\begin{proof}See the proof of \cite[Proposition 1.8]{Lan96}.
\end{proof}
\begin{remark}
(i) The kernel $\ker \nu_N$ contains $\Mbf(F)^1$ and the center of $\G(F)$. 
(ii) The reductivity of $\G$ forces the action of the Weyl group $W$ on $V_0= X_*(Z_\G)\otimes_\Z \R$ to be trivial, which implies the injectivity of the morphism $W \hra \GL(V)$ and accordingly 
$\ker \nu_N= \ker \nu$.
(iii) All possible extensions of $\nu$ are of the form
$$\nu_{N,v}(n)\colon a \mapsto -v+\nu_N(n)(v+a),\, \forall a\in \A_{\red}(\G,\Sbf),$$
for some fixed $v\in V$. 
\end{remark}
We are now ready to define the central combinatorial object of this section
\begin{definition}The affine space $\A_{\red}(\G,\Sbf)$ together with the group homomorphism $\nu_N\colon \Nbf(F)\to \Aff(\A_{\red}(\G,\Sbf))$ is called the standard apartment of $\G$ with respect to $\Sbf$.
\end{definition}
\subsection{A discrete valuation of the generating root datum}\label{discretevaluation}
Define for each $\alpha\in \Phi$ the set $M_\alpha$ of elements $n\in \Nbf(F)$ whose image in the Weyl group $\jmath_W(n) \in W$ is the reflection $s_\alpha\colon V \to V , v \mapsto v-\langle v,\alpha^\vee \rangle \alpha $, where (by abuse of notation) $\alpha^\vee$ denotes the image of the co-root in $V$.

A good part of the theory developed by Borel and Tits in \cite{BR65} (see \cite[\S 6.1]{BT72}) may be condensed in the following fundamental theorem:
\begin{theorem}\label{generating}
The datum $\left(\Mbf(F),\left(\mathbf{U}_\alpha(F), M_\alpha\right)_{\alpha\in \Phi}\right)$ is a generating root datum 
of type $\Phi$ in $\G(F)$.\end{theorem}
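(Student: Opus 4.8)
The plan is to show that the system $\mathcal{D}:=\bigl(\Mbf(F),(\U_\alpha(F),M_\alpha)_{\alpha\in\Phi}\bigr)$ satisfies the axioms of a generating root datum of type $\Phi$ in $\G(F)$ in the sense of \cite[6.1.1]{BT72}, by checking them one at a time; every ingredient is to be read off from the rational structure theory of reductive groups developed by Borel and Tits in \cite{BR65}, and all of it is bookkeeping except for a single rank-one computation, where the real content sits. First I would dispatch the axioms internal to the root groups. Nontriviality $\U_\alpha(F)\neq\{1\}$ holds because $\U_\alpha$ is a nontrivial smooth connected $F$-group admitting a central filtration with $\Ga$-type quotients having rational points; and when $2\alpha\in\Phi$ the strict inclusion $\U_{2\alpha}\subsetneq\U_\alpha$ follows from the description of $\U_\alpha$, for a multipliable root, as an $F$-form of the Heisenberg group with centre $\U_{2\alpha}$, part of the rank-one analysis of (possibly non-quasi-split) forms of $A_2$ recalled in \cite[\S 6.1]{BT72}. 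For the commutator axiom I would pass to $F$-points in the inclusion $[\U_\alpha,\U_\beta]\subset\U_{(\alpha,\beta)}$ of algebraic groups, for non-proportional $\alpha,\beta$, where $(\alpha,\beta)=\{p\alpha+q\beta:p,q\ge 1\}\cap\Phi$; after ordering $\Phi^+$ so that $(\alpha,\beta)$ is a closed subset, this is a consequence of the filtration of $\U_{\Phi^+}$ and its $W$-conjugates by the subgroups $\U_\Psi$, together with the product isomorphism $(\ast)$.

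Next I would treat the axioms governing the sets $M_\alpha$. That $M_\alpha\neq\emptyset$ is automatic in the present setup: the quotient map $\jmath_W\colon\Nbf(F)\to W=\Nbf(F)/\Mbf(F)$ is surjective and, by Borel--Tits, $W$ is the reflection group of $\Phi$, so each $s_\alpha$ lifts to $\Nbf(F)$ (indeed $M_\alpha$ is a single $\Mbf(F)$-coset). For $n\in M_\alpha$ one has $n\Mbf(F)n^{-1}=\Mbf(F)$, since $n$ normalizes $\Sbf$ and hence its centralizer, and $n\U_\beta(F)n^{-1}=\U_{s_\alpha(\beta)}(F)$ for every $\beta\in\Phi$, since conjugation by $n$ transports the characterization of $\U_\beta$ --- the connected unipotent subgroup normalized by $\Mbf$ whose Lie algebra is $\mathfrak{g}_\beta\oplus\mathfrak{g}_{2\beta}$ --- to that of $\U_{s_\alpha(\beta)}$, the $\Mbf$-weight $\beta$ being carried to $\jmath_W(n)(\beta)=s_\alpha(\beta)$. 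That $\Mbf(F)$ normalizes each $\U_\alpha(F)$ is built into the construction of $\U_\alpha$.

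The heart of the matter is the axiom $\U_{-\alpha}(F)\setminus\{1\}\subseteq\U_\alpha(F)\,M_\alpha\,\U_\alpha(F)$, together with the generation condition $\G(F)=\langle\Mbf(F),\U_\alpha(F):\alpha\in\Phi\rangle$. For the inclusion I would descend to the subgroup $\Lbf_\alpha:=Z_\G(\Sbf_\alpha)$, where $\Sbf_\alpha=(\ker\alpha)^{\circ}$, a reductive $F$-group of relative semisimple rank one, and invoke its Bruhat decomposition over $F$, whose relative Weyl group is $\{1,s_\alpha\}$: a nontrivial element of $\U_{-\alpha}(F)$ cannot lie in the minimal parabolic $\Lbf_\alpha(F)\cap\Bbf(F)$, so it lands in the big cell $\U_\alpha(F)\,M_\alpha\,\U_\alpha(F)$. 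This rank-one Bruhat decomposition --- in essence the $\mathrm{SL}_2$/$\mathrm{SU}_3$ lemma of \cite{BR65} --- is the one genuinely non-formal step, and it is where I expect the main difficulty to lie, the multipliable-root case requiring separate treatment. Granting it, the generation condition follows from the relative Bruhat decomposition $\G(F)=\bigsqcup_{w\in W}\Bbf(F)\,\dot w\,\Bbf(F)$ with $\Bbf=\U_{\Phi^+}\rtimes\Mbf$: one may take the representatives $\dot w$ in the subgroup generated by the $M_\alpha$, which by the inclusion just established lies in $\langle\Mbf(F),\U_{\pm\alpha}(F)\rangle$, while $\Bbf(F)=\Mbf(F)\,\U_{\Phi^+}(F)$ and $\U_{\Phi^+}(F)$ is generated by the $\U_\alpha(F)$, $\alpha\in\Phi^+$, again via $(\ast)$. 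Assembling the verified axioms then yields that $\mathcal{D}$ is a generating root datum of type $\Phi$ in $\G(F)$.
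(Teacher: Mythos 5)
The paper offers no argument for this theorem at all: it is stated as a condensation of the relative structure theory of Borel--Tits, with the bare citations \cite{BR65} and \cite[\S 6.1]{BT72}, and the text moves straight on to Corollary \ref{m(u)}. Your proposal is therefore not so much an alternative route as an unpacking of the citation, and as such it is sound: you correctly enumerate the axioms of \cite[\S 6.1.1]{BT72}, source (DR1)--(DR3) and (DR5) from the algebraic-group description of the $\U_\alpha$ and the product isomorphism $(\ast)$, obtain the nonemptiness and coset structure of $M_\alpha$ from the surjectivity of $\Nbf(F)\to W$, and rightly isolate (DR4) --- the inclusion $\U_{-\alpha}(F)\setminus\{1\}\subset \U_\alpha(F)M_\alpha\U_\alpha(F)$ via the rank-one Bruhat decomposition of $Z_\G(\Sbf_\alpha)(F)$ --- as the only genuinely non-formal step; there is no circularity in then using the full relative Bruhat decomposition for generation, since Borel--Tits prove that decomposition independently of the root-datum formalism. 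What your sketch buys over the paper's treatment is an explicit record of exactly which theorems of \cite{BR65} are being consumed; what it does not buy is a proof of the rank-one case, which you defer to \cite{BR65} exactly as the paper defers the whole statement.

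One axiom is missing from your checklist: (DR6), which requires $\Mbf(F)\,\U_{\Phi^+}(F)\cap \U_{\Phi^-}(F)=\{1\}$ for a choice of positive system. This is not a consequence of the axioms you do verify, so it must be checked; fortunately it follows at once from the triviality of $\Pbf_\emptyset\cap R_u(\Pbf_\emptyset^-)$ as algebraic groups (e.g.\ by comparing $\Mbf$-weights on the Lie algebra), upon taking $F$-points. With that added, your outline is a complete reduction of the theorem to \cite{BR65}.
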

The nomenclature "generating root datum" means that the above datum verifies the list of axioms \cite[\S 6.1.1]{BT72}.
\begin{corollary}\label{m(u)}
For any $\alpha \in \Phi$ and any $u\in \U_{\alpha}(F)\setminus\{1\}$ there exists a unique pair $$(u',u'') \in {\U_{-\alpha}(F)\setminus\{1\} \times \U_{-\alpha}(F)\setminus\{1\}}$$ such that $m_\alpha(u):=u'uu''\in \Nbf(F)$ and verifies $m_\alpha(u) \U_{\alpha}(F) m_\alpha(u)^{-1}=\U_{-\alpha}(F)$. 

In particular, $\U_{-\alpha}(F) u \U_{-\alpha}(F) \cap \Nbf(F)=\{m_{\alpha}(u)\}$. 
Moreover, $m_\alpha(u)$ normalizes $\Sbf$ and its image in $W$ acts on $V$ by the reflexion $s_\alpha$\nomenclature[B]{$s_\alpha$}{Reflexion on $V$ associated with $\alpha\in \Phi$}. 
Accordingly, the set $M_\alpha$ defined above Theorem \ref{generating} is precisely the right coset $m_{\alpha}(u)\Mbf(F)$.
\end{corollary}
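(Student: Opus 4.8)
The plan is to read everything off Theorem~\ref{generating} and the axioms of a generating root datum in \cite[\S 6.1]{BT72}, together with two elementary facts about the semisimple rank-one subgroup $\G_\alpha:=\langle \U_\alpha,\U_{-\alpha}\rangle$. The axiom I would invoke, for the generating root datum $\bigl(\Mbf(F),(\U_\beta(F),M_\beta)_{\beta\in\Phi}\bigr)$ of Theorem~\ref{generating}, states that for $u\in\U_\alpha(F)\setminus\{1\}$ the set $M_\alpha\cap \U_{-\alpha}(F)\,u\,\U_{-\alpha}(F)$ is a singleton, whose element we call $m_\alpha(u)$, and that conjugation by any $n\in M_\alpha$ carries $\U_\beta(F)$ onto $\U_{s_\alpha\beta}(F)$ for every $\beta\in\Phi$; specializing to $\beta=\alpha$ and using $s_\alpha\alpha=-\alpha$ gives $m_\alpha(u)\,\U_\alpha(F)\,m_\alpha(u)^{-1}=\U_{-\alpha}(F)$. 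This already provides the existence of a pair $(u',u'')\in\U_{-\alpha}(F)^2$ with $m_\alpha(u)=u'uu''$. To see that $u'\neq1$ and $u''\neq1$: if, say, $u'=1$, then $\U_{-\alpha}(F)=m_\alpha(u)\U_\alpha(F)m_\alpha(u)^{-1}=u''\U_\alpha(F)(u'')^{-1}$ (because $u\in\U_\alpha(F)$), hence $\U_\alpha(F)=(u'')^{-1}\U_{-\alpha}(F)u''=\U_{-\alpha}(F)$ since $u''\in\U_{-\alpha}(F)$; this contradicts $\U_\alpha(F)\cap\U_{-\alpha}(F)=\{1\}$ together with $\U_{\pm\alpha}(F)\neq\{1\}$. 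The case $u''=1$ is symmetric.

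Next I would establish the two uniqueness assertions. If $u'uu''=u_1'uu_1''$ with $u_1',u_1''\in\U_{-\alpha}(F)$, then $cud=u$ where $c:=(u_1')^{-1}u'$ and $d:=u''(u_1'')^{-1}$ lie in $\U_{-\alpha}(F)$, so $u^{-1}cu=d^{-1}\in\U_{-\alpha}(F)$; the rank-one fact that $u\,\U_{-\alpha}(F)\,u^{-1}\cap\U_{-\alpha}(F)=\{1\}$ for $u\neq 1$ then forces $c=d=1$, i.e. $(u_1',u_1'')=(u',u'')$. For the equality $\U_{-\alpha}(F)\,u\,\U_{-\alpha}(F)\cap\Nbf(F)=\{m_\alpha(u)\}$, I would take $n$ in the left-hand side; then $n\in\G_\alpha(F)\cap\Nbf(F)$, and since $\G_\alpha$ centralizes $\Sbf_\alpha:=(\ker\alpha)^{\circ}$ the element $\jmath_W(n)$ fixes the wall $\ker\alpha$ pointwise, hence $\jmath_W(n)\in\{1,s_\alpha\}$. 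The possibility $\jmath_W(n)=1$, i.e. $n\in\Mbf(F)$, is excluded because $\Mbf(F)\cap\U_{-\alpha}(F)\,u\,\U_{-\alpha}(F)=\emptyset$ for $u\neq1$ (once more a rank-one computation, using that $u\notin\U_{-\alpha}(F)$). Therefore $\jmath_W(n)=s_\alpha$, i.e. $n\in M_\alpha$, and the axiom gives $n=m_\alpha(u)$.

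The remaining assertions are now formal: $m_\alpha(u)\in M_\alpha\subset\Nbf(F)$ normalizes $\Sbf$ and has image $s_\alpha$ in $W$ directly from the definition of $M_\alpha$; and since $\ker\jmath_W=\Mbf(F)$, the fibre $M_\alpha=\jmath_W^{-1}(s_\alpha)$ is precisely the coset $m_\alpha(u)\Mbf(F)$. I expect the only delicate points to be the two rank-one facts used above — triviality of $\U_\alpha(F)\cap\U_{-\alpha}(F)$ and of $u\,\U_{-\alpha}(F)\,u^{-1}\cap\U_{-\alpha}(F)$ for $u\neq1$, and emptiness of $\Mbf(F)\cap\U_{-\alpha}(F)u\U_{-\alpha}(F)$ — all of which belong to the structure theory of the rank-one group $\G_\alpha$ and are available in \cite[\S 6.1]{BT72}; everything else is a direct translation of the root-datum axioms.
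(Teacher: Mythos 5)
Your route is the same one the paper takes, since the paper's entire proof is the citation ``See \cite[\S 6.1.2]{BT72}'': everything is to be read off the root-datum axioms plus rank-one computations. Two steps, however, do not work as written. First, the ``axiom'' you invoke at the outset --- that $M_\alpha\cap \U_{-\alpha}(F)\,u\,\U_{-\alpha}(F)$ is a singleton --- is not an axiom of a generating root datum. The relevant axiom (DR4 in \cite[\S 6.1.1]{BT72}) only asserts $\U_{\alpha}(F)\setminus\{1\}\subset \U_{-\alpha}(F)\,M_\alpha\,\U_{-\alpha}(F)$, i.e.\ \emph{existence}; the singleton property is precisely the content of 6.1.2. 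Consequently your closing step ``the axiom gives $n=m_\alpha(u)$'' assumes the thing being proved. The missing argument is of the same flavour as your other rank-one facts: if $n_1,n_2\in M_\alpha\cap \U_{-\alpha}(F)u\U_{-\alpha}(F)$, then $\U_{-\alpha}(F)n_1\cap n_2\U_{-\alpha}(F)\neq\emptyset$; conjugating the left-hand $\U_{-\alpha}(F)$ through $n_2$ (which sends it to $\U_{\alpha}(F)$ by DR5) reduces this to $\U_\alpha(F)\cdot(n_2^{-1}n_1)\cap\U_{-\alpha}(F)\neq\emptyset$ with $n_2^{-1}n_1\in\Mbf(F)$, and then $\U_\alpha(F)\cap\U_{-\alpha}(F)\Mbf(F)=\{1\}$ forces $n_1=n_2$. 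Either write this out or cite 6.1.2 for it explicitly rather than calling it an axiom.

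Second, in the nontriviality argument the identity $m_\alpha(u)\U_\alpha(F)m_\alpha(u)^{-1}=u''\U_\alpha(F)(u'')^{-1}$ is false as stated: with $u'=1$ you have $m_\alpha(u)=uu''$, so the conjugation is $u\bigl(u''\U_\alpha(F)(u'')^{-1}\bigr)u^{-1}$, and the inner set is no longer $\U_\alpha(F)$, so the outer $u$ cannot be absorbed. The fix is to use the other instance of DR5, namely $m_\alpha(u)\U_{-\alpha}(F)m_\alpha(u)^{-1}=\U_{\alpha}(F)$: since $u''\in\U_{-\alpha}(F)$ normalizes $\U_{-\alpha}(F)$, this reads $u\U_{-\alpha}(F)u^{-1}=\U_\alpha(F)$, hence $\U_{-\alpha}(F)=u^{-1}\U_\alpha(F)u=\U_\alpha(F)$, which is the contradiction you want. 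With these two repairs, and granting the deferred rank-one facts to \cite[\S 6.1]{BT72} exactly as the paper does, the plan is sound.
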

\proof See \cite[\S 6.1.2]{BT72}.
\qed

{Fix any point (origin) $a_\circ$ in $\A_{\red}(\G,\Sbf)$. For all $\alpha \in \Phi$ and all $u\in \U_{\alpha}(F)\setminus\{1\}$ we have
$$\nu_N(m_\alpha(u))(a)=s_\alpha(a-a_{\circ}) + \nu_N(m_\alpha(u))(a_\circ)= -\alpha(a-a_{\circ}) \alpha^\vee+a-a_{\circ} +  \nu_N(m_\alpha(u))(a_\circ)$$
The set of fixed points of $\nu_N(m_\alpha(u))$ is an affine hyperplane in $\A_{\red}(\G,\Sbf)$ with direction $\ker(s_\alpha - \text{Id}_V)$. Let $b_{a_\circ,\alpha,u}$ be any fixed element of this hyperplane, one has
$$ \nu_N(m_\alpha(u))(a_\circ)=\alpha(b_{a_\circ,\alpha,u}-a_{\circ}) \alpha^\vee+a_{\circ}.$$Therefore,
$$\nu_N(m_\alpha(u))(a)=s_\alpha(a-a_{\circ}) +\alpha(b_{a_\circ,\alpha,u}-a_{\circ}) \alpha^\vee+a_{\circ}.$$
Now, by setting $\varphi_\alpha^{a_\circ}(u):=-\alpha(b_{a_\circ,\alpha,u}-a_{\circ})\in \R$ one can rewrite $\nu_N(m_\alpha(u))$ as follows:
$$\nu_N(m_\alpha(u))(a)
=-\left(\alpha(a-a_{\circ})+\varphi_\alpha^{a_\circ}(u)\right)\alpha^\vee + a.$$
Accordingly, the element $\nu_N(m_{\alpha}(u))$ acts as a reflection with respect to the hyperplane $\{a\in \A_{\red}(\G,\Sbf)\colon \alpha(a-a_{\circ})=-\varphi_\alpha^{a_\circ}(u) \}$ (See \cite[Remark (b) 6.2.12]{BT72}).}

One of the fundamental results obtained by Bruhat and Tits is 
\begin{theorem}The family $\varphi^{a_\circ}=(\varphi_\alpha^{a_\circ}\colon \U_\alpha(F) \to \R \cup \{\infty\})_{\alpha \in \Phi}$\nomenclature[B]{$\varphi_\alpha$}{} is a discrete valuation of the generating root datum $\left(\Mbf(F),\left(\mathbf{U}_\alpha(F), M_\alpha\right)_{\alpha\in \Phi}\right)$, 
meaning that $\varphi^{a_\circ}$ verifies the properties listed in \cite[\S 6.2.1 \& \S 6.2.21]{BT72}.
\end{theorem}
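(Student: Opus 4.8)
The plan is to verify, one axiom at a time, the defining properties $(V0)$--$(V5)$ of a valuation of a root datum listed in \cite[\S 6.2.1]{BT72} together with the discreteness requirement of \cite[\S 6.2.21]{BT72}. The only geometric input available is the computation carried out just above the statement, which says that for every $\alpha\in\Phi$ and every $u\in\U_\alpha(F)\setminus\{1\}$ the affine transformation $\nu_N(m_\alpha(u))$ of Proposition \ref{A(G,S)} is the orthogonal reflection in the hyperplane $H_{\alpha,u}=\{a\in\A_{\red}(\G,\Sbf):\alpha(a-a_\circ)=-\varphi_\alpha^{a_\circ}(u)\}$. Two of the axioms come out of this dictionary and Corollary \ref{m(u)} with essentially no extra work. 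For $(V2)$: given $n=m_\alpha(u_0)\in M_\alpha$, the uniqueness clause of Corollary \ref{m(u)} gives $n\,m_\alpha(u)\,n^{-1}=m_{-\alpha}(nun^{-1})$, so conjugating the reflection $\nu_N(m_\alpha(u))$ by the explicit affine map $\nu_N(n)$ and identifying the image hyperplane with $H_{-\alpha,\,nun^{-1}}$ yields $\varphi_{-\alpha}^{a_\circ}(nun^{-1})-\varphi_\alpha^{a_\circ}(u)=-2\,\varphi_\alpha^{a_\circ}(u_0)$, which is independent of $u$. For $(V5)$: writing $m_\alpha(u)=u'uu''$ with $u',u''\in\U_{-\alpha}(F)\setminus\{1\}$, the same uniqueness principle (cf. \cite[\S 6.1.2]{BT72}) identifies $m_{-\alpha}(u')$ and $m_{-\alpha}(u'')$ with $m_\alpha(u)$, and comparing reflection hyperplanes gives $\varphi_{-\alpha}^{a_\circ}(u')=\varphi_{-\alpha}^{a_\circ}(u'')=-\varphi_\alpha^{a_\circ}(u)$.

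The remaining axioms are where the real work lies: $(V1)$, that each $U_{\alpha,k}:=\{u:\varphi_\alpha^{a_\circ}(u)\ge k\}$ is a subgroup of $\U_\alpha(F)$ with $U_{\alpha,\infty}=\{1\}$; $(V4)$, that $\varphi_{2\alpha}^{a_\circ}=2\,\varphi_\alpha^{a_\circ}|_{\U_{2\alpha}(F)}$ when $2\alpha\in\Phi$; $(V0)$, that each value set $\Gamma_\alpha:=\varphi_\alpha^{a_\circ}(\U_\alpha(F)\setminus\{1\})$ is large enough (at least two elements); and $(V3)$, the quasi-additivity $[U_{\alpha,k},U_{\beta,\ell}]\subseteq\prod_{p,q\ge1,\;p\alpha+q\beta\in\Phi}U_{p\alpha+q\beta,\;pk+q\ell}$ --- together with discreteness of the $\Gamma_\alpha$. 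The hard part will be exactly these, because none of them can be read off from the reflections $\nu_N(m_\alpha(u))$ alone: they encode the internal structure of the unipotent root subgroups $\U_{p\alpha+q\beta}$ and of the valued field $F$. I would treat them as Bruhat--Tits do. One first reduces $(V3)$ to the rank $\le 2$ sub-root-data of $\Phi$ generated by the various pairs $\{\alpha,\beta\}$. Then, by the étale descent of \cite[\S\S 5--6]{BT72} applied to the generating root datum of Theorem \ref{generating} (itself built on Borel--Tits \cite{BR65}), one reduces to the quasi-split inner form, where the root subgroups are explicit affine spaces over finite separable extensions of $F$: there the Chevalley--Steinberg commutator formulas make $(V3)$ explicit and at the same time exhibit each $\varphi_\alpha^{a_\circ}$ as an affine shift of $\tfrac1e\,\omega$ for a suitable ramification index $e$, which gives $(V0)$, $(V1)$, $(V4)$ and discreteness ($\Gamma_\alpha\subset\tfrac1e\Z$); descending back to $\G$ alters this only by bounded indices.

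All of this is carried out in \cite[\S 6.2]{BT72}, and in precisely the normalisation of the apartment adopted here in \cite[\S 1]{Lan96}; the role of the computation recalled just above the statement is to pin down the $\varphi_\alpha^{a_\circ}$ so that the resulting discrete valuation is the one attached to the chosen origin $a_\circ$. Combining that normalisation with the verification of the axioms $(V0)$--$(V5)$ proves the theorem.
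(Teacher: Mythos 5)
Your proposal is correct and takes essentially the same route as the paper, whose entire proof is a citation to Bruhat--Tits: neither attempts to verify the axioms from scratch for a general reductive group, and the strategy you outline (direct verification of the "reflection" axioms from the dictionary above the statement, then reduction of $(V0)$, $(V1)$, $(V3)$, $(V4)$ and discreteness to the quasi-split case via \'etale descent and the Chevalley--Steinberg commutator formulas) is exactly the Bruhat--Tits argument. One correction of attribution: the descent and the quasi-split computations you invoke are carried out in \cite[\S 4 and \S 5.1.20, \S 5.1.23]{BT84}, not in \cite[\S 6.2]{BT72}, which only develops the abstract theory of valued root data; the paper accordingly cites those sections of \cite{BT84} together with \cite[Remarque 6.2.12 (b)]{BT72}, the latter being precisely the normalisation-by-reflections computation recalled above the theorem.
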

\begin{proof} 
For a proof we refer the reader to \cite[\S 5.1.20 \& \S 5.1.23]{BT84} and \cite[Remarque 6.2.12 (b)]{BT72}.\end{proof}
Consequently, for all $\alpha \in \Phi$ and $r \in \R\cup \{\infty\}$, the set $U_{\alpha+r}^{a_\circ}:=(\varphi_\alpha^{a_\circ})^{-1}([r,\infty])$ is a compact open subgroup of $\U_\alpha(F)$ and $U_{\alpha+ \infty}^{a_\circ}=\{1\}$. Moreover, if $2\alpha \in \Phi$ then $\varphi_{2\alpha}^{a_\circ}$ is the restriction of $2\varphi_\alpha^{a_\circ}$ to $U_{2\alpha}$. 

For any $v\in V$, the map $a_\circ \mapsto v+ a_\circ $ yields the map $$\varphi^{a_\circ} \mapsto v+ \varphi^{a_\circ}:=\varphi^{v+a_\circ}=\left(\varphi^{v+a_\circ}_\alpha \colon  \U_\alpha(F) \to \R \cup \{\infty\}, u \mapsto \varphi^{a_\circ}(u) + \alpha(v)\right)_{\alpha \in \Phi}.$$
Two discrete valuations in $\mathds{A}_{\varphi^{a_\circ}}:=\{v+ \varphi^{a_\circ} \colon v \in V\}$ are called equipollent. As noted by \cite[6.2.6]{BT72}, the action of $V$ on $\mathds{A}_{\varphi^{a_\circ}}$ described above endow this latter with a structure of an affine space under $V$.
\begin{remark}\label{NactionAphi}Under the isomorphism of affine spaces $\mathds{A}_{\varphi^{a_\circ}} \simeq \A_{\red}(\G,\Sbf)$, the action of $N$ on $\A_{\red}(\G,\Sbf)$ described in Proposition \ref{A(G,S)} corresponds to the following action of $N$ on $\mathds{A}_{\varphi^{a_\circ}}$:
$$N \times \mathds{A}_{\varphi^{a_\circ}} \to \mathds{A}_{\varphi^{a_\circ}}, \quad [n, v+ \varphi^{a_\circ}]\mapsto \jmath_W(n)(v)+ \varphi^{\nu_N(n)(a_\circ)}.$$
\end{remark} 
For any $\alpha \in \Phi$, the valuation $\varphi^{a_\circ}$ is discrete; $\Gamma_\alpha(a_\circ):=\varphi_{\alpha}^{a_\circ}( \U_{\alpha}(F)\setminus\{1\})$ is a discrete subset of $\R$ \cite[\S 6.2.21]{BT72} and $\Gamma_\alpha(a_\circ)=\Gamma_{-\alpha}(a_\circ)$. 
Set 
$$\Gamma_\alpha'(a_\circ):=\{\varphi_{\alpha}^{a_\circ}(u) \in \Gamma_\alpha(a_\circ) \colon \varphi_{\alpha}^{a_\circ}(u)=\sup \varphi_{\alpha}^{a_\circ}(u \U_{2\alpha}(F))  \}$$
(If $2\alpha \not\in \Phi$ we put $U_{2 \alpha}=\{1\}$, i.e. $\Gamma_\alpha'(a_\circ)=\Gamma_\alpha(a_\circ)$).
We have
$\Gamma_\alpha(a_\circ)=\Gamma_\alpha'(a_\circ) \cup \frac{1}{2}\Gamma_{2\alpha}(a_\circ)$ \cite[\S 6.2.2]{BT72}.
\begin{remark}
The set $\Gamma_\alpha'(a_\circ)$ can be empty, in our discrete case, this happens if and only if $U_{2 \alpha}$ is dense in $U_{\alpha}$ \cite[\S 6.2.2]{BT72}.
\end{remark}
\begin{definition}We say that a discrete valuation
$\varphi^{a} \in \mathds{A}_{\varphi^{a_\circ}}$ (for some $a\in \mathds{A}_{\red}(\G,\Sbf)$) is special if, for any $\alpha \in \Phi_{\red}$, one has $0 \in \Gamma_\alpha(a)$.
\end{definition}
\begin{remark}\label{specialpointvaluation}
By definition $\varphi^{a} = (\varphi_\alpha^{a}\colon u \mapsto -\alpha(b_{a,\alpha,u}-a))$, we see then that $\varphi^{a}$ is special if and only if for any $\alpha \in \Phi_{\red}$, there is a $u_\alpha \in \U(F)$ such that $a=b_{a,\alpha,u_\alpha}$, this is equivalent to saying that $a$ is fixed by $m_\alpha(u_\alpha)$ for all $\alpha \in \Phi_{\red}$, in which case we will say that $a$ is a special point of $\mathds{A}_{\red}(\G,\Sbf)$.
\end{remark}
\begin{lemma}
There exists a special discrete valuation in $\mathds{A}_{\varphi^{a_\circ}}$.
\end{lemma}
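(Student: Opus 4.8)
The plan is to exhibit a \emph{special point} $a\in\A_{\red}(\G,\Sbf)$; by Remark \ref{specialpointvaluation} the associated valuation $\varphi^{a}=(a-a_\circ)+\varphi^{a_\circ}\in\mathds{A}_{\varphi^{a_\circ}}$ will then automatically be special. First I would fix, for each simple root $\alpha\in\Delta$, some $u_\alpha\in\U_\alpha(F)\setminus\{1\}$, and consider the affine hyperplane $H_\alpha:=\{c\in\A_{\red}(\G,\Sbf):\alpha(c-a_\circ)=-\varphi_\alpha^{a_\circ}(u_\alpha)\}$, which is exactly the fixed-point set of $\nu_N(m_\alpha(u_\alpha))$. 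The simple roots are linearly independent and, vanishing on $\Zbf_{c,sp}$, lie in $V^{*}=X^{*}(\Sbf/\Zbf_{c,sp})\otimes_\Z\R$; hence the linear map $V\to\R^{\Delta}$, $x\mapsto(\alpha(x))_{\alpha\in\Delta}$, is surjective, so the affine subspace $\bigcap_{\alpha\in\Delta}H_\alpha$ is non-empty, and I would pick any $a$ in it.

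It then remains to check that this $a$ is special, i.e. that $0\in\Gamma_\beta(a)$ for \emph{every} $\beta\in\Phi_{\red}$, not only the simple ones. Set $W_a:=\langle\nu_N(m_\alpha(u_\alpha)):\alpha\in\Delta\rangle$; each generator fixes $a$ and lies in $\nu_N(\Nbf(F))$, so $W_a$ fixes $a$, consists of maps $\nu_N(n)$ with $n\in\Nbf(F)$, and surjects onto $W$ under $\jmath_W$ since the $s_\alpha$ ($\alpha\in\Delta$) generate $W$. Given $\beta\in\Phi_{\red}$, write $\beta=w(\alpha_0)$ with $\alpha_0\in\Delta$ (every reduced root is $W$-conjugate to a simple one, and $\Delta\subset\Phi_{\red}$), and lift $w$ to some $\tilde w=\nu_N(n)\in W_a$ with $\jmath_W(n)=w$. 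Conjugation by $n$ carries $\U_{\pm\alpha_0}(F)$ onto $\U_{\pm\beta}(F)$, so, writing $m_{\alpha_0}(u_{\alpha_0})=u'u_{\alpha_0}u''$ as in Corollary \ref{m(u)} and conjugating by $n$, the element $n\,m_{\alpha_0}(u_{\alpha_0})\,n^{-1}$ lies in $\U_{-\beta}(F)\,(nu_{\alpha_0}n^{-1})\,\U_{-\beta}(F)\cap\Nbf(F)$, which by the uniqueness assertion of Corollary \ref{m(u)} equals $\{m_\beta(nu_{\alpha_0}n^{-1})\}$. Hence $\nu_N(m_\beta(nu_{\alpha_0}n^{-1}))=\tilde w\circ\nu_N(m_{\alpha_0}(u_{\alpha_0}))\circ\tilde w^{-1}$ fixes $a$, so $a$ lies on its fixed hyperplane and therefore $0=\varphi_\beta^{a}(nu_{\alpha_0}n^{-1})\in\Gamma_\beta(a)$.

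Putting these together, $0\in\Gamma_\beta(a)$ for all $\beta\in\Phi_{\red}$, so $\varphi^{a}$ is a special discrete valuation in $\mathds{A}_{\varphi^{a_\circ}}\simeq\A_{\red}(\G,\Sbf)$. The crux — and the step I expect to require the most care — is the passage from "special for $\Delta$" to "special for all of $\Phi_{\red}$": it relies on the conjugation rule $n\,\U_\gamma(F)\,n^{-1}=\U_{\jmath_W(n)\gamma}(F)$ for $n\in\Nbf(F)$ together with the uniqueness in Corollary \ref{m(u)}, rather than on affine geometry alone. Everything else (the existence of $a$, and the bookkeeping relating fixed hyperplanes to the values $\varphi_\beta^{a}$) is routine manipulation inside the apartment.
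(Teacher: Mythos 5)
Your argument is correct. The paper offers no argument of its own for this lemma --- its ``proof'' is a bare citation of [BT72, Corollaire 6.2.15] --- so what you have written is essentially the classical Bruhat--Tits construction transcribed into the paper's notation: intersect the fixed hyperplanes $H_\alpha$ of the reflections $\nu_N(m_\alpha(u_\alpha))$ attached to the simple roots, then propagate speciality from $\Delta$ to all of $\Phi_{\red}$ by conjugacy. The two points that genuinely need justification both check out. First, $\bigcap_{\alpha\in\Delta}H_\alpha\neq\emptyset$: since $V=\mathrm{span}_\R(\Phi^\vee)=X_*(\Sbf^{der})\otimes_\Z\R$ and the simple roots remain linearly independent as functionals on $V$ (they are independent on $X_*(\Sbf)\otimes\R$, which surjects onto $V$, and they factor through $V$), the map $x\mapsto(\alpha(x))_{\alpha\in\Delta}$ is in fact an isomorphism $V\to\R^\Delta$, so the intersection is a single point. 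Second, every $\beta\in\Phi_{\red}$ is $W$-conjugate to an element of $\Delta$, because $\Phi_{\red}$ is a reduced root system with base $\Delta$ and Weyl group $W$; and the identity $n\,m_{\alpha_0}(u)\,n^{-1}=m_{\jmath_W(n)(\alpha_0)}(nu n^{-1})$ does follow from the uniqueness assertion of Corollary \ref{m(u)} exactly as you say, since conjugation by $n$ carries $\U_{\pm\alpha_0}(F)$ to $\U_{\pm\beta}(F)$ and preserves $\Nbf(F)$. The final bookkeeping $\varphi_\beta^{a}(v)=\varphi_\beta^{a_\circ}(v)+\beta(a-a_\circ)=0$ for $a$ on the fixed hyperplane of $\nu_N(m_\beta(v))$ is precisely the equipollence formula of \S\ref{discretevaluation} combined with Remark \ref{specialpointvaluation}, so $0\in\Gamma_\beta(a)$ for all $\beta\in\Phi_{\red}$ and $\varphi^{a}=(a-a_\circ)+\varphi^{a_\circ}\in\mathds{A}_{\varphi^{a_\circ}}$ is special. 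In short: a complete, self-contained proof of a statement the paper delegates to the literature.
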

\proof This is \cite[Corollaire 6.2.15]{BT72}.\qed

By the previous lemma we may and will assume from now on that $a_\circ$ is a special point. From now on we will omit indicating $a_\circ$ in the notation of $\varphi^{a_\circ},\Gamma_\alpha(a_\circ), \Gamma_\alpha'(a_\circ) ,\varphi_\alpha^{a_\circ}$ and $U_{\alpha+r}^{a_\circ}$ for $\alpha \in \Phi, r \in \R$, i.e. we will write $\varphi, \varphi_\alpha$ and $U_{\alpha+r}$ ...
\begin{proposition}\label{nalpha}
For any $\alpha \in \Phi$, there exists a positive integer $n_\alpha\nomenclature[B]{$n_\alpha$}{}$ such that $\Gamma_\alpha= n_\alpha^{-1}\Z$, which satisfies the following properties: 
$n_{\alpha}=n_{-\alpha}$, $n_{w(\alpha)}=n_\alpha$ for any $w\in W=W(\Phi)$ 
and $n_{\alpha}$ is divisible by $n_{2\alpha}$ if $\alpha,2\alpha\in \Phi$.
\end{proposition}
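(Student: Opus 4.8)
The plan is to derive the statement from the structure of the valued root datum $\bigl(\Mbf(F),(\U_\alpha(F),M_\alpha)_{\alpha\in\Phi},\varphi=\varphi^{a_\circ}\bigr)$ at the special point $a_\circ$, together with the normalisation $\omega(F^\times)=\Z$. Fix $\alpha\in\Phi_{\red}$ first. We already know that $\Gamma_\alpha$ is a discrete subset of $\R$ and that $0\in\Gamma_\alpha$ because $a_\circ$ is special. The first step records how $\Gamma_\alpha$ transforms under conjugation: $\Mbf(F)$ normalises $\U_\alpha(F)$, and conjugating the affine reflection $\nu_N(m_\alpha(u))$ by $z\in\Mbf(F)$ translates its wall by $\nu(z)$; by the uniqueness in Corollary \ref{m(u)} one has $z m_\alpha(u) z^{-1}=m_\alpha(zuz^{-1})$, and comparing walls gives $\varphi_\alpha(zuz^{-1})=\varphi_\alpha(u)-\langle\alpha,\nu_M(z)\rangle$. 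Hence $\Gamma_\alpha$ is stable under translation by the subgroup $\Lambda_\alpha:=\langle\alpha,\nu_M(\Mbf(F))\rangle\subset\R$, which is discrete and nonzero ($\nu_M(\Mbf(F))$ is a lattice of full rank in $X_*(\Sbf)\otimes\Q$ and $\alpha\neq0$) and contains $\langle\alpha,\nu_M(\alpha^\vee(F^\times))\rangle=2\,\omega(F^\times)=2\Z$ since $\nu_M(\alpha^\vee(s))=-\omega(s)\alpha^\vee$ and $\langle\alpha,\alpha^\vee\rangle=2$. Picking $u$ with $\varphi_\alpha(u)=0$ and conjugating by such $z$, we get $\Lambda_\alpha\subseteq\Gamma_\alpha$, so in particular $2\Z\subseteq\Gamma_\alpha$.

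The second step bounds $\Gamma_\alpha$ from above. For $r=\varphi_\alpha(u)$ and $s=\varphi_\alpha(v)$ in $\Gamma_\alpha$, the product $m_\alpha(u)m_\alpha(v)$ maps to $s_\alpha^2=1$ in $W$, hence lies in $\Mbf(F)$; being the composite of the two affine reflections through the parallel walls $\{\alpha=-r\}$ and $\{\alpha=-s\}$, it acts on $\A_{\red}(\G,\Sbf)$ as the translation by $\pm(s-r)\alpha^\vee$. Therefore $\pm(s-r)\alpha^\vee\in\nu_M(\Mbf(F))$, and pairing with $\alpha$ gives $2(s-r)\in\Lambda_\alpha$; taking $r=0$ this shows $\Gamma_\alpha\subseteq\tfrac12\Lambda_\alpha$. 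So $\Lambda_\alpha\subseteq\Gamma_\alpha\subseteq\tfrac12\Lambda_\alpha$, with $\Gamma_\alpha$ symmetric, $\Lambda_\alpha$-translation-stable and containing $0$; writing $\Lambda_\alpha=c\Z$, such a set is necessarily $c\Z$ or $\tfrac{c}{2}\Z$, so $\Gamma_\alpha$ is a discrete subgroup of $\R$ containing $2\Z$. That $\Gamma_\alpha$ actually contains $1$ — equivalently $\Z\subseteq\Gamma_\alpha$, which is exactly what makes $\Gamma_\alpha=n_\alpha^{-1}\Z$ for a \emph{positive integer} $n_\alpha$ — is the one point that genuinely uses the explicit coordinatisation of $\U_\alpha(F)$ over $F$ in the valued root datum (it is immediate in the split case, where $\U_\alpha(F)\cong F$ and $\varphi_\alpha=\omega$). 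This is the step I expect to be the main obstacle, and for it I would lean on \cite[\S6.2]{BT72}.

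The three asserted identities are then formal. We have $n_{-\alpha}=n_\alpha$ because $\Gamma_{-\alpha}=\Gamma_\alpha$. For $w\in W$ choose $n\in\Nbf(F)$ with $\jmath_W(n)=w$; conjugation by $n$ carries $\U_\alpha(F)$ isomorphically onto $\U_{w\alpha}(F)$, and by the description of the $\Nbf(F)$-action on $\mathds{A}_{\varphi^{a_\circ}}$ in Remark \ref{NactionAphi} it intertwines $\varphi_\alpha^{a_\circ}$ with $\varphi_{w\alpha}^{\nu_N(n)(a_\circ)}$, whence $\Gamma_{w\alpha}(\nu_N(n)(a_\circ))=\Gamma_\alpha(a_\circ)$; changing the origin back to $a_\circ$ translates $\Gamma_{w\alpha}$ by a real constant, so $\Gamma_{w\alpha}(a_\circ)$ and $\Gamma_\alpha(a_\circ)$ are two subgroups of $\R$ differing by a translation, hence equal, i.e. $n_{w\alpha}=n_\alpha$. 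Finally, if $\alpha,2\alpha\in\Phi$ then $\varphi_{2\alpha}=2\varphi_\alpha|_{\U_{2\alpha}(F)}$ forces $\Gamma_{2\alpha}\subseteq2\Gamma_\alpha=2n_\alpha^{-1}\Z$; running the argument of the first two steps for $2\alpha$ in place of $\alpha$ (transporting the needed facts, in particular $0\in\Gamma_{2\alpha}$, via $\varphi_{2\alpha}=2\varphi_\alpha|_{\U_{2\alpha}(F)}$) shows $\Gamma_{2\alpha}=n_{2\alpha}^{-1}\Z$ as well, and the inclusion $n_{2\alpha}^{-1}\Z\subseteq2n_\alpha^{-1}\Z$ gives $2n_{2\alpha}\mid n_\alpha$, in particular $n_{2\alpha}\mid n_\alpha$.
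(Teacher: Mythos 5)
Your reduction for $\alpha\in\Phi_{\red}$ is correct and is in fact more informative than the paper's own proof, which consists solely of the citations to \cite[Lemma I.2.10]{SchUlr97} and \cite[6.2.23]{BT72}: the conjugation formula $\varphi_\alpha(zuz^{-1})=\varphi_\alpha(u)-\langle \nu_M(z),\alpha\rangle$ is exactly Lemma \ref{NactionUalphar}, the identification of $\nu_N(m_\alpha(u)m_\alpha(v))$ with the translation by $(r-s)\alpha^\vee$ is right, and the sandwich $\Lambda_\alpha\subseteq\Gamma_\alpha\subseteq\tfrac12\Lambda_\alpha$ together with $2\Z\subseteq\Lambda_\alpha$ does show that $\Gamma_\alpha$ is a discrete subgroup of $\R$ containing $2\Z$; the $\pm$-symmetry and $W$-invariance are also handled correctly. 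However, the step you yourself flag as "the main obstacle" --- that $1\in\Gamma_\alpha$, i.e.\ that $\Gamma_\alpha$ equals $n_\alpha^{-1}\Z$ for a \emph{positive integer} $n_\alpha$ rather than $\tfrac{2}{k}\Z$ with $k$ odd --- is precisely the content of the two results the paper cites, and your group-theoretic sandwich cannot produce it: already for $\SL_2$ one has $\Lambda_\alpha=2\Z$ while $\Gamma_\alpha=\Z$, so the extra factor of $2$ genuinely comes from the explicit valuation of the root groups and not from the reflection formalism. As written, the proposal therefore reduces the proposition to the same external input the paper invokes, rather than proving it.

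The treatment of a multipliable root has a second, more substantive defect. You propose to transport "$0\in\Gamma_{2\alpha}$" from $0\in\Gamma_\alpha$ via $\varphi_{2\alpha}=2\varphi_\alpha|_{\U_{2\alpha}(F)}$, but an element $u\in\U_\alpha(F)$ with $\varphi_\alpha(u)=0$ need not lie in $\U_{2\alpha}(F)$, and speciality of $a_\circ$ only guarantees $0\in\Gamma_\beta$ for $\beta\in\Phi_{\red}$, which excludes $2\alpha$. This is not a removable technicality: for $SU_3$ attached to a ramified quadratic extension $E=F(\sqrt{\varpi})$ with odd residue characteristic, $\U_{2\alpha}(F)$ is parametrized by the trace-zero elements of $E$, whose valuations form the coset $\tfrac12+\Z$ (with $\omega$ normalized by $\omega(F^\times)=\Z$), so at the standard special point $\Gamma_{2\alpha}$ is not even a subgroup of $\R$. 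Hence "running the argument of the first two steps for $2\alpha$" does not go through, and the divisibility $n_{2\alpha}\mid n_\alpha$ cannot be obtained along your route; for non-reduced roots the correct statement and proof live with the sets $\Gamma_\alpha'$ and the \'echelonnage $\Sigma$ of \cite[6.2.23]{BT72}, which is exactly where the paper's citation points.
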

\begin{proof}
This follows from \cite[Lemma I.2.10]{SchUlr97} and \cite[6.2.23]{BT72}. 
\end{proof} 
\subsection{Affine roots}\label{affinerootssection}
In the notation $U_{\alpha+r}$ for any $\alpha \in \Phi$ and any $r \in \Gamma_\alpha$, $\alpha +r$ should be interpreted as the closed half-space of $\A_{\red}(\G,\Sbf)$, on which the affine mapping $a \mapsto \alpha(a-a_{\circ})+r$ is positive. 
\begin{definition}\label{affineroots}
Let $\Phi_{\aff}$ denote the set of all affine roots $\left\{\alpha + r \colon \alpha \in \Phi,  r \in \Gamma_\alpha' \right\}$ \cite[6.2.6]{BT72}.
\end{definition}
For use later, we describe in the following lemma the action of $\Nbf(F)$ on root groups $U_{\alpha+r}$ for affine roots.
\begin{lemma}\label{NactionUalphar}
Let $n\in \Nbf(F)$, with image $w$ in $W$. 
We have for all $\alpha \in \Phi$ and all $r\in \Gamma_\alpha'$:
$$nU_{\alpha+ r}n^{-1}=U_{\beta},$$	
where, $\beta= w(\alpha)+ r- w(\alpha)(\nu_N(n)(a_\circ)-a_\circ)$. In particular, $mU_{\alpha+ r}m^{-1}=U_{\alpha+r- \langle \nu(m),\alpha \rangle}$ for  $m \in \Mbf(F)$.
\end{lemma}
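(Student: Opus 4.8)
The plan is to reduce the statement to a computation that tracks how $n \in \Nbf(F)$ transports the valuation data $\varphi$. The key starting point is the functoriality of the valued root datum under the action of $\Nbf(F)$: by definition, the root group $U_{\alpha+r}$ equals $(\varphi_\alpha)^{-1}([r,\infty])$, so I need to understand $n (\varphi_\alpha)^{-1}([r,\infty]) n^{-1}$. Conjugation by $n$ sends $\U_\alpha(F)$ isomorphically onto $\U_{w(\alpha)}(F)$ (with $w = \jmath_W(n)$), since $n$ normalizes $\Sbf$ and permutes the root groups accordingly via the generating root datum structure of Theorem~\ref{generating}. So the content of the lemma is precisely the identity of valuations
\[
\varphi_{w(\alpha)}(n u n^{-1}) = \varphi_\alpha(u) - w(\alpha)\bigl(\nu_N(n)(a_\circ) - a_\circ\bigr), \qquad u \in \U_\alpha(F).
\]
Once this is established, applying $(\varphi_{w(\alpha)})^{-1}([r,\infty])$ to both sides and renaming gives exactly $n U_{\alpha+r} n^{-1} = U_\beta$ with $\beta = w(\alpha) + r - w(\alpha)(\nu_N(n)(a_\circ) - a_\circ)$; in particular I should check that $\beta$ is again an affine root, i.e. that $r - w(\alpha)(\nu_N(n)(a_\circ)-a_\circ) \in \Gamma'_{w(\alpha)}$, which follows because $\Gamma_{w(\alpha)} = \Gamma_\alpha$ (Proposition~\ref{nalpha}) is stable under the relevant translations and because the $\Gamma'$ refinement is preserved (the $\U_{2\alpha}$-condition is conjugation-equivariant).

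To prove the displayed valuation identity, I would argue via the geometric characterization of $m_\alpha(u)$ and $\varphi_\alpha$ developed just before Proposition~\ref{nalpha}. Recall $\varphi_\alpha(u) = -\alpha(b_{a_\circ,\alpha,u} - a_\circ)$, where $b_{a_\circ,\alpha,u}$ lies on the fixed hyperplane of the reflection $\nu_N(m_\alpha(u))$, and that $n m_\alpha(u) n^{-1} = m_{w(\alpha)}(n u n^{-1})$ by Corollary~\ref{m(u)} (uniqueness of the decomposition $u'uu''$, which is respected by conjugation by $n$ since $n$ normalizes $\Sbf$, $\Nbf(F)$ and sends $\U_{\pm\alpha}$ to $\U_{\pm w(\alpha)}$). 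Therefore the fixed hyperplane of $\nu_N(m_{w(\alpha)}(nun^{-1}))$ is the $\nu_N(n)$-image of the fixed hyperplane of $\nu_N(m_\alpha(u))$, so I may take $b_{a_\circ, w(\alpha), nun^{-1}} = \nu_N(n)(b_{a_\circ,\alpha,u})$. Writing $\nu_N(n)(a) = w(a - a_\circ) + \nu_N(n)(a_\circ)$ and plugging in, I get
\[
\varphi_{w(\alpha)}(nun^{-1}) = -w(\alpha)\bigl(\nu_N(n)(b_{a_\circ,\alpha,u}) - a_\circ\bigr)
= -w(\alpha)\bigl(w(b_{a_\circ,\alpha,u}-a_\circ)\bigr) - w(\alpha)\bigl(\nu_N(n)(a_\circ)-a_\circ\bigr).
\]
Since $w(\alpha)(w(x)) = \alpha(x)$ for the linear action, the first term is $-\alpha(b_{a_\circ,\alpha,u}-a_\circ) = \varphi_\alpha(u)$, giving the claimed formula.

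For the second assertion, specialize to $n = m \in \Mbf(F)$, so $w = \id$ and $\nu_N(m)$ is the translation by $\nu(m)$, whence $\nu_N(m)(a_\circ) - a_\circ = \nu(m)$ and $\beta = \alpha + r - \alpha(\nu(m)) = \alpha + r - \langle \nu(m), \alpha\rangle$, as stated. I expect the main subtlety to be the bookkeeping in the first paragraph—verifying that conjugation by $n$ genuinely carries the generating root datum structure over (so that $m_\alpha(u) \mapsto m_{w(\alpha)}(nun^{-1})$ and the $\Gamma'$-refinement is respected) rather than the affine-geometry computation itself, which is a short substitution once the equivariance is in place. All of this is essentially extracting \cite[6.2.10, 6.2.12]{BT72}, so an alternative, shorter write-up would simply cite that and record the explicit formula for $\beta$.
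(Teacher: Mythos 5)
Your proposal is correct, but it takes a different route from the paper's. The paper proves the lemma by a direct chain of set equalities: it rewrites $nU_{\alpha+r}n^{-1}$ as $\{u\in\U_{w(\alpha)}(F)\colon (\nu_N(n)\cdot\varphi)_{w(\alpha)}(u)\ge r\}$ and then invokes the explicit description of the $\Nbf(F)$-action on the space of equipollent valuations (Remark~\ref{NactionAphi}), namely $n\cdot\varphi^{a_\circ}=\varphi^{\nu_N(n)(a_\circ)}=(\nu_N(n)(a_\circ)-a_\circ)+\varphi^{a_\circ}$, so the shift $w(\alpha)(\nu_N(n)(a_\circ)-a_\circ)$ drops out of the equipollence formula $\varphi^{v+a_\circ}_\beta=\varphi^{a_\circ}_\beta+\beta(v)$. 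You instead prove the underlying valuation identity $\varphi_{w(\alpha)}(nun^{-1})=\varphi_\alpha(u)-w(\alpha)(\nu_N(n)(a_\circ)-a_\circ)$ by going through the elements $m_\alpha(u)$ of Corollary~\ref{m(u)}: the equivariance $nm_\alpha(u)n^{-1}=m_{w(\alpha)}(nun^{-1})$ transports the fixed hyperplane, and the formula $\varphi_\alpha(u)=-\alpha(b_{a_\circ,\alpha,u}-a_\circ)$ then gives the shift by a one-line substitution. Both arguments rest on the same geometric definition of $\varphi$, but yours makes the mechanism visible at the level of individual reflections, whereas the paper's packages it into the action on the space of valuations; the paper's version is slightly more economical and avoids the two bookkeeping points your route requires, namely that the element $u=1$ (for which $m_\alpha(u)$ is undefined and $\varphi_\alpha(1)=\infty$) must be handled trivially on the side, and that $-\alpha(b-a_\circ)$ is independent of the choice of $b$ on the fixed hyperplane. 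Neither point is a gap — both are immediate — so your write-up stands as a valid alternative proof, and your closing remark that the whole lemma is an unpacking of \cite[6.2.10, 6.2.12]{BT72} matches the paper's own attribution.
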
	
\begin{proof}Let $\alpha+r \in \Phi_{\aff}$ and $n\in N$ of image $w$ in $W$. As suggested by the proof of \cite[6.2.10. (iii)]{BT72}:
\begin{align*}
nU_{\alpha+r} n^{-1}&=(\varphi_\alpha)^{-1}([r,\infty])\\
&=n \{u \in \U_\alpha(F)\colon \varphi_\alpha(u)\ge r\} n^{-1}\\
&= \{u \in \U_{w(\alpha)}(F)\colon \varphi_\alpha(n^{-1}un)\ge r\}\\
&=\{u \in \U_{w(\alpha)}(F)\colon (\nu_N(\varphi))_{w(\alpha)}(u)\ge r\} \\
\overset{\text{Remark \ref{NactionAphi}}}&{=}\{u \in \U_{w(\alpha)}(F)\colon (w(0_V)+\varphi^{\nu_N(a_\circ)})_{w(\alpha)}(u)\ge r\}\\
&=\{u \in \U_{w(\alpha)}(F)\colon (\varphi^{\nu_N(a_\circ)})_{w(\alpha)}(u)\ge r\} \\
\overset{\varphi^{v+a_\circ}=v+\varphi^{a_\circ}}&{=}\{u \in \U_{w(\alpha)}(F)\colon (\nu_N(n)(a_\circ)-a_\circ + \varphi)_{w(\alpha)}(u)\ge r\} 
\\
&=\{u \in \U_{w(\alpha)}(F)\colon   \varphi_{w(\alpha)}(u)+ w(\alpha)(\nu_N(n)(a_\circ)-a_\circ) \ge r\}\\
&=U_{\beta= w(\alpha)+ r- w(\alpha)(\nu_N(n)(a_\circ)-a_\circ)}\qedhere
\end{align*}
\end{proof}
For each affine root $\alpha+r\in \Phi_{\aff} $ consider the affine hyperplane $H_{\alpha+r}:=\big\{a\in \A_{\red}(\G,\Sbf) \colon \alpha(a-a_{\circ})=-r\}\big\}$. 
For any $u_{r}\in \varphi_{\alpha}^{-1}(r)$, one has $H_{\alpha+r}=\{a\in \A_{\red}(\G,\Sbf) \colon m_{\alpha}(u_{r})(a)=a\}$. 
If $\alpha,2\alpha \in \Phi$, then for any $r \in \Gamma_{2 \alpha}$, we have $H_{2\alpha+r}=H_{\alpha+r/2} $.
These hyperplanes define a poly-simplicial structure on the standard apartment in the following way: 
\begin{definition}\label{definitionfacets}Define the equivalence relation on $\A_{\red}(\G,\Sbf)$ by $a\sim b$ if for any affine root $\beta=\alpha + r$ the sign of $\alpha(a-a_{\circ})+r$ and $ \alpha(b-a_{\circ})+r $ is the same or are both equal to zero; \footnote{Equivalently, $a \sim b$ if for any affine hyperplane $H$, either $a,b \in H$ or they are in the same connected component of $\A_{\red}(\G,\Sbf) \setminus H$.}
the equivalence classes are called the facets. 
A vertex is a point which is a facet, e.g. the point $a_\circ$. 
A facet with maximal dimension is called an alcove, it is also a connected component $\A_{\red}(\G,\Sbf)\setminus \bigcup_{\beta\in \Phi_{\aff} }H_\beta$.
\end{definition}
From now on, we fix an alcove $\a\subset \A_{\red}(\G,\Sbf)$\nomenclature[B]{$\a$}{Fixed alcove containing $a_\circ$ in its closure} containing in its closure the special vertex $a_\circ$.
\begin{definition}\label{signroot}
Let $\alpha \in \Phi$ and $r\in \Gamma_\alpha$. We say that $\alpha + r$ is $\a$-positive\nomenclature[B]{$\a$-\text{positive}}{} (resp. $\a$-negative) if $\alpha(a-a_\circ)+r >0$ (resp. $<0$) for some $a\in \a$ (then for all, since {the sign does not depend on the choice of $a\in \a$}). Let $\Phi_{\aff}^+$ (resp. $\Phi_{\aff}^-$) denote the set of affine $\a$-positive (resp. negative) affine roots.
\end{definition}
 For any non-empty subset $\Upomega \subset \A_{\red}(\G,\Sbf)$ and $\alpha \in \Phi$, we define $f_\Upomega\colon \Phi\to \R\cup \{\infty\}\nomenclature[B]{$f_\Upomega\colon \Phi\to \R\cup \{\infty\}$}{Function attached to $\Upomega$}$,
$$f_{\Upomega}(\alpha):=\inf\{r\in  \Gamma_\alpha \colon (\alpha+r)(\Upomega) \subset \R^+\}.$$
Define also the group $U_{\Upomega}$ \label{Uomega} to be the subgroup of $\G(F)$ generated by $\cup_{\alpha \in \Phi_{\red}}U_{\alpha+f_{\Upomega}(\alpha)}$. 
Note that for $a \in \A_{\red}(\G,\Sbf)$ and $\alpha\in \Phi$, the real $f_{\{a\}}(\alpha)$ depends only on the facet containing $a$. 
\begin{proposition}\label{propertiesII3.3}
For any non-empty subset $\Upomega \subset \A_{\red}(\G,\Sbf)$, the groups defined above have the following important properties:
\begin{enumerate}[(a)]
\item \label{1}For any $n\in \Nbf(F)$ we have $nU_\Upomega n^{-1}=U_{\nu_N(n)\cdot  \Upomega}$, so in particular $N_\Upomega:=\{n\in \Nbf(F)\colon \nu_N(n)\cdot x=x \text{ for all } x\in \Upomega\}$ normalizes $U_\Upomega$. 
\item\label{intersectionUpomega} For any $\alpha \in \Phi_{\red}$ we have $U_\Upomega \cap U_\alpha=U_{\alpha+ f_\Upomega(\alpha)}$.
\item\label{P_omega} The set $P_\Upomega:= N_\Upomega U_\Upomega= U_\Upomega  N_\Upomega$ is a group. 
We have $P_\Upomega\cap \Nbf(F)=N_\Upomega$.
\item[]For any decomposition into positive and negative roots $\Phi=\Phi^+ \sqcup \Phi^-$, we have
\item\label{homeoproduct} The following product map is an 
$$\begin{tikzcd}\prod_{\alpha \in \Phi_{\red}^\pm} U_{\alpha+f_{\Upomega}(\alpha)}\arrow{r}{\simeq}&U_\Upomega\cap \U(F)^\pm=:U_\Upomega^\pm.\end{tikzcd}$$
homeomorphism whatever ordering of the factors we take.
\item \label{5} $U_\Upomega=U_\Upomega^+ U_\Upomega^- (U_\Upomega \cap \Nbf(F))\).
\end{enumerate}
\end{proposition}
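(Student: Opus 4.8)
The plan is to reduce all five assertions to the structure theory of the valued root datum $\bigl(\Mbf(F),(\U_\alpha(F),M_\alpha)_{\alpha\in\Phi}\bigr)$ developed in \cite[\S 6.1, \S 6.2, \S 6.4]{BT72}, using Lemma \ref{NactionUalphar} and a concavity property of the defining data as the two main inputs. It is convenient to replace $f_\Upomega$ by its ``unrounded'' version $g_\Upomega(\alpha):=\sup_{x\in\Upomega}\bigl(-\alpha(x-a_\circ)\bigr)$; since $\varphi_\alpha$ takes values in $\Gamma_\alpha\cup\{\infty\}$ one has $U_{\alpha+f_\Upomega(\alpha)}=U_{\alpha+g_\Upomega(\alpha)}$, so that $U_\Upomega$ is the subgroup of $\G(F)$ generated by $\{U_{\alpha+g_\Upomega(\alpha)}:\alpha\in\Phi_{\red}\}$, and $g_\Upomega$ is visibly concave: $g_\Upomega(p\alpha+q\beta)\leq pg_\Upomega(\alpha)+qg_\Upomega(\beta)$ whenever $p,q>0$ and $p\alpha+q\beta\in\Phi$, and $g_\Upomega(\alpha)+g_\Upomega(-\alpha)\geq0$. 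For (a), recall that the linear part of $\nu_N(n)$ is $j(\jmath_W(n))$ (Proposition \ref{A(G,S)}); combining this with the tautology $\langle w(\alpha),j(w)v\rangle=\langle\alpha,v\rangle$ gives, for $n\in\Nbf(F)$ with image $w$ in $W$,
\[
g_{\nu_N(n)\cdot\Upomega}\bigl(w(\alpha)\bigr)=g_\Upomega(\alpha)-w(\alpha)\bigl(\nu_N(n)(a_\circ)-a_\circ\bigr),
\]
so that Lemma \ref{NactionUalphar} yields $nU_{\alpha+g_\Upomega(\alpha)}n^{-1}=U_{w(\alpha)+g_{\nu_N(n)\cdot\Upomega}(w(\alpha))}$; as $\alpha$ ranges over $\Phi_{\red}$ so does $w(\alpha)$, hence conjugation by $n$ matches the generating families of $U_\Upomega$ and $U_{\nu_N(n)\cdot\Upomega}$, proving $nU_\Upomega n^{-1}=U_{\nu_N(n)\cdot\Upomega}$. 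The assertion about $N_\Upomega$ is then immediate, since $n\in N_\Upomega$ forces $\nu_N(n)\cdot\Upomega=\Upomega$.

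The product decompositions (d) and (e) are the heart of the statement. Concavity of $g_\Upomega$ makes the family $(U_{\alpha+g_\Upomega(\alpha)})_{\alpha\in\Phi_{\red}}$ closed under the commutator relations of the valued root datum: for non-proportional $\alpha,\beta\in\Phi_{\red}$, the commutator $[U_{\alpha+g_\Upomega(\alpha)},U_{\beta+g_\Upomega(\beta)}]$ lies in the product over $p,q>0$ with $p\alpha+q\beta\in\Phi$ of the groups $U_{p\alpha+q\beta+pg_\Upomega(\alpha)+qg_\Upomega(\beta)}\subseteq U_{p\alpha+q\beta+g_\Upomega(p\alpha+q\beta)}$. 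Fixing an ordering of $\Phi_{\red}^{\pm}$ compatible with heights and pushing commutators to one side, the standard induction shows that $\prod_{\alpha\in\Phi_{\red}^{\pm}}U_{\alpha+g_\Upomega(\alpha)}$ is a subgroup of $\U(F)^{\pm}$, independent of the chosen ordering, and the injectivity of $\prod_{\alpha\in\Phi_{\red}^{\pm}}\U_\alpha(F)\to\U(F)^{\pm}$ turns the resulting bijection into a homeomorphism, which is (d). For (e) one argues as in \cite[6.4.9]{BT72}: the set $U_\Upomega^+U_\Upomega^-(U_\Upomega\cap\Nbf(F))$ is contained in $U_\Upomega$, contains every generator $U_{\alpha+g_\Upomega(\alpha)}$, and is stable under right multiplication by each such generator -- the only non-routine case being when a generator must be reflected past a wall through $\Upomega$, which produces precisely a factor in $U_\Upomega\cap\Nbf(F)$ -- hence it is a subgroup and equals $U_\Upomega$. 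One records in passing that $U_\Upomega\cap\Nbf(F)$ is generated by the $m_\alpha(u)$ with $\varphi_\alpha(u)=f_\Upomega(\alpha)$ and $\Upomega\subset H_{\alpha+f_\Upomega(\alpha)}$, so that $U_\Upomega\cap\Nbf(F)\subset N_\Upomega$.

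Given the decompositions, (b) and (c) follow formally. For (b), fix $\alpha\in\Phi_{\red}$ and take $\Phi^+$ with $\alpha\in\Phi^+$; then $U_\Upomega\cap\U_\alpha(F)\subset U_\Upomega\cap\U(F)^+=U_\Upomega^+=\prod_{\beta\in\Phi_{\red}^+}U_{\beta+g_\Upomega(\beta)}$, and the uniqueness of the product decomposition in $\U(F)^+=\prod_{\beta\in\Phi_{\red}^+}\U_\beta(F)$ together with $U_{\beta+g_\Upomega(\beta)}\subset\U_\beta(F)$ forces any element of $\U_\alpha(F)$ lying there to be in $U_{\alpha+g_\Upomega(\alpha)}=U_{\alpha+f_\Upomega(\alpha)}$; the reverse inclusion holds by definition of $U_\Upomega$. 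For (c), $N_\Upomega$ normalizes $U_\Upomega$ by (a), so $N_\Upomega U_\Upomega=U_\Upomega N_\Upomega$ is a subgroup of $\G(F)$; moreover if $n=n_0u$ with $n,n_0\in\Nbf(F)$ and $u\in U_\Upomega$, then $u=n_0^{-1}n\in\Nbf(F)\cap U_\Upomega\subset N_\Upomega$ by the observation closing the previous paragraph, whence $n=n_0u\in N_\Upomega$ and $P_\Upomega\cap\Nbf(F)=N_\Upomega$.

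The one genuine obstacle is the pair (d)/(e): everything else is bookkeeping once the product decomposition and its uniqueness are available, but (d)/(e) really do rest on the full strength of the commutator (``braid'') relations of the valued root datum together with the concavity of $g_\Upomega$, so the cleanest honest route there is to invoke \cite[\S 6.4]{BT72} directly rather than to grind through the combinatorial induction.
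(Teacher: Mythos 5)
Your proposal is correct and follows essentially the same route as the paper, which simply cites \cite[6.2.10(iii), 6.4.9, 7.1.3 \& 7.1.8]{BT72} for all five assertions: your reduction of everything to the concavity of the function $g_\Upomega$ and the commutator relations of the valued root datum, with the decisive decompositions (d)/(e) deferred to \cite[\S 6.4]{BT72}, is exactly the content of those references. The only point worth flagging is that your parenthetical description of $U_\Upomega\cap\Nbf(F)$ as generated by reflections fixing $\Upomega$ is itself a nontrivial part of \cite[6.4.9]{BT72} rather than a "passing" observation, but since you invoke that section anyway the argument is complete.
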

\begin{proof}
See \cite[6.2.10(iii),6.4.9, 7.1.3\& 7.1.8]{BT72}. 
We warn the reader that the groups denoted here $N_{\Upomega}$ and $N_{\Upomega}$ are denoted by $\widehat{N}_{\Upomega}$ and $\widehat{P}_{\Upomega}$.
\end{proof}
From now, we will adopt the following notation: when $\Upomega=\{x\}$ we will write $\square_x$ instead of $\square_{\{x\}}$ for  $\square\in\{f,U,N,P\}$.
\begin{example}\label{exfomega}
1 - For $\Upomega=\{a_\circ\}$, we have $f_{\{a_\circ\}}(\alpha)=0$ for all $\alpha\in \Phi$. Now, let us see the case $\Upomega=\a$. 
We first have
$$\a=\{a\in \A_{\red}(\G,\Sbf)\colon 0< \alpha(a-a_\circ) < n_{\alpha}^{-1}  \text{ for all } \alpha \in \Phi \text{ that are $\a$-positive}\}.$$
Therefore, if $\alpha\in \Phi $ is $\a$-positive, then $f_{\a}(\alpha)=0$. 
If now $\alpha \in \Phi_{\red}$ is $\a$-negative, then $-\alpha$ is $\a$-positive, hence for all $a\in \a$ we have $0< - \alpha(a-a_\circ) < n_{-\alpha}^{-1}$. The real $f_{\a}(\alpha)\in \Gamma_\alpha$ being the smallest element such that $\alpha(a-a_\circ)\ge -f_{\a}(\alpha)$, we see that $f_{\a}(\alpha)=n_{-\alpha}^{-1}$. 
We know that $\Gamma_{\alpha}=\Gamma_{-\alpha}$, this implies that $n_{\alpha}^{-1}=n_{-\alpha}^{-1}$. 
In conclusion: 
$$f_{\a}(\alpha)=\begin{cases} 0 &\text{ if $\alpha \in \Phi$ is $\a$-positive},\\
n_{\alpha}^{-1}&\text{ if $\alpha \in \Phi$ is $\a$-negative}
\end{cases}$$
2 - If $\Upomega_1 \subset \Upomega_2$, then $f_{\Upomega_1}(\alpha)\ge f_{\Upomega_2}(\alpha)$, for all $\alpha \in \Phi$. For example, if $\cF$ is facet of $ \A_{\red}(\G,\Sbf)$ such that $a_\circ \in \overline{\cF}\subset \overline{\a}$, then 
$$f_{\cF}(\alpha)=0 \text{ if $\alpha \in \Phi$ is $\a$-positive}, \text{ and }
 f_{\cF}(\alpha) \in \{0, n_{\alpha}^{-1}\} \text{ if $\alpha \in \Phi$ is $\a$-negative}.$$ 
3 - For any non-empty subset $\Upomega \subset \A_{\red}(\G,\Sbf)$ and any $n\in \widetilde{W}$ whose image in $W$ is $w$, we have by Lemma \ref{NactionUalphar}
$$f_{\nu_N(n) \cdot \Upomega}(\alpha)=w(\alpha)(\nu_N(n)(a_\circ)-a_\circ)  + f_{\Upomega}(w^{-1}(\alpha)).$$
\end{example}
\subsection{Affine Weyl groups}\label{affineweyl}
For any affine root $\beta=\alpha+r \in\Phi_{\aff}$, let $s_{\beta}\in \nu_N(\Nbf(F))$ denote the orthogonal reflection with respect to the hyperplane $H_{\beta}$:
$$s_\beta \colon a\mapsto-\left(\alpha(a-a_{\circ})+r\right)\alpha^\vee + a, \, \forall a\in \A_{\red}(\G,\Sbf).$$
\begin{definition}
We define the affine Weyl group $W_{\aff}\subset \nu_N(\Nbf(F))$\nomenclature[B]{$W_{\aff}$}{The affine Weyl group} to be the group generated by the reflections $s_\beta$ for all $\beta \in  \Phi_{\aff}$. It is a normal subgroup of $ \nu_N(\Nbf(F))$ \cite[6.2.11]{BT72}.
\end{definition}
By \cite[\S 4.2.21 \& \S 5.1.19]{BT84}, we have $\Phi':=\{\alpha\in \Phi\colon \Gamma_{\alpha}'\neq 0\}=\Phi$. 
The set $\Sigma := \{n_\alpha \alpha \colon \alpha \in \Phi\}$ is a root system and the map
$$\begin{tikzcd}[column sep= small]\Phi_{\red,\aff}:=\cup_{\alpha\in \Phi_{\red}}(\alpha +\Gamma_\alpha') \arrow{r}{\simeq}& \Sigma_{\aff}:=\cup_{\alpha\in \Sigma}(\alpha +\Z) ;\quad (\alpha+r)  \arrow[r,mapsto]& n_\alpha(\alpha+r),\end{tikzcd}$$
is bijective and respects positivity. 
The group $W_{\aff}=W_{\aff}(\Sigma)$ is the affine Weyl group associated with $\Sigma$ and $\Phi_{\red,\aff}$ is the corresponding affine roots system
\cite[6.2.22]{BT72}. 
This is also the normal subgroup of $\nu_N(\Nbf(F))$ generated by the set of orthogonal reflections $s_{\alpha+r}$ for all $\alpha \in \Phi$ and all $r\in \Gamma_\alpha$.

Define $\Lambda_{\aff}$ to be the subgroup of translations in $W_{\aff}$, we can then identify $\Lambda_{\aff}$ with the coroot lattice $\Z[\Sigma^\vee]$ \cite[Proposition 6.2.20]{BT72}.
The affine Weyl group $W_{\aff}$ acts simply transitively on the set of alcoves in $\A_{\red}(\G,\Sbf)$ (VI \S 2.1 {\em loc. cit.}).

Let $a$ be vertex in $\A_{\red}(\G,\Sbf)$, we denote by $\Phi_{\aff}^a$ the set of affine roots that vanish at $a$, set $W_{\aff}^a=\langle s_\beta \colon \beta \in \Phi_{\aff}^a \rangle \subset W_{\aff}$. The vertex $a$ is special\label{specialvertex} in the sense of Remark \ref{specialpointvaluation} if and only if the composition of the following maps
$$\begin{tikzcd}W_{\aff}^a\arrow[r,hook] & W_{\aff}\arrow[r,hook]& \nu_N(\Nbf(F)) \arrow[r, twoheadrightarrow] & W,\end{tikzcd}$$ 
is an isomorphism\footnote{This is equivalent to $W_{\aff}^a\simeq W$ or just to $\#W_{\aff}^a=\#W$ since the induced map $W_{\aff}^a\hookrightarrow W_{\aff}\to W$ is always injective.}. Special vertices exists by \cite[Corollaire 6.2.16]{BT72}.  
We have then by \cite[1.3 \& 6.2.19]{BT72} a decomposition for the affine Weyl group
$$W_{\aff}\simeq \Lambda_{\aff} \rtimes W_{\aff}^a\simeq \Z[\Sigma^\vee]\rtimes W_{\aff}^a,$$    
for any fixed special vertex $a\in \A_{\red}(\G,\Sbf)$.
\begin{definition}\label{wallsgeneratingcox}
A face (resp. facet) of the alcove $\a$ is a facet contained in a single affine hyperplane (resp. in $\overline{\a}$).
A wall of the alcove $\a$ is a hyperplane containing a face of $\a$. 
Set
$$S(\a):=\{s_H \colon \text{$H$ a wall of $\a$}\}.$$
Define the type of a facet $\cF$ of $\a$ to be the set
$$\mathcal{T}_\cF=\{s_H \colon\text{$H$ a wall of $\a$, }\cF\subset H \}\subset S(\a).$$
Let $W_{\aff}^{\cF}$ be the group generated by $\mathcal{T}_\cF$. Then $(W_{\aff}^{\cF}, \mathcal{T}_\cF)$ is a finite Coxeter system.
\end{definition}
\begin{remark}
A facet is determined by its type
$$\cF=\{a \in \overline{\a} \colon (a\in H \Longleftrightarrow \cF \subset H) \text{ for any wall $H$ of $\a$}.\}$$
\end{remark}

Since a facet of $\cF$ is the image by an element of $W_{\aff}$ of a unique facet of $\a$ and each facet of $\a$ is determined by its type, one can then define the type for all facets \cite[1.3.5]{BT72}. So alcoves have empty types ($\mathcal{T}_\a=\emptyset$) and special points have full type and $W_{\aff}^{a_\circ}$ is generated by $ \mathcal{T}_{a_\circ}$. 
\subsection{The reduced Bruhat--Tits building}
We arrive now at the most important object of this section, that is the reduced Bruhat--Tits building:
\begin{definition}Define the set 
$$\cB(\G,F)_{\red}:=\G(F) \times \A_{\red}(\G,\Sbf)/\sim,$$\nomenclature[B]{$\cB(\G,F)_{\red}$}{The reduced Bruhat--Tits building}
where $\sim$ is the equivalence relation on $\G(F)\times \A_{\red}(\G,\Sbf)$ defined by $$(g,x)\sim(h,y)\,\text{ if }\exists n\in \Nbf(F)\text{ such that } nx=y \text{ and } g^{-1}hn\in U_{x}.$$
\end{definition}
The group $\G(F)$ acts on $\cB(\G,F)_{\red}$ on the left:
$$g \cdot [(h,y)]:=[(gh,y)] \quad\text{ for }g\in \G(F) \text{ and } (h,y)\in\G(F) \times \A_{\red}(\G,\Sbf).$$
Moreover, the map $ \A_{\red}(\G,\Sbf)\to\cB(\G,F)_{\red}$ given by $x\mapsto [(1,x)]$ is an $\Nbf(F)$-equivariant embedding. We will denote its image by $\cA$.  We have $g\cdot \cA=\cA$ (resp. $g\cdot x=x,\, \forall x\in \cA$) if and only if $g\in \Nbf(F)$ (resp. $g\in \ker \nu_N$) \cite[7.4.10]{BT72}.
\begin{definition}
An apartment of the building $\cB(\G,F)_{\red}$ is a subset of the form $g\cA$ for some $g\in \G(F)$. A facet (resp. alcove) of $ \cB(\G,F)_{\red}$ is a subset of the form $g\cF$ for some $g\in \G(F)$ and a facet (resp. alcove) $\cF\subset \cA$.
\end{definition}
Since all maximal $F$-split $F$-tori of $\G$ are $\G(F)$-conjugate \cite[Theorem 20.9]{Bor91}, 
all apartments of $\mathcal{B}(\G,F)_{\red}$ are in bijection with maximal split tori of $\G$.
\begin{proposition}\label{buildproperties}
In this proposition, we will collect some important properties regarding the building $\cB(\G,F)_{\red}$:
\begin{enumerate}[(a)]
\item (Fixators)\label{abuildproperties} Let $\Upomega \subset \cA$. We have an alternative characterization for the subgroups $P_\Upomega=N_\Upomega U_\Upomega$ (defined in Proposition \ref{propertiesII3.3} - \ref{P_omega}):
$$P_\Upomega= \text{Fix}(\G(F),\Upomega) :=\{g\in \G(F)\colon gx=x \quad\forall x\in \Upomega\},$$
in other words $P_\Upomega$ is the subgroup that fixes all points of $\Upomega$ \cite[7.4.4]{BT72}. 
\item\label{buildproperties2} For any $g\in \G(F)$ we can find a $n\in \Nbf(F)$ verifying $gx=nx$ for all $x$ in the closed subset  $\cA\cap g^{-1} \cA$ \cite[7.4.8]{BT72}.
\item\label{transitivity} (Transitivity) Let $\Upomega \subset \cA$. The group $U_\Upomega$ acts transitively on the set of all apartments containing $\Upomega$ \cite[7.4.9]{BT72}.
\item\label{containment} For any two facets in the building, there exists an apartment that contains both of them \cite[7.4.18]{BT72}.
\item Let us fix an $W$-invariant euclidean metric $d$ on $\cA$. The previous properties ensure that the distance $d$ extends uniquely to a $\G(F)$-invariant metric on the set $(\cB(\G,F)_{\red},d)$.
\end{enumerate}
\end{proposition}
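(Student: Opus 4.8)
The five assertions of Proposition~\ref{buildproperties} form the structural backbone of the reduced building, and only three of them carry substantive content: (a), (c) and (d). Their proofs require the full weight of Bruhat--Tits theory — the axioms of the valued root datum, the Tits system / $BN$-pair attached to each parahoric $P_{\cF}$, and the machinery of retractions — so for those I would quote \cite[7.4.4]{BT72}, \cite[7.4.9]{BT72} and \cite[7.4.18]{BT72} respectively. The remaining parts (b) and (e) I would then derive by short formal arguments from (c) and from (c)+(d), and the easy half of (a) I would check directly from the material already assembled in this section. So the plan is: prove the easy inclusion of (a); invoke the hard inclusion of (a), (c), (d); deduce (b) from (c); deduce (e) from (c) and (d). The genuine obstacle is the content of \cite[\S7.4]{BT72} being quoted, in particular the single-point fixator statement $P_x=\text{Fix}(\G(F),x)$, which is essentially equivalent to the whole construction of the building.

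For (a): the inclusion $P_\Upomega=N_\Upomega U_\Upomega\subseteq\text{Fix}(\G(F),\Upomega)$ is immediate from the definition of $\sim$. If $n\in N_\Upomega$, then $n\cdot[(1,x)]=[(1,\nu_N(n)(x))]=[(1,x)]$ for $x\in\Upomega$ since $\nu_N(n)$ fixes $\Upomega$ pointwise; and if $u\in U_{\alpha+f_\Upomega(\alpha)}$, then $\varphi_\alpha(u)\ge f_\Upomega(\alpha)\ge f_x(\alpha)$ for every $x\in\Upomega$ by the very definition of $f_\Upomega$, so $u\in U_x$, whence $(1,x)\sim(u,x)$ and $u$ fixes $x$; as these root groups generate $U_\Upomega$ this gives the inclusion. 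For the reverse inclusion I would first treat a single point, where it is \cite[7.4.4]{BT72}, and then pass to a general $\Upomega$ by establishing $\bigcap_{x\in\Upomega}P_x=P_\Upomega$; the nontrivial direction of this equality uses the ordered product decompositions of Proposition~\ref{propertiesII3.3}(d)--(e) to rewrite a fixator of $\Upomega$ inside $P_\Upomega$. The single-point fixator statement is where I expect the real difficulty to sit.

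Granting (c) — transitivity of $U_\Upomega$ on the apartments containing $\Upomega$, \cite[7.4.9]{BT72}, itself proved by reducing to $\Upomega$ a facet and inducting along minimal galleries — part (b) is short. Put $\Upomega_0:=\cA\cap g^{-1}\cA$, a closed subset of $\cA$; then $g\Upomega_0=g\cA\cap\cA\subseteq\cA$, and both $\cA$ and $g\cA$ are apartments containing $g\Upomega_0$. By (c) there is $u\in U_{g\Upomega_0}$ with $u\cdot(g\cA)=\cA$; hence $n:=ug$ lies in $\Stab_{\G(F)}(\cA)=\Nbf(F)$, and for $x\in\Upomega_0$ we get $nx=u(gx)=gx$ since $gx\in g\Upomega_0$ is fixed by $u$ (easy part of (a)). For (d) I would quote \cite[7.4.18]{BT72}: one reduces to two facets, chooses by a genericity argument a chamber whose closed star, together with both facets, already lies in a single apartment, and concludes with (c).

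Finally (e). Given $x,y\in\cB(\G,F)_{\red}$, choose via (d) an apartment $g\cA$ containing the facets through $x$ and through $y$ (hence $x$ and $y$) and set $d(x,y):=d(g^{-1}x,g^{-1}y)$. Independence of $g$ for a fixed apartment: two such $g$ differ by an element of $\Stab_{\G(F)}(\cA)=\Nbf(F)$, which acts on $\cA$ through $\nu_N(\Nbf(F))$, a group of $d$-isometries because the linear parts lie in the finite group $j(W)\subset\GL(V)$ and $d$ is chosen $W$-invariant (translations being automatically isometries). Independence of the apartment: if $\cA_1,\cA_2$ both contain $x$ and $y$, one may assume $\cA_1=\cA$ after translating; then $\cA\cap\cA_2\subseteq\cA$ contains $x,y$, so by (c) some $u\in U_{\cA\cap\cA_2}$ — which fixes $x$ and $y$ by (a) — carries $\cA$ to $\cA_2$, and the two charts for $d$ differ by an isometry of $\cA$ fixing $x,y$. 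The resulting $d$ is $\G(F)$-invariant and positive-definite by construction, and uniqueness of the extension follows since $\G(F)$-invariance forces the value of $d$ on any pair to be read off inside an apartment. The one delicate metric axiom is the triangle inequality, which I would obtain from the distance-non-increasing retractions $\rho_{C,\cA'}\colon\cB(\G,F)_{\red}\to\cA'$ onto apartments (equivalently, by quoting that this metric is CAT(0)). Throughout, the decisive inputs are the quoted Bruhat--Tits facts; in the portion actually carried out here, the care lies in the well-definedness of $d$.
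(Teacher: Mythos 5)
Your proposal is correct and follows essentially the same route as the paper, which gives no separate proof at all and simply cites \cite[7.4.4, 7.4.8, 7.4.9, 7.4.18]{BT72} inline for the substantive assertions; you defer to the same references for the hard content of (a), (c) and (d). The extra material you supply — the easy inclusion $N_\Upomega U_\Upomega\subseteq\mathrm{Fix}(\G(F),\Upomega)$, the derivation of (b) from (c) via $u\in U_{g\Upomega_0}$ with $n=ug\in\Nbf(F)$, and the well-definedness check for the metric in (e) — is accurate and fills in details the paper leaves implicit.
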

\begin{remark}The term "fixators" refers to pointwise stabilizers, in contrast the term "stabilizers" will be used for setwise stabilizers.\qedhere
\end{remark}
\begin{definition}The reduced Bruhat--Tits building of $\G(F)$ is the pair $(\cB(\G,F)_{\red},d)$ together with its isometric $\G(F)$-action and the poly-simplicial structure defined by its facets.
\end{definition}

\subsection{The extended Bruhat--Tits building}\label{extendedbuilding}
\subsubsection{The extended apartment}
When the center $Z(\G)$ has split rank $>0$, the stabilizer $P_x$ of a point $x\in\cA \subset \cB(\G,F)_{\red}$ is no longer a compact subgroup of $\G(F)$. To remedy this "issue" we define in this section a larger building following \cite[4.2.16]{BT84}.
 
We have a decomposition $X_*(\Sbf)\otimes_\Z \Q=(X_*(\Sbf^{der})\otimes_\Z\Q)\oplus (X_*(\Zbf_{c,sp})\otimes_\Z\Q)$, 
where $\Sbf^{der}=(\G^{\der}\cap \Sbf)^\circ$. 
This is the "dual" statement of the decomposition (2) in Lemma \cite[Lemma 11.3.3]{Conrad19}. 
This decomposition allows us to inject $\Phi^\vee$ in $V^\vee$. So, the "dual" statement of (3) in {\em loc. cit.} together with \cite[Theorem 201.6 \& Remark 21.1]{Bor91} imply that $V=\text{span}_\R(\Phi^\vee) = X_*(\Sbf^{der})\otimes_\Z \R$. 
Accordingly, the apartment $\cA\subset \cB(\G,F)_{\red}$ corresponding to $\Sbf$ can be viewd as a torsor for the real vector space 
$$X_*(\Sbf)\otimes_\Z \R\big{/}X_*(\Zbf_{c,sp})\otimes_\Z \R.$$
Let us construct a homomorphism $\nu_{G} \colon \G(F) \to X_*(\Zbf_{c,sp})\otimes_\Z \Q$: 
Using the isogeny $\Zbf_{c} \times \G^{\der}\to \G$, we see that 
 $X^*(\G)_F=X^*(\G/\G^{\der})_F$. Since the quotient map $\Zbf_{c} \to \G/\G^{\der}$ is an $F$-isogeny of $F$-tori, we see that $X^*(\G/\G^{\der})_F$ identifies with a subgroup of $X^*(\Zbf_{c})_F$ of finite index. Therefore, we have a natural isomorphism of $\Q$-vector spaces $$X^*(\G/\G^{\der})_F\otimes_\Z \Q \to  X^*(\Zbf_{c})_F\otimes_\Z\Q.$$
Identify the $\Q$-linear dual $\Hom_\Z(X^*(\Zbf_{c,sp}),\Q )$ of $X^*(\Zbf_{c,sp})\otimes_\Z \Q$ with $X_*(\Zbf_{c,sp})\otimes_\Z \Q$. In conclusion, we get a natural isomorphism 
 $$X^*(\G)_F\otimes_\Z \Q\simeq X^*(\Zbf_{c,sp})\otimes_\Z \Q.$$
 Consequently, we obtain a canonical isomorphism 
 $$X_*(\Zbf_{c,sp})\otimes_\Z \Q \simeq \Hom_\Z(X^*(\G)_F\otimes_\Z \Q,\Q ).$$
 This shows that there exists a unique homomorphism 
 $$ \G(F) \to X_*(\Zbf_{c,sp})\otimes_\Z \Q,\nomenclature[B]{$\nu_G$}{Translation action of $\G(F)$ on $V_G$}$$
extending $\nu_G\colon \G(F)\to  \Hom_\Z(X^*(\G)_F,\Z)$ (\S \ref{notationsCh3}); this is the unique homomorphism denoted by abuse of notation $\nu_G$ and satisfying\footnote{We abuse notation and denote also by $\langle\,, \rangle$ the pairing $X_*(\Zbf_{c,sp}) \times X^*(\Zbf_{c,sp}) \to \Z$.} (see proof of Lemma \ref{nu})
$$\langle \nu_{G}(g),\chi|_{\Zbf_{c,sp}}\rangle=-\omega(\chi(g)),\text{ for all } \chi\in X^*(\G)_F \text{ and } g\in \G(F).$$  
Put $\G(F)^1:=\ker \nu_{G}$\nomenclature[B]{$\G(F)^1$}{The kernel of $\nu_{G}$}\footnote{We refer the reader to \S \ref{Kottwitzhom} for more on the homomorphism $\nu_G$}. Then $ \G(F)/\G(F)^1$ is a finitely generated abelian group and there is an isomorphism
$$\G(F)/\G(F)^1 \otimes_\Z \Q \simeq  X_*(\Zbf_{c,sp})\otimes_\Z \Q.$$
Set $V_G:=X_*(\Zbf_{c,sp})\otimes_\Z \R$\nomenclature[B]{$V_G$}{$X_*(\Zbf_{c,sp})\otimes_\Z \R$} and let $\A_G$ be a fixed affine space under $V_G$. 
We have a morphism $\nu_G\colon \G(F) \to \Aff(\A_G)$ sending any $g$ to the translation $ (a\mapsto  \nu_{G}(g)+a)$.

We define the extended standard apartment $\A_{\ext}(\G,\Sbf)$\nomenclature[B]{$\A_{\ext}(\G,\Sbf)$}{Extended standard apartement} to be the product of $\A_{\red}(\G,\Sbf)\times \A_G$ together with the group homomorphism
$$\begin{tikzcd}[row sep=tiny]\nu_{N,\ext}\nomenclature[B]{$\nu_{N,\ext}$}{Action of $\Nbf(F)$ on $\Aff(\A_{\ext}(\G,\Sbf))$} \colon \Nbf(F)\arrow{r}& \Aff(\A_{\ext}(\G,\Sbf)),&  n \arrow[r, mapsto]& \nu_N(n) \oplus \nu_G(n).\end{tikzcd}$$
The decomposition $X_*(\Sbf)\otimes_\Z \R =V \oplus V_G$ shows that the extended standard apartment $\A_{\ext}(\G,\Sbf)$ (as defined above) is actually an affine space under $X_*(\Sbf)\otimes_\Z \R$ and the restriction of $\nu_{N,\ext}$ to $\Mbf(F)$ corresponds precisely to the translation action given by the homomorphism $\nu_M$ of Lemma \ref{nu}. 
In particular, 
$$
\Mbf(F)^1=\ker \nu_M =\ker \nu \cap \G(F)^1   =\ker \nu_N \cap \G(F)^1= \ker \nu_{N,\ext}. 
$$
\begin{remark}\label{remark13}
While the reduced system $(\A_{\red}(\G,\Sbf),\nu_N)$ is canonical, that is, unique up to unique isomorphism (Proposition \ref{A(G,S)}), the extended system $(\A_{\ext}(\G,\Sbf),\nu_{N,\ext})$ is only unique up to isomorphism. We "canonify" it (following G. Rousseau) by viewing (which we will adopt from now on) $\A_G$ as $V_G$ with a marked origin $\{0_{V_G}\}$.  
\end{remark}
Since there is a one-to-one correspondence between systems of positive roots for $\Phi(\G,\Sbf)$ and vectorial chambers in $\A_{\ext}(\G,\Sbf)$ with apex the fixed special point $a_\circ$, we may and will refine our choice of the alcove $\a$ to be the unique alcove with apex $a_\circ$ such that
$$\{\alpha \in \Phi\colon \alpha \text{ is $\a$-positive}\}=\Phi^+.$$ 
\subsubsection{The extended building}
\begin{definition}
The extended building is the following product of a poly-simplicial complex and a real vector space
$$\cB(\G,F)_{\ext}=\cB(\G,F)_{\red}\times V_G.\nomenclature[B]{$\cB(\G,F)_{\ext}$}{The extended Bruhat--Tits building}$$
The group $\G(F)$ acts isometrically on it as follows:
$$g\cdot (x,v)=(g\cdot x,v+\nu_{G}(g))\quad \forall g\in \G(F), x\in\cB(\G,F)_{\red}, v\in V_G.$$
\end{definition}
A facet (resp. alcove, Weyl chamber, apartment) of $\cB(\G,F)_{\ext}$ is defined to be a product of a same object in $\cB(\G,F)_{\red}$ and $V_G$. 
For instance let us fix the extended apartment $\cA_{\ext}=\cA \times V_G$.
We identify $\cB(\G,F)_{\red}$ with the subset $\cB(\G,F)_{\red}\times V_G$ in $\cB(\G,F)_{\ext}$. Then, the stabilizer of $\cB(\G,F)_{\red}$ for the action of $\G(F)$ in the extended building is exactly $\ker \nu_{G}$. 
In this building the minimal dimensional facets are of the form $\cF_x:=\{x\}\times V_G$ for some vertex $x\in \cB(\G,F)_{\red}$. 
The stabilizer of $(x, 0)$ is equal to $P_x\cap \ker \nu_{G}$, which is also the fixator of $\cF_x$, where $P_x$ is the subgroup defined in \S \ref{discretevaluation}.

\subsection{Parahoric subgroups}
\subsubsection{Further notations}

Let $F^{un}$ denote the maximal unramified extension of $F$ contained in $F^{sep}$ and $L$ the completion of $F^{un}$ with respect to the valuation on $F^{un}$ which extends the normalized valuation on $F$. The residue field of $F$ is perfect, thus $F^{un}=F^{\text{sh}}$ is the strict henselization of $F$ in the fixed separable closure $F^{sep}$. 
Let $L^{sep}$ the separable closure of $L$ containing $F^{sep}$. 
The arithmetic Frobenius automorphism $\sigma\in \Gal(F^{un}/F)$ (which induces $x \mapsto x^q$ on the residue field of $F^{un}$) extends continuously to an automorphism of $L$ over $F$, also denoted $\sigma$. 
Write $\text{In}=\Gal(F^{sep}/F^{un})$ for the inertia subgroup of $\Gal(F^{sep}/F)$. Since ${L}^{sep}=L\otimes_{F^{un}} {F}^{sep}$, one can identify the inertia subgroup $\text{In}$ with $\Gal(L^{sep}/L)$. 

\subsubsection{The Kottwitz homomorphism}\label{Kottwitzhom}
Let $\H$ be any connected reductive $F$-group, $\widehat{\H}$ be its connected Langlands dual and $Z(\widehat{\H})$ be the center of $\widehat{\H}$. 
Kottwitz defines in \cite[\S 7.7]{kottwitz_1997} a functorial surjective homomorphism 
$$\kappa_{\H}: \H({L}) \rightarrow X^*(Z(\widehat{\H})^\text{In}) =X^*(Z(\widehat{\H}))_\text{In},$$\nomenclature[B]{$\kappa_\H$}{The Kottwitz map of $\H$ and $\kappa_H$ its restriction to $\H(F)$}where, the subscript $\text{In}$ indicates coinvariants of the $\Gal(F^{sep}/F)$-module $X^*(Z(\widehat{\H}))$, {this latter is the Borovoi fundamental algebraic group (usually denoted $\pi_1(\H)$) and it is isomorphic to the quotient $X_*(\T)$ for a maximal $F$-torus $\T$ by the coroots lattice. The group $\pi_1(\H)$ acquires an action of $\Gal(F^{sep}/F)$ via its representation as $X_*(\T)/\sum_{\alpha \in \Phi(\H_{F^{sep}},\T)}\Z \alpha^\vee$}. There is a canonical surjective homomorphism 
$$q_{\H} \colon X^*(Z(\widehat{\H}))_{\text{In}} \to \Hom_\Z (X^*(\H)^{\text{In}},\Z)$$
whose kernel is the torsion subgroup of $X^*(Z(\widehat{\H}))_{\text{In}}$ \cite[7.4.4]{kottwitz_1997}\footnote{The codomain of (7.4.4) in {\em loc. cit.} is actually $\Hom_\Z (X_*(Z(\widehat{\H}))^{\text{In}},\Z))$, so one needs to use the isomorphism $X_*(Z(\widehat{\H})) \simeq X^*(\H)$.}. 
Kottwitz shows in \cite[\S 7.4]{kottwitz_1997} that the above two homomorphisms sit in the following commutative diagram:
$$\begin{tikzcd}[column sep=normal,row sep=large]
\H(L)  \arrow[twoheadrightarrow]{rr}[description]{\kappa_{\H}}\arrow[]{rrdd}[description]{-\nu_{\H}}&& X^*(Z(\widehat{\H}))_{\text{In}}\arrow[twoheadrightarrow]{dd}[description]{q_{\H}}
 \\ \\
 && \Hom_\Z(X^*(\H)^{\text{In}},\Z)
\end{tikzcd}$$
where, $\nu_{\H}$ is the natural homomorphism characterized by $$\langle \nu_{\H}(h) , \chi \rangle= \nu_{\H}(h) (\chi)=-\omega(\chi(h)),\text{ for all }h\in \H(L)   \text{ and } \chi\in X^*(\H)^{\text{In}}.$$
Therefore, $\H(L)_1:= \ker \kappa_\H \subset \H(L)^1:= \ker \nu_\H$ and 
$$\left(X^*(Z(\widehat{\H}))_{\text{In}}\right)_{\text{tors}}=\H(L)^1/ \H(L)_1.$$
\begin{remark}
Note here that $\nu_{\H}$ differs in a sign from the map $v_H$ in \cite[7.4.3]{kottwitz_1997}.
\end{remark}
Since our connected reductive group $\H$ is defined over $F$, the restriction of $\kappa_\H$ to $\H(F)$ provides a surjective homomorphism $\kappa_H$ that sits in the following commutative diagram (see \cite[\S 7.7]{kottwitz_1997} for the surjectivity of $\kappa_H$, see also \cite[\S 2.7]{R15})
$$\begin{tikzcd}[column sep=large,row sep=large]
\H(F)  \arrow[twoheadrightarrow]{rr}[description]{\kappa_{H}}\arrow[]{rrdd}[description]{-\nu_{H}}&& (X^*(Z(\widehat{\H}))_{\text{In}})^{\langle \sigma\rangle}\arrow[]{dd}[description]{q_{H}}
 \\ \\
 && \Hom_\Z(X^*(\H)_F,\Z)
\end{tikzcd}$$
We denote by $\H(F)_1= \H(F)\cap \H(L)_1 $\nomenclature[B]{$\H(F)_1$}{$\H(F)\cap \ker \kappa_{\H}$} and $\H(F)^1$ the kernel of $\kappa_H$ and $\nu_H$, respectively. 
Therefore, by \cite[\S 3.2, Lemma]{HV15}
$$\H(F)^1 = \H(L)^1\cap \H(F)=\left\{h\in \H(F)\colon \kappa_H(h)\in \left(X^*(Z(\widehat{\H}))_{\text{In}}^\sigma\right)_{\text{tors}} \right\}.$$
Set $\Lambda_H:=\H(F)/\H(F)_1$\nomenclature[B]{$\Lambda_H$}{The abelian group $\H(F)/\H(F)_1$}, thus
$$\H(F)^1/\H(F)_1 = (\Lambda_H)_{\text{tors}} \simeq \left(X^*(Z(\widehat{\H}))_{\text{In}}^\sigma\right)_{\text{tors}}.$$
\subsubsection{Passage to completion and descent}
Consider the extended building $\cB(\G,F^{un})_{\ext}$ of the group $\G_{F^{un}}=\G\times_FF^{un}$. This building is equipped with an action of $\G(F^{un})\rtimes \langle \sigma\rangle$. Moreover, there is a natural $\G(F)$-equivariant embedding $\iota\colon \cB(\G,F)_{\ext}\hookrightarrow \cB(\G,F^{un})_{\ext}$ such that \cite[5.1.25]{BT84}
$$\iota(\cB(\G,F)_{\ext})=\cB(\G,F^{un})_{\ext}^{\sigma}.$$
In other words, the extended building of $\G$ over $F$ is identified with the fixed points of $\sigma \in \text{Gal}(F^{un}/F)$ (in particular of $\sigma$) in the building of $\G$ over $F^{un}$. 

Since $F^{un}$ is henselian, Rousseau has shown in \cite[proposition 2.3.9]{rousseau77} that $\G$ has the same relative rank over $F^{un}$ and over $L$ and hence by \cite[proposition 2.3.9]{rousseau77} the building $\cB(\G,F^{un})_{\ext}$ identifies canonically with $\cB(\G,L)_{\ext}$ as $\G(F^{un})$-space. Using this identification, we see that the extended building of $\G$ over $F$, can also be identified with the fixed points of $\sigma \in \text{Aut}(L/F)$ in the building of $\G$ over $L$, i.e.
$$\iota(\cB(\G,F)_{\ext})=\cB(\G,L)_{\ext}^{\sigma}.$$
This gives a bijection between the set of $\sigma$-stable facets in $\cB(\G,L)_{\ext}$ and the set of facets in $\cB(\G,F)_{\ext}$. 
For more details one can consult \cite{Prasad17}.
\subsubsection{$F$-Parahoric subgroups}\label{Parahoric}
In \cite[\S 4.6.28, \S 5.2.6, \S 5.1.29 and \S 5.1.30]{BT84} (see also \cite[\S 9.3]{Yu16}), Bruhat and Tits associated two subgroups to any non-empty $\sigma$-stable set $\Upomega$ that is contained in an apartment of $\cB(\G,F^{un})_{\ext}$ and with bounded image in the reduced building: 
\begin{enumerate}\item The fixator subgroup $$\text{Fix}(\G(F^{un}),\Upomega) :=\{g \in \G(F^{un})\colon g\cdot a=a, \forall a\in \Upomega \}\subset \G(F^{un}).$$
They also showed in {\em loc. cit.}, the existence of a unique affine smooth group scheme $\mathbb{P}_{\Upomega}$ 
defined over $\Spec \,{\cO_F}$ with generic fiber $\G$ and which is uniquely characterized by the property $\mathbb{P}_{\Upomega}(\cO_{F^{un}})= \text{Fix}(\G(F^{un}),\Upomega)$.
\item The "connected fixator" subgroup 
$$\text{Fix}(\G(F^{un}),\Upomega)^\circ:=\mathbb{P}_{\Upomega}^\circ (\cO_{F^{un}}) \subset \text{Fix}(\G(F^{un}),\Upomega),\nomenclature[B]{$\text{Fix}(\G(F^{un}),\Upomega)^\circ$}{The connected fixator subgroup of $\Upomega$}$$
where $\mathbb{P}_{\Upomega}^\circ $ is the identity component of $\mathbb{P}_{\Upomega}$.
\end{enumerate}
A $F$-parahoric subgroup of $\G$ is by definition \cite[under 5.2.6]{BT84} the "connected fixator" of a facet $\cF \subset \iota(\cB(\G,F)_{\ext})$.
\begin{remark}
{
We warn the reader that, in \cite[\S 4 and \S 5]{BT84} notations, $\mathbb{P}_{\Upomega}$, $\mathbb{P}_{\Upomega}^\circ$, $\text{Fix}(\G(F^{un}),\Upomega)$, $\text{Fix}(\G(F^{un}),\Upomega)^\circ$ and $\mathbb{P}_{\Upomega}$ and $\mathbb{P}_{\Upomega}^\circ$ are denoted by $\widehat{ \mathfrak{G}}_{\Upomega}^\natural$, ${\mathfrak{G}}_{\Upomega}^{0\natural}$, $\widehat{P}_{\Upomega}^1$ and ${P}_{\Upomega}^0$, , respectively.
}
\end{remark}
\begin{lemma}\label{equalityofparah}
Two facets $\cF,\cF'\subset \cB(\G,F)_{\ext}$ are equal if and only if $$\mathbb{P}_{\iota(\cF)}^\circ (\cO_{F}) = \text{Fix}(\G(F^{un}),\iota(\cF))^\circ \cap \G(F) =  \text{Fix}(\G(F^{un}),\iota(\cF'))^\circ\cap \G(F) = \mathbb{P}_{\iota(\cF')}^\circ (\cO_{F}).$$
\end{lemma}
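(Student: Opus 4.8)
The implication ``$\cF=\cF'\Rightarrow K_\cF=K_{\cF'}$'' (write $K_\cF:=\mathbb{P}_{\iota(\cF)}^\circ(\cO_F)$) is trivial, so I concentrate on the converse. The idea is to reconstruct a facet from its parahoric subgroup inside the building: I will show that the fixed‑point set of $K_\cF$ acting on $\cB(\G,F)_{\ext}$ is exactly the closure $\overline\cF$. Granting this, $K_\cF=K_{\cF'}$ gives $\overline\cF=\overline{\cF'}$, and since a facet of the polysimplicial complex is the relative interior of its closure, this forces $\cF=\cF'$.

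So write $\cF=\cF_0\times V_G$ with $\cF_0$ a facet of $\cA\subseteq\cB(\G,F)_{\red}$, and set $Z:=\{z\in\cB(\G,F)_{\ext}: K_\cF z=z\}$; the claim is $Z=\overline\cF$. The inclusion $Z\supseteq\overline\cF$ is clear: $K_\cF=\text{Fix}(\G(F^{un}),\iota(\cF))^\circ\cap\G(F)$ fixes $\iota(\cF)$ pointwise, hence fixes $\cF$ pointwise (as $\iota$ is $\G(F)$‑equivariant and injective), and being a group of isometries it fixes $\overline\cF$. For $Z\subseteq\overline\cF$ the point is that $K_\cF$ is large: by the construction of $\mathbb{P}_{\iota(\cF)}^\circ$ in \cite[\S4--\S5]{BT84} the valued root groups $U_{\alpha+f_{\cF_0}(\alpha)}$ ($\alpha\in\Phi_{\red}$), hence the group $U_{\cF_0}$ they generate, lie in $\mathbb{P}_{\iota(\cF)}^\circ(\cO_F)=K_\cF$ (equivalently, $U_{\cF_0}$ is generated by unipotent elements, so $U_{\cF_0}\subseteq\ker\kappa_G\cap\text{Fix}(\G(F),\cF)=K_\cF$). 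Now take $z\in Z$. Applying Proposition~\ref{buildproperties} (containment) in the reduced building to $\cF_0$ and the reduced part of the facet containing $z$, and multiplying by $V_G$, we obtain an apartment of $\cB(\G,F)_{\ext}$ containing both $\cF$ and $z$; translating by $\G(F)$ we may assume it is the standard one, so $\cF\subseteq\cA_{\ext}=\cA\times V_G$ and $z\in\cA_{\ext}$.

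For each $\alpha\in\Phi_{\red}$ choose $u_\alpha\in U_{\alpha+f_{\cF_0}(\alpha)}\setminus\{1\}$ with $\varphi_\alpha(u_\alpha)=f_{\cF_0}(\alpha)$; this is possible because $f_{\cF_0}(\alpha)\in\Gamma_\alpha=\varphi_\alpha(\U_\alpha(F)\setminus\{1\})$ (the infimum defining $f_{\cF_0}(\alpha)$ is attained, $\Gamma_\alpha$ being discrete). Then $u_\alpha\in U_{\cF_0}\subseteq K_\cF$ fixes $z$. By a standard property of the valued root datum, the fixed‑point set of a nontrivial $u\in\U_\alpha(F)$ inside the standard apartment is the closed half‑apartment $\{a\in\cA_{\ext}: \alpha(a-a_\circ)\geq-\varphi_\alpha(u)\}$ (here $\alpha$ is extended to $\cA_{\ext}$ via the projection $\cA_{\ext}\to\cA$; see \cite[\S6.4,\ \S7.4]{BT72}). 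Hence $\alpha(z-a_\circ)\geq-f_{\cF_0}(\alpha)$ for every $\alpha\in\Phi_{\red}$, i.e.\ $z$ lies in $\bigcap_{\alpha\in\Phi_{\red}}\{a\in\cA_{\ext}: \alpha(a-a_\circ)\geq-f_{\cF_0}(\alpha)\}$. This intersection is contained in $\overline{\cF_0}\times V_G=\overline\cF$: indeed it is contained in the enclosure $\mathrm{cl}(\cF_0)\times V_G$ of $\cF$ (the defining half‑spaces of the enclosure are $\gamma\geq0$ for affine roots $\gamma$ nonnegative on $\cF_0$, and $-f_{\cF_0}(\alpha)$ is no larger than the corresponding affine‑root bound), and the enclosure of a facet coincides with its closure (\cite[\S1.3,\ \S7.1]{BT72}); here one uses that $\Phi_{\red}$ spans and that $\overline{\cF_0}$ is bounded. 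Thus $z\in\overline\cF$, which gives $Z=\overline\cF$ and finishes the proof.

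\textbf{Main obstacle.} The delicate inclusion is $Z\subseteq\overline\cF$: it rests on two structural inputs from Bruhat--Tits theory — that $K_\cF$ contains every valued root subgroup $U_{\alpha+f_{\cF_0}(\alpha)}$, and the precise polyhedral shape of $\overline\cF$ in terms of the concave function $f_{\cF_0}$ — together with the standard computation of the fixed set of a root‑group element in an apartment. Everything else (the reduction to the fixed‑point statement, the common‑apartment move, and passing from $\overline\cF=\overline{\cF'}$ to $\cF=\cF'$) is formal.
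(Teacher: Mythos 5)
Your argument is correct, and it is worth noting that the paper itself gives no proof of this lemma at all -- it simply cites \cite[5.2.8]{BT84} -- so what you have written is a genuine reconstruction of the Bruhat--Tits argument rather than a variant of anything in the text. The strategy (recover $\overline{\cF}$ as the fixed-point set of $K_\cF:=\mathbb{P}^\circ_{\iota(\cF)}(\cO_F)$, then pass from equality of closures to equality of facets via relative interiors) is sound, and the three load-bearing inputs are all legitimate and are even available inside the paper, albeit stated \emph{after} the lemma: the containment $U_{\alpha+f_{\cF_0}(\alpha)}\subset K_\cF$ is part of Proposition \ref{decomppara} (equivalently your $\ker\kappa_G$-argument via Proposition \ref{HRpara}, which gives $K_\cF=P_\cF\cap\G(F)_1$); the computation that a nontrivial $u\in\U_\alpha(F)$ with $\varphi_\alpha(u)=s$ fixes exactly $\{a\in\cA:\alpha(a-a_\circ)\ge -s\}$ inside $\cA$ follows from $P_{\{a\}}\cap\U_\alpha(F)=U_{\alpha+f_{\{a\}}(\alpha)}$; and the identification $\bigcap_{\alpha\in\Phi_{\red}}\{(\alpha+f_{\cF_0}(\alpha))\ge 0\}=\overline{\cF_0}$ is correct because the bounding hyperplanes are walls, the pair $\pm\alpha$ recovers the equality constraints when $\cF_0$ lies in a wall, and the conditions coming from multipliable roots $2\alpha$ are implied by those from $\alpha$. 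The reduction of an arbitrary fixed point $z$ to the standard apartment via the common-apartment property and $\G(F)$-equivariance is also fine (one could shortcut it using transitivity of $U_{\cF_0}$ on apartments containing $\cF_0$, since $U_{\cF_0}$ fixes $z$). Since the mathematics you invoke is independent of the lemma, there is no circularity; the only cosmetic remark is that a reader of the paper would see these supporting facts only in \S 2.8.5--2.8.6 and \S 2.9.
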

\begin{proof}
See \cite[5.2.8]{BT84}.
\end{proof}
This lemma justifies (by misuse of language) the following definition:
\begin{definition}\label{parahoricsubgroup}
For any bounded subset  $\Upomega$ contained in an apartment of $\cB(\G,F)_{\red}$, set
$$\widetilde{K}_{\Upomega}=\mathbb{P}_{\iota(\Upomega \times V_G)} (\cO_{F^{un}}) \cap \G(F) \text{ and }K_{\Upomega}:=\mathbb{P}_{\iota(\Upomega \times V_G)}^\circ (\cO_{F})$$ 
A parahoric subgroup of $\G(F)$ is $K_{\cF}$ for some facet $\cF\subset \cB(\G,F)_{\red}$. 
When the facet $\cF$ is an alcove $\mathfrak b$, the parahoric subgroup $K_{\cF}$ is called an Iwahori subgroup. The subgroups $\widetilde{K}_{\mathfrak b}$ and ${K}_{\mathfrak b}$ will be denoted by $\widetilde{I}_{\mathfrak b}$ and ${I}_{\mathfrak b}$, respectively. 
\end{definition}
\begin{remark}
The subgroup $\mathbb{P}_{\Upomega \times V_G} (\cO_{F})$ might be strictly smaller than $\widetilde{K}_{\Upomega}$, see \cite{Haines09cor} for examples. 
\end{remark} 
Haines and Rapoport showed in \cite{HR8} that parahoric subgroups over $\G(F^{un})$ are actually $\G(F^{un})_1$-fixators of facets in the Bruhat--Tits building over $F^{un}$.  
Accordingly, as we see in the following proposition, this result descend also to the base field $F$, i.e. parahoric subgroups of $\G(F)$ are fixators of facets in the kernel of $\kappa_G$, i.e. for any facet $\cF\subset \cB(\G,F)_{\red}$ we have $K_{\cF}= P_\cF \cap \G(F)_1$.
\begin{proposition}\label{HRpara}\label{parahr}
For any bounded subset  $\Upomega$ contained in an apartment of $\cB(\G,F)_{\red}$ we have 
$$\widetilde{K}_{\Upomega}=P_{\Upomega} \cap \G(F)^1 \text{ and } K_{\Upomega} =\widetilde{K}_{\Upomega} \cap \G(F)_1 =\mathbb{P}_{\iota(\Upomega \times V_G)} (\cO_{F}) \cap \G(F)_1$$
\end{proposition}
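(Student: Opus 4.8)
The plan is to deduce both statements from their counterparts over $F^{un}$ by descent along $\sigma$. Over $F^{un}$ the first is essentially the defining property of the Bruhat--Tits parahoric group scheme and the second is the theorem of Haines--Rapoport \cite{HR8}; what then remains is to pass to $\sigma$-invariants and to untwist the action on the extended building.

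\textbf{The equality $\widetilde{K}_{\Upomega}=P_{\Upomega}\cap\G(F)^1$.} By construction $\mathbb{P}_{\iota(\Upomega\times V_G)}(\cO_{F^{un}})=\text{Fix}(\G(F^{un}),\iota(\Upomega\times V_G))$; since $\mathbb{P}_{\iota(\Upomega\times V_G)}$ is a smooth affine $\cO_F$-scheme and $\cO_{F^{un}}$ is the strict henselisation of $\cO_F$, Galois descent gives $\widetilde{K}_{\Upomega}=\mathbb{P}_{\iota(\Upomega\times V_G)}(\cO_{F^{un}})\cap\G(F)=\mathbb{P}_{\iota(\Upomega\times V_G)}(\cO_{F^{un}})^{\sigma}=\mathbb{P}_{\iota(\Upomega\times V_G)}(\cO_F)$, using $\mathbb{P}_{\iota(\Upomega\times V_G)}(\cO_{F^{un}})\subset\G(F^{un})$ and $\G(F^{un})^{\sigma}=\G(F)$. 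Next I would use that $\iota\colon\cB(\G,F)_{\ext}\hookrightarrow\cB(\G,F^{un})_{\ext}$ is $\G(F)$-equivariant and injective: a $g\in\G(F)$ lies in $\widetilde{K}_{\Upomega}$ iff it fixes $\Upomega\times V_G$ pointwise inside $\cB(\G,F)_{\ext}=\cB(\G,F)_{\red}\times V_G$, and since $g\cdot(x,v)=(g\cdot x,\,v+\nu_G(g))$ this happens iff $g$ fixes $\Upomega\subset\cB(\G,F)_{\red}$ pointwise and $\nu_G(g)=0$. By the fixator description in Proposition~\ref{buildproperties} the first condition means $g\in P_{\Upomega}$, and the second means $g\in\ker\nu_G=\G(F)^1$; hence $\widetilde{K}_{\Upomega}=P_{\Upomega}\cap\G(F)^1$.

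\textbf{The identities for $K_{\Upomega}$.} Running the same descent on the connected fixator (as in Lemma~\ref{equalityofparah}, whose argument applies verbatim to a bounded subset) gives $K_{\Upomega}=\mathbb{P}_{\iota(\Upomega\times V_G)}^{\circ}(\cO_F)=\text{Fix}(\G(F^{un}),\iota(\Upomega\times V_G))^{\circ}\cap\G(F)$. By Haines--Rapoport \cite{HR8}, over $F^{un}$ the connected fixator is the intersection of the full fixator with $\G(F^{un})_1=\ker\kappa_{\G}$, so intersecting with $\G(F)$ and using $\G(F^{un})_1\cap\G(F)=\G(F)_1$ (the definition of $\G(F)_1$, compatible with the functoriality of $\kappa$ recalled in \S\ref{Kottwitzhom}) yields $K_{\Upomega}=\bigl(\text{Fix}(\G(F^{un}),\iota(\Upomega\times V_G))\cap\G(F)\bigr)\cap\G(F)_1=\widetilde{K}_{\Upomega}\cap\G(F)_1$, which equals $\mathbb{P}_{\iota(\Upomega\times V_G)}(\cO_F)\cap\G(F)_1$ by the previous paragraph.

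\textbf{Main obstacle.} The only non-formal input is the Haines--Rapoport theorem over $F^{un}$ (invoked after the identification $\cB(\G,F^{un})_{\ext}\simeq\cB(\G,L)_{\ext}$). Everything else is descent bookkeeping, where the two points that need care are: that taking $\sigma$-invariants commutes with intersecting against $\ker\nu_{\G}$ or $\ker\kappa_{\G}$ --- immediate, since these are intersections of subsets of $\G(F^{un})$ --- combined with $\nu_{\G}|_{\G(F)}=\nu_G$ and $\kappa_{\G}|_{\G(F)}=\kappa_G$ from \S\ref{Kottwitzhom}, so that $\G(F)^1$ and $\G(F)_1$ really are the $\sigma$-invariants of $\ker\nu_{\G}$ and $\ker\kappa_{\G}$; and that $\iota(\Upomega\times V_G)$ is admissible for the Bruhat--Tits and Haines--Rapoport constructions ($\sigma$-stable, with bounded image in the reduced building), which holds because $\Upomega$ is bounded in $\cA\subset\cB(\G,F)_{\red}=\cB(\G,F^{un})_{\red}^{\sigma}$.
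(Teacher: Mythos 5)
Your proposal is correct and follows essentially the same route as the paper: the first equality comes from splitting the action on the extended building into the reduced part plus the $\nu_G$-translation and invoking $P_{\Upomega}=\text{Fix}(\G(F),\Upomega)$, and the identities for $K_{\Upomega}$ come from the Haines--Rapoport identification of the connected fixator with $\mathbb{P}_{\iota(\Upomega\times V_G)}(\cO_{F^{un}})\cap\G(F^{un})_1$ followed by intersecting with $\G(F)$. The only cosmetic difference is that you phrase the passage to $\cO_F$-points as Galois descent for the smooth affine scheme $\mathbb{P}_{\iota(\Upomega\times V_G)}$ and work with $\nu_G$ directly on $\G(F)$, whereas the paper simply intersects with $\G(F)$ and uses $\G(F^{un})^1\cap\G(F)=\G(F)^1$.
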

\begin{proof}
On the one hand,
$$\text{Fix}(\G(F^{un}),\iota(\Upomega \times V_G))\cap \G(F) = \text{Fix}(\G(F^{un}),\iota(\Upomega)) \cap \G(F^{un})^1 \cap \G(F).$$
On the second hand $\text{Fix}(\G(F^{un}),\iota(\Upomega))  \cap \G(F)= P_{\Upomega}$ (because the map $\iota$ is $\G(F)$-invariant and \ref{abuildproperties} Proposition \ref{buildproperties}), which shows the first equality of the Lemma, since $\G(F^{un})^1 \cap \G(F) = \G(F)^1 $. 
For the remaining equalities, thanks to \cite[Proposition 3, remarks 4 and 11]{HR8}, one has {$\mathbb{P}_{\iota(\Upomega \times V_G)}^\circ (\cO_{F^{un}})=\mathbb{P}_{\iota(\Upomega \times V_G)} (\cO_{F^{un}}) \cap \G(F^{un})_1$}, and accordingly:
\begin{align*}
\mathbb{P}_{\iota(\Upomega \times V_G)}^\circ (\cO_{F})& =\mathbb{P}_{\iota(\Upomega \times V_G)}^\circ (\cO_{F^{un}}) \cap \G(F)\\
&=\mathbb{P}_{\iota(\Upomega \times V_G)} (\cO_{F^{un}}) \cap \G(F^{un})_1 \cap \G(F)=\mathbb{P}_{\iota(\Upomega \times V_G)} (\cO_{F})\cap \G(F)_1.\qedhere
\end{align*}
\end{proof}
The map that associates to a facet $\cF$ its parahoric subgroup $ K_\cF$ is decreasing, in particular, if $\cF$ is any facet of the alcove $\a \subset \cB(\G,F)_{\red}$ then we have $I_{\a}\subset K_\cF$. In addition, the action of an element $g\in \G(F)$ on a facet $\cF$ translates for parahorics to $K_{g\cdot\cF}=g K_\cF g^{-1}$.
\subsubsection{The scheme $\mathcal{Z}$}\label{Z}
By replacing $\G$ by $\Mbf$ in all of the above, 
we see that $\cB(\Mbf,F)_{\red}$ is reduced to a single point $\{a_{\circ,M}\}$ \cite[\S 5.1.26]{BT84}. 
The fixator subgroup $\text{Fix}(\Mbf(F),a_{\circ,M})$ is equal to the $\cO_F$-points of a unique smooth $\cO_F$-group scheme $\mathcal{Z}$ which is the schematic closure of $\Mbf$ in $\mathbb{P}_{\iota(\Upomega\times V_G)}$ for any bounded subset $\Upomega \subset \mathbb{A}_{\red}(\G,\Sbf)$ (see \cite[5.2.1 \& 5.2.4]{BT84}) and the corresponding parahoric subgroup is
$$K_{a_{\circ,M}}=\text{Fix}(\Mbf(F),a_{\circ,M})^\circ=\mathcal{Z}^\circ(\cO_F).$$ 
Applying Proposition \ref{HRpara} to $\Mbf$ and $\Upomega=\{a_{\circ,M}\}$ one shows that $K_{a_{\circ,M}} = \Mbf(F)_1$. 
By \cite[\S 5.2.1]{BT84}, one has $\Mbf(F)^1=\ker \nu \cap \G(F)^1=\mathcal{Z}(\cO_F)$ and in particular $\Mbf(F)_1=\mathcal{Z}^\circ(\cO_F)$ is of finite index in $\Mbf(F)^1$. 
In summary:
\begin{corollary}\label{uniquepara}
The subgroup $\Mbf(F)_1$ is the unique parahoric subgroup of $\Mbf(F)$ and it is a finite index subgroup of $\Mbf(F)^1=\ker \nu_{M}$ (The maximal open compact subgroup of $\Mbf(F)$). 
\end{corollary}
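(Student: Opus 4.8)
The plan is to exploit the fact that $\Mbf = Z_\G(\Sbf)$, being the centralizer of a maximal $F$-split torus, has no relative roots and a trivial reduced building. Since $\Sbf$ is central in $\Mbf$, the relative root system $\Phi(\Mbf,\Sbf)$ is empty and $\Mbf$ is $F$-anisotropic modulo its centre; consequently the reduced apartment $\A_{\red}(\Mbf,\Sbf)$, and a fortiori the reduced building $\cB(\Mbf,F)_{\red}$, is the single point $a_{\circ,M}$ \cite[\S 5.1.26]{BT84}. The first step is to conclude from this that $\{a_{\circ,M}\}$ is the unique facet of $\cB(\Mbf,F)_{\red}$, so that, by Definition \ref{parahoricsubgroup}, $K_{a_{\circ,M}}$ is the only parahoric subgroup of $\Mbf(F)$.

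The second step is to identify $K_{a_{\circ,M}}$ with $\Mbf(F)_1$. Since $\Mbf(F)$ acts trivially on its one-point reduced building, the subgroup $P_{a_{\circ,M}}$ of Proposition \ref{propertiesII3.3} equals $\mathrm{Fix}(\Mbf(F),a_{\circ,M}) = \Mbf(F)$ by Proposition \ref{buildproperties}. Applying Proposition \ref{HRpara} with $\G$ replaced by $\Mbf$ and $\Upomega = \{a_{\circ,M}\}$ then gives
\[
\widetilde{K}_{a_{\circ,M}} = P_{a_{\circ,M}}\cap\Mbf(F)^1 = \Mbf(F)^1, \qquad
K_{a_{\circ,M}} = \widetilde{K}_{a_{\circ,M}}\cap\Mbf(F)_1 = \Mbf(F)^1\cap\Mbf(F)_1 = \Mbf(F)_1,
\]
the last equality because $\Mbf(F)_1 = \ker\kappa_M \subseteq \ker\nu_M = \Mbf(F)^1$, $\nu_M$ factoring through the Kottwitz homomorphism $\kappa_M$ (\S \ref{Kottwitzhom}). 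This establishes that $\Mbf(F)_1$ is the unique parahoric subgroup of $\Mbf(F)$.

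The last step is the index assertion. By \cite[\S 5.2.1 \& \S 5.2.4]{BT84} the maximal compact subgroup $\Mbf(F)^1$ equals $\mathcal{Z}(\cO_F)$ for a smooth affine $\cO_F$-group scheme $\mathcal{Z}$ of finite type with generic fibre $\Mbf$, and $\Mbf(F)_1 = \mathcal{Z}^\circ(\cO_F)$ where $\mathcal{Z}^\circ$ is the identity component; the index $[\mathcal{Z}(\cO_F):\mathcal{Z}^\circ(\cO_F)]$ is finite (the component group of the special fibre being finite), a fact already recorded in \cite[\S 5.2.1]{BT84}. Finally, \cite[Proposition 1.2]{Lan96} identifies $\ker\nu_M = \Mbf(F)^1$ with the maximal open compact subgroup of $\Mbf(F)$, which finishes the proof. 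I do not expect a genuine obstacle here: every step is either formal or quoted from \cite{BT84,Lan96}, which are already invoked above; the only mildly substantive input is the finiteness of $[\mathcal{Z}(\cO_F):\mathcal{Z}^\circ(\cO_F)]$, and the one piece of bookkeeping to keep straight is that parahoric subgroups of $\Mbf(F)$ are indexed by facets of the \emph{reduced} building of $\Mbf$ — of which there is exactly one — even though the extended building of $\Mbf$ is a full affine space.
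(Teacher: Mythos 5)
Your proposal is correct and follows essentially the same route as the paper: the one-point reduced building of $\Mbf$ forces a unique facet hence a unique parahoric, Proposition \ref{HRpara} applied to $\Mbf$ with $\Upomega=\{a_{\circ,M}\}$ identifies it with $\Mbf(F)_1$, and the finite index $[\mathcal{Z}(\cO_F):\mathcal{Z}^\circ(\cO_F)]$ together with \cite[Proposition 1.2]{Lan96} gives the rest. The paper additionally records $\Mbf(F)^1=\mathcal{Z}(\cO_F)$ via \cite[\S 5.2.1]{BT84}, but this is the same argument.
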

\begin{lemma}\label{NcapIM1}
Let ${\mathfrak{b}} \subset \cA$ be an alcove. The $\Nbf(F)$-fixator of ${\mathfrak{b}}$ (resp. ${\mathfrak{b}} \times V_G$) is $N_{\mathfrak{b}}=\ker \nu_N$ (resp. $\Nbf(F)\cap \widetilde{I}_{\mathfrak{b}} = \Mbf(F)^1$). Therefore,
$$P_{\mathfrak{b}}= U_{\mathfrak{b}}^+  U_{\mathfrak{b}}^-  \ker \nu \, \text{ and }  \widetilde{I}_{\mathfrak{b}}= U_{\mathfrak{b}}^+  U_{\mathfrak{b}}^-  \Mbf(F)^1 .$$
\end{lemma}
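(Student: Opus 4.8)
The plan is to deduce everything from the single identity $N_{\mathfrak b}=\ker\nu_N$, after which the remaining assertions are bookkeeping with results already collected above. Since the embedding $\cA\hookrightarrow\cB(\G,F)_{\red}$ is $\Nbf(F)$-equivariant for the action of $\Nbf(F)$ through $\nu_N$, the $\Nbf(F)$-fixator of the alcove $\mathfrak b\subset\cA$ inside $\cB(\G,F)_{\red}$ is precisely $N_{\mathfrak b}=\{n\in\Nbf(F)\colon \nu_N(n)x=x\ \text{for all }x\in\mathfrak b\}$, in the notation of Proposition \ref{propertiesII3.3}. The inclusion $\ker\nu_N\subseteq N_{\mathfrak b}$ is trivial. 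For the reverse inclusion I would use that $\mathfrak b$, being a facet of maximal dimension, has the same dimension as the affine space $\A_{\red}(\G,\Sbf)$ and hence affinely spans it; therefore an affine automorphism fixing $\mathfrak b$ pointwise is the identity, so $\nu_N(n)=\id$ and $n\in\ker\nu_N$. By Remark (ii) following Proposition \ref{A(G,S)} one has $\ker\nu_N=\ker\nu$, whence $N_{\mathfrak b}=\ker\nu_N=\ker\nu$. This is the only step requiring a genuine argument.

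For the extended statement, the first move is to identify $\widetilde I_{\mathfrak b}$ with a fixator in $\cB(\G,F)_{\ext}$. By Definition \ref{parahoricsubgroup} and Proposition \ref{parahr}, $\widetilde I_{\mathfrak b}=\widetilde K_{\mathfrak b}=P_{\mathfrak b}\cap\G(F)^1$; since $P_{\mathfrak b}=\mathrm{Fix}(\G(F),\mathfrak b)$ by Proposition \ref{buildproperties}(a), while $\G(F)^1=\ker\nu_G$ and the $\G(F)$-action on $\cB(\G,F)_{\ext}=\cB(\G,F)_{\red}\times V_G$ is $g\cdot(x,v)=(g\cdot x,\,v+\nu_G(g))$, an element $g$ fixes $\mathfrak b\times V_G$ pointwise exactly when it fixes $\mathfrak b$ pointwise and lies in $\ker\nu_G$; that is, $\widetilde I_{\mathfrak b}=\mathrm{Fix}(\G(F),\mathfrak b\times V_G)$. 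Intersecting with $\Nbf(F)$ and invoking Proposition \ref{propertiesII3.3}(c) (so that $P_{\mathfrak b}\cap\Nbf(F)=N_{\mathfrak b}$) gives
$$\Nbf(F)\cap\widetilde I_{\mathfrak b}=\bigl(\Nbf(F)\cap P_{\mathfrak b}\bigr)\cap\G(F)^1=N_{\mathfrak b}\cap\G(F)^1=\ker\nu_N\cap\G(F)^1=\Mbf(F)^1,$$
the last equality being the chain of identities recorded in the construction of the extended apartment, namely $\Mbf(F)^1=\ker\nu_M=\ker\nu\cap\G(F)^1=\ker\nu_N\cap\G(F)^1$. In particular the $\Nbf(F)$-fixator of $\mathfrak b\times V_G$ is $\Mbf(F)^1$.

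Finally, for the product decompositions I would combine $P_{\mathfrak b}=U_{\mathfrak b}N_{\mathfrak b}$ (Proposition \ref{propertiesII3.3}(c)) with $U_{\mathfrak b}=U_{\mathfrak b}^+U_{\mathfrak b}^-(U_{\mathfrak b}\cap\Nbf(F))$ (Proposition \ref{propertiesII3.3}(e)). Because $U_{\mathfrak b}\cap\Nbf(F)\subseteq P_{\mathfrak b}\cap\Nbf(F)=N_{\mathfrak b}$, this middle factor is absorbed into $N_{\mathfrak b}$, so $P_{\mathfrak b}=U_{\mathfrak b}^+U_{\mathfrak b}^-N_{\mathfrak b}=U_{\mathfrak b}^+U_{\mathfrak b}^-\ker\nu$ by the first paragraph. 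For the Iwahori case, $\widetilde I_{\mathfrak b}=P_{\mathfrak b}\cap\G(F)^1$; writing a general element of $P_{\mathfrak b}$ as $g=u^+u^-z$ with $u^{\pm}\in U_{\mathfrak b}^{\pm}\subseteq\U_{\Phi^{\pm}}(F)$ and $z\in\ker\nu$, and using that $\nu_G$ is a homomorphism which is trivial on unipotent elements (every $\chi\in X^*(\G)_F$ is), one gets $\nu_G(g)=\nu_G(z)$, so $g\in\G(F)^1$ if and only if $z\in\ker\nu\cap\G(F)^1=\Mbf(F)^1$. Hence $\widetilde I_{\mathfrak b}=U_{\mathfrak b}^+U_{\mathfrak b}^-\Mbf(F)^1$, as asserted. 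There is no real obstacle beyond keeping track of which of the kernels $\ker\nu$, $\ker\nu_N$, $\ker\nu_G$, $\ker\nu_M$ is in play at each step; the only genuinely geometric input is the remark that an alcove affinely spans its apartment.
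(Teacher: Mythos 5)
Your proposal is correct and follows essentially the same route as the paper: the key point in both is that an affine automorphism fixing an alcove pointwise is the identity (you argue via affine spanning of the open alcove, the paper reduces to $\a$ and uses that the vertices of $\overline{\a}$ affinely span $V$), and the remaining claims are then assembled from Proposition \ref{propertiesII3.3}(c),(e), Proposition \ref{parahr}, and the identity $\ker\nu_N\cap\G(F)^1=\Mbf(F)^1$. Your explicit verification that $\nu_G$ kills the unipotent factors $u^{\pm}$ is a detail the paper leaves implicit, but it is the right justification.
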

\begin{proof}
{The first equality is stated at the end of \cite[\S 4.1.2]{BT72}. Here is a geometric argument: 
Because $\Nbf(F)$ acts transitively on the set of alcoves of $\cA$, it suffices to prove the Lemma for $\mathfrak{b}=\a$. 
Let $n\in \Nbf(F)\cap P_{\a}$. 
By assumption $\nu_N(n)(a)=a$ for any $a\in \overline{\a}$, in particular for all $a\in \text{vert}(\a)=\{\text{vertices of $\a$}\}$. 
The set $\left\{a-a_\circ \colon a\in \text{vert}(\a)\setminus\{a_\circ\}\right\}$ forms a basis for $V$ and hence $\nu_N(n)$ must equal the identity on $V$. Therefore, $ \Nbf(F)\cap P_{\a}=\Mbf(F)\cap P_{\a}= \ker \nu$. 
By \ref{P_omega} \ref{propertiesII3.3}, this is precisely the $\Nbf(F)$-fixator of ${\a}$.} 
Therefore, the first equality in the displayed equation in the Lemma follows from \ref{5} Proposition \ref{propertiesII3.3} and the second one by Proposition \ref{HRpara}.
\end{proof}
\subsubsection{Decompositions}\label{sectiondecomp}
By \cite[\S 5.2.4]{BT84}, we have the following decomposition of parahoric subgroups:
\begin{proposition}\label{decomppara}\label{KomegacapM}
For any bounded subset  $\Upomega \subset \cA$,  
the subgroup $K_{\Upomega}\subset \G(F)$ is equal to the subgroup generated by $\Mbf(F)_1=\mathcal{Z}^\circ(\cO_F)$ and $U_{\Upomega}$, more precisely it has the following decomposition 
$$K_{\Upomega}=U_{\Upomega} \Mbf(F)_1 =U_{\Upomega}^+ U_{\Upomega}^- U_{\Upomega}^+ \Mbf(F)_1=U_{\Upomega}^+ U_{\Upomega}^- (\Nbf(F)\cap K_{\Upomega}),$$
such that the factors commute in the right factorization and the product maps 
$$\begin{tikzcd}\prod_{\alpha \in \Phi_{\red}\cap \Phi^\pm} U_{\alpha+f_{{\Upomega}}(\alpha)}\arrow{r}{\sim}&U_{\Upomega}^\pm=U_{\Upomega}\cap \U(F)^\pm=K_{\Upomega}\cap \U(F)^\pm.\end{tikzcd}$$
is an homeomorphism, whatever ordering of the factors we take.
\end{proposition}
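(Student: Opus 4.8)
The assertion is, in substance, the Bruhat--Tits decomposition \cite[\S 5.2.4]{BT84} of the $\cO_F$-points of the connected parahoric group scheme, and the plan is to invoke \emph{loc. cit.} for the one genuinely nontrivial input and extract everything else from Propositions~\ref{propertiesII3.3} and~\ref{HRpara}. First I would record that both generating sets lie in $K_\Upomega$. On the one hand $\Mbf(F)_1=\mathcal{Z}^{\circ}(\cO_F)$, and $\mathcal{Z}^{\circ}$ is the identity component of the schematic closure of $\Mbf$ inside $\mathbb{P}_{\iota(\Upomega\times V_G)}$ (\S\ref{Z}); the resulting morphism $\mathcal{Z}^{\circ}\to\mathbb{P}_{\iota(\Upomega\times V_G)}$ has connected fibres containing the respective identities, hence factors through $\mathbb{P}_{\iota(\Upomega\times V_G)}^{\circ}$, so $\Mbf(F)_1\subseteq\mathbb{P}_{\iota(\Upomega\times V_G)}^{\circ}(\cO_F)=K_\Upomega$. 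On the other hand, for each $\alpha\in\Phi_{\red}$ the group $U_{\alpha+f_\Upomega(\alpha)}$ fixes $\Upomega$ pointwise (this is the defining property of $f_\Upomega(\alpha)$ together with the fact that $U_{\alpha+r}$ fixes the closed half-apartment on which $a\mapsto\alpha(a-a_\circ)+r$ is nonnegative), so $U_\Upomega\subseteq P_\Upomega$; and each $\U_\alpha(F)$ is a divisible group, hence has trivial image in the finitely generated abelian group $\Lambda_G=\G(F)/\G(F)_1$, so $U_\Upomega\subseteq\ker\kappa_G=\G(F)_1$. By Proposition~\ref{HRpara}, $K_\Upomega=P_\Upomega\cap\G(F)_1$, whence $U_\Upomega\subseteq K_\Upomega$. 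Finally $\Mbf(F)_1\subseteq\Mbf(F)^1\subseteq\ker\nu_N\subseteq N_\Upomega$, and $N_\Upomega$ normalizes $U_\Upomega$ (part~\ref{1} of Proposition~\ref{propertiesII3.3}), so $U_\Upomega\,\Mbf(F)_1$ is already a subgroup of $K_\Upomega$.

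\textbf{The reverse inclusion and the first two factorizations.} Here one quotes \cite[\S 5.2.4]{BT84}: via the pinning of the affine root subgroup schemes attached to the $\alpha+f_\Upomega(\alpha)$ and of the torus scheme $\mathcal{Z}^{\circ}$, together with the Bruhat decomposition of the maximal reductive quotient of the special fibre of $\mathbb{P}_{\iota(\Upomega\times V_G)}^{\circ}$, one obtains that $\mathbb{P}_{\iota(\Upomega\times V_G)}^{\circ}(\cO_{F^{un}})$ is generated by $\mathcal{Z}^{\circ}(\cO_{F^{un}})$ and the groups $U_{\alpha+f_\Upomega(\alpha)}(F^{un})$, and that it equals $U_\Upomega^{+}U_\Upomega^{-}U_\Upomega^{+}\,\mathcal{Z}^{\circ}(\cO_{F^{un}})$. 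Passing to $\sigma$-fixed points is harmless: the smooth connected factors occurring (unipotent groups and the torus scheme) have vanishing first $\sigma$-cohomology over the strictly henselian base, so the factorization descends to $\cO_F$; alternatively one reads it off directly over $F$ once the generation over $F^{un}$ is known, by intersecting with $\G(F)$ via Proposition~\ref{HRpara}. Combined with the previous step this gives $K_\Upomega=U_\Upomega\,\Mbf(F)_1=U_\Upomega^{+}U_\Upomega^{-}U_\Upomega^{+}\,\Mbf(F)_1$; moreover $U_\Upomega^{\pm}=U_\Upomega\cap\U(F)^{\pm}=K_\Upomega\cap\U(F)^{\pm}$ (using $K_\Upomega\subseteq P_\Upomega$ and part~\ref{intersectionUpomega} of Proposition~\ref{propertiesII3.3}), and the product maps $\prod_{\alpha\in\Phi_{\red}\cap\Phi^{\pm}}U_{\alpha+f_\Upomega(\alpha)}\to U_\Upomega^{\pm}$ are the homeomorphisms of part~\ref{homeoproduct} of Proposition~\ref{propertiesII3.3}.

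\textbf{The $N$-factorization and the permutability of factors.} By parts~\ref{P_omega} and~\ref{5} of Proposition~\ref{propertiesII3.3} one has $P_\Upomega=U_\Upomega N_\Upomega=U_\Upomega^{+}U_\Upomega^{-}(U_\Upomega\cap\Nbf(F))N_\Upomega=U_\Upomega^{+}U_\Upomega^{-}N_\Upomega$, using $U_\Upomega\cap\Nbf(F)\subseteq\Nbf(F)\cap P_\Upomega=N_\Upomega$. Since $U_\Upomega^{\pm}\subseteq\G(F)_1$ (previous step) and a product $u^{+}u^{-}n$ lies in $\G(F)_1$ exactly when $n$ does, intersecting with $\G(F)_1$ and invoking Proposition~\ref{HRpara} yields $K_\Upomega=P_\Upomega\cap\G(F)_1=U_\Upomega^{+}U_\Upomega^{-}(N_\Upomega\cap\G(F)_1)=U_\Upomega^{+}U_\Upomega^{-}(\Nbf(F)\cap K_\Upomega)$, the last equality because $N_\Upomega=\Nbf(F)\cap P_\Upomega$ and $K_\Upomega\subseteq P_\Upomega$. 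Applying the same computation to the opposite system of positive roots gives $K_\Upomega=U_\Upomega^{-}U_\Upomega^{+}(\Nbf(F)\cap K_\Upomega)$, and taking inverses (all three factors are subgroups) moves $\Nbf(F)\cap K_\Upomega$ to the front; thus the three factors $U_\Upomega^{+}$, $U_\Upomega^{-}$, $\Nbf(F)\cap K_\Upomega$ may be multiplied in any order, which is the asserted permutability. One also has $K_\Upomega=(\Nbf(F)\cap K_\Upomega)U_\Upomega=U_\Upomega(\Nbf(F)\cap K_\Upomega)$, since $N_\Upomega\cap\G(F)_1$ normalizes $U_\Upomega$ by part~\ref{1} of Proposition~\ref{propertiesII3.3}.

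\textbf{Main obstacle.} Everything that is $F$-rational above is bookkeeping built on Propositions~\ref{propertiesII3.3} and~\ref{HRpara}; the one nontrivial ingredient — and the step I would treat as a black box — is \cite[\S 5.2.4]{BT84} itself, namely that the connected fixator is generated by the torus scheme and the affine root subgroups and admits the big--cell factorization $U^{+}U^{-}U^{+}\,\mathcal{Z}^{\circ}$. A self-contained argument would need the sharp form of the Bruhat decomposition of the maximal reductive quotient $\mathsf M$ of the special fibre of $\mathbb{P}_{\iota(\Upomega\times V_G)}^{\circ}$ — namely $\mathsf M(\kappa(F))=\mathsf U^{+}\mathsf U^{-}\mathsf U^{+}\mathsf T(\kappa(F))$ for a connected reductive group $\mathsf M$ over the residue field — lifted through the smooth reduction map by Hensel's lemma, plus a check that the passage to $\sigma$-invariants is compatible with the factorization. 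This is precisely the content of \emph{loc. cit.}, so I would cite rather than reprove it.
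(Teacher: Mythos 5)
Your proposal is correct and matches the paper's treatment: the paper offers no proof beyond the citation of \cite[\S 5.2.4]{BT84} (together with Remark \ref{524corrected} on the typographical error there), which is exactly the step you isolate as the black box. The surrounding bookkeeping you supply — the inclusion of the generators via Proposition \ref{HRpara}, and the $N$-factorization obtained by intersecting $P_\Upomega=U_\Upomega^+U_\Upomega^-N_\Upomega$ with $\G(F)_1$ — is sound and in fact more detailed than what the paper records.
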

\begin{remark}\label{524corrected}
As observed by Haines \cite[\S 6]{Haines09cor}, the reference \cite[\S5.2.4]{BT84} contains a typographical error. All of the "hats" in the four displayed equations in {\em loc. cit.} should be removed.
\end{remark}
\begin{lemma}\label{NcapIM2} 
Let ${\mathfrak{b}} \subset \cA$ be an alcove. We have $\Nbf(F)\cap I_{\mathfrak{b}}=\Mbf(F)^1 \cap \G(F)_1=\Mbf(F)_1$. 
\end{lemma}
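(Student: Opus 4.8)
The plan is to obtain both displayed equalities purely formally, by chaining together Lemma \ref{NcapIM1}, Proposition \ref{HRpara} and the Iwahori decomposition of Proposition \ref{decomppara}; no genuinely new ingredient is needed.

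First I would treat the identity $\Nbf(F)\cap I_{\mathfrak b}=\Mbf(F)^1\cap\G(F)_1$. By Proposition \ref{HRpara} one has $I_{\mathfrak b}=\widetilde I_{\mathfrak b}\cap\G(F)_1$, so intersecting with $\Nbf(F)$ and invoking $\Nbf(F)\cap\widetilde I_{\mathfrak b}=\Mbf(F)^1$ from Lemma \ref{NcapIM1} gives
\[
\Nbf(F)\cap I_{\mathfrak b}=(\Nbf(F)\cap\widetilde I_{\mathfrak b})\cap\G(F)_1=\Mbf(F)^1\cap\G(F)_1 .
\]
Moreover, since $\Nbf(F)\cap I_{\mathfrak b}\subseteq\Nbf(F)\cap\widetilde I_{\mathfrak b}=\Mbf(F)^1$ and $\Mbf(F)^1\subseteq\Mbf(F)\subseteq\Nbf(F)$, the chain of inclusions $\Mbf(F)^1\cap I_{\mathfrak b}\subseteq\Mbf(F)\cap I_{\mathfrak b}\subseteq\Nbf(F)\cap I_{\mathfrak b}\subseteq\Mbf(F)^1\cap I_{\mathfrak b}$ shows that these three subgroups coincide. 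Thus it only remains to prove $\Mbf(F)\cap I_{\mathfrak b}=\Mbf(F)_1$.

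For that last point, the inclusion $\Mbf(F)_1\subseteq\Mbf(F)\cap I_{\mathfrak b}$ is immediate from the decomposition $I_{\mathfrak b}=K_{\mathfrak b}=U_{\mathfrak b}\,\Mbf(F)_1$ of Proposition \ref{decomppara}. For the reverse inclusion I would take $x\in\Mbf(F)\cap I_{\mathfrak b}$ and write, again by Proposition \ref{decomppara}, $x=u_1^{+}u^{-}u_2^{+}m$ with $u_1^{+},u_2^{+}\in U_{\mathfrak b}^{+}\subseteq\U(F)^{+}$, $u^{-}\in U_{\mathfrak b}^{-}\subseteq\U(F)^{-}$ and $m\in\Mbf(F)_1$. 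Then
\[
u_1^{+}u^{-}=x\,m^{-1}(u_2^{+})^{-1}
\]
lies simultaneously in $\U(F)^{+}\U(F)^{-}$ and in $\Mbf(F)\,\U(F)^{+}$; since $\U^{-}\subseteq\Mbf\,\U^{-}$ and $\Mbf\,\U^{+}\subseteq\U^{+}\Mbf$, both of these sets are contained in the open (``big'') cell, i.e.\ in the image of the multiplication morphism $\U^{+}\times\Mbf\times\U^{-}\to\G$, which is an open immersion and hence injective on $F$-points. Comparing the two coordinate expressions forces the $\Mbf$-component $x\,m^{-1}$ to be trivial, so $x=m\in\Mbf(F)_1$. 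Combined with the previous paragraph this gives $\Nbf(F)\cap I_{\mathfrak b}=\Mbf(F)^1\cap\G(F)_1=\Mbf(F)_1$.

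There is no real obstacle here: the only step that is not a formal manipulation of earlier results is the last coordinate comparison in the big cell, which is elementary. One could instead try to establish $\Mbf(F)^1\cap\G(F)_1=\Mbf(F)_1$ directly from the functoriality of the Kottwitz homomorphism for $\Mbf\hookrightarrow\G$, but that route must contend with possible torsion in $X^{*}(Z(\widehat{\Mbf}))_{\text{In}}$, whereas the argument above avoids it entirely.
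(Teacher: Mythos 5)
Your proposal is correct and follows essentially the same route as the paper: the first equality is obtained by intersecting Lemma \ref{NcapIM1} with $\G(F)_1$ via Proposition \ref{parahr}, and the second by writing an element of $\Mbf(F)\cap I_{\mathfrak b}$ through the Iwahori factorization of Proposition \ref{decomppara} and comparing components in the big cell (the paper phrases this as $\Mbf(F)\cap \U(F)\U^-(F)=\{1\}$). The only cosmetic difference is your intermediate reduction to $\Mbf(F)\cap I_{\mathfrak b}=\Mbf(F)_1$, which is equivalent to the paper's step.
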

\begin{proof}
Clearly, $\Mbf(F)^1 \subset P_{\mathfrak{b}}$, hence $\Mbf(F)^1 \cap \G(F)_1 \subset \Nbf(F)\cap I_{\mathfrak{b}}$. The opposite inclusion follows from Lemma \ref{NcapIM1}. This shows the first equality. 
For the second, let $m \in \Mbf(F)^1 \cap \G(F)_1=\Nbf(F)\cap I_{\mathfrak{b}}$. 
Using the decomposition of Proposition \ref{decomppara} for $I_{\mathfrak{b}}$, write $m$ as $u_1^+ u_1^-u_2^+ m_1 \in U_{\mathfrak{b}}^+ U_{\mathfrak{b}}^- U_{\mathfrak{b}}^+ \Mbf(F)_1$, then, 
one finds that $mm_1^{-1}= \text{ad}(mm_1^{-1})(u_2^+) u_1^+ u_1^-\in \Mbf(F) \cap \U(F) \U^-(F)=\{1\}$, i.e. $m=m_1 \in \Mbf(F)_1$. 
This concludes the proof of the lemma.
\end{proof}

\subsubsection{The subgroup generated by all parahoric subgroups}\label{subgenpar}
As suggested by the condition \cite[(A 6) \S 5.2.1]{BT72}, Bruhat and Tits introduces in \cite[\S 5.2.11]{BT84} the subgroup generated by all parahoric subgroups of $\G(F)$ denoted here by $\G(F)^{\aff}$. 
They also prove in {\em loc. cit.} that $\G(F)^{\aff} = \Mbf(F)_1\cdot \varphi(\G_{sc}(F))$, where $\varphi \colon \G_{sc}\to \G,$ be the natural homomorphism from the simply connected cover of $\G^{der}$. 
For any subgroup $H\subset \G(F)$ set $H^{\aff}= H \cap \G(F)^{\aff}$.
\subsection{A double Tits system}

Set $H_1= G_1 \cap H$ for any subgroup $H\subset G$.
A good part of the following two sections relies heavily on the following theorem of Bruhat and Tits:
\begin{theorem}\label{G1titssystem}\label{Gaffadapted}
The homomorphism $\nu_N$ induces an isomorphism of groups $W_{\aff} \simeq \Nbf(F)^{\aff}/\Mbf(F)_1$. 
If $\cS \subset \Nbf(F)^{\aff}/\Mbf(F)_1$ is the subset corresponding to $\cS(\a)$, then 
the quadruple $$(\G(F)^{\aff},I_\a,\Nbf(F)^{\aff}, \cS)$$ is a double Tits system whose Weyl group is $W_{\aff}$ and the injection $  \G(F)^{\aff} \to  \G(F)$ is $I_\a$-$\Nbf(F)^{\aff}$-adapted of connected type.
\end{theorem}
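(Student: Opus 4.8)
The statement is essentially \cite[\S6.5]{BT72} together with \cite[\S5.2]{BT84}, so the plan is to reduce it to the general theory of valued root data and bootstrap from the simply connected case, using the identity $\G(F)^{\aff}=\Mbf(F)_1\cdot\varphi(\G_{sc}(F))$ recalled in \S\ref{subgenpar} and the parahoric factorisations of Lemma \ref{NcapIM1} and Proposition \ref{decomppara}.

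\emph{The affine Weyl group.} First I would identify $\nu_N\colon \Nbf(F)^{\aff}/\Mbf(F)_1\riso W_{\aff}$. Every generator $s_{\alpha+r}$ of $W_{\aff}$ equals $\nu_N(m_\alpha(u))$ for a suitable $u\in\U_\alpha(F)$ by Corollary \ref{m(u)} and \S\ref{affineweyl}; since $m_\alpha(u)$ lifts to $\G_{sc}(F)$ one has $m_\alpha(u)\in\Nbf(F)\cap\G(F)^{\aff}=\Nbf(F)^{\aff}$, so $\nu_N(\Nbf(F)^{\aff})\supseteq W_{\aff}$. The reverse inclusion is the delicate point: it says that no nontrivial symmetry of the alcove $\a$ is realised in $\G(F)^{\aff}/\Mbf(F)_1$, and I would deduce it from the simply connected case, where \cite[\S6.5]{BT72} produces on $\varphi(\G_{sc}(F))$ a Tits system with Coxeter Weyl group $W_{\aff}$ attached to the root system $\Sigma$, together with the fact that $\Mbf(F)_1=\mathcal{Z}^\circ(\cO_F)$ acts trivially on $\A_{\red}(\G,\Sbf)$ and normalises every $U_\Upomega$ (Proposition \ref{propertiesII3.3}). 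Combined with $\ker\nu_N\cap\G(F)^{\aff}=\Mbf(F)^1\cap\G(F)_1=\Mbf(F)_1$ — which holds by Lemma \ref{NcapIM2} since parahoric subgroups lie in $\ker\kappa_G=\G(F)_1$ — this yields the asserted isomorphism, and $\cS(\a)$ corresponds to a distinguished generating set $\cS$.

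\emph{The Tits system axioms.} Next I would verify the defining axioms for $(\G(F)^{\aff},I_\a,\Nbf(F)^{\aff},\cS)$. Generation of $\G(F)^{\aff}$ by $I_\a$ and $\Nbf(F)^{\aff}$, and the Bruhat decomposition relative to $I_\a$, follow from the parahoric factorisation $I_\a=U_\a^+U_\a^-\,\Mbf(F)_1$ (Lemma \ref{NcapIM1}, Proposition \ref{decomppara}) and the transitivity statements of Proposition \ref{buildproperties}; the axiom $I_\a\cap\Nbf(F)^{\aff}=\Mbf(F)_1\trianglelefteq\Nbf(F)^{\aff}$ is Lemma \ref{NcapIM2}. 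The exchange condition and the nondegeneracy $sI_\a s\neq I_\a$ reduce, via the ordered product decomposition of $U_\Upomega$ and the conjugation rule of Lemma \ref{NactionUalphar}, to a rank-one computation inside the subgroup generated by $\U_{\pm\alpha}(F)$ for a simple affine root $\alpha$ — that is, to the $\SL_2$ calculation of \cite[\S6.4]{BT72} — which I would invoke directly. This shows the quadruple is a Tits system with Weyl group $W_{\aff}$.

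\emph{Connected type and the double structure.} Finally, since $\Nbf(F)^{\aff}=\Nbf(F)\cap\G(F)^{\aff}$, $I_\a$ is the connected parahoric rather than $\widetilde I_\a$, and $\widetilde I_\a\cap\G(F)^{\aff}=I_\a$ by Proposition \ref{HRpara}, the inclusion $\G(F)^{\aff}\hookrightarrow\G(F)$ is $I_\a$-$\Nbf(F)^{\aff}$-adapted of connected type in the sense of \cite[\S5.2.6]{BT84}; repeating the argument with $\widetilde I_\a$, $\Nbf(F)^1$ and the full fixator $P_\a$ in place of $\Mbf(F)_1$ yields the companion (saturated) Tits system, whence the double Tits system. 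The main obstacle is the reverse inclusion $\nu_N(\Nbf(F)^{\aff})\subseteq W_{\aff}$, and more generally the bookkeeping separating $\G(F)^{\aff}$, $\G(F)_1$ and $\G(F)$ (and $I_\a$ from $\widetilde I_\a$); this is exactly where one must lean on the simply connected case and on \cite[\S5.2]{BT84}.
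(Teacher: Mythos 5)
Your overall strategy (reduce to the simply connected case via $\G(F)^{\aff}=\Mbf(F)_1\cdot\varphi(\G_{sc}(F))$ and invoke the rank-one computations of \cite[\S 6.4--6.5]{BT72}) is essentially a re-derivation of what the paper simply cites: the proof in the text is one line, namely \cite[Proposition 5.2.12]{BT84} combined with $I_\a\cap\Nbf(F)^{\aff}=\Mbf(F)_1$ from Lemma \ref{NcapIM2}. Your first two steps (the identification $W_{\aff}\simeq\Nbf(F)^{\aff}/\Mbf(F)_1$ and the verification of the Tits system axioms for $(\G(F)^{\aff},I_\a,\Nbf(F)^{\aff},\cS)$) are sketched along the standard lines and are acceptable at that level of detail.

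There is, however, a genuine gap in your last step: you have misread what \emph{double} Tits system means. As the paper states immediately after the theorem (following \cite[\S 1.2.6]{BT72}), the second Tits system of the pair is $(\G(F)^{\aff},\B(F)^{\aff},\Nbf(F)^{\aff})$, i.e. the one built on the minimal parabolic $\B(F)^{\aff}=\B(F)\cap\G(F)^{\aff}$, whose Weyl group is the finite relative Weyl group $\Nbf(F)^{\aff}/\Mbf(F)^{\aff}\simeq W$ and which encodes the Iwasawa decomposition. Your proposed companion system, obtained by replacing $I_\a$ by $\widetilde I_\a$ and $\Mbf(F)_1$ by $P_\a$, is not this object; worse, it produces nothing new inside $\G(F)^{\aff}$, since by Proposition \ref{HRpara} one has $\widetilde I_\a\cap\G(F)^{\aff}=I_\a$ (so your ``saturated'' system coincides with the one already built), and $P_\a\not\subset\G(F)^{\aff}$ in general because $P_\a=U_\a^+U_\a^-\ker\nu$ contains noncompact central elements. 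So as written you never verify the second Tits system that the statement actually asserts. Relatedly, your justification of adaptedness of $\G(F)^{\aff}\hookrightarrow\G(F)$ does not engage with the actual condition recalled below the theorem (every $\G(F)$-conjugate of the pair $(I_\a,\Nbf(F)^{\aff})$ is already a $\G(F)^{\aff}$-conjugate), which rests on the transitivity of $\G(F)^{\aff}$ on alcoves and apartments rather than on the identities you list.
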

\begin{proof}
This is \cite[Proposition 5.2.12]{BT84} together with the fact that $I_\a \cap \Nbf(F)^{\aff}= \Mbf(F)_1$ proved in Lemma \ref{NcapIM2}.
\end{proof}
The nomenclature "double Tits system" \cite[\S 1.2.6]{BT72} means that both $$( \G(F)^{\aff} ,I_\a,\Nbf(F)^{\aff})\text{ and }( \G(F)^{\aff}, \B(F)^{\aff},\Nbf(F)^{\aff})$$ 
are Tits systems. 
The second assertion means that \cite[\S 1.2.13]{BT72}: for any $g\in \G(F)$, there exists $h\in \G(F)^{\aff}$ such that $hI_\a h^{-1}=gI_\a g^{-1}$ and $h\Nbf(F)^{\aff} h^{-1}=g\Nbf(F)^{\aff} g^{-1}$. 
We refer the reader to the section \cite[\S 5.1]{BT72} for more details, especially Theorem 5.1.3 in {\em loc. cit.} for equivalent properties. 
Being of connected type (Definition 4.1.3 in {\em loc. cit.}) shows that the images of $\nu(\Nbf(F))$ and $\nu(\Nbf(F)^{\aff})$ in $\GL(V)$ are equal, i.e. $\Nbf(F)^{\aff} / \Mbf(F)^{\aff} \simeq W=\Nbf(F)/\Mbf(F)$.
{
Thanks to Theorem \ref{Gaffadapted} we have:
\begin{corollary}\label{BNtypecor}
\begin{enumerate}
\item If $a\in \cA$ is a special point, then by the claim below \cite[Definition 4.1.3]{BT72}, we have $ \Nbf(F) \cap P_{a} =  (\Nbf(F) \cap K_{a}) \cdot \ker \nu= (\Nbf(F) \cap U_{a}) \cdot \ker \nu $, thus $P_{a} =  \ker \nu \, \cdot U_{a}=  \ker \nu \, \cdot K_{a}$.
\item For any subset $\Upomega \subset \cA$, set $P_{\Upomega}^\dag$ for its $\G(F)$-stabilizer, i.e. $\{g\in \G(F) \colon g \cdot \Upomega= \Upomega \}$ and $N_{\Upomega}^\dag= P_{\Upomega}^\dag \cap \Nbf(F)$. 
We have 
$P_{\Upomega}^\dag=N_{\Upomega}^\dag K_{\Upomega}$. 
\end{enumerate}
\end{corollary}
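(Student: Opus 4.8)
I would first prove the single statement $\Nbf(F)\cap P_a=(\Nbf(F)\cap K_a)\cdot\ker\nu$ and deduce everything else from it. Write $N_a=\Nbf(F)\cap P_a$. Since $\Mbf(F)_1\subseteq\Mbf(F)^1=\ker\nu_M\subseteq\ker\nu$ and $K_a=U_a\Mbf(F)_1$ (Proposition~\ref{decomppara}), any $n\in N_a\cap K_a$ writes as $n=u m_1$ with $u\in U_a$, $m_1\in\Mbf(F)_1$, and then $u=n m_1^{-1}\in\Nbf(F)\cap U_a$; hence $\Nbf(F)\cap K_a=(\Nbf(F)\cap U_a)\cdot\Mbf(F)_1$, so the identity above is equivalent to $N_a=(\Nbf(F)\cap U_a)\cdot\ker\nu$. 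Moreover $\ker\nu=\ker\nu_N\subseteq N_a$ normalizes $U_a$ (Proposition~\ref{propertiesII3.3}(\ref{1})), so from $P_a=N_aU_a$ (Proposition~\ref{propertiesII3.3}(\ref{P_omega})) we get $P_a=(\Nbf(F)\cap K_a)U_a\ker\nu\subseteq K_a\ker\nu\subseteq P_a$, forcing $P_a=K_a\ker\nu=\ker\nu\, K_a$ (the last equality because $P_a$ is a group), and likewise $P_a=U_a\ker\nu=\ker\nu\, U_a$.

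\textbf{The identity $N_a=(\Nbf(F)\cap K_a)\ker\nu$.} The inclusion $\supseteq$ is clear from $K_a\subseteq P_a$ and $\ker\nu\subseteq P_a$. For $\subseteq$, take $n\in N_a$ and set $w=\jmath_W(n)\in W$. Because $a$ is special, the natural map $W_{\aff}^a\to W$ is an isomorphism (\S\ref{affineweyl}); lift the preimage $\overline w\in W_{\aff}^a$ of $w$ to $n'\in\Nbf(F)^{\aff}$, using $W_{\aff}=\nu_N(\Nbf(F)^{\aff})$ (Theorem~\ref{Gaffadapted}). Then $\nu_N(n')=\overline w$ fixes $a$, so $n'\in P_a$; and since $\Nbf(F)^{\aff}\subseteq\G(F)^{\aff}\subseteq\G(F)_1$ (\S\ref{subgenpar} and Proposition~\ref{HRpara}), we obtain $n'\in P_a\cap\G(F)_1=K_a$, i.e.\ $n'\in\Nbf(F)\cap K_a$. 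Finally $n(n')^{-1}\in\ker\jmath_W=\Mbf(F)$, so it acts on $\A_{\red}(\G,\Sbf)$ by the translation $\nu(n(n')^{-1})$, which fixes $a$ (both $n$ and $n'$ do) and is therefore trivial; thus $n(n')^{-1}\in\ker\nu_N=\ker\nu$ and $n\in(\Nbf(F)\cap K_a)\ker\nu$. (This identity is precisely the claim following \cite[Definition~4.1.3]{BT72}, applicable because Theorem~\ref{Gaffadapted} makes $\G(F)^{\aff}\hookrightarrow\G(F)$ adapted of connected type.)

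\textbf{Part (2).} Here only the building is used. Let $g\in P_\Upomega^\dag$, so $g\Upomega=\Upomega$; then $\cA$ and $g\cA$ are apartments both containing $\Upomega$, so by Proposition~\ref{buildproperties}(\ref{transitivity}) there is $u\in U_\Upomega$ with $u\cdot(g\cA)=\cA$. Then $ug$ stabilizes $\cA$, hence $ug\in\Nbf(F)$; and $u$ fixes $\Upomega$ pointwise ($U_\Upomega\subseteq P_\Upomega$), so $(ug)\cdot\Upomega=u\cdot\Upomega=\Upomega$, i.e.\ $ug\in N_\Upomega^\dag$. Thus $g=u^{-1}(ug)\in U_\Upomega N_\Upomega^\dag\subseteq K_\Upomega N_\Upomega^\dag$. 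Finally $N_\Upomega^\dag$ normalizes $K_\Upomega$: for $n\in N_\Upomega^\dag$ one has $nP_\Upomega n^{-1}=P_{n\Upomega}=P_\Upomega$ (the fixator depends only on the set $n\Upomega=\Upomega$) and $n\G(F)_1 n^{-1}=\G(F)_1$, so $nK_\Upomega n^{-1}=K_\Upomega$ by Proposition~\ref{HRpara}. Hence $P_\Upomega^\dag\subseteq K_\Upomega N_\Upomega^\dag=N_\Upomega^\dag K_\Upomega$, and the reverse inclusion follows from $N_\Upomega^\dag\subseteq P_\Upomega^\dag$ and $K_\Upomega\subseteq P_\Upomega\subseteq P_\Upomega^\dag$.

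\textbf{Main obstacle.} Everything except the inclusion $\Nbf(F)\cap P_a\subseteq(\Nbf(F)\cap K_a)\ker\nu$ is routine manipulation with the decompositions of Propositions~\ref{propertiesII3.3}, \ref{decomppara}, \ref{HRpara} and the building axioms of Proposition~\ref{buildproperties}. The genuine input is that inclusion: it is exactly here that one combines specialness of $a$ (giving $W_{\aff}^a\cong W$), the connected-type property of the double Tits system (Theorem~\ref{Gaffadapted}, to lift $W$ into $\Nbf(F)^{\aff}$), and the identification $K_a=P_a\cap\G(F)_1$ (Proposition~\ref{HRpara}, which places those lifts inside $K_a$).
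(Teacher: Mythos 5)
Your argument is correct, and it is worth noting that it is genuinely more than what the paper records: the paper's proof of this corollary is a bare citation to \cite[\S 4.1]{BT72}, whereas you reconstruct the content of the ``claim below Definition 4.1.3'' from the ingredients already assembled in the text. Your reduction of (1) to the single inclusion $\Nbf(F)\cap P_a\subseteq(\Nbf(F)\cap K_a)\ker\nu$, and your proof of that inclusion by lifting $w\in W$ through the isomorphism $W_{\aff}^a\cong W$ (specialness of $a$) into $\Nbf(F)^{\aff}$ via Theorem \ref{Gaffadapted}, then landing in $K_a=P_a\cap\G(F)_1$ via Proposition \ref{HRpara}, is exactly the mechanism that the ``adapted of connected type'' property of the double Tits system packages; you have correctly identified this as the only non-routine step. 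Part (2) is likewise sound: the transitivity of $U_\Upomega$ on apartments containing $\Upomega$ is the standard building-theoretic device, and your verification that $N_\Upomega^\dag$ normalizes $K_\Upomega$ (via $nP_\Upomega n^{-1}=P_{n\Upomega}=P_\Upomega$ and normality of $\G(F)_1$) is needed and correctly supplied. The only cost of your route relative to the citation is length; what it buys is a proof that is self-contained within the paper's own framework and makes visible where specialness of $a$ and the identification $K_a=P_a\cap\G(F)_1$ actually enter.
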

\begin{proof}
See \cite[\S 4.1]{BT72} for these statement.
\end{proof}
}
\begin{corollary}\label{uniqueparahoricM}\label{iwahoriM} 
For any bounded subset $\Upomega \subset \cA$, we have 
$$\widetilde{K}_{\Upomega} \cap \Mbf(F)=\mathbb{P}_{\iota(\Upomega\times V_G)}(\cO_F) \cap \Mbf(F) = \Mbf(F)^1 \text{ and } K_{\Upomega} \cap\Mbf(F) =\Mbf(F)_1.$$
\end{corollary}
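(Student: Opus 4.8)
The statement to prove is Corollary \ref{uniqueparahoricM}: for any bounded subset $\Upomega \subset \cA$,
$$\widetilde{K}_{\Upomega} \cap \Mbf(F)=\mathbb{P}_{\iota(\Upomega\times V_G)}(\cO_F) \cap \Mbf(F) = \Mbf(F)^1 \text{ and } K_{\Upomega} \cap\Mbf(F) =\Mbf(F)_1.$$

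First I would dispose of the second equality. By Proposition \ref{HRpara} we have $K_\Upomega = \widetilde{K}_\Upomega \cap \G(F)_1$, so
$$K_\Upomega \cap \Mbf(F) = \widetilde{K}_\Upomega \cap \Mbf(F) \cap \G(F)_1.$$
Granting the first chain of equalities, this is $\Mbf(F)^1 \cap \G(F)_1$, which is exactly $\Mbf(F)_1$ by Lemma \ref{NcapIM2} (first equality there). So everything reduces to proving $\widetilde{K}_{\Upomega} \cap \Mbf(F)=\mathbb{P}_{\iota(\Upomega\times V_G)}(\cO_F) \cap \Mbf(F) = \Mbf(F)^1$.

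For the inclusion $\Mbf(F)^1 \subset \widetilde{K}_\Upomega \cap \Mbf(F)$: recall $\widetilde{K}_\Upomega = P_\Upomega \cap \G(F)^1$ by Proposition \ref{HRpara}. An element $m\in \Mbf(F)^1 = \ker\nu_M$ fixes every point of $\cA$ pointwise (Remark after Proposition \ref{A(G,S)}: $\ker\nu_N \supset \Mbf(F)^1$ acts trivially on $\cA$), so $m \in N_\Upomega \subset P_\Upomega$; and $\Mbf(F)^1 \subset \G(F)^1$ since $\nu_G$ restricted to $\Mbf(F)$ factors through $\nu_M$ (the identity $\Mbf(F)^1 = \ker\nu \cap \G(F)^1$ is recorded after the definition of $\nu_{N,\ext}$). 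Hence $\Mbf(F)^1 \subset P_\Upomega \cap \G(F)^1 \cap \Mbf(F) = \widetilde{K}_\Upomega \cap \Mbf(F)$. Since $\mathbb{P}_{\iota(\Upomega\times V_G)}(\cO_F) \supset \mathbb{P}^\circ_{\iota(\Upomega\times V_G)}(\cO_F) = K_\Upomega$ and also $\mathbb{P}_{\iota(\Upomega\times V_G)}(\cO_F) = \widetilde{K}_\Upomega$ (again Proposition \ref{HRpara}, using $\mathbb{P}_{\iota(\Upomega\times V_G)}(\cO_{F^{un}})\cap\G(F)=\widetilde K_\Upomega$ and faithfully flat descent — though here I can just cite that $\widetilde K_\Upomega = \mathbb{P}_{\iota(\Upomega\times V_G)}(\cO_{F^{un}})\cap \G(F)$ directly from Definition \ref{parahoricsubgroup}), the middle term sits between the two, so it suffices to prove $\widetilde{K}_\Upomega \cap \Mbf(F) \subset \Mbf(F)^1$.

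For the reverse inclusion $\widetilde{K}_\Upomega \cap \Mbf(F) \subset \Mbf(F)^1$: since $\Upomega$ is bounded, $\widetilde K_\Upomega = P_\Upomega \cap \G(F)^1$ is contained in $\G(F)^1$, and $\widetilde K_\Upomega$ is a bounded (compact mod center... actually compact since we are in $\G(F)^1$) subgroup of $\G(F)$; hence $\widetilde K_\Upomega \cap \Mbf(F)$ is a bounded subgroup of $\Mbf(F)$. But $\Mbf(F)^1$ is, by \cite[Proposition 1.2]{Lan96} (quoted right after Lemma \ref{nu}), the maximal compact open subgroup of $\Mbf(F)$, hence contains every bounded subgroup. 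Therefore $\widetilde K_\Upomega \cap \Mbf(F) \subset \Mbf(F)^1$, completing the circle of inclusions. The main obstacle here is making sure $\widetilde K_\Upomega$ (equivalently $\mathbb{P}_{\iota(\Upomega\times V_G)}(\cO_{F^{un}})\cap \G(F)$) is genuinely bounded in $\G(F)$: this is because $\Upomega$ has bounded image in the reduced building and we have intersected with $\G(F)^1$ to kill the central split part, which is exactly the condition under which Bruhat–Tits attach a (smooth affine, finite-type) $\cO_F$-group scheme — alternatively one invokes Corollary \ref{uniquepara} applied to $\Mbf$ together with Proposition \ref{HRpara} applied to $\Mbf$ directly, which is essentially how the argument is sketched in \S \ref{Z}. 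I expect the only subtlety to be bookkeeping between the $\flat$-free statements of \S \ref{Z} and the present formulation; the substance is just "a bounded subgroup of $\Mbf(F)$ lies in the maximal compact $\Mbf(F)^1$."
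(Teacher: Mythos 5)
Your proposal is correct, and its skeleton matches the paper's: both reduce $K_{\Upomega}\cap\Mbf(F)=\Mbf(F)_1$ to the first chain of equalities via Proposition \ref{HRpara} and Lemma \ref{NcapIM2}, and both prove $\widetilde{K}_{\Upomega}\cap\Mbf(F)=\Mbf(F)^1$ by double inclusion, with the easy inclusion $\Mbf(F)^1\subset\widetilde{K}_{\Upomega}$ handled identically (elements of $\ker\nu_M$ fix all of $\cA_{\ext}$ and lie in $\G(F)^1$).

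The one place you genuinely diverge is the hard inclusion $\widetilde{K}_{\Upomega}\cap\Mbf(F)\subset\Mbf(F)^1$. The paper's argument is a one-liner: $\Mbf(F)$ acts on $\cA_{\ext}$ by translations via $\nu_M$, so any $m\in\Mbf(F)$ fixing even a single point of $\iota(\Upomega\times V_G)$ has $\nu_M(m)=0$, i.e.\ lies in $\Mbf(F)^1$. You instead argue that $\widetilde{K}_{\Upomega}$ is a bounded subgroup of $\G(F)$, hence $\widetilde{K}_{\Upomega}\cap\Mbf(F)$ is a bounded subgroup of $\Mbf(F)$ and is swallowed by the maximal compact open subgroup $\Mbf(F)^1$ of \cite[Proposition 1.2]{Lan96}. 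This is valid (and consistent with how the paper invokes maximality of $M^1$ in \S\ref{flatpartout}), but it imports an extra input — boundedness of the extended fixator, which you yourself flag as ``the main obstacle'' — that the translation argument avoids entirely: you only need $m$ to fix \emph{one} point, not that the whole fixator is compact. The paper also records a second proof of the final equality via \cite[Lemma 4.1.1]{HR10} together with Corollary \ref{uniquepara} (every parahoric of $\Mbf(F)$ is $\Mbf(F)_1$), which is close to the alternative you sketch at the end. In short: same architecture, but the paper's justification of the key inclusion is more economical, while yours trades a geometric observation for a compactness/maximality argument.
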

\begin{proof}
{
The equality $\widetilde{K}_{\Upomega} \cap \Mbf(F) = \Mbf(F)^1$ is clear by double inclusions, since $\Mbf(F)$ acts by translations, so any element of $\Mbf(F)$ fixing any point fixes all of $\cA_{\ext}$, i.e. is in $\Mbf(F)^1$. 
The last equality $K_{\Upomega} \cap\Mbf(F) =\Mbf(F)_1$ follows readily from the first one using Lemma \ref{NcapIM2}. 

The last equality, can also be proved using \cite[Lemma 4.1.1]{HR10}, which shows that for any facet $\cF$ in $\cA_{\ext}$ the subgroup $\Mbf(F) \cap K_{\cF}$ is a parahoric subgroup of $\Mbf(F)$ and hence must be $\Mbf(F)_1$ by Corollary \ref{uniquepara}.}
\end{proof}
\begin{lemma}\label{repofWinK}
For any special point $a \in \cA$,  the canonical injection$\begin{tikzcd}[column sep= small]\Nbf(F)\cap K_{a}/\Mbf(F)_1\arrow[r,hook]& W\end{tikzcd}$is an isomorphism. 
\end{lemma}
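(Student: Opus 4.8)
The plan is to establish injectivity and surjectivity separately, using only the special-vertex property and the two identifications $K_{a}\cap\Mbf(F)=\Mbf(F)_1$ and $\Nbf(F)\cap P_{a}=(\Nbf(F)\cap K_{a})\cdot\ker\nu$ already at our disposal.

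\medskip
\noindent\textbf{Injectivity.} This is what makes the displayed map "canonical'': I would observe that $\Mbf(F)_1\subseteq\Nbf(F)\cap K_{a}$ (Proposition~\ref{decomppara}, applied to $\Upomega=\{a\}$), so the map is well defined, and that the kernel of the composite $\Nbf(F)\cap K_{a}\hookrightarrow\Nbf(F)\twoheadrightarrow W=\Nbf(F)/\Mbf(F)$ is $(\Nbf(F)\cap K_{a})\cap\Mbf(F)=K_{a}\cap\Mbf(F)$, which equals $\Mbf(F)_1$ by Corollary~\ref{uniqueparahoricM}. Hence $\Nbf(F)\cap K_{a}/\Mbf(F)_1$ embeds into $W$.

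\medskip
\noindent\textbf{Surjectivity.} The idea is to transfer the question to the full fixator $N_{a}:=\Nbf(F)\cap P_{a}$ and then use that $a$ is special. Since $a$ is special, Corollary~\ref{BNtypecor}(1) gives $N_{a}=(\Nbf(F)\cap K_{a})\cdot\ker\nu$; as $\ker\nu$ is a subgroup of $\Mbf(F)$ it maps trivially into $W$, so $\Nbf(F)\cap K_{a}$ and $N_{a}$ have the same image in $W$, and it suffices to show $N_{a}\twoheadrightarrow W$. Now the finite reflection group $W_{\aff}^{a}=\langle s_{\beta}:\beta\in\Phi_{\aff}^{a}\rangle\subseteq W_{\aff}\subseteq\nu_N(\Nbf(F))$ is generated by reflections in affine hyperplanes through $a$, hence fixes $a$; therefore $W_{\aff}^{a}\subseteq\nu_N(N_{a})$. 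Because $a$ is special, the composite $W_{\aff}^{a}\hookrightarrow\nu_N(\Nbf(F))\twoheadrightarrow W$ is an isomorphism (\S\ref{affineweyl}), so $\nu_N(N_{a})$, and a fortiori $N_{a}$, surjects onto $W$. Combined with the previous step, $\Nbf(F)\cap K_{a}/\Mbf(F)_1\riso W$.

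\medskip
\noindent\textbf{Expected obstacle.} I do not anticipate a genuine difficulty; the only delicate point is making sure the lifts of $W_{\aff}^{a}$ land in the parahoric $K_{a}$ and not merely in the (possibly non-compact) fixator $P_{a}$, and this is precisely what the special-point version of Corollary~\ref{BNtypecor} provides. An alternative that avoids passing through $P_{a}$: lift $W_{\aff}^{a}$ directly into $\Nbf(F)^{\aff}$ via the isomorphism $W_{\aff}\cong\Nbf(F)^{\aff}/\Mbf(F)_1$ of Theorem~\ref{G1titssystem}, note that such lifts fix $a$ and lie in $\G(F)^{\aff}\subseteq\G(F)_1$ (each parahoric lies in $\ker\kappa_G$, hence so does the subgroup they generate), and conclude via $P_{a}\cap\G(F)_1=K_{a}$ from Proposition~\ref{HRpara}.
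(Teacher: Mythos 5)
Your proof is correct, and both halves are properly grounded: injectivity via $K_{a}\cap\Mbf(F)=\Mbf(F)_1$ (Corollary \ref{uniqueparahoricM}) is exactly what makes the map in the statement a well-defined injection, and the surjectivity argument has no circularity, since Corollary \ref{BNtypecor} and the special-vertex characterization of \S\ref{affineweyl} are established before this lemma. The route is genuinely different from the paper's, however. The paper's proof is a single sentence relying on the valuation-theoretic characterization of special points (Remark \ref{specialpointvaluation}): for each $\alpha\in\Phi_{\red}$ there is $u_\alpha$ with $m_\alpha(u_\alpha)\in U_{a}\cap\Nbf(F)$ fixing $a$ and mapping to $s_\alpha$ in $W$; since $U_{a}\subset K_{a}$ by Proposition \ref{decomppara} and $W$ is generated by the $s_\alpha$, surjectivity is immediate. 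That argument produces explicit lifts of the generating reflections directly inside the parahoric, bypassing the fixator $P_{a}$ entirely. Your argument instead passes through the combinatorial characterization $W_{\aff}^{a}\iso W$ and the Tits-system decomposition $N_{a}=(\Nbf(F)\cap K_{a})\cdot\ker\nu$, which is heavier machinery for this particular statement but has the virtue of isolating exactly where the delicate point lies (getting from the non-compact fixator $P_{a}$ down to $K_{a}$); your closing alternative via $P_{a}\cap\G(F)_1=K_{a}$ from Proposition \ref{HRpara} is in fact the cleanest variant of your approach and is also valid.
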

\begin{proof}
{
Because $a$ is special, we know by Remark \ref{specialpointvaluation} that for any $\alpha \in \Phi_{\red} $ there exists $m_\alpha \in U_{a}\cap \Nbf(F)$
whose image in $W=\langle s_\alpha\colon \alpha \in \Phi_{\red} \rangle$ is the reflection $s_\alpha$. 
}
\end{proof}
\begin{lemma}\label{staba}
Let ${\mathfrak{b}} \subset \cA$ be an alcove.
The subgroup ${P}_{\mathfrak{b}}^{\dag}=N_{\mathfrak{b}}^{\dag}  \cdot I_{\mathfrak{b}}$ is the $\G(F)$-normalizer of $P_{\mathfrak{b}}$, $\widetilde{I}_{\mathfrak{b}}$ and $I_{\mathfrak{b}}$. Moreover, 
$$N_{\mathfrak{b}}^{\dag} \cap \G(F)^{\aff}= \Mbf(F)_1 \text{ and } P_{\mathfrak{b}}^{\dag} \cap \Mbf(F)= \ker \nu.$$
\end{lemma}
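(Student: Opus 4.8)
The plan is to reduce all three normalizer assertions to the single case of $I_{\mathfrak{b}}$, and then to read off the two displayed identities from the description of $P_{\mathfrak{b}}^{\dag}$. The identity $P_{\mathfrak{b}}^{\dag}=N_{\mathfrak{b}}^{\dag}\cdot I_{\mathfrak{b}}$ is nothing but Corollary~\ref{BNtypecor}(2) applied with $\Upomega=\mathfrak{b}$, since $K_{\mathfrak{b}}=I_{\mathfrak{b}}$. For the case of $I_{\mathfrak{b}}$: if $g\in P_{\mathfrak{b}}^{\dag}$, then $gI_{\mathfrak{b}}g^{-1}=gK_{\mathfrak{b}}g^{-1}=K_{g\cdot\mathfrak{b}}=K_{\mathfrak{b}}=I_{\mathfrak{b}}$ by the equivariance $gK_{\cF}g^{-1}=K_{g\cdot\cF}$ recorded at the end of \S\ref{Parahoric}. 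Conversely, if $g\in\G(F)$ normalizes $I_{\mathfrak{b}}$, then $K_{g\cdot\mathfrak{b}}=gK_{\mathfrak{b}}g^{-1}=K_{\mathfrak{b}}$; since $g$ acts on the factor $V_G$ of $\cB(\G,F)_{\ext}=\cB(\G,F)_{\red}\times V_G$ by a translation, $g\cdot(\mathfrak{b}\times V_G)=(g\cdot\mathfrak{b})\times V_G$, and Lemma~\ref{equalityofparah} (a facet of the extended building is determined by its connected $\cO_F$-fixator) forces $g\cdot\mathfrak{b}=\mathfrak{b}$, i.e. $g\in P_{\mathfrak{b}}^{\dag}$.

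For $P_{\mathfrak{b}}$ and $\widetilde{I}_{\mathfrak{b}}$ I would argue one inclusion directly and bootstrap the other. The inclusions $P_{\mathfrak{b}}^{\dag}\subseteq\{g:gP_{\mathfrak{b}}g^{-1}=P_{\mathfrak{b}}\}$ and $P_{\mathfrak{b}}^{\dag}\subseteq\{g:g\widetilde{I}_{\mathfrak{b}}g^{-1}=\widetilde{I}_{\mathfrak{b}}\}$ follow from $gP_{\Upomega}g^{-1}=P_{g\cdot\Upomega}$ for the fixators of Proposition~\ref{buildproperties} and from $g\widetilde{K}_{\Upomega}g^{-1}=\widetilde{K}_{g\cdot\Upomega}$ (using that $\iota$ is $\G(F)$-equivariant). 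For the reverse inclusions I would use that $\G(F)_1$ is normal in $\G(F)$, being the kernel of the Kottwitz homomorphism, together with the identities $I_{\mathfrak{b}}=P_{\mathfrak{b}}\cap\G(F)_1=\widetilde{I}_{\mathfrak{b}}\cap\G(F)_1$ of Proposition~\ref{HRpara}: any $g$ normalizing $P_{\mathfrak{b}}$ (resp. $\widetilde{I}_{\mathfrak{b}}$) then normalizes $I_{\mathfrak{b}}=P_{\mathfrak{b}}\cap\G(F)_1$ (resp. $=\widetilde{I}_{\mathfrak{b}}\cap\G(F)_1$) as well, hence lies in $P_{\mathfrak{b}}^{\dag}$ by the first paragraph.

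It remains to establish the two displayed equalities. For the first, $N_{\mathfrak{b}}^{\dag}\cap\G(F)^{\aff}=P_{\mathfrak{b}}^{\dag}\cap\Nbf(F)^{\aff}$, and an $n\in\Nbf(F)^{\aff}$ lies in $P_{\mathfrak{b}}^{\dag}$ exactly when $\nu_N(n)$ stabilizes the alcove $\mathfrak{b}$; by Theorem~\ref{G1titssystem} the homomorphism $\nu_N$ maps $\Nbf(F)^{\aff}$ onto $W_{\aff}$ with kernel $\Mbf(F)_1$, and $W_{\aff}$ acts simply transitively on the alcoves of $\cA$ (\S\ref{affineweyl}), so the stabilizer of $\mathfrak{b}$ in $W_{\aff}$ is trivial, whence $n\in\Mbf(F)_1$; conversely $\Mbf(F)_1\subseteq\ker\nu_N\subseteq P_{\mathfrak{b}}^{\dag}$, $\Mbf(F)_1\subseteq\Nbf(F)$, and $\Mbf(F)_1$ is a parahoric subgroup, so $\Mbf(F)_1\subseteq\G(F)^{\aff}$. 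For the second, an $m\in\Mbf(F)$ acts on $\cA$ by the translation $\nu(m)$, and a nonzero translation cannot preserve the bounded nonempty set $\mathfrak{b}$; hence $m$ stabilizes $\mathfrak{b}$ iff $\nu(m)=0$, i.e. iff $m\in\ker\nu$, and conversely $\ker\nu=\ker\nu_N$ fixes $\mathfrak{b}$ pointwise so lies in $P_{\mathfrak{b}}^{\dag}$. The only non-formal inputs here are Lemma~\ref{equalityofparah} and the simple transitivity of $W_{\aff}$ on alcoves; the step most deserving of care is the passage between the reduced and extended buildings — one must keep in mind that the alcove $\mathfrak{b}\subset\cB(\G,F)_{\red}$ corresponds to the facet $\mathfrak{b}\times V_G$ of $\cB(\G,F)_{\ext}$ and that $\G(F)$ acts on the $V_G$-factor by translations — but I do not anticipate a substantive obstacle.
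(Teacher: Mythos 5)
Your argument is correct. The paper's own proof dismisses the normalizer assertions as ``clear,'' so your careful treatment — reducing the normalizers of $P_{\mathfrak{b}}$ and $\widetilde{I}_{\mathfrak{b}}$ to that of $I_{\mathfrak{b}}$ by intersecting with the normal subgroup $\G(F)_1$ (via Proposition \ref{HRpara}) and using Lemma \ref{equalityofparah} for the converse inclusion — is added detail rather than a deviation. Where you genuinely diverge is the identity $N_{\mathfrak{b}}^{\dag}\cap\G(F)^{\aff}=\Mbf(F)_1$: the paper notes that $P_{\mathfrak{b}}^{\dag}\cap\G(F)^{\aff}$ is the $\G(F)^{\aff}$-normalizer of the Iwahori, invokes the Tits-system fact that the Iwahori is its own normalizer to get $P_{\mathfrak{b}}^{\dag}\cap\G(F)^{\aff}=I_{\mathfrak{b}}$, and then intersects with $\Nbf(F)$ using Lemma \ref{NcapIM2}; you instead push $n\in\Nbf(F)^{\aff}\cap P_{\mathfrak{b}}^{\dag}$ into $W_{\aff}$ via Theorem \ref{G1titssystem} and use the simple transitivity of $W_{\aff}$ on alcoves to force $\nu_N(n)=1$. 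Both are sound: the paper's route produces the stronger intermediate statement $P_{\mathfrak{b}}^{\dag}\cap\G(F)^{\aff}=I_{\mathfrak{b}}$, which is reused later (e.g.\ in the proof of Lemma \ref{GaffG1}), whereas yours is more directly geometric and avoids the self-normalizing property of the Iwahori in the $BN$-pair. Your boundedness argument for $P_{\mathfrak{b}}^{\dag}\cap\Mbf(F)=\ker\nu$ coincides with the paper's first argument for that equality.
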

\begin{proof}
{
The statement on the normalizer is clear. 
Since $P_{\mathfrak{b}}^{\dag} \cap \G(F)^{{\aff}}$ equals the $\G(F)^{{\aff}}$-stabilizer of ${\mathfrak{b}}$ which is the $\G(F)^{{\aff}}$-normalizer of $I$, but this latter is its own normalizer (in $\G(F)^{{\aff}}$), hence $P_{\mathfrak{b}}^{\dag} \cap \G(F)^{{\aff}}= I_{\mathfrak{b}}$. 
Intersecting this equality with $\Nbf(F)$ shows the first equality thanks to Lemma \ref{NcapIM2}.

Finally, an $m\in {P}_{\mathfrak{b}}^{\dag} \cap \Mbf(F)$ acts by translation, and we have $\cup_{k\in \Z_{\ge 0}} {\mathfrak{b}} + k \nu(m) \subset {\mathfrak{b}}$. 
But since ${\mathfrak{b}}$ is bounded, this inclusion can only happen if $\nu(m)=0$, i.e. ${P}_{\mathfrak{b}}^{\dag} \cap \Mbf(F)= \ker \nu$. 
We can argue also using $\Mbf(F) = \ker \nu \, \Mbf(F)^{\aff}$ (by Corollary \ref{Waffdec}), so ${P}_{\mathfrak{b}}^{\dag} \cap \Mbf(F)=\ker \nu \,  ({P}_{\mathfrak{b}}^{\dag} \cap  \Mbf(F)^{\aff})$ which equals $\ker \nu$ by the first part of this Lemma. }
\end{proof}
\begin{corollary}\label{Waffdec}
We have $\Mbf(F)^{\aff}/ \Mbf(F)_1\simeq \Lambda_{\aff}$ and 
$$ \Nbf(F)^{\aff}/\Mbf(F)_1\simeq W_{\aff} \simeq (\Mbf(F)^{\aff} / \Mbf(F)_1) \rtimes  W .$$
\end{corollary}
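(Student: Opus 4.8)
The plan is to deduce everything from Theorem \ref{Gaffadapted} (the double Tits system) together with the structural lemmas \ref{NcapIM2}, \ref{staba} and the decomposition of parahorics in Proposition \ref{decomppara}. First I would establish the isomorphism $\Mbf(F)^{\aff}/\Mbf(F)_1 \simeq \Lambda_{\aff}$. By definition $\Nbf(F)^{\aff} = \Nbf(F)\cap \G(F)^{\aff}$ and, by Theorem \ref{Gaffadapted}, $\nu_N$ induces an isomorphism $\Nbf(F)^{\aff}/\Mbf(F)_1 \riso W_{\aff}$. Under this isomorphism, the translation subgroup $\Lambda_{\aff}\subset W_{\aff}$ (identified with $\Z[\Sigma^\vee]$ as in \S\ref{affineweyl}) is exactly the image of $\Nbf(F)^{\aff}\cap\ker(\Nbf(F)^{\aff}\to W)$. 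But $\Nbf(F)^{\aff}\cap\Mbf(F) = \Mbf(F)^{\aff}$ by the very definition of the $\flat$-less affine parts (intersecting with $\G(F)^{\aff}$), and an element of $\Nbf(F)^{\aff}$ maps to the identity in $W$ precisely when it lies in $\Mbf(F)$. Hence $\nu_N$ restricts to an isomorphism $\Mbf(F)^{\aff}/\Mbf(F)_1 \riso \Lambda_{\aff}$, using $\Mbf(F)_1 = \Nbf(F)^{\aff}\cap I_\a \subset \Mbf(F)^{\aff}$ from Lemma \ref{NcapIM2}.

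Next I would produce the semidirect product decomposition. The group $\Nbf(F)^{\aff}/\Mbf(F)_1 \simeq W_{\aff}$ sits in the short exact sequence
$$
1 \to \Mbf(F)^{\aff}/\Mbf(F)_1 \to \Nbf(F)^{\aff}/\Mbf(F)_1 \to \Nbf(F)^{\aff}/\Mbf(F)^{\aff} \to 1,
$$
and the last term is $W$ by the "connected type" statement in Theorem \ref{Gaffadapted} (which gives $\Nbf(F)^{\aff}/\Mbf(F)^{\aff}\simeq W$). To split this, recall that $a_\circ$ is a special vertex, so by \S\ref{affineweyl} the finite group $W_{\aff}^{a_\circ}=\langle \mathcal{T}_{a_\circ}\rangle$ maps isomorphically onto $W$; lifting this to $\Nbf(F)\cap K_{a_\circ}$ via Lemma \ref{repofWinK} gives a subgroup of $\Nbf(F)^{\aff}/\Mbf(F)_1$ projecting isomorphically onto $W$ and meeting the translation part trivially. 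This yields $W_{\aff}\simeq \Lambda_{\aff}\rtimes W$, which combined with the previous paragraph gives the displayed formula.

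Finally, the middle isomorphism $\Nbf(F)^{\aff}/\Mbf(F)_1 \simeq W_{\aff}$ is just a restatement of Theorem \ref{Gaffadapted}, so it can be quoted directly. I would assemble the three pieces into the two displayed equations of the corollary. The only mild subtlety — and the point I would be most careful about — is the clean identification $\Nbf(F)^{\aff}\cap\Mbf(F) = \Mbf(F)^{\aff}$ and the verification that the image of $\Mbf(F)^{\aff}$ in $W_{\aff}$ is precisely the translation lattice $\Lambda_{\aff}$ rather than something potentially larger; this rests on the fact that $\nu_N$ is injective modulo $\ker\nu_N = \Mbf(F)^1$ and that $\Mbf(F)^{\aff}\subset\Mbf(F)^1$, together with the characterization of $\Lambda_{\aff}$ as the full translation subgroup of $W_{\aff}$ from Proposition 6.2.20 of \cite{BT72}. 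Everything else is bookkeeping with the exact sequences above.
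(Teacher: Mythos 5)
Your proposal is correct and follows essentially the same route as the paper: both deduce $\Mbf(F)^{\aff}/\Mbf(F)_1\simeq\Lambda_{\aff}$ by identifying $\Mbf(F)^{\aff}$ with the preimage of the translation subgroup under the isomorphism $\Nbf(F)^{\aff}/\Mbf(F)_1\simeq W_{\aff}$ of Theorem \ref{Gaffadapted}, and both obtain the semidirect product from the special-point decomposition $W_{\aff}\simeq\Lambda_{\aff}\rtimes W_{\aff}^{a_\circ}$ together with Lemma \ref{repofWinK}. The paper simply quotes the decomposition from \S\ref{affineweyl} where you reconstruct the splitting via the short exact sequence, but the ingredients and the logic are identical.
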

\begin{proof}
{
Given the isomorphism $\Nbf(F)^{\aff}/\Mbf(F)_1 \iso W_{\aff}$ (Theorem \ref{G1titssystem}),  $\Mbf(F)^{\aff}$ is precisely the subgroup of elements in $\Nbf(F)$ that act on $\cA$ by translation, hence the map $\nu$ induces an isomorphism 
$$ \Mbf(F)^{\aff}/ \Mbf(F)_1\iso \Lambda_{\aff}.$$ 
Accordingly, the isomorphism $W_{\aff}\simeq \Lambda_{\aff} \rtimes W_{\aff}^{a}$ for any special point $a\in \cA$ (\S \ref{affineweyl}) corresponds to the decomposition
$$\Nbf(F)^{\aff}/\Mbf(F)_1 =(\Nbf(F)^{\aff} \cap \Mbf(F))/ \Mbf(F)_1 \rtimes  (\Nbf(F)^{\aff} \cap K_a)/ \Mbf(F)_1.$$
This concludes the proof thanks to Lemma \ref{repofWinK}.
}
\end{proof}
\begin{corollary}\label{GG1NN1}
Let ${\mathfrak{b}} \subset \cA$ be an alcove. 
We have, 
$\G(F)= N_{\mathfrak{b}}^{\dag} \cdot \G(F)^{\aff}$ and $ \Nbf(F)= N_{\mathfrak{b}}^{\dag} \cdot \Nbf(F)^{\aff}$ and $\G(F)/\G(F)^{\aff} \simeq \Nbf(F) / \Nbf(F)^{\aff}\simeq  N_{\mathfrak{b}}^{\dag}/ \Mbf(F)_1$.
\end{corollary}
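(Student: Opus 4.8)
The plan is to combine the description of $\G(F)^{\aff}$ as the subgroup generated by all parahoric subgroups (\S\ref{subgenpar}) with the simple transitivity of $W_{\aff}\simeq\Nbf(F)^{\aff}/\Mbf(F)_1$ on the alcoves of $\cA$ (Theorem \ref{G1titssystem} and \S\ref{affineweyl}). The first observation I would record is that $\G(F)^{\aff}$ is normal in $\G(F)$: conjugation by $g\in\G(F)$ carries the parahoric $K_{\cF}$ to $K_{g\cdot\cF}$, hence permutes the generators of $\G(F)^{\aff}$; consequently $\Nbf(F)^{\aff}=\Nbf(F)\cap\G(F)^{\aff}$ is normal in $\Nbf(F)$.

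The geometric core is the claim that $\G(F)^{\aff}$ acts transitively on the set of all alcoves of $\cB(\G,F)_{\red}$. Given an alcove $\mathfrak{c}$, Proposition \ref{buildproperties}(\ref{containment}) provides an apartment $\cA'$ containing both $\mathfrak{b}$ and $\mathfrak{c}$; by Proposition \ref{buildproperties}(\ref{transitivity}) the group $U_{\mathfrak{b}}$, which sits inside $I_{\mathfrak{b}}\subset\G(F)^{\aff}$ by Proposition \ref{decomppara}, contains an element $u$ carrying $\cA'$ onto $\cA$, so that $u\cdot\mathfrak{c}$ is an alcove of $\cA$; finally, since $\nu_N$ maps $\Nbf(F)^{\aff}$ onto $W_{\aff}$ (Theorem \ref{G1titssystem}) and $W_{\aff}$ is simply transitive on the alcoves of $\cA$, some $n\in\Nbf(F)^{\aff}$ sends $u\cdot\mathfrak{c}$ to $\mathfrak{b}$, and then $nu\in\G(F)^{\aff}$ sends $\mathfrak{c}$ to $\mathfrak{b}$.

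With this in hand, take $g\in\G(F)$ and choose $h\in\G(F)^{\aff}$ with $hg\cdot\mathfrak{b}=\mathfrak{b}$, i.e. $hg\in P_{\mathfrak{b}}^{\dag}$. By Lemma \ref{staba} we may write $hg=n'i$ with $n'\in N_{\mathfrak{b}}^{\dag}$ and $i\in I_{\mathfrak{b}}\subset\G(F)^{\aff}$, whence, using that $\G(F)^{\aff}$ is normal, $g=h^{-1}n'i=n'\bigl((n')^{-1}h^{-1}n'\bigr)i\in N_{\mathfrak{b}}^{\dag}\cdot\G(F)^{\aff}$. Running the same argument entirely inside the apartment $\cA$ — using only that $\Nbf(F)^{\aff}/\Mbf(F)_1\simeq W_{\aff}$ is simply transitive on the alcoves of $\cA$ and that $P_{\mathfrak{b}}^{\dag}\cap\Nbf(F)=N_{\mathfrak{b}}^{\dag}$ — yields $\Nbf(F)=N_{\mathfrak{b}}^{\dag}\cdot\Nbf(F)^{\aff}$.

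The isomorphisms then follow from the second isomorphism theorem. From $\G(F)=\Nbf(F)\cdot\G(F)^{\aff}$ (a weakening of the first decomposition) and $\Nbf(F)^{\aff}=\Nbf(F)\cap\G(F)^{\aff}$ one gets $\Nbf(F)/\Nbf(F)^{\aff}\iso\G(F)/\G(F)^{\aff}$; from $\Nbf(F)=N_{\mathfrak{b}}^{\dag}\cdot\Nbf(F)^{\aff}$ together with $N_{\mathfrak{b}}^{\dag}\cap\Nbf(F)^{\aff}=N_{\mathfrak{b}}^{\dag}\cap\G(F)^{\aff}=\Mbf(F)_1$ (Lemma \ref{staba}) one gets $N_{\mathfrak{b}}^{\dag}/\Mbf(F)_1\iso\Nbf(F)/\Nbf(F)^{\aff}$. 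I expect the only real difficulty to be the geometric step, transporting $g\cdot\mathfrak{b}$ back into the standard apartment $\cA$ by an element of $\G(F)^{\aff}$; once this is done, everything reduces to affine Weyl group combinatorics and routine manipulations with the normal subgroup $\G(F)^{\aff}$.
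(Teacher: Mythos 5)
Your proof is correct, and its overall skeleton coincides with the paper's: establish that $\G(F)^{\aff}$ is transitive on alcoves, deduce $\G(F)=P_{\mathfrak{b}}^{\dag}\cdot\G(F)^{\aff}$, use $P_{\mathfrak{b}}^{\dag}=N_{\mathfrak{b}}^{\dag}\cdot I_{\mathfrak{b}}$ and Lemma \ref{staba}, then conclude by the isomorphism theorems. The one genuine difference is how the transitivity is obtained. The paper gets $\G(F)\cdot\mathfrak{b}=\G(F)^{\aff}\cdot\mathfrak{b}$ as an ``immediate consequence'' of the injection $\G(F)^{\aff}\hookrightarrow\G(F)$ being $I_{\a}$-$\Nbf(F)^{\aff}$-adapted (Theorem \ref{Gaffadapted}), i.e.\ it cites a Bruhat--Tits axiom-level property as a black box. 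You instead re-derive the transitivity from the building axioms directly: an apartment containing both alcoves (Proposition \ref{buildproperties}(\ref{containment})), transitivity of $U_{\mathfrak{b}}\subset I_{\mathfrak{b}}\subset\G(F)^{\aff}$ on apartments containing $\mathfrak{b}$ (Proposition \ref{buildproperties}(\ref{transitivity})), and simple transitivity of $W_{\aff}\simeq\Nbf(F)^{\aff}/\Mbf(F)_1$ on alcoves of $\cA$. This makes your argument more self-contained at that step, at the cost of being longer; it also requires you to make explicit the normality of $\G(F)^{\aff}$ in $\G(F)$ (which your conjugation-of-parahorics observation handles correctly, and which the paper leaves implicit). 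All the supporting identifications you use --- $\Nbf(F)^{\aff}=\Nbf(F)\cap\G(F)^{\aff}$, $N_{\mathfrak{b}}^{\dag}\cap\G(F)^{\aff}=\Mbf(F)_1$, $P_{\mathfrak{b}}^{\dag}\cap\Nbf(F)=N_{\mathfrak{b}}^{\dag}$ --- are exactly the ones the paper relies on, so the two proofs are interchangeable.
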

{
\begin{proof}
A immediate consequence of Theorem \ref{Gaffadapted} is $\G(F) \cdot {\mathfrak{b}}= \G(F)^{\aff} \cdot {\mathfrak{b}}$ and so $\G(F)= P_{\mathfrak{b}}^\dag \cdot \G(F)^{\aff}$, which proves the first equality given that $P_{\mathfrak{b}}^\dag=  N_{\mathfrak{b}}^\dag \cdot I_{\mathfrak{b}}$. 
Intersecting this with $\Nbf(F)$ yields the second equality of the corollary.

This shows that the natural maps $N_{\mathfrak{b}}^\dag \to \G(F)/ \G(F)^{\aff}$ and $N_{\mathfrak{b}}^\dag \to \Nbf(F)/ \Nbf(F)^{\aff}$ are both surjective and have the same kernel $\Mbf(F)_1 $ by Lemma \ref{staba}. 
\end{proof}}
\subsection{The subgroup $\G(F)_1$}\label{sectionG1}
Proposition \ref{parahr} shows that $\G(F)^{\aff} \subset \G(F)_1$ and as we will see in the following Lemma, this is actually an equality:
\begin{lemma}\label{GaffG1}
We have, $\G(F)^1=  (N_\a^\dag \cap \G(F)^1)\cdot \varphi(\G_{sc}(F)) \text{ and }\G(F)_1=\G(F)^{\aff}=\Mbf(F)_1\cdot \varphi(\G_{sc}(F))$.
\end{lemma}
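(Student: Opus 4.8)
The plan is to prove the two displayed equalities in turn, each time reducing to a statement about the alcove stabilizer $N_\a^\dag$. For $\G(F)^1$ this reduction is completely formal, while for $\G(F)_1$ it leaves a nontrivial residue — a comparison of the Kottwitz homomorphisms of $\Mbf$ and $\G$ — which I expect to be the main obstacle. First, for $\G(F)^1$: since $\G_{sc}$ is semisimple one has $X^*(\G_{sc})_F=0$, hence $\langle\nu_G(\varphi(x)),\chi\rangle=-\omega\big((\chi\circ\varphi)(x)\big)=0$ for all $\chi\in X^*(\G)_F$ and $x\in\G_{sc}(F)$, so $\varphi(\G_{sc}(F))\subseteq\ker\nu_G=\G(F)^1$. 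As $\Mbf(F)_1$ fixes $\cA$ pointwise, $\Mbf(F)_1\subseteq N_\a^\dag$, so Corollary \ref{GG1NN1} together with $\G(F)^{\aff}=\Mbf(F)_1\cdot\varphi(\G_{sc}(F))$ (\S\ref{subgenpar}) gives $\G(F)=N_\a^\dag\cdot\G(F)^{\aff}=N_\a^\dag\cdot\varphi(\G_{sc}(F))$. Now if $g\in\G(F)^1$, write $g=n\,\varphi(x)$ with $n\in N_\a^\dag$, $x\in\G_{sc}(F)$; then $n=g\,\varphi(x)^{-1}\in\G(F)^1$, so $g\in(N_\a^\dag\cap\G(F)^1)\cdot\varphi(\G_{sc}(F))$. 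The reverse inclusion being obvious, the first equality follows.

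For $\G(F)_1$: the inclusion $\G(F)^{\aff}\subseteq\G(F)_1$ is Proposition \ref{parahr}. Since $\varphi(\G_{sc}(F))\subseteq\G(F)^{\aff}\subseteq\G(F)_1$, repeating the previous argument with $\G(F)_1$ in place of $\G(F)^1$ gives $\G(F)_1=(N_\a^\dag\cap\G(F)_1)\cdot\varphi(\G_{sc}(F))$, so everything reduces to proving
\[
N_\a^\dag\cap\G(F)_1=\Mbf(F)_1,
\]
whereupon $\G(F)_1=\Mbf(F)_1\cdot\varphi(\G_{sc}(F))=\G(F)^{\aff}$ by \S\ref{subgenpar}. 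The inclusion $\supseteq$ is clear ($\Mbf(F)_1\subseteq N_\a^\dag\cap\G(F)^{\aff}$ by Lemma \ref{NcapIM2}). For $\subseteq$ I would pass to the Iwahori--Weyl group $\widetilde W:=\Nbf(F)/\Mbf(F)_1$. Using $\Nbf(F)=\Mbf(F)\cdot(\Nbf(F)\cap K_{a_\circ})$ and Lemma \ref{repofWinK} one has $\widetilde W=\Lambda_M\rtimes W$; by Theorem \ref{G1titssystem} and Corollary \ref{Waffdec}, $W_{\aff}=\Nbf(F)^{\aff}/\Mbf(F)_1=\Lambda_{\aff}\rtimes W$ with $\Lambda_{\aff}=\Mbf(F)^{\aff}/\Mbf(F)_1\cong\Z[\Sigma^\vee]$. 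Comparing these (using $\Lambda_M\cap W=\{1\}$) identifies $N_\a^\dag/\Mbf(F)_1\cong\widetilde W/W_{\aff}$ with $\Lambda_M/\Lambda_{\aff}$, while $\G(F)=\Mbf(F)\cdot\G(F)^{\aff}\subseteq\Mbf(F)\cdot\G(F)_1$ gives $\Lambda_G=\G(F)/\G(F)_1\cong\Lambda_M/\Lambda_M'$ with $\Lambda_M':=(\Mbf(F)\cap\G(F)_1)/\Mbf(F)_1\supseteq\Lambda_{\aff}$; moreover the natural map $\Lambda_M/\Lambda_{\aff}\cong N_\a^\dag/\Mbf(F)_1\to\Lambda_G$ is the quotient $\Lambda_M/\Lambda_{\aff}\twoheadrightarrow\Lambda_M/\Lambda_M'$. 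Hence the desired equality is equivalent to $\Lambda_M'=\Lambda_{\aff}$, i.e. to $\Mbf(F)\cap\G(F)_1=\Mbf(F)^{\aff}$.

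This last identity is the crux. I would prove it via functoriality of the Kottwitz homomorphism for the inclusion $\Mbf\hookrightarrow\G$: under the identifications $\Lambda_M\cong(X^*(Z(\widehat{\Mbf}))_{\text{In}})^{\sigma}$ and $\Lambda_G\cong(X^*(Z(\widehat{\G}))_{\text{In}})^{\sigma}$ of \S\ref{Kottwitzhom}, the induced map $\Lambda_M\to\Lambda_G$ is the one coming from $Z(\widehat{\G})\hookrightarrow Z(\widehat{\Mbf})$, so its kernel is the image of $\big(X^*(Z(\widehat{\Mbf})/Z(\widehat{\G}))_{\text{In}}\big)^{\sigma}$; unwinding $X^*(Z(\widehat{\Mbf})/Z(\widehat{\G}))$ in terms of coweights and passing to inertia-coinvariants, this recovers exactly the coroot lattice $\Z[\Sigma^\vee]$ of the root system $\Sigma$ of \S\ref{affineweyl}, i.e. $\Lambda_{\aff}$. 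Equivalently, this is the assertion that $\kappa_G$ identifies $\widetilde W/W_{\aff}$ with $\Lambda_G$, which one can also extract from \cite[Proposition~3, Remarks~4 and~11]{HR8} by descent from $F^{un}$. The delicate points are matching the normalization $\Sigma=\{n_\alpha\alpha\}$ against the coroots appearing in Kottwitz's description and keeping track of the passage to $F^{un}$; everything else is a formal manipulation of decompositions already established in the text.
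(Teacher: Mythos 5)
Your overall architecture is sound and, for the first displayed equality, essentially identical to the paper's: the paper also intersects $\G(F)=N_\a^\dag\cdot\G(F)^{\aff}$ (Corollary \ref{GG1NN1}) with $\ker\nu_G$ and $\ker\kappa_G$ to obtain $\G(F)^1=(N_\a^\dag\cap\G(F)^1)\cdot\G(F)^{\aff}$ and $\G(F)_1=(N_\a^\dag\cap\G(F)_1)\cdot\G(F)^{\aff}$, and both arguments then hinge on the single identity $N_\a^\dag\cap\G(F)_1=\Mbf(F)_1$. Where you diverge is in how you attack that identity. The paper's route is short and geometric: apply Lemma \ref{staba} to $\G_{F^{un}}$ together with \cite[Lemma 17]{HR8} to get $P_{\iota(\a)}^{\dag}\cap\G(F^{un})_1=\mathbb{P}_{\iota(\a\times V_G)}^{\circ}(\cO_{F^{un}})$, intersect with $\G(F)$ to conclude $P_\a^\dag\cap\G(F)_1=I_\a$, and then intersect with $\Nbf(F)$ and invoke Lemma \ref{NcapIM2}. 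You instead reduce (correctly) to $\Mbf(F)\cap\G(F)_1=\Mbf(F)^{\aff}$, i.e.\ to $\ker(\Lambda_M\to\Lambda_G)=\Lambda_{\aff}$, and propose to prove this by functoriality of the Kottwitz homomorphism.

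That last step is where the genuine gap sits. First, your description of the kernel is not correct as stated: from $0\to X^*(Z(\widehat{\Mbf})/Z(\widehat{\G}))\to X^*(Z(\widehat{\Mbf}))\to X^*(Z(\widehat{\G}))\to 0$, taking $\text{In}$-coinvariants is only right exact, so the kernel at the level of coinvariants is the image $Q$ of $X^*(Z(\widehat{\Mbf})/Z(\widehat{\G}))_{\text{In}}$; after taking $\sigma$-invariants the kernel of $\Lambda_M\to\Lambda_G$ is $Q^\sigma$, which contains but need not equal the image of $\bigl(X^*(Z(\widehat{\Mbf})/Z(\widehat{\G}))_{\text{In}}\bigr)^\sigma$. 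Second, and more seriously, the identification $Q^\sigma=\Lambda_{\aff}$ is exactly the hard content: one must show that every $\sigma$-fixed class in the image of the absolute coroot lattice is represented by an element of $\Mbf(F)^{\aff}$, i.e.\ match $Q^\sigma$ with the translation lattice $\Z[\Sigma^\vee]$ of the \'echelonnage root system of \S\ref{affineweyl}. This is precisely the kind of statement that \cite{HR8} establish by the geometric argument (alcove stabilizer intersected with $\ker\kappa$ equals the Iwahori, plus descent), so deferring to ``extract from \cite{HR8} by descent'' at this point re-imports the very fact the paper's one-line intersection argument uses directly, without the intermediate translation. To close the gap you should either carry out the comparison $Q^\sigma=\Lambda_{\aff}$ in full (\'echelonnage plus a surjectivity argument for the $\sigma$-action on $Q$), or replace the whole paragraph by the paper's descent: $P_\a^\dag\cap\G(F)_1\subseteq P_{\iota(\a)}^{\dag}\cap\G(F^{un})_1\cap\G(F)=I_\a$, whence $N_\a^\dag\cap\G(F)_1=\Mbf(F)_1$ by Lemma \ref{NcapIM2}.
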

\begin{proof} The equality $\G(F)_1=\G(F)^{\aff}$ was already proved in \cite[Lemma 1.3]{Richardz2016}. 
However, we present here another argument: 
{
Intersecting $\G(F)=  N_\a^\dag \cdot \G(F)^{\aff} $ of Corollary \ref{GG1NN1} with the kernel of $\nu_G$ and $\kappa_G$ yields
$$\G(F)_1=(N_\a^\dag \cap \G(F)_1) \cdot  \G(F)^{\aff}  \text{ and } \G(F)^1= (N_\a^\dag \cap \G(F)^1)\cdot \G(F)^{\aff}.$$
By Lemma \ref{staba} for $\G_{F^{un}}$ (or by \cite[Lemma 17]{HR8}), we have
$$P_{\iota(\a)}^{\dag} \cap \G(F^{un})^{\aff}=P_{\iota(\a)}^{\dag} \cap \G(F^{un})_1= \mathbb{P}_{\iota(\a \times V_G)}^{\circ}(\cO_{F^{un}}),$$
where $P_{\iota(\a)}^\dag$ is the $\G(F^{un})$-stabilizer of ${\iota(\a)}$. 
Now since clearly $P_\a^\dag \subset P_{\iota(\a)}^{\dag} \cap \G(F)$, we have
$$I_\a \subset P_\a^\dag \cap \G(F)_1=P_\a^\dag \cap \G(F^{un})_1 \cap \G(F)  \subset  \mathbb{P}_{\iota(\a \times V_G)}^{\circ}(\cO_{F^{un}}) \cap \G(F)= I_\a.$$
In conclusion, this shows $I_\a=P_\a^\dag \cap \G(F)_1$ and so $\Mbf(F)_1 =N_\a^\dag \cap \G(F)_1$ by Lemma \ref{NcapIM2}. 
Finally, recall that the last one was already seen in \S \ref{subgenpar}, which concludes the proof of the lemma.}
\end{proof}
\subsection{More on $\G(F)^1$-fixators}\label{G1stabprel}
\begin{lemma}\label{G1fixatorP}
Let $\cF \subset \cA$ be a facet containing in its closure a special point $a$, i.e. $ {K}_{\cF} \subset K_a$. We have
$$\widetilde{K}_{\cF}=\Mbf(F)^1 K_{\cF}  \text{ and }\widetilde{K}_{\cF}/{K}_{\cF}\simeq \Mbf(F)^1/\Mbf(F)_1.$$
\end{lemma}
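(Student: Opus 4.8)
The plan is to reduce everything to Proposition~\ref{HRpara}, which (together with $\G(F)_1\subseteq \G(F)^1$) gives $\widetilde{K}_\Upomega = P_\Upomega\cap \G(F)^1$ and $K_\Upomega = P_\Upomega\cap \G(F)_1$ for any bounded $\Upomega\subset\cA$, applied both to $\Upomega=\cF$ and to $\Upomega=\{a\}$, and to the description of the fixator of a special point from Corollary~\ref{BNtypecor}(1).

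First I would handle the special point. Since $a$ is special, $P_a=\ker\nu\cdot K_a$ by Corollary~\ref{BNtypecor}(1); intersecting with $\G(F)^1$, and using that $K_a\subseteq\G(F)_1\subseteq\G(F)^1$ together with $\Mbf(F)^1=\ker\nu\cap\G(F)^1$, yields $\widetilde{K}_a=\Mbf(F)^1K_a$. Next, because $a\in\overline{\cF}$, continuity of the $\G(F)$-action on $\cB(\G,F)_{\red}$ gives $P_\cF=\mathrm{Fix}(\G(F),\cF)\subseteq\mathrm{Fix}(\G(F),a)=P_a$ (Proposition~\ref{buildproperties}\ref{abuildproperties}), hence $\widetilde{K}_\cF\subseteq\widetilde{K}_a=\Mbf(F)^1K_a$.

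The crux is then the identity $\widetilde{K}_\cF\cap K_a=K_\cF$, which I would prove by $\widetilde{K}_\cF\cap K_a=(P_\cF\cap\G(F)^1)\cap K_a=P_\cF\cap K_a$ (as $K_a\subseteq\G(F)^1$) $=P_\cF\cap P_a\cap\G(F)_1=P_\cF\cap\G(F)_1=K_\cF$, using $P_\cF\subseteq P_a$ and $K_a=P_a\cap\G(F)_1$. Granting this, take $g\in\widetilde{K}_\cF$ and write $g=mk$ with $m\in\Mbf(F)^1$, $k\in K_a$ (possible since $\widetilde{K}_\cF\subseteq\Mbf(F)^1K_a$). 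Since $\Mbf(F)^1=\ker\nu_M$ fixes $\cA$ pointwise and lies in $\G(F)^1$, we have $m\in P_\cF\cap\G(F)^1=\widetilde{K}_\cF$, so $k=m^{-1}g\in\widetilde{K}_\cF\cap K_a=K_\cF$, whence $g\in\Mbf(F)^1K_\cF$. The reverse inclusion $\Mbf(F)^1K_\cF\subseteq\widetilde{K}_\cF$ is immediate from $\Mbf(F)^1\subseteq\widetilde{K}_\cF$ and $K_\cF\subseteq\widetilde{K}_\cF$, giving $\widetilde{K}_\cF=\Mbf(F)^1K_\cF$.

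Finally, $K_\cF$ is normal in $\widetilde{K}_\cF$ (both are intersections of $P_\cF$ with a normal subgroup of $\G(F)$), so the second isomorphism theorem gives $\widetilde{K}_\cF/K_\cF=\Mbf(F)^1K_\cF/K_\cF\cong\Mbf(F)^1/(\Mbf(F)^1\cap K_\cF)$, and $\Mbf(F)^1\cap K_\cF=\Mbf(F)^1\cap(\Mbf(F)\cap K_\cF)=\Mbf(F)^1\cap\Mbf(F)_1=\Mbf(F)_1$ by Corollary~\ref{iwahoriM}. I do not expect a genuine obstacle: the whole argument is bookkeeping with the four nested levels $\Mbf(F)_1\subseteq\Mbf(F)^1$ and $\G(F)_1\subseteq\G(F)^1$, the only substantive input being the special-point factorization $P_a=\ker\nu\cdot K_a$, which must be invoked precisely where it is used above.
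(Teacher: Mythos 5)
Your proof is correct and follows essentially the same route as the paper's: both rest on the special-point factorization $P_a=\ker\nu\cdot K_a$ from Corollary~\ref{BNtypecor}, the characterizations $\widetilde{K}_\Upomega=P_\Upomega\cap\G(F)^1$ and $K_\Upomega=P_\Upomega\cap\G(F)_1$ from Proposition~\ref{parahr}, and Corollary~\ref{uniqueparahoricM} for the final quotient. The only difference is presentational — the paper compresses your elementwise $g=mk$ bookkeeping into a single chain of equalities using the modular law $(K_a\cdot\Mbf(F)^1)\cap P_\cF=(K_a\cap P_\cF)\cdot\Mbf(F)^1$.
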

{\begin{proof}
This Lemma generalizes \cite[Lemma 4.2.1]{R15} and \cite[Proposition 11.1.4]{HR10}. 
Here we give a different  argument:
By Corollary \ref{BNtypecor}, one has $P_{a}=K_{a} \cdot  \ker \nu$, hence
\begin{align*}
\widetilde{K}_{\cF} &=  ((K_{a} \cdot  \, \ker \nu) \cap \G(F)^1) \cap P_{\cF}&(\text{Prop. \ref{parahr}})\\
&= (K_a \cdot  (\ker \nu \cap \G(F)^1)) \cap P_{\cF} & (K_a  \subset  \G(F)^1) 
\\
&=(K_a \cdot  \Mbf(F)^1) \cap P_{\cF} &\\
&= (K_a  \cap P_{\cF})\cdot  \Mbf(F)^1 &( \Mbf(F)^1 \subset  P_{\cF})\\
&=K_{\cF}\cdot  \Mbf(F)^1.
\end{align*}
Consequently, Lemma \ref{uniqueparahoricM} yields $\widetilde{K}_{\Upomega}/{K}_{\Upomega}\simeq \Mbf(F)^1/\Mbf(F)_1$ .
\end{proof}}
\begin{remark}\label{KequalKtilde}
By Lemma \ref{NcapIM2} we see that $M^1\hra G^1$ induces an inclusion $(\Lambda_M)_{\text{tor}} \hra (\Lambda_G)_{\text{tor}}$ 
compatible with $\kappa_M$ and $\kappa_G$. 
We see then:
$$\begin{tikzcd}
(\Lambda_G)_{\text{tor}}=0 \arrow[rr, Leftrightarrow]&\hspace{1cm}\arrow[d, Rightarrow]& G^1=G_1\\
(\Lambda_M)_{\text{tor}}=0 \arrow[r, Leftrightarrow]& M^1=M_1 \arrow[r, Leftrightarrow]& \widetilde{K}_{\cF}=K_{\cF}.
\end{tikzcd}$$
where $\cF\subset \cB(\G,F)_{\red}$ is any facet containing a special point in its closure. The group $\Lambda_M$ (which plays the role of the cocharacters lattice in the split case \S \ref{SBM}.) 
has no torsion:
\begin{itemize}
\item If $\Mbf$ is a torus that splits over an unramified extension of $F$, i.e. $\G$ is unramified \cite[9.5]{Bor79}.
\item If $\G$ is semisimple and simply connected then by \cite[\S 4.6.32 and \S 5.2.9]{BT84} one has $\widetilde{K}_{\cF}= K_\cF$, 
\item See \cite[Corollary 11.1.5]{HR10} for other situations for which $\Lambda_M$ is torsion free.
\end{itemize}
\end{remark}

\section{Preliminary results around (Iwahori)--Hecke algebras}\label{PremIH}
\subsection*{Notations}
In addition to the notation fixed/adopted previously we need to introduce the following, that we hope will lighten a bit the exposition: For any algebraic $F$-groups $\H$ (bold style), we denote its group of $F$-points by the ordinary capital letter $H=\mathbf{H}(F)$. 
For any subgroup $X \subset G$ different from $M$ set $X_1:=X\cap G_1$. Recall that Lemma \ref{GaffG1} says that $G_1=G^{\aff}$, so for any such a $X$, we have $X_1=X^{\aff}$. 
For $M$, one might have $M_1=\ker \kappa_M \neq M \cap G_1$, for this reason, we keep the notation $M^{\aff}$ for $M \cap G_1$.

Set $K:=K_{a_\circ}$ for the special maximal parahoric subgroup attached to the special vertex $a_\circ \in \cA$ and $I:=I_{\a}$ be the Iwahori subgroup corresponding to the fixed alcove ${\a}\subset \cA$. 

Recall that by Lemma \ref{repofWinK}, the canonical injection $N\cap K/M\cap K \to W$ is an isomorphism. Therefore, we may and will assume from now on, that any representative in $N$ of a class in $W=N/M$ lies in $K$.

\subsection{$\flat$ is everywhere !}\label{flatpartout}
{
Let $M_1 \subset M^{\flat} \subset M$ be any open compact subgroup, by maximality of $M^1$ \cite[Proposition 1.2]{Lan96} we must have $ M^{\flat} \subset M^1$.

\begin{lemma}\label{Wtrivialontors}
The group $N$ acts trivially on $\ker \nu/M_1$. In particular, $M^{\flat}$ is normal in $N$. 
\end{lemma}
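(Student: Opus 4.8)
The plan is to prove the slightly stronger statement that $n m n^{-1} m^{-1} \in M_1$ for all $n \in \Nbf(F)$ and all $m \in \ker\nu$, which is exactly the assertion that $\Nbf(F)$ acts trivially on $\ker\nu/M_1$ by conjugation; the normality of $M^{\flat}$ will then follow immediately. First I would check that the action is well defined: $\Nbf(F)$ normalises $\Mbf(F)$, it normalises $\ker\nu = \ker\nu_N$ since $\nu_N\colon \Nbf(F) \to \Aff(\A_{\red}(\G,\Sbf))$ is a group homomorphism (Proposition~\ref{A(G,S)}), and it normalises $M_1 = \ker\kappa_M$ since $\kappa_M$ is functorial, so that $M_1$ is characteristic in $\Mbf(F)$. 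Thus $\ker\nu/M_1$ is an $\Nbf(F)$-stable subgroup of $\Lambda_M = \Mbf(F)/M_1$, and the whole question reduces to showing the commutator $[n,m] := n m n^{-1} m^{-1}$ lies in $M_1$.

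The heart of the argument would be to place $[n,m]$ inside two subgroups of $\G(F)$. On the one hand $[n,m] \in \G(F)_1 = \ker\kappa_G$: the Kottwitz homomorphism $\kappa_G$ of $\G$ (\S\ref{Kottwitzhom}) takes values in an \emph{abelian} group, hence kills commutators, whence $\kappa_G([n,m]) = 0$. On the other hand $[n,m] \in \ker\nu_N$: applying the homomorphism $\nu_N$ and using that $\nu_N(m) = \mathrm{id}$, because $m \in \ker\nu_N$, gives $\nu_N([n,m]) = \nu_N(n)\,\nu_N(m)\,\nu_N(n)^{-1}\,\nu_N(m)^{-1} = \mathrm{id}$.

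It then remains to see that $\ker\nu \cap \G(F)_1 = M_1$. Using $\G(F)_1 = \G(F)^{\aff}$ (Lemma~\ref{GaffG1}) and $\ker\nu = \ker\nu_N \subseteq \Nbf(F)$, this intersection equals $\ker\nu_N \cap \Nbf(F)^{\aff}$, i.e.\ the kernel of the restriction of $\nu_N$ to $\Nbf(F)^{\aff}$; and by Theorem~\ref{G1titssystem} this restriction induces the isomorphism $\Nbf(F)^{\aff}/M_1 \xrightarrow{\sim} W_{\aff}$, so its kernel is exactly $M_1$. Combining the two previous paragraphs, $[n,m] \in \ker\nu \cap \G(F)_1 = M_1$, as desired. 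Finally, for the normality of $M^{\flat}$: one has $M_1 \subseteq M^{\flat} \subseteq M^1 \subseteq \ker\nu$, and since $\Nbf(F)$ normalises $M_1$ and acts trivially on $\ker\nu/M_1$, the subgroups $n M^{\flat} n^{-1}$ and $M^{\flat}$ have equal image in $\ker\nu/M_1$ and both contain $M_1$; hence $n M^{\flat} n^{-1} = M^{\flat}$ for every $n \in \Nbf(F)$.

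I do not anticipate a real obstacle here: the commutator computation is essentially immediate once one thinks of it, the only mild subtleties being the well-definedness of the conjugation action (that $\Mbf(F)$, $\ker\nu$ and $M_1$ are all $\Nbf(F)$-stable) and the identity $\ker\nu \cap \G(F)_1 = M_1$, which I would derive cleanly from Theorem~\ref{G1titssystem} and Lemma~\ref{GaffG1} as above.
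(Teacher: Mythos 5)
Your proof is correct and follows essentially the same route as the paper's: observe that the commutator of $n\in \Nbf(F)$ and $m\in\ker\nu$ lies both in $\ker\nu_N$ and in $G_1=\ker\kappa_G$ (the latter because $\kappa_G$ is valued in an abelian group), and then identify $\ker\nu\cap G_1$ with $M_1$. The only difference is bookkeeping: the paper extracts $\ker\nu\cap G_1=M_1$ from Lemma~\ref{staba} (together with $G_1=G^{\aff}$), whereas you derive it from Theorem~\ref{G1titssystem} and Lemma~\ref{GaffG1}; both are legitimate and rest on results established earlier in the paper.
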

\begin{proof}
Let $n\in N$ and $m\in \ker \nu$. 
On the one hand, we clearly have $m^{-1} nmn^{-1} \in \ker \nu_N= \ker \nu$. 
On the other hand, the commutator element $m^{-1} nmn^{-1} \in G_1$. 
Therefore, Lemma \ref{staba} implies $m^{-1} nmn^{-1} \in \ker \nu \cap G_1 = M_1$ and accordingly $m M_1= nmn^{-1} M_1$.
\end{proof}

For any $X\subset G$, set $X^{\flat}=M^{\flat} X$. This procedure does not alter the structure of the object we are interested in, e.g. if $K_\cF$ is a parahoric subgroup attached to a facet $\cF \subset \cA$, then $K_\cF^{\flat}$ is still a group.

When $M^{\flat}=M_1$, the resulting objects will sometimes be described as "parahoric" and the supscript ${\flat}$ will be simply omitted, e.g. $I^{\flat}=I$ the Iwahori subgroup, $K_\cF^{\flat}=K_\cF$ the parahoric subgroup attached to a facet $\cF$, and so on ... 
When $M^{\flat}=M^1$, the resulting objects will be described as "geometric" and ${\flat}$ will be replaced simply by $1$, e.g. $I^{1} =\widetilde{I},K^{1}= \widetilde{K}$.

{
Thanks to Lemma \ref{NcapIM2}, we have an isomorphism of groups $W_{\aff}^{\flat} := N_1^\flat/M^\flat \simeq W_{\aff}$. 
Recall that $\cS(\a) = \cup_{\cF \text{ facet of }  {\a}} \mathcal{T}_{\cF}$ is the set of reflections at the walls of $\a$ and it generates $ W_{\aff}$. 
Here, 
$r$ is the number of irreducible components of the Dynkin diagram of $\Phi$. 
For any facet $\cF$ of $\a$, let $\mathcal{T}_{\cF}^{\flat}\subset \cS^{\flat} \subset W_{\aff}^{\flat}$ be the sets corresponding to $\mathcal{T}_{\cF} \subset \cS(\a) \subset W_{\aff}$.
Similarly, we put $\Lambda_{\aff}^{\flat}:=M^{\flat}M^{\aff}/M^{\flat}\simeq \Lambda_{\aff}$ and 
we define for any facet $\cF\subset \cA$, the group ${W}_{\cF}^{\flat}:= N_{1, \cF}^{\flat}/M^{\flat}\simeq {W}_{\cF}$ where $N_{1, \cF}^{\flat}:= N \cap K_\cF^{\flat}= M^{\flat} N_{1,\cF}$. 
The group ${W}_{\cF}^{\flat}$ is generated by $\mathcal{T}_{\cF}^{\flat}$. 
For example, $W_{a_\circ}^{\flat}$ is canonically isomorphic to $ W$. 
Given Lemma \ref{Waffdec}, we have
$$W_{\aff}^{\flat} = \Lambda_{\aff}^{\flat} \rtimes W_{a_\circ}^{\flat}.$$
}
\begin{corollary}\label{doubtitflat}
The quadruple $(G_1^{\flat},I^{\flat},N_1^{\flat}, \cS^{\flat})$ is a double Tits system whose Weyl group is isomorphic to $W_{\aff}$ and the injection $  G_1^{\flat} \hra  G$ is $I_\a^{\flat}$-$N_1^{\flat}$-adapted of connected type.
\end{corollary}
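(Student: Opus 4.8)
The plan is to \emph{inflate} the double Tits system $(\G(F)^{\aff},I_\a,\Nbf(F)^{\aff},\cS)$ of Theorem~\ref{Gaffadapted} along the subgroup $M^\flat$; recall from Lemma~\ref{GaffG1} that $\G(F)^{\aff}=G_1$ and $\Nbf(F)^{\aff}=N_1$, and put $\B^\flat:=M^\flat\B(F)^{\aff}$. First I would assemble the elementary facts that make everything a group and single out $M^\flat$ as the common ``torus''. By Lemma~\ref{Wtrivialontors}, $M^\flat$ is normal in $N$, so it normalizes $N_1$ and $\B(F)^{\aff}$ (the latter also because $M^\flat\subseteq\Mbf(F)$); and since $M^\flat\subseteq M^1\subseteq\ker\nu$, while $\ker\nu\subseteq P_\a^{\dag}$ is the $\G(F)$-normalizer of $I_\a$ by Lemma~\ref{staba}, $M^\flat$ normalizes $I=I_\a$. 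Hence $I^\flat=M^\flat I$, $\B^\flat$, $N_1^\flat=M^\flat N_1$ and $G_1^\flat=M^\flat G_1$ are subgroups of $G$. Moreover $M^\flat\cap G_1=M_1$ (from $M^\flat\subseteq M^1$ and $M^1\cap G_1=M_1$, Lemma~\ref{NcapIM2}), so Dedekind's modular law gives
$$I^\flat\cap N_1^\flat=M^\flat\bigl(I\cap M^\flat N_1\bigr)=M^\flat M_1=M^\flat=\B^\flat\cap N_1^\flat,$$
and the quotient map induces a canonical isomorphism $N_1^\flat/M^\flat\xrightarrow{\sim}N_1/M_1\simeq W_{\aff}$ carrying $\cS^\flat$ onto $\cS$; thus the candidate Weyl group is again $W_{\aff}$ with the same Coxeter generators.

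Next I would verify the BN-pair axioms for $(G_1^\flat,I^\flat,N_1^\flat,\cS^\flat)$ (and identically for $(G_1^\flat,\B^\flat,N_1^\flat,\cS^\flat)$) by transporting them from $(G_1,I,N_1,\cS)$, which satisfies them by Theorem~\ref{Gaffadapted}. The key point is that the $M^\flat$-decoration is transparent on double cosets: for a lift $\dot w\in N_1$ of $w\in W_{\aff}$ one has, using that $M^\flat$ normalizes $I$ and that $M^\flat\dot w=\dot w M^\flat$ (as $M^\flat\trianglelefteq N$),
$$I^\flat\dot w I^\flat=M^\flat\bigl(I\dot w I\bigr)=\bigl(I\dot w I\bigr)M^\flat,$$
so the Bruhat decomposition $G_1=\bigsqcup_{w\in W_{\aff}}I\dot w I$ lifts to $G_1^\flat=M^\flat G_1=\bigsqcup_{w\in W_{\aff}}I^\flat\dot w I^\flat$; disjointness persists because $I^\flat\dot w I^\flat=I^\flat\dot w' I^\flat$ would give $I\dot w I\subseteq M^\flat I\dot w' I$, hence, on intersecting with $G_1$ and using $M^\flat\cap G_1=M_1\subseteq I$, $I\dot w I\subseteq I\dot w' I$ and $w=w'$. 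Granting this, the cell-multiplication relation $I^\flat\dot s I^\flat\cdot I^\flat\dot w I^\flat\subseteq I^\flat\dot s\dot w I^\flat\cup I^\flat\dot w I^\flat$ and the non-degeneracy $\dot s I^\flat\dot s^{-1}\neq I^\flat$ follow at once from the analogous statements for $(G_1,I,N_1,\cS)$; for the latter, $I^\flat\dot s I^\flat=I^\flat$ would force $I\dot s I\subseteq M^\flat I$, hence $I\dot s I\subseteq(M^\flat I)\cap G_1=I$, contradicting $\dot s I\dot s^{-1}\neq I$ in the affine system. The remaining axioms are immediate, and repeating the computation with $\B(F)^{\aff}$ in place of $I$ gives the second Tits system, so $(G_1^\flat,I^\flat,N_1^\flat,\cS^\flat)$ is a double Tits system with Weyl group $W_{\aff}$. (This step is nothing but the standard ``inflation of a BN-pair by a normalizing subgroup of its torus'', which one could also cite wholesale.)

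It remains to see that $G_1^\flat\hra G$ is $I_\a^\flat$-$N_1^\flat$-adapted of connected type. Connected type is inherited: $M^\flat\subseteq\ker\nu$ acts trivially on $\cA$, so the linear parts of $\nu_N$ on $N_1^\flat=M^\flat N_1$ and on $N_1$ have the same image in $\GL(V)$, and $N_1=\Nbf(F)^{\aff}\twoheadrightarrow W$ by Theorem~\ref{Gaffadapted}, whence $N_1^\flat\twoheadrightarrow W$. For adaptedness, given $g\in G$ pick $h\in\G(F)^{\aff}=G_1\subseteq G_1^\flat$ with $hI h^{-1}=gIg^{-1}$ and $hN_1h^{-1}=gN_1g^{-1}$; it suffices to show $h M^\flat h^{-1}=gM^\flat g^{-1}$, for then $hI^\flat h^{-1}=(hM^\flat h^{-1})(hIh^{-1})=gI^\flat g^{-1}$ and likewise for $N_1^\flat$. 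Put $x=g^{-1}h$; then $x$ normalizes $I$ and $N_1$, so $x\in P_\a^{\dag}=N_\a^{\dag}I$ (Lemma~\ref{staba}), say $x=n_0 i_0$ with $n_0\in N_\a^{\dag}\subseteq N$ and $i_0\in I$. Since $n_0$ already normalizes $N_1$, so does $i_0$, hence $i_0$ normalizes the unique maximal compact normal subgroup $M_1$ of $N_1$ (note $W_{\aff}$ has no non-trivial finite normal subgroup); as $M_1$ is Zariski dense in $\Mbf$ this gives $i_0\in N_{\G}(\Mbf)(F)\subseteq N_{\G}(\Sbf)(F)=N$, so $i_0\in N\cap I=M_1$ and $x\in N$. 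By Lemma~\ref{Wtrivialontors}, $x$ normalizes $M^\flat$, whence $hM^\flat h^{-1}=gxM^\flat x^{-1}g^{-1}=gM^\flat g^{-1}$, as needed; the $\B^\flat$-version is identical (one lands in $\Mbf(F)$ rather than in $N$). I expect the main obstacle to be precisely this last point — that conjugation in $G$ respects the $M^\flat$-decoration of $I$ and $N_1$ — which is exactly what the small normalizer computation ($N_{\G}(\Mbf)\subseteq\Nbf$ together with $M^\flat\trianglelefteq N$) supplies; everything else is a formal transfer along the inflation.
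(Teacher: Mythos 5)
Your proof follows the route the paper itself takes (the paper records the corollary as an immediate consequence of Theorem \ref{G1titssystem}, leaving the inflation along $M^\flat$ implicit), and the Iwahori half is correct and complete: the identities $I^\flat\cap N_1^\flat=M^\flat$, $I^\flat\dot wI^\flat=M^\flat(I\dot wI)$, the disjointness via $M^\flat\cap G_1=M_1\subset I$, the transfer of $T_1$--$T_4$, the connected-type statement, and the adaptedness argument (reducing $x=g^{-1}h$ to an element of $P_\a^\dag$ and then, via the normal-compact-subgroup and Zariski-density observations, to an element of $\Nbf(F)$ normalizing $M^\flat$) are all sound. Incidentally, adaptedness admits a shortcut that avoids the normalizer computation entirely: write $g=hn$ with $h\in G_1$ and $n\in N_\a^\dag$ (Corollary \ref{GG1NN1}); since $n$ normalizes both $I^\flat$ and $N_1^\flat$ (Lemmas \ref{staba} and \ref{Wtrivialontors}), this $h$ works directly.

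There is, however, one false equality, and it sits exactly in the part of the statement you treat most briefly, the \emph{second} Tits system of the ``double'' pair: $\B^\flat\cap N_1^\flat$ is \emph{not} $M^\flat$. Indeed $\B(F)^{\aff}\cap\Nbf(F)^{\aff}=\Mbf(F)\cap G_1=M^{\aff}$, whose image in $N_1/M_1$ is the full translation lattice $\Lambda_{\aff}$; the modular-law computation therefore gives $\B^\flat\cap N_1^\flat=M^\flat M^{\aff}$, which strictly contains $M^\flat$ as soon as $\Phi\neq\emptyset$ (the left-hand side is non-compact). Consequently the Weyl group of $(G_1^\flat,\B^\flat,N_1^\flat)$ is $N_1^\flat/M^\flat M^{\aff}\simeq W$, generated by the simple reflections $\mathcal{T}_{a_\circ}^\flat$, and not $W_{\aff}$ generated by $\cS^\flat$. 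The inflation argument you run for the Iwahori system does transfer verbatim to the Borel system once the correct ``torus'' $M^\flat M^{\aff}$ and generating set are used, so the gap is repairable, but as written the chain $I^\flat\cap N_1^\flat=M^\flat=\B^\flat\cap N_1^\flat$ and the claim that the $\B^\flat$-computation is ``identical'' are incorrect.
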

\begin{proof}This follows readily from Theorem \ref{G1titssystem}.
\end{proof}
Consequently, the pair $(W_{\aff}^{\flat},\cS^{\flat})$ is a Coxeter system \cite[\S 3]{BourbakiLieV} 
subject to the following properties:
\begin{enumerate}[nosep]
\item[$T_1.$] The subgroup $N_1^{\flat} \cap I^{\flat}=M^{\flat}$ is normal in $N_1^{\flat} $ and $G_1^{\flat} $ is generated by $I^{\flat}$ and $N_1^{\flat}$,
\item[$T_2.$] the elements of $\cS^{\flat}$ have order 2 and generate $
W_{\aff}^{\flat}$,
\item[$T_3.$] For all $s\in \cS^{\flat}, w\in W_{\aff}^{\flat} $, we have $sI^{\flat} w \subset I^{\flat} wI^{\flat} \sqcup I^{\flat} swI^{\flat} $,
\item[$T_4.$] For all $s\in \cS^{\flat}$, we have $sI^{\flat} s \neq I^{\flat}$.
\end{enumerate}
}
\begin{corollary}\label{cortitssystflat}
\begin{enumerate}
\item $G_1^{\flat}= \sqcup_{w \in W_{\aff}^{\flat}} I^{\flat}  w I^{\flat} $. 
\item For any facet $\cF \subset \overline{\a}$, the subgroup $K_\cF^\flat=I^\flat W_\cF^\flat I^\flat$ is its own normalizer in $G_1^{\flat}$.
\item For any two facet $\cF,\cF' \subset \overline{\a}$, and $w \in W_{\aff}^\flat$, we have $K_\cF^\flat  w K_{\cF'}^\flat = I^{\flat} W_\cF^\flat w W_{\cF'}^\flat I^{\flat} $ and the map $w \mapsto I^{\flat}wI^{\flat}$ defines a bijection
$$W_\cF^\flat \backslash W_{\aff}^{\flat} /W_{\cF'}^\flat \cong K_\cF^\flat \backslash G_1^{\flat}/ K_{\cF'}^\flat.$$
\end{enumerate}
\end{corollary}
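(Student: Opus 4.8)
The plan is to deduce all three assertions from the general theory of Tits systems, applied to the Tits system $(G_1^{\flat},I^{\flat},N_1^{\flat},\cS^{\flat})$ furnished by Corollary~\ref{doubtitflat}, whose associated Coxeter system is $(W_{\aff}^{\flat},\cS^{\flat})$ and whose Weyl group is $W_{\aff}^{\flat}=N_1^{\flat}/M^{\flat}$ (using axiom $T_1$). Note first that for $w\in W_{\aff}^{\flat}$ the double coset $I^{\flat}wI^{\flat}$ is well defined independently of the chosen lift in $N_1^{\flat}$, since $M^{\flat}=N_1^{\flat}\cap I^{\flat}\subseteq I^{\flat}$.

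Statement $(1)$ is then exactly the Bruhat decomposition of this Tits system, $G_1^{\flat}=\bigsqcup_{w\in W_{\aff}^{\flat}}I^{\flat}wI^{\flat}$, together with the disjointness of the cells; see \cite{BourbakiLieV} or \cite[\S 1.2]{BT72}.

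For $(2)$ I would first observe that $K_{\cF}^{\flat}$ is a subgroup of $G_1^{\flat}$ containing the ``Borel'' $I^{\flat}$: indeed $K_{\cF}\subseteq G_1$ by Proposition~\ref{parahr} and $M^{\flat}\subseteq M^{1}$, so $K_{\cF}^{\flat}=M^{\flat}K_{\cF}\subseteq M^{\flat}G_1=G_1^{\flat}$, while $I=I_{\a}\subseteq K_{\cF}$ because $\cF\subset\overline{\a}$, whence $I^{\flat}\subseteq K_{\cF}^{\flat}$. By the classification of subgroups of the ambient group of a Tits system that contain the Borel \cite{BourbakiLieV}, there is a unique subset $X\subseteq\cS^{\flat}$ with $K_{\cF}^{\flat}=I^{\flat}\langle X\rangle I^{\flat}$, and $\langle X\rangle$ is recovered as the image of $K_{\cF}^{\flat}\cap N_1^{\flat}$ in $W_{\aff}^{\flat}$. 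Since $N_1^{\flat}\subseteq N$ one has $K_{\cF}^{\flat}\cap N_1^{\flat}=N\cap K_{\cF}^{\flat}=N_{1,\cF}^{\flat}$, so this image equals $W_{\cF}^{\flat}=\langle\mathcal{T}_{\cF}^{\flat}\rangle$; as a standard parabolic subgroup of a Coxeter system determines its generating set, $X=\mathcal{T}_{\cF}^{\flat}$, hence $K_{\cF}^{\flat}=I^{\flat}W_{\cF}^{\flat}I^{\flat}$. That $K_{\cF}^{\flat}$ is its own normalizer in $G_1^{\flat}$ is the corresponding self-normalizing property of standard parabolic subgroups of a Tits system \cite{BourbakiLieV}.

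Finally, $(3)$ is the standard double-coset calculus for the two standard parabolics $K_{\cF}^{\flat}=P_{\mathcal{T}_{\cF}^{\flat}}$ and $K_{\cF'}^{\flat}=P_{\mathcal{T}_{\cF'}^{\flat}}$: iterating the multiplication rule $(I^{\flat}sI^{\flat})(I^{\flat}vI^{\flat})\subseteq I^{\flat}svI^{\flat}\cup I^{\flat}vI^{\flat}$ (for $s\in\cS^{\flat}$, $v\in W_{\aff}^{\flat}$) over reduced words of the elements of $W_{\cF}^{\flat}$ and $W_{\cF'}^{\flat}$ gives $K_{\cF}^{\flat}\,w\,K_{\cF'}^{\flat}=I^{\flat}\,(W_{\cF}^{\flat}wW_{\cF'}^{\flat})\,I^{\flat}$ for every $w\in W_{\aff}^{\flat}$, the reverse inclusion being immediate from $I^{\flat}\subseteq K_{\cF}^{\flat},K_{\cF'}^{\flat}$. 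Combining this with $(1)$ and with the elementary fact that two cells $I^{\flat}wI^{\flat}$, $I^{\flat}w'I^{\flat}$ lie in the same $(K_{\cF}^{\flat},K_{\cF'}^{\flat})$-double coset precisely when $W_{\cF}^{\flat}wW_{\cF'}^{\flat}=W_{\cF}^{\flat}w'W_{\cF'}^{\flat}$ yields the announced bijection $W_{\cF}^{\flat}\backslash W_{\aff}^{\flat}/W_{\cF'}^{\flat}\cong K_{\cF}^{\flat}\backslash G_1^{\flat}/K_{\cF'}^{\flat}$. The only step requiring genuine attention is the identification carried out in $(2)$ — concretely the equalities $K_{\cF}^{\flat}\cap N_1^{\flat}=N_{1,\cF}^{\flat}$ and $W_{\cF}^{\flat}=\langle\mathcal{T}_{\cF}^{\flat}\rangle$ with $\mathcal{T}_{\cF}^{\flat}\subseteq\cS^{\flat}$ a set of Coxeter generators; once this is in place the three statements are formal consequences of the abstract theory, and I do not expect a substantive obstacle.
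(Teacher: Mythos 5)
Your proposal is correct and follows essentially the same route as the paper: the paper's proof likewise derives all three statements from the axioms $T_1,\dots,T_4$ of the Tits system $(G_1^{\flat},I^{\flat},N_1^{\flat},\cS^{\flat})$ by invoking the general results of \cite[Ch.~IV, \S 2 n$^\circ$ 5 \& 6]{BourbakiLieV}, using exactly the fact you isolate, namely that $W_{\cF}^{\flat}$ is the standard parabolic subgroup of $(W_{\aff}^{\flat},\cS^{\flat})$ generated by the type $\mathcal{T}_{\cF}^{\flat}$. Your write-up merely makes explicit the intermediate verifications (that $K_{\cF}^{\flat}$ is a subgroup of $G_1^{\flat}$ containing $I^{\flat}$, and the identification $K_{\cF}^{\flat}\cap N_1^{\flat}=N_{1,\cF}^{\flat}$) that the paper leaves implicit.
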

\begin{proof}
These facts are consequences of the axioms $T_1, \dots, T_4$, for which proofs may be found in \cite[Ch. IV, \S 2 n$^\circ$ 5 \& 6]{BourbakiLieV} since for any $\cF \subset \overline{\a}$ we know that $W_\cF^\flat$ is precisely the subgroup of $W_{\aff}^{\flat}$ generated by $\cT_\cF$, the type of $\cF$.
\end{proof}

\begin{corollary}[Iwahori factorizations]\label{Iwahoridecomp}
Let ${\mathfrak{b}} \subset \cA$ be an alcove. 
The map $U_{\mathfrak{b}}^+ \times U_{\mathfrak{b}}^- \times M^{\flat} \to I_{\mathfrak{b}}^{\flat} $ is a bijection. 
In particular, $I_{\mathfrak{b}}^{\flat}$ admits the Iwahori factorization $I_{\mathfrak{b}}^{\flat}=U_{\mathfrak{b}}^+ U_{\mathfrak{b}}^- M^{\flat}$ in which the factors commute. 
For $\mathfrak{b}=\a$, the product maps $\prod_{\alpha \in \Phi^+\cap \Phi_{\red}}U_{\alpha+0}\to U_{\a}^+=:I^+ \text{ and } \prod_{\alpha \in \Phi^-\cap \Phi_{\red}} U_{\alpha+n_\alpha^{-1}}\to U_{\a}^-=:I^-$ are homeomeprisms. 
Similarly, the product maps $\prod_{\alpha \in \Phi^\pm\cap \Phi_{\red}}U_{\alpha+0}\to U_{a_\circ}^\pm = K\cap U^\pm$ are isomorphisms and $K^{\flat}=U_{a_\circ}^+ U_{a_\circ}^- \,U_{a_\circ}^+ M^{\flat}$.
\end{corollary}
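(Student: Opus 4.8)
The plan is to bootstrap everything from the general parahoric decomposition of Proposition~\ref{decomppara}, the explicit values of the concave functions $f_{\mathfrak b}$ and $f_{a_\circ}$ computed in Example~\ref{exfomega}, and one separate injectivity argument coming from the uniqueness of the big-cell (Bruhat) decomposition in $\G(F)$.

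First I would apply Proposition~\ref{decomppara} to the bounded set $\Upomega=\mathfrak b$. Since $\Nbf(F)\cap I_{\mathfrak b}=\Mbf(F)_1$ by Lemma~\ref{NcapIM2}, this gives $I_{\mathfrak b}=K_{\mathfrak b}=U_{\mathfrak b}^{+}U_{\mathfrak b}^{-}\Mbf(F)_1$ with the factors commuting (the subset being independent of the order) and with the product maps $\prod_{\alpha\in\Phi_{\red}\cap\Phi^{\pm}}U_{\alpha+f_{\mathfrak b}(\alpha)}\to U_{\mathfrak b}^{\pm}$ homeomorphisms. To promote this along $M^{\flat}$ I use that $I_{\mathfrak b}^{\flat}=M^{\flat}I_{\mathfrak b}$ by the very definition of the superscript, that $M^{\flat}\subset M^{1}=\ker\nu_{M}$ by maximality of $M^{1}$ (\S\ref{flatpartout}), so that $\nu(m)=0$ for every $m\in M^{\flat}$ and hence $mU_{\alpha+r}m^{-1}=U_{\alpha+r}$ by Lemma~\ref{NactionUalphar}; thus $M^{\flat}$ normalizes $U_{\mathfrak b}^{+}$ and $U_{\mathfrak b}^{-}$, and combined with $\Mbf(F)_1\subset M^{\flat}$ this yields $I_{\mathfrak b}^{\flat}=U_{\mathfrak b}^{+}U_{\mathfrak b}^{-}M^{\flat}$, the three factors still commuting. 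Running the same argument with $\Upomega=\{a_\circ\}$, now using the longer decomposition $K_{a_\circ}=U_{a_\circ}^{+}U_{a_\circ}^{-}U_{a_\circ}^{+}\Mbf(F)_1$ from Proposition~\ref{decomppara}, produces $K^{\flat}=U_{a_\circ}^{+}U_{a_\circ}^{-}U_{a_\circ}^{+}M^{\flat}$.

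Next, for the bijectivity of $U_{\mathfrak b}^{+}\times U_{\mathfrak b}^{-}\times M^{\flat}\to I_{\mathfrak b}^{\flat}$: surjectivity is precisely the factorization just established, so I only need injectivity. If $u^{+}u^{-}m=v^{+}v^{-}n$ with $u^{\pm},v^{\pm}\in U_{\mathfrak b}^{\pm}$ and $m,n\in M^{\flat}\subset\Mbf(F)$, then $(v^{+})^{-1}u^{+}=v^{-}(nm^{-1})(u^{-})^{-1}$; since $nm^{-1}\in\Mbf(F)$ normalizes $\U^{-}(F)$, the right-hand side lies in $\U^{-}(F)\Mbf(F)=\B^{-}(F)$, where $\B^{-}=\U^{-}\rtimes\Mbf$ is the opposite minimal parabolic, while the left-hand side lies in $\U^{+}(F)$. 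As $\B\cap\B^{-}=\Mbf$ and $\U^{+}\cap\Mbf=\{1\}$, we get $\U^{+}(F)\cap\B^{-}(F)=\{1\}$, forcing $u^{+}=v^{+}$, then $u^{-}m=v^{-}n$, then $(v^{-})^{-1}u^{-}=nm^{-1}\in\U^{-}(F)\cap\Mbf(F)=\{1\}$, so $u^{-}=v^{-}$ and $m=n$. This is just the uniqueness of the big-cell decomposition of $\G(F)$ applied to the subgroups $U_{\mathfrak b}^{\pm}\subset\U^{\pm}(F)$ and $M^{\flat}\subset\Mbf(F)$.

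Finally, the explicit product maps would come by feeding Example~\ref{exfomega} into the homeomorphisms of Proposition~\ref{decomppara}: since the alcove $\a$ has apex $a_\circ$ and its $\a$-positive roots are exactly $\Phi^{+}$, one has $f_{\a}(\alpha)=0$ for $\alpha\in\Phi^{+}$ and $f_{\a}(\alpha)=n_{\alpha}^{-1}$ for $\alpha\in\Phi^{-}$, giving $I^{+}=U_{\a}^{+}=\prod_{\alpha\in\Phi^{+}\cap\Phi_{\red}}U_{\alpha+0}$ and $I^{-}=U_{\a}^{-}=\prod_{\alpha\in\Phi^{-}\cap\Phi_{\red}}U_{\alpha+n_{\alpha}^{-1}}$; whereas $f_{\{a_\circ\}}(\alpha)=0$ for every $\alpha\in\Phi$, giving $U_{a_\circ}^{\pm}=\prod_{\alpha\in\Phi^{\pm}\cap\Phi_{\red}}U_{\alpha+0}=K\cap\U^{\pm}(F)$, the last equality again being part of Proposition~\ref{decomppara}. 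I expect the only genuinely delicate point to be the injectivity step, and even there the work amounts to placing the three factors inside the honest big cell of $\G(F)$ and invoking its standard uniqueness; everything else is bookkeeping with Proposition~\ref{decomppara} and the already-known normalization behaviour of the affine root groups under $\ker\nu_M$.
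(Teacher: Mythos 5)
Your proof is correct and follows essentially the same route as the paper, whose own proof is a one-line citation of Proposition~\ref{decomppara}, Lemma~\ref{NcapIM2}, Proposition~\ref{propertiesII3.3}\ref{homeoproduct} and Example~\ref{exfomega}; your only additions are the (correct) observation that $M^{\flat}\subset M^{1}=\ker\nu_{M}$ normalizes each $U_{\alpha+r}$ via Lemma~\ref{NactionUalphar}, and an explicit injectivity argument via $\U^{+}(F)\cap\U^{-}(F)\Mbf(F)=\{1\}$. That last step is a genuine improvement in completeness, since the paper leaves the bijectivity (as opposed to the mere set-theoretic factorization) implicit, and it is consistent with the same big-cell fact the paper invokes in the proof of Lemma~\ref{NcapIM2}.
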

\begin{proof}
The corollary is a consequence of Proposition \ref{decomppara}, Lemma \ref{NcapIM2}, \ref{homeoproduct} of Proposition \ref{propertiesII3.3} together with Example \ref{exfomega} for the values of $f_{\a}(\alpha)$ for all $\alpha \in \Phi_{\red}$.
\end{proof}
\subsection{Dominance in \texorpdfstring{$\Lambda_M^{\flat}$}{LM}}\label{Lambdasection}
Let $\Lambda_M^{\flat}:=M/M^{\flat}$. Recall that we have a canonical isomorphism (see \S \ref{Kottwitzhom})
$$\Lambda_M =M/M_1\simeq \kappa_M(M)=(X^*(Z(\widehat{\Mbf}))_{\text{In}})^{\langle \sigma \rangle}.\nomenclature[C]{$\Lambda_M$}{The finitely generated abelian group $M/M_1$}$$
This shows that $\Lambda_M$ is a finitely generated abelian group with finite torsion subgroup $(\Lambda_{M})_{\text{tor}}=M^1 /M_1$, which injects into the set of permutations of the vertices of $\a$. 
So $\Lambda_M^{\flat}$ is finitely generated abelian group with torsion subgroup $M^1/M^{\flat}$. Set $q_{tor}^{\flat}:= |M^1/M^{\flat}|$.

There exists a natural injective finite-cokernel homomorphism $ X_*(\Sbf) \hookrightarrow  X^*(Z(\widehat{\Mbf}))_{\text{In}}^\sigma$ \cite[\S 2.7]{R15}, which yields an isomorphisms
$$X_*(\Sbf)\otimes_\Z \R \simeq  X^*(Z(\widehat{\Mbf}))_{\text{In}}^\sigma\otimes_\Z \R \simeq \Lambda_M\otimes_\Z \R.$$
The map $\nu_{M}\colon M\to X_*(\Sbf)\otimes_\Z \R$ (Lemma \ref{nu}) identifies $\Lambda_M^1=M/M^1 (=\Lambda_M/(\Lambda_{M})_{\text{tor}})$ with a lattice $\underline{\Lambda}_M$ in $X_*(\Sbf)\otimes_\Z \R$ and $\text{rank}({\Lambda_M/(\Lambda_{M})_{\text{tor}}})=\dim_\R X_*(\Sbf)\otimes_\Z \R$. 
By the above isomorphisms, the extended apartment $\cA_{\ext}$ acquires the structure of an affine space over $ \Lambda_M\otimes_\Z \R\simeq \Lambda_M^{\flat}\otimes_\Z \R \simeq \underline{\Lambda}_M \otimes_\Z \R$, thus one gets an embedding $$\underline{\Lambda}_M \to \cA_{\ext}, \quad v \mapsto  \nu_M(v)+ (a_\circ, 0_{V_G}).$$
Define 
\begin{align*}M^\pm &:=\{m\in M \colon (\alpha+0)(\nu(m)+a_\circ)= 
\langle \nu_M(m) , \alpha \rangle\ge 0 , \,\, \forall \alpha \in \Phi_{\red}^\pm\}.
\end{align*}
We call $M^+$ (resp. $M^-$) the set of dominant (resp. antidominant) elements of $M$.
Let ${\Lambda}_M^{\pm,\flat}\subset {\Lambda}_M^{\flat}$ and $\underline{\Lambda}_M^\pm \subset \underline{\Lambda}_M$ denote the images of $M^{\pm,\flat}$ by the natural projections $M \twoheadrightarrow {\Lambda}_M^{\pm,\flat} \twoheadrightarrow \underline{\Lambda}_M^\pm$. 
In other words
$$\underline{\Lambda}_M^\pm:= \left((a_\circ+\underline{\Lambda}_M) \cap \cC^\pm\right)-a_\circ\subset\underline{\Lambda}_M ,$$
where $\cC^\pm$ are the two opposite vectorial chambers
$$\cC^\pm:=\{a \in \cA_{\ext}\colon (\alpha+0)(a)= \alpha(a-a_\circ)= \langle a-a_\circ, \alpha  \rangle \ge 0, \,\, \forall \alpha \in \Phi_{\red}^\pm\}.$$
\begin{remark}
An element $m\in M$ is in $M^+$ if and only if $\nu_N(m)(a_\circ)=\nu(m)+a_\circ \in \overline{\cC}^+$ (topological closure) and accordingly also $\nu_N(m)(\a)\subset \cC^+$ since by definition $\a\subset \cC^+$.
\end{remark}

\subsection{Iwahori--Weyl group}\label{Weylgroups}

Define the $\flat$-Iwahori--Weyl group for $G$ to be 
$$\widetilde{W}^{\flat}:= N/M^{\flat}.$$
Lemma \ref{Wtrivialontors} simply says that $\ker \nu/M_1\subset Z(\widetilde{W})$ the center of $\widetilde{W}$. 
We now present two semidirect product decompositions of the $\flat$-Iwahori--Weyl group. 
These decompositions will be useful for studying the Iwahori--Hecke algebra. 
Here is the first one: 
\begin{lemma}\label{weyiwa}The ${\flat}$-Weyl--Iwahori subgroup has a natural structure of a quasi-Coxeter group
$$
\widetilde{W}^{\flat}= W_{\aff}^{\flat} \rtimes \widetilde{\Omega}^{\flat}\simeq  W_{\aff}^\flat \rtimes \Lambda_G^{\flat},
$$
where $\widetilde{\Omega}^{\flat}:= N_\a^\dag /M^{\flat}$, that is the subgroup of stabilizers of $\a$ in $\widetilde{W}^{\flat}$ and $\Lambda_G^{\flat}:=G/G_1^{\flat}$.
\end{lemma}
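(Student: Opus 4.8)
The plan is to derive everything from the parahoric case $M^{\flat}=M_1$, which is contained in Bruhat--Tits theory, by tracking the effect of enlarging $M_1$ to $M^{\flat}$. First I would isolate the two ingredient subgroups of $\widetilde W^{\flat}=N/M^{\flat}$. Since $M^{\flat}\subset M^1\subset\ker\nu_N$ and $\ker\nu_N$ fixes $\cA$ pointwise, the $\nu_N$-action of $N$ on $\cA$ factors through $\widetilde W^{\flat}$, and $M^{\flat}\subset N_\a^{\dag}$; consequently the setwise stabilizer of $\a$ in $\widetilde W^{\flat}$ is exactly $\widetilde\Omega^{\flat}=N_\a^{\dag}/M^{\flat}$, which is the description claimed in the statement. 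On the other hand, writing $N_1:=N\cap G_1=\Nbf(F)^{\aff}$ (the second equality by Lemma \ref{GaffG1} and the definition of $(-)^{\aff}$), we have $N_1^{\flat}=M^{\flat}N_1$ and $W_{\aff}^{\flat}=N_1^{\flat}/M^{\flat}$. Since $G_1\trianglelefteq G$ and $M^{\flat}\trianglelefteq N$ (Lemma \ref{Wtrivialontors}), the subgroup $N_1^{\flat}$ is normal in $N$, hence $W_{\aff}^{\flat}$ is a normal subgroup of $\widetilde W^{\flat}$.

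Next I would establish the internal semidirect product $\widetilde W^{\flat}=W_{\aff}^{\flat}\rtimes\widetilde\Omega^{\flat}$. Surjectivity of $W_{\aff}^{\flat}\cdot\widetilde\Omega^{\flat}\to\widetilde W^{\flat}$ follows from $N=N_\a^{\dag}\cdot\Nbf(F)^{\aff}=N_\a^{\dag}\cdot N_1$ (Corollary \ref{GG1NN1}) after reducing modulo $M^{\flat}$. For triviality of the intersection, if $n\in N_1^{\flat}\cap N_\a^{\dag}$, write $n=mn_0$ with $m\in M^{\flat}\subset N_\a^{\dag}$ and $n_0\in N_1$; then $n_0\in N_\a^{\dag}\cap\Nbf(F)^{\aff}=\Mbf(F)_1$ by Lemma \ref{staba}, so $n\in M^{\flat}\Mbf(F)_1=M^{\flat}$, giving $W_{\aff}^{\flat}\cap\widetilde\Omega^{\flat}=\{1\}$. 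The identification $\widetilde\Omega^{\flat}\simeq\Lambda_G^{\flat}=G/G_1^{\flat}$ is the same kind of computation: the composite $N_\a^{\dag}\hookrightarrow G\twoheadrightarrow G/G_1^{\flat}$ is surjective because $G=N_\a^{\dag}\cdot G^{\aff}=N_\a^{\dag}G_1\subset N_\a^{\dag}G_1^{\flat}$, and its kernel $N_\a^{\dag}\cap M^{\flat}G_1$ equals $M^{\flat}$ (if $n=mg\in N_\a^{\dag}$ with $m\in M^{\flat}$ and $g\in G_1$, then $g=m^{-1}n\in N\cap G_1=N_1$, and $m\in N_\a^{\dag}$ forces $g\in N_\a^{\dag}\cap N_1=\Mbf(F)_1$, whence $n\in M^{\flat}$). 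Thus $\widetilde\Omega^{\flat}=N_\a^{\dag}/M^{\flat}\xrightarrow{\sim}\Lambda_G^{\flat}$.

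For the quasi-Coxeter structure I would invoke Corollary \ref{doubtitflat} and the axioms $T_1$--$T_4$ to get that $(W_{\aff}^{\flat},\cS^{\flat})$ is a Coxeter system, and then observe that each $\sigma\in\widetilde\Omega^{\flat}$ fixes $\a$ setwise, hence permutes its walls and therefore the set $\cS(\a)$ of reflections in those walls; transported through the isomorphism $W_{\aff}^{\flat}\simeq W_{\aff}$ (matching $\cS^{\flat}$ with $\cS(\a)$), conjugation by $\sigma$ inside $\widetilde W^{\flat}$ permutes $\cS^{\flat}$. Hence the length function of $(W_{\aff}^{\flat},\cS^{\flat})$ extends to $\widetilde W^{\flat}$ via $\ell(v\sigma):=\ell(v)$ for $v\in W_{\aff}^{\flat}$, $\sigma\in\widetilde\Omega^{\flat}$, with $\widetilde\Omega^{\flat}$ the set of length-zero elements acting by length-preserving automorphisms --- precisely the assertion that $\widetilde W^{\flat}$ is a quasi-Coxeter group. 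I expect the only genuine work to lie in the bookkeeping of the first two paragraphs: one must verify that replacing $M_1$ by $M^{\flat}$ disturbs none of Lemmas \ref{Wtrivialontors}, \ref{staba}, \ref{NcapIM2} and Corollary \ref{GG1NN1}, after which both decompositions are forced.
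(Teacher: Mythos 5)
Your proof is correct and follows essentially the same route as the paper: the paper exhibits the splitting of $1\to N_1^{\flat}/M^{\flat}\to \widetilde W^{\flat}\to N/N_1^{\flat}\to 1$ via $N_\a^{\dag}/M^{\flat}\simeq N/N_1^{\flat}\simeq G/G_1^{\flat}$ using Corollary \ref{GG1NN1}, which is exactly your internal semidirect product argument. You merely make explicit what the paper leaves implicit — the trivial intersection via Lemma \ref{staba}, the identification of the kernel of $N_\a^{\dag}\to G/G_1^{\flat}$, and the quasi-Coxeter structure coming from $\widetilde\Omega^{\flat}$ permuting $\cS^{\flat}$ — all of which checks out.
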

\begin{proof}
It suffices to consider the short exact sequence\begin{tikzcd}[column sep= tiny]
1\arrow[]{r}& N_1^{\flat} /M^{\flat}\arrow[]{r}{}& N/M^{\flat} \arrow[]{r}{}& N/N_1^{\flat} \arrow[]{r} & 1\end{tikzcd}which splits given that $N_\a^\dag/M^{\flat}\simeq N/N_1^{\flat} \simeq G/G_1^{\flat}$ 
(Corollary \ref{GG1NN1}). 
\end{proof}
\begin{lemma}\label{berndecom}\label{decompwwf}
The ${\flat}$-Iwahori--Weyl group $\widetilde{W}^{\flat}$ admits the following decomposition 
$$\widetilde{W}^{\flat}=\Lambda_M^{\flat} \rtimes W_{a_\circ}^{\flat}\simeq \Lambda_M^{\flat} \rtimes W.$$ 
If $\cF\subset  \cA$ is a facet, we have a bijection of $\Lambda_M^{\flat} $-sets
$$\widetilde{W}^{\flat}/ W_{\cF}^{\flat} \cong \Lambda_M^{\flat} \times  (W/ \jmath_W(W_{\cF}))$$
where, $\jmath_W\colon N \to W=N/M$ is the quotient map.
\end{lemma}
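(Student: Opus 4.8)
The plan is to treat the two statements in turn, both by elementary group theory layered on the structural results already established.

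For the semidirect-product decomposition I would begin with the tautological exact sequence
$$1 \longrightarrow \Lambda_M^{\flat}=M/M^{\flat} \longrightarrow \widetilde{W}^{\flat}=N/M^{\flat} \longrightarrow W=N/M \longrightarrow 1,$$
in which $\Lambda_M^{\flat}$ is normal because $\Mbf=Z_\G(\Sbf)$ is normal in $\Nbf=N_\G(\Sbf)$ (and abelian, being a quotient of $\Lambda_M$). Since $a_\circ$ is special, the $\flat$-analogue of Lemma \ref{repofWinK} gives that $W_{a_\circ}^{\flat}=N_{1,a_\circ}^{\flat}/M^{\flat}$ maps isomorphically onto $W$: indeed $N_{1,a_\circ}^{\flat}=M^{\flat}(N\cap K)$ surjects onto $N/M$ by Lemma \ref{repofWinK}, while $N_{1,a_\circ}^{\flat}\cap M=M^{\flat}(M\cap K)=M^{\flat}M_1=M^{\flat}$ by the modular law together with Corollary \ref{uniqueparahoricM}. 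Hence the inclusion $W_{a_\circ}^{\flat}\hookrightarrow\widetilde{W}^{\flat}$ splits the sequence, giving $\widetilde{W}^{\flat}=\Lambda_M^{\flat}\rtimes W_{a_\circ}^{\flat}\simeq\Lambda_M^{\flat}\rtimes W$, the action being conjugation by any lift in $N\cap K$ (well defined since $M^{\flat}\trianglelefteq N$ by Lemma \ref{Wtrivialontors}).

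For the coset description, write $W_{\cF}^{\flat}=N_{1,\cF}^{\flat}/M^{\flat}=M^{\flat}N_{1,\cF}/M^{\flat}$ with $N_{1,\cF}=\Nbf(F)\cap K_{\cF}$. Since $M^{\flat}\trianglelefteq N$, the subset $M^{\flat}N_{1,\cF}$ is a subgroup, so $\widetilde{W}^{\flat}/W_{\cF}^{\flat}\cong N/M^{\flat}N_{1,\cF}$ as $\widetilde{W}^{\flat}$-sets, hence a fortiori as $\Lambda_M^{\flat}$-sets for left translation. The crucial observation is the modular-law identity
$$M\cap M^{\flat}N_{1,\cF}=M^{\flat}(M\cap N_{1,\cF})=M^{\flat}(M\cap K_{\cF})=M^{\flat}M_1=M^{\flat},$$
using Corollary \ref{uniqueparahoricM}; it says precisely that $\Lambda_M^{\flat}\cap W_{\cF}^{\flat}=\{1\}$ inside $\widetilde{W}^{\flat}$.

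It then remains to analyse the left $\Lambda_M^{\flat}$-action on $X=\widetilde{W}^{\flat}/W_{\cF}^{\flat}$. Because $\Lambda_M^{\flat}$ is normal, $\Lambda_M^{\flat}W_{\cF}^{\flat}$ is a subgroup containing $W_{\cF}^{\flat}$, the surjection $X\twoheadrightarrow\widetilde{W}^{\flat}/\Lambda_M^{\flat}W_{\cF}^{\flat}$ has fibres equal to the $\Lambda_M^{\flat}$-orbits, and the projection $\bar\pi\colon\widetilde{W}^{\flat}\to W$ (with kernel $\Lambda_M^{\flat}$) identifies $\widetilde{W}^{\flat}/\Lambda_M^{\flat}W_{\cF}^{\flat}$ with $W/\jmath_W(W_{\cF})$, since $\bar\pi(W_{\cF}^{\flat})$ is the image of $N_{1,\cF}$ in $W$, which is by definition $\jmath_W(W_{\cF})$. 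Moreover every orbit is free: the stabiliser of $xW_{\cF}^{\flat}$ is $\Lambda_M^{\flat}\cap xW_{\cF}^{\flat}x^{-1}=x(\Lambda_M^{\flat}\cap W_{\cF}^{\flat})x^{-1}=\{1\}$ by normality of $\Lambda_M^{\flat}$ and the trivial-intersection fact. Picking representatives $\{x_c:c\in W/\jmath_W(W_{\cF})\}$ for the cosets of $\Lambda_M^{\flat}W_{\cF}^{\flat}$, the map $(\lambda,c)\mapsto\lambda x_c W_{\cF}^{\flat}$ is then a $\Lambda_M^{\flat}$-equivariant bijection $\Lambda_M^{\flat}\times(W/\jmath_W(W_{\cF}))\riso X$, $\Lambda_M^{\flat}$ acting on the left factor by translation. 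I do not expect a genuine obstacle here beyond the bookkeeping; the two points to watch are that general facets $\cF\subset\cA$ (not only those in $\overline{\a}$) are handled uniformly — which is automatic since $\Nbf(F)$ stabilises $\cA$, so all the identities above are insensitive to the position of $\cF$ — and that each quotient must be formed as a coset space of an honest subgroup, which is why normality of $M^{\flat}$ and of $\Lambda_M^{\flat}$ is invoked at every step.
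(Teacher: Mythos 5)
Your argument is correct and follows the same route as the paper: the split exact sequence $1\to\Lambda_M^{\flat}\to\widetilde{W}^{\flat}\to W\to 1$ via Lemma \ref{repofWinK}, and the bijection via the key identity $M\cap N_{1,\cF}^{\flat}=M^{\flat}$ coming from Corollary \ref{uniqueparahoricM}. Your orbit--stabiliser bookkeeping simply makes explicit the existence and uniqueness of the pair $(m_x,n_x)$ that the paper leaves as "one easily shows".
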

\begin{proof}
Using the canonical isomorphism $W \simeq W_{a_\circ}$, proved in Lemma \ref{repofWinK}, one gets a short exact sequence
$$ \begin{tikzcd}[column sep= small]
1\arrow[]{r}& \Lambda_M^{\flat}\arrow[]{r}{}& \widetilde{W}^{\flat} \arrow[]{r}{}& N/M\simeq N_{1,a_\circ}^{\flat}/M^{\flat}\arrow[]{r} & 1\end{tikzcd},$$
which clearly splits, i.e. 
$\widetilde{W}^{\flat} \simeq \Lambda_M^{\flat} \rtimes W$.

For the bijection statement, fix a set of representatives $D_\cF=\{ n_{{I^{\flat}}}^\cF \in N \colon 1\le i \le |W/ \jmath_W(W_{\cF}^{\flat})|  \}$ for $W/\jmath_W(W_{\cF}^{\flat})$. 
Note that $\jmath_W(W_{\cF}^{\flat})=\jmath_W(W_{\cF})$. 
Given that $M\cap K_\cF^{\flat}=M^{\flat}$ (by Corollary \ref{uniqueparahoricM}), one easily shows that for any $x\in N$ there exist a unique pair $(m_x,n_x) \in \Lambda_M^{\flat} \times D_\cF$ such that $x N_{1,\cF}^{\flat} = m_x n_{x} N_{1,\cF}^{\flat} $. This completes the proof of the Lemma.
\end{proof}

\begin{remark}
When $M^{\flat}=M^1=\nu_{N,\ext}(\Nbf(F))$, ${\widetilde{W}}^1$ is called the extended affine Weyl group and usually denoted by ${\widetilde{W}}_{\aff}$ and it can be viewed as a group of affine-linear transformations on $\cA_{\ext}$ marked with $(a_\circ , 0_{V_G})$ as the "origin".
\end{remark}

\begin{remark}\label{commuwaffs}
(i) For any $x\in \widetilde{W}^{\flat}$, we write $m_x$ and $w_x$ for its projection in $\Lambda_M^{\flat}$ and $W_{a_\circ}^{\flat}$, respectively. 

(ii) The natural projection morphism $\square^{\flat}\colon\widetilde{W}  \twoheadrightarrow\widetilde{W}^{\flat} $ induces two isomorphisms; $W_{\aff} \iso W_{\aff}^{\flat}$ and $W_{a_\circ}\iso W_{a_\circ}^{\flat}$
for which the following diagram commutes
$$\begin{tikzcd}
\Lambda_M \rtimes W_{a_\circ}\arrow[two heads]{d}{ \square^{\flat}\times \text{Id} } &\widetilde{W}\arrow[swap]{l}{\simeq}\arrow{r}{\simeq}\arrow[two heads]{d}{\square^{\flat}}& W_{\aff} \rtimes \widetilde{\Omega} \arrow[two heads]{d}{ \text{Id}\times \square^{\flat} }\\
\Lambda_M^{\flat} \rtimes W_{a_\circ}^{\flat}&\widetilde{W}^{\flat} \arrow[swap]{l}{\simeq}\arrow{r}{\simeq}& W_{\aff}^{\flat} \rtimes \widetilde{\Omega}^{\flat}
\end{tikzcd}$$
In particular, the projection $\Lambda_M\to\Lambda_M^{\flat}$ is $W$-equivariant.
\end{remark}
\subsection{Bruhat and Iwasawa decompositions}
Let us recall Iwasawa and Bruhat decompositions for $G$:
\begin{proposition}[The Bruhat and Iwasawa decompositions]\label{bruhatdecomposition}
Let $K_\cF$ resp. $K_{\cF'}$ be parahoric subgroups associated with facets $\cF$ resp. $\cF'$ contained in the apartment associated with $\Sbf$.
$$G= U_\cF N U_{\cF'} \quad\text{(The Bruhat decomposition)},$$
$$G= U^\pm N B= U^\pm N U_\cF \quad \text{(The Iwasawa decomposition)}.$$
Moreover, the maps $n\mapsto BnB$, $n\mapsto K_\cF^{\flat} n K_{\cF'}^{\flat}$ and $n\mapsto U^\pm n K_{\cF}^{\flat}$ induce the following bijections
$$W\cong B\backslash G/B, \quad\,K_\cF^{\flat} \backslash G/K_{\cF'}^{\flat} \cong {W}_\cF^{\flat} \backslash \widetilde{W}^{\flat}/ {W}_{\cF'}^{\flat}\quad \text{ and }\quad  U^\pm \backslash G/K_{\cF}^{\flat} \cong \widetilde{W}^{\flat}/W_\cF^{\flat}.$$
\end{proposition}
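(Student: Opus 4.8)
\textbf{Proof strategy for Proposition \ref{bruhatdecomposition}.}

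The plan is to deduce everything from the double Tits system machinery already installed, together with the Iwasawa--type factorizations of the $U_\cF$'s. For the classical Bruhat decomposition $G = \Mbf(F) \cdot N \cdot \Mbf(F)$-type statement with parabolics and the bijection $W \cong B\backslash G/B$: this is the statement that $(\G(F), \B(F), \Nbf(F), \cS_W)$ is a (relative) Tits system with Weyl group $W$, which follows from Theorem \ref{generating} (the generating root datum axioms of \cite[\S 6.1]{BT72} include precisely the Bruhat decomposition for $(\Mbf(F), (\U_\alpha(F), M_\alpha))$). So the first step is simply to cite \cite[\S 6.1.1]{BT72} and Theorem \ref{generating} for $G = \B(F)\, \Nbf(F)\, \B(F)$ and $W \cong \B(F)\backslash G/\B(F)$; since $\B(F) = \U^+ \rtimes \Mbf(F)$ and $N$ surjects onto $W$ with kernel $\Mbf(F) \subset \B(F)$, this rewrites as $G = \U^+ N \Mbf(F) = \U^+ N \B(F)$.

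Next, for the Iwasawa decomposition $G = \U^\pm N U_\cF$: starting from $G = \U^\pm N \B(F)$, I would write $\B(F) = \U^+ \Mbf(F)$ and absorb the pieces. Using $P_\cF \supset I_\a$ and the factorization $K_\cF = U_\cF^+ U_\cF^- \Mbf(F)_1$ of Proposition \ref{decomppara} (extended to $K_\cF^\flat$ via Corollary \ref{Iwahoridecomp} and the definitions of \S\ref{flatpartout}), together with $\U^+ = \varinjlim U_{\a}^+ \cdot(\text{translates})$ — more precisely, since every element of $\U^+(F)$ lies in $n U_\cF^+ n^{-1}$ for a suitable $n \in \Nbf(F) \cap \Mbf(F)$ deep in the antidominant cone — one shows $\U^+ N \B(F) = \U^+ N U_\cF^\flat$. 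The cleanest route is: $\U^+ N B = \U^+ N \Mbf(F) = \U^+ \Mbf(F) N = \B(F) N$, and then observe $\B(F) \subset \U^+ \cdot K_\cF^\flat \cdot \Mbf(F)$ is false in general, so instead absorb $\U^+$ into $N U_\cF^\flat$ on the left by the standard ``shrinking'' argument (conjugate a large compact-open piece of $\U^+$ into $U_\cF^+$ by an antidominant cocharacter in $N$). For the bijectivity of $n \mapsto \U^\pm n K_\cF^\flat$ onto $\widetilde W^\flat/W_\cF^\flat$: two elements $n, n'$ give the same coset iff $n'^{-1} n \in \U^\pm n^{-1}\U^\pm \cdot K_\cF^\flat$; intersecting with $\Nbf(F)$ and using the Iwahori factorization of $K_\cF^\flat$ (so that $\Nbf(F) \cap \U^+ K_\cF^\flat \U^- = N_{1,\cF}^\flat$, via Corollary \ref{uniqueparahoricM} and Lemma \ref{NcapIM2}) pins this down to $N_{1,\cF}^\flat$, giving the bijection with $N/N_{1,\cF}^\flat W_{\aff}^\flat$-cosets, i.e. $\widetilde W^\flat/W_\cF^\flat$.

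For the double-coset bijection $K_\cF^\flat \backslash G/K_{\cF'}^\flat \cong W_\cF^\flat \backslash \widetilde W^\flat / W_{\cF'}^\flat$: combine Corollary \ref{cortitssystflat}(3), which gives $K_\cF^\flat \backslash G_1^\flat / K_{\cF'}^\flat \cong W_\cF^\flat \backslash W_{\aff}^\flat / W_{\cF'}^\flat$ from the double Tits system of Corollary \ref{doubtitflat}, with the decomposition $G = N_\a^\dag \cdot G_1^\flat$ from Corollary \ref{GG1NN1} (so $G = \widetilde\Omega^\flat$-many translates of $G_1^\flat$, and $\widetilde\Omega^\flat$ normalizes $I_\a^\flat \subset K_\cF^\flat$, $K_{\cF'}^\flat$): this promotes the $G_1^\flat$-statement to the full $G$-statement, matching $\widetilde W^\flat = W_{\aff}^\flat \rtimes \widetilde\Omega^\flat$ of Lemma \ref{weyiwa}. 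I expect the main obstacle to be the bookkeeping in the Iwasawa part — carefully justifying that $\U^+$ (non-compact) can be absorbed into $N U_\cF^\flat$ and that the resulting fibers are exactly single $N_{1,\cF}^\flat$-cosets rather than something larger; this is where one must invoke the Iwahori factorization $I_\a^\flat = U_\a^+ U_\a^- M^\flat$ (Corollary \ref{Iwahoridecomp}) and the triviality $\Mbf(F) \cap \U^+\U^- = \{1\}$ most delicately. The rest is formal consequence of the Tits-system axioms $T_1$--$T_4$.
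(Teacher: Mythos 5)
Your treatment of the Bruhat side is sound and close to the paper's: the bijection $W\cong B\backslash G/B$ is quoted from the theory of the relative $(B,N)$-pair, and the passage from the double Tits system on $G_1^{\flat}$ (Corollaries \ref{doubtitflat} and \ref{cortitssystflat}) to all of $G$ via $G=N_\a^\dag\cdot G_1^{\flat}$ is exactly what the paper does — with the one caveat that $\widetilde{\Upomega}^{\flat}$ normalizes $I^{\flat}$ but not $K_{\cF}^{\flat}$, so conjugating by $\tau\in\widetilde{\Upomega}^{\flat}$ replaces $\cF$ by $\tau(\cF)\subset\overline{\a}$; the paper tracks this explicitly and you should too. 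Likewise your computation of the fibers of $n\mapsto U^+nK_\cF^{\flat}$ (Iwahori factorization plus $N\cap U^+U^-=\{1\}$) is the right mechanism, provided you conjugate correctly: the paper rewrites $U^+nK_\cF^{\flat}=U^+K_{n\cdot\cF}^{\flat}n=U^+U_{n\cdot\cF}^-(N\cap K_{n\cdot\cF}^{\flat})n$ rather than intersecting $N$ with $U^+K_\cF^{\flat}U^-$ as you wrote.

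The genuine gap is in the Iwasawa decomposition itself. You propose to start from $G=U^+NB=U^+NU^+M$ and ``absorb'' the right-hand $U^+M$ into $NU_\cF^{\flat}$ by shrinking: write $u'=m_0^{-1}vm_0$ with $v\in U_\cF^+$ for $m_0$ deep in the antidominant cone. But this produces $nu'm=nm_0^{-1}\cdot v\cdot m_0m$, i.e.\ an element of $U^+\,N\,U_\cF^+\,M$, and the trailing $M$-factor cannot be moved past $v$ without undoing the conjugation (pushing $m_0m$ back to the left of $v$ returns $v$ to an arbitrary element of $U^+$). So the shrinking argument only re-proves $G=U^+NU^+$ and never lands in $U^+NU_\cF$. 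The statement $G=U^+NU_\cF$ is not a formal consequence of the Bruhat decomposition plus conjugation estimates; it is precisely the extra content of the \emph{double} Tits system. The paper gets it from \cite[Theorem 5.1.3 (vi)]{BT72} applied to Theorem \ref{G1titssystem}, which gives $G_1=B_1N_1I_{\mathfrak b}$ directly, and then promotes this to $G$ via $G=G_1N_{\mathfrak b}^\dag$ and Lemma \ref{staba}. You need to invoke that input (or an equivalent building-theoretic transitivity statement) rather than the shrinking trick; without it the Iwasawa part, and hence the surjectivity of $n\mapsto U^\pm nK_\cF^{\flat}$, is not established.
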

\begin{proof}
\begin{itemize}
\item 
By Corollary \ref{GG1NN1} and Corollary \ref{cortitssystflat} we have $G= G_1   N_{\mathfrak{b}}^\dag = I^\flat N_1^\flat I^\flat  N_{\mathfrak{b}}^\dag$. But $N_{\mathfrak{b}}^\dag$ normlaizes $I$ (Lemma \ref{staba}) and hence
$$G= I^\flat N_1^\flat N_{\mathfrak{b}}^\dag I^\flat =U_\a N U_\a = U_{\cF} N U_{\cF'}. $$
Accordingly, the map $\widetilde{W}^\flat \mapsto K_\cF^\flat\backslash G/K_{\cF'}^\flat $ is surjective and factors through ${W}_\cF^\flat \backslash \widetilde{W}^\flat/ {W}_{\cF'}^\flat$. 
Let $w,w'\in \widetilde{W}^\flat$ (with representatives $n,n' \in N$) such that $ K_\cF^\flat n K_{\cF'}^\flat = K_\cF^\flat n' K_{\cF'}^\flat$. 
Now $K_\cF\subset G_1$, hence $\kappa_G( n M^\flat)= \kappa_G( n' M^\flat)$ so that $w= \tau \cdot w_a$ and $\tau \cdot w_a'$ for a unique $\tau \in \widetilde{\Upomega}^\flat$ and $w_a,w_a' \in W_{\aff}^\flat$, by Lemma \ref{weyiwa}. 
Consequently, $K_{\tau(\cF)}^\flat w_a K_{\cF'}^\flat = K_{\tau(\cF)}^\flat w_a' K_{\cF'}^\flat$ with $\tau(\cF) \subset \overline{\a}$, by definition of $\widetilde{\Upomega}^\flat$. So by (2) Corollary \ref{cortitssystflat}, $w_a=w_a'$ which ends the proof of the Bruhat decomposition and the bijection $K_\cF\backslash G/K_{\cF'} \simeq {W}_\cF \backslash \widetilde{W}/ {W}_{\cF'}$.  
\item Let us prove the remaining statements. 
Let ${\mathfrak{b}}$ be an alcove that contains in its closure $\cF$. 
By \cite[Theorem 5.1.3 (vi)]{BT72} together with Theorem \ref{G1titssystem}, we have
$G_1= B_1  N_1  I_{\mathfrak{b}}$. 
Using Corollary \ref{GG1NN1} and Lemma \ref{staba} again we get 
$$G=B_1  N_1  I_{\mathfrak{b}} N_{\mathfrak{b}}^\dag=U^+  N  I_{\mathfrak{b}}=U^+  N U_{\cF}.$$
Assume $n, n' \in N$ with $n'\in U^+ n K_\cF^{\flat}= U^+ K_{n \cdot \cF}^{\flat}n= U^+ U_{n \cdot \cF}^- (N \cap K_{n \cdot \cF}^{\flat}) n$, 
hence $n' \in (N \cap K_{n \cdot \cF}^{\flat})n=n (N \cap K_{ \cF}^{\flat}) $ (using $N\cap U^+U^-=\{1\}$). 
This proves that $U^+ \backslash G/K_{\cF}^{\flat} \simeq N/N_{1,  \cF}^{\flat} = \widetilde{W}^{\flat}/W_\cF^{\flat}$. 
{Finally, using the longest element $w_0\in W$, one obtains 
$G= U^- N U_\cF$ and this yields the corresponding bijection $U^- \backslash G/K_{\cF}^{\flat} \simeq \widetilde{W}^{\flat}/W_\cF^{\flat}$.}
\item  For $G= \sqcup_{w \in W}BwB$ we refer to \cite[Theorem 21.73]{milne_2017}. \qedhere
\end{itemize}
\end{proof}

\begin{corollary}[Iwasawa decomposition II]\label{Iwasawadeccor}
If $\P=\mathbf{L}\ltimes \U_P^+$ is any standard parabolic subgroup of $\G$ with Levi factor $\mathbf{L}$ and unipotent radical $\U_P^+$, then $G=P K$, i.e. $K$ is a good open compact subgroup of $G$.    
\end{corollary}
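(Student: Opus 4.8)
The plan is to reduce the statement $G = PK$ to the Iwasawa decomposition $G = U^\pm N U_\cF$ already established in Proposition \ref{bruhatdecomposition}, applied with $\cF = \{a_\circ\}$, so that $U_\cF = U_{a_\circ} \subset K$. Since $\mathbf{P} = \mathbf{P}_J$ is standard, I would first write $\mathbf{P} = \mathbf{L} \ltimes \U_P^+$ with $\mathbf{L} = \Mbf_J$ the standard Levi and $\U_P^+ = R_u(\mathbf{P}) = \U_{\Phi^+ \setminus \Phi_J^+}$, so that $\U^+ \subset P$. The Iwasawa decomposition gives $G = U^+ N U_{a_\circ}$, and $U^+ \subset P$, so it remains to show $N \subset PK$.

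For the containment $N \subset PK$, the key point is to use the Bruhat-type decomposition internal to the Levi: by Lemma \ref{repofWinK} (applied with $a = a_\circ$), every class in $W = N/M$ has a representative lying in $N \cap K$; equivalently $N = M (N \cap K)$. Since $M = \Mbf(F) \subset \Mbf_J(F) \subset L \subset P$, we get $N \subset M(N\cap K) \subset P(N\cap K) \subset PK$. Combining, $G = U^+ N U_{a_\circ} \subset P \cdot PK \cdot K = PK$. The final sentence ``$K$ is a good open compact subgroup of $G$'' is then just the terminological restatement: by definition an open compact subgroup $K$ is \emph{good} (with respect to $\mathbf{P}$) precisely when the Iwasawa decomposition $G = PK$ holds, so nothing further is required once $G = PK$ is proven.

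I do not expect a serious obstacle here; the only point needing a little care is making sure the special maximal parahoric $K = K_{a_\circ}$ really does contain $U_{a_\circ}$ and admits representatives of all of $W$, but both are recorded earlier — the first is immediate from $K_{a_\circ} = U_{a_\circ}\Mbf(F)_1$ (Proposition \ref{decomppara}, or Corollary \ref{Iwahoridecomp}), and the second is exactly Lemma \ref{repofWinK}. One should also note that the decomposition $G = U^+ N U_{a_\circ}$ is genuinely the Iwasawa decomposition with $\cF = \{a_\circ\}$: the statement of Proposition \ref{bruhatdecomposition} allows $\cF'$ to be any facet in the apartment, and $\{a_\circ\}$ is such a facet since $a_\circ \in \overline{\a} \subset \cA$. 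If one wished to avoid invoking Lemma \ref{repofWinK}, an alternative is to note $N \subset \Mbf(F)(N_1 \cap K) \cdot N_\a^\dagger$ via Corollary \ref{GG1NN1} together with the double Tits system; but the argument through Lemma \ref{repofWinK} is the shortest and is the one I would write.
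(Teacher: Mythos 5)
Your proof is correct and follows essentially the same route as the paper: both reduce to the Iwasawa decomposition $G=U^+NU_{a_\circ}$ of Proposition \ref{bruhatdecomposition} and then absorb $N$ into $PK$ via Lemma \ref{repofWinK}, i.e. $N=M(N\cap K)$ with $M\subset L\subset P$. The only (cosmetic) difference is that the paper additionally points to the factorization $B\cap K=M_1(U^+\cap K)$ from the proof of Proposition \ref{Iwahori2}, which some definitions of a ``good'' compact subgroup require beyond the bare equality $G=PK$.
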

\proof
By Proposition \ref{bruhatdecomposition}, one has an Iwasawa decomposition $G= U^+ N K=\cup_{w \in N/M} U^+ M  w K $. As observed in Lemma \ref{repofWinK}, the classes of the quotient $W=N/M$ admit representatives in $K$, therefore $G= B \cdot K = P\cdot K$. See the proof of Proposition \ref{Iwahori2} for the factorization $B\cap K=  M_1 (U^+\cap K)$.\qed

\begin{proposition}[Cartan decomposition for $K$]\label{cartanK}\label{orbitLambda}The map $M\to K^{\flat}\backslash G\slash K^{\flat}$ defined by $m\mapsto K^{\flat}mK^{\flat}$, induces
a bijection $$\Lambda_M^{-,{\flat}}\cong W^{\flat} \backslash \Lambda_M^{\flat} \cong W_{a_\circ}^{\flat} \backslash \widetilde{W}^{\flat}/W_{a_\circ}^{\flat} \cong K^{\flat}\backslash G\slash K^{\flat}.$$
\end{proposition}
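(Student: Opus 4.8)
The plan is to establish the Cartan decomposition $G = K^\flat M K^\flat$ first, and then analyze the fibers of the resulting map $M \to K^\flat \backslash G / K^\flat$ to pin down the bijection with $\Lambda_M^{-,\flat}$. For the surjectivity part, I would start from the Iwasawa decomposition $G = U^+ N K$ of Proposition \ref{bruhatdecomposition}, write an arbitrary $g = u n k$ with $u \in U^+$, $n \in N$, $k \in K$, and then use the geometry of the apartment: since $a_\circ$ is special, $W_{a_\circ}^\flat \cong W$ acts on $\cA_{\ext}$ and carries $\overline{\cC}^+$ to $\overline{\cC}^-$ via the longest element $w_0$. The key input is that every $\widetilde{W}^\flat$-orbit on $\cA_{\ext}$, modulo $W_{a_\circ}^\flat$ acting on both sides, has a unique antidominant representative; concretely, using $\widetilde{W}^\flat = \Lambda_M^\flat \rtimes W_{a_\circ}^\flat$ (Lemma \ref{berndecom}) one reduces a general element of $N$ modulo $K^\flat$ on both sides to an element of $\Lambda_M^\flat$, and then to $\Lambda_M^{-,\flat} = W^\flat \backslash \Lambda_M^\flat$ by applying a Weyl group element. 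I would phrase this as: the composite $\Lambda_M^{-,\flat} \hookrightarrow \widetilde{W}^\flat \twoheadrightarrow W_{a_\circ}^\flat \backslash \widetilde{W}^\flat / W_{a_\circ}^\flat$ is a bijection, which is pure combinatorics of the semidirect product and the fact that $W$ acts on $\Lambda_M^\flat$ (Remark \ref{commuwaffs}) with fundamental domain the antidominant cone.

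The remaining and more delicate point is injectivity: if $m, m' \in \Lambda_M^{-,\flat}$ satisfy $K^\flat m K^\flat = K^\flat m' K^\flat$, I must show $m = m'$. The natural approach is to feed $K^\flat m K^\flat = K^\flat m' K^\flat$ into the $U^+ \backslash G / K^\flat$-decomposition. By Proposition \ref{bruhatdecomposition}, $U^+ m K^\flat$ and $U^+ m' K^\flat$ are cosets indexed by $\widetilde{W}^\flat / W_{a_\circ}^\flat$, i.e. by $\Lambda_M^\flat$; writing $K^\flat = I^\flat W_{a_\circ}^\flat I^\flat$ and using the Iwahori factorization $I^\flat = U_{a_\circ}^+ U_{a_\circ}^- M^\flat$ (Corollary \ref{Iwahoridecomp}), the equality $K^\flat m K^\flat = K^\flat m' K^\flat$ means $m' \in K^\flat m K^\flat \subset \bigcup_{w, w' \in W} U^+ w m w' K^\flat$, so $m'$ lies in the $U^+$-orbit (mod $K^\flat$) of some $w m w'$ with $w, w' \in W_{a_\circ}^\flat$. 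Pushing to $\widetilde{W}^\flat / W_{a_\circ}^\flat \cong \Lambda_M^\flat$ and then projecting to $\underline{\Lambda}_M = \Lambda_M^\flat / (\Lambda_M^\flat)_{\tors}$, I would argue that $m'$ and $w \cdot m$ have the same image in $\underline{\Lambda}_M$ for some $w \in W$ — this uses that the $U^+$-orbit of $n m$ modulo $K^\flat$ depends only on the image $\nu_M(m)$ in the apartment, which is standard from the Iwasawa-decomposition bijection. Since both $m, m'$ are antidominant and $W$ acts simply transitively on the Weyl chambers, $w \cdot m$ antidominant forces $w$ to fix the closed antidominant cone, whence $\nu_M(w \cdot m) = \nu_M(m)$ and $m$, $m'$ have the same image in $\underline{\Lambda}_M$.

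The main obstacle is the final step of upgrading "equal images in $\underline{\Lambda}_M = \Lambda_M/(\Lambda_M)_{\tors}$" to "equal in $\Lambda_M^\flat$", i.e. dealing with the finite torsion $M^1/M^\flat$. This is where specialness of $a_\circ$ and the precise structure of $K^\flat$ enter: I would show that two elements $m, m' \in M$ with $\nu_M(m) = \nu_M(m')$ and $K^\flat m K^\flat = K^\flat m' K^\flat$ must satisfy $m' \in M^\flat m$. The argument is that $m^{-1} m' \in M^1$ and $m^{-1} m' \in m^{-1} K^\flat m K^\flat$; using Corollary \ref{uniqueparahoricM} ($M \cap K^\flat = M^\flat$) together with the decomposition $K^\flat = U_{a_\circ}^+ U_{a_\circ}^- U_{a_\circ}^+ M^\flat$ and the fact that conjugation by $m \in M^1$ preserves each root group filtration (Lemma \ref{NactionUalphar}, since $\nu(m) = 0$), one intersects with $M$ and is forced into $M^\flat$. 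I expect this torsion bookkeeping, rather than the coarse geometry, to be the technical heart of the proof; the rest is an assembly of the Iwasawa/Bruhat bijections already recorded in Proposition \ref{bruhatdecomposition} and the semidirect-product combinatorics of Lemma \ref{berndecom} and Remark \ref{commuwaffs}.
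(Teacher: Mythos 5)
There is a genuine gap in your injectivity step: the inclusion
$K^{\flat} m K^{\flat} \subseteq \bigcup_{w,w'\in W} U^{+}\, w m w'\, K^{\flat}$
is false in general. The $U^{+}$-support of $K^{\flat}mK^{\flat}$ is not confined to the Weyl translates of $m$; it contains cosets $U^{+}xK^{\flat}$ for every $x \precsim m$ that actually occurs, which is precisely why the Satake transform is upper triangular rather than diagonal. Already for $\GL_2$, $K\,\mathrm{diag}(\varpi^2,1)\,K$ meets $U^{+}\mathrm{diag}(\varpi,\varpi)K$ (the coset representatives $\abcd{\varpi}{b}{0}{\varpi}$), and $\mathrm{diag}(\varpi,\varpi)$ is not of the form $wmw'$ modulo $K^{\flat}$. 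The source of the error is that in $K^{\flat}mK^{\flat}=\bigcup_{w}U_{a_\circ}^{+}U_{a_\circ}^{-}\,w(m)\,K^{\flat}$ the conjugate $w(m)^{-1}U_{a_\circ}^{-}w(m)$ is not contained in $K^{\flat}$ unless $w(m)$ is itself antidominant, so the $U_{a_\circ}^{-}$ factor cannot be absorbed. Consequently you cannot conclude $U^{+}m'K^{\flat}=U^{+}w(m)K^{\flat}$, and the rest of the argument does not get started. If you insist on going through $U^{+}$-orbits, the correct tool is the convexity statement of Lemma \ref{Proposition444} (resting on \cite[4.3.16 Corollaire]{BT72}): $U^{+}m'\cap KmK\neq\emptyset$ forces $m'\precsim m$, and then symmetry plus antisymmetry of $\precsim$ gives $m=m'$. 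That input is not free, and your sketch supplies no substitute for it.

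The paper's proof is much shorter because the hard analytic/geometric content is already packaged elsewhere: Proposition \ref{bruhatdecomposition} (for $\cF=\cF'=\{a_\circ\}$) gives the bijection $K^{\flat}\backslash G/K^{\flat}\cong W_{a_\circ}^{\flat}\backslash\widetilde{W}^{\flat}/W_{a_\circ}^{\flat}$ directly from the Tits-system axioms, so only the group-theoretic identifications $W_{a_\circ}^{\flat}\backslash\widetilde{W}^{\flat}/W_{a_\circ}^{\flat}\cong W\backslash\Lambda_M^{\flat}$ (Lemma \ref{berndecom}) and $W\backslash\Lambda_M^{\flat}\cong\Lambda_M^{-,\flat}$ (cited from \cite{HV15}) remain. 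On that last point your instinct about torsion is partly right but your mechanism (intersecting $m^{-1}K^{\flat}mK^{\flat}$ with $M$) is both unnecessary and unclear: once you know $m'=w(m)$ in $\Lambda_M^{\flat}$ with both antidominant, you get $w(\nu(m))=\nu(m)$, hence the commutator $n_w m n_w^{-1}m^{-1}$ lies in $M\cap\ker\nu\cap G_1=M_1\subseteq M^{\flat}$ (Lemmas \ref{staba} and \ref{GaffG1}), so $w(m)=m$ in $\Lambda_M^{\flat}$ with no further bookkeeping.
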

\begin{proof}
It suffices to prove the Lemma for $M^{\flat}=M_1$. In which case, the first bijection  is \cite[\S 6.3 Lemma]{HV15}, the second is a consequence of the decomposition of Lemma \ref{berndecom} and the third is given by Proposition \ref{bruhatdecomposition} for $\cF=\{a_{\circ}\}$.
\end{proof}

\subsection{Iwahori decompositions}
We now give an Iwahori decomposition for the special parahoric subgroup $K$. 
Note that we are no longer in the unramified case where we could have pulled up the Bruhat decomposition for the residue field of $F$.
\begin{proposition}[Iwahori decomposition of $K$ - I] \label{Iwahori1}
Let $\cF$ be a facet of the alcove $\a$. We have the decomposition 
$$K_\cF^{\flat}=\bigsqcup_{w \in {W}_\cF^{\flat}} I^{\flat}wI^{\flat}.
$$
\end{proposition}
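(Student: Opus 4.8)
The plan is to deduce the Iwahori decomposition of $K_\cF^\flat$ directly from the double Tits system machinery established in Corollary \ref{doubtitflat} and Corollary \ref{cortitssystflat}, by expressing $K_\cF^\flat$ as a union of $(I^\flat, I^\flat)$-double cosets indexed by the parabolic subgroup $W_\cF^\flat$ of $W_{\aff}^\flat$.

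First I would recall that by the very definition of $K_\cF^\flat$ (see \S\ref{flatpartout} and Definition \ref{parahoricsubgroup}), we have $K_\cF^\flat = M^\flat K_\cF$ with $K_\cF \subset G_1$; since $\cF \subset \overline{\a}$, the parahoric $K_\cF$ contains the Iwahori $I = I_\a$, hence $I^\flat \subset K_\cF^\flat \subset G_1^\flat$. Next, I would invoke the decomposition $K_\cF^\flat = I^\flat W_\cF^\flat I^\flat$ from part (2) of Corollary \ref{cortitssystflat}, together with the fact (Corollary \ref{doubtitflat}) that $(G_1^\flat, I^\flat, N_1^\flat, \cS^\flat)$ is a Tits system with Weyl group $W_{\aff}^\flat$ and that $W_\cF^\flat$ is the standard parabolic subgroup of $(W_{\aff}^\flat, \cS^\flat)$ generated by $\cT_\cF^\flat$, the type of $\cF$. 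The Bruhat decomposition for the Tits system gives $G_1^\flat = \bigsqcup_{w \in W_{\aff}^\flat} I^\flat w I^\flat$ (part (1) of Corollary \ref{cortitssystflat}), and since the bijection $w \mapsto I^\flat w I^\flat$ is compatible with the stratification, the union $\bigcup_{w \in W_\cF^\flat} I^\flat w I^\flat$ is a \emph{disjoint} union of double cosets contained in $K_\cF^\flat$.

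The remaining point is to verify that every element of $K_\cF^\flat$ lies in $\bigsqcup_{w \in W_\cF^\flat} I^\flat w I^\flat$, i.e. that $K_\cF^\flat = I^\flat W_\cF^\flat I^\flat$ genuinely exhausts exactly these strata with no overlap. This follows because, by the Tits system axioms (specifically $T_3$ applied inductively, as in \cite[Ch.~IV, \S 2 n$^\circ$~5~\&~6]{BourbakiLieV}), the set $I^\flat W_\cF^\flat I^\flat$ is a subgroup of $G_1^\flat$ containing $I^\flat$, and its image in $I^\flat \backslash G_1^\flat / I^\flat \cong W_{\aff}^\flat$ is precisely $W_\cF^\flat$; combined with part (3) of Corollary \ref{cortitssystflat} (for $\cF = \cF'$, the bijection $W_\cF^\flat \backslash W_{\aff}^\flat / W_\cF^\flat \cong K_\cF^\flat \backslash G_1^\flat / K_\cF^\flat$ and the identity $K_\cF^\flat w K_\cF^\flat = I^\flat W_\cF^\flat w W_\cF^\flat I^\flat$), one sees that $K_\cF^\flat$ is exactly the preimage of $W_\cF^\flat$ under $G_1^\flat \twoheadrightarrow W_\cF^\flat \backslash W_{\aff}^\flat / W_\cF^\flat$ at the trivial coset, hence equals $\bigsqcup_{w \in W_\cF^\flat} I^\flat w I^\flat$. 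The main obstacle — really the only thing requiring care — is bookkeeping the passage between the $\flat$-decorated objects and the underlying Tits-system statements in \cite{BourbakiLieV}: one must make sure the isomorphisms $W_{\aff}^\flat \simeq W_{\aff}$, $W_\cF^\flat \simeq W_\cF$ (from Lemma \ref{NcapIM2} and \S\ref{flatpartout}) are applied consistently, and that $M^\flat = N_1^\flat \cap I^\flat$ plays the role of the kernel $B \cap N$ in the axioms, which is exactly axiom $T_1$ recorded after Corollary \ref{doubtitflat}.
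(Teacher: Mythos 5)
Your argument is correct and follows essentially the same route as the paper: the paper deduces the statement immediately from the disjointness of the Iwahori double cosets in the Bruhat decomposition (Proposition \ref{bruhatdecomposition}) combined with $K_\cF^{\flat}=I^{\flat}W_\cF^{\flat}I^{\flat}$, which is exactly the content of your appeal to parts (1) and (2) of Corollary \ref{cortitssystflat}. Your final paragraph re-deriving the equality $K_\cF^{\flat}=I^{\flat}W_\cF^{\flat}I^{\flat}$ via part (3) is harmless but redundant, since part (2) already supplies it directly.
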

\begin{proof}
This is an immediate consequence of $ G = \bigsqcup_{w \in \widetilde{W}^{\flat}} I^{\flat}wI^{\flat}$ (Proposition \ref{bruhatdecomposition} for $\cF=\cF'=\a$). 
\end{proof}
\begin{proposition}[Iwahori decomposition of $K$ - II]\label{Iwahori2} 
Let $\cF$ be a facet of the alcove $\a$. We have the decomposition 
$$K_\cF^{\flat}=\bigsqcup_{w\in W_\cF^{\flat}} U_\cF^+ w I^{\flat}=\bigsqcup_{w\in W_\cF^{\flat}} U_\cF^- w I^{\flat}.$$
\end{proposition}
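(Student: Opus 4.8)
The statement to be proved is the second Iwahori decomposition of the parahoric $K_\cF^{\flat}$: for any facet $\cF$ of the alcove $\a$,
$$K_\cF^{\flat}=\bigsqcup_{w\in W_\cF^{\flat}} U_\cF^+ w I^{\flat}=\bigsqcup_{w\in W_\cF^{\flat}} U_\cF^- w I^{\flat}.$$
The idea is to refine the first Iwahori decomposition (Proposition \ref{Iwahori1}), $K_\cF^{\flat}=\bigsqcup_{w\in W_\cF^{\flat}} I^{\flat}wI^{\flat}$, by replacing the left factor $I^{\flat}$ by $U_\cF^{+}$ (resp. $U_\cF^{-}$). First I would record the Iwahori factorization $I^{\flat}=I^{+}I^{-}M^{\flat}$ from Corollary \ref{Iwahoridecomp}, where $I^{\pm}=U_{\a}^{\pm}$, and observe that since $\cF$ is a facet of $\a$ we have $f_{\cF}(\alpha)\le f_{\a}(\alpha)$, so $U_{\a}^{\pm}\subset U_{\cF}^{\pm}$, and moreover $U_\cF^{+}\supset I^{+}$ while $U_\cF^{-}\subset I^{-}$ (by Example \ref{exfomega}, since $f_\cF(\alpha)\in\{0,n_\alpha^{-1}\}$ for $\a$-negative $\alpha$ and $f_\cF(\alpha)=0=f_{\a}(\alpha)$ for $\a$-positive $\alpha$ only on the positive side). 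The cleanest route: show directly that $U_\cF^{+} I^{\flat}=K_\cF^{\flat}\cap(\text{something})$... actually the efficient approach is to show $U_\cF^{+}\,W_\cF^{\flat}\,I^{\flat}=K_\cF^{\flat}$ as a set and then argue disjointness.

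For the \textbf{surjectivity} (i.e. $K_\cF^{\flat}\subseteq \bigcup_{w\in W_\cF^{\flat}}U_\cF^{+}wI^{\flat}$), I would start from Proposition \ref{decomppara}, which gives $K_\cF^{\flat}=U_\cF^{+}U_\cF^{-}U_\cF^{+}M^{\flat}=U_\cF^{+}U_\cF^{-}(N\cap K_\cF^{\flat})$; combined with $N\cap K_\cF^{\flat}=N_{1,\cF}^{\flat}=M^{\flat}W_\cF^{\flat}$ this yields $K_\cF^{\flat}=U_\cF^{+}U_\cF^{-}M^{\flat}W_\cF^{\flat}$. Now for $w\in W_\cF^{\flat}$ (choosing a representative $n_w$ in $K$, which we may by the convention after Lemma \ref{repofWinK}), I want to absorb $U_\cF^{-}$: write $u^{-}\in U_\cF^{-}$ and use that $w$ permutes the affine root groups so that $n_w^{-1}U_\cF^{-}n_w$ is again a product of root groups inside $K_\cF^{\flat}$; then push the part that lands in $U^{+}$ across to the left into $U_\cF^{+}$, and the part landing in $U^{-}$ is absorbed into $I^{-}\subset I^{\flat}$ — here one uses that $U_{w\cdot\cF}^{-}$ decomposes along $w(\Phi^{-})$, and that the root groups for $\alpha\in\Phi^{-}$ appearing satisfy $U_{\alpha+f_\cF(\alpha)}\subset I^{-}$ because $w\in W_\cF^{\flat}$ fixes $\cF$ so preserves the relevant $f_\cF$-levels. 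Alternatively — and this is cleaner — I would invoke Corollary \ref{cortitssystflat}(1),(3) for the double Tits system: $I^{\flat}wI^{\flat}$ for $w\in W_\cF^{\flat}$, combined with the factorization $I^{\flat}=I^{+}I^{-}M^{\flat}$ and the fact that $M^{\flat}$ normalizes everything and $W_\cF^{\flat}$ stabilizes $\cF$, lets one rewrite $I^{\flat}wI^{\flat}$ with the leftmost $I^{\flat}$ replaced by $U_\cF^{+}$: indeed $I^{\flat}wI^{\flat}\subset U_\cF^{+}U_\cF^{-}M^{\flat}wI^{\flat}=U_\cF^{+}(U_\cF^{-}M^{\flat}w)I^{\flat}$ and $U_\cF^{-}M^{\flat}w\subset K_\cF^{\flat}$ with $U_\cF^{-}\subset w I^{\flat}w^{-1}\cdot(\text{correction})$; reindexing over $w$ one gets the full union.

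For the \textbf{disjointness} of $\bigsqcup_{w\in W_\cF^{\flat}}U_\cF^{+}wI^{\flat}$, I would use the Iwasawa bijection from Proposition \ref{bruhatdecomposition}: the map $n\mapsto U^{+}nK_\cF^{\flat}$ gives a bijection $U^{+}\backslash G/K_\cF^{\flat}\cong \widetilde{W}^{\flat}/W_\cF^{\flat}$. If $U_\cF^{+}w I^{\flat}\cap U_\cF^{+}w'I^{\flat}\ne\emptyset$ for $w,w'\in W_\cF^{\flat}$, then a fortiori $U^{+}wK_\cF^{\flat}=U^{+}w'K_\cF^{\flat}$ (since $U_\cF^{+}\subseteq U^{+}$... wait — $U_\cF^{+}\supseteq I^{+}=U_\a^{+}$ but need $U_\cF^{+}\subseteq U^{+}(F)$, which holds since $U_\cF^{+}=U_\cF\cap U^{+}(F)$ by Proposition \ref{decomppara}), hence $w W_\cF^{\flat}=w'W_\cF^{\flat}$ in $\widetilde{W}^{\flat}$, i.e. $w=w'$ in $W_\cF^{\flat}$ since $W_\cF^{\flat}\cap W_\cF^{\flat}$-cosets are trivial and $W_\cF^{\flat}\hookrightarrow\widetilde{W}^{\flat}/W_\cF^{\flat}$ has the two representatives equal. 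For the $U_\cF^{-}$ version, the argument is identical using the companion bijection $U^{-}\backslash G/K_\cF^{\flat}\cong\widetilde{W}^{\flat}/W_\cF^{\flat}$ from the same proposition (obtained there via the longest element $w_0$), and for surjectivity one conjugates the whole argument by a representative of $w_0\in W$, which swaps $U^{+}\leftrightarrow U^{-}$ and $U_\cF^{+}\leftrightarrow U_{w_0\cdot\cF}^{-}$; since $w_0\cdot\cF$ is again a facet of an alcove ($w_0\cdot\a$) one can alternatively just apply the first identity to the opposite ordering of roots, as $U_\cF^{\pm}$ is symmetric in the choice $\Phi=\Phi^{+}\sqcup\Phi^{-}$.

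\textbf{Main obstacle.} The genuinely delicate point is the surjectivity step — pushing $U_\cF^{-}$ (and the middle $U_\cF^{+}$ from the $+\,-\,+$ factorization) past a representative $n_w$ of $w\in W_\cF^{\flat}$ and verifying that the pieces land where claimed, specifically that nothing escapes $U_\cF^{+}I^{\flat}$. This requires carefully tracking, via Lemma \ref{NactionUalphar} and Example \ref{exfomega}(3), how $w$ acts on the levels $f_\cF(\alpha)$: the key fact is that $w\in W_\cF^{\flat}$ fixes $\cF$ pointwise, so $f_{w\cdot\cF}=f_\cF$, hence $n_w U_\cF^{\pm}n_w^{-1}$ is again a product of the $U_{\alpha+f_\cF(\alpha)}$ over a permuted index set, and then one splits this product into its $\Phi^{+}$-part (absorbed left into $U_\cF^{+}$) and its $\Phi^{-}$-part (which lies in $\prod_{\alpha\in\Phi^{-}}U_{\alpha+f_\cF(\alpha)}\subseteq I^{-}\subseteq I^{\flat}$, using $f_\cF(\alpha)\ge f_{\a}(\alpha)=n_\alpha^{-1}$ for $\a$-negative $\alpha$). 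I expect this bookkeeping, done cleanly with the Tits-system axioms $T_1$–$T_4$ and Corollary \ref{cortitssystflat} to avoid explicit root-group gymnastics, to be routine but the only place requiring real care.
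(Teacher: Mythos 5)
Your surjectivity argument rests on a containment that is false, and your disjointness argument quotients by the wrong subgroup; both are genuine gaps. For the first: by Example \ref{exfomega}, for a facet $\cF\subset\overline{\a}$ one has $f_\cF(\alpha)=0$ for $\a$-positive $\alpha$ and $f_\cF(\alpha)\in\{0,n_\alpha^{-1}\}$ for $\a$-negative $\alpha$, whereas $f_\a(\alpha)=n_\alpha^{-1}$ for $\a$-negative $\alpha$. Hence $U_{\alpha+f_\cF(\alpha)}\supseteq U_{\alpha+n_\alpha^{-1}}$ for negative roots, i.e. $U_\cF^-\supseteq I^-$ — the \emph{opposite} of your claim $U_\cF^-\subset I^-$ (already for $\cF=\{a_\circ\}$ one has $U_{a_\circ}^-=K\cap U^-\supsetneq I^-$). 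So the key step — conjugating $U_\cF^-$ past a representative of $w\in W_\cF^{\flat}$ and absorbing the $\Phi^-$-part into $I^-\subset I^{\flat}$ — fails: that part lies only in $U_\cF^-$, which is in general strictly larger than $I^{\flat}\cap U^-$, and nothing is gained. Your fallback, $I^{\flat}wI^{\flat}\subset U_\cF^+(U_\cF^-w)I^{\flat}$ with ``$U_\cF^-w\subset K_\cF^{\flat}$'', is circular: it reduces the claim to covering $U_\cF^-wI^{\flat}$ by $\bigcup_{w'}U_\cF^+w'I^{\flat}$, which is the statement being proved. For the second gap: you invoke $U^+\backslash G/K_\cF^{\flat}\cong\widetilde{W}^{\flat}/W_\cF^{\flat}$, but every $w\in W_\cF^{\flat}$ lies in the single class $W_\cF^{\flat}$ of that quotient, so the bijection says nothing here. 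The correct tool is $U^+\backslash G/I^{\flat}\cong\widetilde{W}^{\flat}$ (the case $\cF=\a$ of Proposition \ref{bruhatdecomposition}) combined with the injection $W_\cF^{\flat}=N_{1,\cF}^{\flat}/M^{\flat}\hookrightarrow\widetilde{W}^{\flat}$; that part is easily repaired.

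The paper's proof avoids all root-group bookkeeping and handles both signs at once. From the factorization $K_\cF^{\flat}=U_\cF^+U_\cF^-N_{1,\cF}^{\flat}$ (Proposition \ref{decomppara}) and the uniqueness $U^+\cap U^-N=\{1\}$ coming from the generating root datum, one first deduces $(U^\pm N)\cap K_\cF^{\flat}=U_\cF^\pm N_{1,\cF}^{\flat}$. Then one intersects the global Iwasawa decomposition $G=\bigsqcup_{w\in\widetilde{W}^{\flat}}U^\pm wI^{\flat}$ with $K_\cF^{\flat}$: writing $k=uni$ with $i\in I^{\flat}\subset K_\cF^{\flat}$ forces $un\in(U^\pm N)\cap K_\cF^{\flat}=U_\cF^\pm N_{1,\cF}^{\flat}$, so $K_\cF^{\flat}=U_\cF^\pm N_{1,\cF}^{\flat}I^{\flat}=\bigsqcup_{w\in W_\cF^{\flat}}U_\cF^\pm wI^{\flat}$, with disjointness inherited from the Iwasawa decomposition. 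I would redo your argument along these lines rather than trying to salvage the conjugation bookkeeping.
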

\begin{proof} 
By the Iwahori decomposition $K_\cF^{\flat}= U_\cF^+ \cdot U_\cF^- \cdot  N_{1, \cF}^{\flat}$ one shows 
$(U^\pm \cdot N)\cap K_\cF^{\flat}=U_\cF^\pm  \cdot N_{1,\cF}^{\flat} 
$. 
Combining this equality with the decomposition $  G = \sqcup_{w \in \widetilde{W}^{\flat}} U^\pm wI^{\flat}$ (Proposition \ref{bruhatdecomposition}), it becomes clear that 
\begin{align*}
K_\cF^{\flat}&=U_\cF^\pm  \cdot N_{1, \cF}^{\flat}\cdot I^{\flat}=\sqcup_{w\in W_\cF^{\flat}} U_\cF^\pm w I^{\flat}. \qedhere \end{align*}
\end{proof}
\subsection{Conjugations}
\begin{lemma}\label{actionwonI}
Let $\cF$ be a facet of $\a$ that contains in its closure $a_\circ$. For any $n\in N_{1,a_\circ}$, we have 
$$nU_{\cF}^-n^{-1}\subset K_\cF \text{ and } (nK_\cF n^{-1}) \cap U^+ \subset I^+.$$
\end{lemma}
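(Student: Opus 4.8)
The statement concerns an element $n \in N_{1,a_\circ} = \Nbf(F) \cap K_{a_\circ}$ (by Lemma~\ref{repofWinK} such $n$ can be taken to represent an arbitrary class $w \in W$). I want to show two inclusions: $n U_\cF^- n^{-1} \subset K_\cF$ and $(n K_\cF n^{-1}) \cap U^+ \subset I^+$. The natural strategy is to work directly with the root-group description of these compact subgroups, using Lemma~\ref{NactionUalphar} which tells us exactly how $\Nbf(F)$ conjugates affine root groups, together with the product decomposition of $U_\cF^\pm$ from Proposition~\ref{propertiesII3.3}\ref{homeoproduct} (equivalently Proposition~\ref{decomppara}) and the explicit values of $f_\cF$ from Example~\ref{exfomega}.

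For the first inclusion: since $a_\circ \in \overline{\cF} \subset \overline{\a}$, Example~\ref{exfomega}(2) gives $f_\cF(\alpha) = 0$ if $\alpha$ is $\a$-positive and $f_\cF(\alpha) \in \{0, n_\alpha^{-1}\}$ if $\alpha$ is $\a$-negative. Now $n$ fixes $a_\circ$, so $\nu_N(n)(a_\circ) = a_\circ$, and Lemma~\ref{NactionUalphar} simplifies: for $\alpha \in \Phi$ with $r \in \Gamma_\alpha'$ we get $n U_{\alpha + r} n^{-1} = U_{w(\alpha) + r}$, where $w = \jmath_W(n)$. Writing $U_\cF^- = \prod_{\alpha \in \Phi_{\red}^-} U_{\alpha + f_\cF(\alpha)}$ via Proposition~\ref{decomppara}, I get $n U_\cF^- n^{-1} = \prod_{\alpha \in \Phi_{\red}^-} U_{w(\alpha) + f_\cF(\alpha)}$. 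To see this lies in $K_\cF$ it suffices (by Proposition~\ref{decomppara}, $K_\cF$ being generated by the $U_{\beta + f_\cF(\beta)}$ over all $\beta \in \Phi_{\red}$ together with $M^\flat$) to check that for each $\alpha \in \Phi_{\red}^-$ one has $U_{w(\alpha) + f_\cF(\alpha)} \subset U_{w(\alpha) + f_\cF(w(\alpha))}$, i.e.\ that $f_\cF(\alpha) \ge f_\cF(w(\alpha))$. Since $f_\cF(\alpha) \in \{0, n_\alpha^{-1}\}$ and $f_\cF(w(\alpha)) \in \{0, n_{w(\alpha)}^{-1}\} = \{0, n_\alpha^{-1}\}$, the inequality can only fail if $f_\cF(\alpha) = 0$ while $f_\cF(w(\alpha)) = n_\alpha^{-1}$; but $f_\cF(\alpha) = 0$ for a negative root $\alpha$ forces $\alpha \in \Phi_\cF^-$ (the roots vanishing on $\cF$, from the Coxeter structure $W_\cF$), and since $w \in W$ fixes $a_\circ$... here one must be more careful — actually the cleanest route is: $f_\cF(\alpha) = 0$ means $(\alpha+0)(\cF) \subset \R^+$, i.e.\ $\alpha(a - a_\circ) \ge 0$ for all $a \in \cF$; since $\cF \ni$ points arbitrarily close to $a_\circ$ in $\overline{\a}$ and $\a \subset \cC^+$, this says precisely $\alpha$ is $\a$-positive OR $\cF$ lies in the wall $H_\alpha$, i.e.\ $s_\alpha \in \cT_\cF$. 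In the latter case $w$ can be chosen in (or the relevant comparison reduces to) $W_{a_\circ}$; I expect to argue that $U_{w(\alpha)+f_\cF(\alpha)}$ is always among the generators of $K_\cF$ because $w(\alpha) + f_\cF(\alpha)$ is either $\a$-positive (so the root group $U_{w(\alpha)+0} \subset K$, and $f_\cF(\alpha)=0 \ge f_\cF(w(\alpha))$ trivially since both are $\ge 0$ with $f_\cF(w(\alpha)) \le n_\alpha^{-1}$... no). The honest subtlety: I should instead directly invoke that $n \in K_{a_\circ}$ and $U_\cF^- \subset K_{a_\circ}$ (because $f_{a_\circ} \le f_\cF$ pointwise gives $U_{a_\circ} \supset \cdots$, wait — $f_{a_\circ}(\alpha) = 0 \le f_\cF(\alpha)$ so $U_{\alpha + f_\cF(\alpha)} \subset U_{\alpha+0}$, hence $U_\cF^- \subset U_{a_\circ}^- \subset K_{a_\circ}$), so $n U_\cF^- n^{-1} \subset K_{a_\circ}$; it then remains to show the result lands in $K_\cF$, and for that I combine: $n U_\cF^- n^{-1} \subset U^-$ trivially (conjugation by $n \in \Nbf(F)$ preserves the unipotent part once we know it's in $\U(F)$, but $w(\alpha)$ for $\alpha$ negative need not be negative!). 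So in fact $n U_\cF^- n^{-1}$ is a product of root groups $U_{w(\alpha)+f_\cF(\alpha)}$ with $w(\alpha)$ ranging over $w(\Phi_{\red}^-)$ — a mix of positive and negative roots — and each such group lies in $K_\cF$ precisely when $f_\cF(\alpha) \ge f_\cF(w(\alpha))$, which I verify case by case from Example~\ref{exfomega}(2) using that $a_\circ$ is fixed by $w$: if $w(\alpha)$ is $\a$-positive then $f_\cF(w(\alpha)) = 0 \le f_\cF(\alpha)$; if $w(\alpha)$ is $\a$-negative then $\alpha = w^{-1}(w(\alpha))$ must also be $\a$-negative (since if $\alpha$ were $\a$-positive, $f_\cF(\alpha)=0$, and... ) — the key combinatorial fact I will use is that $W_\cF$ (hence any $w$ we care about, after reducing via the $W_\cF$-$W_\cF$ double coset structure as in Corollary~\ref{cortitssystflat}) permutes $\{$roots vanishing on $\cF\}$ and respects the sign of roots not vanishing on $\cF$; combined with $n$ fixing $a_\circ$ this pins down $f_\cF(w(\alpha)) = f_\cF(\alpha)$ whenever neither is $0$, and $f_\cF(w(\alpha)) = 0 \le f_\cF(\alpha)$ otherwise.

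For the second inclusion: $n K_\cF n^{-1} = K_{n\cdot\cF}$ by the last remark of \S\ref{Parahoric} (conjugation translates to the action on facets). Since $n$ fixes $a_\circ$, the facet $n \cdot \cF$ again has $a_\circ$ in its closure; moreover $n \cdot \cF \subset \overline{n \cdot \a}$, and $n \cdot \a$ is an alcove with $a_\circ$ in its closure. Now $(K_{n\cdot\cF}) \cap U^+$: by Proposition~\ref{decomppara} this equals $\prod_{\alpha \in \Phi_{\red}^+} U_{\alpha + f_{n\cdot\cF}(\alpha)}$, and by Example~\ref{exfomega}(3), $f_{n\cdot\cF}(\alpha) = w(\alpha)(\nu_N(n)(a_\circ) - a_\circ) + f_\cF(w^{-1}\alpha) = f_\cF(w^{-1}\alpha) \ge 0$ (using again $\nu_N(n)(a_\circ) = a_\circ$). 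So $(K_{n\cdot\cF}) \cap U^+ = \prod_{\alpha\in\Phi_{\red}^+} U_{\alpha + f_\cF(w^{-1}\alpha)}$ with every exponent $\ge 0$, hence $\subset \prod_{\alpha \in \Phi_{\red}^+} U_{\alpha + 0} = I^+$ (using $U_{\alpha+r} \subset U_{\alpha+0} = U_{\alpha + f_\a(\alpha)}$ for $r \ge 0$ and $\alpha$ $\a$-positive, from Example~\ref{exfomega}(1) and Corollary~\ref{Iwahoridecomp}). This direction is actually the cleaner of the two.

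\textbf{Main obstacle.} The delicate point is the first inclusion: controlling $f_\cF(w(\alpha))$ versus $f_\cF(\alpha)$ for $\alpha$ negative and $w(\alpha)$ possibly positive. The resolution hinges on the dichotomy from Example~\ref{exfomega}(2) ($f_\cF$ takes only values $0$ and $n_\alpha^{-1}$ on $\a$-negative roots, and $0$ on $\a$-positive ones), plus the fact — forced by $n$ fixing the special point $a_\circ$ and the facet $\cF$ being $W_\cF$-related to $\a$ — that the sign pattern and the "$f_\cF = 0$" pattern of roots are permuted compatibly by $w$. Once that bookkeeping is done, both inclusions follow by assembling root-group products inside $K_\cF$ via Proposition~\ref{decomppara}.
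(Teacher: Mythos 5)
Your overall strategy is the paper's: conjugate the factors of the product decomposition $U_\cF^-=\prod_{\alpha\in\Phi_{\red}^-}U_{\alpha+f_\cF(\alpha)}$ using Lemma \ref{NactionUalphar} (which, since $n$ fixes $a_\circ$, gives $nU_{\alpha+f_\cF(\alpha)}n^{-1}=U_{w(\alpha)+f_\cF(\alpha)}$) and then compare $f_\cF(\alpha)$ with $f_\cF(w(\alpha))$; your treatment of the second inclusion is correct and coincides with the paper's one-line argument ($nK_\cF n^{-1}\cap U^+=U^+_{\nu_N(n)\cdot\cF}\subset U^+_{a_\circ}=I^+$ because $f_{\nu_N(n)\cdot\cF}\ge 0$ on $\Phi_{\red}^+$). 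The problem is the combinatorial claim you invoke to close the first inclusion, namely that $w$ permutes the roots vanishing on $\cF$ so that $f_\cF(w(\alpha))\le f_\cF(\alpha)$ for all $\alpha\in\Phi_{\red}^-$. This is false for general $w\in W$, and the proposed reduction "via the $W_\cF$--$W_\cF$ double coset structure" is not available: the lemma concerns conjugation by a fixed $n\in N_{1,a_\circ}$, whose image in $W$ is arbitrary and need not normalize $\jmath_W(W_\cF)$ or preserve the set of walls containing $\cF$.

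Concretely, take $\G=\SL_3$ with simple roots $\alpha_1,\alpha_2$ and $\theta=\alpha_1+\alpha_2$, and let $\cF$ be the edge of $\a$ joining $a_\circ$ to the vertex in the direction of $\varpi_1^\vee$, so that $f_\cF(-\alpha_2)=0$ while $f_\cF(-\alpha_1)=f_\cF(-\theta)=1$; thus $K_\cF$ is the preimage in $\SL_3(\cO_F)$ of the standard parabolic of type $(1,2)$. For $n\in N_{1,a_\circ}$ lifting $w=s_{\alpha_1}$ one has $w(-\alpha_2)=-\theta$, hence $nU_{-\alpha_2+0}n^{-1}=U_{-\theta+0}=\U_{-\theta}(\cO_F)$, whereas $K_\cF\cap\U_{-\theta}(F)=U_{-\theta+1}$; so $nU_\cF^-n^{-1}\not\subset K_\cF$. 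In other words the first inclusion itself fails for such intermediate facets, so no bookkeeping can repair your argument in the stated generality. The argument (yours and the paper's, which silently treats only the value $f_\cF(\alpha)=n_\alpha^{-1}$) does go through exactly when $f_\cF$ is constant on $\Phi_{\red}^-$, i.e.\ for $\cF=\a$ (where $f_\a(\alpha)=n_\alpha^{-1}$ on negative roots and $n_{w(\alpha)}=n_\alpha$ by Lemma \ref{nalpha}) and for $\cF=\{a_\circ\}$ (where $f_{a_\circ}\equiv 0$); these are the only cases in which the lemma is subsequently used (Corollary \ref{WInInI}). You should either restrict the statement to these facets or replace the hypothesis $n\in N_{1,a_\circ}$ by $n\in N\cap K_\cF$, for which $\nu_N(n)$ fixes $\cF$ and hence $f_\cF\circ w=f_\cF$.
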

\begin{proof} 
Let $n\in N_{1,a_\circ}$, with image $w$ in $W$. By Lemma \ref{NactionUalphar} we have $nU_{\alpha+ f_\cF(\alpha)}n^{-1}=U_{\beta},$ where, $\beta\colon\cA \to \R$ is the affine map $ w(\alpha)+ f_\cF(\alpha)- w(\alpha)(\nu_N(n)(a_\circ)-a_\circ)$, thus $\beta =  w(\alpha)+f_\cF(\alpha)$ since $n$ fixes $a_\circ$. By Proposition \ref{decomppara}, we have for any fixed ordering $\prec$ of $\Phi^\pm\cap \Phi_{\red}$
$$nU_\cF^\pm n^{-1}=\prod_{\alpha \in \Phi^\pm\cap \Phi_{\red}}^{\prec} U_{w(\alpha)+f_\cF(\alpha)}.$$
If $w(\alpha) \in \Phi_{\red}\cap \Phi^+$ then $U_{w(\alpha)+n_\alpha^{-1}}\subset U_{w(\alpha)+0}\subset K_\cF$ (Example \ref{exfomega}). 
If $w(\alpha) \in \Phi_{\red}\cap \Phi^-$, then $U_{w(\alpha)+n_{\alpha}^{-1}}= U_{w(\alpha)+n_{w(\alpha)}^{-1}}\subset K_\cF$.  
(Lemma \ref{nalpha}). 
Therefore, $nU_{\cF}^-n^{-1}\subset K_\cF.$

For the second equality, note that $(nK_\cF n^{-1}) \cap U^+ = U_{\nu_N(n)(\cF)}^+ \subset U_{a_\circ}^+=I^+$ by Corollary \ref{Iwahoridecomp}.
\end{proof}
\begin{lemma}\label{normalizingI} 
Let ${\Upomega} \subset \cA$ be any bounded subset and $\Psi$ any closed subset of roots in $\Phi^+ \cap \Phi_{\red}$. For any $m\in M^-$, we have
$$m (U_\Psi \cap K_{\Upomega})m^{-1} \subset U_\Psi \cap K_{\Upomega}\text{ and } m^{-1} (U_{-\Psi} \cap K_{\Upomega})m \subset U_{-\Psi} \cap K_{\Upomega}.$$
In particular, if $\Psi=\Phi^+ \cap \Phi_{\red}$ one gets $m^{-1} U_{\Upomega}^{-} m \subset U_{\Upomega}^{-} \text{ and } m \, U_{\Upomega}^{+} m^{-1} \subset U_{\Upomega}^{+}$.
\end{lemma}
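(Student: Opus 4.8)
The plan is to prove the ``in particular'' clause first (the case $\Psi=\Phi^+\cap\Phi_{\red}$), extracting it from the explicit product decomposition of Proposition~\ref{decomppara} together with the $M$-action formula of Lemma~\ref{NactionUalphar}, and then to deduce the general statement by simply intersecting with $U_\Psi$, using that $U_\Psi$ is normalized by $M$.

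The only genuinely arithmetic input is the sign of $\langle\nu(m),\alpha\rangle$ for $m\in M^-$. By definition of $M^-$ one has $\langle\nu_M(m),\gamma\rangle\ge 0$ for all $\gamma\in\Phi_{\red}^-$; since $\nu$ is $\nu_M$ followed by the projection $X_*(\Sbf)\otimes_\Z\R\twoheadrightarrow V$ and roots factor through this projection, this says $\langle\nu(m),\alpha\rangle\le 0$ for all $\alpha\in\Phi_{\red}^+$ and $\langle\nu(m),\alpha\rangle\ge 0$ for all $\alpha\in\Phi_{\red}^-$. Lemma~\ref{NactionUalphar} gives $mU_{\alpha+r}m^{-1}=U_{\alpha+r-\langle\nu(m),\alpha\rangle}$, and since $s\mapsto U_{\alpha+s}=(\varphi_\alpha)^{-1}([s,\infty])$ is decreasing, this yields
$$mU_{\alpha+r}m^{-1}\subseteq U_{\alpha+r}\quad(\alpha\in\Phi_{\red}^+),\qquad m^{-1}U_{\alpha+r}m\subseteq U_{\alpha+r}\quad(\alpha\in\Phi_{\red}^-).$$
Applying this to each factor of $U_\Upomega^{\pm}=\prod_{\alpha\in\Phi_{\red}\cap\Phi^{\pm}}^{\prec}U_{\alpha+f_\Upomega(\alpha)}$ (Proposition~\ref{decomppara}) — conjugation being a group automorphism commutes with the product — gives $mU_\Upomega^+ m^{-1}\subseteq U_\Upomega^+$ and $m^{-1}U_\Upomega^- m\subseteq U_\Upomega^-$, which is the last assertion of the lemma.

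For the general closed $\Psi\subseteq\Phi^+\cap\Phi_{\red}$, I would use that $\U_\Psi$ is by construction an $F$-subgroup of $\G$ normalized by $Z_\G(\Sbf)=\Mbf$, and likewise that $-\Psi\subseteq\Phi^-\cap\Phi_{\red}$ is closed with $\U_{-\Psi}$ normalized by $\Mbf$, so that $mU_\Psi m^{-1}=U_\Psi$ and $mU_{-\Psi}m^{-1}=U_{-\Psi}$ for all $m\in M$. As $\Psi\subseteq\Phi^+$ we have $U_\Psi\subseteq\U(F)^+$, hence $U_\Psi\cap K_\Upomega=U_\Psi\cap(K_\Upomega\cap\U(F)^+)=U_\Psi\cap U_\Upomega^+$; intersecting $mU_\Upomega^+ m^{-1}\subseteq U_\Upomega^+$ with $U_\Psi$ and using that conjugation commutes with intersections then gives
$$m(U_\Psi\cap K_\Upomega)m^{-1}=U_\Psi\cap mU_\Upomega^+ m^{-1}\subseteq U_\Psi\cap U_\Upomega^+=U_\Psi\cap K_\Upomega,$$
and the assertion for $U_{-\Psi}$ follows verbatim after exchanging $\Psi,+,m$ with $-\Psi,-,m^{-1}$.

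I do not anticipate a real obstacle here; the argument is short and the difficulty is purely one of bookkeeping. One should check that Lemma~\ref{NactionUalphar} applies legitimately with $r=f_\Upomega(\alpha)$ — the $M$-conjugation formula holds for every real level, being just $\varphi_\alpha(mum^{-1})=\varphi_\alpha(u)-\langle\nu(m),\alpha\rangle$ — that $-\Psi$ indeed counts as a closed set of reduced roots so that $\U_{-\Psi}$ is available and $\Mbf$-stable, and that the product decomposition of $U_\Upomega^{\pm}$ is valid for an arbitrary ordering (so that conjugating term by term is meaningful). All of the substance of the lemma is concentrated in the single inequality $\langle\nu(m),\alpha\rangle\le 0$ for $m\in M^-$ and $\alpha\in\Phi_{\red}^+$.
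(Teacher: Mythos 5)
Your proof is correct and follows essentially the same route as the paper's: both rest on Lemma~\ref{NactionUalphar} (conjugation by $m$ shifts the level of $U_{\alpha+r}$ by $-\langle\nu(m),\alpha\rangle$, whose sign is controlled by $m\in M^-$) combined with the product decomposition of Proposition~\ref{decomppara}. The only cosmetic difference is in the general closed $\Psi$: you deduce it from the case $\Psi=\Phi^+\cap\Phi_{\red}$ by intersecting with the $\Mbf$-normalized group $U_\Psi$ (using $U_\Psi\cap K_\Upomega=U_\Psi\cap U_\Upomega^+$), whereas the paper conjugates a product decomposition of $U_\Psi\cap K_\Upomega$ over $\Psi$ term by term — both are valid and equally short.
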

\begin{proof}
For any $r \in \Gamma_\alpha$, we have by Lemma \ref{NactionUalphar}
\begin{align*}mU_{\alpha+r}m^{-1}&=U_{\alpha +r-\alpha(\nu_N(m)(a_\circ)-a_\circ)}
=U_{\alpha +r-\langle \nu(m),\alpha \rangle}
\end{align*} 
and so, for all $r\in \Gamma_\alpha$
$$mU_{\alpha + r}m^{-1}\begin{cases}\subset U_{\alpha + r} &\text{if } \langle \nu(m),\alpha \rangle \le 0 (\Leftarrow m \in M^\pm \text{ and }  \alpha \in \Phi^\mp \cap \Phi_{\red})\\
\supset U_{\alpha + r}& \text{if }\langle \nu(m),\alpha \rangle \ge 0 (\Leftarrow  m \in M^\pm \text{ and }  \alpha \in \Phi^\pm \cap \Phi_{\red})\end{cases}$$
By Proposition \ref{decomppara}, we have
$$m^{-1}U_{\Upomega}^-m= \prod_{\alpha \in\Phi^-\cap \Phi_{\red}} ^{\prec}U_{\alpha+f_{\Upomega}(\alpha)  +\langle \nu(m), \alpha \rangle}.$$
for a fixed ordering ${\prec}$ of the set $\Psi$.
Therefore, if $m$ is antidominant: 
\begin{align*}m (U_\Psi \cap K_{\Upomega})m^{-1} \subset U_\Psi \cap K_{\Upomega}, \quad m^{-1}U_{\Upomega}^-m\subset U_{\Upomega}^- \quad &\text{and}\quad m \, U_{\Upomega}^{+} m^{-1} \subset U_{\Upomega}^{+} . \qedhere\end{align*}
\end{proof}

\subsection{Double cosets}

Recall that $(W_{\aff},\cS(\a))$ and $(W_{\aff}^{\flat},\cS^{\flat})$ are isomorphic Coxeter systems. 
Denote by $\ell^{\flat} \colon  W_{\aff}^{\flat} \to \N$ the length function of the Coxeter system. 
Inflate the map $\ell^{\flat}$ to a length function $\widetilde{W}^{\flat}\simeq W_{\aff}^{\flat}\rtimes \widetilde{\Omega}^{\flat} \to \N$, for which $\widetilde{\Omega}^{\flat}$ is exactly the subset of elements of length equals to $0$\footnote{Equivalently, the length of an element in $w\in \widetilde{W}$ is the number of walls between the fixed alcove $\a$ and the alcove $w(\a)$.}: $$\ell^{\flat}(w)=\ell^{\flat}(w_{\aff}) \text{ if }w=w_{\aff}u\text{ with }w_{\aff} \in W_{\aff}^{\flat}, u\in  \widetilde{\Omega}^{\flat}.$$
Furthermore, we consider the Chevalley--Bruhat (partial) order $\overset{\flat}{\le}$ \footnote{Let $(W, S)$ be a Coxeter system and define a partial order on $W$ as follows: Fix a reduced word $w = s_1s_2 \dots s_k$. We say $v \le w$ if and only if there is a reduced subword $s_{i_1} s_{i_2}\dots s_{i_j}=v$ such that $1\le i_1 <i_2 <\dots <i_j \le k$.} on $W_{\aff}^{\flat}$. 
Now extend this order to $\widetilde{W}^{\flat}$ as follows: we say $(w_1,u_1)\overset{\flat}{\le} (w_2,u_2) \in  W_{\aff}^{\flat} \rtimes  \widetilde{\Omega}^{\flat}$ if and only if $w_1\overset{\flat}{\le} w_2$ and $\lambda_1=\lambda_2$.
\begin{remark}
Thanks to Remark \ref{commuwaffs}, 
the projection $\square^{\flat}\colon \widetilde{W} \twoheadrightarrow \widetilde{W}^{\flat}$ preserves the length and the Chevalley--Bruhat order and we may and will write $\ell^{\flat}$ (resp. $\overset{\flat}{\le}$)  simply as $\ell$ (resp. ${\le}$). 
\end{remark}
\begin{remark}
{The Chevalley--Bruhat ordering of $(W_{\aff}^{\flat},\cS^{\flat})$ can be defined in several ways, their equivalence is established in \cite{Deodhar77}. 
One interesting equivalent definition is: For $w,w'\in W_{\aff}$, $w\le w'$ if there exists $\alpha_1 \cdots \alpha_k \in \Phi^+$ such that 
(i) $w'=w s_{\alpha_1} \cdots s_{\alpha_k}$ and (ii) $w s_{\alpha_1} \cdots s_{\alpha_{j-1}}(\alpha_j)\in \Phi^+$ for all $1\le j \le k$.}
\end{remark}
\begin{lemma}\label{bnrelation}
For any $w\in \widetilde{W}^{\flat}$ and $s \in \cS^{\flat}$ we have
$$I^{\flat} sI^{\flat}wI^{\flat} =\begin{cases} I^{\flat}swI^{\flat}, &\text{ if }  w<sw \quad  (\ell(sw) = \ell(w)+1),\\\hspace*{\fill}
I^{\flat}wI^{\flat}\sqcup I^{\flat}swI^{\flat}, &\text{ if }  sw <w\quad (\ell(sw) = \ell(w)-1).
\end{cases}$$
\end{lemma}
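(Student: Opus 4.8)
The statement is the standard "$BN$-pair relation" in the multiplicative/convolution form, and the natural strategy is to deduce it from the axioms $T_1,\dots,T_4$ of the double Tits system $(G_1^{\flat},I^{\flat},N_1^{\flat},\cS^{\flat})$ established in Corollary \ref{doubtitflat}, together with the length-theoretic combinatorics of the Coxeter system $(W_{\aff}^{\flat},\cS^{\flat})$ extended to $\widetilde{W}^{\flat}$ via $\widetilde\Omega^{\flat}$. First I would reduce to the affine case: writing $w = w_{\aff}u$ with $w_{\aff}\in W_{\aff}^{\flat}$ and $u\in\widetilde\Omega^{\flat}$, and noting $sw = (sw_{\aff})u$ with $\ell(sw)=\ell(sw_{\aff})$, the double coset $I^{\flat}wI^{\flat} = I^{\flat}w_{\aff}I^{\flat}u$ (since $u$ normalizes $I^{\flat}$ by Lemma \ref{staba} and the definition of $\widetilde\Omega^{\flat}$); so right-multiplying everything by $u$ reduces the claim to the corresponding statement for $w_{\aff}\in W_{\aff}^{\flat}$ inside $G_1^{\flat}$, which is exactly the classical Tits-system computation.

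For the affine case, I would argue by induction on $\ell(w)$. The inclusion $I^{\flat}sI^{\flat}wI^{\flat}\subset I^{\flat}wI^{\flat}\sqcup I^{\flat}swI^{\flat}$ is immediate from axiom $T_3$ ($sI^{\flat}w\subset I^{\flat}wI^{\flat}\sqcup I^{\flat}swI^{\flat}$, then multiply on the left by $I^{\flat}$ and on the right by $I^{\flat}$). The content is to decide which of the two pieces actually occur and to get the disjointness and the exact equalities. When $\ell(sw)=\ell(w)+1$, pick a reduced word $w = s_{1}\cdots s_{k}$ so that $sw = s\,s_{1}\cdots s_{k}$ is reduced of length $k+1$; then the braid relations of Theorem \ref{IMpresentation} (or directly the standard fact that $I^{\flat}s_{1}I^{\flat}\cdots I^{\flat}s_{k}I^{\flat} = I^{\flat}wI^{\flat}$ when the word is reduced, proved by induction using $T_3$ and $T_4$) give $I^{\flat}sI^{\flat}wI^{\flat} = I^{\flat}swI^{\flat}$, a single double coset. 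When $\ell(sw)=\ell(w)-1$, set $w' = sw$, so $\ell(sw')=\ell(w')+1$ and $w = sw'$; by the case just handled, $I^{\flat}sI^{\flat}w'I^{\flat} = I^{\flat}wI^{\flat}$, hence
$$I^{\flat}sI^{\flat}wI^{\flat} = I^{\flat}sI^{\flat}sI^{\flat}w'I^{\flat}.$$
Now $I^{\flat}sI^{\flat}sI^{\flat}$: from $T_3$ with $w=s$ one has $sI^{\flat}s\subset I^{\flat}sI^{\flat}\sqcup I^{\flat}$, and $T_4$ ($sI^{\flat}s\neq I^{\flat}$) forces the $I^{\flat}sI^{\flat}$ component to be nonempty, while $s^2=1$ gives $1\in I^{\flat}sI^{\flat}sI^{\flat}$; together with $sI^{\flat}\subset I^{\flat}sI^{\flat}sI^{\flat}$ one gets $I^{\flat}sI^{\flat}sI^{\flat} = I^{\flat}\cup I^{\flat}sI^{\flat} = I^{\flat}sI^{\flat}\sqcup I^{\flat}$ (the standard quadratic-type identity for a rank-one Tits system). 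Substituting,
$$I^{\flat}sI^{\flat}wI^{\flat} = (I^{\flat}\sqcup I^{\flat}sI^{\flat})\,w'I^{\flat} = I^{\flat}w'I^{\flat}\cup I^{\flat}sI^{\flat}w'I^{\flat} = I^{\flat}swI^{\flat}\cup I^{\flat}wI^{\flat},$$
using $w' = sw$ and the previous-case identity again for the second term.

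Finally, the disjointness $I^{\flat}wI^{\flat}\cap I^{\flat}swI^{\flat}=\emptyset$ is exactly the statement that $w\mapsto I^{\flat}wI^{\flat}$ is injective on $\widetilde{W}^{\flat}$, which is part of Corollary \ref{cortitssystflat}(1) (the disjoint Bruhat decomposition $G_1^{\flat}=\bigsqcup_{w\in W_{\aff}^{\flat}}I^{\flat}wI^{\flat}$, extended to $\widetilde{W}^{\flat}$ via $\widetilde\Omega^{\flat}$); since $w\neq sw$ these two double cosets are distinct, hence disjoint. The main obstacle I anticipate is purely bookkeeping: making the reduction from $\widetilde{W}^{\flat}$ to $W_{\aff}^{\flat}$ clean (handling the $\widetilde\Omega^{\flat}$-factor and its normalization of $I^{\flat}$ correctly), and carefully citing the correct instance of the rank-one identity $I^{\flat}sI^{\flat}sI^{\flat}=I^{\flat}sI^{\flat}\sqcup I^{\flat}$ from the Tits-system axioms rather than re-deriving it; the rest is the familiar induction on length and should go through verbatim as in \cite[Ch.~IV]{BourbakiLieV}.
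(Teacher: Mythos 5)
Your proposal is correct and follows essentially the same route as the paper: reduce to $w_{\aff}\in W_{\aff}^{\flat}$ using that lifts of $\widetilde{\Upomega}^{\flat}$ in $N_{\a}^{\dag}$ normalize $I^{\flat}$, then invoke the Tits-system axioms $T_1,\dots,T_4$ of Corollary \ref{doubtitflat} for the affine case (which the paper simply cites from the classical theory, while you spell out the standard induction). One caution: do not lean on Theorem \ref{IMpresentation} for the length-additive case, since in the paper that theorem is itself deduced from the present lemma; your parenthetical fallback to the direct $T_3$/$T_4$ induction is the right (non-circular) citation.
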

\begin{proof}
Since the quadruplet $(G_1^{\flat},I^{\flat},N_1^{\flat}, \cS^{\flat})$ is a Tits system, it verifies the axioms listed below Corollary \ref{doubtitflat}. 
Therefore, 
for any $w\in W_{\aff}^{\flat}$ and $s \in \cS^{\flat}$
$$I^{\flat}sI^{\flat}wI^{\flat} =\begin{cases} I^{\flat}swI^{\flat} , &\text{ if }  w<sw,\\\hspace*{\fill}
I^{\flat}wI^{\flat}\sqcup I^{\flat}swI^{\flat}, &\text{ if }  sw <w .
\end{cases}$$
To finish the proof, recall that elements of $ \widetilde{\Upomega}^{\flat}$ are of zero length and their lifts in $N_\a^\dag$ normalizes $I^{\flat}$, therefore for any triplet $( s , w_{\aff} ,u )\in \mathcal{S}^{\flat} \times W_{\aff}^{\flat} \times \widetilde{\Upomega}^{\flat}$, we have $I^{\flat}sI^{\flat}wI^{\flat}=I^{\flat}sI^{\flat}w_{\aff}I^{\flat}u$, which completes the proof. 
\end{proof}
\begin{remark}\label{Iwahoriaditivity}
The first equality in the lemma above can be easily generalized: for any $w,w'\in \widetilde{W}^{\flat}$ 
\begin{align*}I^{\flat} wI^{\flat}w'I^{\flat} &= I^{\flat}ww'I^{\flat}, \quad\text{ if and only if } \quad\ell(ww')=\ell(w)+\ell(w').\qedhere\end{align*}
\end{remark}
Let us end this section with the following results, they will be needed later. \ref{uppertriangulariy}.
\begin{corollary}\label{triangularitiy}
Let $x,y\in \widetilde{W}^{\flat}$ then 
$$I^{\flat}xI^{\flat}yI^{\flat}\subset \bigsqcup_{z\le y} I^{\flat}xzI^{\flat} \text{ and } I^{\flat}xI^{\flat}yI^{\flat}\subset \bigsqcup_{z\le x} I^{\flat}zyI^{\flat}.$$
\end{corollary}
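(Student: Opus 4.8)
The plan is to prove the first inclusion $I^{\flat}xI^{\flat}yI^{\flat}\subset \bigsqcup_{z\le y} I^{\flat}xzI^{\flat}$ by induction on the length $\ell(y)$, using Lemma \ref{bnrelation} as the engine; the second inclusion then follows by a symmetric argument (or by applying the anti-automorphism $g\mapsto g^{-1}$, which sends $I^{\flat}xI^{\flat}yI^{\flat}$ to $I^{\flat}y^{-1}I^{\flat}x^{-1}I^{\flat}$, exchanges the roles of left and right, and preserves both length and the Chevalley--Bruhat order by the remark following Lemma \ref{bnrelation}). First I would reduce to the case where $y=w_{\aff}\in W_{\aff}^{\flat}$: writing $y=w_{\aff}u$ with $u\in\widetilde{\Omega}^{\flat}$ of zero length, whose lifts normalize $I^{\flat}$, we have $I^{\flat}xI^{\flat}yI^{\flat}=(I^{\flat}xI^{\flat}w_{\aff}I^{\flat})u$, and since $z\le y$ in $\widetilde{W}^{\flat}$ means exactly $z=z'u$ with $z'\overset{\flat}{\le}w_{\aff}$ in $W_{\aff}^{\flat}$, the statement for $y$ is equivalent to the statement for $w_{\aff}$.

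For the base case $\ell(y)=0$ (so $y\in\widetilde{\Omega}^{\flat}$, reduced to $y=1$ after the above reduction) the claim is trivial since $I^{\flat}xI^{\flat}$ is a single double coset and $z\le 1$ forces $z=1$. For the inductive step, write $y=sy'$ with $s\in\cS^{\flat}$ and $\ell(y')=\ell(y)-1$, so that $I^{\flat}yI^{\flat}\subset I^{\flat}sI^{\flat}y'I^{\flat}$ by Lemma \ref{bnrelation} (the first case, $y'<sy'=y$). Hence
$$I^{\flat}xI^{\flat}yI^{\flat}\subset I^{\flat}xI^{\flat}sI^{\flat}y'I^{\flat}.$$
Now I apply Lemma \ref{bnrelation} to the inner product $I^{\flat}xI^{\flat}sI^{\flat}$, distinguishing the two cases $xs>x$ and $xs<x$: in either case $I^{\flat}xI^{\flat}sI^{\flat}$ is contained in $I^{\flat}xsI^{\flat}\cup I^{\flat}xI^{\flat}$. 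Therefore
$$I^{\flat}xI^{\flat}yI^{\flat}\subset (I^{\flat}xsI^{\flat}y'I^{\flat})\cup(I^{\flat}xI^{\flat}y'I^{\flat}).$$
Applying the induction hypothesis (with $y'$ in place of $y$, and with $xs$, resp. $x$, in place of $x$) to each of the two terms, we get $I^{\flat}xI^{\flat}yI^{\flat}\subset \bigsqcup_{z\le y'}I^{\flat}xszI^{\flat}\ \cup\ \bigsqcup_{z\le y'}I^{\flat}xzI^{\flat}$.

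It remains to check that every double coset appearing on the right is of the form $I^{\flat}xz'I^{\flat}$ with $z'\overset{\flat}{\le}y=sy'$. For the terms $I^{\flat}xzI^{\flat}$ with $z\overset{\flat}{\le}y'$: since $y'\overset{\flat}{\le}sy'=y$ (because $\ell(sy')=\ell(y')+1$), transitivity of the order gives $z\overset{\flat}{\le}y$. For the terms $I^{\flat}xszI^{\flat}$ with $z\overset{\flat}{\le}y'$: here I invoke the subword/lifting property of the Chevalley--Bruhat order on the Coxeter system $(W_{\aff}^{\flat},\cS^{\flat})$ --- namely, if $z\overset{\flat}{\le}y'$ then $sz\overset{\flat}{\le}sy'$ or $sz\overset{\flat}{\le}y'\overset{\flat}{\le}y$ (the standard "lifting lemma" / Z-property for Bruhat order, equivalent to the characterization recalled in the remark before Lemma \ref{bnrelation}) --- so $sz\overset{\flat}{\le}y$ in all cases, and thus $I^{\flat}xszI^{\flat}=I^{\flat}x(sz)I^{\flat}$ with $sz\overset{\flat}{\le}y$. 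This establishes the desired inclusion; the disjointness of the union on the right is automatic since distinct elements of $\widetilde{W}^{\flat}$ index distinct double cosets by Proposition \ref{bruhatdecomposition}.

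The main obstacle is the bookkeeping with the Chevalley--Bruhat order in the last step --- specifically ensuring that after left-multiplying the index $z$ by $s$ one still lands below $y$. This is exactly the content of the lifting property of Bruhat order on a Coxeter group, and I would either cite it directly (e.g. via \cite{Deodhar77} or the equivalent definition recalled in the remark preceding Lemma \ref{bnrelation}) or absorb it by reorganizing the induction to peel off $s$ from $y$ on the correct side so that only the trivially-monotone direction of the order is used. Apart from this, every step is a direct application of Lemma \ref{bnrelation} and the disjoint double-coset decomposition of Proposition \ref{bruhatdecomposition}.
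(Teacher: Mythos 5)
Your proposal is correct and follows essentially the same route as the paper: induction on $\ell(y)$, peeling a simple reflection $s$ off the left of $y$ via Lemma \ref{bnrelation}, absorbing it into the $x$-side with the same lemma, applying the induction hypothesis to the two resulting pieces, handling $\widetilde{\Omega}^{\flat}$ by normalization of $I^{\flat}$, and deducing the second inclusion by inversion. The only difference is cosmetic — you perform the reduction to $W_{\aff}^{\flat}$ at the start rather than the end, and you are more explicit than the paper about the lifting/subword property needed to conclude $sz\le y$ from $z\le y'$.
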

\begin{proof}
One argues by induction on the size of a minimal word for $y$. Let $y=s_1 \in \cS^{\flat}$, we have
$$I^{\flat}xI^{\flat}s_1I^{\flat} =\begin{cases} I^{\flat}xs_1I^{\flat}, &\text{ if } x<xs_1 ,\\\hspace*{\fill}
I^{\flat}xI^{\flat}\sqcup I^{\flat}xs_1I^{\flat}, &\text{ if } xs_1<x.
\end{cases}$$
So it suffices to take $z \in \{y\}$ if $x<xs_1$ and $z\in\{s_1y,y\}$ if $xs_1<x$. 
Let $y=s_1\cdots s_r$ a reduced word for $y$. Put $y'=\prod_{i=2}^{r}s_{{I^{\flat}}}$, we then have
\begin{align*}I^{\flat}xI^{\flat}yI^{\flat}=I^{\flat}xI^{\flat}s_1 y'I^{\flat} \overset{\text{Lemma \ref{bnrelation}}}&{=} I^{\flat}xI^{\flat}s_1 I^{\flat}y'I^{\flat}  \\
 & \subset \begin{cases} \bigsqcup_{z\le y'} I^{\flat}x s_1 zI^{\flat}, &\text{ if } x<xs_1 ,\\
 \bigsqcup_{z\le y'} I^{\flat}x zI^{\flat}\cup  \bigsqcup_{z\le y'} I^{\flat}x s_1 zI^{\flat}, &\text{ if } xs_1<x
\end{cases}.
\end{align*}
where the last inclusion is just the recursion hypothesis. Now if $z\le y'$ then clearly $z < y$ and $s_1z \le y$, hence in both cases we have $I^{\flat}xI^{\flat}yI^{\flat} \subset \bigsqcup_{z\le y} I^{\flat}x zI^{\flat}$ for any $x \in \widetilde{W}^{\flat}$ and $y \in W_{\aff}^{\flat}$.

Now to extend this fact to any $y \in \widetilde{W}^{\flat}$, write $y=y_{\aff} u \in W_{\aff}^{\flat} \rtimes \widetilde{\Omega}^{\flat}$. Because any lift of $u$ in $N_\a^\dag$ normalizes $I^{\flat}$ one has
$$I^{\flat}xI^{\flat}y_{\aff}uI^{\flat}= I^{\flat}xI^{\flat}y_{\aff}I^{\flat}u  \subset \bigsqcup_{z\le y_{\aff}} I^{\flat}x zI^{\flat}u= \bigsqcup_{z\le y_{\aff}} I^{\flat}x zuI^{\flat}\subset\bigsqcup_{z'\le y} I^{\flat}x z'I^{\flat}.$$
This conclude the proof of the first inclusion, from which the second inclusion follows readily by upon applying the inverse map $w \mapsto w^{-1}$.
\end{proof}
\begin{corollary}\label{uppertriangulariy}
Let $x,y\in \widetilde{W}^{\flat}$. If $U^+ x \cap I^{\flat}yI^{\flat} \neq \emptyset$ then $x\le y$ in the Chevalley--Bruhat order. 
\end{corollary}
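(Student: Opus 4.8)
The plan is to reduce the assertion to the set-theoretic inclusion $I^{\flat}yI^{\flat}\subseteq\bigsqcup_{x\le y}U^{+}xI^{\flat}$ and then to feed this into the triangularity estimate of Corollary \ref{triangularitiy}. Note first that, by the Iwasawa decomposition of Proposition \ref{bruhatdecomposition}, the sets $U^{+}wI^{\flat}$ with $w\in\widetilde{W}^{\flat}$ partition $G$; consequently $U^{+}x\cap I^{\flat}yI^{\flat}\ne\emptyset$ is equivalent to the statement that the Iwasawa double coset $U^{+}xI^{\flat}$ meets $I^{\flat}yI^{\flat}$, and it suffices to show that every $g\in I^{\flat}yI^{\flat}$ lies in some $U^{+}xI^{\flat}$ with $x\le y$. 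The obstruction to doing this naively is that $I^{\flat}yI^{\flat}$ is left $I^{\flat}$-invariant but \emph{not} left $U^{+}$-invariant (the Iwahori factorization $I^{\flat}=U_{\a}^{+}U_{\a}^{-}M^{\flat}$ of Corollary \ref{Iwahoridecomp} carries a genuine $U_{\a}^{-}$-factor that cannot be pushed past $U^{+}$), so one cannot simply replace $g$ by a more convenient representative. The device that circumvents this is to conjugate by a deep antidominant element of $M$, which absorbs the $U^{+}$-part of $g$ into $I^{\flat}$.

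The key preliminary step will be: \emph{for every $u\in U^{+}$ there exists $m\in M^{-}$ with $mum^{-1}\in I^{+}\subseteq I^{\flat}$}. To prove this, write $u=\prod_{\alpha\in\Phi_{\red}^{+}}u_{\alpha}$ using the decomposition of $U^{+}$ as an ordered product of the root groups $\U_{\alpha}(F)$, $\alpha\in\Phi_{\red}^{+}$ (\S\ref{notationsCh3}), and pick $r_{\alpha}$ with $u_{\alpha}\in U_{\alpha+r_{\alpha}}$. Since $\nu(M)$ is a subgroup of $V$ spanning $V$ over $\R$ (\S\ref{Lambdasection}), it meets the open antidominant cone; fix $m_{0}\in M$ with $\langle\nu(m_{0}),\alpha\rangle<0$ for all $\alpha\in\Delta$, so that $\langle\nu(m_{0}),\alpha\rangle<0$ for all $\alpha\in\Phi_{\red}^{+}$. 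By Lemma \ref{NactionUalphar}, $m_{0}^{n}U_{\alpha+r}m_{0}^{-n}=U_{\alpha+r-n\langle\nu(m_{0}),\alpha\rangle}$; choosing $n$ large enough that $r_{\alpha}-n\langle\nu(m_{0}),\alpha\rangle\ge 0$ for each of the finitely many $\alpha\in\Phi_{\red}^{+}$, the element $m:=m_{0}^{n}\in M^{-}$ conjugates each $u_{\alpha}$ into $U_{\alpha+0}$, hence $mum^{-1}\in\prod_{\alpha\in\Phi_{\red}^{+}}U_{\alpha+0}=I^{+}$ by Corollary \ref{Iwahoridecomp}.

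With this in hand the main argument is immediate. Suppose $g\in U^{+}x\cap I^{\flat}yI^{\flat}$ (choosing lifts in $N$ silently, as in the statement) and write $g=ux$ with $u\in U^{+}$. Pick $m\in M^{-}$ as above, so $i:=mum^{-1}\in I^{\flat}$; then $mg=i\,(mx)$, whence $I^{\flat}(mg)I^{\flat}=I^{\flat}(mx)I^{\flat}$. On the other hand $g\in I^{\flat}yI^{\flat}$ gives $mg\in mI^{\flat}yI^{\flat}\subseteq I^{\flat}mI^{\flat}yI^{\flat}$, and therefore, applying Corollary \ref{triangularitiy} to the pair $(m,y)$ in $\widetilde{W}^{\flat}$,
\[
I^{\flat}(mx)I^{\flat}=I^{\flat}(mg)I^{\flat}\subseteq I^{\flat}mI^{\flat}yI^{\flat}\subseteq\bigsqcup_{z\le y}I^{\flat}mz\,I^{\flat}.
\]
Since the double cosets $I^{\flat}wI^{\flat}$, $w\in\widetilde{W}^{\flat}$, are pairwise disjoint (Proposition \ref{bruhatdecomposition}), the left-hand coset must coincide with one of those on the right, i.e. $mx=mz$ in $\widetilde{W}^{\flat}$ for some $z\le y$; cancelling $m$ yields $x=z\le y$.

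The only genuinely non-formal ingredient is the sweeping-out lemma of the second paragraph (the existence of a sufficiently antidominant $m\in M$ together with the explicit conjugation formula for affine root groups), and this is where I expect the only real care to be needed --- in particular in checking that powers of a strictly antidominant $m_{0}$ push each $U_{\alpha+r}$ ($\alpha\in\Phi_{\red}^{+}$) arbitrarily deep, so that $u$ is captured by a single $m$. Everything after that is bookkeeping with the Bruhat--Iwasawa double cosets and the triangularity of Corollary \ref{triangularitiy}; this also explains why I avoid the more obvious induction on $\ell(y)$, which would be derailed precisely by the $U_{\a}^{-}$-tail of $I^{\flat}$.
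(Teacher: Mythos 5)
Your argument is correct and is essentially the paper's own proof: both absorb the $U^{+}$-factor by conjugating with a deep antidominant $m\in M^{-}$ so that $I^{\flat}mxI^{\flat}\subseteq I^{\flat}mI^{\flat}yI^{\flat}$, and then invoke Corollary \ref{triangularitiy} together with the disjointness of the Iwahori double cosets. The only difference is that you supply the existence of such an $m$ (via the root-group decomposition of $U^{+}$ and Lemma \ref{NactionUalphar}), which the paper simply asserts with ``choose any $m\in M^{-}$ such that $mum^{-1}\in U^{+}\cap I^{\flat}$''; your justification of that step is sound.
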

\begin{proof}
The proof resembles the proof of the claim in \cite[Lemma 1.6.1]{HKP10}. 
Let $x,y\in \widetilde{W}^{\flat}$ such that $U^+ x \cap I^{\flat}yI^{\flat} \neq \emptyset$. 
There exist then $u\in U^+$ such that $ux \in I^{\flat}yI^{\flat}$. 
Choose any $m \in M^-$ such that $m u m^{-1}\in U^+\cap I^{\flat}$. 
Therefore,
$$I^{\flat}m x I^{\flat} = I^{\flat}m u  x I^{\flat}= I^{\flat}m iyi'  I^{\flat} \subset I^{\flat}m I^{\flat}yI^{\flat}.$$
Given Corollary \ref{triangularitiy}, this shows
$I^{\flat}m x I^{\flat}  \subset \bigsqcup_{z\le y} I^{\flat}m zI^{\flat}$, 
and consequently $x \le y$.
\end{proof}

\subsection{Volumes and $q$-powers}
{
\begin{lemma}\label{sizetildeI}
Let $\cF \subset \cA$ be any facet. 
For any $n\in N$, we have $q_{\cF,n}:=[{K}_\cF n {K}_\cF:{K}_\cF]=[{K}_\cF^{\flat} n{K}_\cF^{\flat}:{K}_\cF^{\flat}]$. 
\end{lemma}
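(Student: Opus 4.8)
Here is the plan. The plan is to reduce the asserted equality of indices to a purely group-theoretic computation, exploiting that $M^{\flat}$ is normal in $N$ (Lemma \ref{Wtrivialontors}) and that $K_{\cF}^{\flat}=M^{\flat}K_{\cF}$ is a compact open group in which $K_{\cF}=K_{\cF}^{\flat}\cap G_1$ is normal. First I would invoke the standard fact that, for a compact open subgroup $K\subset G$ and $n\in G$, the index of $K$ inside the double coset $KnK$ equals $[K:K\cap nKn^{-1}]$; it therefore suffices to prove
\[[K_{\cF}^{\flat}:K_{\cF}^{\flat}\cap nK_{\cF}^{\flat}n^{-1}]=[K_{\cF}:K_{\cF}\cap nK_{\cF}n^{-1}].\]
Since $M^{\flat}\trianglelefteq N$ (Lemma \ref{Wtrivialontors}) and $nK_{\cF}n^{-1}=K_{n\cdot\cF}$ with $n\cdot\cF\subset\cA$ again a facet, one has $nK_{\cF}^{\flat}n^{-1}=M^{\flat}K_{n\cdot\cF}$.

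The heart of the argument is the identity
\[K_{\cF}^{\flat}\cap nK_{\cF}^{\flat}n^{-1}=M^{\flat}\,\bigl(K_{\cF}\cap nK_{\cF}n^{-1}\bigr).\]
The inclusion $\supseteq$ is immediate. For $\subseteq$, write an element $x$ of the left-hand side as $x=m_1k_1=m_2k_2$ with $m_1,m_2\in M^{\flat}$, $k_1\in K_{\cF}$, $k_2\in K_{n\cdot\cF}$; then $m_2^{-1}m_1=k_2k_1^{-1}$ lies in $M^{\flat}\cap G_1$. Here one uses $M^{\flat}\cap G_1=M_1$, which follows from $M^{\flat}\subset M^1=\widetilde{K}_{\Upomega}\cap M$ (Corollary \ref{uniqueparahoricM}), $K_{\Upomega}=\widetilde{K}_{\Upomega}\cap G_1$ (Proposition \ref{HRpara}) and $K_{\Upomega}\cap M=M_1$. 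Since $M_1\subseteq K_{n\cdot\cF}$, the element $k_1=(m_2^{-1}m_1)^{-1}k_2$ then lies in $K_{n\cdot\cF}$ as well, hence in $K_{\cF}\cap K_{n\cdot\cF}=K_{\cF}\cap nK_{\cF}n^{-1}$, and $x=m_1k_1$ exhibits $x$ as an element of $M^{\flat}(K_{\cF}\cap nK_{\cF}n^{-1})$.

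Finally I would extract the indices. Write $H':=K_{\cF}\cap nK_{\cF}n^{-1}$ and $L:=K_{\cF}^{\flat}\cap nK_{\cF}^{\flat}n^{-1}=M^{\flat}H'$. Using $M^{\flat}\cap K_{\cF}=M_1$ and $M^{\flat}\cap H'=M_1$ (as $M_1\subseteq H'$), together with the elementary formula $[AB:B]=[A:A\cap B]$ valid whenever $AB$ is a group, one gets $[K_{\cF}^{\flat}:K_{\cF}]=[M^{\flat}:M_1]=[L:H']$, all three indices being finite. Multiplicativity of the index along the two chains $H'\subseteq K_{\cF}\subseteq K_{\cF}^{\flat}$ and $H'\subseteq L\subseteq K_{\cF}^{\flat}$ then yields
\[[K_{\cF}^{\flat}:L]=\frac{[K_{\cF}^{\flat}:K_{\cF}]\,[K_{\cF}:H']}{[L:H']}=[K_{\cF}:H']=q_{\cF,n},\]
which is exactly the desired equality $[K_{\cF}^{\flat}nK_{\cF}^{\flat}:K_{\cF}^{\flat}]=q_{\cF,n}$. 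The one genuinely delicate point I expect is the displayed commutation identity $K_{\cF}^{\flat}\cap nK_{\cF}^{\flat}n^{-1}=M^{\flat}(K_{\cF}\cap nK_{\cF}n^{-1})$; everything else is bookkeeping with the containments $M_1\subseteq K_{\Upomega}$, $M^{\flat}\cap G_1=M_1$ and $M\cap K_{\Upomega}=M_1$ already recorded in \S\S\,\ref{Parahoric}--\ref{flatpartout}.
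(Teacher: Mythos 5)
Your proof is correct, and it rests on exactly the same key input as the paper's argument: an element of $M^{\flat}$ lying in $G_1$ must lie in $M_1$ (i.e. $M^{\flat}\cap G_1\subset M^1\cap G_1=M_1$, which is Lemma \ref{NcapIM2}; the paper phrases this via $\kappa_G$). The packaging differs slightly: the paper constructs the surjection $K_\cF\to K_\cF^{\flat}nK_\cF^{\flat}/K_\cF^{\flat}$, $x\mapsto xnK_\cF^{\flat}$ (using Lemma \ref{Wtrivialontors} to see $K_\cF^{\flat}nK_\cF^{\flat}=K_\cF nK_\cF^{\flat}$) and identifies its fibers directly, whereas you prove the intersection identity $K_\cF^{\flat}\cap nK_\cF^{\flat}n^{-1}=M^{\flat}(K_\cF\cap nK_\cF n^{-1})$ and then conclude by index multiplicativity. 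Your delicate step is sound — in particular $M^{\flat}\cap H'=M_1$ because $M_1\subset K_{n\cdot\cF}$ for every facet $n\cdot\cF\subset\cA$, and $M^{\flat}H'$ is indeed a group since $M^{\flat}\subset M^1$ normalizes both parahorics — so the two arguments are essentially equivalent, yours being a bit longer but making the intersection of conjugates explicit.
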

\begin{proof}
This is a generalization of \cite[Proposition 4.2.3]{R15}, in which a similar statement is proved when $n \in W$, $M^{\flat}=M^1$ and $\cF=\a$. Here, we present a different argument:

Using Lemma \ref{Wtrivialontors}, note that
${K}_\cF^{\flat} n{K}_\cF^{\flat} =K_\cF n{K}_\cF^{\flat}$, the map $K_\cF  \to {K}_\cF^{\flat} n{K}_\cF^{\flat}/{K}_\cF^{\flat}$, given by  $x \mapsto xn {K}_\cF^{\flat}$ is then clearly surjective. 
Two elements $x,y \in {K}_\cF$ give the same image if and only if $x n \in y n {K}_\cF^{\flat}= \sqcup_{m\in M^{\flat}/M_1}  y n m{K}_\cF$, if and only if there exists $m \in M^{\flat}$ such that $n^{-1} y^{-1}x n \in m {K}_\cF$.

Given that $K_\cF \subset \ker \kappa_G$, we see that
$\kappa_G (m)= \kappa_G (n^{-1} y^{-1}x n)=1$, i.e. $m\in G_1 \cap M^1= M_1$ (Lemma \ref{NcapIM2}). 
This shows that ${K}_\cF n {K}_\cF/{K}_\cF \cong  {K}_\cF/n{K}_\cF n^{-1} \cap {K}_\cF \iso {K}_\cF^{\flat} n {K}_\cF^{\flat}/{K}_\cF^{\flat}.$
\end{proof}}

\begin{lemma}\label{InIpowerq}
Let $w= s_1 \cdots s_r \in \widetilde{W}^{\flat}$ be a reduced word for $w$. Then, 
$q_w:=  q_{\a,w}= \prod_{i=1}^r q_{\a,s_i} $. Moreover, $q_w$ is a power of $q$ for any $w\in \widetilde{W}^{\flat}$.
\end{lemma}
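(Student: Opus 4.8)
The plan is to prove the multiplicativity $q_w = \prod_{i=1}^r q_{\a,s_i}$ first, and then deduce that each $q_w$ is a power of $q$. For the multiplicativity, I would argue by induction on $r = \ell(w)$, the base case $r \le 1$ being trivial. Writing $w = w' s_r$ with $\ell(w') = r-1 < \ell(w's_r)$, Remark \ref{Iwahoriaditivity} gives $I^{\flat} w' I^{\flat} s_r I^{\flat} = I^{\flat} w I^{\flat}$ since $\ell(w) = \ell(w') + \ell(s_r)$. I would then count cosets of $I^{\flat}$ in this union: the key point is that the natural surjection $(I^{\flat}w'I^{\flat}/I^{\flat}) \times (I^{\flat}s_rI^{\flat}/I^{\flat}) \to I^{\flat}wI^{\flat}/I^{\flat}$, $(x I^{\flat}, y I^{\flat}) \mapsto xy I^{\flat}$, is in fact a bijection. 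Surjectivity is immediate from $I^{\flat}w'I^{\flat}s_rI^{\flat} = I^{\flat}wI^{\flat}$; injectivity is where additivity of length is used again, via the fact that in a Tits system, $\ell(ww') = \ell(w) + \ell(w')$ forces the product map on cosets to be injective (this is the standard ``BN-pair'' length computation, and can be extracted from the axioms $T_1,\dots,T_4$ listed below Corollary \ref{doubtitflat}, exactly as in \cite[Ch.~IV, \S 2]{BourbakiLieV}). Hence $q_w = [I^{\flat}wI^{\flat} : I^{\flat}] = q_{\a,w'} \cdot q_{\a,s_r}$, and the induction closes. (One also invokes Lemma \ref{sizetildeI} to pass freely between $q_{\a,w}$ computed with $I$ and with $I^{\flat}$, so the index is independent of the $\flat$-avatar.)

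For the statement that $q_w$ is a power of $q$, by the multiplicativity just proved it suffices to treat $w = s \in \cS^{\flat}$, where $q_s = [I s I : I] = [I : I \cap sIs^{-1}]$. Here I would use the Iwahori factorization $I = U_{\a}^+ U_{\a}^- M^{\flat}$ from Corollary \ref{Iwahoridecomp}, together with the explicit description of the root groups $U_{\a}^{\pm}$ as products of the $U_{\alpha+r}$ with $r \in \{0, n_\alpha^{-1}\}$ (Example \ref{exfomega}) and Lemma \ref{NactionUalphar} for the conjugation action of a lift of $s$ on these root groups. Conjugating by $s$ negates exactly the affine root $\beta$ corresponding to the wall fixed by $s$ (reflecting $U_{\beta}$ to $U_{-\beta}$) and permutes the remaining positive affine root groups among themselves; therefore $I \cap sIs^{-1}$ has index in $I$ equal to $[U_{\beta} : U_{\beta'}]$ for two nested root subgroups, which is the cardinality of a quotient of successive terms in the filtration of a one-parameter root group $\U_\alpha(F) \cong F^{\dim}$ (or $\kappa(F)$-vector space quotients thereof). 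Since $\kappa(F)$ has $q$ elements and all these successive quotients are finite-dimensional $\kappa(F)$-vector spaces, the index is a power of $q$.

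The main obstacle I anticipate is making the injectivity of the product-of-cosets map fully rigorous from the $\flat$-avatar's Tits system rather than quoting it as folklore: one must check that the standard BN-pair argument genuinely applies to $(G_1^{\flat}, I^{\flat}, N_1^{\flat}, \cS^{\flat})$, and that the length function inflated to $\widetilde{W}^{\flat}$ (with $\widetilde{\Omega}^{\flat}$ of length $0$ and its lifts normalizing $I^{\flat}$) behaves correctly when $w$ has a nontrivial $\widetilde{\Omega}^{\flat}$-component — but this last point is harmless since multiplying by a length-zero element that normalizes $I^{\flat}$ does not change the index $[I^{\flat}wI^{\flat}:I^{\flat}]$. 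A secondary technical nuisance is bookkeeping in the second part: ensuring that conjugation by a lift of $s$ really stabilizes the product decomposition of $I$ up to reordering, so that the index reduces cleanly to a single root-group index; this is precisely the content of Lemma \ref{NactionUalphar} applied wall-by-wall, so it is mechanical once set up carefully.
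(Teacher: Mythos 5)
Your proposal is correct and follows the same overall strategy as the paper: multiplicativity by induction on the length via the braid relation, then reduction to a simple reflection $s$, then identification of $q_s$ with the cardinality of a filtration quotient of a root group. Two remarks on the details. For the multiplicativity step, the paper avoids your coset-counting bijection entirely: it applies the ring homomorphism $d_{I^{\flat}}\colon {\bf 1}_{I^{\flat} g I^{\flat}} \mapsto |I^{\flat} g I^{\flat}|_{I^{\flat}}$ of Remark \ref{morphismhecktoZ} to the braid relation $i_s^{\flat} *_{I^{\flat}} i_w^{\flat} = i_{sw}^{\flat}$, which immediately yields $q_s q_w = q_{sw}$ (this equivalence is recorded in Lemma \ref{equiviIellq}). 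Your route works too, but as written the map $(xI^{\flat}, yI^{\flat}) \mapsto xyI^{\flat}$ is not well-defined on pairs of cosets; you must fix representatives $I^{\flat} w' I^{\flat} = \sqcup_k a_k I^{\flat}$ and $I^{\flat} s_r I^{\flat} = \sqcup_l b_l I^{\flat}$ and then show the cosets $a_k b_l I^{\flat}$ are pairwise distinct, which is the standard BN-pair computation you allude to. For the second step, the paper quotes Macdonald's lemmas to obtain $IsI/I \cong U_{\alpha_s}/U_{\alpha_s+1}$ directly, rather than running the Iwahori factorization and Lemma \ref{NactionUalphar} by hand as you propose, but the endpoint is the same root-group quotient, and the fact that such a quotient has $q$-power cardinality is asserted at the same level of detail in both treatments.
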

\begin{proof}
Let $s \in \cS^{\flat}$ and $w \in  \widetilde{W}^{\flat}$ such that $\ell(sw)=\ell(w)+1$. 
By Lemma \ref{bnrelation}, we know that $I^{\flat}swI^{\flat}= I^{\flat}sI^{\flat} \cdot I^{\flat}wI^{\flat}$. 
By assumption, $s \neq w$, hence $I^{\flat}sI^{\flat} \cap I^{\flat}wI^{\flat}\neq\emptyset$ and accordingly $q_{sw}=q_sq_w$. 
Therefore, the equality $q_w:=  q_{\a,w}= \prod_{i=1}^r q_{\a,s_i} $ follows by induction on the length $r$ of $w$. 
Consequently, to show that $q_w$ is a power of $q$ it suffices to show the claim for all elements of $\cS$. 
For any $s\in \cS$ (corresponding to the affine root $\alpha_s \in \Sigma_{\aff}$) we have, as in the proof of \cite[Lemma 2.7.4]{Mac71}
\begin{align*}
IsI/I  & \cong U_{\alpha_s}sI/I  &\text{\cite[Lemma 2.6.7 (1)]{Mac71}}&\\
 & \cong U_{\alpha_s}/ U_{\alpha_s} \cap I_{\nu(s) \cdot \a} && \\
 & \cong U_{\alpha_s}/ U_{\alpha_s + 1}  &\text{\cite[Lemma 2.6.5]{Mac71}}&. \qedhere
\end{align*}
This completes the proof of the lemma since the size of the last quotient is a power of $q$.
\end{proof}
\begin{lemma}\label{powerq}
For any $n \in N$, we have a bijection
$$U^+I\cap nIn^{-1}I/I  \cong    \prod_{\Phi_{\red} \cap \Phi^+}^{\prec_+} U_{\alpha+  f_{\nu(n)\cdot \a}(\alpha)} /U_{\alpha+ \max(0, f_{\nu(n)\cdot \a}(\alpha))}$$
for any fixed ordering $\prec_\pm$ of $\Phi^\pm$. 
In particular, $|U^+I\cap nIn^{-1}I|_{I}$ is a power of $q$.
\end{lemma}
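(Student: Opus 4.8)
The plan is to compute the double coset $U^+I\cap nIn^{-1}I$ by intersecting the Iwahori factorization of $nIn^{-1}$ with $U^+$, and then to divide out by $I$ on the right. First I would recall that by Proposition~\ref{decomppara} applied to the alcove $\nu(n)\cdot\a$, we have $nIn^{-1} = I_{\nu(n)\cdot\a} = U_{\nu(n)\cdot\a}^+ U_{\nu(n)\cdot\a}^- M^{\flat}$ with commuting factors, where $U_{\nu(n)\cdot\a}^{\pm}=\prod_{\alpha\in\Phi_{\red}\cap\Phi^{\pm}} U_{\alpha+f_{\nu(n)\cdot\a}(\alpha)}$ for any fixed ordering. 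Since $M^{\flat}\subset I$ and $U_{\nu(n)\cdot\a}^-\subset I$ is not automatic, one needs to be a little careful; but the key point is that modulo $I$ on the right, everything in $nIn^{-1}$ coming from root groups $U_{\alpha+r}$ with $r\ge 0$ (i.e. $U_{\alpha+r}\subset U_{\alpha+0}\subset I$) and from $M^{\flat}$ disappears. Thus $nIn^{-1}I/I$ is represented by the "excess" part of the positive root groups: those $U_{\alpha+f_{\nu(n)\cdot\a}(\alpha)}$ with $f_{\nu(n)\cdot\a}(\alpha)<0$, and also the negative root groups, but these last do not meet $U^+$.

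Next I would intersect with $U^+$. Using the direct-product decomposition $U^+=\prod_{\alpha\in\Phi_{\red}\cap\Phi^+}U_\alpha$ (the statement $(\ast)$ from the Notations, with $\Psi=\Phi_{\red}^+$) together with the commuting Iwahori factorization of $nIn^{-1}$, one sees that an element of $U^+$ lies in $nIn^{-1}I$ if and only if each of its root-group components $u_\alpha\in U_\alpha$ lies in $U_{\alpha+f_{\nu(n)\cdot\a}(\alpha)}\cdot(U_\alpha\cap I)$; but $U_\alpha\cap I = U_{\alpha+f_{\a}(\alpha)}=U_{\alpha+0}$ for $\alpha\in\Phi_{\red}\cap\Phi^+$ by Proposition~\ref{propertiesII3.3}\ref{intersectionUpomega} and Example~\ref{exfomega}. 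Hence $U^+I\cap nIn^{-1}I$ is the product over $\alpha\in\Phi_{\red}\cap\Phi^+$ of $U_{\alpha+\min(0,f_{\nu(n)\cdot\a}(\alpha))}$ (the subgroup generated by $U_{\alpha+f_{\nu(n)\cdot\a}(\alpha)}$ and $U_{\alpha+0}$, which is just the larger of the two since these form a decreasing filtration of $U_\alpha$). Dividing by $I$ on the right kills precisely the $U_{\alpha+\max(0,f_{\nu(n)\cdot\a}(\alpha))}$ part of each factor, giving the asserted bijection
$$U^+I\cap nIn^{-1}I/I \;\cong\; \prod_{\Phi_{\red}\cap\Phi^+}^{\prec_+} U_{\alpha+f_{\nu(n)\cdot\a}(\alpha)}\big/U_{\alpha+\max(0,f_{\nu(n)\cdot\a}(\alpha))}.$$

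Finally, each quotient $U_{\alpha+r}/U_{\alpha+r'}$ with $r\le r'$ in $\Gamma_\alpha$ is finite of $q$-power order: this is the same computation as in Lemma~\ref{InIpowerq} (following \cite[Lemma~2.6.5, Lemma~2.7.4]{Mac71}), since consecutive steps $U_{\alpha+s}/U_{\alpha+s^+}$ are $\kappa(F)$-vector spaces, so the product over the finitely many $\alpha\in\Phi_{\red}\cap\Phi^+$ has cardinality a power of $q$. The main obstacle I anticipate is bookkeeping with the orderings: one must check that the product map $\prod U_{\alpha+f_{\nu(n)\cdot\a}(\alpha)}\to U_{\nu(n)\cdot\a}^+$ being a homeomorphism for \emph{any} ordering (Proposition~\ref{propertiesII3.3}\ref{homeoproduct}) is genuinely enough to justify the factor-by-factor analysis inside $U^+$, i.e. that the intersection $U^+\cap nIn^{-1}I$ really does decompose along root groups compatibly with both the $U^+$-ordering and the $nIn^{-1}$-ordering; this is exactly the type of argument that appears in the proof of Proposition~\ref{Iwahori2} and in \cite[Lemma~1.6.1]{HKP10}, and I would model the details on those.
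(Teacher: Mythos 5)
Your proposal is correct and follows essentially the same route as the paper: both reduce via the Iwahori factorization $nIn^{-1}=I_{\nu(n)\cdot\a}=I_{\nu(n)\cdot\a}^{+}I_{\nu(n)\cdot\a}^{-}M_1$ to the identity $U^+I\cap nIn^{-1}I=I_{\nu(n)\cdot\a}^{+}I$, then use the product decompositions of $I_{\nu(n)\cdot\a}^{+}$ and $I_{\nu(n)\cdot\a}^{+}\cap I^{+}$ along positive reduced root groups (Proposition~\ref{propertiesII3.3}\ref{homeoproduct} and Corollary~\ref{Iwahoridecomp}) and the fact that each filtration quotient $U_{\alpha+r}/U_{\alpha+r'}$ has $q$-power order as in Lemma~\ref{InIpowerq}. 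The "bookkeeping" worry you flag is resolved exactly as you anticipate, by the ordering-independence of the product homeomorphism.
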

\begin{proof}
Write $n  I n^{-1} = I_{\nu(n) \cdot \a}$. 
Using Corollary \ref{Iwahoridecomp}, we have 
$$U^+I\cap I_{\nu(n) \cdot \a}I= (U^+ M_1 I^- \cap I_{\nu_N(n)\cdot \a}) I = I_{\nu_N(n)\cdot \a}^{+} I.$$
This equality yields the bijection $ I_{\nu_N(n)\cdot \a}^{+} I/I \cong I_{\nu_N(n)\cdot \a}^{+} /I_{\nu_N(n)\cdot \a}^{+} \cap I^+$.

By Corollary \ref{Iwahoridecomp} again together with \ref{homeoproduct} and \ref{5} of Proposition \ref{propertiesII3.3}, we have
$$I_{\nu_N(n)\cdot \a}^{+} =\prod_{\Phi_{\red} \cap \Phi^+}^{\prec_+} U_{\alpha+  f_{\nu(n)\cdot \a}(\alpha)} \text{ and } I_{\nu_N(n)\cdot \a}^{+} \cap I^+ = \prod_{\Phi_{\red} \cap \Phi^+}^{\prec_+}U_{\alpha+ \max(0, f_{\nu(n)\cdot \a}(\alpha))} $$
for any fixed ordering $\prec_\pm$ of $\Phi^\pm$. 
Therefore, we get a bijection 
$$  I_{\nu_N(n)\cdot \a}^{+} I/I   \cong    \prod_{\Phi_{\red} \cap \Phi^+}^{\prec_+} U_{\alpha+  f_{\nu(n)\cdot \a}(\alpha)} /U_{\alpha+ \max(0, f_{\nu(n)\cdot \a}(\alpha))}.$$
This completes the proof of the lemma since the size of each quotient in the right product is a power of $q$.
\end{proof}
\begin{corollary}\label{WInInI}
Let $n= m \cdot w$ for any $(m,w)\in \Lambda_M^- \times W_{a_\circ}$, we have $U^+I\cap nIn^{-1}I=I$.
\end{corollary}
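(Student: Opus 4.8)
The statement is that for $n = m \cdot w$ with $(m,w) \in \Lambda_M^- \times W_{a_\circ}$, we have $U^+ I \cap n I n^{-1} I = I$. The plan is to reduce this to a computation of the relevant index via Lemma \ref{powerq}, and to show that every root-group quotient appearing on the right-hand side is trivial. Indeed, Lemma \ref{powerq} gives a bijection
$$U^+ I \cap n I n^{-1} I / I \;\cong\; \prod_{\alpha \in \Phi_{\red} \cap \Phi^+}^{\prec_+} U_{\alpha + f_{\nu(n)\cdot\a}(\alpha)} \big/ U_{\alpha + \max(0,\, f_{\nu(n)\cdot\a}(\alpha))},$$
so it suffices to prove that for every $\alpha \in \Phi_{\red} \cap \Phi^+$ one has $f_{\nu(n)\cdot\a}(\alpha) \geq 0$, since then the quotient $U_{\alpha + f_{\nu(n)\cdot\a}(\alpha)} / U_{\alpha + \max(0, f_{\nu(n)\cdot\a}(\alpha))}$ collapses to a single element, forcing the whole product to be a singleton and hence $U^+ I \cap n I n^{-1} I = I$.

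The core of the argument is therefore the inequality $f_{\nu(n)\cdot\a}(\alpha) \geq 0$ for $\alpha$ a positive reduced root. First I would use part 3 of Example \ref{exfomega}: writing $w$ also for the image of $n$ in $W$ (here $w \in W_{a_\circ}$, so a lift fixing $a_\circ$, and $m$ acts by the translation $\nu(m)$), we have
$$f_{\nu_N(n)\cdot\a}(\alpha) = w(\alpha)\big(\nu_N(n)(a_\circ) - a_\circ\big) + f_{\a}(w^{-1}(\alpha)).$$
Now $\nu_N(n)(a_\circ) - a_\circ = \nu(m)$ because the lift of $w$ in $N_{1,a_\circ}$ fixes $a_\circ$ and $m$ translates by $\nu(m)$; thus the first term is $w(\alpha)(\nu(m)) = \langle \nu(m), w(\alpha)\rangle$. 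By Example \ref{exfomega}, the second term $f_{\a}(w^{-1}(\alpha))$ is $0$ if $w^{-1}(\alpha)$ is $\a$-positive and $n_{w^{-1}(\alpha)}^{-1} > 0$ if $w^{-1}(\alpha)$ is $\a$-negative; in either case $f_{\a}(w^{-1}(\alpha)) \geq 0$. It remains to control the sign of $\langle \nu(m), w(\alpha)\rangle$. Since $m \in \Lambda_M^-$ is antidominant, $\langle \nu(m), \beta\rangle \leq 0$ for all $\beta \in \Phi_{\red}^+$; the subtlety is that $w(\alpha)$ need not be positive. When $w(\alpha) \in \Phi^+$ we get $\langle \nu(m), w(\alpha)\rangle \leq 0$, which is the wrong sign, so this naive splitting is not enough.

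The main obstacle is thus handling the sign bookkeeping correctly, and I expect the cleanest route is not to split $f$ via Example \ref{exfomega}.3 at all but to argue geometrically: $m \in M^-$ means $\nu_N(m)(\a) \subset \cC^-$ (Remark after Lemma \ref{normalizingI} / the remark in \S\ref{Lambdasection}), and since $w \in W_{a_\circ}$ fixes $a_\circ$, the alcove $\nu_N(n)(\a) = w\big(\nu_N(m)(\a)\big)$ lies in the vectorial chamber $w(\cC^-)$ with apex $a_\circ$. For $\alpha \in \Phi_{\red} \cap \Phi^+$, the condition $f_{\nu_N(n)\cdot\a}(\alpha) \geq 0$ is equivalent to the affine root $\alpha + 0$ being nonnegative on $\nu_N(n)(\a)$, i.e. $\alpha(a - a_\circ) \geq 0$ for $a \in \nu_N(n)(\a)$. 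Equivalently, applying $w^{-1}$, we need $w^{-1}(\alpha)$ to be nonnegative on $\nu_N(m)(\a) - a_\circ \subset \overline{\cC}^-$. But on $\overline{\cC}^-$ every root in $-\Phi^+$ (equivalently $w^{-1}(\alpha)$ when $w^{-1}(\alpha)$ happens to be negative) is nonnegative, and when $w^{-1}(\alpha) \in \Phi^+$ one instead uses that $\alpha$ being a simple-enough positive root combined with $w \in W_{a_\circ}$... — here I would actually reduce to the key fact that $U^+ \cap I = I^+$ contains $U^+ \cap nIn^{-1}$ whenever $n = mw$ with $m$ antidominant, which is precisely Lemma \ref{actionwonI} (second assertion, conjugating $K_\cF$ by $n \in N_{1,a_\circ}$) combined with Lemma \ref{normalizingI} ($m U_\a^+ m^{-1} \subset U_\a^+$ for $m \in M^-$): indeed $nIn^{-1} \cap U^+ = m(wIw^{-1})m^{-1} \cap U^+$, and $wIw^{-1} \cap U^+ \subset I^+$ by Lemma \ref{actionwonI}, so $nIn^{-1} \cap U^+ \subset m I^+ m^{-1} \subset I^+ \subset I$; this gives $U^+ I \cap nIn^{-1}I \subset I \cdot I = I$ directly (since an element of $U^+ I \cap nIn^{-1}I$ lies in $(U^+ \cap nIn^{-1}) I$ after absorbing $I$-factors, using the Iwahori factorization to isolate the $U^+$-component), and the reverse inclusion $I \subset U^+ I \cap nIn^{-1}I$ is trivial. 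This second, direct approach avoids the sign analysis entirely and is the one I would write up; the Lemma \ref{powerq} route serves as a cross-check that the index is $1$.
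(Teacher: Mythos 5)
Your final write-up is correct and is precisely the first route the paper itself indicates: its proof of this corollary opens with ``this corollary follows readily using Lemma \ref{actionwonI} and Lemma \ref{normalizingI}'' before giving the $f$-function computation as an alternative. Your chain $nIn^{-1}\cap U^+ = m\,(wIw^{-1}\cap U^+)\,m^{-1}\subset m I^+ m^{-1}\subset I^+$ is exactly what those two lemmas deliver, and the absorption step $U^+I\cap nIn^{-1}I\subset (U^+\cap nIn^{-1})\,I$ that you gesture at via the Iwahori factorization is literally the first displayed identity in the proof of Lemma \ref{powerq}, namely $U^+I\cap I_{\nu_N(n)\cdot\a}I= I_{\nu_N(n)\cdot\a}^{+}I$; you could simply cite it rather than re-derive it.

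One correction to your analysis of the route you rejected: the sign obstruction you describe comes from taking the formula in Example \ref{exfomega}.3 at face value. Unwinding Lemma \ref{NactionUalphar} together with $nU_\Upomega n^{-1}=U_{\nu_N(n)\cdot\Upomega}$ and $U_\Upomega\cap U_\alpha=U_{\alpha+f_\Upomega(\alpha)}$ gives $f_{\nu_N(n)\cdot\Upomega}(\alpha)=-\alpha(\nu_N(n)(a_\circ)-a_\circ)+f_\Upomega(w^{-1}(\alpha))$, i.e.\ the linear term is $-\alpha(\cdots)$, not $w(\alpha)(\cdots)$ (the statement in Example \ref{exfomega}.3 carries a typo), and this corrected version is the one the paper uses in its proof of the corollary: $f_{\nu_N(n)\cdot\a}(\alpha)=-\langle\nu(m),\alpha\rangle+f_\a(w^{-1}(\alpha))$. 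With it, for $m\in M^-$ and $\alpha\in\Phi_{\red}\cap\Phi^+$ one has $-\langle\nu(m),\alpha\rangle\ge 0$ and $f_\a(w^{-1}(\alpha))\in\{0,n_{w^{-1}(\alpha)}^{-1}\}\ge 0$, so every quotient appearing in Lemma \ref{powerq} is trivial and the combinatorial route goes through with no case analysis on the sign of $w(\alpha)$. So both proofs work; your geometric one is the cleaner of the two, but the ``cross-check'' route you abandoned was not actually obstructed.
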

\begin{proof}This corollary follows readily using Lemma \ref{actionwonI} and Lemma \ref{normalizingI}. We can also deduce it from  Lemma \ref{powerq}:  
Let $n= m \cdot w$ for any $m\in \Lambda_M^-$ and any $w\in W_{a_\circ}$.  
By Remark \ref{exfomega}, we have $f_{\nu_N(n) \cdot \a}(\alpha)=-\langle \nu(m),\alpha \rangle + f_{\a}(w^{-1}(\alpha))$. 
In particular, if $n \in \Lambda_M^- \cup W_{a_\circ}$, then 
$\langle \nu(m),\alpha \rangle \ge 0$ for $\alpha \in \Phi_{\red}\cap \Phi^+$ and 
$n_\alpha^{-1} \le f_{\a}(w^{-1}(\alpha)) \le 0$ then $I_{\nu_N(n)\cdot \a}^{+} \subset I$ and so $U^+I\cap I_{\nu(n) \cdot \a}I=I$ by Lemma \ref{powerq}. 
\end{proof}

\subsection{Double cosets decomposition}
\begin{lemma}\label{KmK}
Let $\cF$ and $\cF'$ be two facets of $\a$. 
For any $w \in \widetilde{W}^{\flat}$ we have
$$K_\cF^{\flat} w K_{\cF'}^{\flat}=  I^{\flat} W_{\cF} w W_{\cF'} I^{\flat}.$$
\end{lemma}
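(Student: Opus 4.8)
The statement $K_\cF^{\flat} w K_{\cF'}^{\flat}= I^{\flat} W_{\cF} w W_{\cF'} I^{\flat}$ should be read as an equality of subsets of $G$, where on the right-hand side $W_\cF$ and $W_{\cF'}$ are replaced by (any choices of) coset representatives in $N$, or equivalently by the subgroups $N_{1,\cF}^{\flat}$, $N_{1,\cF'}^{\flat}$; the content is that the double coset does not depend on these choices. The plan is to unwind the Iwahori decompositions of $K_\cF^{\flat}$ and $K_{\cF'}^{\flat}$ already established and reduce to the combinatorics of the double Tits system.

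First I would invoke Proposition~\ref{Iwahori1} (Iwahori decomposition of $K$ -- I), which gives $K_\cF^{\flat}=\bigsqcup_{v \in W_\cF^{\flat}} I^{\flat} v I^{\flat}$ and $K_{\cF'}^{\flat}=\bigsqcup_{v' \in W_{\cF'}^{\flat}} I^{\flat} v' I^{\flat}$. Multiplying these, and using that $I^{\flat}$ is a subgroup (so $I^{\flat}\cdot I^{\flat}=I^{\flat}$), one gets
$$K_\cF^{\flat} w K_{\cF'}^{\flat} = \bigcup_{v\in W_\cF^{\flat},\, v'\in W_{\cF'}^{\flat}} I^{\flat} v I^{\flat} w I^{\flat} v' I^{\flat}.$$
Now I would apply Corollary~\ref{triangularitiy} twice: $I^{\flat} v I^{\flat} w I^{\flat} \subset \bigsqcup_{z\le v} I^{\flat} z w I^{\flat}$ and then $I^{\flat} (zw) I^{\flat} v' I^{\flat} \subset \bigsqcup_{z'\le v'} I^{\flat} z w z' I^{\flat}$. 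Since $v\in W_\cF^{\flat}$ and $z\le v$ forces $z\in W_\cF^{\flat}$ (the Bruhat order on a Coxeter group restricts to any standard parabolic subgroup, here $W_\cF^{\flat}$ generated by $\cT_\cF$), and similarly $z'\in W_{\cF'}^{\flat}$, the right-hand side lies inside $I^{\flat} W_\cF^{\flat} w W_{\cF'}^{\flat} I^{\flat}=I^{\flat} W_\cF w W_{\cF'} I^{\flat}$ (using $\jmath_W(W_\cF^{\flat})=\jmath_W(W_\cF)$, i.e. the choice of representatives is immaterial). This gives the inclusion $\subseteq$.

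For the reverse inclusion $\supseteq$, the point is that each $I^{\flat} v w v' I^{\flat}$ with $v\in W_\cF^{\flat}$, $v'\in W_{\cF'}^{\flat}$ is contained in $I^{\flat} v I^{\flat} w I^{\flat} v' I^{\flat}$: indeed $vwv' \in I^{\flat}vI^{\flat}\cdot I^{\flat}wI^{\flat}\cdot I^{\flat}v'I^{\flat}$ trivially, because $v\in I^{\flat}vI^{\flat}$ etc. Then $I^{\flat} v I^{\flat}\subseteq K_\cF^{\flat}$ and $I^{\flat} v' I^{\flat}\subseteq K_{\cF'}^{\flat}$ by Proposition~\ref{Iwahori1} and $I^{\flat}\subseteq K_\cF^{\flat}\cap K_{\cF'}^{\flat}$, so $I^{\flat} v w v' I^{\flat}\subseteq K_\cF^{\flat} w K_{\cF'}^{\flat}$. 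Taking the union over all $v,v'$ yields $I^{\flat} W_\cF w W_{\cF'} I^{\flat}\subseteq K_\cF^{\flat}wK_{\cF'}^{\flat}$, completing the proof. I do not expect a serious obstacle here; the only point requiring a little care is the bookkeeping with representatives — checking that $I^{\flat} W_\cF^{\flat} w W_{\cF'}^{\flat} I^{\flat}$ is well-defined independently of the lift of $w$ to $N$ and of the chosen transversals, which follows from $M^{\flat}\subseteq I^{\flat}$ and the normality statements in Lemma~\ref{Wtrivialontors}.
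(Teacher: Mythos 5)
Your proposal is correct and follows essentially the same route as the paper's proof: both reduce via the Iwahori decomposition of Proposition~\ref{Iwahori1} to products of the form $I^{\flat} v I^{\flat} w I^{\flat} v' I^{\flat}$ and then apply Corollary~\ref{triangularitiy} (the paper phrases this with words $s_1\cdots s_x$ in $\mathcal{T}_\cF^{\flat}$ and $t_1\cdots t_y$ in $\mathcal{T}_{\cF'}^{\flat}$, which makes the closure of $W_\cF^{\flat}$ under the Bruhat order automatic, while you invoke the standard-parabolic fact explicitly). Your write-up of the reverse inclusion, which the paper dismisses as obvious, is a harmless elaboration.
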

We will be interested later in the particular case  $w=m\in \Lambda_M^-$ and $\cF=\cF'$, for which we have  $K_\cF mK_\cF= \sqcup_{(w_1,w_2) \in J_{m,\cF}}Iw_1mw_2I$, where $J_{m,\cF} \simeq W_\cF^m \backslash W_\cF \times W_\cF$. 
Here, $w \in W_\cF^m$ (the isotropy subgroup of $m$ in $W_\cF$) acts on $(w_1,w_2)\in W_\cF \times W_\cF$ as follow $(w_1w, w^{-1} w_2)$.
\begin{proof}

For $w=m \in \Lambda_M^{-,\flat}$, the equality can easily be obtained by combining Proposition \ref{bruhatdecomposition} and Lemma \ref{normalizingI}. 
However, here is an argument for a general $w\in \widetilde{W}^{\flat}$.
For any $ s_1,\cdots, s_x \in \mathcal{T}_\cF^{\flat}$ and $t_1,\cdots, t_y \in \mathcal{T}_{\cF'}^{\flat}$, 
Lemma \ref{triangularitiy} shows 
$$I^{\flat} s_1\cdots s_x I^{\flat}wI^{\flat}  t_1 \cdots t_y I^{\flat}\subset  I^{\flat} W_{\cF} w W_{\cF'} I^{\flat}.$$
But $W_{\cF}^{\flat}$ (resp. $W_{\cF'}^{\flat}$) is generated by $\mathcal{T}_\cF^{\flat}$ (resp. $\mathcal{T}_{\cF'}^{\flat}$) and hence Lemma \ref{Iwahori1} yields 
$K_\cF^{\flat} w K_{\cF'}^{\flat}\subset  I^{\flat} W_{\cF} w W_{\cF'} I^{\flat}$. 
The opposite inclusion is obvious.
\end{proof}

\subsection{The modular function} 
Let $\delta_B$ be the modular function on the fixed minimal parabolic $B$ containing $M$ given by the normalized absolute value of the determinant of the adjoint action on $\text{Lie} \,U^+$:
$$\delta_B(m):=|\det (\Ad(m)_{\text{Lie}U^+})|_F,\quad \forall m\in M$$
where $|\cdot|_F$ is the fixed normalized absolute value of $F$. 
\begin{lemma}\label{qmdeltam}
Let $\Upomega\subset \cA_{\ext}$ be any bounded subset containing any alcove, then for any $m\in M^-$
$$[K_{\Upomega}mK_{\Upomega}:K_{\Upomega}]=\delta_B(m)^{-1}.$$ 
In particular, if $\Upomega$ is an alcove $\mathfrak{b}$, then $q_m=[I_\mathfrak{b}mI_\mathfrak{b}:I_\mathfrak{b}]=\delta_B(m)^{-1}$ and so $q_{w(m)}= \delta_B(m)^{-1}$ for all $m\in M^-$.
\end{lemma}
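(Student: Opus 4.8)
The plan is to reduce the statement to two separate computations: first the index $[K_\Upomega m K_\Upomega : K_\Upomega]$ as a power of $q$ coming from root-group volumes, and second the comparison of that power of $q$ with $\delta_B(m)^{-1}$. For the first part, I would use the Iwahori/Cartan-type decomposition already at hand. Writing $K_\Upomega = U_\Upomega^+ U_\Upomega^- (N \cap K_\Upomega)$ (Proposition \ref{decomppara}) together with $m \in M^-$, Lemma \ref{normalizingI} tells us $m U_\Upomega^+ m^{-1} \subset U_\Upomega^+$ and $m^{-1} U_\Upomega^- m \subset U_\Upomega^-$. Hence the coset space $K_\Upomega m K_\Upomega / K_\Upomega$ is in bijection with $K_\Upomega / (m^{-1} K_\Upomega m \cap K_\Upomega)$, and using the product decomposition of $U_\Upomega^\pm$ this intersection is controlled factor-by-factor: the index equals $\prod_{\alpha \in \Phi_{\red}\cap\Phi^+} [U_{\alpha + f_\Upomega(\alpha)} : U_{\alpha + f_\Upomega(\alpha) - \langle \nu(m),\alpha\rangle}]$ (the negative roots and the $N\cap K_\Upomega$-part contribute trivially since $m$ is antidominant). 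Each such index is $q$ raised to the appropriate multiple of $\langle \nu(m),\alpha\rangle$ weighted by the "thickness" $n_\alpha^{-1}$ of the filtration steps and the dimension of the root group $\mathbf{U}_\alpha$; crucially, this is independent of $\Upomega$ since the relevant increments $\langle\nu(m),\alpha\rangle$ do not involve $f_\Upomega(\alpha)$.

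For the second part I would compute $\delta_B(m)$ directly. Since $\delta_B(m) = |\det(\mathrm{Ad}(m)|_{\mathrm{Lie}\,U^+})|_F$ and $\mathrm{Lie}\,U^+ = \bigoplus_{\alpha \in \Phi_{\red}\cap\Phi^+} \mathrm{Lie}\,\mathbf{U}_\alpha$ (using that for $2\alpha \in \Phi$ the group $\mathbf{U}_{2\alpha}$ sits inside $\mathbf{U}_\alpha$, so the reduced roots already account for everything after passing to Lie algebras, or more carefully one keeps track of both $\alpha$ and $2\alpha$ and their respective multiplicities), the determinant factors as $\prod_\alpha \alpha(m)^{d_\alpha}$ where $d_\alpha = \dim \mathrm{Lie}\,\mathbf{U}_\alpha$. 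Taking normalized absolute values and using $|\chi(m)|_F = q^{-\langle\nu_M(m),\chi\rangle}$ from the definition of $\nu_M$ (the notations section and Lemma \ref{nu}), one gets $\delta_B(m) = q^{-\sum_\alpha d_\alpha \langle \nu(m),\alpha\rangle}$, which for $m \in M^-$ (so all $\langle\nu(m),\alpha\rangle \le 0$ over $\Phi_{\red}^+$, equivalently $\langle\nu(m),\alpha\rangle \ge 0$ over $\Phi_{\red}^-$... careful with the sign convention, $M^-$ means $\langle\nu_M(m),\alpha\rangle \ge 0$ for $\alpha \in \Phi_{\red}^-$, i.e. $\le 0$ for positive roots) gives $\delta_B(m)^{-1} \ge 1$. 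The task is then to match the exponent $\sum_\alpha d_\alpha \langle\nu(m),\alpha\rangle$ with the exponent coming from the index computation.

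The matching is the technical heart, and I expect it to be the main obstacle: one must verify that $[U_{\alpha+r} : U_{\alpha + r - \langle\nu(m),\alpha\rangle}]$ equals $q^{d_\alpha \langle\nu(m),\alpha\rangle \cdot c_\alpha}$ for the correct constant $c_\alpha$ relating the combinatorial filtration step to the valuation, and that the constants conspire so that the product over $\Phi_{\red}^+$ of these indices equals precisely $\delta_B(m)^{-1}$. This is exactly the local computation carried out in \cite[Lemma 2.7.4]{Mac71} and its companion lemmas (as invoked already in the proof of Lemma \ref{InIpowerq}), together with Proposition \ref{nalpha} governing $\Gamma_\alpha = n_\alpha^{-1}\Z$; the point is that the Haar-measure normalization in the definition of $\delta_B$ via the determinant is compatible with counting cosets of root subgroups. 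Rather than redo \cite{Mac71} I would cite it, reduce to the case $\Upomega$ an alcove (where $f_\Upomega(\alpha)$ takes the explicit values from Example \ref{exfomega}), establish the formula $q_m = [I_{\mathfrak b} m I_{\mathfrak b}:I_{\mathfrak b}] = \delta_B(m)^{-1}$ there, and then invoke the $\Upomega$-independence argued in the first paragraph (via Lemma \ref{sizetildeI}-type reasoning or directly from the root-group index computation) to extend to arbitrary $\Upomega$ containing an alcove. Finally $q_{w(m)} = \delta_B(m)^{-1}$ follows because $w(m)$ and $m$ lie in the same $K$-double coset by the Cartan decomposition (Proposition \ref{cartanK}), or alternatively because $\nu(w(m))$ and $\nu(m)$ are $W$-conjugate and $\delta_B$ restricted to... no — more simply, $w(m) \in K m K$ so the indices agree.
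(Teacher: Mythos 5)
Your skeleton coincides with the paper's: the bijection $K_{\Upomega}mK_{\Upomega}/K_{\Upomega}\cong K_{\Upomega}/(K_{\Upomega}\cap mK_{\Upomega}m^{-1})$, the Iwahori factorization of $K_{\Upomega}$ together with Lemma \ref{normalizingI} to see that the $U_{\Upomega}^-$- and $M_1$-parts contribute trivially, and the reduction to $[U_{\Upomega}^{+}:mU_{\Upomega}^{+}m^{-1}]$. Where you diverge is in evaluating that index, and this is where the gap lies. The paper closes it in one stroke: for \emph{any} compact open subgroup $C$ of $U^{+}$ with $mCm^{-1}\subseteq C$ one has $[C:mCm^{-1}]=\mathrm{mod}_{U^{+}}(\mathrm{Ad}(m))^{-1}=|\det \mathrm{Ad}(m)_{\text{Lie}\,U^{+}}|_F^{-1}=\delta_B(m)^{-1}$, the module of an automorphism being independent of $C$; this is simultaneously what makes the answer independent of $\Upomega$ and what makes any root-by-root bookkeeping unnecessary. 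Your proposed route --- computing each $[U_{\alpha+f_{\Upomega}(\alpha)}:U_{\alpha+f_{\Upomega}(\alpha)-\langle\nu(m),\alpha\rangle}]$ as an explicit power of $q$ via ``thickness $n_\alpha^{-1}$ times dimension'' and matching exponents against $\prod_\alpha\delta_\alpha(m)$ --- is left uncompleted: you defer the matching to \cite{Mac71}, but Macdonald works with simply connected groups (the paper itself flags this restriction just before Lemma \ref{crintegral}), and for ramified groups the successive quotients $U_{\alpha+r}/U_{\alpha+r+n_\alpha^{-1}}$ need not all have the same size, so the exponent is not literally ``a multiple of $\langle\nu(m),\alpha\rangle$ weighted by $n_\alpha^{-1}$ and $\dim\U_\alpha$''. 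For the same reason, your assertion that the index is independent of $\Upomega$ ``since the increments do not involve $f_\Upomega(\alpha)$'' is a non sequitur on its own; the independence again comes from the module (applied to each $\U_\alpha(F)$ if you insist on arguing root by root: $[U_{\alpha+r}:mU_{\alpha+r}m^{-1}]=\delta_\alpha(m)^{-1}$ for every $r$).

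The final claim is also misjustified. From $w(m)\in KmK$ you may conclude $[Kw(m)K:K]=[KmK:K]$, but $q_{w(m)}$ is by definition the Iwahori index $[I\,w(m)\,I:I]$, and equality of $K$-double cosets says nothing about Iwahori-double-coset indices. The correct argument is to choose a lift $n\in K$ of $w$ (Lemma \ref{repofWinK}) and conjugate: $[I\,nmn^{-1}\,I:I]=[(n^{-1}In)\,m\,(n^{-1}In):n^{-1}In]=[I_{w^{-1}(\a)}\,m\,I_{w^{-1}(\a)}:I_{w^{-1}(\a)}]=\delta_B(m)^{-1}$, the last equality being the first part of the lemma applied to the alcove $\mathfrak{b}=w^{-1}(\a)$ --- which is exactly why the statement is formulated for an arbitrary alcove rather than only for $\a$.
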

\proof
Consider the map $ i_1 \mapsto i_1 mK_{\Upomega} $ from the set $K_{\Upomega}$ to $K_{\Upomega}mK_{\Upomega}/K_{\Upomega}$. This yields a bijection
$$K_{\Upomega}mK_{\Upomega}/ K_{\Upomega}\iso K_{\Upomega}/K_{\Upomega}\cap mK_{\Upomega}m^{-1}.$$
Here, $K_{\Upomega}\cap mK_{\Upomega}m^{-1}=K_{\Upomega}\cap K_{\nu(m)(\Upomega)}$ is the $K_{\Upomega_m^{\text{conv}}}={P}(F^{un})_{\Upomega_m^{\text{conv}}}^\circ \cap G_1$ the $G_1$-fixator of the convex hull of $\Upomega_m^{\text{conv}}:=\Upomega \cup \nu(m)(\Upomega)$. Since $m\in M^-$, then $K_{\Upomega}\cap mK_{\Upomega}m^{-1}= U_{\Upomega}^-M_1 mU_{\Upomega}^+m^{-1}$ and so
\begin{align*}[K_{\Upomega}:K_{\Upomega}\cap mK_{\Upomega}m^{-1}]
&=[U_{\Upomega}^+:mU_{\Upomega}^+m^{-1}]=\delta_B^{-1}(m).
\end{align*}
For the last statement, it suffices to observe that $q_{w(m)}= q_{\cF, w^{-1}(\a)}= \delta_B(m)^{-1}$ for any $m \in M^-.$\qedhere
\begin{remark}
A similar proof of \cite[Proposition 3.2.15]{Mac71} shows that for any $m\in \Lambda_M^-$ we have
$$q_{a_\circ,m}=[KmK:K]= \delta_B(m)^{-1} \frac{\sum_{w\in W_{a_\circ}}q_{w}^{-1}}{\sum_{w\in W_{a_\circ}^m}q_{w}^{-1}}$$
where, $W_{a_\circ}^m$ is the isotropy group of $m$ in $W_{a_\circ}$.
\end{remark}
\subsection{Relative Hecke algebras}
For any ring $R$, set $\cC_c(G,R)$ for the $R$-module of locally constant and compactly supported functions $f \colon G \to R$. Let $H$ be any open compact subgroup of $G$. The group $G$ has a unique left invariant measure $\mu_H$ normalized by $H$ on $\Q$ \cite[\S 2.4]{Vigneras96}:
$$\cC_c(G,\Q)\longrightarrow  \Q, \quad f \longmapsto \int_G f(g) d\mu_H(g)
\text{ such that }
\int_G {\bf 1}_H(g)d\mu_H(g)=1.$$
The vector space $\cC_c(G,\Q)$ acquire the structure of a $\Q$-algebra without a unit, when endowed with the convolution product with respect to $\mu_H$:
$$f *_Hf'\colon x \mapsto \int_G f(g) f'(g^{-1}x) d\mu_H(g)\quad(f,f' \in \cC_c(G,\Q)).$$
\begin{remark}\label{invarianceconvolution}
The above expression of the convolution can be rewritten
\begin{align*}f *_Hf' (x) 
&=\int_G f(xg) f'(g^{-1}) d\mu_H(g).
\end{align*} 
This shows that for $f,f' \in \cC_c(G,\Q)$, if $f$ is $X$-invariant on the left and $f'$ is $Y$-invariant on the right, then $f *_Hf'$ is $X$-invariant on the left and $Y$-invariant on the right. 
\end{remark}
The group $G$ acts on the underlying $\Q$-vector space of this algebra by translation, on the left and on the right as follows:
$$\begin{tikzcd}[row sep=small] (G \times G) \times \cC_c(G,\Q)\arrow[r] &\cC_c(G,\Q)\\
((g,g'),f) \arrow[r,mapsto]&((g,g')\cdot f\colon x\mapsto f(g^{-1} xg')).
\end{tikzcd}$$
\begin{lemma}\label{actionhecke}
Let $X,Y$ be two open compact subgroups of $G$. For any $g,h \in G$, one has
\begin{align*}
(i)&& \mathbf{1}_{gY}*_H\mathbf{1}_{hX}&=|Y\cap hXh^{-1}|_H\mathbf{1}_{gYhX}\\
(ii)&& \mathbf{1}_{gY}*_H\mathbf{1}_{XhX}&=\frac{|Y\cap X|_H}{|X\cap hXh^{-1}|_H}\mathbf{1}_{gYX}*_H\mathbf{1}_{hX},\end{align*}
where, the notation $|\square|_H$ denotes the volume of $\square$ with respect to the measure $\mu_H$.
\end{lemma}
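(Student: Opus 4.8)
The plan is to prove (i) by directly computing the convolution integral, and then to deduce (ii) formally from (i) using associativity of $*_H$.

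For (i), unwinding the definition of the convolution gives
$$(\mathbf{1}_{gY}*_H\mathbf{1}_{hX})(x)=\int_G \mathbf{1}_{gY}(t)\,\mathbf{1}_{hX}(t^{-1}x)\,d\mu_H(t)=\mu_H\big(\{t\in G: t\in gY,\ t^{-1}x\in hX\}\big).$$
Since $t^{-1}x\in hX$ is equivalent to $t\in xXh^{-1}$ (using $X^{-1}=X$), the set in question equals $gY\cap xXh^{-1}$. First I would check it is empty unless $x\in gYhX$: if $t$ lies in it then $th\in gYh\cap xX$, which forces $x\in gYhX$. Conversely, for $x=gy_0hx_0$ with $y_0\in Y$, $x_0\in X$, one has $xXh^{-1}=gy_0hXh^{-1}$, hence
$$gY\cap xXh^{-1}=g\big(Y\cap y_0hXh^{-1}\big)=gy_0\big(Y\cap hXh^{-1}\big),$$
using $y_0^{-1}Y=Y$. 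By left-invariance of $\mu_H$ this set has measure $|Y\cap hXh^{-1}|_H$, independent of $x$. Thus $\mathbf{1}_{gY}*_H\mathbf{1}_{hX}=|Y\cap hXh^{-1}|_H\,\mathbf{1}_{gYhX}$, which is (i); in particular this also shows the convolution is supported on finitely many cosets and lies in $\cC_c(G,\Q)$.

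For (ii), I would first specialize (i). Taking $g=1$ and $Y=X$ gives $\mathbf{1}_X*_H\mathbf{1}_{hX}=|X\cap hXh^{-1}|_H\,\mathbf{1}_{XhX}$, and since the open set $X\cap hXh^{-1}$ has positive volume this rewrites as $\mathbf{1}_{XhX}=|X\cap hXh^{-1}|_H^{-1}\,\mathbf{1}_X*_H\mathbf{1}_{hX}$. Taking instead $h=1$ gives $\mathbf{1}_{gY}*_H\mathbf{1}_X=|Y\cap X|_H\,\mathbf{1}_{gYX}$. Combining these with associativity of $*_H$,
$$\mathbf{1}_{gY}*_H\mathbf{1}_{XhX}=\frac{1}{|X\cap hXh^{-1}|_H}\,(\mathbf{1}_{gY}*_H\mathbf{1}_X)*_H\mathbf{1}_{hX}=\frac{|Y\cap X|_H}{|X\cap hXh^{-1}|_H}\,\mathbf{1}_{gYX}*_H\mathbf{1}_{hX},$$
which is (ii).

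There is essentially no serious obstacle: the only delicate points are the coset bookkeeping behind the identity $gY\cap xXh^{-1}=gy_0(Y\cap hXh^{-1})$, and the observation that only left-invariance of $\mu_H$ (not unimodularity of $G$) is used; associativity of $*_H$ is already part of the algebra structure recorded above.
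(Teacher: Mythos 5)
Your proof is correct and follows essentially the same route as the paper: part (i) by direct evaluation of the convolution integral and left-invariance of $\mu_H$, and part (ii) by specializing (i) and invoking associativity of $*_H$. The extra care you take with the coset identity $gY\cap xXh^{-1}=gy_0(Y\cap hXh^{-1})$ is a welcome elaboration of a step the paper compresses into one line, but it is not a different argument.
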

\begin{proof}(i) Observe that the function $$\mathbf{1}_{gY}*_H\mathbf{1}_{hX}(a)=\int_G \mathbf{1}_{gY}(b)\mathbf{1}_{hX}(b^{-1}a)d\mu_H(b),$$ can only be nonzero on the set $gYhX$. Let $a\in gYhX$ and write it as $a=gyhx$, thus
\begin{align*}\mathbf{1}_{gY}*_H\mathbf{1}_{hX}(a)
=|gY\cap gyhxXh^{-1}|_H
=|Y\cap hXh^{-1}|_H\end{align*}
where, the third equality holds by left invariance of $\mu_H$. 
For (ii), we have
\begin{align*}
\mathbf{1}_{gY}*_H\mathbf{1}_{XhX}&=\frac{1}{|X\cap hXh^{-1}|_H}(\mathbf{1}_{gY}*_H\mathbf{1}_X)*_H\mathbf{1}_{hX}
= \frac{|Y\cap X|_H}{|X\cap hXh^{-1}|_H}\mathbf{1}_{gYX}*_H\mathbf{1}_{hX}.\qedhere
\end{align*}
\end{proof}
\begin{example}\label{heckecalculationsbasis}
For $g,g'\in G$, write $Hg'H$ as disjoint union $\sqcup_{h'} h'g'H$, then by lemma above:
\begin{equation*}\label{6bis}\mathbf{1}_{gH}*_H\mathbf{1}_{Hg'H}= \mathbf{1}_{gHg'H}=\mathbf{1}_{\sqcup_{h'} gh'g'H}=\sum_{h'} \mathbf{1}_{gh'g'H}.\end{equation*}
If also $HgH=\sqcup_h hgH$, then
$ \mathbf{1}_{HgH}*_H\mathbf{1}_{Hg'H}=( \sum_{h}\mathbf{1}_{hgH})*_H\mathbf{1}_{Hg'H}=\sum_{h,h'} \mathbf{1}_{hgh'g'H}$.
\end{example}
Now, we associate to the pair $(G,H)$ different $\Z$-modules. 
\begin{definition}
For any commutative ring $A$, we define $\cC_c(G/H,A)$ to be the $A$-module of compactly supported functions $f \colon G \to A$ that are $H$-invariant on the right. 
It has the following canonical basis $\{{\bf 1}_{gH} \colon g \in G/H\}$. 
In addition, the group left action of $G$ on $\cC_c(G,\Q)$ as defined above, restricts to a left action on $\cC_c(G/H,\Z)$. 
We also define $\cH_c(G\sslash H,A)\subset \cC_c(G/H,A)$ to be the $A$-algebra\footnote{By Remark \ref{invarianceconvolution} and Example \ref{heckecalculationsbasis}, we see that $\cH_{H}(A):=\cC_c(G\sslash H,\Z)\subset \cC_c(G\sslash H,\Q)$ is stable under $*_H$.} of functions $f \colon G \to A$, 
that are also $H$-invariant on the left. 
We call $\cH_c(G\sslash H,A)$ the Hecke algebra relative to $H$ with values in $A$ and denote $\cH_H(A)$. 
We have $\cH_H(A)=\cH_H(\Z)\otimes_{\Z} A$. 
\end{definition}
The Hecke algebra $\cH_H(\Z)$ is a free $\Z$-module, it has the canonical basis $\{{\bf 1}_{HgH}\colon g\in HgH\}$. 
\begin{example}
According to Proposition \ref{bruhatdecomposition}, for any facet $\cF\subset \cA$, the Hecke algebra $\cH_{K_\cF^{\flat}}(\Z)$ has the following canonical $\Z$-basis
$$\{{\bf 1}_{K_\cF^{\flat} w K_\cF^{\flat}} \text{ for } w\in W_{\cF}^{\flat} \backslash \widetilde{W}^{\flat}/ W_{\cF}^{\flat}\}.
$$
When $\cF=\{a_\circ\}$, $\cH_{K^{\flat}}(\Z)$ will be called the ${\flat}$-special--Hecke algebra. 
By Lemma \ref{orbitLambda}, one has $\Lambda_M^{-,{\flat}} \cong W_{a_\circ}^{\flat} \backslash \widetilde{W}^{\flat}/W_{a_\circ}^{\flat} $, so one can exhibit a natural $\Z$-basis for $\cH_{K^{\flat}}(\Z)$ as follows $\{h_{m}^{\flat}:={\bf 1}_{K^{\flat} mK^{\flat}} \text{ for } m \in \Lambda_M^{-,{\flat}}\}
$. 
When $\cF=\a$, $\cH_{I^{\flat}}(\Z)$ is called the ${\flat}$-Iwahori--Hecke algebra and the following set forms a $\Z$-basis $\{i_w^{\flat}:={\bf 1}_{I^{\flat}wI^{\flat}} \text{ for } w\in \widetilde{W}^{\flat}\}.
$

When we are in the case $M^{\flat}=M_1$, we systematically omit the prefix ${\flat}$ in all of the above nomenclatures and notations.
\end{example}
Let $\End_{\Z[G]} \cC_c(G/H,\Z)$ denote the ring of $G$-equivariant endomorphisms of $\cC_c(G/H,\Z)$.
\begin{proposition}\label{ringintertwiners}The following map is an isomorphism of rings\footnote{the superscript {\em opp} indicates the opposite ring}:
$$\begin{tikzcd}[row sep= small]\cH_H(\Z) \arrow{r}{\simeq} & \End_{\Z[G]} \cC_c(G/H,\Z)^{\text{\emph{opp}}}\\ h \arrow[r, mapsto] &e_h\colon f \mapsto f*_H h,\end{tikzcd}$$
\end{proposition}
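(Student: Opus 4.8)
The statement to prove is that the map $h \mapsto e_h$, where $e_h(f) = f *_H h$, is a ring isomorphism $\cH_H(\Z) \riso \End_{\Z[G]}\cC_c(G/H,\Z)^{\text{opp}}$. The plan is to verify in turn: (1) $e_h$ is a well-defined $G$-equivariant endomorphism of $\cC_c(G/H,\Z)$; (2) the map $h \mapsto e_h$ is $\Z$-linear and reverses multiplication (hence lands in the \emph{opposite} ring); (3) it is injective; (4) it is surjective. The only real content is surjectivity, so I will spend most of the effort describing that; the rest is bookkeeping.

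\textbf{Steps (1)--(2).} For (1): since $f$ is right $H$-invariant and $h$ is bi-$H$-invariant, Remark \ref{invarianceconvolution} shows $f*_H h$ is right $H$-invariant, and compact support of $f$ together with local constancy forces $f*_H h$ to be compactly supported (the support lies in $(\Supp f)(\Supp h)$, a finite union of cosets). Left $G$-equivariance is the computation $e_h(g\cdot f)(x) = (g\cdot f)*_H h(x) = \int_G f(g^{-1}bg')\cdots$ — more cleanly, using the rewriting in Remark \ref{invarianceconvolution}, $(g\cdot f) *_H h (x) = \int_G (g\cdot f)(xb) h(b^{-1})\,d\mu_H(b) = \int_G f(g^{-1}xb) h(b^{-1})\,d\mu_H(b) = (f*_H h)(g^{-1}x) = g\cdot(f*_H h)(x)$, so $e_h \circ (g\cdot) = (g\cdot)\circ e_h$. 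For (2): $\Z$-linearity in $h$ is clear from linearity of convolution; and $e_{h_1 *_H h_2}(f) = f *_H (h_1 *_H h_2) = (f*_H h_1)*_H h_2 = e_{h_2}(e_{h_1}(f))$ by associativity of $*_H$, which is exactly the multiplication in the opposite ring, so $h \mapsto e_h$ is a ring homomorphism $\cH_H(\Z) \to \End_{\Z[G]}\cC_c(G/H,\Z)^{\text{opp}}$.

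\textbf{Step (3), injectivity.} Evaluate $e_h$ on the basis element $\mathbf{1}_H$: we have $e_h(\mathbf{1}_H)(x) = \mathbf{1}_H *_H h(x) = \int_G \mathbf{1}_H(b) h(b^{-1}x)\,d\mu_H(b) = h(x)$ since $\mu_H$ is normalized so that $\mathbf{1}_H$ has volume $1$ and $h$ is left $H$-invariant. Thus $e_h(\mathbf{1}_H) = h$, so $e_h = 0$ forces $h = 0$.

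\textbf{Step (4), surjectivity --- the main point.} Given $\varphi \in \End_{\Z[G]}\cC_c(G/H,\Z)$, set $h := \varphi(\mathbf{1}_H) \in \cC_c(G/H,\Z)$. First I claim $h \in \cH_H(\Z)$, i.e. $h$ is also left $H$-invariant: for $k \in H$, $k\cdot \mathbf{1}_H = \mathbf{1}_H$ (left translation by $k$ fixes $\mathbf{1}_H$ because $H$ is a group), so by $G$-equivariance $k \cdot h = k\cdot \varphi(\mathbf{1}_H) = \varphi(k\cdot \mathbf{1}_H) = \varphi(\mathbf{1}_H) = h$; hence $h$ is bi-$H$-invariant. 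Now I must show $\varphi = e_h$. Since $\cC_c(G/H,\Z)$ is spanned over $\Z$ by the translates $\mathbf{1}_{gH} = (g^{-1})\cdot \mathbf{1}_H$ (here $g$ acts on the left; note $(g^{-1}\cdot \mathbf{1}_H)(x) = \mathbf{1}_H(gx) = \mathbf{1}_{g^{-1}H}(x)$, and as $g$ ranges over $G$ these give all $\mathbf{1}_{gH}$), and both $\varphi$ and $e_h$ are $\Z$-linear and $G$-equivariant, it suffices to check they agree on $\mathbf{1}_H$. But $e_h(\mathbf{1}_H) = h = \varphi(\mathbf{1}_H)$ by the computation in Step (3) and the definition of $h$. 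Hence $\varphi = e_h$, proving surjectivity.

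\textbf{Expected obstacle.} There is no deep obstacle; the subtlety worth being careful about is the bookkeeping of which side $G$ acts on and ensuring the ``opposite ring'' really is forced by associativity of convolution in the order $(f*_H h_1)*_H h_2$. One should also double-check that the span argument uses the correct $G$-action: $\cC_c(G/H,\Z)$ with its \emph{left} $G$-action is generated as a $\Z[G]$-module by the single element $\mathbf{1}_H$, which is the key finiteness/generation fact making surjectivity immediate once $\varphi(\mathbf{1}_H)$ is shown to lie in $\cH_H(\Z)$.
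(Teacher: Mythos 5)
Your proof is correct and follows essentially the same route as the paper's: both rest on the observation that a $G$-equivariant endomorphism of $\cC_c(G/H,\Z)$ is determined by its value on $\mathbf{1}_H$, that this value must be left $H$-invariant, and that $e_h(\mathbf{1}_H)=h$, with anti-multiplicativity coming from associativity of $*_H$. (Only a cosmetic slip: $\mathbf{1}_{gH}=g\cdot\mathbf{1}_H$ rather than $g^{-1}\cdot\mathbf{1}_H$, as your own computation immediately shows.)
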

\begin{proof}
We first prove that the above map is an isomorphism of $\Z$-modules. {The injectivity being clear, we prove surjectivity. Any $G$-equivariant endomorphism $e$ of $\cC_c(G/H,\Z)$ is uniquely defined by $e({\bf 1}_H)$. Since ${\bf 1}_H$ is $H$-invariant on the left, $e({\bf 1}_H)$ must also be $H$-invariant on the left. This shows that $e$ is the image of the Hecke element $e({\bf 1}_H)$}.

Finally, for any $h_1,h_2 \in \cH_H(\Z)$, we clearly have 
\begin{align*}e_{h_1 *_H h_2}({\bf 1}_H)
&=e_{h_2}\circ e_{h_1} ({\bf 1}_H)
\end{align*}
As we previously said, elements of $\End_{\Z[G]} \cC_c(G/H,\Z)$ are uniquely determined by the image they give to ${\bf 1}_H$, this shows that $e_{h_1 *_H h_2}=e_{h_2}\circ e_{h_1}$ and ends the proof of the lemma.
\end{proof}
\begin{remark}\label{heckeinvolution}
The map $$\begin{tikzcd}[row sep= small]\cH_H(\Z) \arrow{r}{\simeq} & \cH_H(\Z)\\ h \arrow[r, mapsto] &h^\vee \colon g \mapsto h(g^{-1}),\end{tikzcd}$$
is an involution and for any $h_1,h_2  \in \cH_H(\Z)$ we have $(h_1*_Hh_2)^\vee = h_2^\vee *_H h_1^\vee$.
\end{remark}
The following lemma is meant to clarify the multiplicative structure of the algebra $\cH_H(\Z)$.
\begin{lemma}\label{relativeheckerule}
For $g,g' \in G$, we have
$${\bf 1}_{HgH}*_H {\bf 1}_{Hg'H}=\sum_{g''\in C_{g,g'}} c^H(g,g',g'') {\bf 1}_{Hg''H}, \quad c^H(g,g',g''):=\big|HgH\cap g''Hg'^{-1}H\big|_H$$
where, $C_{g,g'}$ denotes a set of representatives for $H\backslash HgHg'H/H$.
\end{lemma}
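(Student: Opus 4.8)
The plan is to compute the convolution $\mathbf{1}_{HgH}*_H\mathbf{1}_{Hg'H}$ directly from the definition by decomposing the double cosets into left cosets, following the same mechanism already used in Example \ref{heckecalculationsbasis}. First I would write $HgH = \bigsqcup_{i} h_i g H$ (a finite disjoint union, since $H$ is open compact and $G$ is locally compact) and similarly $Hg'H = \bigsqcup_{j} h_j' g' H$. Then by bilinearity and Example \ref{heckecalculationsbasis} one gets $\mathbf{1}_{HgH}*_H\mathbf{1}_{Hg'H} = \sum_{i,j}\mathbf{1}_{h_i g h_j' g' H}$, an $\mathbb{Z}$-linear combination of indicator functions of left cosets, all contained in $HgHg'H$. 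Grouping the left cosets $h_i g h_j' g' H$ according to which double coset $Hg''H$ of $H\backslash HgHg'H/H$ they belong to, and noting that $\mathbf{1}_{HgH}*_H\mathbf{1}_{Hg'H}$ is bi-$H$-invariant (Remark \ref{invarianceconvolution}), the sum collapses to $\sum_{g''\in C_{g,g'}} c^H(g,g',g'')\,\mathbf{1}_{Hg''H}$ where $c^H(g,g',g'')$ is the number of pairs $(i,j)$ with $h_i g h_j' g' H \subset Hg''H$, equivalently the number of left cosets of $H$ inside $(HgH \cap g''H g'^{-1}H)$ — but since each such left coset has volume $1$ for $\mu_H$, this count is exactly $|HgH\cap g''Hg'^{-1}H|_H$.

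The one point requiring a small argument is the identification of the coefficient: I would show that the set of left cosets $h_i g h_j' g' H$ landing in $Hg''H$ is in bijection with the set of left cosets of $H$ contained in $HgH \cap g'' H g'^{-1} H$. Indeed $h_i g h_j' g' H \subset Hg''H$ means $h_i g h_j' g' \in Hg''H$, i.e.\ $h_i g h_j' \in Hg''H g'^{-1}$; since also $h_i g h_j' \in HgH$ (as $h_j' \in H$), the left coset $h_i g h_j' H$ lies in $HgH \cap Hg''Hg'^{-1}$. Conversely, a careful check using left-invariance of $\mu_H$ shows that distinct pairs $(i,j)$ giving distinct left cosets $h_igh_j'H$ match distinct left cosets of $H$ in that intersection, and every such left coset is hit exactly once; I would phrase this cleanly by evaluating $(\mathbf{1}_{HgH}*_H\mathbf{1}_{Hg'H})(g'')$ directly as $\int_G \mathbf{1}_{HgH}(b)\mathbf{1}_{Hg'H}(b^{-1}g'')\,d\mu_H(b) = \mu_H\big(\{b : b\in HgH,\ b^{-1}g''\in Hg'H\}\big) = \mu_H\big(HgH \cap g''H g'^{-1}H\big)$, which is the constant value of the coefficient function on the double coset $Hg''H$.

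The main obstacle — such as it is — is purely bookkeeping: making sure the volume normalization (everything measured with $\mu_H$, which assigns mass $1$ to $H$) is tracked consistently, and that one does not double-count left cosets when passing between the $(i,j)$-indexing and the intrinsic description of $HgH \cap g''Hg'^{-1}H$. There is no serious conceptual difficulty here; the statement is essentially the multiplication rule for Hecke algebras of the form "structure constants count overlaps of double cosets," and the proof is a direct unwinding of the convolution integral together with the finiteness of $H\backslash HgH$ for $H$ open compact. I would keep the write-up short, citing Remark \ref{invarianceconvolution} for the bi-invariance and Example \ref{heckecalculationsbasis} for the left-coset expansion, and finish with the explicit evaluation at $g''$ displayed above.
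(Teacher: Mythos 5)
Your proposal is correct and follows essentially the same route as the paper: the decisive step in both is the direct evaluation $(\mathbf{1}_{HgH}*_H\mathbf{1}_{Hg'H})(g'')=\big|HgH\cap g''Hg'^{-1}H\big|_H$, combined with bi-$H$-invariance to identify the expansion in the canonical basis. Your additional left-coset decomposition is harmless bookkeeping that also supplies (implicitly) the paper's final observation that every double coset of $HgHg'H$ occurs with coefficient at least $1$.
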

\begin{proof}First, we know that the function ${\bf 1}_{HgH}*_H {\bf 1}_{Hg'H}$ is $H$-biinvariante (Remark \ref{invarianceconvolution}), therefore, using the canonical basis of $\cH_H(\Z)$, it can be written as follows
$${\bf 1}_{HgH}*_H {\bf 1}_{Hg'H}=\sum_{g''\in C_{g,g'}} c^H(g,g',g'') {\bf 1}_{Hg''H}, \quad c^H(g,g',g'')\in \Z_{\ge 0} $$
for some finite set $C_{g,g'}$ and  integral coefficients $c^H(g,g',g'')\neq 0 $, for each $g''\in C_{g,g'}$. 

Secondly, the integral defining ${\bf 1}_{HgH}*_H {\bf 1}_{Hg'H}(a)$
is nonzero only if $a\in HgHg'H$. Therefore, if $Hg''H\subset HgHg'H$, we have $c^H(g,g',g'')={\bf 1}_{HgH}*_H {\bf 1}_{Hg'H}(g'')=\big|HgH \cap g'' Hg'^{-1}H\big|_H\in \Z_{\ge 0}$. 
For each double coset $Hg''H\subset HgHg'H$, write $g''= gh'' g'$ for some $h'' \in H$, thus
$$c^H(g,g',g'') =\big|HgH \cap gh'' g'Hg'^{-1}H\big|_H \ge \big|gH\big|_H=1.$$
This shows that $C_{g,g'}$ is indeed a set of representatives for $H\backslash HgHg'H/H$.
\end{proof}
\begin{remark}\label{morphismhecktoZ}
For any $g,g'\in G$, we have
\begin{align*}
\int_G {\bf 1}_{HgH}*_H {\bf 1}_{Hg'H}(a)d\mu_H(a)
&=\int_G \left(\int_G {\bf 1}_{Hg'H}(b^{-1}a)d\mu_H(a)\right){\bf 1}_{HgH}(b)d\mu_H(b)\\
&=\left|Hg'H \right|_H\int_G {\bf 1}_{HgH}(b)d\mu_H(b)
=\left|HgH \right|_H\left|Hg'H \right|_H.
\end{align*}
This shows that the linear functional $\begin{tikzcd}[row sep= small]
d_H\colon \cH_H(\Z) \arrow[r] & \Z,\end{tikzcd}$
defined on the canonical basis elements by ${\bf 1}_{HgH} \longmapsto\left|HgH \right|_H$, is an homomorphism of rings. Consequently,  
$${\bf 1}_{HgH}\in \cH_H(\Z)^\times \Longleftrightarrow d_H\left({\bf 1}_{HgH}\right)=1  \Longleftrightarrow g\in N_G(H).\qedhere$$ 
\end{remark}
{\begin{remark}Let $\Upomega\subset \cA_{\ext}$ be any bounded subset containing the alcove $\a$. Proposition \ref{KomegacapM} shows that the product map yields an isomorphism $U^-_{\Upomega} \times M_{1} \times U^+_{\Upomega} \iso K_{\Upomega}$.

For any $m_1,m_2 \in M^-$, applying Lemma \ref{normalizingI} shows that 
$K_{\Upomega} m_1K_{\Upomega} m_2K_{\Upomega}= K_{\Upomega} m_1m_2K_{\Upomega}$ and $K_{\Upomega}m_1K_{\Upomega} \cap m_1m_2 K_{\Upomega}m_2^{-1} K_{\Upomega}= m_1 K_{\Upomega}$. 
Accordingly, by Lemma \ref{relativeheckerule}, we have
$$
{\bf 1}_{K_{\Upomega} m_1K_{\Upomega}}*_{K_{\Upomega}}{\bf 1}_{K_{\Upomega} m_2K_{\Upomega}}={\bf 1}_{K_{\Upomega} m_1m_2K_{\Upomega}}.
$$
Therefore, we get an homomorphism of rings \begin{tikzcd}\Z[\Lambda_M^-]\arrow{r}& \cH_{K_{\Upomega}}(\Z)\end{tikzcd} with commutative image.
\end{remark}

\begin{lemma}\label{identificationsubsuphecke}Let $H'\subset H$ be two open compact subgroups of $G$ (e.g. ${H'}=I$ and ${H}=K$). 
For any ring $R$ in which $[H:H']$ is invertible, we have a natural isomorphism of algebras 
$$\begin{tikzcd}[row sep= small]
(\cH_{H}(R),  +,*_{H}) \arrow{r}{\simeq} &(e_{H} *_{H'} \cH_{H'}(R) *_{H'} e_{H} ,+,*_{H'})  , \quad h \arrow[mapsto]{r}& {|{H}|_{H'}}^{-1} h.\end{tikzcd}$$
where $e_{H} :={|{H}|_{H'}}^{-1}{\bf 1}_{H}$ is an idempotent of the relative Hecke algebra $ \cH_{H'}(R)$.
\end{lemma}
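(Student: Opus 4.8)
The plan is to unwind the two Haar normalizations and then identify the corner algebra explicitly. Write $q_0:=[H:H']=|H|_{H'}$, a positive integer which is invertible in $R$ by hypothesis; since $H'\subset H$, every $H$-biinvariant function is $H'$-biinvariant, so I regard $\cH_H(R)$ as an $R$-submodule of $\cH_{H'}(R)\subset\cC_c(G/H',R)$. First I would record that $\mu_H=q_0^{-1}\mu_{H'}$: both are left invariant Haar measures, hence proportional, and evaluating on $H$ gives $1=|H|_H=c\,|H|_{H'}=cq_0$. Consequently, on the common space $\cC_c(G,\Q)$ one has $f*_H f'=q_0^{-1}\,f*_{H'}f'$ for all $f,f'$. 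In particular $e_H=q_0^{-1}\mathbf{1}_H$ lies in $\cH_{H'}(R)$, and by Lemma~\ref{actionhecke}(i) with $g=h=1$, $X=Y=H$, we get $\mathbf{1}_H*_{H'}\mathbf{1}_H=|H|_{H'}\mathbf{1}_H=q_0\mathbf{1}_H$, whence $e_H*_{H'}e_H=e_H$, so $e_H$ is idempotent.

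Next I would prove the key identity
\[
\mathbf{1}_H*_{H'}h=h*_{H'}\mathbf{1}_H=q_0\,h\qquad\text{for all }h\in\cH_H(R).
\]
This is a one-line computation from the integral formula in Remark~\ref{invarianceconvolution}: $\mathbf{1}_H*_{H'}h\,(x)=\int_H h(g^{-1}x)\,d\mu_{H'}(g)$, and since $h$ is left $H$-invariant the integrand is constantly $h(x)$, giving $|H|_{H'}h(x)=q_0 h(x)$; the other equality is symmetric, using right $H$-invariance of $h$ and left invariance of $\mu_{H'}$. It follows that $e_H*_{H'}h*_{H'}e_H=q_0^{-2}\,\mathbf{1}_H*_{H'}h*_{H'}\mathbf{1}_H=h$ for every $h\in\cH_H(R)$. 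Since convolution of compactly supported functions is compactly supported and (again by Remark~\ref{invarianceconvolution}) $e_H*_{H'}f*_{H'}e_H$ is $H$-biinvariant for any $f\in\cH_{H'}(R)$, this shows that $e_H*_{H'}\cH_{H'}(R)*_{H'}e_H$ is exactly $\cH_H(R)$ as an $R$-module, with multiplicative unit $e_H$ (indeed $e_H*_{H'}h=h*_{H'}e_H=h$ for $h\in\cH_H(R)$, by the displayed identity divided by $q_0$); note also that $\mathbf{1}_H$ is then the unit of $(\cH_H(R),*_H)$, since $\mathbf{1}_H*_H h=q_0^{-1}\mathbf{1}_H*_{H'}h=h$.

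Finally I would check that $\phi\colon h\mapsto q_0^{-1}h$ is the asserted isomorphism. It is $R$-linear and injective because $q_0\in R^{\times}$, and surjective onto the corner algebra because $\phi(q_0 h)=h$ for every $h$ in it. For multiplicativity, using $*_H=q_0^{-1}*_{H'}$,
\[
\phi(h_1)*_{H'}\phi(h_2)=q_0^{-2}\,h_1*_{H'}h_2=q_0^{-1}\bigl(q_0^{-1}\,h_1*_{H'}h_2\bigr)=q_0^{-1}\bigl(h_1*_H h_2\bigr)=\phi(h_1*_H h_2),
\]
and $\phi(\mathbf{1}_H)=q_0^{-1}\mathbf{1}_H=e_H$ sends the unit of $(\cH_H(R),*_H)$ to the unit of the corner algebra, so $\phi$ is a unital $R$-algebra isomorphism; naturality in $R$ is immediate as every step is given by the same integral formulas over $\Z[q_0^{-1}]$. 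There is no conceptual obstacle here; the only point requiring care is the bookkeeping of the three normalization constants — $\mu_H$ versus $\mu_{H'}$, the factor $q_0^{-1}$ defining $e_H$, and the factor $q_0^{-1}$ defining $\phi$ — which I would keep strictly separate throughout.
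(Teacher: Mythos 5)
Your proof is correct, and it fills in exactly the computation the paper leaves implicit (the paper's proof is simply ``This is clear''): the rescaling $\mu_H=[H:H']^{-1}\mu_{H'}$, the identity $\mathbf{1}_H*_{H'}h=h*_{H'}\mathbf{1}_H=[H:H']\,h$ for $H$-biinvariant $h$, and the resulting identification of the corner algebra. The bookkeeping of the three normalization constants is handled correctly throughout, so there is nothing to add.
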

\proof This is clear. \qed
\subsection{"Averaging" maps are homomorphisms}
\begin{lemma}\label{flatKcenter}
Let $\cF\subset \cA$ be any facet. 
For any $w \in N$, we have
$${\bf 1}_{ {K}_\cF^{\flat} w  {K}_\cF^{\flat}}=\mathbf{1}_{K_\cF wK_\cF} *_{K_\cF} \mathbf{1}_{{K}_\cF^{\flat}}=   {\bf 1}_{{K}_\cF^{\flat}} *_{K_\cF} {\bf 1}_{K_\cF wK_\cF} = \mathbf{1}_{K_\cF w{K}_\cF^{\flat}}= \sum_{m \in M^{\flat}/M_1}{\bf 1}_{K_\cF wmK_\cF}.$$
In particular, $\mathbf{1}_{{K}_\cF^{\flat}} \in Z(\cH_{K_{\cF}}(\Z))$.
\end{lemma}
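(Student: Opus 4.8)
\textbf{Proof plan for Lemma \ref{flatKcenter}.}

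The plan is to establish the chain of equalities in stages, starting from the elementary observation that $M^{\flat}$ is normal in $N$ (Lemma \ref{Wtrivialontors}), so in particular $M^{\flat}$ normalizes $K_\cF$ and we have $K_\cF^{\flat}=K_\cF M^{\flat}=M^{\flat} K_\cF$; combined with $M^{\flat}\cap K_\cF = M^{\flat}\cap M = \dots$, more precisely $K_\cF \cap M = M_1$ (Corollary \ref{uniqueparahoricM}), this gives a disjoint decomposition $K_\cF^{\flat} = \bigsqcup_{m\in M^{\flat}/M_1} K_\cF m$. First I would prove the rightmost identity $\mathbf{1}_{K_\cF w K_\cF^{\flat}} = \sum_{m\in M^{\flat}/M_1}\mathbf{1}_{K_\cF wmK_\cF}$: using that $K_\cF^{\flat} = \bigsqcup_{m} m K_\cF$ one writes $K_\cF w K_\cF^{\flat} = \bigcup_{m} K_\cF w m K_\cF$, and the union is disjoint because two cosets $K_\cF wm K_\cF$ and $K_\cF wm'K_\cF$ coincide only if $w^{-1}K_\cF w \ni$ something conjugating $m$ to $m'$ modulo $K_\cF$, which forces $m\equiv m' \bmod M_1$ by the Kottwitz map argument already used in Lemma \ref{sizetildeI} (both $m,m'$ lie in $M^1$ and $\kappa_M(m)=\kappa_M(m')$ since everything in sight lies in $G_1$, so $mm'^{-1}\in M^1\cap G_1 = M_1$ by Lemma \ref{NcapIM2}). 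Symmetrically one gets $\mathbf{1}_{K_\cF^{\flat} w K_\cF} = \sum_{m} \mathbf{1}_{K_\cF m w K_\cF}$, and since $M^{\flat}$ is normal in $N$ the two sums agree and both equal $\mathbf{1}_{K_\cF^{\flat} w K_\cF^{\flat}}$ (using $K_\cF^{\flat}wK_\cF^{\flat} = K_\cF w K_\cF^{\flat}$, again from normality, since $M^{\flat}wK_\cF = wM^{\flat}K_\cF = wK_\cF^{\flat} \subset K_\cF w K_\cF^{\flat}$ after absorbing $w^{-1}M^{\flat}w \subset M^{\flat}$).

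Next I would identify these with the convolution products. By Remark \ref{invarianceconvolution} the function $\mathbf{1}_{K_\cF wK_\cF}*_{K_\cF}\mathbf{1}_{K_\cF^{\flat}}$ is left $K_\cF$-invariant and right $K_\cF^{\flat}$-invariant, hence a $\Z_{\ge 0}$-combination of indicators $\mathbf{1}_{K_\cF x K_\cF^{\flat}}$; evaluating at a point $a$ and using the normalization $\int \mathbf{1}_{K_\cF}\,d\mu_{K_\cF}=1$ shows the only nonzero contribution is at $a\in K_\cF w K_\cF^{\flat}$ with coefficient $|K_\cF \cap K_\cF^{\flat} w^{-1}a \cdot(\text{stuff})|_{K_\cF}$; a direct computation exactly parallel to Lemma \ref{actionhecke}(i) gives this coefficient is $1$ because $\mathbf{1}_{K_\cF^{\flat}}$ restricted to the relevant coset is a single $K_\cF$-coset worth of mass. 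Concretely, $\mathbf{1}_{K_\cF wK_\cF}*_{K_\cF}\mathbf{1}_{K_\cF^{\flat}}(w) = \int_G \mathbf{1}_{K_\cF wK_\cF}(b)\mathbf{1}_{K_\cF^{\flat}}(b^{-1}w)\,d\mu_{K_\cF}(b)$, and the integrand is supported on $b\in K_\cF wK_\cF \cap w K_\cF^{\flat}$; writing $b = w k$ with $k\in K_\cF$ and noting $wk \in K_\cF wK_\cF$ automatically, the condition becomes $k\in w^{-1}K_\cF w \cap K_\cF^{\flat}$, whose $\mu_{K_\cF}$-volume I can pin down to $1$. The identity $\mathbf{1}_{K_\cF^{\flat}}*_{K_\cF}\mathbf{1}_{K_\cF wK_\cF} = \mathbf{1}_{K_\cF^{\flat}wK_\cF}$ follows symmetrically (or by applying the involution of Remark \ref{heckeinvolution}).

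Finally, centrality: to show $\mathbf{1}_{K_\cF^{\flat}}\in Z(\cH_{K_\cF}(\Z))$ it suffices to check $\mathbf{1}_{K_\cF^{\flat}}*_{K_\cF}\mathbf{1}_{K_\cF wK_\cF} = \mathbf{1}_{K_\cF wK_\cF}*_{K_\cF}\mathbf{1}_{K_\cF^{\flat}}$ for all $w$ (the $\mathbf{1}_{K_\cF wK_\cF}$ form a $\Z$-basis by the example after Proposition \ref{bruhatdecomposition}), and this is precisely the equality of the two convolution expressions with the common value $\mathbf{1}_{K_\cF^{\flat}wK_\cF^{\flat}}$ that the previous steps established (using once more $K_\cF^{\flat}wK_\cF = K_\cF wK_\cF^{\flat} = K_\cF^{\flat}wK_\cF^{\flat}$ via normality of $M^{\flat}$ in $N$). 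I expect the only genuinely delicate point to be the volume/multiplicity bookkeeping showing that all the structure constants collapse to $1$ — i.e. that $w^{-1}K_\cF w \cap K_\cF^{\flat}$ and its variants have $\mu_{K_\cF}$-volume exactly $1$ and that the various unions are disjoint — which is where the Kottwitz-map argument of Lemma \ref{sizetildeI} (that $M^{\flat}\cap w^{-1}K_\cF w \subset M_1$ modulo nothing extra) does the real work; everything else is formal manipulation with the decompositions $K_\cF^{\flat} = M^{\flat}K_\cF$ and the invariance properties of convolution in Remark \ref{invarianceconvolution}.
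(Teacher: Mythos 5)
Your plan is correct and follows essentially the same route as the paper's proof: the convolution computation in the style of Lemma \ref{actionhecke}, the collapse of the structure constant to $1$ via $K_\cF^{\flat}\cap G_1=K_\cF$ (equivalently $M^{\flat}\cap G_1=M_1$, Lemma \ref{NcapIM2}), and the normality of $M^{\flat}$ in $N$ (Lemma \ref{Wtrivialontors}) to pass between $K_\cF wK_\cF^{\flat}$, $K_\cF^{\flat}wK_\cF$ and $K_\cF^{\flat}wK_\cF^{\flat}$. One small correction to your bookkeeping: the set controlling the value at $w$ is $K_\cF wK_\cF\cap wK_\cF^{\flat}$, not $w\left(w^{-1}K_\cF w\cap K_\cF^{\flat}\right)$ --- the latter equals $w\left(w^{-1}K_\cF w\cap K_\cF\right)$ and has $\mu_{K_\cF}$-volume $q_{\cF,w^{-1}}^{-1}<1$ in general --- but the same $G_1$-argument shows $wmK_\cF\subset K_\cF wK_\cF$ forces $m\in M_1$, so the former intersection is exactly $wK_\cF$ and the coefficient is indeed $1$.
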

\begin{proof}
Using Lemma \ref{actionhecke}, one shows that for any $w \in N$
$$|{K}_\cF \cap w {K}_\cF^{\flat} w^{-1} |_{{K}_\cF}\mathbf{1}_{{K}_\cF w{K}_\cF^{\flat}}= |{K}_\cF\cap w{K}_\cF w^{-1}|_{{K}_\cF}\mathbf{1}_{{K}_\cF w{K}_\cF} *_{{K}_\cF} \mathbf{1}_{{K}_\cF^{\flat}} $$
Note that ${K}_\cF \cap w {K}_\cF^{\flat} w^{-1} \subset G_1$, hence it equals ${K}_\cF \cap w ({K}_\cF^{\flat}\cap G_1) w^{-1}= {K}_\cF \cap w {K}_\cF w^{-1}$. So $\mathbf{1}_{{K}_\cF w{K}_\cF} *_{{K}_\cF} \mathbf{1}_{{K}_\cF^{\flat}}= \mathbf{1}_{{K}_\cF w{K}_\cF^{\flat}} .$ 
By Lemma \ref{Wtrivialontors}, we have ${K}_\cF w {K}_\cF^{\flat} = {K}_\cF^{\flat} w{K}_\cF^{\flat}$, hence
\begin{align*} {\bf 1}_{{K}_\cF w {K}_\cF^{\flat}} 
&={\bf 1}_{ {K}_\cF^{\flat} w  {K}_\cF^{\flat}}=  {\bf 1}_{{K}_\cF^{\flat}} *_{{K}_\cF}{\bf 1}_{{K}_\cF w{K}_\cF}.
\end{align*}
Finally, since $M^{\flat}$ normalizes $K_\cF$ and ${K}_\cF^{\flat}= \sqcup_{m\in M^{\flat}/M_1} m{K}_\cF$, one has 
$\sum_{m \in M^{\flat}/M_1}{\bf 1}_{{K}_\cF wm{K}_\cF}={\bf 1}_{{K}_\cF w {K}_\cF^{\flat}}$.
\end{proof}
\begin{corollary}\label{ontoaveraging}
The "averaging" map $\iota^{\flat}\colon\cH_{K_\cF}(\Z)\to \cH_{{K}_\cF^{\flat}}(\Z)$ defined on basis elements by
$${\bf 1}_{K_\cF wK_\cF}\mapsto \mathbf{1}_{{K}_\cF^{\flat} w {K}_\cF^{\flat}}$$
is a surjective homomorphism of algebras. 
\end{corollary}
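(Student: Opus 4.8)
The plan is to realize $\iota^{\flat}$ as the map $h \mapsto {\bf 1}_{K_\cF^{\flat}} *_{K_\cF} h$ and then read off all three required properties from Lemma \ref{flatKcenter}. First I would note that $\iota^{\flat}$, defined on the canonical basis $\{{\bf 1}_{K_\cF w K_\cF}\}$ and extended $\Z$-linearly, agrees with $h \mapsto {\bf 1}_{K_\cF^{\flat}} *_{K_\cF} h$: both are $\Z$-linear maps $\cH_{K_\cF}(\Z) \to \cH_{K_\cF^{\flat}}(\Z)$ (the target for the second follows from Remark \ref{invarianceconvolution} together with the identity in Lemma \ref{flatKcenter}), and they coincide on basis elements by Lemma \ref{flatKcenter}. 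Surjectivity is then immediate: by Proposition \ref{bruhatdecomposition} the elements ${\bf 1}_{K_\cF^{\flat} w K_\cF^{\flat}}$, with $w$ running over representatives in $N$ of $W_\cF^{\flat}\backslash\widetilde{W}^{\flat}/W_\cF^{\flat} \cong K_\cF^{\flat}\backslash G/K_\cF^{\flat}$, form a $\Z$-basis of $\cH_{K_\cF^{\flat}}(\Z)$, and each is $\iota^{\flat}({\bf 1}_{K_\cF w K_\cF})$ by Lemma \ref{flatKcenter}; and $\iota^{\flat}$ is unital, since $\iota^{\flat}({\bf 1}_{K_\cF}) = {\bf 1}_{K_\cF^{\flat}}$ is the unit of $\cH_{K_\cF^{\flat}}(\Z)$.

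For multiplicativity, write $e := {\bf 1}_{K_\cF^{\flat}}$, which by Lemma \ref{flatKcenter} lies in $Z(\cH_{K_\cF}(\Z))$, and set $q_{\flat} := [K_\cF^{\flat}:K_\cF] = |M^{\flat}/M_1|$ (note $M \cap K_\cF = M_1$ by Corollary \ref{uniqueparahoricM}). A direct volume computation, or Lemma \ref{actionhecke}(i) with $g=h=1$ and $X=Y=K_\cF^{\flat}$, gives $e *_{K_\cF} e = q_{\flat}\, e$. Hence, for $h_1,h_2 \in \cH_{K_\cF}(\Z)$, using centrality of $e$ and associativity,
\[
\iota^{\flat}(h_1) *_{K_\cF} \iota^{\flat}(h_2) = (e *_{K_\cF} h_1) *_{K_\cF} (e *_{K_\cF} h_2) = (e *_{K_\cF} e) *_{K_\cF} (h_1 *_{K_\cF} h_2) = q_{\flat}\,\iota^{\flat}(h_1 *_{K_\cF} h_2),
\]
an identity in $\cH_{K_\cF^{\flat}}(\Z)$. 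On the other hand the two convolutions are comparable on $\cH_{K_\cF^{\flat}}$: a left-invariant measure on $G$ is unique up to a positive scalar, and $\mu_{K_\cF}(K_\cF^{\flat}) = q_{\flat} = q_{\flat}\,\mu_{K_\cF^{\flat}}(K_\cF^{\flat})$, so $\mu_{K_\cF} = q_{\flat}\,\mu_{K_\cF^{\flat}}$, whence $f *_{K_\cF} f' = q_{\flat}\,(f *_{K_\cF^{\flat}} f')$ for all $f,f' \in \cH_{K_\cF^{\flat}}(\Z)$. Combining the two displays yields $q_{\flat}\,\big(\iota^{\flat}(h_1) *_{K_\cF^{\flat}} \iota^{\flat}(h_2)\big) = q_{\flat}\,\iota^{\flat}(h_1 *_{K_\cF} h_2)$; since $\cH_{K_\cF^{\flat}}(\Z)$ is a free $\Z$-module one may cancel the nonzero integer $q_{\flat}$, giving $\iota^{\flat}(h_1) *_{K_\cF^{\flat}} \iota^{\flat}(h_2) = \iota^{\flat}(h_1 *_{K_\cF} h_2)$, as desired.

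There is no genuine obstacle; the only point needing care is the bookkeeping of the two Haar normalizations $\mu_{K_\cF}$ and $\mu_{K_\cF^{\flat}}$ and checking that the resulting powers of $q_{\flat}$ cancel, which is exactly what upgrades $\iota^{\flat}$ from a ring homomorphism "up to a scalar" to an honest one. As an alternative packaging, one can run the argument through Lemma \ref{identificationsubsuphecke} over $\Q$: the element $q_{\flat}^{-1}{\bf 1}_{K_\cF^{\flat}}$ is a central idempotent of $\cH_{K_\cF}(\Q)$, multiplication by it is a ring homomorphism onto the associated corner, and Lemma \ref{identificationsubsuphecke} identifies that corner with $\cH_{K_\cF^{\flat}}(\Q)$ compatibly with $\iota^{\flat}$; restricting to integral coefficients then concludes.
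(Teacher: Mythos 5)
Your proof is correct and follows essentially the same route as the paper: realize $\iota^{\flat}$ as convolution with ${\bf 1}_{K_\cF^{\flat}}$, invoke Lemma \ref{flatKcenter} for centrality and for the identification on basis elements, and track the normalization factor $|M^{\flat}/M_1|=[K_\cF^{\flat}:K_\cF]$ between $*_{K_\cF}$ and $*_{K_\cF^{\flat}}$. Your version merely makes the Haar-measure bookkeeping and the final cancellation of $q_{\flat}$ more explicit than the paper's inline computation.
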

\begin{proof}
Clearly it is a surjective morphism of $\Z$-modules. 
It suffices then to verify that $\iota$ respects the algebra structure. 
For any two elements $w,w'\in N$, Lemma \ref{flatKcenter} shows
\begin{align*}\iota({\bf 1}_{{K}_\cF w{K}_\cF } *_{{K}_\cF} {\bf 1}_{{K}_\cF w'{K}_\cF })&=  {\bf 1}_{{K}_\cF w{K}_\cF} *_{{K}_\cF} {\bf 1}_{{K}_\cF w'{K}_\cF} *_{{K}_\cF} {\bf 1}_{{K}_\cF^{\flat}} \\
&={|M^{\flat}/M_1|}^{-1}{\bf 1}_{{K}_\cF w{K}_\cF} *_{{K}_\cF} {\bf 1}_{{K}_\cF^{\flat}}*_{{K}_\cF} {\bf 1}_{{K}_\cF w'{K}_\cF}*_{{K}_\cF} {\bf 1}_{{K}_\cF^{\flat}} \\
&= {\bf 1}_{{K}_\cF^{\flat} w{K}_\cF^{\flat}}*_{{K}_\cF^{\flat}} {\bf 1}_{{K}_\cF^{\flat} w'{K}_\cF^{\flat}} \qedhere
\end{align*}
\end{proof}
\subsection{Iwahori--Matsumoto presentation and other properties}\label{HeckeIwahoriAlg}
Recall that $\cS \subset N_1/M_1$ corresponds to the set of orthogonal reflections with respect to the walls of the fixed alcove $\mathfrak{a}$ (Using the identification $N_1/M_1 \simeq W_{\aff}$). 
\begin{theorem}[The Iwahori--Matsumoto Presentation]\label{IMpresentation}
The Iwahori--Hecke ring $\cH_{I^{\flat}}(\Z)$ is the free $\Z$-module with basis $(i_w^\flat)_{w\in \widetilde{W}^{\flat}}$ endowed with the unique ring structure satisfying
\begin{itemize}[topsep=0pt]
\item\label{braidrel}
The braid relations: 
$i_w^{\flat} *_{I^{\flat}} i_{w'}^{\flat} = i_{ww'}^{\flat} \text{ if }w, w'\in \widetilde{W}^{\flat}\text{ such that }\ell(w) + \ell(w') = \ell(ww')$.
\item The quadratic relations: 
$(i_s^{\flat})^2 = q_s  i_1^{\flat}+ (q_s - 1)i_s^{\flat}\text{ if }s \in \cS^{\flat}$.
\end{itemize}
\end{theorem}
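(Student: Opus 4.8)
The plan is to prove the Iwahori--Matsumoto presentation using the general machinery of $BN$-pairs, mirroring the classical argument but working directly over $\Z$. First I would recall that $(G_1^{\flat},I^{\flat},N_1^{\flat},\cS^{\flat})$ is a double Tits system with affine Weyl group $W_{\aff}^{\flat}$ (Corollary~\ref{doubtitflat}), and that $\widetilde{W}^{\flat} = W_{\aff}^{\flat}\rtimes\widetilde{\Omega}^{\flat}$ with $G = \bigsqcup_{w\in\widetilde{W}^{\flat}} I^{\flat}wI^{\flat}$ (Corollary~\ref{cortitssystflat} and Proposition~\ref{bruhatdecomposition}). This already shows that $\{i_w^{\flat}\}_{w\in\widetilde{W}^{\flat}}$ is a $\Z$-basis of $\cH_{I^{\flat}}(\Z)$ as a $\Z$-module (the last ``Example'' before the theorem records exactly this). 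So the content is entirely in establishing the multiplicative relations and the fact that they determine the ring structure uniquely.

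Next I would establish the quadratic relation. For $s\in\cS^{\flat}$, by Lemma~\ref{bnrelation} we have $I^{\flat}sI^{\flat}\cdot I^{\flat}sI^{\flat} = I^{\flat}I^{\flat}\sqcup I^{\flat}sI^{\flat} = I^{\flat}\sqcup I^{\flat}sI^{\flat}$ as sets, so $(i_s^{\flat})^2$ is supported on $\{1,s\}$ and we may write $(i_s^{\flat})^2 = a\, i_1^{\flat} + b\, i_s^{\flat}$ with $a,b\in\Z_{\ge 0}$. To pin down $a$ and $b$: evaluating at $1$ gives $a = (i_s^{\flat}\ast_{I^{\flat}} i_s^{\flat})(1) = |I^{\flat}sI^{\flat}\cap I^{\flat}s^{-1}I^{\flat}|_{I^{\flat}} = |I^{\flat}sI^{\flat}|_{I^{\flat}} = [I^{\flat}sI^{\flat}:I^{\flat}] = q_s$, using $s = s^{-1}$ and Lemma~\ref{InIpowerq}/Lemma~\ref{sizetildeI} (so $q_s = [IsI:I]$, matching the statement). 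Then applying the ring homomorphism $d_{I^{\flat}}\colon\cH_{I^{\flat}}(\Z)\to\Z$ of Remark~\ref{morphismhecktoZ}, which sends $i_w^{\flat}\mapsto q_w$, we get $q_s^2 = a + b\,q_s = q_s + b\,q_s$, hence $b = q_s - 1$. This gives the quadratic relation $(i_s^{\flat})^2 = q_s i_1^{\flat} + (q_s-1)i_s^{\flat}$.

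For the braid relations, the key input is Remark~\ref{Iwahoriaditivity}: if $\ell(ww') = \ell(w) + \ell(w')$ then $I^{\flat}wI^{\flat}\cdot w'I^{\flat} = I^{\flat}ww'I^{\flat}$, i.e.\ the product set is a single double coset. It then follows from Lemma~\ref{relativeheckerule} (or directly from Example~\ref{heckecalculationsbasis}) that $i_w^{\flat}\ast_{I^{\flat}} i_{w'}^{\flat} = c\, i_{ww'}^{\flat}$ for a single coefficient $c\in\Z_{>0}$, and applying $d_{I^{\flat}}$ gives $q_w q_{w'} = c\, q_{ww'}$; since $q_{ww'} = q_w q_{w'}$ when $\ell(ww')=\ell(w)+\ell(w')$ (Lemma~\ref{InIpowerq}), we get $c = 1$, so $i_w^{\flat}\ast_{I^{\flat}} i_{w'}^{\flat} = i_{ww'}^{\flat}$. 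Finally, uniqueness: any associative unital ring structure on the free module $\bigoplus_w \Z i_w^{\flat}$ satisfying the braid and quadratic relations is determined, because every $i_w^{\flat}$ is a product of the $i_s^{\flat}$ ($s\in\cS^{\flat}$) and an element $i_u^{\flat}$ with $u\in\widetilde{\Omega}^{\flat}$ of length zero (write a reduced expression $w = s_1\cdots s_r u$ and use the braid relations), and the $i_s^{\flat}$ are subject to the braid relations among themselves (Coxeter braid moves, valid in any length-additive situation) plus the quadratic relations, while the $i_u^{\flat}$ for $u\in\widetilde{\Omega}^{\flat}$ multiply by the group law of $\widetilde{\Omega}^{\flat}$ and conjugate the $i_s^{\flat}$ according to the action of $\widetilde{\Omega}^{\flat}$ on $\cS^{\flat}$ --- all of which is forced. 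I expect the main obstacle to be the bookkeeping in the uniqueness argument: one must check that the braid and quadratic relations, together with the structure of $\widetilde{W}^{\flat}$ as a quasi-Coxeter group, genuinely suffice to compute every product $i_w^{\flat}\ast_{I^{\flat}} i_{w'}^{\flat}$ unambiguously, which amounts to an induction on $\ell(w')$ peeling off one simple reflection at a time and invoking Lemma~\ref{bnrelation} at each step, with care taken over the zero-length factors in $\widetilde{\Omega}^{\flat}$ that normalize $I^{\flat}$.
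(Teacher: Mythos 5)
Your proposal is correct and follows essentially the same route as the paper: both rest on Lemma~\ref{bnrelation}/Remark~\ref{Iwahoriaditivity} to identify the support of the products and on the ring homomorphism $d_{I^{\flat}}$ of Remark~\ref{morphismhecktoZ} together with the multiplicativity of $q_w$ to pin the coefficients down to $1$. The only (cosmetic) divergence is that you extract the coefficient $q_s-1$ in the quadratic relation from the identity $q_s^2=q_s+b\,q_s$ via $d_{I^{\flat}}$, whereas the paper counts $|I^{\flat}sI^{\flat}\cap sI^{\flat}sI^{\flat}|_{I^{\flat}}=|sI^{\flat}sI^{\flat}|_{I^{\flat}}-1$ directly; your closing uniqueness discussion is a sensible addition that the paper leaves implicit.
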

This is a very known result, at least for Iwahori--Hecke attached to extended affine Weyl groups. See for example \cite[\S 6.3]{garrett:buildings}, \cite{Geck-Pfeiffer},  \cite{curtis-reiner},\cite{Carter93}.
For the general case, it was showed in \cite[Theorem 2.1]{vigneras_2016} for $\Z$-coefficients or \cite[Proposition 4.1.1]{R15} for $\C$-coefficients. 
However, since we have enough ingredients, we prove here using a slightly different argument:}
\begin{proof}
{
 \begin{enumerate}
\item The Braid relation : Let $w, w'\in \widetilde{W}^{\flat}$ such that $\ell(w) + \ell(w') = \ell(ww')$, then by Remark \ref{Iwahoriaditivity} together with Remark \ref{relativeheckerule} we obtain that 
$i_w^{\flat} *_{I^{\flat}}i_{w'}^{\flat} = c^{I^{\flat}}(w,w',ww') i_{ww'}^{\flat}$ where $c^{I^{\flat}}(w,w',ww')=|I^{\flat}wI^{\flat} \cap ww' I^{\flat} w'^{-1}I|_{{I^{\flat}}}$. 
Combining Remark \ref{morphismhecktoZ} and Remark \ref{Iwahoriaditivity} again shows that $c^{I^{\flat}}(w,w',ww')$ must be $1$.
\item For the quadratic relations one has by  Lemma \ref{bnrelation} $I^{\flat}sI^{\flat}sI^{\flat}= I^{\flat} \cup I^{\flat}sI^{\flat}$ for any $s \in \mathcal{S}$. 
Thus, $(i_s^{\flat})^2 = c^{I^{\flat}}(s,s,1)  i_1^{\flat}+ c^{I^{\flat}}(s,s,s) i_s^{\flat}$ where $c^{I^{\flat}}(s,s,1)=|I^{\flat}sI^{\flat}|_{I^{\flat}}=q_s$ and $c^{I^{\flat}}(s,s,s)=|I^{\flat}sI^{\flat}\cap sI^{\flat}sI^{\flat}|_{I^{\flat}}$ by Remark \ref{relativeheckerule}.

Clearly $ sI^{\flat}sI^{\flat} \subset I^{\flat}sI^{\flat}sI^{\flat} = I^{\flat} \sqcup I^{\flat} s I^{\flat}$, thus $sI^{\flat}sI^{\flat} \setminus I^{\flat} \subset I^{\flat} s I^{\flat}$ but since $I^{\flat} \cap  I^{\flat}sI^{\flat} = \emptyset$ we must have 
$I^{\flat}sI^{\flat}\cap sI^{\flat}sI^{\flat} = sI^{\flat}sI^{\flat} \setminus I^{\flat}$, hence $|I^{\flat}sI^{\flat}\cap sI^{\flat}sI^{\flat}|_{I^{\flat}}= |sI^{\flat}sI^{\flat}|_{I^{\flat}}-1=|I^{\flat}sI^{\flat}|_{I^{\flat}}-1=q_s-1$.\qedhere
\end{enumerate}
}
\end{proof}
{{
\begin{lemma}\label{equiviIellq}
For any $w,w'\in \widetilde{W}^{\flat}$, we have 
$$i_w^{\flat} *_{I^{\flat}} i_{w'}^{\flat}=i_{ww'}^{\flat} \Leftrightarrow I^{\flat} ww'I^{\flat}  = I^{\flat} wI^{\flat} w'I^{\flat} \Leftrightarrow \ell(w) + \ell(w') = \ell(ww')\Leftrightarrow q_wq_{w'}=q_{ww'} .$$
\end{lemma}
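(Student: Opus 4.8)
The plan is to prove the chain of equivalences by going around a cycle. The cleanest route is: $\ell(w)+\ell(w')=\ell(ww') \Rightarrow I^{\flat}ww'I^{\flat}=I^{\flat}wI^{\flat}w'I^{\flat} \Rightarrow i_w^{\flat}*_{I^{\flat}}i_{w'}^{\flat}=i_{ww'}^{\flat} \Rightarrow q_wq_{w'}=q_{ww'} \Rightarrow \ell(w)+\ell(w')=\ell(ww')$. The first implication is exactly Remark \ref{Iwahoriaditivity} (itself a consequence of Lemma \ref{bnrelation}), so nothing new is needed there. The second implication follows from Lemma \ref{relativeheckerule} together with the computation already carried out in the proof of Theorem \ref{IMpresentation}: when $I^{\flat}ww'I^{\flat}=I^{\flat}wI^{\flat}w'I^{\flat}$, the only double coset appearing in the support of $i_w^{\flat}*_{I^{\flat}}i_{w'}^{\flat}$ is $I^{\flat}ww'I^{\flat}$, and the structure constant $c^{I^{\flat}}(w,w',ww')=|I^{\flat}wI^{\flat}\cap ww'I^{\flat}w'^{-1}I^{\flat}|_{I^{\flat}}$ equals $1$ because the multiplicativity of $d_{I^{\flat}}$ (Remark \ref{morphismhecktoZ}) forces $q_w q_{w'} = d_{I^{\flat}}(i_w^{\flat}*_{I^{\flat}}i_{w'}^{\flat}) = c^{I^{\flat}}(w,w',ww')\, q_{ww'}$, while on the other hand from Remark \ref{Iwahoriaditivity} $q_{ww'}=|I^{\flat}ww'I^{\flat}|_{I^{\flat}}=|I^{\flat}wI^{\flat}w'I^{\flat}|_{I^{\flat}}=q_wq_{w'}$ (the last equality using that the product map $I^{\flat}wI^{\flat}\times_{I^{\flat}}I^{\flat}w'I^{\flat}\to I^{\flat}ww'I^{\flat}$ is a bijection when the cosets multiply without collapsing, as in Lemma \ref{InIpowerq}).

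For the third implication, simply apply $d_{I^{\flat}}$ to $i_w^{\flat}*_{I^{\flat}}i_{w'}^{\flat}=i_{ww'}^{\flat}$ and use that $d_{I^{\flat}}$ is a ring homomorphism with $d_{I^{\flat}}(i_x^{\flat})=q_x$: this gives $q_wq_{w'}=q_{ww'}$ immediately. For the last implication, $q_wq_{w'}=q_{ww'}\Rightarrow \ell(w)+\ell(w')=\ell(ww')$, I would argue by induction on $\ell(w)$ using Lemma \ref{InIpowerq}, which says $q_x=\prod q_{s_i}$ over a reduced word and each $q_{s_i}>1$. Write $w=s_1\cdots s_r$ reduced. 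If some $s$ with $\ell(sw)=\ell(w)-1$ also satisfies $\ell(sww')<\ell(ww')$, then by Lemma \ref{bnrelation} one gets a collapse forcing $q_{ww'}<q_wq_{w'}$, contradiction; tracking this carefully via the subword characterization of the Bruhat order (the Deodhar remark quoted before Lemma \ref{bnrelation}) one sees that $q_wq_{w'}=q_{ww'}$ can only hold when no cancellation occurs between a reduced word for $w$ and one for $w'$, i.e. exactly when $\ell(w)+\ell(w')=\ell(ww')$.

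The main obstacle I anticipate is the last implication: deducing $\ell$-additivity purely from the multiplicativity of $q$-powers. The subtlety is that $q_s$ need not be a prime power independent of $s$, so one cannot just compare $q$-adic valuations naively across walls of different types; one genuinely has to use the structure of the Coxeter system and Lemma \ref{bnrelation} to see that any cancellation strictly drops the $q$-power. One clean way around this: note that Lemma \ref{bnrelation} gives, for any $s\in\cS^{\flat}$ with $\ell(sw')>\ell(w')$, that $I^{\flat}sI^{\flat}w'I^{\flat}=I^{\flat}sw'I^{\flat}$ so $q_{sw'}=q_sq_{w'}$, whereas if $\ell(sw')<\ell(w')$ then $I^{\flat}sI^{\flat}w'I^{\flat}=I^{\flat}w'I^{\flat}\sqcup I^{\flat}sw'I^{\flat}$ so $q_{w'}=q_sq_{w'}-q_{sw'}\cdot(\text{something})$ — more precisely from the quadratic relation one extracts $q_{sw'}<q_sq_{w'}$. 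Iterating a reduced word for $w$ from the left against $w'$, each letter either multiplies the $q$-power by $q_{s_i}$ (additive case) or produces a strictly smaller $q$-power (non-additive case). Hence $q_{ww'}\le q_wq_{w'}$ always, with equality iff every step is additive, iff $\ell(w)+\ell(w')=\ell(ww')$. This reduces the whole statement to bookkeeping already present in Lemma \ref{bnrelation} and Lemma \ref{InIpowerq}.
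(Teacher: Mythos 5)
Your proposal is correct, and for most of the chain it coincides with the paper's own argument: the equivalence between the double-coset identity $I^{\flat}ww'I^{\flat}=I^{\flat}wI^{\flat}w'I^{\flat}$ and length-additivity is Remark \ref{Iwahoriaditivity}, the passage from there to $i_w^{\flat}*_{I^{\flat}}i_{w'}^{\flat}=i_{ww'}^{\flat}$ is the braid-relation computation from Theorem \ref{IMpresentation}, and the implication to $q_wq_{w'}=q_{ww'}$ is the application of the ring homomorphism $d_{I^{\flat}}$ of Remark \ref{morphismhecktoZ}. Where you genuinely diverge is the hard implication out of $q_wq_{w'}=q_{ww'}$. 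The paper applies $d_{I^{\flat}}$ to the full expansion $i_w^{\flat}*_{I^{\flat}}i_{w'}^{\flat}=\sum_{w''\in C_{w,w'}}c^{I^{\flat}}(w,w',w'')\,i_{w''}^{\flat}$ of Lemma \ref{relativeheckerule}: since $c^{I^{\flat}}(w,w',ww')\ge 1$ and every extra double coset would contribute a strictly positive term to $\sum_{w''}c^{I^{\flat}}(w,w',w'')q_{w''}=q_wq_{w'}$, positivity forces $C_{w,w'}=\{ww'\}$ in two lines. You instead run a reduced-word induction: prepending a letter $s$ of $w$ multiplies the $q$-power by $q_{s}$ when the length increases and replaces it by $q_x/q_{s}<q_{s}q_x$ when it decreases, so $q_{ww'}\le q_wq_{w'}$ always, with equality exactly in the length-additive case. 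Both routes are sound; the paper's is shorter and stays inside the Hecke algebra, while yours is purely Coxeter-combinatorial, needs only Lemma \ref{bnrelation} and Lemma \ref{InIpowerq}, and yields the general inequality $q_{ww'}\le q_wq_{w'}$ as a byproduct. Your caution about $q_s$ varying from wall to wall is well placed, and your fix (the strict drop at each non-additive step, coming from $q_{sx}=q_x/q_s$) is exactly what is needed; just state it that way rather than via the quadratic relation. One small circularity to smooth out: in your second implication you assert $|I^{\flat}wI^{\flat}w'I^{\flat}|_{I^{\flat}}=q_wq_{w'}$, which is itself a consequence of length-additivity via Lemma \ref{InIpowerq}; this is harmless, since Remark \ref{Iwahoriaditivity} already converts the coset identity into length-additivity, after which the coefficient $c^{I^{\flat}}(w,w',ww')=1$ follows as in Theorem \ref{IMpresentation}.
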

\begin{proof}
The first two equivalences can be easily deduced using Remark \ref{Iwahoriaditivity} and Lemma \ref{relativeheckerule}. Let us prove the last one. 
If $i_w^{\flat}*_{I^{\flat}} i_{w'}^{\flat} = i_{ww'}^{\flat}$, Remark \ref{morphismhecktoZ} implies that $ q_wq_{w'}=q_{ww'}$. 
Now, assume that $ q_wq_{w'}=q_{ww'}$, then by Remark \ref{morphismhecktoZ} again and Lemma \ref{relativeheckerule}
$$
(c^{I^{\flat}}(w,w',ww')-1)\int_G i_{ww'}^{\flat}(a)d\mu_{I^{\flat}}(a)   +\sum_{w''\in C_{w,w'}\setminus \{ww'\}} c^{I^{\flat}}(w,w',w'') \int_G i_{w''}^{\flat}(a)d\mu_{I^{\flat}}(a)=0.$$
As we saw in the proof of Theorem \ref{IMpresentation}, we have $c^{I^{\flat}}(w,w',ww')=1$. 
Accordingly,
$$\sum_{w''\in C_{w,w'}\setminus \{ww'\}} c^{I^{\flat}}(w,w',w'') \int_G i_{w''}^{\flat}(a)d\mu_{I^{\flat}}(a)=0.$$
But if  $\{ww'\} \subsetneq C_{w,w'}$, then all $c^{I^{\flat}}(w,w',w'')=\big|I^{\flat}wI^{\flat}\cap w''I^{\flat}w'^{-1}I^{\flat}\big|_{I^{\flat}}\ge 1$. 
This can not be possible, unless $C_{w,w'}= \{ww'\}$, i.e. $I^{\flat} ww'I^{\flat}  = I^{\flat} wI^{\flat} w'I^{\flat} $, i.e. $i_w^{\flat} *_{I^{\flat}} i_{w'}^{\flat}=i_{ww'}^{\flat} $. 
This concludes the proof of the lemma.
\end{proof}}}
The above Iwahori--Matsumoto presentation yields the following consequences:
\begin{corollary}\label{additionalIMpres}
\begin{enumerate}[(i)]
\item For any $w,w'\in \widetilde{W}^{\flat}$, if $\ell(w)+\ell(w')=\ell(ww')$ then  
$I_{w^{-1}(\a)}I \cap I_{w'(\a)}I=(I_{w^{-1}(\a)} \cap I_{w'(\a)})I=I$. 
\item\label{groupalgomegaH}   
The $\Z$-linear map $z\mapsto i_z^{\flat}$ embeds the group algebra $\Z[\widetilde{\Omega}^{\flat}]$ into $\cH_{I^{\flat}}(\Z)$.
\item\label{actiongroupalgG} For all $w \in W_{\aff}^{\flat}$ and $z\in \widetilde{\Omega}^{\flat}$, one has $i_w^{\flat}i_z^{\flat}
=i_z^{\flat} i_{z^{-1}wz}^{\flat}$.
\item Let $w \in W_{\aff}^{\flat}$ and $w = s_1s_2 \cdots s_{\ell(w)}$ a reduced expression. 
Using Lemma \ref{bnrelation}, one has 
\begin{equation}\label{qwreduced}q_w = q_{s_1}q_{s_2} \cdots q_{s_{\ell(w)}}.\end{equation} 
\item\label{v}
Let $w= w_{\aff} \cdot z \in \widetilde{W}^{\flat}$ with $w_{\aff}=s_1 \cdots s_{\ell(w)}$ a reduced word in $W_{\aff}^{\flat}$ and $z\in \widetilde{\Omega}^{\flat}$, we have 
\begin{align*}(i_w^{\flat})^{-1}
=i_{z^{-1}}^{\flat} (i_{w_{\aff}}^{\flat})^{-1}=q_w^{-1} {i_{z^{-1}}^{\flat}(i_{s_{\ell(w)}}^{\flat}-q_{s_{\ell(w)}}+1) \cdots (i_{s_{1}}^{\flat}-q_{s_{1}}+1)}.
\end{align*}
which lives in $\cH_{I^\flat}(\Z[q^{-1}])$ given Lemma \ref{InIpowerq}.
\item If we do not include $q^{-1}$ in the coefficients ring, we still have $$i_w^{\flat} (i_w^{\flat})^*= (i_w^{\flat})^* i_w^{\flat}=q_{w} , \quad\text{in }\cH_{I^{\flat}}(\Z),$$
where, $(i_w^{\flat})^*:=(i_{s_{\ell(w)}}^{\flat}-q_{s_{\ell(w)}}+1) \cdots (i_{s_{1}}^{\flat}-q_{s_{1}}+1)$.
\item For any $s \in \cS^{\flat}$, set $\mathscr{P}_s^l:=\{w\in \widetilde{W}^{\flat} \colon w<sw, \, i.e. \,  \ell(sw) = \ell(w)+1 \}$ and $\mathscr{P}_s^r:=\{w\in \widetilde{W}^{\flat} \colon w > ws, \, i.e. \,  \ell(ws) = \ell(w)- 1 \}$ then $(\mathscr{P}_s^l)^{-1}= \mathscr{P}_s^r$ 
and 
$\widetilde{W}=\mathscr{P}_s^l \sqcup  s\mathscr{P}_s^l=\mathscr{P}_s^r \sqcup  \mathscr{P}_s^r s$ \cite[Ch. IV, \S 1.7]{BourbakiLieV}. 
For any $s \in \cS^{\flat}$ and any $w\in \widetilde{W}^{\flat}$, we have
 $$i_s^{\flat}*_{{I^{\flat}}} i_w^{\flat} =  \begin{cases} i_{sw}^{\flat}, &\text{ if } w\in \mathscr{P}_s^l ,\\\hspace*{\fill}
 q_s  i_w^{\flat}+ (q_s - 1)i_{sw}^{\flat}, &\text{ if } w\in  s\mathscr{P}_s^l .
\end{cases} \text{ and } i_w^{\flat} *_{{I^{\flat}}} i_s^{\flat}=  \begin{cases} i_{ws}^{\flat}, &\text{ if } w\in \mathscr{P}_s^r ,\\\hspace*{\fill}
 q_s  i_w^{\flat}+ (q_s - 1)i_{ws}^{\flat}, &\text{ if } w\in  \mathscr{P}_s^rs.
\end{cases}$$
\end{enumerate}
\end{corollary}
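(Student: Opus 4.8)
The plan is to derive all seven statements from the Iwahori--Matsumoto presentation (Theorem~\ref{IMpresentation}): parts (ii)--(vii) will be purely formal consequences of the braid and quadratic relations, while (i) is of geometric nature and is where I expect the only real work to lie. Since every $z\in\widetilde{\Omega}^{\flat}$ has length $0$, for $z,z'\in\widetilde{\Omega}^{\flat}$ we have $\ell(z)+\ell(z')=0=\ell(zz')$, so the braid relation gives $i_z^{\flat}*_{I^{\flat}}i_{z'}^{\flat}=i_{zz'}^{\flat}$; hence $z\mapsto i_z^{\flat}$ is a homomorphism of groups (unit $i_1^{\flat}$, inverse $z\mapsto z^{-1}$) and extends $\Z$-linearly to a ring homomorphism $\Z[\widetilde{\Omega}^{\flat}]\to\cH_{I^{\flat}}(\Z)$, injective because the $i_z^{\flat}$ are distinct members of the free $\Z$-basis $(i_w^{\flat})_{w\in\widetilde{W}^{\flat}}$; this is (ii). For (iii), a lift of $z\in\widetilde{\Omega}^{\flat}$ normalizes $I^{\flat}$ (Lemma~\ref{staba}), so $\ell(wz)=\ell(w)=\ell(z^{-1}wz)$, and applying the braid relation twice gives $i_w^{\flat}*_{I^{\flat}}i_z^{\flat}=i_{wz}^{\flat}=i_{z(z^{-1}wz)}^{\flat}=i_z^{\flat}*_{I^{\flat}}i_{z^{-1}wz}^{\flat}$. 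For (iv) I would either invoke Lemma~\ref{InIpowerq} with $w\in W_{\aff}^{\flat}$, or note that for a reduced word $w=s_1\cdots s_{\ell(w)}$ the braid relation gives $i_w^{\flat}=i_{s_1}^{\flat}*_{I^{\flat}}\cdots*_{I^{\flat}}i_{s_{\ell(w)}}^{\flat}$, and applying the ring homomorphism $d_{I^{\flat}}$ of Remark~\ref{morphismhecktoZ} yields $q_w=q_{s_1}\cdots q_{s_{\ell(w)}}$.

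For (v) and (vi) the key observation is that the quadratic relation rearranges to $i_s^{\flat}*_{I^{\flat}}(i_s^{\flat}-q_s+1)=(i_s^{\flat}-q_s+1)*_{I^{\flat}}i_s^{\flat}=q_s\,i_1^{\flat}$ for $s\in\cS^{\flat}$. Writing $w_{\aff}=s_1\cdots s_{\ell(w)}$ reduced and multiplying out $i_{w_{\aff}}^{\flat}*_{I^{\flat}}(i_{w_{\aff}}^{\flat})^{*}$ from the middle outward, collapsing each successive pair $i_{s_j}^{\flat}*_{I^{\flat}}(i_{s_j}^{\flat}-q_{s_j}+1)=q_{s_j}\,i_1^{\flat}$, the product telescopes --- by (iv) --- to $q_{w_{\aff}}\,i_1^{\flat}$ inside $\cH_{I^{\flat}}(\Z)$, and the same computation read from the other end gives $(i_{w_{\aff}}^{\flat})^{*}*_{I^{\flat}}i_{w_{\aff}}^{\flat}=q_{w_{\aff}}\,i_1^{\flat}$ as well, with no denominators; this is (vi). For (v), dividing by $q_w$ (a power of $q$ by Lemma~\ref{InIpowerq}) and, for $w=w_{\aff}z$, pulling the factor $i_{z^{-1}}^{\flat}$ to the front via (ii)--(iii), one recovers the displayed expression for $(i_w^{\flat})^{-1}$ in $\cH_{I^{\flat}}(\Z[q^{-1}])$. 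For (vii): when left multiplication by $i_s^{\flat}$ increases length (i.e. $\ell(sw)=\ell(w)+1$) the braid relation gives $i_s^{\flat}*_{I^{\flat}}i_w^{\flat}=i_{sw}^{\flat}$; otherwise write $w=s\cdot(sw)$ with $sw$ the shorter element, so $i_w^{\flat}=i_s^{\flat}*_{I^{\flat}}i_{sw}^{\flat}$ by the previous case, whence $i_s^{\flat}*_{I^{\flat}}i_w^{\flat}=(i_s^{\flat})^{2}*_{I^{\flat}}i_{sw}^{\flat}$, and expanding $(i_s^{\flat})^{2}$ by the quadratic relation gives the two-term formula. The partition statements $\widetilde{W}^{\flat}=\mathscr{P}_s^l\sqcup s\mathscr{P}_s^l=\mathscr{P}_s^r\sqcup\mathscr{P}_s^r s$ and $(\mathscr{P}_s^l)^{-1}=\mathscr{P}_s^r$ are cited from \cite[Ch.~IV, \S1.7]{BourbakiLieV}, and the right-multiplication rules follow from the left ones by applying the anti-involution $h\mapsto h^{\vee}$ of Remark~\ref{heckeinvolution}, which sends $i_w^{\flat}$ to $i_{w^{-1}}^{\flat}$ and interchanges $\mathscr{P}_s^l$ and $\mathscr{P}_s^r$.

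Finally, part (i) is geometric and is the step I expect to be the main obstacle. Fixing lifts $\dot w,\dot w'\in N$, one has $I_{w^{-1}(\a)}=\dot w^{-1}I\dot w$ and $I_{w'(\a)}=\dot w'I\dot w'^{-1}$, both contained in $G_1$, so any $g\in I_{w^{-1}(\a)}I\cap I_{w'(\a)}I$ lies in $G_1$; writing $g=ai$ with $a$ fixing the alcove $w^{-1}(\a)$ and $i$ fixing $\a$ shows that $g\cdot\a=a\cdot\a$ has the same combinatorial (Weyl) distance to $w^{-1}(\a)$ as $\a$ has, and, symmetrically, the same distance to $w'(\a)$ as $\a$ has. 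Since $\ell(ww')=\ell(w)+\ell(w')$, the alcove $\a$ sits on a minimal gallery joining $w^{-1}(\a)$ to $w'(\a)$, and the input I would need is the standard fact that the alcove on a minimal gallery at prescribed Weyl distances from its two endpoints is unique (equivalently, that between two fixed chambers there is exactly one minimal gallery of a given reduced type); this is standard for the building attached to the double Tits system of Corollary~\ref{doubtitflat}, and is closely related to the triangularity statement of Corollary~\ref{triangularitiy}. This forces $g\cdot\a=\a$, hence $g$ fixes $\a$, so $g\in P_{\a}\cap G_1=I$ by Proposition~\ref{parahr}. As $I$ is obviously contained in $(I_{w^{-1}(\a)}\cap I_{w'(\a)})I$, which in turn is contained in $I_{w^{-1}(\a)}I\cap I_{w'(\a)}I=I$, all three sets coincide with $I$, as asserted; everything outside of this uniqueness input reduces to bookkeeping with the braid and quadratic relations of Theorem~\ref{IMpresentation}.
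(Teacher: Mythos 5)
Your derivations of (ii)--(vii) are exactly the routine consequences of the braid and quadratic relations that the paper intends (its proof is the single sentence that everything follows readily from Theorem \ref{IMpresentation}), and they are all correct. Note only that your computation in (vii) yields $i_s^{\flat}*_{I^{\flat}}i_w^{\flat}=q_s\,i_{sw}^{\flat}+(q_s-1)\,i_w^{\flat}$ for $w\in s\mathscr{P}_s^l$, which is the standard and correct form; the display in the corollary as printed has $i_w^{\flat}$ and $i_{sw}^{\flat}$ interchanged in that case, as one sees by taking $w=s$ and comparing with the quadratic relation. The only place where you genuinely diverge from the paper is part (i), which you prove geometrically via the uniqueness of the chamber at prescribed Weyl distances from the two ends of a minimal gallery in the building of the Tits system of Corollary \ref{doubtitflat}; that argument is valid, and you correctly isolate the one external fact it needs. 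But the paper's own machinery gives (i) in one line with no building theory: by Lemma \ref{equiviIellq} (equivalently, the coefficient computation in the proof of the braid relation in Theorem \ref{IMpresentation}), the hypothesis $\ell(w)+\ell(w')=\ell(ww')$ forces $c^{I}(w,w',ww')=\big|IwI\cap ww'Iw'^{-1}I\big|_{I}=1$, i.e. $IwI\cap ww'Iw'^{-1}I=wI$; left-translating by $w^{-1}$ and using $I_{w^{-1}(\a)}=w^{-1}Iw$ and $I_{w'(\a)}=w'Iw'^{-1}$ turns this into $I_{w^{-1}(\a)}I\cap I_{w'(\a)}I=I$, and the middle set $(I_{w^{-1}(\a)}\cap I_{w'(\a)})I$ is squeezed between $I$ and this intersection. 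The gallery-uniqueness you invoke is logically equivalent to this coefficient being $1$, so your route is not wrong --- it re-proves, through the building, a fact the algebra already hands you, at the cost of importing a standard result the paper never states.
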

\begin{proof}  These properties follow readily from Theorem \ref{IMpresentation}.\end{proof} 
\begin{proposition}\label{generalizedIWdecomp}
Set $\cH_{I^{\flat}}^{\aff}(\Z):=\cH(G_{1}^{\flat} \sslash I^{\flat},\Z)$ and let $\cH_{I^{\flat}}^{\Upomega}(\Z)$ be the subalgebra of functions supported on cosets of the form $\sigma I^{\flat}$ for $\sigma \in \widetilde{\Upomega}^{\flat}$. 
We have an isomorphism of $\Z$-algebras
$$\cH_{I^{\flat}}(\Z) \simeq \Z[ \widetilde{\Upomega}^{\flat}]\otimes^\prime \cH_{I^{\flat}}^{\aff}(\Z)$$ 
where, $\otimes^\prime$ indicates that the tensor modules is endowed with a "twisted" multiplication: for any $\sigma,\sigma' \in \widetilde{\Upomega}^{\flat}$ and any $w,w'\in \widetilde{W}^{\flat}$
$$(\sigma \otimes i_w)(\sigma' \otimes i_{w'})=\sigma\sigma' \otimes i_{\sigma'^{-1}w \sigma'}*_{I^{\flat}} i_{w'} .$$
\end{proposition}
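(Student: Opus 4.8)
The plan is to build the isomorphism $\cH_{I^{\flat}}(\Z) \xrightarrow{\sim} \Z[\widetilde{\Upomega}^{\flat}] \otimes^\prime \cH_{I^{\flat}}^{\aff}(\Z)$ directly on the level of $\Z$-modules and then check it respects the twisted multiplication. First I would observe that, by Lemma \ref{weyiwa}, we have $\widetilde{W}^{\flat} = W_{\aff}^{\flat} \rtimes \widetilde{\Omega}^{\flat}$, so every $w \in \widetilde{W}^{\flat}$ factors uniquely as $w = w_{\aff} \sigma$ with $w_{\aff} \in W_{\aff}^{\flat}$, $\sigma \in \widetilde{\Omega}^{\flat}$, and $\ell(w) = \ell(w_{\aff})$. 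By Theorem \ref{IMpresentation} the braid relation gives $i_w^{\flat} = i_{w_{\aff}}^{\flat} *_{I^{\flat}} i_\sigma^{\flat}$ (the lengths add since $\ell(\sigma)=0$), so the map $\sigma \otimes i_{w_{\aff}}^{\flat} \mapsto i_{w_{\aff}}^{\flat} *_{I^{\flat}} i_\sigma^{\flat}$ carries the evident $\Z$-basis $\{\sigma \otimes i_{w_{\aff}}^{\flat} : \sigma \in \widetilde{\Upomega}^{\flat}, w_{\aff} \in W_{\aff}^{\flat}\}$ of the right-hand side bijectively onto the canonical basis $\{i_w^{\flat} : w \in \widetilde{W}^{\flat}\}$ of $\cH_{I^{\flat}}(\Z)$. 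This immediately yields a $\Z$-module isomorphism; note that $\cH_{I^{\flat}}^{\aff}(\Z)$ is identified with the span of the $i_{w_{\aff}}^{\flat}$ via Corollary \ref{cortitssystflat}(1), and $\Z[\widetilde{\Upomega}^{\flat}] \hookrightarrow \cH_{I^{\flat}}(\Z)$ via Corollary \ref{additionalIMpres}(ii).

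Next I would verify that this $\Z$-linear bijection is multiplicative when the target carries the twisted product $(\sigma \otimes i_w)(\sigma' \otimes i_{w'}) = \sigma\sigma' \otimes i_{\sigma'^{-1} w \sigma'}^{\flat} *_{I^{\flat}} i_{w'}^{\flat}$. The key computational input is Corollary \ref{additionalIMpres}(iii): for $w_{\aff} \in W_{\aff}^{\flat}$ and $\sigma \in \widetilde{\Omega}^{\flat}$ one has $i_{w_{\aff}}^{\flat} *_{I^{\flat}} i_\sigma^{\flat} = i_\sigma^{\flat} *_{I^{\flat}} i_{\sigma^{-1} w_{\aff} \sigma}^{\flat}$, reflecting the fact that lifts of $\widetilde{\Omega}^{\flat}$ in $N_\a^\dag$ normalize $I^{\flat}$ (and that conjugation by $\sigma$ preserves $W_{\aff}^{\flat}$ and its length). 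Using this one computes, for basis elements, $\big(i_{w_{\aff}}^{\flat} *_{I^{\flat}} i_\sigma^{\flat}\big) *_{I^{\flat}} \big(i_{w_{\aff}'}^{\flat} *_{I^{\flat}} i_{\sigma'}^{\flat}\big)$; commuting $i_\sigma^{\flat}$ past $i_{w_{\aff}'}^{\flat}$ via (iii) produces $i_{w_{\aff}}^{\flat} *_{I^{\flat}} i_{\sigma w_{\aff}' \sigma^{-1}}^{\flat} *_{I^{\flat}} i_\sigma^{\flat} *_{I^{\flat}} i_{\sigma'}^{\flat}$, and since $i_\sigma^{\flat} *_{I^{\flat}} i_{\sigma'}^{\flat} = i_{\sigma\sigma'}^{\flat}$ (braid relation, zero lengths) this equals $\big(i_{w_{\aff}}^{\flat} *_{I^{\flat}} i_{\sigma w_{\aff}' \sigma^{-1}}^{\flat}\big) *_{I^{\flat}} i_{\sigma\sigma'}^{\flat}$, which is exactly the image of $\sigma\sigma' \otimes i_{\sigma^{-1} \cdot (\text{stuff})}^{\flat}$. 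I should be careful here with the bookkeeping of which side the conjugation acts on and whether the convention in the statement ($\sigma'^{-1} w \sigma'$) matches; a small reindexing (replacing $w$ by $w_{\aff}$ and tracking that the factor in $\cH_{I^{\flat}}^{\aff}$ sits on the left of the $\widetilde{\Upomega}^{\flat}$-part or the right) will reconcile the formula, and the product $i_{w_{\aff}}^{\flat} *_{I^{\flat}} i_{\sigma w_{\aff}' \sigma^{-1}}^{\flat}$ indeed lies in $\cH_{I^{\flat}}^{\aff}(\Z)$ since $W_{\aff}^{\flat}$ is normalized by $\widetilde{\Omega}^{\flat}$.

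Finally I would record that $\cH_{I^{\flat}}^{\aff}(\Z)$ is genuinely a subalgebra — this is Corollary \ref{cortitssystflat}(1) together with Remark \ref{Iwahoriaditivity}, giving $G_1^{\flat} = \sqcup_{w \in W_{\aff}^{\flat}} I^{\flat} w I^{\flat}$ and closure of the span of the $i_{w_{\aff}}^{\flat}$ under $*_{I^{\flat}}$ — and that $\cH_{I^{\flat}}^{\Upomega}(\Z) \cong \Z[\widetilde{\Upomega}^{\flat}]$ as rings. The main obstacle is purely notational rather than conceptual: getting the twisted multiplication formula to match the sign/side conventions in the displayed formula and handling the case where lengths do \emph{not} add (which does not arise here, because $\widetilde{\Omega}^{\flat}$ has length zero, so every product with a $\widetilde{\Omega}^{\flat}$-element is "length-additive" and the braid relation applies cleanly). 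I would also double-check that the uniqueness of the factorization $w = w_{\aff}\sigma$ in the semidirect product makes the map well-defined independently of representatives, which follows from Lemma \ref{weyiwa} and Remark \ref{commuwaffs}.
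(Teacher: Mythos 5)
Your proof is correct and follows essentially the same route as the paper, which simply cites the Iwahori--Matsumoto presentation (Theorem \ref{IMpresentation}), the decomposition $\widetilde{W}^{\flat}=W_{\aff}^{\flat}\rtimes\widetilde{\Omega}^{\flat}$ of Lemma \ref{weyiwa}, and parts (ii)--(iii) of Corollary \ref{additionalIMpres}; you have merely written out the length-additivity and commutation computations that the paper leaves implicit. The side convention you flag does resolve as you expect: the map $\sigma\otimes i_w^{\flat}\mapsto i_\sigma^{\flat}*_{I^{\flat}}i_w^{\flat}$ reproduces the displayed formula with $\sigma'^{-1}w\sigma'$ exactly, while your choice $i_w^{\flat}*_{I^{\flat}}i_\sigma^{\flat}$ gives the isomorphic variant with $\sigma w'\sigma^{-1}$.
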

\begin{proof}
This an immediate consequence of Proposition \ref{IMpresentation} and Lemma \ref{weyiwa} together with \ref{groupalgomegaH} and \ref{actiongroupalgG} of Corollary \ref{additionalIMpres}.
\end{proof}
\begin{remark}\label{9111}
The restriction of the surjective morphism (Corollary \ref{ontoaveraging}) of $\iota^{\flat}\colon\cH_{I}({\Z})\surjto \cH_{I^{\flat}}({\Z})$ to $\cH_{I}^{\aff}(\Z)$ is injective (Remark \ref{commuwaffs}) and yields an isomorphism of algebras $\cH_{I}^{\aff}(\Z) \iso \cH_{I^{\flat}}^{\aff}(\Z)$ since $G_{1}^{\flat} \sslash I^{\flat} \simeq G_{1} \sslash I$.
Thanks to Remark \ref{commuwaffs}, the following diagram is commutative
$$
\begin{tikzcd}
\cH_{I}(\Z) \arrow{r}{\simeq}\arrow[two heads, swap]{d}{\iota^{\flat}} & \Z[ \widetilde{\Upomega}]\otimes^\prime \cH_{I}^{\aff}(\Z)\arrow[two heads]{d}{ \square^{\flat} \otimes \iota^{\flat}}  \\
\cH_{I^{\flat}}(\Z) \arrow{r}{\simeq } & \Z[ \widetilde{\Upomega}^{\flat}]\otimes^\prime \cH_{I^{\flat}}^{\aff}(\Z)
\end{tikzcd}$$
\end{remark}
\subsection{Root data}\label{rootdatata}
Recall that we have defined in \S \ref{affineweyl} the root system $\Sigma$ and that we have $\Lambda_{\aff}^{\vee} = \Z[ \Sigma ]$. 
Set $\Sigma^{\flat} \subset \Lambda_{\aff}^{\flat}$  for the subset corresponding to $\Sigma$ under the isomorphism $\Lambda_{\aff}\simeq \Lambda_{\aff}^{\flat}$ (\S \ref{flatpartout}). 
We consider the root datum $\cD_{\aff}^{\flat}:=({\Lambda}_{\aff}^{\flat}, \Sigma^{\flat,\vee},{\Lambda}_{\aff}^{\flat,\vee}, \Sigma^{\flat})\simeq \cD_{\aff}$. 

Define the free abelian group $\Lambda_M^{1,\vee}\colon=\{x \in X_*(\Sbf)\otimes_\Z \R \colon \langle x , \nu_M (\Lambda_M^1) \rangle_{\R} \subset \Z \}$. 
We obtain in this way a perfect pairing $\langle \cdot,\cdot \rangle \colon \Lambda_M^1 \times \Lambda_M^{1,\vee}\to \Z$ (\S \ref{emptyaprtment}).  
Therefore, we have in our hands another root datum
$$\cD:=({\Lambda}_M^1, \Sigma^{1,\vee},{\Lambda}_M^{1,\vee}, \Sigma^1,\Delta^{1,\vee}).$$
In \cite[\S 3.2]{Lu89}, Lusztig attaches to any quadruplet $(\cD',\mathbf{W}, v=?,L)$ (where, $\cD'$ is a root datum, $\mathbf{W}$ the extended affine group associated with $\cD'$, $v$ an indeterminate \S 3.2 in  {\em loc. cit.} and $L\colon \mathbf{W} \to \N$ a function \S 3.1 in  {\em loc. cit.} verifying an additivity condition) a Hecke algebra $\cH(\cD',\mathbf{W}, v=?,L)$ generated by elements $\{T_w\colon w \in \mathbf{W}\}$, denoted by $H$ in  {\em loc. cit.}.
\begin{corollary}\label{2rootsystems}
The map $i_w \mapsto T_w$ yields two isomorphisms of Hecke algebras
$$\cH_{{I}^1}(\Z[q^{-1}]) \iso \cH(\cD,\widetilde{W}_{\aff},v=q, L) \text{ and }\cH_{{I}}^{\aff}(\Z[q^{-1}]) \iso \cH(\cD_{\aff},{W}_{\aff},v=q, L)$$
where, $L\colon W \to \mathbb{N}, w \mapsto -\frac{1}{2} \omega(q_w)$.
\end{corollary}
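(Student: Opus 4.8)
The plan is to verify that the assignment $i_w \mapsto T_w$ respects the defining relations of both Hecke algebras and then invoke the uniqueness of the multiplicative structure. First I would recall that by Theorem~\ref{IMpresentation}, the $\Z$-algebra $\cH_{I^1}(\Z)$ (the geometric/extended affine case, $M^\flat = M^1$) is the free $\Z$-module on $\{i_w^1 : w \in \widetilde W^1 = \widetilde W_{\aff}\}$ with the braid relations along $\widetilde W_{\aff}$ and the quadratic relations $(i_s^1)^2 = q_s i_1^1 + (q_s-1)i_s^1$ for $s \in \cS^1$. After inverting $q$, using Lemma~\ref{InIpowerq} each $q_w$ is a power of $q$; with the substitution $v = q$ and $L(w) = -\tfrac12\omega(q_w)$ we have $q_s = v^{2L(s)}$, so the quadratic relation reads $(i_s)^2 = v^{2L(s)} + (v^{2L(s)}-1)i_s$, which is exactly Lusztig's relation for $T_s$ in $\cH(\cD,\widetilde W_{\aff},v=q,L)$ (\cite[\S 3.2]{Lu89}); the additivity of $L$ required by \emph{loc.\ cit.}\ is precisely the multiplicativity $q_w = \prod q_{s_i}$ along reduced words, which is Lemma~\ref{InIpowerq} again, equivalently Corollary~\ref{additionalIMpres}\eqref{qwreduced}. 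Since both algebras are free modules on bases indexed by $\widetilde W_{\aff}$ and both are determined by braid plus quadratic relations with matching structure constants, the $\Z[q^{-1}]$-linear map $i_w^1 \mapsto T_w$ is a well-defined isomorphism of algebras.

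For the second isomorphism, I would restrict attention to the affine part. By Remark~\ref{9111} the natural map $\cH_I^{\aff}(\Z) \iso \cH_{I^\flat}^{\aff}(\Z)$ is an isomorphism and $\cH_I^{\aff}(\Z) = \cH(G_1 \sslash I,\Z)$ sits inside $\cH_I(\Z)$ as the span of $\{i_w : w \in W_{\aff}\}$; this is a subalgebra because $W_{\aff}$ is closed under the Coxeter multiplication and the structure constants $c^I(w,w',w'')$ from Lemma~\ref{relativeheckerule} only involve double cosets indexed by $W_{\aff}$ (Corollary~\ref{cortitssystflat}(1)). So $\cH_I^{\aff}(\Z[q^{-1}])$ is the free $\Z[q^{-1}]$-module on $\{i_w : w \in W_{\aff}\}$ with the braid relations along the Coxeter system $(W_{\aff},\cS)$ and the same quadratic relations. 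On the other side, $\cH(\cD_{\aff},W_{\aff},v=q,L)$ is Lusztig's Hecke algebra for the (non-extended) affine Weyl group $W_{\aff}$ attached to the root datum $\cD_{\aff} \simeq \cD_{\aff}^\flat$ from \S\ref{rootdatata}, again with generators $T_w$ subject to braid and quadratic relations with $q_s = v^{2L(s)}$. The map $i_w \mapsto T_w$ therefore matches generators and relations and extends to an isomorphism.

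The main point to be careful about — and the step I expect to be the real content rather than bookkeeping — is checking that the parameter function matches on the nose: that for every simple reflection $s$ the geometric parameter $q_s = [IsI:I]$ equals $v^{2L(s)}$ with $v=q$ and $L$ the chosen function, and that $L$ satisfies Lusztig's axioms (\cite[\S 3.1]{Lu89}), namely $L(w) = L(w^{-1})$, $L(ww') = L(w) + L(w')$ whenever $\ell(ww') = \ell(w)+\ell(w')$, and $L(s) = L(s')$ whenever $s,s'$ are conjugate in $W_{\aff}$. The additivity is Lemma~\ref{InIpowerq}; the symmetry $L(w)=L(w^{-1})$ follows from $q_w = q_{w^{-1}}$ (since $IwI$ and $Iw^{-1}I$ have the same index, e.g.\ via Corollary~\ref{additionalIMpres}(vi) or the involution of Remark~\ref{heckeinvolution}); and conjugation-invariance follows from the fact that conjugate reflections give the same affine root length in $\Sigma_{\aff}$, or directly from the computation $IsI/I \cong U_{\alpha_s}/U_{\alpha_s+1}$ in the proof of Lemma~\ref{InIpowerq}, whose cardinality depends only on the $W$-orbit of the root. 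Once this identification of parameters is in place, both isomorphisms follow formally from presentation-by-generators-and-relations, so I would state the result and refer to Theorem~\ref{IMpresentation}, Lemma~\ref{InIpowerq}, Remark~\ref{9111} and \cite[\S 3.1--3.2]{Lu89} for the details.
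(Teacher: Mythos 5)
Your proposal is correct and follows the same route as the paper: the paper's proof is a one-line appeal to the Iwahori--Matsumoto presentation (Theorem \ref{IMpresentation}) and Remark \ref{9111}, observing that Lusztig's algebra $\cH(\cD',\mathbf{W},v,L)$ is defined by exactly the same braid and quadratic relations. Your additional verification that $L$ satisfies Lusztig's axioms (additivity along reduced words via Lemma \ref{InIpowerq}, symmetry, conjugation-invariance) is exactly the bookkeeping the paper leaves implicit.
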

\begin{proof}
The corollary follows readily from the Iwahori--Matsumoto presentation in Theorem \ref{IMpresentation} and Remark \ref{9111}, because $\cH(\cD',\mathbf{W}, v=?,L)$ is characterized precisely by the same Braid relation \cite[\S 2.1]{Lu89} and quadratic relations \cite[\S 3.2]{Lu89}.
\end{proof}

\subsection{Properties of dominance \texorpdfstring{$\Lambda_M^{\flat}$}{LM}}
\begin{lemma}\label{propertiesdominance}
We have the following properties regarding $\Lambda_M^{\flat}$:
\begin{enumerate}[1 -, nosep] 
\item\label{sumtwodom} If $m_1,m_2, \dots, m_k \in \Lambda_M^{\flat}$, then there exists $m_{\circ}\in \Lambda_M^{-\flat}$ such that $m_{\circ} + m_{{I^{\flat}}} \in \Lambda_M^{-,\flat}$\footnote{We will denote the operation on $\Lambda_M$ additively.} for all $1\le i\le k$. 
\item If $m_1,m_2\in \Lambda_M^{-,\flat}$, then $\ell(m_1 + m_2) = \ell(m_1)+\ell(m_2)$.

For any $w\in  {W}$, we have:
\item $\ell(mw) = \ell(m)+\ell(w)$ if $m\in \Lambda_M^{-,\flat}$ and $\ell(wm) = \ell(w)+\ell(m)$ if $m\in \Lambda_M^{+,\flat}$.
\item $\ell(w(m)) =\ell(wmw^{-1})= \ell(m)$ for all $m\in \Lambda_M^{\flat}$.
\end{enumerate}
\end{lemma}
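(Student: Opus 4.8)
The four assertions are all statements about the length function on $\widetilde{W}^{\flat}$, which (by the remark after Remark \ref{commuwaffs} and Lemma \ref{InIpowerq}) coincides with the length function pulled back from $W_{\aff}^{\flat}$ under the retraction $\widetilde{W}^{\flat}\simeq W_{\aff}^{\flat}\rtimes\widetilde{\Omega}^{\flat}$; equivalently, by Lemma \ref{qmdeltam} and Lemma \ref{InIpowerq}, one can read off $\ell$ via the $q$-powers $q_w$. My strategy is to deduce everything from the geometric description of $\ell(w)$ as the number of affine hyperplanes (walls) separating $\a$ from $w(\a)$, which is available because $(W_{\aff}^{\flat},\cS^{\flat})$ is a Coxeter system adapted to the alcove combinatorics of $\A_{\red}(\G,\Sbf)$ (Corollary \ref{doubtitflat}), together with the translation-by-$\nu$ description of $\Lambda_M^{\flat}$ acting on $\cA_{\ext}$ from \S\ref{Lambdasection}.

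For (1): given $m_1,\dots,m_k$, write each $\nu(m_i)\in\underline{\Lambda}_M\otimes\R$ and choose an antidominant $m_\circ\in\Lambda_M^{-,\flat}$ whose image $\nu(m_\circ)$ lies sufficiently deep in the open antidominant chamber $\cC^-$; concretely, one needs $\langle\nu(m_\circ)+\nu(m_i),\alpha\rangle\le 0$ for all $\alpha\in\Phi_{\red}^+$ and all $i$, which holds as soon as $\langle\nu(m_\circ),\alpha\rangle\le -\max_i\langle\nu(m_i),\alpha\rangle$ for every simple $\alpha$ --- possible since $\underline\Lambda_M$ has full rank in $X_*(\Sbf)\otimes\R$ and $M^{-,\flat}$ surjects onto a finite-index subgroup of $\underline\Lambda_M^-$. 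Then $m_\circ+m_i\in M^{-,\flat}$ by the very definition of $M^-$ in \S\ref{Lambdasection}. For (4): $w m w^{-1}$ again lies in $\Lambda_M^{\flat}$ (it acts on $\cA_{\ext}$ by the translation $j(w)(\nu(m))$), and $w$ induces an isometry of $(\A_{\red}(\G,\Sbf),d)$ permuting the set of walls, hence it carries the set of walls separating $\a$ from $m(\a)$ bijectively onto those separating $w(\a)$ from $wm(\a)=w(m)w(\a)$; since $\ell$ is translation-length it is invariant under this, giving $\ell(w(m))=\ell(m)$. Alternatively one may invoke Lemma \ref{qmdeltam}: $q_m=\delta_B(m)^{-1}$ for $m\in M^-$, and $\delta_B$ is visibly $W$-invariant on $\Lambda_M$ after conjugation, so $q_{w(m)}=q_m$ and the lengths agree (reducing the general $m$ to the antidominant case via (1)).

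For (2) and (3), the cleanest route is again the wall-counting characterization combined with the standard fact about Coxeter groups that $\ell(xy)=\ell(x)+\ell(y)$ iff every reduced word for $x$ concatenated with one for $y$ is reduced, equivalently iff $I^{\flat}xyI^{\flat}=I^{\flat}xI^{\flat}yI^{\flat}$ (Remark \ref{Iwahoriaditivity}). For (2): if $m_1,m_2\in\Lambda_M^{-,\flat}$ then by Lemma \ref{normalizingI} (with $\Upomega=\a$) $m_1$ normalizes $U_\a^+$ inward and $m_2^{-1}$ normalizes $U_\a^-$ inward, and a direct computation $I^{\flat}m_1I^{\flat}m_2I^{\flat}=I^{\flat}m_1m_2I^{\flat}$ follows exactly as in the displayed remark before Lemma \ref{identificationsubsuphecke} (the Iwahori factorization $I^{\flat}=U_\a^+U_\a^-M^{\flat}$ of Corollary \ref{Iwahoridecomp} makes the middle $I^{\flat}$ collapse); then Remark \ref{Iwahoriaditivity} gives $\ell(m_1+m_2)=\ell(m_1)+\ell(m_2)$. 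For (3): if $m\in\Lambda_M^{-,\flat}$ and $w\in W=W_{a_\circ}^{\flat}$, I claim $\nu(m)+a_\circ$ being antidominant forces the walls separating $\a$ from $m(\a)$ to be disjoint from those separating $\a$ from $w(\a)$ (the latter all pass through a neighborhood of $a_\circ$ and are "vectorial" finite walls through $a_\circ$, the former are strictly translated away in the antidominant direction), so the wall sets for $m$ and $w$ are disjoint and their union is exactly the wall set for $mw$, giving $\ell(mw)=\ell(m)+\ell(w)$; equivalently one checks $I^{\flat}mI^{\flat}wI^{\flat}=I^{\flat}mwI^{\flat}$ using Lemma \ref{actionwonI} together with the inward normalization from Lemma \ref{normalizingI}. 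The case $m\in\Lambda_M^{+,\flat}$ is the mirror image (apply the involution $h\mapsto h^\vee$ of Remark \ref{heckeinvolution}, which sends $i_w^{\flat}$ to $i_{w^{-1}}^{\flat}$ and exchanges $\Lambda_M^{+}$ with $\Lambda_M^{-}$).

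\textbf{Main obstacle.} The substantive point, and the one I expect to require the most care, is (3) --- the additivity $\ell(mw)=\ell(m)+\ell(w)$ for $m$ antidominant and $w\in W$. Making the "disjoint wall sets" argument rigorous requires knowing that for $m$ genuinely antidominant (i.e. $\nu(m)+a_\circ\in\overline{\cC}^-$) no affine wall through $a_\circ$ separates $\a$ from $m(\a)$ while every such wall may separate $\a$ from $w(\a)$, and conversely every wall not through $a_\circ$ that separates $\a$ from $m(\a)$ does not separate $\a$ from $w(\a)$ --- this is the affine-Weyl-group analogue of the classical fact $\ell(\lambda w)=\ell(\lambda)+\ell(w)$ for antidominant cocharacters $\lambda$, and it is exactly the content that the projection $\widetilde{W}^{\flat}\to\widetilde{W}$ preserves lengths reduces to the extended-affine case already handled in \cite{Lu89}. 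So in practice the honest thing is to state that by Remark \ref{commuwaffs} and the length-preservation there, all four assertions reduce to the extended affine Weyl group $\widetilde{W}_{\aff}=\widetilde{W}^1$, where (2), (3), (4) are \cite[\S1.1--1.4]{Lu89} (or \cite[Lemma 2.7.x]{Mac71}) and (1) is the elementary full-rank-lattice observation above.
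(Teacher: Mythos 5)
Your proof is correct and follows essentially the same route as the paper: the substantive assertions (2) and (3) are settled by the double-coset identities $I^{\flat}mI^{\flat}m'I^{\flat}=I^{\flat}mm'I^{\flat}$ and $I^{\flat}mI^{\flat}wI^{\flat}=I^{\flat}mwI^{\flat}$ obtained from Lemmas \ref{normalizingI} and \ref{actionwonI} together with Remark \ref{Iwahoriaditivity} (equivalently Lemma \ref{equiviIellq}), while (1) and (4) reduce, as in the paper, to an elementary choice of a sufficiently antidominant $m_\circ$ and to the extended affine case via Remark \ref{commuwaffs}. One small caveat: your parenthetical alternative for (4) asserting that $\delta_B$ is ``$W$-invariant after conjugation'' is false as stated (that failure is exactly why the dot-action carries the correction factor $c(m,w)$); what Lemma \ref{qmdeltam} actually gives is $q_{w(m)}=\delta_B(m)^{-1}$ for $m\in M^-$, so the conclusion $q_{w(m)}=q_m$ still holds and your primary wall-permutation argument is unaffected.
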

\begin{proof}
These are well-known facts for extended affine Weyl groups, so in particular for $\cH_{{I}^1}(\Z[q^{-1}])$. 
Accordingly, the lemma follows immediately because $\square^{\beta}\colon \Lambda_M^{\flat}\to \Lambda_M^1$ is $W$-equivariant and preserves dominance and the length $\ell$. See also \cite[Lemma 5.2.1]{R15}.
{
However, for the reader's convenience, let us show them using the ingredients we prepared so far:

1. Once we show it for $k=2$ (which is obvious), take $m_\circ= \sum m_{\circ,i}$ such that $m_{\circ,i}+ m_{i} \in \Lambda_M^-$ and $m_{\circ,i} \in\Lambda_M^-$.

2. If $m,m'$ are both antidominant then by Lemma \ref{normalizingI} $I^{\flat}mm'I^{\flat}=I^{\flat}mI^{\flat}m'I^{\flat}$, hence $\ell(mm')=\ell(m)+ \ell(m')$ by Lemma \ref{equiviIellq}.

3. If $w\in W$, using this time both Lemma \ref{normalizingI} and Lemma \ref{actionwonI}, we get $I^{\flat}mwI^{\flat}=I^{\flat}mI^{\flat}wI^{\flat}$, hence $\ell(mw)=\ell(m)+ \ell(w)$ by Lemma \ref{equiviIellq}.

4. Thanks to Remark \ref{commuwaffs}, this follows from the extended affine case.
 }\end{proof}
 
\subsection{The Weyl dot-action}\label{weyldotsect}
Set $\mathcal{R}_{\aff}^{\flat}:=\Z[\Lambda_{\aff}^{\flat}] \subset \mathcal{R}^{\flat}:=\Z[\Lambda_M^{\flat}] $.

The map $mM^{\flat} \mapsto {\mathbf{1}}_{mM^{\flat}}$ yields a natural identification between $\mathcal{R}^{\flat}$ (resp. $\mathcal{R}_{\aff}^{\flat}$) and the Hecke algebra $\cH(M\sslash M^{\flat},\Z)$ (resp. $\cH(M^{\aff,\flat}\sslash M^{\flat},\Z)$). 
The action of the Weyl group on $\Lambda_M^{\flat}$ extends linearly to an action on $\mathcal{R}^{\flat}$. 
For integrality reasons that will become clearer in the coming sections, we are interested in a variant of this action:
\begin{definition}[Dot-action]\label{dotaction}We define a twisted action of $W$ on $\cH(M\sslash M^{\flat},\Z[q^{-1}])$, by
$$\dot{w} (r) :=m \mapsto \delta_B(m)^{1/2} \delta_B(w^{-1}(m))^{-1/2} r(w^{-1}(m)),$$
for any $w\in W$ and any $r \in  \cH(M\sslash M^{\flat},\Z[q^{-1}])$. 
In particular, for any $m\in M$,
$$\dot{w} ({\bf 1}_{mM^{\flat}})= \delta_B(w(m))^{1/2} \delta_B(m)^{-1/2} {\bf 1}_{w(m)M^{\flat}}.$$
Thus, upon identifying $\cH(M\sslash M^{\flat},\Z[q^{-1}])$ with ${\Z[q^{-1}] \otimes_\Z \mathcal{R}^{\flat}}$, the above dot-action is given on basis elements by 
$\dot{w} (mM^{\flat}) = \left(\frac{\delta_B(w(m))}{\delta_B(m)}\right)^{1/2} w(m)M^{\flat}\in \Lambda_M^{\flat}$, 
and extended $\Z[q^{-1}]$-linearly to $\Z[q^{-1}] \otimes_\Z \mathcal{R}^{\flat}$.
\end{definition}
\begin{remark}\label{flatiswequi}
(i) From now on, we write $\dot{W}$ to specify that we are dealing with the dot-action of $W$ and not the standard one. 
(ii) The natural projection map ${\square}^{\flat}\colon \Lambda_M \to \Lambda_M^{\flat}$ is $\dot{W}$-equivariant, i.e. 
$(\dot{w}(m))^{\flat}= \dot{w}(m^{\flat})$ for any $w\in W$ and any $m\in \Lambda_M$.
\end{remark}
Define $c(m,n):= \delta_B(nmn^{-1})^{1/2} \delta_B(m)^{-1/2}$\nomenclature[C]{$c(m,n)$}{$\delta_B(nmn^{-1})^{1/2} \delta_B(m)^{-1/2}$ for $m\in M$ and $n\in N$.} for any $m\in M$ and any $n\in N$. Note that since the $\delta_B$ is trivial on the compact $M^1$ it factors through $\Lambda_M^1$, and so $c(m,n)$ factors then through the image of $(m,n)$ in $\Lambda_M^1 \times W$. The following lemma will be used in the sequel.
\begin{lemma}\label{crintegral}
For any $m\in M$ and $n \in N$ with image $w$ in $W$, we have
\begin{enumerate}[topsep=0pt]
\item[(i)] $c(m,n)=\prod_{\alpha \in \Phi_{\red}^- \cap w^{-1}(\Phi_{\red}^+)} \delta_\alpha(m) \in q^\Z$.
\item[(ii)] If $m \in M^-$ then $c(m,n)$ is positive power of $q$.
\end{enumerate}
\end{lemma}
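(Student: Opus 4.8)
The statement is Lemma~\ref{crintegral}, computing the factor $c(m,n)=\delta_B(nmn^{-1})^{1/2}\delta_B(m)^{-1/2}$ explicitly in terms of the modulus characters $\delta_\alpha$ along relative roots. The first step is to record the multiplicativity of the modulus: since $\delta_B(m)=\lvert\det(\Ad(m)_{\Lie U^+})\rvert_F$ and $\Lie U^+=\bigoplus_{\alpha\in\Phi_{\red}^+}(\Lie U^+)_\alpha$ (the root space filtration, where $(\Lie U^+)_\alpha$ accounts for both $\alpha$ and $2\alpha$ when $2\alpha\in\Phi$), one has $\delta_B(m)=\prod_{\alpha\in\Phi_{\red}^+}\delta_\alpha(m)$ where $\delta_\alpha(m):=\lvert\det(\Ad(m)_{(\Lie U^+)_\alpha})\rvert_F$. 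Because $\delta_B$ is trivial on the maximal compact $M^1$ (all entries there have absolute value $1$), each $\delta_\alpha$ factors through $\Lambda_M^1=M/M^1$, and $\delta_\alpha(m)=q^{-\langle\nu(m),\beta_\alpha\rangle}$ for a suitable positive-rational multiple $\beta_\alpha$ of $\alpha$; in particular $\delta_\alpha(m)\in q^{\mathbb Z}$ for all $m\in M$ and $\delta_\alpha(m)\in q^{\mathbb Z_{\ge 0}}$ whenever $\langle\nu(m),\alpha\rangle\le 0$, i.e.\ when $m\in M^-$.

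For (i), I would conjugate: the linear part of $\nu_N(n)$ acts on $V$ by $w=\jmath_W(n)$, and $\Ad(n)$ carries the root space $(\Lie U^+)_\alpha$ isomorphically to the root space for $w(\alpha)$. Hence $nmn^{-1}$ acts on $(\Lie U^+)_{w(\alpha)}$ the same way $m$ acts on $(\Lie U^+)_\alpha$, so
\begin{align*}
\delta_B(nmn^{-1})=\prod_{\alpha\in\Phi_{\red}^+}\delta_\alpha(nmn^{-1})=\prod_{\alpha\in\Phi_{\red}^+}\delta_{w^{-1}(\alpha)}(m)=\prod_{\beta\in w^{-1}(\Phi_{\red}^+)}\delta_\beta(m).
\end{align*}
Therefore
\begin{align*}
c(m,n)^2=\frac{\delta_B(nmn^{-1})}{\delta_B(m)}=\frac{\prod_{\beta\in w^{-1}(\Phi_{\red}^+)}\delta_\beta(m)}{\prod_{\beta\in\Phi_{\red}^+}\delta_\beta(m)}=\prod_{\beta\in w^{-1}(\Phi_{\red}^+)\cap\Phi_{\red}^-}\delta_\beta(m)\cdot\prod_{\beta\in\Phi_{\red}^+\cap w(\Phi_{\red}^-)}\delta_\beta(m)^{-1},
\end{align*}
and since $\delta_{-\beta}=\delta_\beta^{-1}$ the second product equals the first one again (reindex $\beta\mapsto -w(\beta)$, or just note $w^{-1}(\Phi_{\red}^-)\cap\Phi_{\red}^+ = -(w^{-1}(\Phi_{\red}^+)\cap\Phi_{\red}^-)$ after applying $-1$), giving $c(m,n)^2=\big(\prod_{\beta\in\Phi_{\red}^-\cap w^{-1}(\Phi_{\red}^+)}\delta_\beta(m)\big)^2$. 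As $c(m,n)$ is a positive real by construction, taking the positive square root yields the claimed formula $c(m,n)=\prod_{\alpha\in\Phi_{\red}^-\cap w^{-1}(\Phi_{\red}^+)}\delta_\alpha(m)$, and each $\delta_\alpha(m)\in q^{\mathbb Z}$ by the first paragraph, so $c(m,n)\in q^{\mathbb Z}$.

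For (ii), if $m\in M^-$ then $\langle\nu(m),\alpha\rangle\ge 0$ for all $\alpha\in\Phi_{\red}^-$ (by the very definition of $M^-$ in \S\ref{Lambdasection}), hence for every $\alpha$ appearing in the product of (i) we have $\alpha\in\Phi_{\red}^-$, so $\delta_\alpha(m)=q^{-\langle\nu(m),\beta_\alpha\rangle}$ with $\beta_\alpha$ a positive multiple of $\alpha$, which is a non-negative power of $q$; the product is thus a non-negative power of $q$, and it is a genuinely positive power exactly when some $\alpha\in\Phi_{\red}^-\cap w^{-1}(\Phi_{\red}^+)$ pairs non-trivially with $\nu(m)$. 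The main (minor) obstacle is bookkeeping the $2\alpha$ case cleanly — making sure the root-space decomposition of $\Lie U^+$ groups $\alpha$ and $2\alpha$ together so that $\delta_\alpha$ is well-defined and $\delta_{-\alpha}=\delta_\alpha^{-1}$ holds on the nose — but this is handled once and for all in the first paragraph and the rest is the formal reindexing above.
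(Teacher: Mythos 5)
Your proof is correct and follows essentially the same route as the paper: decompose $\delta_B=\prod_{\alpha\in\Phi_{\red}^+}\delta_\alpha$, use $\delta_\alpha(nmn^{-1})=\delta_{w^{-1}(\alpha)}(m)$, cancel the common part of $\Phi_{\red}^+$ and $w^{-1}(\Phi_{\red}^+)$, and fold the leftover denominator into the numerator via $\delta_{-\alpha}=\delta_\alpha^{-1}$ (the paper works with half-powers throughout rather than squaring and extracting the positive root, but that is cosmetic); your treatment of (ii) likewise matches, and you are right that the conclusion is really $c(m,n)\in q^{\N}$, i.e.\ a non-negative power. Only nitpick: in your display for $c(m,n)^2$ the index set $\Phi_{\red}^+\cap w(\Phi_{\red}^-)$ should read $\Phi_{\red}^+\cap w^{-1}(\Phi_{\red}^-)$, as your subsequent reindexing identity correctly assumes.
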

We could have extracted (ii) from a similar result proved in \cite[Lemma 3.2.8 \& Proposition 3.2.4]{Mac71} for simply connected groups.
\begin{proof}
(i) Recall that the modulus character $\delta_B\colon B=MU^+ \to q^\Z, mu \mapsto |\det \text{Ad}_{\text{Lie}(U^+)}(m)|$ (it is trivial on $U^+$)\footnote{Being an absolute value of a determinant on an $F$-vector space, the modulus function has clearly its values in $q^{\Z}$.}. 
Given that  
$\text{Lie}(\U^+)= \oplus_{\alpha \in \Phi_{\red}^+} \text{Lie}(\U_\alpha)$, we have 
$$\delta_B(m)=\prod_{\alpha   \in \Phi_{\red}^+} \delta_{\alpha}(m), \text{ where }\delta_{\alpha}(m) := |\det \text{Ad}_{\text{Lie}(\U_\alpha)}(m)|.$$
Since $n \U_\alpha n^{-1}= \U_{w(\alpha)}$, we have $n{\text{Lie}(\U_\alpha)}n^{-1}= {\text{Lie}(\U_{w(\alpha)})}$ and so
\begin{align*}\delta_{\alpha}(w(m))=|\det \text{Ad}_{\text{Lie}(\U_\alpha)}(w(m))| 
=|\det \text{Ad}_{\text{Lie}(\U_{w^{-1}(\alpha)})}(m)|
= \delta_{w^{-1}(\alpha)}(m).
\end{align*}
Accordingly,
\begin{align*}
c(m,n)=
\frac{\prod_{\alpha \in \Phi_{\red}^+}  \delta_{w^{-1}(\alpha)}(m)^{1/2}}{\prod_{\alpha  \in \Phi_{\red}^+}  \delta_\alpha(m)^{1/2} }
&=\frac{\prod_{\alpha   \in w^{-1}(\Phi_{\red}^+) \cap \Phi_{\red}^- }  \delta_{\alpha}(m)^{1/2}}{\prod_{\alpha   \in \Phi_{\red}^+ \cap w^{-1}(\Phi_{\red}^-)}  \delta_{\alpha}(m)^{1/2} }\\
&={\prod_{\alpha   \in \Phi_{\red}^- \cap w^{-1}(\Phi_{\red}^+)}  \delta_{\alpha}(m)} \quad\quad\quad \quad(\delta_{-\alpha}=\delta_{\alpha}^{-1}).
\end{align*}

(ii) For any $m\in M^-$ and any $\alpha \in \Phi_{\red}^-$, one has $\omega(\delta_{\alpha}(m))\le 0$. Hence, $c(m,n)\in q^{\N}$ for any $n \in N$.
\end{proof}

\section{Twisted integral Bernstein morphism}\label{SBM}
\subsection{Freeness of the $\mathcal{R}^{\flat}$-module $\cM_{K_\cF^{\flat}}(\Z)$} 
Let $\cF \subset \mathbb{A}(\G,\Sbf)_{\red}$ be any fixed facet. 
{Following the ideas of Bernstein \cite{Deligne84}, as exposed by the approach of \cite{HKP10}}, we define the universal unramified principal series right $\cH_{K_{\cF}^{\flat}}(\Z)$-module $\cM_{K_{\cF}^{\flat}}(\Z) = \cC_c (M^{\flat}U^+\backslash G/K_{\cF},\Z)$, this is the set of $\Z$-valued functions supported on finitely many double cosets. 
The $\cH_{K_{\cF}^{\flat}}(\Z)$-module structure of $\cM_{K_{\cF}^{\flat}}(\Z)$ comes from the natural right action by convolution with respect to the normalized measure $\mu_{K_\cF^{\flat}}$ giving $K_\cF^{\flat}$ volume $1$. 

The natural identification between $\mathcal{R}^{\flat}\simeq \cH(M\sslash M^{\flat},\Z)$
allows us to endow the $\cH_{K_{\cF}^{\flat}}(\Z)$-module $\cM_{K_{\cF}^{\flat}}(\Z)$ with a left $\mathcal{R}^{\flat}$-action as follows: define for any $\psi\in \cM_{K_{\cF}^{\flat}}(\Z)$ and $r\in \mathcal{R}^{\flat}$:
$$r\cdot \psi(g):=\int_Mr(a)\psi(a^{-1}g)d\mu_{M^{\flat}}(a) \quad (\forall g\in G)$$
here, $d\mu_{M^{\flat}}(a)$ is the Haar measure on $M$ giving $M^{\flat}$ volume $1$. 
We will see in Lemma \ref{actionlambdaMI} a more concrete description of this action, it will confirm that the resulting $r\cdot \psi$ is indeed $\Z$-valued. 
It is clear that the actions of $\cH_{K_{\cF}^{\flat}}(\Z)$ and $\mathcal{R}$ on $\cM_{K_{\cF}^{\flat}}(\Z)$ commute: $\cM_{K_{\cF}^{\flat}}(\Z)$ is an $(\mathcal{R}^{\flat},\cH_{{I^{\flat}}}{K_{\cF}^{\flat}}(\Z))$-bimodule.
\begin{remark}
Here, we have defined an twisted action without using the modulus character. 
The standard action is defined as follows 
\begin{align*}
r\cdot_{\text{twist}} \psi(g)&:=\int_M \delta_B^{1/2}(a)r(a)\psi(a^{-1}g)d\mu_{M_1}(a).\qedhere
\end{align*}
\end{remark}
In Theorem \ref{structureMI}, we will generalize \cite[Lemma 1.6.1]{HKP10}. For this purpose, we begin by the following two lemmas. For any $n\in N$ set $v_{n,\cF}^{\flat}:={\bf 1}_{U^+ n K_\cF^{\flat}}$.
\begin{lemma}\label{basisMI} 
The family $$\{v_{w,\cF}^{\flat}\colon w \in \widetilde{W}^{\flat}/W_\cF^{\flat}\},$$ forms a $\Z$-basis for the $\Z$-module $\cM_{K_\cF^{\flat}}(\Z)$.
\end{lemma}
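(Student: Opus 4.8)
The plan is to exhibit an explicit $\Z$-basis for $\cM_{K_\cF^{\flat}}(\Z) = \cC_c(M^{\flat}U^+\backslash G/K_\cF,\Z)$ coming from the Iwasawa decomposition. First I would recall that, by the third bijection in Proposition \ref{bruhatdecomposition} (Iwasawa decomposition, taking the positive unipotent radical $U^+$), the map $n\mapsto U^+ n K_\cF^{\flat}$ induces a bijection
$$\widetilde{W}^{\flat}/W_\cF^{\flat} \;\overset{\sim}{\longrightarrow}\; U^+\backslash G/K_\cF^{\flat}.$$
Since $\cM_{K_\cF^{\flat}}(\Z)$ is by definition the free $\Z$-module on the set $M^{\flat}U^+\backslash G/K_\cF$, the only point to check is that double cosets for $M^{\flat}U^+$ coincide with double cosets for $U^+$ (on the left), i.e. that $M^{\flat}U^+ g K_\cF = U^+ g K_\cF$ for all $g$. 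This is immediate because $M^{\flat}$ normalizes $U^+$ (each $\U_\alpha$ is normalized by $M=Z_\G(\Sbf)\supset M^{\flat}$), so $M^{\flat}U^+ = U^+ M^{\flat}$, and moreover $M^{\flat}\subset K_\cF^{\flat}$ — actually one must be slightly careful here since the double cosets in the definition are $M^{\flat}U^+\backslash G/K_\cF$ with $K_\cF$ not $K_\cF^{\flat}$ on the right; but $M^{\flat}$ normalizes $K_\cF$ (Lemma \ref{Wtrivialontors}, as $M^{\flat}$ is normal in $N$ hence normalizes each parahoric $K_\cF$ associated to a facet $\cF\subset\cA$), so $M^{\flat}U^+ g K_\cF = U^+ M^{\flat} g K_\cF = U^+ g (g^{-1}M^{\flat}g) K_\cF = U^+ g K_\cF^{\flat}$, and thus the double coset space $M^{\flat}U^+\backslash G/K_\cF$ is canonically identified with $U^+\backslash G/K_\cF^{\flat}$.

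Combining these two identifications, $M^{\flat}U^+\backslash G/K_\cF \cong U^+\backslash G/K_\cF^{\flat} \cong \widetilde{W}^{\flat}/W_\cF^{\flat}$, and a set of coset representatives is given by lifting $\widetilde{W}^{\flat}/W_\cF^{\flat}$ to elements $w\in\widetilde{W}^{\flat}$, i.e. to elements $n\in N$ (using $\widetilde{W}^{\flat}=N/M^{\flat}$, and noting $v_{n,\cF}^{\flat}={\bf 1}_{U^+ n K_\cF^{\flat}}$ depends only on the class of $n$ in $\widetilde{W}^{\flat}/W_\cF^{\flat}$ since $M^{\flat}\subset K_\cF^{\flat}$ acts trivially on the right and $U^+ n M^{\flat} = U^+ (nM^{\flat}n^{-1}) n \subset U^+ n K_\cF^{\flat}$... wait, more simply $n M^{\flat} = M^{\flat} n$ up to the normality, giving $U^+ nm K_\cF^{\flat} = U^+ n K_\cF^{\flat}$ directly as $m\in M^{\flat}\subset K_\cF^{\flat}$). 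Hence the indicator functions $\{v_{w,\cF}^{\flat}:w\in\widetilde{W}^{\flat}/W_\cF^{\flat}\}$ are precisely the indicator functions of the distinct double cosets in $M^{\flat}U^+\backslash G/K_\cF$, which by the very definition of $\cM_{K_\cF^{\flat}}(\Z)$ as compactly supported (= finitely supported on double cosets) $\Z$-valued functions form a $\Z$-basis.

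The only genuine subtlety — the step I expect to require the most care — is making sure the right-invariance is with respect to the correct group: the functions in $\cM_{K_\cF^{\flat}}(\Z)$ are $K_\cF$-right-invariant by definition, but the double coset ${U^+ n K_\cF^{\flat}}$ appearing in $v_{n,\cF}^{\flat}$ uses $K_\cF^{\flat}$; one reconciles this exactly as above, since $M^{\flat}$-right-translation of a left-$M^{\flat}U^+$-invariant function changes nothing (left $M^{\flat}$-invariance plus $M^{\flat}$ normalizing everything), so a $K_\cF$-right-invariant function that is $M^{\flat}U^+$-left-invariant is automatically $K_\cF^{\flat}$-right-invariant. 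Once this bookkeeping is settled, the statement follows formally from Proposition \ref{bruhatdecomposition}. I would therefore structure the proof as: (1) reduce $M^{\flat}U^+\backslash G/K_\cF$ to $U^+\backslash G/K_\cF^{\flat}$; (2) invoke the Iwasawa bijection $U^+\backslash G/K_\cF^{\flat}\cong\widetilde{W}^{\flat}/W_\cF^{\flat}$; (3) conclude that $\{v_{w,\cF}^{\flat}\}$ is the set of characteristic functions of these double cosets, hence a $\Z$-basis.
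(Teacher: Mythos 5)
Your approach is the same as the paper's: the lemma is read off from the Iwasawa bijection $U^+\backslash G/K_\cF^{\flat}\cong \widetilde{W}^{\flat}/W_\cF^{\flat}$ of Proposition \ref{bruhatdecomposition}, and your conclusion and overall structure are correct. One intermediate equality is false as written, though: for arbitrary $g\in G$ the chain $U^+M^{\flat}gK_\cF=U^+g(g^{-1}M^{\flat}g)K_\cF=U^+gK_\cF^{\flat}$ fails at the last step, since $g^{-1}M^{\flat}g\neq M^{\flat}$ and $(g^{-1}M^{\flat}g)K_\cF\neq K_\cF^{\flat}$ in general (the identity $M^{\flat}K_\cF=K_\cF^{\flat}$ is only available for the untranslated $M^{\flat}$). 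The repair is precisely the reduction you carry out a few lines later: first use the Iwasawa decomposition to write $g=unk\in U^+NK_\cF$, so that $M^{\flat}U^+gK_\cF=M^{\flat}U^+nK_\cF$ and $U^+gK_\cF^{\flat}=U^+nK_\cF^{\flat}$, and only then invoke that $M^{\flat}$ normalizes $U^+$ and is normal in $N$ (Lemma \ref{Wtrivialontors}) to get $M^{\flat}U^+nK_\cF=U^+nM^{\flat}K_\cF=U^+nK_\cF^{\flat}$. With that ordering of the steps, the identification $M^{\flat}U^+\backslash G/K_\cF\cong U^+\backslash G/K_\cF^{\flat}$ and hence the basis statement follow exactly as you say.
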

\begin{proof}
This is an immediate consequence of $U^\pm \backslash G/K_{\cF}^{\flat} \cong \widetilde{W}^{\flat}/W_\cF^{\flat}$ (Proposition \ref{bruhatdecomposition}).
\end{proof}
\begin{lemma}\label{actionlambdaMI}
For any $w \in N$ and $m \in \Lambda_M^{\flat}$ we have
$$m\cdot v_{n,\cF}^{\flat} = v_{mn,\cF}^{\flat}.$$
So the action of $\mathcal{R}^{\flat}$ on $\cM_{K_\cF^{\flat}}(\Z)$ is induced by the left action of $\Lambda_M^{\flat}$ on $M^{\flat} U^+ \backslash G/{K_\cF^{\flat}}$.
\end{lemma}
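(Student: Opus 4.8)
The claim is the identity $m\cdot v_{n,\cF}^{\flat} = v_{mn,\cF}^{\flat}$ for $m\in\Lambda_M^{\flat}$ and $n\in N$, together with the resulting description of the $\mathcal{R}^{\flat}$-action. The strategy is simply to unwind the definition of the left $\mathcal{R}^{\flat}$-action applied to the characteristic-function basis. Under the identification $\mathcal{R}^{\flat}\simeq\cH(M\sslash M^{\flat},\Z)$, the element $m\in\Lambda_M^{\flat}=M/M^{\flat}$ corresponds to $\mathbf{1}_{mM^{\flat}}$ (for any choice of lift of $m$ to $M$, which I also denote $m$). So it suffices to compute $\mathbf{1}_{mM^{\flat}}\cdot v_{n,\cF}^{\flat}$ from the formula $r\cdot\psi(g)=\int_M r(a)\psi(a^{-1}g)\,d\mu_{M^{\flat}}(a)$.

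First I would substitute $r=\mathbf{1}_{mM^{\flat}}$ and $\psi=v_{n,\cF}^{\flat}=\mathbf{1}_{U^+ nK_\cF^{\flat}}$. The integrand is nonzero only when $a\in mM^{\flat}$ and $a^{-1}g\in U^+ nK_\cF^{\flat}$, i.e. $g\in aU^+ nK_\cF^{\flat}$. Writing $a=m\mu$ with $\mu\in M^{\flat}\subset M^1\subset M$, and using that $M$ normalizes $U^+$ (so $aU^+=U^+a$) together with $M^{\flat}\subset M^1$ which normalizes $K_\cF^{\flat}$ (here one needs $M^{\flat}\subset K_\cF^{\flat}$, or more precisely that $M^{\flat}$ is absorbed: since $M^{\flat}\subset K_{a_\circ}^{\flat}$-type arguments apply, and $\mu\in M^{\flat}$ so $\mu nK_\cF^{\flat}$ — actually the cleanest route is $aU^+ nK_\cF^{\flat}=U^+ a n K_\cF^{\flat}=U^+ m\mu n K_\cF^{\flat}$ and then note $\mu\in M^{\flat}\subset K_\cF^{\flat}$ when $\mu$ centralizes... ) — more safely, I would observe $a^{-1}g\in U^+nK_\cF^{\flat}\iff g\in U^+ an K_\cF^{\flat}$ and since $\mu\in M^{\flat}$ lies in the parahoric $K_\cF^{\flat}$ (as $M^{\flat}\subset M^1\subset\widetilde K_\cF$ and intersecting with the relevant group gives $M^{\flat}\subset K_\cF^{\flat}$), we get $an K_\cF^{\flat}=m\mu nK_\cF^{\flat}$; but $\mu$ need not commute with $n$, so instead I rewrite $\mu n K_\cF^{\flat}$: since $K_\cF^\flat=M^\flat K_\cF$ and $M^\flat$ is normal in $N$ by Lemma \ref{Wtrivialontors}, $\mu n K_\cF^{\flat}=n(n^{-1}\mu n)K_\cF^{\flat}=nK_\cF^{\flat}$. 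Hence $aU^+nK_\cF^{\flat}=U^+mnK_\cF^{\flat}$ is independent of $\mu\in M^{\flat}$.

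Therefore the integrand equals $\mathbf{1}_{U^+mnK_\cF^{\flat}}(g)$ on the set $a\in mM^{\flat}$, and integrating over $a\in mM^{\flat}$ with respect to $\mu_{M^{\flat}}$ — which gives $mM^{\flat}$ total mass $1$ — yields
$$\mathbf{1}_{mM^{\flat}}\cdot v_{n,\cF}^{\flat}(g)=\mathbf{1}_{U^+mnK_\cF^{\flat}}(g)=v_{mn,\cF}^{\flat}(g),$$
which is the desired identity. In particular this shows the action is $\Z$-valued and that $\mathcal{R}^{\flat}$ acts through the left translation action of $\Lambda_M^{\flat}=M/M^{\flat}$ on the double-coset space $M^{\flat}U^+\backslash G/K_\cF^{\flat}$, which is well-defined precisely because $M$ normalizes $U^+$.

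\textbf{Main obstacle.} The only real subtlety is the bookkeeping needed to see that $\mu nK_\cF^{\flat}=nK_\cF^{\flat}$ for $\mu\in M^{\flat}$, i.e. that the coset $U^+anK_\cF^{\flat}$ does not depend on the representative $a\in mM^{\flat}$; this rests on the normality of $M^{\flat}$ in $N$ (Lemma \ref{Wtrivialontors}) and on $M^{\flat}\subset K_\cF^{\flat}$, so that $M^{\flat}$ is genuinely absorbed into the parahoric on the right. Everything else is a direct unwinding of definitions and normalizations of Haar measure, so the lemma is essentially immediate once that point is pinned down.
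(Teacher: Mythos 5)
Your proof is correct and follows essentially the same route as the paper's: unwind the convolution defining the $\mathcal{R}^{\flat}$-action, use that $M$ normalizes $U^+$ and that $M^{\flat}$ is normal in $N$ and contained in $K_\cF^{\flat}$ to identify the support with $U^+mnK_\cF^{\flat}$, and then observe that the scalar is $|mM^{\flat}|_{M^{\flat}}=1$. Your extra care in checking that $\mu nK_\cF^{\flat}=nK_\cF^{\flat}$ for $\mu\in M^{\flat}$ is exactly the step the paper compresses into the equality $mM^{\flat}U^+nK_\cF^{\flat}=M^{\flat}U^+mnK_\cF^{\flat}$.
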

\begin{proof}
Let $n \in N$ and $r =  mM^{\flat} \in \mathcal{R}$ for some $m\in M$. Recall that $\mathcal{R}^{\flat}$ is identified with the Iwahori--Hecke algebra for $M$; $r=mM^{\flat}  \leftrightarrow r={\bf 1}_{m M^{\flat}}$. 
The integral defining $r\cdot v_{n,\cF}^{\flat}(b)$ is (by definition) non-zero only if $b\in mM^{\flat}U^+n{K_\cF^{\flat}}=M^{\flat}U^+mn{K_\cF^{\flat}}$, hence $r\cdot  v_{n,\cF}^{\flat}= \text{s} \, v_{mn,\cF}^{\flat}$ where $s$ is the scalar $r\cdot  v_{n,\cF}^{\flat}(mw) =|mM^{\flat}|=1$.
\end{proof}
The following proposition gives a precise description of the $\mathcal{R}^{\flat}$-module $\cM_{K_\cF^{\flat}}(\Z)$. 
\begin{proposition}\label{HIRbasis}\label{freerank1MK}
The $\mathcal{R}^{\flat}$-module $\cM_{K_\cF^{\flat}}(\Z)$ is free of rank $r_\cF:=|W/\jmath_W(W_\cF)|$, with basis $\{v_{w,\cF}^{\flat},\, w\in D_\cF^{\flat}\}$ where $D_\cF^{\flat}\subset W_{a_\circ}$ is any fixed set of representatives for $W/ \jmath_W(W_{\cF})$. In particular, $\{v_{w,\a}^{\flat},\, w\in W_{a_\circ}\}$ is a canonical basis for the $\mathcal{R}^{\flat}$-module $\cM_{I^{\flat}}(\Z)$.
\end{proposition}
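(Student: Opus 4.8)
The plan is to combine the two structural facts already available: the $\Z$-basis of $\cM_{K_\cF^\flat}(\Z)$ indexed by $\widetilde{W}^\flat/W_\cF^\flat$ from Lemma \ref{basisMI}, and the $\Lambda_M^\flat$-set decomposition $\widetilde{W}^\flat/W_\cF^\flat \cong \Lambda_M^\flat \times (W/\jmath_W(W_\cF))$ from Lemma \ref{berndecom}. Concretely, I would first fix the set $D_\cF^\flat \subset W_{a_\circ}$ of representatives for $W/\jmath_W(W_\cF)$ (which by the convention after Lemma \ref{repofWinK} we may take inside $K = K_{a_\circ}$, hence inside $N$), and observe via Lemma \ref{berndecom} that every class in $\widetilde{W}^\flat/W_\cF^\flat$ is written uniquely as $m \cdot w$ with $m \in \Lambda_M^\flat$ and $w \in D_\cF^\flat$. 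Then each basis vector $v_{x,\cF}^\flat$ with $x$ running over $\widetilde{W}^\flat/W_\cF^\flat$ equals $v_{mw,\cF}^\flat$ for a unique such pair.

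The next step is to invoke Lemma \ref{actionlambdaMI}, which gives $m \cdot v_{w,\cF}^\flat = v_{mw,\cF}^\flat$ for $m \in \Lambda_M^\flat$ and $w \in N$. Applying this with $w$ ranging over the representatives $D_\cF^\flat$, we get that the $\Z$-basis $\{v_{x,\cF}^\flat : x \in \widetilde{W}^\flat/W_\cF^\flat\}$ is precisely the set $\{ m \cdot v_{w,\cF}^\flat : m \in \Lambda_M^\flat,\ w \in D_\cF^\flat\}$, and that for each fixed $w$ these vectors are $\Z$-linearly independent and their $\Z$-span over all $m$ is a free $\mathcal{R}^\flat$-submodule of rank one generated by $v_{w,\cF}^\flat$ (using that $\mathcal{R}^\flat = \Z[\Lambda_M^\flat]$ acts on $M^\flat U^+ \backslash G / K_\cF^\flat$ through the left translation action of $\Lambda_M^\flat$ on the coset space, which is a free $\Lambda_M^\flat$-set once we restrict to the orbit of $w$). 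Summing over $w \in D_\cF^\flat$ gives the direct sum decomposition $\cM_{K_\cF^\flat}(\Z) = \bigoplus_{w \in D_\cF^\flat} \mathcal{R}^\flat \cdot v_{w,\cF}^\flat$, each summand free of rank one, so the module is free of rank $r_\cF = |W/\jmath_W(W_\cF)| = |D_\cF^\flat|$. The special case $\cF = \a$ follows since $W_\a^\flat$ is trivial (alcoves have empty type), so $\jmath_W(W_\a) = 1$ and $D_\a^\flat = W_{a_\circ}$.

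The main point requiring care — though it is not deep — is checking that the $\Lambda_M^\flat$-action on the set of double cosets $M^\flat U^+ \backslash G / K_\cF^\flat$ lying over a fixed $w$ is genuinely free, i.e. that $m w K_\cF^\flat = m' w K_\cF^\flat$ (modulo $M^\flat U^+$ on the left) forces $m = m'$ in $\Lambda_M^\flat$; this is exactly the uniqueness clause in the bijection $\widetilde{W}^\flat/W_\cF^\flat \cong \Lambda_M^\flat \times (W/\jmath_W(W_\cF))$ of Lemma \ref{berndecom}, whose proof there used $M \cap K_\cF^\flat = M^\flat$ (Corollary \ref{uniqueparahoricM}). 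Given that, freeness of each rank-one piece is automatic. I would therefore organize the write-up as: (1) recall the two bijections; (2) translate the $\Z$-basis along them; (3) identify the $\mathcal{R}^\flat$-action using Lemma \ref{actionlambdaMI}; (4) conclude freeness and compute the rank; (5) specialize to $\cF = \a$.
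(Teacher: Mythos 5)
Your proposal is correct and follows exactly the route of the paper's own (much terser) proof: the paper also deduces the statement by combining the $\Z$-basis of Lemma \ref{basisMI}, the bijection of $\Lambda_M^{\flat}$-sets from Lemma \ref{decompwwf}, and the action formula of Lemma \ref{actionlambdaMI}. Your elaboration of the freeness of each rank-one piece via the uniqueness clause in Lemma \ref{decompwwf} (resting on $M \cap K_\cF^{\flat} = M^{\flat}$) is precisely the point the paper leaves implicit.
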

\begin{proof}
Given that we have a bijection of $\Lambda_M^{\flat}$-sets $\widetilde{W}^{\flat}/W_\cF^{\flat} \cong \Lambda_M^{\flat} \times (W/\jmath_W(W_\cF))$ (Lemma \ref{decompwwf}), the proposition follows readily from Lemma \ref{basisMI} and Lemma \ref{actionlambdaMI}.
\end{proof}
\begin{remark}\label{lanskygeneral}
Proposition \ref{HIRbasis} yields an alternative proof of the main theorem of \cite{lansky02} 
giving the dimension of the space of $K_{\cF}$-fixed vectors of an unramified principal series representation of $G$.
\end{remark}
\begin{remark}\label{transition313}
By Lemma \ref{flatKcenter} and Corollary \ref{ontoaveraging}, the map $\psi^{\flat}\colon \cM_{K_\cF}(\Z)\to \cM_{K_\cF^{\flat}}(\Z)$, $v\mapsto v*_{K_\cF}\mathbf{1}_{K_\cF^{\flat}}$ is an homomorphism of bi-$(\Lambda_M-\cH_{K_\cF^{\flat}}(\Z))$-modules. 
Here, the left action $\Lambda_M$ on $\cM_{K_\cF^{\flat}}(\Z)= \cM_{K_\cF}(\Z) *_{K_\cF}\mathbf{1}_{K_\cF^{\flat}}$ is well defined and factors through $\Lambda_M^{\flat}$. 
\end{remark}
\subsection{Properties of the $\cH_{I^{\flat}}(\Z)$-module $\cM_{I^{\flat}}(\Z)$}
In this subsection we will focus on the case $\cF=\a$, for this write $v_{w,\a}^{\flat}, w \in \widetilde{W}^{\flat}$ simply as $v_w^{\flat}$. 
We would like to describe the structure of $\cM_{I^{\flat}}(\Z)$, this time as a $\cH_{I^{\flat}}(\Z)$-module. However, a satisfactory result can only be obtained after enlarging the coefficients ring $\Z$ to $R:=\Z[q^{\pm 1}]$.  
\begin{proposition}\label{relationsiwahori} 
The action of $\cH_{I^{\flat}}(\Z)$ on $\cM_{I^{\flat}}(\Z)$ has the following rules: for any $w \in W_{a_\circ}^{\flat}$ and $m \in \Lambda_M^{\flat}$, we have
\begin{enumerate}[1 -, nosep]
\item\label{1actionMI} $v_1^{\flat} *_{I^{\flat}}i_w^{\flat}=v_w^{\flat}$,
\item\label{2actionMI} $v_m^{\flat} *_{I^{\flat}}i_w^{\flat}=v_{m w}^{\flat}$,
\item\label{3actionMI} $v_1^{\flat} *_{I^{\flat}}i_m^{\flat}=v_m^{\flat}$ if $m\in \Lambda_M^{-,\flat}$.
\item\label{4actionMI} $v_1^{\flat} *_{I^{\flat}}(i_m^{\flat})^{-1}=v_{-m}^{\flat}\in \cM_{I^{\flat}}(\Z[q^{-1}])$ if $m\in \Lambda_M^{-,\flat}$.
\end{enumerate}
\end{proposition}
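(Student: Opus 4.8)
\textbf{Proof plan for Proposition \ref{relationsiwahori}.}

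The strategy is to reduce everything to explicit coset computations, using the convolution formula from Lemma \ref{actionhecke}(i) together with the Iwahori factorizations (Corollary \ref{Iwahoridecomp}) and the double-coset calculus built up in \S\ref{PremIH}. For statement \ref{1actionMI}, I would compute $v_1^{\flat} *_{I^{\flat}} i_w^{\flat} = {\bf 1}_{U^+ I^{\flat}} *_{I^{\flat}} {\bf 1}_{I^{\flat} w I^{\flat}}$ directly. Since ${\bf 1}_{U^+I^\flat}={\bf 1}_{U^+ K_\a^\flat}$ is invariant under left translation by $U^+$ and right translation by $I^\flat$, the convolution lands in $U^+ I^\flat w I^\flat = U^+ w I^\flat$, and the coefficient is a volume $|I^{\flat}\cap w I^\flat w^{-1}|_{I^{\flat}}$-type quantity that I must show equals $1$. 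The right way to see this is to first reduce to $w\in W_{a_\circ}^\flat$, then use Corollary \ref{WInInI} (or Lemma \ref{actionwonI}) which gives exactly $U^+ I \cap wIw^{-1}I = I$ for $w\in W_{a_\circ}$; combined with Lemma \ref{powerq} this forces the coefficient to be $q^0=1$. Statement \ref{2actionMI} then follows from \ref{1actionMI} by left-translating: since the $\mathcal{R}^\flat$-action is left translation by $\Lambda_M^\flat$ (Lemma \ref{actionlambdaMI}), we have $v_m^\flat = m\cdot v_1^\flat$ and the $\mathcal{R}^\flat$- and $\cH_{I^\flat}$-actions commute, so $v_m^\flat *_{I^\flat} i_w^\flat = m\cdot(v_1^\flat *_{I^\flat} i_w^\flat)= m\cdot v_w^\flat = v_{mw}^\flat$ (using $\ell(mw)=\ell(m)+\ell(w)$, Lemma \ref{propertiesdominance}, to identify $m\cdot v_w^\flat$ with $v_{mw}^\flat$ via Lemma \ref{actionlambdaMI}, as $mw\in\widetilde W^\flat$).

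For statement \ref{3actionMI}, the key input is Lemma \ref{normalizingI}: for $m\in M^-$ one has $m^{-1}U^-_\a m\subset U^-_\a$ and $mU^+_\a m^{-1}\subset U^+_\a$, hence $I^\flat m I^\flat = I^\flat m I^\flat$ has the single-coset multiplication property, and more to the point $U^+ I^\flat m I^\flat = U^+ m I^\flat$ with trivial multiplicity. Concretely I would compute ${\bf 1}_{U^+ I^\flat} *_{I^\flat} {\bf 1}_{I^\flat m I^\flat}$ and show it equals ${\bf 1}_{U^+ m I^\flat}=v_m^\flat$ by checking that $|I^\flat \cap m I^\flat m^{-1}|_{I^\flat}$ appearing in Lemma \ref{actionhecke}(i), after the appropriate reduction, is $1$ --- this is where antidominance of $m$ is used, exactly as in Corollary \ref{WInInI} with $w=1$. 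Statement \ref{4actionMI} is then formal: by Corollary \ref{additionalIMpres}(v), $i_m^\flat$ is invertible in $\cH_{I^\flat}(\Z[q^{-1}])$ (its inverse exists because $q_m$ is a power of $q$, Lemma \ref{InIpowerq}), and $-m\in\Lambda_M^{+,\flat}$. Applying \ref{3actionMI} is not directly possible since $-m$ is dominant, not antidominant, so instead I would argue: $v_1^\flat *_{I^\flat} (i_m^\flat)^{-1} = v_{-m}^\flat$ is equivalent, after applying $*_{I^\flat} i_m^\flat$ on the right, to $v_1^\flat = v_{-m}^\flat *_{I^\flat} i_m^\flat$, and the latter is an instance of \ref{2actionMI} (with the role of $w$ played by the translation $m$, using $\ell(-m+m)=\ell(0)=0=\ell(-m)+\ell(m)-2\ell(m)$... ) --- more carefully, one uses that for $m\in\Lambda_M^{-,\flat}$, $\ell(-m\cdot m)$ computation requires the additivity $\ell((-m)\cdot m)\neq\ell(-m)+\ell(m)$ so \ref{2actionMI} does not literally apply; instead I would directly use Lemma \ref{actionlambdaMI} and \ref{3actionMI}: $v_{-m}^\flat *_{I^\flat} i_m^\flat = ((-m)\cdot v_0^\flat)*_{I^\flat} i_m^\flat = (-m)\cdot(v_1^\flat *_{I^\flat} i_m^\flat) = (-m)\cdot v_m^\flat = v_0^\flat = v_1^\flat$, where the middle step is \ref{3actionMI} and the last uses Lemma \ref{actionlambdaMI}. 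This gives $v_{-m}^\flat = v_1^\flat *_{I^\flat} (i_m^\flat)^{-1}$ after right-multiplying by $(i_m^\flat)^{-1}$.

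The main obstacle I anticipate is the multiplicity bookkeeping in parts \ref{1actionMI} and \ref{3actionMI}: one must rule out that the convolution picks up extra cosets $U^+ w' I^\flat$ with $w'<w$ (resp. a $q$-power coefficient $\neq 1$). The upper-triangularity statement Corollary \ref{uppertriangulariy} controls which $w'$ can appear ($w'\le w$), and Corollary \ref{WInInI}/Lemma \ref{powerq} pin down that the leading coefficient is exactly $1$; the delicate point is organizing the reduction to $w\in W_{a_\circ}^\flat$ cleanly (writing a general $w\in\widetilde W^\flat$ as $mw_0$ with $m\in\Lambda_M^\flat$, $w_0\in W_{a_\circ}^\flat$, and using that $\Lambda_M^\flat$ acts by honest left translation so contributes no multiplicity). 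Once that reduction is in place, all four statements follow from the already-established lemmas without further difficulty; I would present \ref{1actionMI} and \ref{3actionMI} as the two genuine computations and deduce \ref{2actionMI}, \ref{4actionMI} formally.
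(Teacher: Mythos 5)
Your proposal is correct and follows essentially the same route as the paper: parts \ref{1actionMI} and \ref{3actionMI} by a direct support-and-volume computation pinned down by Corollary \ref{WInInI} (via Lemmas \ref{actionwonI} and \ref{normalizingI}), part \ref{2actionMI} by commuting the left $\Lambda_M^{\flat}$-translation of Lemma \ref{actionlambdaMI} past the convolution, and part \ref{4actionMI} by right-multiplying by $i_m^{\flat}$ and invoking part \ref{3actionMI}. The paper simply treats \ref{1actionMI} and \ref{3actionMI} in one stroke for $n\in N_{1,a_\circ}\cup M^-$, so no separate reduction of general $w\in\widetilde W^{\flat}$ to $W_{a_\circ}^{\flat}$ is needed.
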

\begin{proof}
1 - \& 3 - Let $n\in N$. If the quantity $v_1^{\flat}*_{I^{\flat}} i_n^{\flat}(b)= \int_{U^+{I}^{\flat}}i_n^{\flat}(a^{-1}b)d\mu_{I^{\flat}}(a)$ is non-zero for some $b$, then 
$b\in U^+I^{\flat}nI^{\flat}=U^+I^{-,\flat}nI^{\flat}$ (the equality follows using the Iwahori factorization of $I^{\flat}$ and $M^{\flat} \triangleleft N$). 

If $n\in N_{1,a_\circ}\cup M^-$, then we must have $b\in U^+nI^{\flat}$ by Corollary \ref{WInInI}. 
Accordingly, $v_1^{\flat} *_{I^{\flat}} i_w^{\flat}= \text{s} \, v_w^{\flat}$, with $s=v_1^{\flat}*_{I ^{\flat}} i_n^{\flat}(n) =U^+I^{\flat} \cap n I^{\flat} n^{-1}I^{\flat} \in \Z$, which is $1$ by Corollary \ref{WInInI}. 

2 - Using the previous rule and Lemma \ref{actionlambdaMI}: $v_m^{\flat}*_{I^{\flat}}i_w^{\flat}=m\cdot v_1^{\flat}*_{I^{\flat}} i_w^{\flat}
=m\cdot v_w^{\flat} =v_{mw}^{\flat}$. 

4 - Using fact 3, we have $ v_{-m}^{\flat}={-m} \cdot v_1^{\flat} ={-m} \cdot v_m^{\flat}*_{I^{\flat}}(i_m^{\flat})^{-1}=v_1^{\flat}*_{I^{\flat}}(i_m^{\flat})^{-1}$. 
\end{proof}
\begin{corollary}\label{surjectiveh}
For any $m\in  \Lambda_M^{\flat}$, any $w\in W$ and any $m_\circ\in \Lambda_M^{-,\flat}$ such that $m+m_\circ \in \Lambda_M^{-,\flat}$ (which always exists by Proposition \ref{propertiesdominance}), we have in $\cM_{I^{\flat}}(\Z[q^{-1}])$
$$v_{mw}^{\flat}= v_1^{\flat}*_{I^{\flat}}i_{m+m_\circ}^{\flat}*_{I^{\flat}} (i_{m_\circ}^{\flat})^{-1} *_{I^{\flat}} i_w^{\flat}= v_1^{\flat}*_{I^{\flat}} (i_{m_\circ}^{\flat})^{-1}*_{I^{\flat}}i_{m+m_\circ}^{\flat} *_{I^{\flat}} i_w^{\flat}.$$
\end{corollary}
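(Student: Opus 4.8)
The plan is to prove Corollary~\ref{surjectiveh} as a direct consequence of the module relations established in Proposition~\ref{relationsiwahori} together with the decomposition properties of $\Lambda_M^{\flat}$ recorded in Proposition~\ref{propertiesdominance}. First I would fix $m\in\Lambda_M^{\flat}$, $w\in W$ and choose, using part~\ref{sumtwodom} of Proposition~\ref{propertiesdominance} (applied to the single element $m$, or to $\{m, 0\}$), an element $m_\circ\in\Lambda_M^{-,\flat}$ with $m+m_\circ\in\Lambda_M^{-,\flat}$; the existence of such an $m_\circ$ is exactly what that proposition guarantees. Note that from now on we work in $\cM_{I^{\flat}}(\Z[q^{-1}])$, where the elements $(i_{m_\circ}^{\flat})^{-1}$ and $(i_{m+m_\circ}^{\flat})^{-1}$ make sense by Corollary~\ref{additionalIMpres}\ref{v} (using Lemma~\ref{InIpowerq} that $q_w\in q^{\N}$).

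The core computation runs as follows. Start from $v_{mw}^{\flat}=m\cdot v_w^{\flat}$, which by rule~\ref{2actionMI} of Proposition~\ref{relationsiwahori} equals $v_1^{\flat}*_{I^{\flat}}i_m^{\flat}$ when $m$ is antidominant, but in general we must route through a dominant shift. Since $m+m_\circ\in\Lambda_M^{-,\flat}$, rule~\ref{3actionMI} gives $v_{m+m_\circ}^{\flat}=v_1^{\flat}*_{I^{\flat}}i_{m+m_\circ}^{\flat}$. Now $m=(m+m_\circ)+(-m_\circ)$ in $\Lambda_M^{\flat}$, so by Lemma~\ref{actionlambdaMI} the left $\mathcal R^{\flat}$-action satisfies $v_m^{\flat}=(-m_\circ)\cdot v_{m+m_\circ}^{\flat}$. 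On the other hand, rule~\ref{4actionMI} gives $v_{-m_\circ}^{\flat}=v_1^{\flat}*_{I^{\flat}}(i_{m_\circ}^{\flat})^{-1}$ (valid since $m_\circ\in\Lambda_M^{-,\flat}$), and rule~\ref{2actionMI} upgraded via the commuting left/right actions yields $(-m_\circ)\cdot\big(v_1^{\flat}*_{I^{\flat}}h\big)=v_{-m_\circ}^{\flat}*_{I^{\flat}}h=v_1^{\flat}*_{I^{\flat}}(i_{m_\circ}^{\flat})^{-1}*_{I^{\flat}}h$ for any $h$. Combining,
\begin{align*}
v_{mw}^{\flat}&=m\cdot v_w^{\flat}=m\cdot\big(v_1^{\flat}*_{I^{\flat}}i_w^{\flat}\big)=(-m_\circ)\cdot\big(v_{m+m_\circ}^{\flat}*_{I^{\flat}}i_w^{\flat}\big)\\
&=(-m_\circ)\cdot\big(v_1^{\flat}*_{I^{\flat}}i_{m+m_\circ}^{\flat}*_{I^{\flat}}i_w^{\flat}\big)=v_1^{\flat}*_{I^{\flat}}(i_{m_\circ}^{\flat})^{-1}*_{I^{\flat}}i_{m+m_\circ}^{\flat}*_{I^{\flat}}i_w^{\flat}.
\end{align*}
This gives one of the two claimed expressions; the other follows upon checking that $i_{m_\circ}^{\flat}$ and $i_{m+m_\circ}^{\flat}$ commute in $\cH_{I^{\flat}}(\Z[q^{-1}])$, which is immediate from the braid relation of Theorem~\ref{IMpresentation} since $\ell(m_\circ)+\ell(m+m_\circ)=\ell((m_\circ)+(m+m_\circ))=\ell((m+m_\circ)+m_\circ)$ by part~2 of Proposition~\ref{propertiesdominance} (both $m_\circ$ and $m+m_\circ$ being antidominant), so $i_{m_\circ}^{\flat}*_{I^{\flat}}i_{m+m_\circ}^{\flat}=i_{m_\circ+(m+m_\circ)}^{\flat}=i_{m+m_\circ}^{\flat}*_{I^{\flat}}i_{m_\circ}^{\flat}$, and inverting gives $(i_{m_\circ}^{\flat})^{-1}*_{I^{\flat}}i_{m+m_\circ}^{\flat}=i_{m+m_\circ}^{\flat}*_{I^{\flat}}(i_{m_\circ}^{\flat})^{-1}$.

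I do not expect a genuine obstacle here: the statement is essentially bookkeeping once Proposition~\ref{relationsiwahori} is in hand. The one point requiring a little care is the interplay of the commuting left $\mathcal R^{\flat}$-action and right $\cH_{I^{\flat}}$-action — specifically justifying that $(-m_\circ)\cdot(v_1^{\flat}*_{I^{\flat}}h)=v_{-m_\circ}^{\flat}*_{I^{\flat}}h$, which is the bimodule compatibility noted right after the definition of $\cM_{K_\cF^{\flat}}$, combined with $v_{-m_\circ}^{\flat}=(-m_\circ)\cdot v_1^{\flat}$ from Lemma~\ref{actionlambdaMI}. The mild subtlety of working in $\cM_{I^{\flat}}(\Z[q^{-1}])$ rather than $\cM_{I^{\flat}}(\Z)$ is already flagged in the statement and is handled by Corollary~\ref{additionalIMpres}\ref{v}.
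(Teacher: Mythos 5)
Your proof is correct and follows the same route as the paper, whose own proof is simply the remark that the identity ``follows readily from Proposition~\ref{relationsiwahori}''; you have filled in exactly the intended bookkeeping (the bimodule compatibility, the antidominant shift via Lemma~\ref{actionlambdaMI}, and the commutation of $i_{m_\circ}^{\flat}$ with $i_{m+m_\circ}^{\flat}$ via the braid relation and Proposition~\ref{propertiesdominance}). The only blemish is the garbled opening sentence claiming $v_{mw}^{\flat}=v_1^{\flat}*_{I^{\flat}}i_m^{\flat}$ for antidominant $m$ (that expression is $v_m^{\flat}$, not $v_{mw}^{\flat}$), but the subsequent displayed computation is the correct one.
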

\begin{proof}
This follows readily from Proposition \ref{relationsiwahori}.
\end{proof}
Accordingly, any $v \in \cM_{I^{\flat}}(\Z[q^{-1}])$ can be written as $v_1^{\flat}*_{I^{\flat}} h_v$ for some $h_v \in \cH_{I^{\flat}}(\Z[q^{-1}])$, i.e. 
the following homomorphism of right $\cH_{I^{\flat}}(\Z[q^{-1}])$-modules
\begin{align*}\hbar_{I^{\flat}}\colon \cH_{I^{\flat}}({\Z[q^{-1}]}) \longrightarrow \cM_{I^{\flat}}({\Z[q^{-1}]}), \quad h\longmapsto  v_1^{\flat}*_{I^{\flat}}h.\end{align*}
is surjective. 
Actually, more is true:
\begin{theorem}\label{structureMI}
The homomorphism $\hbar_{I^{\flat}}$ is an isomorphism. 
\end{theorem}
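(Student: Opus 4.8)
We already know from the discussion preceding the theorem that $\hbar_{I^{\flat}}$ is a surjective homomorphism of right $\cH_{I^{\flat}}(\Z[q^{-1}])$-modules, so the only thing left to prove is injectivity. The strategy is to compare the two sides as $\mathcal{R}^{\flat}\otimes_\Z \Z[q^{-1}]$-modules: by Corollary \ref{HIbasis} the algebra $\cH_{I^{\flat}}(\Z[q^{-1}])$ is free as a left $\mathcal{R}^{\flat}\otimes_\Z\Z[q^{-1}]$-module with basis $\{i_w^{\flat}\colon w\in W_{a_\circ}^{\flat}\}$, and by Proposition \ref{HIRbasis} the module $\cM_{I^{\flat}}(\Z)$ is free as an $\mathcal{R}^{\flat}$-module with basis $\{v_{w,\a}^{\flat}\colon w\in W_{a_\circ}^{\flat}\}$, hence $\cM_{I^{\flat}}(\Z[q^{-1}])$ is free over $\mathcal{R}^{\flat}\otimes_\Z\Z[q^{-1}]$ on the same index set. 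So both sides are free of the same (finite) rank $|W_{a_\circ}^{\flat}|=|W|$ over the commutative ring $\mathcal{R}^{\flat}\otimes_\Z\Z[q^{-1}]$.

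The key point is that $\hbar_{I^{\flat}}$ is a map of left $\mathcal{R}^{\flat}$-modules: indeed the left $\mathcal{R}^{\flat}$-action on $\cM_{I^{\flat}}$ is induced by left translation by $\Lambda_M^{\flat}$ (Lemma \ref{actionlambdaMI}), while on $\cH_{I^{\flat}}(\Z[q^{-1}])$ the corresponding left $\mathcal{R}^{\flat}$-structure is the one coming from the embedding $m\mapsto \dot\Theta_m^{\flat}$ — but for injectivity it is cleaner to use directly the rules of Proposition \ref{relationsiwahori}. Concretely, I would compute $\hbar_{I^{\flat}}$ on the $\Z[q^{-1}]$-basis $\{\dot\Theta_m^{\flat}*_{I^{\flat}}i_w^{\flat}\colon m\in\Lambda_M^{\flat},\,w\in W_{a_\circ}^{\flat}\}$ of $\cH_{I^{\flat}}(\Z[q^{-1}])$ furnished by Corollary \ref{HIbasis}. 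One shows $v_1^{\flat}*_{I^{\flat}}\dot\Theta_m^{\flat}=v_m^{\flat}$ (this is essentially facts 3 and 4 of Proposition \ref{relationsiwahori} together with the triangular definition of $\dot\Theta_m^{\flat}$: for $m$ antidominant $\dot\Theta_m^\flat=i_m^\flat$ and for general $m$ write $m=m_1-m_2$ with $m_i\in\Lambda_M^{-,\flat}$ so that $\dot\Theta_m^\flat=i_{m_1}^\flat*_{I^\flat}(i_{m_2}^\flat)^{-1}$, whence $v_1^\flat*_{I^\flat}\dot\Theta_m^\flat=v_1^\flat*_{I^\flat}i_{m_1}^\flat*_{I^\flat}(i_{m_2}^\flat)^{-1}=v_{m_1}^\flat*_{I^\flat}(i_{m_2}^\flat)^{-1}=v_{m_1-m_2}^\flat=v_m^\flat$ using Corollary \ref{surjectiveh}). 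Then by rule 2 of Proposition \ref{relationsiwahori}, $\hbar_{I^{\flat}}(\dot\Theta_m^{\flat}*_{I^{\flat}}i_w^{\flat})=v_m^{\flat}*_{I^{\flat}}i_w^{\flat}=v_{mw}^{\flat}=v_{w,\a}^{\flat}$ up to the $\mathcal{R}^{\flat}$-action, i.e. $\hbar_{I^{\flat}}$ sends the basis $\{\dot\Theta_m^\flat*_{I^\flat}i_w^\flat\}$ bijectively onto the basis $\{m\cdot v_{w,\a}^\flat\}=\{v_{mw}^\flat\}$ of $\cM_{I^{\flat}}(\Z[q^{-1}])$ (Proposition \ref{HIRbasis} together with Lemma \ref{decompwwf}). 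A bijection between $\Z[q^{-1}]$-bases is an isomorphism.

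Alternatively, and perhaps more transparently: $\hbar_{I^{\flat}}$ is a surjection of free $\mathcal{R}^{\flat}\otimes_\Z\Z[q^{-1}]$-modules of the same finite rank; to conclude it is an isomorphism it suffices to check that the induced matrix (in the bases $\{i_w^\flat\}_{w\in W_{a_\circ}^\flat}$ on the source and $\{v_{w,\a}^\flat\}_{w\in W_{a_\circ}^\flat}$ on the target) is invertible over $\mathcal{R}^{\flat}\otimes_\Z\Z[q^{-1}]$ — and rule 1 of Proposition \ref{relationsiwahori}, $v_1^{\flat}*_{I^{\flat}}i_w^{\flat}=v_w^{\flat}$, says this matrix is the identity. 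So $\hbar_{I^{\flat}}$ is already an isomorphism of $\mathcal{R}^{\flat}$-modules before passing to $\Z[q^{-1}]$-coefficients would even be needed — except that $\hbar_{I^{\flat}}$ as a map of \emph{$\cH_{I^{\flat}}$-modules} requires $q^{-1}$ for the source to be $\mathcal{R}^\flat$-free on $\{i_w^\flat\}$, hence the hypothesis on coefficients. I expect the only mild obstacle is bookkeeping: making sure the left $\mathcal{R}^{\flat}$-module structures on the two sides genuinely agree under $\hbar_{I^{\flat}}$ (which is where one invokes Lemma \ref{actionlambdaMI} and the compatibility of $\dot\Theta_{Bern}^{\flat}$ with the module structure), and that a surjective endomorphism-like map between free modules of equal finite rank over a commutative ring is automatically bijective — the latter is standard (e.g. it is injective because a surjective $R$-linear endomorphism of a finitely generated module is an isomorphism, applied after identifying source and target via the bases).
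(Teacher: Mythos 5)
Your argument is circular. The two inputs you lean on --- that $\cH_{I^{\flat}}(\Z[q^{-1}])$ is a free left $\mathcal{R}^{\flat}\otimes_\Z\Z[q^{-1}]$-module with basis $\{i_w^{\flat}\colon w\in W_{a_\circ}^{\flat}\}$, and that $\{\dot\Theta_m^{\flat}*_{I^{\flat}}i_w^{\flat}\}$ is a $\Z[q^{-1}]$-basis --- are exactly Corollary \ref{HIbasis} (via Lemma \ref{RH0isomorphism}), and in this paper both are \emph{deduced from} Theorem \ref{structureMI}; even the map $\dot{\Theta}_{\text{Bern}}^{\flat}$ itself is constructed in \S\ref{decomptwistbern} "thanks to Theorem \ref{structureMI}" (its characterizing property $m\cdot v_1^{\flat}=v_1^{\flat}*_{I^{\flat}}\dot\Theta_m^{\flat}$ pins down $\dot\Theta_m^{\flat}$ only because $\hbar_{I^{\flat}}$ is injective). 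You can repair part of this: define $\dot\Theta_m^{\flat}:=i_{m+m_\circ}^{\flat}*_{I^{\flat}}(i_{m_\circ}^{\flat})^{-1}$ directly and check well-definedness and $v_1^{\flat}*_{I^{\flat}}\dot\Theta_m^{\flat}=v_m^{\flat}$ from Corollary \ref{surjectiveh} and the braid relations; that much is non-circular. What you cannot get for free is that these elements, times the $i_w^{\flat}$ for $w\in W_{a_\circ}^{\flat}$, \emph{span} $\cH_{I^{\flat}}(\Z[q^{-1}])$ over $\Z[q^{-1}]$ (equivalently, that every $i_y^{\flat}$, $y\in\widetilde{W}^{\flat}$ --- not just $y\in W_{a_\circ}^{\flat}$ --- lies in $\sum_{m,w}\Z[q^{-1}]\,\dot\Theta_m^{\flat}*_{I^{\flat}}i_w^{\flat}$). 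That spanning statement is Corollary \ref{cortheostructureMI}, which the paper extracts \emph{from the proof} of the theorem. Your "alternative" argument has the same defect: the identity $v_1^{\flat}*_{I^{\flat}}i_w^{\flat}=v_w^{\flat}$ only computes $\hbar_{I^{\flat}}$ on the finitely many $i_w^{\flat}$ with $w\in W_{a_\circ}^{\flat}$, and the assertion that these form an $\mathcal{R}^{\flat}$-basis of the source is again the circular input. The general fact "surjective onto a free module of the same finite rank implies bijective" is fine, but only once finite generation of the source over $\mathcal{R}^{\flat}\otimes_\Z\Z[q^{-1}]$ is established, and that is the actual content here.

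The paper's proof avoids the Bernstein basis entirely: it expands $v_1^{\flat}*_{I^{\flat}}i_y^{\flat}=\sum_{x}c_{x,y}v_x^{\flat}$ in the Iwahori--Matsumoto basis indexed by all of $\widetilde{W}^{\flat}$, uses Corollary \ref{uppertriangulariy} to show $c_{x,y}\neq 0$ forces $x\le y$ in the Chevalley--Bruhat order, and Lemma \ref{powerq} to show the diagonal entry $c_{y,y}=|U^+I^{\flat}\cap yI^{\flat}y^{-1}I^{\flat}|_{I^{\flat}}$ is a (nonzero) power of $q$; a maximal element of the support of a putative kernel element then gives a contradiction. If you want to keep your structure, you would have to prove the spanning claim of Corollary \ref{cortheostructureMI} first, and the natural proof of that is precisely this triangularity (or an induction on length equivalent to it) --- so the Bruhat-order argument is not optional bookkeeping but the mathematical core of the statement.
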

\begin{proof} 
Given Corollary \ref{surjectiveh}, we just need to show the injectivity of $\hbar_{{I^{\flat}}}$. 
In doing so, we will actually show that this map is "upper triangular with respect to the Chevalley--Bruhat order, with invertible diagonals".

Recall that $\cH_{I^{\flat}}(\Z)$ (resp. $\cM_{I^{\flat}}(\Z)$) admits the $\Z$-basis $i_y^{\flat}$ (resp. $v_x^{\flat}$) for $x,y \in \widetilde{W}^{\flat}$.  
For any $y\in \widetilde{W}^{\flat}$, write $v_1^{\flat} *_{I^{\flat}} i_y^{\flat}= \sum_{x \in \widetilde{W}^{\flat}} c_{x,y} v_x^{\flat}$ with $c_{x,y} = v_1^{\flat} *_{I^{\flat}} i_y^{\flat} (x) \in \Z$. 
The function $v_1^{\flat} *_{I^{\flat}} i_y^{\flat} $ can be nonzero only on the set $U^+I^{\flat}yI^{\flat}$, so by Corollary \ref{uppertriangulariy} one gets
$$c_{x,y} \neq 0 \Leftrightarrow U^+x \cap I^{\flat}yI^{\flat}\neq \emptyset \Rightarrow x \le y.$$ 
Therefore,
$$v_1^{\flat} *_{I^{\flat}} i_y^{\flat}=\sum_{x \in \widetilde{W}^{\flat} \colon x\le y} c_{x,y} v_x^{\flat}.$$
Now, write any $h\in \ker \hbar_{I^{\flat}}$ as $\sum_{y\in \widetilde{W}^{\flat}} r_y i_y^{\flat}$, where the coefficients $\{r_y\}$ are nonzero precisely on some finite set $I_h \subset \widetilde{W}^{\flat}$. 
Assume that $h\neq 0$ and let $z\in I_h$ be any element maximal with respect to the Chevalley--Bruhat order in $I_h$. 
The coefficient $c_{z,z}=|U^+I^{\flat} \cap z I^{\flat}z^{-1}I^{\flat}|_{I^{\flat}}\ge 1$ and it is actually a power of $q$ by Lemma \ref{powerq}. 
By maximality of $z$ in $I_h$, the term $v_z^{\flat}$ can not be killed in the sum $v_1^{\flat} *_{I^{\flat}} h=\sum_{y\in I_h} \sum_{x \in \widetilde{W}^{\flat} \colon x\le y} c_{x,y} r_y v_x^{\flat}$, hence $v_1^{\flat} *_{I^{\flat}} h$ can not equal zero. 
Therefore,  $\ker \hbar_{I^{\flat}}$ is trivial and consequently $ \hbar_{I^{\flat}}$ is an isomorphism.
\end{proof}
Theorem \ref{structureMI}, implies immediately the following 
\begin{corollary}\label{cor6}
The right $\cH_{I^{\flat}}({\Z[q^{-1}]})$-module $\cM_{I^{\flat}}({\Z[q^{-1}]})$ is free and of rank $1$, with canonical generator $v_1^{\flat}$. 
This yields a canonical isomorphism
$$\begin{tikzcd}
\cH_{I^{\flat}}({\Z[q^{-1}]}) \arrow{r}{\simeq}& \text{End}_{\cH_{I^{\flat}}({\Z[q^{-1}]})}(\cM_{I^{\flat}}({\Z[q^{-1}]})), \quad h \arrow[mapsto]{r}& (v_1^{\flat}*_{I^{\flat}}h'\mapsto v_1*_{{I^{\flat}}}h*_{{I^{\flat}}}h').
\end{tikzcd}$$
\end{corollary}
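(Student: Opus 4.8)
The plan is to deduce Corollary~\ref{cor6} directly from Theorem~\ref{structureMI} together with the general formalism of Proposition~\ref{ringintertwiners}. First I would observe that Theorem~\ref{structureMI} asserts that the map $\hbar_{I^{\flat}}\colon h\mapsto v_1^{\flat}*_{I^{\flat}}h$ is an isomorphism of right $\cH_{I^{\flat}}(\Z[q^{-1}])$-modules from $\cH_{I^{\flat}}(\Z[q^{-1}])$ onto $\cM_{I^{\flat}}(\Z[q^{-1}])$; since $\cH_{I^{\flat}}(\Z[q^{-1}])$ is by definition free of rank $1$ as a right module over itself, with canonical generator $i_1^{\flat}={\bf 1}_{I^{\flat}}$, transporting this structure along $\hbar_{I^{\flat}}$ shows $\cM_{I^{\flat}}(\Z[q^{-1}])$ is free of rank $1$ with generator $\hbar_{I^{\flat}}(i_1^{\flat})=v_1^{\flat}*_{I^{\flat}}{\bf 1}_{I^{\flat}}=v_1^{\flat}$ (the last equality because $v_1^{\flat}$ is already right $I^{\flat}$-invariant, so convolving with the unit ${\bf 1}_{I^{\flat}}$ of $\cH_{I^{\flat}}(\Z[q^{-1}])$ fixes it). That disposes of the first assertion.

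For the endomorphism-ring statement, I would argue as follows. Any free rank-one right module $P$ over a ring $R$ satisfies $\End_R(P)\simeq R^{\mathrm{opp}}$ canonically, but here the claim is stated so that composition on the left corresponds to convolution on the left, so I need to be a touch careful about the side. Concretely, given $h\in \cH_{I^{\flat}}(\Z[q^{-1}])$, define $\Phi_h\colon\cM_{I^{\flat}}(\Z[q^{-1}])\to\cM_{I^{\flat}}(\Z[q^{-1}])$ by $\Phi_h(v_1^{\flat}*_{I^{\flat}}h')=v_1^{\flat}*_{I^{\flat}}h*_{I^{\flat}}h'$; this is well defined precisely because every element of $\cM_{I^{\flat}}(\Z[q^{-1}])$ is \emph{uniquely} of the form $v_1^{\flat}*_{I^{\flat}}h'$ by Theorem~\ref{structureMI}, and it is right $\cH_{I^{\flat}}(\Z[q^{-1}])$-linear since right multiplication commutes with left multiplication by $h$ by associativity of $*_{I^{\flat}}$. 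Conversely, any $\varphi\in\End_{\cH_{I^{\flat}}(\Z[q^{-1}])}(\cM_{I^{\flat}}(\Z[q^{-1}]))$ is determined by $\varphi(v_1^{\flat})$, and writing $\varphi(v_1^{\flat})=v_1^{\flat}*_{I^{\flat}}h_\varphi$ via Theorem~\ref{structureMI} one checks $\varphi=\Phi_{h_\varphi}$ on the generator, hence everywhere by linearity. Finally $\Phi_{h_1*_{I^{\flat}}h_2}=\Phi_{h_1}\circ\Phi_{h_2}$ is immediate from associativity, so $h\mapsto\Phi_h$ is a ring isomorphism $\cH_{I^{\flat}}(\Z[q^{-1}])\riso\End_{\cH_{I^{\flat}}(\Z[q^{-1}])}(\cM_{I^{\flat}}(\Z[q^{-1}]))$.

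The only subtlety — and the place I would be most careful — is bookkeeping of left versus right actions and the opposite-ring issue, exactly parallel to the situation in Proposition~\ref{ringintertwiners}; the statement as written in Corollary~\ref{cor6} is arranged so that no $\mathrm{opp}$ appears, which is consistent because here we are computing endomorphisms of the \emph{right} $\cH_{I^{\flat}}(\Z[q^{-1}])$-module $\cM_{I^{\flat}}(\Z[q^{-1}])$ via the \emph{left} convolution action, and these two actions commute, so the composition order matches the multiplication order of $\cH_{I^{\flat}}(\Z[q^{-1}])$. There is essentially no real obstacle: this corollary is a formal consequence of Theorem~\ref{structureMI}, and the proof is a two-line invocation of the ``free rank-one module'' principle plus the observation that $v_1^{\flat}*_{I^{\flat}}{\bf 1}_{I^{\flat}}=v_1^{\flat}$.
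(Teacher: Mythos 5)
Your proof is correct and matches the paper, which offers no separate argument beyond the remark that Theorem \ref{structureMI} ``implies immediately'' the corollary; your explicit check that $\Phi_{h_1*_{I^{\flat}}h_2}=\Phi_{h_1}\circ\Phi_{h_2}$ and that any endomorphism is determined by its value on $v_1^{\flat}$ is exactly the required bookkeeping. One nitpick: your parenthetical general principle is stated backwards --- for a \emph{right} module $R_R$ one has $\End_R(R_R)\simeq R$ via left multiplications (the opposite ring appears for the left regular module), but your subsequent explicit computation gets the sides right, so nothing in the argument is affected.
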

\begin{corollary}\label{injectivebarh}
Let $e\in \cH_{I^{\flat}}(\Z[q^{-1}])$ be an idempotent and set 
$$v_e:=v_1^{\flat}*_{I^{\flat}} e, \cH_e({\Z[q^{-1}]}):=e*_{I^{\flat}}\cH_{I^{\flat}}({\Z[q^{-1}]}) *_{I^{\flat}}e \text{ and }\cM_e({\Z[q^{-1}]}):=\cM_{I^{\flat}}({\Z[q^{-1}]})*_{I^{\flat}}e$$
The following homomorphism of right $\cH_e({\Z[q^{-1}]})$-modules is injective
\begin{align*}\hbar_e\colon \cH_e({\Z[q^{-1}]})\longrightarrow \cM_e({\Z[q^{-1}]}), \quad h\longmapsto  v_e *_{I^{\flat}}h.\end{align*} 
\end{corollary}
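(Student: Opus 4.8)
\textbf{Proof plan for Corollary \ref{injectivebarh}.}

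The plan is to deduce the injectivity of $\hbar_e$ directly from Theorem \ref{structureMI}, which asserts that $\hbar_{I^{\flat}}\colon \cH_{I^{\flat}}(\Z[q^{-1}])\to\cM_{I^{\flat}}(\Z[q^{-1}])$, $h\mapsto v_1^{\flat}*_{I^{\flat}}h$, is an isomorphism of right $\cH_{I^{\flat}}(\Z[q^{-1}])$-modules. First I would observe that $\hbar_e$ is simply the restriction of $\hbar_{I^{\flat}}$ to the submodule $\cH_e(\Z[q^{-1}])=e*_{I^{\flat}}\cH_{I^{\flat}}(\Z[q^{-1}])*_{I^{\flat}}e$: indeed for $h=e*_{I^{\flat}}h*_{I^{\flat}}e\in\cH_e(\Z[q^{-1}])$ one has $v_e*_{I^{\flat}}h = v_1^{\flat}*_{I^{\flat}}e*_{I^{\flat}}h = v_1^{\flat}*_{I^{\flat}}h = \hbar_{I^{\flat}}(h)$, using $e*_{I^{\flat}}h=h$ (since $h$ already begins with the idempotent $e$, which is idempotent). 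Because $\hbar_{I^{\flat}}$ is an isomorphism, its restriction to any subset is injective; hence $\hbar_e$ is injective.

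The only point that requires a word of care is checking that $\hbar_e$ actually lands in $\cM_e(\Z[q^{-1}])=\cM_{I^{\flat}}(\Z[q^{-1}])*_{I^{\flat}}e$ and is a homomorphism of right $\cH_e(\Z[q^{-1}])$-modules, so that the statement as phrased makes sense. For the first: $\hbar_e(h)=v_1^{\flat}*_{I^{\flat}}h = v_1^{\flat}*_{I^{\flat}}(h*_{I^{\flat}}e) = (v_1^{\flat}*_{I^{\flat}}h)*_{I^{\flat}}e\in\cM_{I^{\flat}}(\Z[q^{-1}])*_{I^{\flat}}e$. For the second: for $h\in\cH_e(\Z[q^{-1}])$ and $h'\in\cH_e(\Z[q^{-1}])$ one computes $\hbar_e(h*_{I^{\flat}}h') = v_1^{\flat}*_{I^{\flat}}h*_{I^{\flat}}h' = (v_e*_{I^{\flat}}h)*_{I^{\flat}}h' = \hbar_e(h)*_{I^{\flat}}h'$, which is the required compatibility (the right $\cH_e$-action on $\cM_e$ being the restriction of the right $\cH_{I^{\flat}}$-action).

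There is essentially no obstacle here: this corollary is a formal consequence of Theorem \ref{structureMI} together with the idempotency of $e$ and associativity of $*_{I^{\flat}}$. The one genuinely substantive input, namely that $v_1^{\flat}*_{I^{\flat}}(-)$ is injective on all of $\cH_{I^{\flat}}(\Z[q^{-1}])$, has already been established via the ``upper triangular with invertible diagonal'' argument in the proof of Theorem \ref{structureMI}. So the proof is a two-line restriction argument, and the appropriate write-up is simply: $\hbar_e$ is the restriction of the isomorphism $\hbar_{I^{\flat}}$ to $\cH_e(\Z[q^{-1}])$, hence injective.
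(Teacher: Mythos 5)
Your argument is correct and is essentially the paper's own proof: both reduce to the observation that $v_e *_{I^{\flat}} h = v_1^{\flat} *_{I^{\flat}} (e *_{I^{\flat}} h) = v_1^{\flat} *_{I^{\flat}} h$ for $h \in \cH_e(\Z[q^{-1}])$, so that $\ker \hbar_e \subset \ker \hbar_{I^{\flat}} = \{0\}$ by Theorem \ref{structureMI}. Your additional checks that $\hbar_e$ lands in $\cM_e(\Z[q^{-1}])$ and respects the right module structure are the routine verifications the paper dismisses as clear.
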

\begin{proof}
It is clear that $\hbar_e$ is a well defined homomorphism of right $\cH_e({\Z[q^{-1}]})$-modules. 
Let $h\in \ker \hbar_e$, then $0=v_e*_{I^{\flat}}h= v_1^{\flat}*_{I^{\flat}} h$. Accordingly, $\ker \hbar_e \subset \ker \hbar_{I^{\flat}}= \{0\}$ by Theorem \ref{structureMI}.
\end{proof}
\begin{remark}\label{idempI2K}
Let $R$ be a ring in which ${|{K^{\flat}}:I^{\flat}|}$ is invertible.
We can naturally identify $\cH_{K^{\flat}}(R)$ with the two-sided ideal $e_{K^{\flat}}*_{{I^{\flat}}}\cH_{{I^{\flat}}}(R)*_{{I^{\flat}}}e_{K^{\flat}} \subset \cH_{{I^{\flat}}}(R)$, 
where  $e_{K^{\flat}}$ is the idempotent ${|{K^{\flat}}:I^{\flat}|}^{-1}{\bf 1}_{K^{\flat}}$ (see Lemma \ref{identificationsubsuphecke}). In a similar way, $\cM_{K^{\flat}}(R)$ can also be naturally identified with $\cM_{{I^{\flat}}}(R)*_{{I^{\flat}}}e_{K^{\flat}} \subset \cM_{{I^{\flat}}}(R)$ and the right action of $\cH_{K^{\flat}}(R)$ will correspond to the right action of $e_{K^{\flat}}*_{{I^{\flat}}}\cH_{{I^{\flat}}}(R)*_{{I^{\flat}}}e_{K^{\flat}}$.\end{remark} 
\subsection{Decomposition of \texorpdfstring{$\cH_{{I^{\flat}}}({\Z[q^{-1}]})$}{HI[Zq-1]} and the twisted Bernstein map}\label{decomptwistbern}
We begin this section by considering two sub-algebras of the Iwahori--Hecke algebra $ \cH_{{I^{\flat}}}(\Z)$:
\begin{itemize}
\item The finite Hecke algebra: $$\cH_{{I^{\flat}}}(\Z)^0:=\cC_c( K^{\flat} \sslash I^{\flat},\Z).$$ 
A $\Z$-basis for $\cH_{{I^{\flat}}}(\Z)^0$ is given by $\{i_w^{\flat}\}_{w\in W_{a_\circ}^{\flat}}$ (Proposition \ref{Iwahori1}).
\item The $(\mathcal{R}^{\flat},\cH_{{I^{\flat}}}(\Z))$-bimodule structure on $\M_{{I^{\flat}}}(\Z)$ induces a homomorphism of algebras
$$\mathcal{R}^{\flat} \hra  \text{End}_{\cH_{{I^{\flat}}}(\Z)}(\cM_{{I^{\flat}}}(\Z)),$$
which is actually an embedding since $\cM_{{I^{\flat}}}(\Z)$ is free over $\mathcal{R}^{\flat}$. 
Accordingly, thanks to Theorem \ref{structureMI}, one gets another embedding of algebras:
\begin{align*}\dot{\Theta}_{\text{\text{Bern}}}^{\flat}\colon \mathcal{R}^{\flat}\hookrightarrow \cH_{{I^{\flat}}}({\Z[q^{-1}]}),\quad m \mapsto \dot{\Theta}_m^{\flat}, \end{align*}
characterized by the property: $m\cdot v_1^{\flat}= v_1^{\flat} *_{{I^{\flat}}} \dot{\Theta}_m^{\flat}$, for any $m\in \mathcal{R}^{\flat}$. 
\end{itemize}
\begin{lemma}\label{explicittheta}
For any $m\in  \Lambda_M^{\flat}$ and any $m_\circ\in \Lambda_M^{-,\flat}$ such that $m+m_\circ \in \Lambda_M^{-,\flat}$,  
we have
$$\dot{\Theta}_m^{\flat}= i_{m+m_\circ}^{\flat}*_{{I^{\flat}}} (i_{m_\circ}^{\flat})^{-1}= (i_{m_\circ}^{\flat})^{-1}  *_{{I^{\flat}}}i_{m+m_\circ}^{\flat}.$$
In particular, $\dot{\Theta}_m^{\flat}=i_m^{\flat}$ if $m\in \Lambda_M^{-,\flat}$ and $\dot{\Theta}_m^{\flat}=(i_{-m}^{\flat})^{-1}$ if $m\in \Lambda_M^{+,\flat}$.
\end{lemma}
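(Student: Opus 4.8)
The statement is essentially a matter of combining the explicit action rules on $\cM_{I^{\flat}}$ with the injectivity of the module map $\hbar_{I^{\flat}}$. Recall that $\dot{\Theta}_m^{\flat}$ is characterized by
$m\cdot v_1^{\flat}= v_1^{\flat}*_{I^{\flat}}\dot{\Theta}_m^{\flat}$, where $m\cdot(-)$ denotes the left $\mathcal{R}^{\flat}$-action on $\cM_{I^{\flat}}$. By Lemma~\ref{actionlambdaMI} (applied with $n=1$), $m\cdot v_1^{\flat}=v_m^{\flat}$. On the other hand, Corollary~\ref{surjectiveh} with $w=1$ gives, in $\cM_{I^{\flat}}(\Z[q^{-1}])$,
$$v_m^{\flat}=v_1^{\flat}*_{I^{\flat}}i_{m+m_\circ}^{\flat}*_{I^{\flat}}(i_{m_\circ}^{\flat})^{-1}=v_1^{\flat}*_{I^{\flat}}(i_{m_\circ}^{\flat})^{-1}*_{I^{\flat}}i_{m+m_\circ}^{\flat}$$
for any $m_\circ\in\Lambda_M^{-,\flat}$ with $m+m_\circ\in\Lambda_M^{-,\flat}$. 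Hence $v_1^{\flat}*_{I^{\flat}}\dot{\Theta}_m^{\flat}=v_1^{\flat}*_{I^{\flat}}\big(i_{m+m_\circ}^{\flat}*_{I^{\flat}}(i_{m_\circ}^{\flat})^{-1}\big)$, and the same with the factors in the other order.

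First I would invoke Theorem~\ref{structureMI}: the homomorphism $\hbar_{I^{\flat}}\colon h\mapsto v_1^{\flat}*_{I^{\flat}}h$ is an isomorphism of right $\cH_{I^{\flat}}(\Z[q^{-1}])$-modules, in particular injective. Cancelling $v_1^{\flat}*_{I^{\flat}}(-)$ from the two displayed equalities therefore yields $\dot{\Theta}_m^{\flat}=i_{m+m_\circ}^{\flat}*_{I^{\flat}}(i_{m_\circ}^{\flat})^{-1}=(i_{m_\circ}^{\flat})^{-1}*_{I^{\flat}}i_{m+m_\circ}^{\flat}$, which is the main assertion. (The two right-hand expressions coincide also for a transparent reason: $m_\circ$ and $m+m_\circ$ are both antidominant, so by Lemma~\ref{propertiesdominance}(2) and commutativity of $\Lambda_M^{\flat}$ one has $i_{m_\circ}^{\flat}*_{I^{\flat}}i_{m+m_\circ}^{\flat}=i_{m+m_\circ}^{\flat}*_{I^{\flat}}i_{m_\circ}^{\flat}$, whence $i_{m+m_\circ}^{\flat}$ commutes with $(i_{m_\circ}^{\flat})^{-1}$ in $\cH_{I^{\flat}}(\Z[q^{-1}])$.) Note as a byproduct that the formula is independent of the auxiliary choice of $m_\circ$, consistent with $\dot{\Theta}_m^{\flat}$ being well-defined by its characterizing property.

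For the ``in particular'' clause: if $m\in\Lambda_M^{-,\flat}$, take $m_\circ$ to be the identity element $1\in\Lambda_M^{-,\flat}$; then $i_{m_\circ}^{\flat}=i_1^{\flat}$ is the unit of the Hecke algebra and $\dot{\Theta}_m^{\flat}=i_m^{\flat}$. If $m\in\Lambda_M^{+,\flat}$, then $-m\in\Lambda_M^{-,\flat}$; take $m_\circ=-m$, so that $m+m_\circ=1$ and $\dot{\Theta}_m^{\flat}=i_1^{\flat}*_{I^{\flat}}(i_{-m}^{\flat})^{-1}=(i_{-m}^{\flat})^{-1}$, using Corollary~\ref{additionalIMpres}(v) to guarantee invertibility in $\cH_{I^{\flat}}(\Z[q^{-1}])$. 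There is no genuinely hard step here; the only point requiring a little care is bookkeeping of the (untwisted) $\Lambda_M^{\flat}$-action on $\cM_{I^{\flat}}$ versus the ``dot'' terminology, and the fact that everything takes place over $\Z[q^{-1}]$ because inverses $(i_{m_\circ}^{\flat})^{-1}$ are needed — both of which are already handled by the cited results.
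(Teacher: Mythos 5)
Your proof is correct and follows exactly the paper's route: the paper's proof of Lemma \ref{explicittheta} is simply "These equalities follow readily from Corollary \ref{surjectiveh}", and you have spelled out precisely that deduction (the characterizing property of $\dot{\Theta}_m^{\flat}$, Lemma \ref{actionlambdaMI}, Corollary \ref{surjectiveh} with $w=1$, and injectivity of $\hbar_{I^{\flat}}$ from Theorem \ref{structureMI}). The specializations $m_\circ=0$ and $m_\circ=-m$ for the "in particular" clause are also the intended ones.
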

\begin{proof} 
These equalities follow readily from Corollary \ref{surjectiveh}.
\end{proof}
\begin{remark}\label{Tran331}
(i) By Remark \ref{transition313}, since $\mathbf{1}_{I^{\flat}}$ is central in $\cH_{I^{\flat}}(\Z[q^{-1}])$, the following diagram is commutative
$$\begin{tikzcd}[column sep=large]
 \mathcal{R}\arrow[hook]{r}{\dot{\Theta}_{\text{\text{Bern}}}} \arrow[swap,two heads]{d}{\square^{\flat}}& \cH_{I}({\Z[q^{-1}]}) \arrow[two heads]{d}{\iota^{\flat}} \\
\mathcal{R}^{\flat}\arrow[hook]{r}{\dot{\Theta}_{\text{\text{Bern}}}^{\flat}}& \cH_{{I^{\flat}}}({\Z[q^{-1}]})
\end{tikzcd}$$
In other words, $\iota^{\flat}(\dot\Theta_m)= \dot\Theta_{{ m}^{\flat}}^{\flat}, m\in \mathcal{R} $.
 
(ii) The Iwahori--Hecke operators $\dot{\Theta}_m^{\flat}$ generalizes the element denoted by $\widetilde{T}$ in Lusztig's \cite[\S 7]{Lu83} (up to the factor $\delta_B(m)^{1/2}$), 
which does not appear here. 
The untwisted version of the map $\dot{\Theta}_{\text{Bern}}$ was attributed by Lusztig in {\em loc. cit.} (for split reductive groups) to Bernstein, this is the reason we use the subscript $\text{Bern}$.
\end{remark}
\begin{corollary}\label{cortheostructureMI}
For any $w \in \widetilde{W}^{\flat}$, we have
$$i_w^{\flat}=\sum_{x=m_x w_x \le w} \dot\Theta_{m_x}^{\flat}*_{I^{\flat}} i_{w_x}^{\flat}.$$
\end{corollary}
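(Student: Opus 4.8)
The plan is to prove Corollary \ref{cortheostructureMI} by applying the isomorphism $\hbar_{I^{\flat}}$ of Theorem \ref{structureMI} and comparing both sides inside the free $\mathcal{R}^{\flat}$-module $\cM_{I^{\flat}}(\Z[q^{-1}])$, using the canonical basis $\{v_w^{\flat}\colon w\in W_{a_\circ}^{\flat}\}$ from Proposition \ref{HIRbasis}. Since $\hbar_{I^{\flat}}$ is injective, it suffices to check the identity $v_1^{\flat}*_{I^{\flat}} i_w^{\flat} = v_1^{\flat}*_{I^{\flat}}\bigl(\sum_{x=m_xw_x\le w}\dot\Theta_{m_x}^{\flat}*_{I^{\flat}}i_{w_x}^{\flat}\bigr)$ after applying $\hbar_{I^{\flat}}$. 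The right-hand side simplifies immediately: by definition of $\dot\Theta_{\text{Bern}}^{\flat}$ we have $v_1^{\flat}*_{I^{\flat}}\dot\Theta_{m_x}^{\flat} = m_x\cdot v_1^{\flat}$, hence by rule \ref{1actionMI} and Lemma \ref{actionlambdaMI}, $v_1^{\flat}*_{I^{\flat}}\dot\Theta_{m_x}^{\flat}*_{I^{\flat}}i_{w_x}^{\flat} = m_x\cdot v_{w_x}^{\flat} = v_{m_xw_x}^{\flat} = v_x^{\flat}$. So the right-hand side maps under $\hbar_{I^{\flat}}$ to $\sum_{x\le w} v_x^{\flat}$ — but with coefficient $1$, which cannot be literally right; rather I expect the correct reading involves the coefficients $c_{x,y}$ from the proof of Theorem \ref{structureMI}, so let me reconsider.

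The honest approach: in the proof of Theorem \ref{structureMI} it was shown that $v_1^{\flat}*_{I^{\flat}}i_w^{\flat} = \sum_{x\le w} c_{x,w} v_x^{\flat}$ where $c_{x,w} = (v_1^{\flat}*_{I^{\flat}}i_w^{\flat})(x)\in\Z$, the matrix $(c_{x,w})$ is upper triangular for the Chevalley--Bruhat order with diagonal entries $c_{w,w}$ a power of $q$ (by Lemma \ref{powerq}), hence invertible over $\Z[q^{-1}]$. Therefore the inverse matrix expresses each $v_x^{\flat}$ as a $\Z[q^{-1}]$-combination of the $v_1^{\flat}*_{I^{\flat}}i_y^{\flat}$ with $y\le x$, equivalently: the family $\{v_1^{\flat}*_{I^{\flat}}i_w^{\flat}\}$ and $\{v_x^{\flat}\}$ are related by a unitriangular-up-to-$q$-powers change of basis. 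Since $v_x^{\flat} = \hbar_{I^{\flat}}(\dot\Theta_{m_x}^{\flat}*_{I^{\flat}}i_{w_x}^{\flat})$ (by the computation in the previous paragraph), applying $\hbar_{I^{\flat}}^{-1}$ to the triangular relation $v_1^{\flat}*_{I^{\flat}}i_w^{\flat}=\sum_{x\le w}c_{x,w}v_x^{\flat}$ gives $i_w^{\flat}=\sum_{x\le w}c_{x,w}\,\dot\Theta_{m_x}^{\flat}*_{I^{\flat}}i_{w_x}^{\flat}$. To get the clean statement with all coefficients equal to $1$ as displayed, I would need $c_{x,w}=1$ for all $x\le w$ with $x=m_xw_x$; this is presumably what the bound $x\le w$ in the sum is implicitly recording together with the claim that in fact $c_{x,w}\in\{0,1\}$ — but establishing $c_{x,w}\in\{0,1\}$ is exactly the delicate combinatorial point.

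Given the phrasing of the corollary, I believe the intended proof is the following short one, taking the displayed formula at face value with coefficients in $\Z[q^{-1}]$ understood: expand $i_w^{\flat}$ in the $\Z[q^{-1}]$-basis $\{\dot\Theta_m^{\flat}*_{I^{\flat}}i_v^{\flat}\colon m\in\Lambda_M^{\flat}, v\in W_{a_\circ}^{\flat}\}$ of $\cH_{I^{\flat}}(\Z[q^{-1}])$ provided by Corollary \ref{HIbasis}, write $i_w^{\flat}=\sum_{m,v}a_{m,v}\,\dot\Theta_m^{\flat}*_{I^{\flat}}i_v^{\flat}$, apply $\hbar_{I^{\flat}}$ to both sides to obtain $\sum_{x\le w}c_{x,w}v_x^{\flat} = \sum_{m,v}a_{m,v}v_{mv}^{\flat}$, and match coefficients in the basis $\{v_x^{\flat}\}$: this forces $a_{m_x,w_x}=c_{x,w}$ and $a_{m,v}=0$ when $mv\not\le w$, giving $i_w^{\flat}=\sum_{x=m_xw_x\le w}c_{x,w}\dot\Theta_{m_x}^{\flat}*_{I^{\flat}}i_{w_x}^{\flat}$. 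The main obstacle, and the place where I would need to look carefully at how the corollary is actually used downstream, is pinning down whether the $c_{x,w}$ are genuinely all $1$ (so the displayed formula is literally correct as stated) or whether the displayed formula should be read with these $q$-power coefficients absorbed; I would resolve this by examining the case $w\in W_{a_\circ}^{\flat}$ (where by rule \ref{1actionMI}, $v_1^{\flat}*_{I^{\flat}}i_w^{\flat}=v_w^{\flat}$ exactly, so $c_{w,w}=1$ and all other $c_{x,w}=0$, consistent with $i_w^{\flat}=\dot\Theta_1^{\flat}*_{I^{\flat}}i_w^{\flat}$) and the case $w=m\in\Lambda_M^{-,\flat}$ (where Lemma \ref{explicittheta} gives $\dot\Theta_m^{\flat}=i_m^{\flat}$ directly), and then running the triangular induction on $\ell(w)$ via Lemma \ref{bnrelation} and part (vii) of Corollary \ref{additionalIMpres} to control the lower-order terms.
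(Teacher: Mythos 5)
Your second paragraph is precisely the paper's argument: the printed proof of this corollary is the single sentence that it ``follows readily from the proof of Theorem \ref{structureMI} and Corollary \ref{surjectiveh}'', i.e.\ apply $\hbar_{I^{\flat}}$, use the triangular expansion $v_1^{\flat}*_{I^{\flat}}i_w^{\flat}=\sum_{x\le w}c_{x,w}\,v_x^{\flat}$ established there, substitute $v_x^{\flat}=v_1^{\flat}*_{I^{\flat}}\dot\Theta_{m_x}^{\flat}*_{I^{\flat}}i_{w_x}^{\flat}$ from Corollary \ref{surjectiveh}, and conclude by injectivity of $\hbar_{I^{\flat}}$. Your hesitation about the coefficients is justified, and you should not attempt to prove $c_{x,w}\in\{0,1\}$: it is false. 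Already the diagonal entry $c_{w,w}=|U^{+}I^{\flat}\cap wI^{\flat}w^{-1}I^{\flat}|_{I^{\flat}}$ is, by Lemma \ref{powerq}, a strictly positive power of $q$ as soon as $m_w$ is strictly dominant; concretely, for $m\in\Lambda_M^{+,\flat}$ with $\ell(m)>0$ the top term of the displayed right-hand side would be $\dot\Theta_m^{\flat}=(i_{-m}^{\flat})^{-1}$, which cannot equal $i_m^{\flat}$ since $d_{I^{\flat}}(i_m^{\flat}*_{I^{\flat}}i_{-m}^{\flat})=q_mq_{-m}>1=d_{I^{\flat}}(i_1^{\flat})$ by Remark \ref{morphismhecktoZ}. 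So the identity is correct only when read with the integers $c_{x,w}$ inserted, $i_w^{\flat}=\sum_{x=m_xw_x\le w}c_{x,w}\,\dot\Theta_{m_x}^{\flat}*_{I^{\flat}}i_{w_x}^{\flat}$, which is exactly what your (and the paper's) argument delivers; the usable content of the corollary is the triangularity, relative to the Chevalley--Bruhat order, of the change of basis between $\{i_w^{\flat}\}$ and the Bernstein basis of Corollary \ref{HIbasis}, with diagonal entries powers of $q$, not the precise values of the coefficients.
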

\begin{proof}
The lemma follows readily from the proof of Theorem \ref{structureMI} and Corollary \ref{surjectiveh}.
\end{proof}

\begin{lemma}\label{veetheta}
For any $m\in \Lambda_M^{\flat}$, on has
$$(\dot{\Theta}_m^{\flat})^\vee= (i_{w_\circ}^{\flat})^{-1} *_{I^{\flat}}\dot{\Theta}_{-w_\circ(m)}^{\flat}*_{I^{\flat}} i_{w_\circ}^{\flat}$$
where $w_\circ \in W$ is the longest element.
\end{lemma}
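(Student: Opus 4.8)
The statement to prove is the identity
\[
(\dot{\Theta}_m^{\flat})^\vee= (i_{w_\circ}^{\flat})^{-1} *_{I^{\flat}}\dot{\Theta}_{-w_\circ(m)}^{\flat}*_{I^{\flat}} i_{w_\circ}^{\flat}
\]
in $\cH_{I^{\flat}}(\Z[q^{-1}])$, where $w_\circ\in W$ is the longest element. The plan is to reduce everything to the antidominant case, where $\dot{\Theta}_m^{\flat}=i_m^{\flat}$ is an honest basis element, use the braid relations of Theorem \ref{IMpresentation} to compute $\vee$ on such elements, and then propagate the identity to arbitrary $m$ by multiplicativity of both sides in $m$.

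First I would record three elementary facts. (1) The involution $\vee$ of Remark \ref{heckeinvolution} is an anti-automorphism: $(h_1*_{I^{\flat}}h_2)^\vee=h_2^\vee*_{I^{\flat}}h_1^\vee$; in particular $(h^{-1})^\vee=(h^\vee)^{-1}$. (2) For $w\in\widetilde{W}^{\flat}$ one has $(i_w^{\flat})^\vee=i_{w^{-1}}^{\flat}$, since $i_w^{\flat}={\bf 1}_{I^{\flat}wI^{\flat}}$ and $(I^{\flat}wI^{\flat})^{-1}=I^{\flat}w^{-1}I^{\flat}$. (3) Conjugation by (a lift of) $w_\circ$ sends $\Lambda_M^{-,\flat}$ to $\Lambda_M^{+,\flat}$ and vice versa, because $w_\circ$ exchanges $\Phi_{\red}^+$ and $\Phi_{\red}^-$; moreover $\ell(w_\circ(m))=\ell(m)$ for all $m\in\Lambda_M^{\flat}$ by Lemma \ref{propertiesdominance}. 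Now suppose first $m\in\Lambda_M^{-,\flat}$. Then $\dot{\Theta}_m^{\flat}=i_m^{\flat}$ by Lemma \ref{explicittheta}, so $(\dot{\Theta}_m^{\flat})^\vee=i_{-m}^{\flat}=i_{m^{-1}}^{\flat}$ (writing $\Lambda_M^{\flat}$ additively, $-m\in\Lambda_M^{+,\flat}$). On the other hand $-w_\circ(m)\in\Lambda_M^{+,\flat}$, so again by Lemma \ref{explicittheta} $\dot{\Theta}_{-w_\circ(m)}^{\flat}=(i_{w_\circ(m)}^{\flat})^{-1}$. Thus the right-hand side is $(i_{w_\circ}^{\flat})^{-1}*_{I^{\flat}}(i_{w_\circ(m)}^{\flat})^{-1}*_{I^{\flat}} i_{w_\circ}^{\flat}=\bigl(i_{w_\circ}^{\flat}*_{I^{\flat}} i_{w_\circ(m)}^{\flat}\bigr)^{-1}*_{I^{\flat}} i_{w_\circ}^{\flat}$. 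Here $w_\circ(m)=\dot{w_\circ}\,m\,\dot{w_\circ}^{-1}$ (choosing a length-zero-free lift; the lift lies in $K$ by the convention fixed after Lemma \ref{repofWinK}), and since $w_\circ(m)\in\Lambda_M^{+,\flat}$ while $w_\circ\in W$, I would use the length additivity $\ell(w_\circ\cdot w_\circ(m))=\ell(w_\circ)+\ell(w_\circ(m))$ — wait, more carefully: $w_\circ(m)\,w_\circ$ has $\ell(w_\circ(m)\,w_\circ)=\ell(w_\circ(m))+\ell(w_\circ)$ by part 3 of Lemma \ref{propertiesdominance}, hence by the braid relation $i_{w_\circ(m)}^{\flat}*_{I^{\flat}} i_{w_\circ}^{\flat}=i_{w_\circ(m)w_\circ}^{\flat}=i_{w_\circ m}^{\flat}$ (using $w_\circ(m)w_\circ=w_\circ m$ as elements of $\widetilde{W}^{\flat}$). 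Therefore the right-hand side equals $(i_{w_\circ m}^{\flat})^{-1}*_{I^{\flat}}i_{w_\circ}^{\flat}$. Now I must identify this with $i_{m^{-1}}^{\flat}$: since $m$ is antidominant, $\ell(w_\circ\cdot m)=\ell(w_\circ)+\ell(m)$ by part 3 of Lemma \ref{propertiesdominance}, so $i_{w_\circ m}^{\flat}=i_{w_\circ}^{\flat}*_{I^{\flat}}i_m^{\flat}$, whence $(i_{w_\circ m}^{\flat})^{-1}=(i_m^{\flat})^{-1}*_{I^{\flat}}(i_{w_\circ}^{\flat})^{-1}$ and the RHS becomes $(i_m^{\flat})^{-1}*_{I^{\flat}}(i_{w_\circ}^{\flat})^{-1}*_{I^{\flat}}i_{w_\circ}^{\flat}=(i_m^{\flat})^{-1}$. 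But $(i_m^{\flat})^{-1}=\dot{\Theta}_{-m}^{\flat}=\dot{\Theta}_{m^{-1}}^{\flat}$ and this equals $i_{m^{-1}}^{\flat}$ only if $m^{-1}=-m$ is antidominant — which it is not; rather $(i_m^{\flat})^{-1}$ is literally $(i_{-m}^{\flat})$ conjugated... I realize I should instead directly observe $(\dot\Theta_m^\flat)^\vee=(i_m^\flat)^\vee=i_{m^{-1}}^\flat=i_{-m}^\flat$, and separately that $(i_m^\flat)^{-1}=\dot\Theta_{-m}^\flat$; these agree precisely because $\dot\Theta_{-m}^\flat=(i_{m}^\flat)^{-1}$ while $i_{-m}^\flat$ is a basis element, and $(i_m^\flat)^{-1}\neq i_{-m}^\flat$ in general. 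So the antidominant computation needs recombining: the cleanest route is to verify the identity not for $m$ antidominant but to set it up symmetrically — prove it for $m$ dominant, where $(\dot\Theta_m^\flat)^\vee=((i_{-m}^\flat)^{-1})^\vee=(i_{-m^{-1}}^\flat)^{-1}=(i_{m}^\flat)^{-1}$ using $(-m)^{-1}=m$, and then run the same braid manipulation; I will choose whichever sign makes the braid lengths additive so that no extra $q$-factors appear.

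The second and final step is to upgrade from the (anti)dominant case to all $m\in\Lambda_M^{\flat}$. Both sides of the claimed identity are multiplicative in the sense that, writing $m=m_1-m_2$ with $m_1,m_2\in\Lambda_M^{-,\flat}$ (possible by Proposition \ref{propertiesdominance}), one has $\dot{\Theta}_m^{\flat}=\dot{\Theta}_{m_1}^{\flat}*_{I^{\flat}}(\dot{\Theta}_{m_2}^{\flat})^{-1}$ because $\dot\Theta_{\text{Bern}}^{\flat}$ is a ring homomorphism (it is an embedding of $\mathcal{R}^{\flat}$ into $\cH_{I^{\flat}}(\Z[q^{-1}])$ and $\Lambda_M^{\flat}$ is abelian, so the images commute). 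Apply $\vee$: using that $\vee$ is an anti-automorphism and that $\Theta$-images commute, $(\dot{\Theta}_m^{\flat})^\vee=((\dot{\Theta}_{m_2}^{\flat})^\vee)^{-1}*_{I^{\flat}}(\dot{\Theta}_{m_1}^{\flat})^\vee$. Substitute the already-proven formula for $\dot\Theta_{m_1}^\flat,\dot\Theta_{m_2}^\flat$ (antidominant case), insert $i_{w_\circ}^{\flat}*_{I^{\flat}}(i_{w_\circ}^{\flat})^{-1}=1$ between the two factors, and use $w_\circ$-equivariance of $\square^\flat$ and of the $W$-action plus the ring-homomorphism property once more to collapse the result into $(i_{w_\circ}^{\flat})^{-1}*_{I^{\flat}}\dot{\Theta}_{-w_\circ(m_1)+w_\circ(m_2)}^{\flat}*_{I^{\flat}}i_{w_\circ}^{\flat}=(i_{w_\circ}^{\flat})^{-1}*_{I^{\flat}}\dot{\Theta}_{-w_\circ(m)}^{\flat}*_{I^{\flat}}i_{w_\circ}^{\flat}$, using $w_\circ$-linearity $-w_\circ(m_1)+w_\circ(m_2)=-w_\circ(m_1-m_2)=-w_\circ(m)$.

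\textbf{Main obstacle.} The delicate point is the antidominant (equivalently dominant) base case: one must track exactly which products $i_a^{\flat}*_{I^{\flat}}i_b^{\flat}$ satisfy the length-additivity hypothesis $\ell(ab)=\ell(a)+\ell(b)$ of the braid relation in Theorem \ref{IMpresentation}, so that no spurious factor of a power of $q$ (coming from the quadratic relation, cf. Remark \ref{morphismhecktoZ} and Lemma \ref{InIpowerq}) is introduced. The relevant additivity facts — $\ell(mw)=\ell(m)+\ell(w)$ for $m$ antidominant and $w\in W$, $\ell(w_\circ(m))=\ell(m)$, and $w_\circ$ exchanging dominant and antidominant cones — are precisely parts 3 and 4 of Lemma \ref{propertiesdominance} together with the description of $w_\circ$, so the obstacle is really bookkeeping rather than a new idea; I would be careful to phrase the base case so that the lift of $w_\circ$ chosen in $K$ (legitimate by the convention after Lemma \ref{repofWinK}) makes $\dot{w_\circ}\,m\,\dot{w_\circ}^{-1}$ literally represent $w_\circ(m)\in\Lambda_M^{\flat}$, with no length-zero correction term.
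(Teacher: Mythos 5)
Your overall strategy — reduce to the antidominant case via multiplicativity of both sides in $m$, then settle the base case with the braid relations — is exactly the paper's, and your reduction step (writing $m=m_1-m_2$ with $m_1,m_2\in\Lambda_M^{-,\flat}$ and using that $\dot{\Theta}_{\text{Bern}}^{\flat}$ is a ring homomorphism with commutative image while $\vee$ is an anti-automorphism) is fine. But the base case, which you yourself single out as the main obstacle, is derailed by a sign error and never actually closed. For $m\in\Lambda_M^{-,\flat}$ the element $w_\circ(m)$ is \emph{dominant} (since $w_\circ$ exchanges $\Phi_{\red}^+$ and $\Phi_{\red}^-$), so $-w_\circ(m)$ is again \emph{antidominant}, not dominant as you assert. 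Consequently Lemma \ref{explicittheta} gives $\dot{\Theta}_{-w_\circ(m)}^{\flat}=i_{-w_\circ(m)}^{\flat}$, an honest basis element, and not $(i_{w_\circ(m)}^{\flat})^{-1}$. With the wrong sign your computation lands on $(i_m^{\flat})^{-1}$ for the right-hand side, which indeed cannot match $i_{-m}^{\flat}=(\dot{\Theta}_m^{\flat})^\vee$ since $(i_m^{\flat})^{-1}=\dot{\Theta}_{-m}^{\flat}\neq i_{-m}^{\flat}$ in general; ending with ``I will choose whichever sign makes the braid lengths additive'' is a deferral, not a proof.

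The correct base case is a one-line braid computation, and it is the paper's proof. In $\widetilde{W}^{\flat}=\Lambda_M^{\flat}\rtimes W_{a_\circ}^{\flat}$ one has $(-w_\circ(m))\cdot w_\circ=w_\circ\cdot(-m)$ (conjugation, using $w_\circ^{-1}=w_\circ$; the choice of lift is immaterial by Lemma \ref{Wtrivialontors} and the convention after Lemma \ref{repofWinK}). Both products are length-additive by parts 3 and 4 of Lemma \ref{propertiesdominance}: $\ell\big((-w_\circ(m))\cdot w_\circ\big)=\ell(-w_\circ(m))+\ell(w_\circ)$ because $-w_\circ(m)$ is antidominant, and $\ell\big(w_\circ\cdot(-m)\big)=\ell(w_\circ)+\ell(-m)$ because $-m$ is dominant, the two totals agreeing since $\ell(w_\circ(-m))=\ell(-m)$. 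Hence the braid relation gives $i_{-w_\circ(m)}^{\flat}*_{I^{\flat}}i_{w_\circ}^{\flat}=i_{w_\circ}^{\flat}*_{I^{\flat}}i_{-m}^{\flat}$, and the right-hand side of the lemma collapses to $i_{-m}^{\flat}=i_{m^{-1}}^{\flat}=(i_m^{\flat})^\vee=(\dot{\Theta}_m^{\flat})^\vee$. Once the dominance of $-w_\circ(m)$ is stated correctly, no separate dominant case and no spurious $q$-powers arise.
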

\begin{proof}
It suffices to show it for elements in $ \Lambda_M^{-,\flat}$, so let $m\in \Lambda_M^{-,\flat}$ we also have $-w_\circ(m)\in \Lambda_M^{-,\flat}$ and accordingly by Lemma \ref{propertiesdominance} $\ell( w_\circ  (-m) \cdot w_\circ )= \ell( w_\circ m^{-1})=\ell(w_\circ) + \ell(-m) =\ell(w_\circ) + \ell(w_\circ(-m)) $ 
hence
\begin{align*} i_{-w_\circ(m)}^{\flat}*_{I^{\flat}} i_{w_\circ}^{\flat}&=i_{ w_\circ  (-m) \cdot w_\circ}^{\flat} = i_{w_\circ m^{-1}}^{\flat} = i_{w_\circ}^{\flat}*_{I^{\flat}}i_{-m}^{\flat}.\qedhere\end{align*}
\end{proof}
In our setting, it is easy to obtain a statement similar to \cite[Lemma 1.7.1]{HKP10}:
\begin{lemma}\label{RH0isomorphism}
The homomorphism of $\mathcal{R}^{\flat}\otimes_\Z{\Z[q^{-1}]}$-modules 
$$
(\mathcal{R}^{\flat}\otimes_\Z {\Z[q^{-1}]})\otimes_\Z \cH_{{I^{\flat}}}^0(\Z)\longrightarrow \cH_{{I^{\flat}}}({\Z[q^{-1}]}), \quad m\otimes h^0 \longmapsto \dot{\Theta}_m^{\flat} *_{{I^{\flat}}}h^0,
$$
is an isomorphism. Composing this homomorphism with $h\mapsto v_1^{\flat}*_{{I^{\flat}}} h$ yields the isomorphism of $\mathcal{R}^{\flat}\otimes_\Z{\Z[q^{-1}]}$-modules $\mathcal{R}^{\flat}\otimes_\Z \cH_{{I^{\flat}}}^0({\Z[q^{-1}]}) \longrightarrow \cM_{{I^{\flat}}}({\Z[q^{-1}]})$, 
given by $m\otimes i_w^{\flat} \longmapsto v_{m w}^{\flat}$, for $w\in W_{a_\circ}^{\flat}$ and $m\in \Lambda_M^{\flat}$.
\end{lemma}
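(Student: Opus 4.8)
\textbf{Proof proposal for Lemma~\ref{RH0isomorphism}.}

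The plan is to deduce the statement directly from Theorem~\ref{structureMI} and Proposition~\ref{Iwahori1}, the point being that we already have good $\Z$-bases on both sides and a triangularity estimate (Corollary~\ref{cortheostructureMI}) that turns the map into a unitriangular change of basis over $\Z[q^{-1}]$. First I would describe both modules explicitly over $R:=\Z[q^{-1}]$. On the source side, $\cH_{I^{\flat}}^0(\Z)$ has $\Z$-basis $\{i_w^{\flat}: w\in W_{a_\circ}^{\flat}\}$ by Proposition~\ref{Iwahori1}, and $\mathcal R^{\flat}\otimes_\Z R$ has $R$-basis $\{m: m\in\Lambda_M^{\flat}\}$; hence $(\mathcal R^{\flat}\otimes_\Z R)\otimes_\Z\cH_{I^{\flat}}^0(\Z)$ is free over $R$ with basis $\{m\otimes i_w^{\flat}: m\in\Lambda_M^{\flat},\, w\in W_{a_\circ}^{\flat}\}$. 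On the target side, by Corollary~\ref{HIbasis} (equivalently the decomposition $\widetilde W^{\flat}=\Lambda_M^{\flat}\rtimes W_{a_\circ}^{\flat}$ of Lemma~\ref{berndecom} together with the Iwahori--Matsumoto basis $\{i_x^{\flat}:x\in\widetilde W^{\flat}\}$), $\cH_{I^{\flat}}(R)$ is free over $R$ with basis $\{i_x^{\flat}: x\in\widetilde W^{\flat}\}$, and writing $x=m_xw_x$ this is the same as $\{i_{m_xw_x}^{\flat}: m_x\in\Lambda_M^{\flat},\, w_x\in W_{a_\circ}^{\flat}\}$. So both sides are free $R$-modules with index set $\Lambda_M^{\flat}\times W_{a_\circ}^{\flat}$, and it suffices to show the map is an isomorphism of $R$-modules.

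Next I would show the map is well-defined and $\mathcal R^{\flat}\otimes_\Z R$-linear: well-definedness is the only slightly delicate point and follows because $\dot\Theta_{\mathrm{Bern}}^{\flat}\colon\mathcal R^{\flat}\hookrightarrow\cH_{I^{\flat}}(R)$ is a homomorphism of algebras (constructed in \S\ref{decomptwistbern}), so the assignment $m\otimes h^0\mapsto\dot\Theta_m^{\flat}*_{I^{\flat}}h^0$ respects the $\mathcal R^{\flat}$-balancing; $\mathcal R^{\flat}\otimes_\Z R$-linearity on the left is then immediate since $\dot\Theta^{\flat}$ is multiplicative. For bijectivity I would invoke Corollary~\ref{cortheostructureMI}: it gives, for each $x=m_xw_x\in\widetilde W^{\flat}$,
$$i_x^{\flat}=\sum_{y=m_yw_y\le x}\dot\Theta_{m_y}^{\flat}*_{I^{\flat}}i_{w_y}^{\flat},$$
with the diagonal term ($y=x$) being exactly $\dot\Theta_{m_x}^{\flat}*_{I^{\flat}}i_{w_x}^{\flat}$, i.e. the image of the basis element $m_x\otimes i_{w_x}^{\flat}$. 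Ordering $\widetilde W^{\flat}$ compatibly with the Chevalley--Bruhat order $\overset{\flat}{\le}$ (refined to a total order on the finite support of any given element), the matrix of the map in these bases is upper-triangular with diagonal entries equal to $1$. Such a matrix is invertible over $R$ (indeed over $\Z$), so the map is an isomorphism of $R$-modules, hence an isomorphism of $\mathcal R^{\flat}\otimes_\Z R$-modules.

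Finally, for the second assertion I would compose with $\hbar_{I^{\flat}}\colon h\mapsto v_1^{\flat}*_{I^{\flat}}h$, which is an isomorphism of right $\cH_{I^{\flat}}(R)$-modules by Theorem~\ref{structureMI}, and track the basis elements through the defining property $m\cdot v_1^{\flat}=v_1^{\flat}*_{I^{\flat}}\dot\Theta_m^{\flat}$ of $\dot\Theta_{\mathrm{Bern}}^{\flat}$ together with rule~\ref{2actionMI} of Proposition~\ref{relationsiwahori} ($v_m^{\flat}*_{I^{\flat}}i_w^{\flat}=v_{mw}^{\flat}$): we get
$$m\otimes i_w^{\flat}\longmapsto \dot\Theta_m^{\flat}*_{I^{\flat}}i_w^{\flat}\longmapsto v_1^{\flat}*_{I^{\flat}}\dot\Theta_m^{\flat}*_{I^{\flat}}i_w^{\flat}=(m\cdot v_1^{\flat})*_{I^{\flat}}i_w^{\flat}=v_m^{\flat}*_{I^{\flat}}i_w^{\flat}=v_{mw}^{\flat},$$
which is the claimed formula; since $\hbar_{I^{\flat}}$ and the first map are isomorphisms, so is the composite. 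I expect the only real subtlety to be bookkeeping the triangularity in the presence of the $\widetilde\Omega^{\flat}$-component (length-zero part): one must make sure the partial order $\overset{\flat}{\le}$ and the decomposition $x=m_xw_x$ interact correctly, but this is exactly what Corollary~\ref{cortheostructureMI} (via the proof of Theorem~\ref{structureMI} and Corollary~\ref{uppertriangulariy}) already packages, so no genuinely new argument is needed.
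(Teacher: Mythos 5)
Your proof is correct and rests on essentially the same ingredients as the paper's, which is shorter: the paper simply observes (via Corollary \ref{surjectiveh}, i.e. $v_1^{\flat}*_{I^{\flat}}\dot{\Theta}_m^{\flat}*_{I^{\flat}}i_w^{\flat}=v_{mw}^{\flat}$) that the composite sends the basis $\{m\otimes i_w^{\flat}\}$ to the $\Z[q^{-1}]$-basis $\{v_{mw}^{\flat}\}$ of $\cM_{I^{\flat}}(\Z[q^{-1}])$, hence is an isomorphism, and then divides by the isomorphism $\hbar_{I^{\flat}}$ of Theorem \ref{structureMI} to obtain the first claim — your direct triangularity argument for the first map is a detour through Corollary \ref{cortheostructureMI}, which is itself extracted from the proof of Theorem \ref{structureMI}. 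One inaccuracy to fix: the diagonal coefficients in that triangular relation are $c_{x,x}=|U^{+}I^{\flat}\cap xI^{\flat}x^{-1}I^{\flat}|_{I^{\flat}}$, which by Lemma \ref{powerq} are powers of $q$ but not $1$ in general (the corollary as printed omits the integer coefficients $c_{x,w}$ appearing in the proof of Theorem \ref{structureMI}), so the change-of-basis matrix is upper-triangular with \emph{unit} diagonal only over $\Z[q^{-1}]$, not over $\Z$ as your parenthetical claims; this does not affect the stated conclusion, which is only over $\Z[q^{-1}]$.
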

\begin{proof}
Given Theorem \ref{structureMI}, this lemma follows immediately from Corollary \ref{surjectiveh} which shows that the composition 
$$\mathcal{R}^{\flat}\otimes_\Z \cH_{{I^{\flat}}}^0({\Z[q^{-1}]}) \longrightarrow \cH_{{I^{\flat}}}({\Z[q^{-1}]}) \longrightarrow \cM_{{I^{\flat}}}({\Z[q^{-1}]}),$$ 
yields an isomorphism of $\mathcal{R}^{\flat}\otimes_\Z{\Z[q^{-1}]}$-modules.  
\end{proof}
A direct application of Lemma \ref{RH0isomorphism} gives an explicit $\mathcal{R}^{\flat}\otimes_\Z{\Z[q^{-1}]}$-basis for the Iwahori--Hecke algebra:
\begin{corollary}\label{HIbasis} 
The algebra $\cH_{{I^{\flat}}}({\Z[q^{-1}]})$ is a free left (and right) $\mathcal{R}^{\flat}\otimes_\Z{\Z[q^{-1}]}$-module, with canonical basis $\{i_w^{\flat} \colon w \in W_{a_\circ}^{\flat}\}$.
The sets $\{\dot{\Theta}_m^{\flat} *_{{I^{\flat}}} i_w^{\flat}\colon m \in \Lambda_M^{\flat}, w \in W_{a_\circ}^{\flat}\}$ 
and $\{ i_w^{\flat} *_{{I^{\flat}}}\dot{\Theta}_m^{\flat} \colon m \in \Lambda_M^{\flat}, w \in W_{a_\circ}^{\flat}\}$ are both ${\Z[q^{-1}]}$-basis for $\cH_{{I^{\flat}}}({\Z[q^{-1}]})$.
\end{corollary}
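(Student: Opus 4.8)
The plan is to deduce Corollary~\ref{HIbasis} directly from Lemma~\ref{RH0isomorphism} together with the explicit structure of $\cH_{{I^{\flat}}}^0(\Z)$. First I would recall that, by Proposition~\ref{Iwahori1} applied to $\cF=\{a_\circ\}$, the finite Hecke algebra $\cH_{{I^{\flat}}}^0(\Z)=\cC_c(K^{\flat}\sslash I^{\flat},\Z)$ is a free $\Z$-module with basis $\{i_w^{\flat}\colon w\in W_{a_\circ}^{\flat}\}$, hence after extending scalars $\cH_{{I^{\flat}}}^0(\Z[q^{-1}])$ is free over $\Z[q^{-1}]$ with the same basis. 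Feeding this into the isomorphism of Lemma~\ref{RH0isomorphism}, namely $(\mathcal{R}^{\flat}\otimes_\Z\Z[q^{-1}])\otimes_\Z\cH_{{I^{\flat}}}^0(\Z)\riso\cH_{{I^{\flat}}}(\Z[q^{-1}])$ sending $m\otimes h^0\mapsto\dot{\Theta}_m^{\flat}*_{{I^{\flat}}}h^0$, immediately shows that $\cH_{{I^{\flat}}}(\Z[q^{-1}])$ is a free \emph{left} $\mathcal{R}^{\flat}\otimes_\Z\Z[q^{-1}]$-module with basis $\{i_w^{\flat}\colon w\in W_{a_\circ}^{\flat}\}$, and simultaneously that $\{\dot{\Theta}_m^{\flat}*_{{I^{\flat}}}i_w^{\flat}\colon m\in\Lambda_M^{\flat},\ w\in W_{a_\circ}^{\flat}\}$ is a $\Z[q^{-1}]$-basis, since $\{mM^{\flat}\colon m\in\Lambda_M^{\flat}\}$ is a $\Z$-basis of $\mathcal{R}^{\flat}$.

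For the right-module statement and the second basis $\{i_w^{\flat}*_{{I^{\flat}}}\dot{\Theta}_m^{\flat}\}$, I would apply the anti-involution $h\mapsto h^\vee$ of Remark~\ref{heckeinvolution}. Since $(h_1*_{{I^{\flat}}}h_2)^\vee=h_2^\vee*_{{I^{\flat}}}h_1^\vee$ and $\vee$ is a $\Z$-linear (hence $\Z[q^{-1}]$-linear) bijection of $\cH_{{I^{\flat}}}(\Z[q^{-1}])$, applying $\vee$ to the basis $\{\dot{\Theta}_m^{\flat}*_{{I^{\flat}}}i_w^{\flat}\}$ produces the basis $\{(i_w^{\flat})^\vee*_{{I^{\flat}}}(\dot{\Theta}_m^{\flat})^\vee\}$. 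Now $(i_w^{\flat})^\vee=i_{w^{-1}}^{\flat}$, and by Lemma~\ref{veetheta} each $(\dot{\Theta}_m^{\flat})^\vee$ lies in $(i_{w_\circ}^{\flat})^{-1}*_{{I^{\flat}}}\dot{\Theta}_{\text{Bern}}^{\flat}(\mathcal{R}^{\flat})*_{{I^{\flat}}}i_{w_\circ}^{\flat}$; combined with the braid relations and Corollary~\ref{additionalIMpres} this rewrites each $(i_{w^{-1}}^{\flat})*_{{I^{\flat}}}(\dot{\Theta}_m^{\flat})^\vee$ as a unimodular (invertible-over-$\Z[q^{-1}]$) $\Z[q^{-1}]$-linear combination of elements $i_{w'}^{\flat}*_{{I^{\flat}}}\dot{\Theta}_{m'}^{\flat}$, so that the latter set is again a $\Z[q^{-1}]$-basis. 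Alternatively, and more cleanly, I would observe that Corollary~\ref{HIbasis} for the right action follows by the exact mirror of the argument in Lemma~\ref{RH0isomorphism}: one checks the map $\cH_{{I^{\flat}}}^0(\Z)\otimes_\Z(\mathcal{R}^{\flat}\otimes_\Z\Z[q^{-1}])\to\cH_{{I^{\flat}}}(\Z[q^{-1}])$, $h^0\otimes m\mapsto h^0*_{{I^{\flat}}}\dot{\Theta}_m^{\flat}$, is an isomorphism by the same triangularity estimate (Corollary~\ref{cortheostructureMI}).

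The only genuine content being invoked is already packaged in the preceding results, so there is no serious obstacle; the statement is essentially a bookkeeping corollary of Theorem~\ref{structureMI} and Lemma~\ref{RH0isomorphism}. The one point that requires a little care is verifying that passing between the "left $\dot{\Theta}$" and "right $\dot{\Theta}$" normalizations does not introduce denominators beyond $q^{-1}$: this is where Corollary~\ref{additionalIMpres}(v),(vi) (stating $i_w^{\flat}(i_w^{\flat})^*=q_w$ and that $q_w\in q^{\Z_{\ge0}}$, via Lemma~\ref{InIpowerq}) is used, ensuring that all change-of-basis matrices have entries in $\Z[q^{-1}]$ and determinant in $\pm q^{\Z}$, hence are invertible over $\Z[q^{-1}]$. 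I would state this explicitly and then conclude that both indicated sets are $\Z[q^{-1}]$-bases and that $\cH_{{I^{\flat}}}(\Z[q^{-1}])$ is free as a left and as a right $\mathcal{R}^{\flat}\otimes_\Z\Z[q^{-1}]$-module on $\{i_w^{\flat}\colon w\in W_{a_\circ}^{\flat}\}$.
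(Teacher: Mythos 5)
Your proposal follows the paper's own proof very closely: the left-module statement and the basis $\{\dot\Theta_m^{\flat} *_{I^{\flat}} i_w^{\flat}\}$ are read off from Lemma \ref{RH0isomorphism} exactly as you do, and for the right-hand statements the paper offers precisely the two routes you sketch, namely (i) mirroring the whole construction with $\cM^-_{I^{\flat}}(\Z)=\Z[I^{\flat}\backslash G/U^-]$ viewed as an $(\cH_{I^{\flat}}(\Z),\mathcal{R}^{\flat})$-bimodule, and (ii) exploiting the involution $\vee$ together with Lemma \ref{veetheta}.

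One caution about your version of route (ii). After applying $\vee$ you are left with elements $i_{w^{-1}}^{\flat} *_{I^{\flat}} (i_{w_\circ}^{\flat})^{-1} *_{I^{\flat}} \dot\Theta_{-w_\circ(m)}^{\flat} *_{I^{\flat}} i_{w_\circ}^{\flat}$, with a copy of $i_{w_\circ}^{\flat}$ sitting to the \emph{right} of the $\dot\Theta$-element. The braid and quadratic relations of Corollary \ref{additionalIMpres} cannot move it to the left; for that you would need the Bernstein commutation relations (Proposition \ref{propoTheta}), and even then you would still have to justify that the resulting infinite change-of-basis matrix is invertible over $\Z[q^{-1}]$ --- your ``unimodular, determinant in $\pm q^{\Z}$'' claim is asserted rather than proved, and determinants are not really available for a module of infinite rank. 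The paper sidesteps all of this: it never computes a change of basis, but instead precomposes the isomorphism of Lemma \ref{RH0isomorphism} with the automorphisms $m\mapsto -w_\circ(m)$ of $\mathcal{R}^{\flat}$ and $h\mapsto i_{w_\circ}^{\flat} *_{I^{\flat}} h^\vee$ of $\cH_{I^{\flat}}^0(\Z[q^{-1}])$, and postcomposes with the module automorphism $h\mapsto ((i_{w_\circ}^{\flat})^{-1}*_{I^{\flat}}h)^\vee$ of $\cH_{I^{\flat}}(\Z[q^{-1}])$; Lemma \ref{veetheta} then identifies the composite as $m\otimes i_w^{\flat}\mapsto i_w^{\flat} *_{I^{\flat}}\dot\Theta_m^{\flat}$, so the second set is a basis with no further computation. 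I recommend adopting that formulation (or your mirror argument, which is the paper's route (i)) in place of the change-of-basis argument.
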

\begin{proof}
The fact that $\cH_{{I^{\flat}}}({\Z[q^{-1}]})$ is a free left $\mathcal{R}^{\flat}\otimes_\Z{\Z[q^{-1}]}$-module, with canonical basis $\{i_w^{\flat} \colon w \in W_{a_\circ}^{\flat}\}$ and the set $\{\dot{\Theta}_m^{\flat} *_{{I^{\flat}}} i_w^{\flat}\colon m \in \Lambda_M^{\flat}, w \in W_{a_\circ}^{\flat}\}$ is ${\Z[q^{-1}]}$-basis for $\cH_{{I^{\flat}}}({\Z[q^{-1}]})$ is clear from Corollary \ref{RH0isomorphism}. Let us show the remaining statements. We may argue in two different ways: 

(i) Consider $\mathcal{M}^-_{{I^{\flat}}}(\Z)=\Z[ I^{\flat} \backslash G/U^-]$ instead, this is a $(\cH_{I^{\flat}}(\Z),\mathcal{R}^{\flat})$-bi-module. 
Rewriting the whole story of \S \ref{SBM} in this context shows that $\cH_{{I^{\flat}}}({\Z[q^{-1}]})$ is a free right $\mathcal{R}^{\flat}\otimes_\Z{\Z[q^{-1}]}$-module and so $\{ i_w^{\flat} *_{{I^{\flat}}}\dot{\Theta}_m^{\flat} \colon m \in \Lambda_M^{\flat}, w \in W_{a_\circ}^{\flat}\}$ is ${\Z[q^{-1}]}$-basis.

(ii) The map $\cH_{{I^{\flat}}}^0(\Z) \to \cH_{{I^{\flat}}}^0(\Z), i_w^{\flat} \to  i_{w_\circ}^{\flat} *_{{I^{\flat}}}(i_w^{\flat})^\vee$ is an isomorphism of $\Z$-modules, the map $\Lambda_M^{\flat} \to \Lambda_M^{\flat}, m \mapsto -w_\circ( m)$ is an automorphism of rings and $\cH_{I^{\flat}}(\Z[q^{-1}]) \to \cH_{I^{\flat}}(\Z[q^{-1}]), h \mapsto  ((i_{w_{\circ}}^{\flat})^{-1} *_{I^{\flat}} h)^\vee$ is an automorphism of $\Z[q^{-1}]$-modules. 
Therefore, the homomorphism of ${\Z[q^{-1}]}$-modules 
$$
(\mathcal{R}^{\flat}\otimes_\Z {\Z[q^{-1}]})\otimes_\Z \cH_{{I^{\flat}}}^0(\Z)\longrightarrow \cH_{{I^{\flat}}}({\Z[q^{-1}]}), \quad m\otimes i_w^{\flat} \longmapsto \left(( i_{w_\circ}^{\flat})^{-1} *_{{I^{\flat}}}  \dot{\Theta}_{-w_\circ( m)}^{\flat} *_{{I^{\flat}}} i_{w_\circ}^{\flat} *_{{I^{\flat}}} (i_w^{\flat})^\vee\right)^\vee,
$$
is an isomorphism. Given Lemma \ref{veetheta}, this concludes the proof of the corollary.
\end{proof}
\begin{remark}\label{decomhiaff}
If we replace $G$ by $G_1^{\flat}$ then $\Lambda_{\aff}^{\flat}$ plays exactly the role of $\Lambda_M^{\flat}$. For example, as in Corollary \ref{RH0isomorphism}, we have two decomposition: 
$$\cH_{I^{\flat}}^{\aff}(\Z[q^{-1}]) = \bigoplus_{w\in W}i_w^{\flat}*_{I^{\flat}} \Theta_{\text{Bern}}^{\flat}(\Lambda_{\aff}^{\flat}\otimes_\Z  \Z[q^{-1}])= \bigoplus_{w\in W} \Theta_{\text{Bern}}^{\flat}(\Lambda_{\aff}^{\flat}\otimes_\Z  \Z[q^{-1}]) *_{I^{\flat}} i_w^{\flat}.$$
\end{remark}

\subsection{Further properties of $\dot\Theta^{\flat}$-elements}

By Corollary \ref{2rootsystems}, various results in \cite{Lu89} applies to $\cH_{I^1}({\Z[q^{-1}]})$ (i.e. $M^{\flat}=M^1$) and $\cH_{I}^{\aff}({\Z[q^{-1}]})$. In this subsection, we will extend some results proved by Lusztig for $\cH_{I^1}(\Z[q^{-1}])$ to $\cH_{{I}}(\Z[q^{-1}])$.

From now on, whenever needed, we identify $\mathcal{R}^{\flat}$ with its image $\dot{\Theta}_{\text{\text{Bern}}}^{\flat}(\mathcal{R}^{\flat})$ and $\mathcal{R}_{\aff}^{\flat}$ with $\dot{\Theta}_{\text{\text{Bern}}}^{\flat}(\mathcal{R}_{\aff}^{\flat})$. 
Let $\mathcal{L}_{\aff}^{\flat}$ be the fraction field of $ \mathcal{R}_{\aff}^{\flat}$.
\begin{lemma}\label{341injLaff}
We have a commutative diagram
$$
\begin{tikzcd}
\cH_{I^{\flat}}^{\aff}({\Z[q^{-1}]}) \arrow[hook]{r} \arrow[hook]{d}& \cH_{I^{\flat}}({\Z[q^{-1}]}) \arrow[hook]{d}\\
\mathcal{L}_{\aff}^{\flat} \otimes_{\mathcal{R}_{\aff}^{\flat}} \cH_{I^{\flat}}^{\aff}({\Z[q^{-1}]}) \arrow[hook]{r} \arrow[swap]{d}{\simeq}& \mathcal{L}_{\aff}^{\flat} \otimes_{\mathcal{R}_{\aff}^{\flat}} \cH_{I}({\Z[q^{-1}]}) \arrow[two heads]{d}{\iota^{\flat}}\\
\mathcal{L}_{\aff}^1 \otimes_{\mathcal{R}_{\aff}^1}\cH_{I^1}^{\aff}({\Z[q^{-1}]}) \arrow[hook]{r} &\mathcal{L}_{\aff}^1 \otimes_{\mathcal{R}_{\aff}^1}\cH_{I^1}({\Z[q^{-1}]})
\end{tikzcd}
$$
\end{lemma}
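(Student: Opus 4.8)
The diagram consists of three squares; I will build it by checking each square separately and then noting that the horizontal and vertical arrows glue. The upper square is the assertion that the inclusion $\cH_{I^{\flat}}^{\aff}(\Z[q^{-1}])\hookrightarrow \cH_{I^{\flat}}(\Z[q^{-1}])$ (which exists by Remark~\ref{9111}, namely $\iota^{\flat}$ restricts to an isomorphism onto $\cH_{I^{\flat}}^{\aff}$ and the latter sits inside $\cH_{I^{\flat}}$ as the span of $i_w^{\flat}$ with $w\in W_{\aff}^{\flat}$) remains injective after the base change $-\otimes_{\mathcal{R}_{\aff}^{\flat}}\mathcal{L}_{\aff}^{\flat}$. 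For this I would use the decomposition of Corollary~\ref{HIbasis}: $\cH_{I^{\flat}}(\Z[q^{-1}])$ is free as a left $\mathcal{R}^{\flat}\otimes_\Z\Z[q^{-1}]$-module on $\{i_w^{\flat}: w\in W_{a_\circ}^{\flat}\}$, and by Remark~\ref{decomhiaff} the subalgebra $\cH_{I^{\flat}}^{\aff}(\Z[q^{-1}])$ is free as a left $\mathcal{R}_{\aff}^{\flat}\otimes_\Z\Z[q^{-1}]$-module on the same basis $\{i_w^{\flat}: w\in W\cong W_{a_\circ}^{\flat}\}$. Since $\mathcal{R}^{\flat}=\bigoplus_{\lambda\in \Lambda_M^{\flat}/\Lambda_{\aff}^{\flat}} \lambda\cdot \mathcal{R}_{\aff}^{\flat}$ is free over $\mathcal{R}_{\aff}^{\flat}$ (the group $\Lambda_{\aff}^{\flat}$ injects into $\Lambda_M^{\flat}$, both being subgroups of $\widetilde W^{\flat}$ with $\Lambda_M^{\flat}/\Lambda_{\aff}^{\flat}$ identified with $\Lambda_G^{\flat}$ via Lemma~\ref{weyiwa}), the inclusion $\cH_{I^{\flat}}^{\aff}\hookrightarrow \cH_{I^{\flat}}$ is a split inclusion of free $\mathcal{R}_{\aff}^{\flat}\otimes_\Z\Z[q^{-1}]$-modules, hence stays injective after tensoring with the flat (indeed localization) $\mathcal{R}_{\aff}^{\flat}\to\mathcal{L}_{\aff}^{\flat}$. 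This handles the top square.

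For the bottom square, I would invoke Corollary~\ref{2rootsystems}: the map $i_w\mapsto T_w$ gives isomorphisms $\cH_{I^1}(\Z[q^{-1}])\iso \cH(\cD,\widetilde W_{\aff},v=q,L)$ and $\cH_{I}^{\aff}(\Z[q^{-1}])\iso \cH(\cD_{\aff},W_{\aff},v=q,L)$, and under these $\mathcal{R}_{\aff}^1$ is carried into the centre-adjacent commutative subalgebra corresponding to the translation lattice $\Lambda_{\aff}^1=\Z[\Sigma^\vee]$. The inclusion $\cH_{I^1}^{\aff}\hookrightarrow\cH_{I^1}$ is again a split inclusion of free $\mathcal{R}_{\aff}^1\otimes_\Z\Z[q^{-1}]$-modules (same Bernstein-basis argument as above, now with $M^{\flat}=M^1$), so it stays injective after $-\otimes_{\mathcal{R}_{\aff}^1}\mathcal{L}_{\aff}^1$; this is the bottom horizontal arrow. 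The left vertical isomorphism $\mathcal{L}_{\aff}^{\flat}\otimes_{\mathcal{R}_{\aff}^{\flat}}\cH_{I^{\flat}}^{\aff}\iso\mathcal{L}_{\aff}^1\otimes_{\mathcal{R}_{\aff}^1}\cH_{I^1}^{\aff}$ is induced from $\iota^{\flat}\colon\cH_I^{\aff}\iso\cH_{I^{\flat}}^{\aff}$ (Remark~\ref{9111}) together with the isomorphism $\mathcal{R}_{\aff}\iso\mathcal{R}_{\aff}^{\flat}$ from the $W$-equivariant projection $\Lambda_{\aff}\iso\Lambda_{\aff}^{\flat}$ of \S\ref{flatpartout} (which also identifies $\mathcal{L}_{\aff}^{\flat}$ with $\mathcal{L}_{\aff}^1$, since $\Lambda_{\aff}$ is canonically $\Lambda_{\aff}^1$ inside $\widetilde W^1$). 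The right vertical arrow $\iota^{\flat}$ is the surjection of Corollary~\ref{ontoaveraging} base-changed along $\mathcal{R}_{\aff}^{\flat}\to\mathcal{L}_{\aff}^{\flat}$; that it makes sense requires only that $\iota^{\flat}$ is $\mathcal{R}_{\aff}^{\flat}$-linear when the source is viewed via $\mathcal{R}_{\aff}\iso\mathcal{R}_{\aff}^{\flat}$, which follows from Remark~\ref{Tran331}(i).

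It remains to check commutativity of the two squares, which is where the only real bookkeeping lies. The top square commutes because both composites send $\lambda\otimes i_w^{\flat}$ ($\lambda\in\mathcal{L}_{\aff}^{\flat}$, $w\in W$) to $\lambda\cdot i_w^{\flat}$ inside $\mathcal{L}_{\aff}^{\flat}\otimes_{\mathcal{R}_{\aff}^{\flat}}\cH_I(\Z[q^{-1}])$ (using $\dot\Theta_{\text{Bern}}^{\flat}$ to embed $\mathcal{R}_{\aff}^{\flat}$), so the identification is literally the identity on the common basis. The bottom square commutes because on the common basis $\{i_w^{\flat}\}_{w\in W}$ the map $\iota^{\flat}$ is the identity (Remark~\ref{9111}: $\iota^{\flat}$ restricted to $\cH_I^{\aff}$ is an isomorphism onto $\cH_{I^{\flat}}^{\aff}$ sending $i_w\mapsto i_w^{\flat}$) and the two downward identifications of $\mathcal{R}_{\aff}$-modules are compatible with $\dot\Theta_{\text{Bern}}$ by Remark~\ref{Tran331}(i). \textbf{The main obstacle} is purely organisational: one must be careful that all four corners are regarded as modules over a \emph{single} base ring — I will take $\mathcal{R}_{\aff}^{\flat}\otimes_\Z\Z[q^{-1}]$, identified once and for all with $\mathcal{R}_{\aff}^1\otimes_\Z\Z[q^{-1}]$ via \S\ref{flatpartout} — so that "base change to $\mathcal{L}_{\aff}^{\flat}$" is unambiguous and the flatness argument applies uniformly; the freeness statements of Corollary~\ref{HIbasis} and Remark~\ref{decomhiaff} then make every map in sight a map of free modules, and injectivity after localization is automatic.
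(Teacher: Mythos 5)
Your proposal is correct and follows essentially the same route as the paper: the $\flat$-versus-$1$ comparison is Remark \ref{9111}, and the injectivity of all the localization maps reduces in both arguments to the fact that $\cH_{I^{\flat}}(\Z[q^{-1}])$ is torsion-free (indeed free) over the domain $\mathcal{R}_{\aff}^{\flat}\otimes_\Z\Z[q^{-1}]$ acting through $\dot{\Theta}_{\text{Bern}}^{\flat}$ on either side. The only cosmetic difference is that the paper extracts this torsion-freeness from the decomposition $\cH_{I^{\flat}}\simeq\Z[\widetilde{\Upomega}^{\flat}]\otimes^\prime\cH_{I^{\flat}}^{\aff}$ of Proposition \ref{generalizedIWdecomp} combined with Remark \ref{decomhiaff}, whereas you get it from Corollary \ref{HIbasis} together with the freeness of $\mathcal{R}^{\flat}$ over $\mathcal{R}_{\aff}^{\flat}$; the two are interchangeable.
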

\begin{proof}
By Remark \ref{9111}, we have a commutative diagram
$$
\begin{tikzcd}
\cH_{I^{\flat}}^{\aff}({\Z[q^{-1}]}) \arrow[hook]{r} \arrow[swap]{d}{\simeq}& \cH_{I^{\flat}}({\Z[q^{-1}]})\arrow[two heads]{d}{\iota^1}\\
\cH_{{I}^1}^{\aff}({\Z[q^{-1}]}) \arrow[hook]{r} &\cH_{{I}^1}({\Z[q^{-1}]})
\end{tikzcd}
$$
Therefore, It remains to show why the horizontal ones and the upper vertical ones are injective. 
For this, we only need to show that no non-zero element in $\mathcal{R}_{\aff}^{\flat}$ is a zero left/right divizor in $\cH_I({\Z[q^{-1}]})$:
Let $a \in \mathcal{R}_{\aff}^{\flat}$ and $b\in \cH_{I^{\flat}}({\Z[q^{-1}]}) $ such that $a *_{I^{\flat}}b =0$. 
By the decomposition of Remark \ref{generalizedIWdecomp}, $b= \sum_{\sigma\in \widetilde\Upomega^{\flat}}\sigma\otimes b_\sigma$ for some $b_\sigma \in \cH_{I^{\flat}}^{\aff}({\Z[q^{-1}]})$. Hence $\sigma(a) *_{I^{\flat}}b_\sigma=0$ for all $\sigma\in \widetilde\Upomega^{\flat}$. 
Thanks to the decomposition $\cH_{I^{\flat}}^{\aff}({\Z[q^{-1}]}) = \oplus_{w\in W_{a_\circ}^{\flat}} (\mathcal{R}_{\aff}^{\flat}\otimes_\Z  \Z[q^{-1}]) *_{I^{\flat}}i_w^{\flat}$ (Remark \ref{decomhiaff}) we see that $a$ must be a zero-divisor in $\mathcal{R}_{\aff}^{\flat}$ which forces it to be zero since this latter is a domain. 
If we start with a right zero, i.e. $b *_{I^{\flat}}a =0$, then using decomposition $\cH_{I^{\flat}}^{\aff}({\Z[q^{-1}]}) = \oplus_{w\in W_{a_\circ}^{\flat}} i_w^{\flat}*_{I^{\flat}} (\mathcal{R}_{\aff}^{\flat}\otimes_\Z  \Z[q^{-1}])  $ one shows similarly that $a$ must be zero.
\end{proof}
\begin{lemma}\label{conjuginaff}
For any $m\in \Lambda_M^{\flat}, w \in W_{a_\circ}^{\flat}$ and $s\in \mathcal{T}_{a_\circ}^{\flat} = \cS^{\flat} \cap W_{a_\circ}^{\flat} $, we have $\dot\Theta_{-m}^{\flat}*_{I^{\flat}} i_w^{\flat}*_{I^{\flat}} \dot\Theta_{s(m)}^{\flat} \in \cH_{I^{\flat}}^{\aff}({\Z[q^{-1}]})$.
\end{lemma}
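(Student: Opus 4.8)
The statement asserts that $\dot\Theta_{-m}^{\flat}*_{I^{\flat}} i_w^{\flat}*_{I^{\flat}} \dot\Theta_{s(m)}^{\flat}$ lands in the affine subalgebra $\cH_{I^{\flat}}^{\aff}({\Z[q^{-1}]})$, for $m\in\Lambda_M^{\flat}$, $w\in W_{a_\circ}^{\flat}$ and $s\in\mathcal{T}_{a_\circ}^{\flat}$. The point is that the affine subalgebra $\cH_{I^{\flat}}^{\aff}({\Z[q^{-1}]})$ is exactly the part of $\cH_{I^{\flat}}({\Z[q^{-1}]})$ sitting over $W_{\aff}^{\flat}$ (or, via the Bernstein presentation of Corollary~\ref{HIbasis} and Remark~\ref{decomhiaff}, the span of $\dot\Theta_\lambda^{\flat}*_{I^{\flat}} i_v^{\flat}$ with $\lambda\in\Lambda_{\aff}^{\flat}$, $v\in W_{a_\circ}^{\flat}$), equivalently the functions supported on $G_1^{\flat}$, i.e.\ on cosets $I^{\flat}xI^{\flat}$ with $x\in W_{\aff}^{\flat}$. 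So the plan is to track the image under the Kottwitz-type quotient $\widetilde{W}^{\flat}\to\widetilde{W}^{\flat}/W_{\aff}^{\flat}\simeq\Lambda_G^{\flat}$ (Lemma~\ref{weyiwa}) of every double coset appearing in the support of the product, and check that it is trivial.

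\textbf{Key steps.} First I would recall from Lemma~\ref{explicittheta} that $\dot\Theta_{-m}^{\flat}$ and $\dot\Theta_{s(m)}^{\flat}$ are expressible as $i_{\mu}^{\flat}*_{I^{\flat}}(i_{\mu_\circ}^{\flat})^{-1}$-type products with $\mu,\mu_\circ\in\Lambda_M^{\flat}$, hence each $\dot\Theta_\lambda^{\flat}$ for $\lambda\in\Lambda_M^{\flat}$ is supported (after expanding inverses via Corollary~\ref{additionalIMpres}(v)) on cosets $I^{\flat}xI^{\flat}$ with $x\in\widetilde{W}^{\flat}$ projecting to the class of $\lambda$ in $\Lambda_M^{\flat}/\Lambda_{\aff}^{\flat}$; equivalently, applying the group homomorphism $d_{I^{\flat}}$-style bookkeeping, the image of $x$ under $\widetilde{W}^{\flat}\to\Lambda_G^{\flat}$ equals the image of $\lambda$ under $\Lambda_M^{\flat}\to\Lambda_M^{\flat}/\Lambda_{\aff}^{\flat}\hookrightarrow\Lambda_G^{\flat}$. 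Since $i_w^{\flat}$ with $w\in W_{a_\circ}^{\flat}\subset W_{\aff}^{\flat}$ has trivial image in $\Lambda_G^{\flat}$, by Corollary~\ref{triangularitiy} every coset in the support of the triple product $\dot\Theta_{-m}^{\flat}*_{I^{\flat}} i_w^{\flat}*_{I^{\flat}}\dot\Theta_{s(m)}^{\flat}$ has image in $\Lambda_G^{\flat}$ equal to (image of $-m$) $+$ (image of $s(m)$). Now the crucial observation is that $s(m)$ and $m$ have the same image in $\Lambda_G^{\flat}$: indeed $s(m)m^{-1}$ lies in $M\cap G_1=M^{\aff}$ because the reflection $s\in W_{a_\circ}$ differs from the identity by a product of coroots, which lie in $\Lambda_{\aff}$; since $M^{\flat}M^{\aff}/M^{\flat}=\Lambda_{\aff}^{\flat}$ maps to $0$ in $\Lambda_G^{\flat}=G/G_1^{\flat}$ (as $M^{\aff}\subset G_1\subset G_1^{\flat}$), we get $\overline{s(m)}=\overline{m}$ in $\Lambda_G^{\flat}$. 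Hence (image of $-m$)$+$(image of $s(m)$)$=0$ in $\Lambda_G^{\flat}$, so every coset in the support projects into $G_1^{\flat}$, which is precisely the condition for membership in $\cH_{I^{\flat}}^{\aff}({\Z[q^{-1}]})$.

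\textbf{Main obstacle.} The routine part is the support-tracking via Corollary~\ref{triangularitiy} and the fact that $\widetilde{W}^{\flat}\to\Lambda_G^{\flat}$ is a homomorphism (so that products of cosets add up in $\Lambda_G^{\flat}$); the genuinely load-bearing point, which I would state carefully, is the $\dot{W}$-equivariance-compatible claim that $s(m)m^{-1}\in\Lambda_{\aff}^{\flat}$ for $s\in\mathcal{T}_{a_\circ}^{\flat}$ — i.e.\ the reflection action of $W_{a_\circ}$ on $\Lambda_M^{\flat}$ changes an element only by an element of the coroot-type lattice $\Lambda_{\aff}^{\flat}$. This is where I expect to spend the most care: one needs $s(\lambda)-\lambda\in\Z\Sigma^{\vee}$ in $X_*(\Sbf)\otimes\Q$-language (standard for the finite Weyl group acting on any lattice between the coroot and coweight lattices), then transport it through the identifications $\Lambda_M^1\hookrightarrow X^*(Z(\widehat{\Mbf}))^\sigma_{\mathrm{In}}$ and $\Lambda_{\aff}^{\flat}\simeq\Z[\Sigma^{\vee}]$ from \S\ref{affineweyl} and \S\ref{flatpartout}, using that $\square^{\flat}$ and $\square^1$ are $W$-equivariant (Remark~\ref{commuwaffs}, Remark~\ref{flatiswequi}). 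Once that lemma is in hand, the rest is the short computation above; alternatively, one can deduce the whole statement by applying $\iota^{\flat}$ to the already-established case $M^{\flat}=M^1$ via the commutative square in Remark~\ref{9111} and Lemma~\ref{341injLaff}, reducing to Lusztig's setting through Corollary~\ref{2rootsystems}.
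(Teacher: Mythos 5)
Your main argument is correct and is essentially the paper's own proof: both track the support of the triple convolution through the homomorphism $\widetilde{W}^{\flat}\to\widetilde{W}^{\flat}/W_{\aff}^{\flat}\simeq\Lambda_G^{\flat}$ (the paper does this concretely by writing $\dot\Theta_{-m}^{\flat}$ and $\dot\Theta_{s(m)}^{\flat}$ with a common antidominant shift $m_\circ$ and factoring $i_{m_\circ}^{\flat}$ as an affine unit times $i_{z_{m_\circ}}^{\flat}$), and both hinge on the same load-bearing fact that $s(m)-m\in\Lambda_{\aff}^{\flat}$, which is most cleanly seen not via coroots but by noting that $s(m)m^{-1}$ is a commutator in $\widetilde{W}^{\flat}$, hence trivial in the abelian quotient $\Lambda_G^{\flat}$, and lies in $\Lambda_M^{\flat}\cap W_{\aff}^{\flat}=\Lambda_{\aff}^{\flat}$. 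Only your closing ``alternative'' runs in the wrong direction: $\iota^{\flat}$ averages from the level $M^{\flat}=M_1$ down to a general $M^{\flat}$, so one would prove the statement for $M_1$ and push forward, whereas there is no map from Lusztig's extended case $M^{\flat}=M^{1}$ back to $\cH_{I^{\flat}}$.
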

\begin{proof}
{For any $m\in \Lambda_M^{\flat}$ and $s \in  \mathcal{T}_{a_\circ}^{\flat}$, by \ref{sumtwodom} Proposition \ref{propertiesdominance}, one can find an element $m_\circ \in \Lambda_M^{-,\flat}$ such that $m_\circ-m $ and $m_\circ + s(m) $ both lie in $\Lambda_M^{-,\flat}$. Hence, for any $w\in W_{a_\circ}^{\flat}$, we have 
$$ \dot\Theta_{-m}^{\flat}*_{I^{\flat}} i_w^{\flat} *_{I^{\flat}}\dot\Theta_{s(m)} ^{\flat}
=(i_{m_\circ}^{\flat})^{-1} *_{I^{\flat}} i_{(m_\circ - m) w}^{\flat}*_{I^{\flat}}i_{m_\circ + s(m)}^{\flat} *_{I^{\flat}} (i_{m_\circ}^{\flat})^{-1}  .$$
Let $  z_{m_\circ}$ be the projection of $m_\circ$ in $ \widetilde{\Upomega}^{\flat}$. 
By \ref{v} Corollary \ref{additionalIMpres}, we have 
$$ i_{m_\circ}^{\flat} \in \cH_{I^{\flat}}^{\aff}({\Z[q^{-1}]})^\times *_{I^{\flat}} i_{z_{m_\circ}}^{\flat}=i_{z_{m_\circ}}^{\flat}*_{I^{\flat}} \cH_{I^{\flat}}^{\aff}({\Z[q^{-1}]})^\times.$$
Accordingly, the support 
$$\text{Supp}(\dot\Theta_{-m}^{\flat}*_{I^{\flat}} i_w^{\flat} *_{I^{\flat}}\dot\Theta_{s(m)} ^{\flat}) \subset I^{\flat}z_{m_\circ}^{-1}m_\circ m^{-1} wI^{\flat}m_\circ  s(m)z_{m_\circ}^{-1}I^{\flat} \subset G_1^{\flat}.$$
Therefore, $\dot\Theta_{-m}^{\flat}*_{I^{\flat}} i_w^{\flat}*_{I^{\flat}} \dot\Theta_{s(m)} \in  \cH_{I^{\flat}}^{\aff}({\Z[q^{-1}]})$.}
\end{proof}
\begin{proposition}\label{propoTheta}
For any $s \in  \mathcal{T}_{a_\circ}^{\flat}$ and any $m\in \Lambda_M^{\flat}$, we have
$$ \dot\Theta_m^{\flat} *_{I^{\flat}}  (i_s^{\flat} +1)-(i_s^{\flat}+1)*_{I^{\flat}}  \dot\Theta_{\dot{s}( m)}^{\flat}= ( \dot\Theta_{m}^{\flat}-\dot\Theta_{\dot{s}( m)}^{\flat})\mathcal{G}(\alpha) \in \mathcal{R}^{\flat}$$
where, $\mathcal{G}(\alpha) \in \mathcal{L}_{\aff}^{\flat}$ is the element defined in \cite[\S 3.8 \& \S 3.13]{Lu89} for the algebra $ \cH(\cD_{\aff},{W}_{\aff},v=q, L)$.
\end{proposition}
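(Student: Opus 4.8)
The strategy is to reduce the assertion, which concerns the algebra $\cH_{I^{\flat}}(\Z[q^{-1}])$, to the already-known analogous statement for the extended affine Hecke algebra $\cH_{I^1}(\Z[q^{-1}])$, where it is essentially Lusztig's computation in \cite[\S 3]{Lu89}, and to the statement for $\cH_I^{\aff}(\Z[q^{-1}])$. First I would observe that both sides of the displayed identity lie in the Iwahori--Hecke algebra and, by Corollary \ref{additionalIMpres} and Lemma \ref{explicittheta}, the element $\dot\Theta_m^{\flat}*_{I^{\flat}}(i_s^{\flat}+1)-(i_s^{\flat}+1)*_{I^{\flat}}\dot\Theta_{\dot s(m)}^{\flat}$ can be rewritten, after multiplying on the left and right by suitable $\dot\Theta_{m_\circ}^{\flat}$ with $m_\circ\in\Lambda_M^{-,\flat}$ chosen as in Proposition \ref{propertiesdominance}, so that all the relevant $i_w^{\flat}$'s and $\dot\Theta$-factors combine into a single double coset supported inside $G_1^{\flat}$; concretely Lemma \ref{conjuginaff} tells us that $\dot\Theta_{-m}^{\flat}*_{I^{\flat}}i_s^{\flat}*_{I^{\flat}}\dot\Theta_{\dot s(m)}^{\flat}\in\cH_{I^{\flat}}^{\aff}(\Z[q^{-1}])$ (up to the modulus twist, which is harmless since $s\in\cT_{a_\circ}^{\flat}$ preserves the coset structure). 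So the expression $\dot\Theta_{-m}^{\flat}*_{I^{\flat}}\big(\dot\Theta_m^{\flat}(i_s^{\flat}+1)-(i_s^{\flat}+1)\dot\Theta_{\dot s(m)}^{\flat}\big)*_{I^{\flat}}\dot\Theta_{\dot s(m)}^{\flat-1}$ actually lives in the affine subalgebra $\cH_{I^{\flat}}^{\aff}(\Z[q^{-1}])$, and thanks to Remark \ref{9111} we may transport the whole question through the isomorphism $\cH_{I^{\flat}}^{\aff}(\Z[q^{-1}])\iso\cH_I^{\aff}(\Z[q^{-1}])$, which is compatible with the $\Theta$-elements (Remark \ref{Tran331}) and with $\cS^{\flat}\leftrightarrow\cS$.

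Next, via Corollary \ref{2rootsystems} I would identify $\cH_I^{\aff}(\Z[q^{-1}])$ with Lusztig's algebra $\cH(\cD_{\aff},W_{\aff},v=q,L)$, under which $i_w^{\flat}\mapsto T_w$ and, by Lemma \ref{explicittheta} applied inside the affine datum (Remark \ref{decomhiaff}), the elements $\dot\Theta_n^{\flat}$ for $n\in\Lambda_{\aff}^{\flat}$ correspond — up to the scalar $\delta_B^{1/2}$ absorbed by the dot-action — precisely to Lusztig's Bernstein elements $\theta_n$. The commutation relation to be proven is then exactly the relation Lusztig establishes in \cite[\S 3.8, \S 3.13]{Lu89} (see also the split-case version in \cite[\S 7]{Lu83}): for a simple reflection $s$ and a translation $n$,
\[
\theta_n(T_s+1)-(T_s+1)\theta_{s(n)}=(\theta_n-\theta_{s(n)})\,\mathcal{G}(\alpha),
\]
where $\mathcal{G}(\alpha)\in\mathcal{L}_{\aff}$ is the rational function in the $\theta$'s defined in \emph{loc. cit.} in terms of the quadratic parameters along the $\alpha$-direction. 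The only work here is bookkeeping: checking that the dot-action of $s$ on $\Lambda_M^{\flat}$ matches Lusztig's $s(n)$ after the identification (the modulus-character twist in Definition \ref{dotaction} is precisely what makes this identification exact, cf. Remark \ref{Tran331}(ii)), and that $\mathcal{T}_{a_\circ}^{\flat}$ maps to simple reflections fixing $a_\circ$, so that $\alpha$ is a genuine (relative) root and $\mathcal{G}(\alpha)$ is defined.

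Finally I would deduce the general statement for arbitrary $m\in\Lambda_M^{\flat}$ (not just $m\in\Lambda_{\aff}^{\flat}$) from the affine case together with the decomposition $\widetilde W^{\flat}=\Lambda_M^{\flat}\rtimes W$ and the multiplicativity $\dot\Theta_{m+m'}^{\flat}=\dot\Theta_m^{\flat}*_{I^{\flat}}\dot\Theta_{m'}^{\flat}$: write $m=n+z$ with $n\in\Lambda_{\aff}^{\flat}$-part and the central part (on which $s$ acts trivially, by Lemma \ref{Wtrivialontors} resp. because $\dot s$ fixes the $\Lambda_G^{\flat}$-component), so $\dot\Theta_m^{\flat}=\dot\Theta_z^{\flat}*_{I^{\flat}}\dot\Theta_n^{\flat}$ with $\dot\Theta_z^{\flat}$ central, and the relation for $m$ follows by multiplying the relation for $n$ by $\dot\Theta_z^{\flat}$. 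The membership of the right-hand side in $\mathcal{R}^{\flat}$ (as opposed to merely $\mathcal{L}_{\aff}^{\flat}\otimes_{\mathcal{R}_{\aff}^{\flat}}\cH_{I^{\flat}}(\Z[q^{-1}])$) is then automatic from the left-hand side being an actual element of $\cH_{I^{\flat}}(\Z[q^{-1}])$ together with the injectivity in Lemma \ref{341injLaff}; concretely, $(\dot\Theta_m^{\flat}-\dot\Theta_{\dot s(m)}^{\flat})\mathcal{G}(\alpha)$ is a difference of $\Theta$-elements times a fixed rational function, and the combination is forced to be polynomial. \textbf{The main obstacle} I anticipate is not any single hard step but the careful verification that the modulus-character normalization in the dot-action (Definition \ref{dotaction}) is exactly the one needed to match Lusztig's $\theta$-elements and his rational function $\mathcal{G}(\alpha)$ after the identification of Corollary \ref{2rootsystems} — i.e.\ getting the twist and the direction of $s$ right — and making sure that passing from $\Lambda_{\aff}^{\flat}$ to all of $\Lambda_M^{\flat}$ does not change $\mathcal{G}(\alpha)$, which it should not, since $\mathcal{G}(\alpha)$ only depends on the $\alpha$-root-string data that already lives in the affine datum.
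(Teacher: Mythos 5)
Your overall route coincides with the paper's: sandwich the commutator so that Lemma \ref{conjuginaff} places it in the affine subalgebra, transport through $\cH_{I^{\flat}}^{\aff}\iso\cH_{I}^{\aff}$, compare $\dot\Theta$ with Lusztig's $\theta$ via $\theta_m=\delta_B(m)^{1/2}\dot\Theta_m$, and invoke Lusztig. Two points, however, need repair, and the second is a genuine gap. First, a normalization slip: the element $\dot\Theta_{-m}^{\flat}*_{I^{\flat}}\bigl(\dot\Theta_m^{\flat}(i_s^{\flat}+1)-(i_s^{\flat}+1)\dot\Theta_{\dot s(m)}^{\flat}\bigr)*_{I^{\flat}}(\dot\Theta_{\dot s(m)}^{\flat})^{-1}=(i_s^{\flat}+1)\dot\Theta_{-\dot s(m)}^{\flat}-\dot\Theta_{-m}^{\flat}(i_s^{\flat}+1)$ does \emph{not} lie in $\cH_{I^{\flat}}^{\aff}$; you must multiply only on the left by $\dot\Theta_{-m}^{\flat}$, so that the expression becomes $(i_s^{\flat}+1)-\dot\Theta_{-m}^{\flat}(i_s^{\flat}+1)\dot\Theta_{\dot s(m)}^{\flat}$, each of whose terms Lemma \ref{conjuginaff} does control (this is exactly the element $A_s(m)$, up to the $\mathcal{G}(\alpha)$-term, that the paper analyzes).

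The real gap is your last step. You identify $\cH_I^{\aff}$ with $\cH(\cD_{\aff},W_{\aff},v=q,L)$ and invoke Lusztig's relation there, which only yields the identity for translations $n\in\Lambda_{\aff}^{\flat}$; to reach general $m\in\Lambda_M^{\flat}$ you write $m=n+z$ with $n\in\Lambda_{\aff}^{\flat}$ and $z$ central and $\dot s$-fixed. No such decomposition exists in general: $W$ acts trivially on the quotient $\Lambda_M^{\flat}/\Lambda_{\aff}^{\flat}$, but that quotient map does not split with an $s$-fixed complement (already for $\mathbf{GL}_2$ one has $\Lambda_M=\Z^2$, $\Lambda_{\aff}=\{(a,-a)\}$, center $\{(a,a)\}$, and $(1,0)$ is in neither the sum nor decomposable integrally), and Lemma \ref{Wtrivialontors} concerns $\ker\nu/M_1$, not a complement of $\Lambda_{\aff}$. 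The correct move — and the one the paper makes — is to apply Lusztig's Proposition 3.9 to the \emph{extended} affine Hecke algebra $\cH_{I^1}\iso\cH(\cD,\widetilde W_{\aff},v=q,L)$ of Corollary \ref{2rootsystems}, where $\theta_m$ is defined for every $m\in\Lambda_M^1$: one checks that the full defect $A_s(m)=(i_s+1)-\theta_{-m}(i_s+1)\theta_{s(m)}-(1-\theta_{s(m)-m})\mathcal{G}(\alpha)$ lies in $\cH_I^{\aff}(\Z[q^{-1}])$ for \emph{arbitrary} $m$ (using Lemma \ref{conjuginaff} and the fact that $(1-\theta_{s(m)-m})\mathcal{G}(\alpha)\in\mathcal{R}_{\aff}$ by Lusztig's \S 2.5(b)), that $\iota^1$ is injective on $\cH_I^{\aff}$ (Remark \ref{9111}, Lemma \ref{341injLaff}), and that $\iota^1(A_s(m))=0$ by Lusztig's result in $\cH_{I^1}$; hence $A_s(m)=0$. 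With that substitution your argument closes; your treatment of why the right-hand side lands in $\mathcal{R}^{\flat}$ is fine.
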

\begin{proof} 
Let $m\in M$. We want to apply \cite[Proposition 3.9]{Lu89}. Since in {\em loc. cit.} the notation ${\theta}_m$ does not equal our $\dot{\Theta}_m$, we first compare the two to make things clear. Let $m_1, m_2\in M^-$ verifying $m=m_1-m_2$, hence by Corollary \ref{2rootsystems} and \cite[Definition \S 3.3 (a)]{Lu89}
$${\theta}_m = \frac{q_{m_2}^{\frac{1}{2}}}{q_{m_1}^{\frac{1}{2}}} i_{m_1}*_Ii_{m_2}^{-1}
\overset{\text{Lemma \ref{qmdeltam}}}{=}
\delta_B(m)^{\frac{1}{2}}\dot{\Theta}_m.
$$
Set $A_{s}(m):= (i_s +1)- \theta_{-m}(i_s+1) \theta_{s(m)}-(1-\theta_{s(m)-m})\mathcal{G}(\alpha)$. 
Clearly, the term $(1-\theta_{s(m)-m})\mathcal{G}(\alpha) \in \mathcal{L}_{\aff}$, but its image by $\iota^1$ lies in $\mathcal{R}_{\aff}^{1}$ \cite[\S 2.5 (b)]{Lu89}, hence $(1-\theta_{s(m)-m})\mathcal{G}(\alpha) \in \mathcal{R}_{\aff}$. 
Accordingly, by Lemma \ref{conjuginaff}, we have $A_s(m)\in \cH_I^{\aff}({\Z[q^{-1}]})$.

Thanks to \cite[Proposition 3.9]{Lu89}, we know that $\iota^{1} (A_s(m))=0$ in $\cH_{I^{1}}({\Z[q^{-1}]})$ since by Remark \ref{Tran331} we have $\iota^1(\dot\Theta_m)= \dot\Theta_{{m}^1}^1$ (recall that $m^1$ denotes the image of $m$ in $\Lambda_M^1$) . 
Therefore $A_s(m)=0$, given that $\iota^1 \colon \cH_I^{\aff}({\Z[q^{-1}]})\to \cH_{I^{1}}({\Z[q^{-1}]})$ is injective.

In conclusion, we proved that for any reflexion $s\in  \mathcal{T}_{a_\circ}$ and any $m\in M$
$$ \dot\Theta_m (i_s +1)-c(m,s) (i_s+1) \dot\Theta_{s(m)}= ( \dot\Theta_{m}-c(m,s)\dot\Theta_{s(m)})\mathcal{G}(\alpha)$$
which is proves the lemma (see Remark \ref{Tran331}).
\end{proof}

\subsection{$\dot{W}$-invariant elements are central}
For $m\in \Lambda_M^{-,\flat}$, set 
$$r_m^{\flat}:=\sum_{w\in W/W_m} \dot{w}(m)=\sum_{w\in W/W_m} c(m,w) w(m)$$ 
where, $W_m$ denotes the isotropy subgroup of $m$ in $W$ (for the standard Weyl action). 

Let ${\mathcal{R}^{\flat}}^{\dot{W}} \subset{\mathcal{R}^{\flat}} $ denotes the $\Z$-subalgebra of elements invariant under the dot-action, which again will be identified with its $\dot{\Theta}_{\text{\text{Bern}}}^{\flat}({\mathcal{R}^{\flat}}^{\dot{W}})$.
 So $r_m^{\flat}$ also denotes its image $\sum_{w\in W/W_m} \dot\Theta_{\dot{w}(m)}^{\flat}=\sum_{w\in W/W_m} c(m,w) \dot\Theta_{{w}(m)}^{\flat}$.

By Lemma \ref{crintegral}, all $r_m\in {\mathcal{R}^{\flat}} $ for all $m\in \Lambda_M^{-,\flat}$. 
In consequence, Lemma \ref{orbitLambda} implies that $\{r_m^{\flat}, m\in \Lambda_M^{-,\flat}\}$ is a $\Z$-basis for ${\mathcal{R}^{\flat}}^{\dot{W}}$.
\begin{corollary}\label{invincentIwahori}
For any $m\in \Lambda_M^{\flat}$ and any $s \in \mathcal{T}_{a_\circ}$, $\dot\Theta_m^{\flat} + \dot\Theta_{\dot{s}( m)}^{\flat}$ commutes with $i_{s}^{\flat}$. 
In consequence,
$${\mathcal{R}^{\flat}}^{\dot{W}} \subset Z(\cH_{I^{\flat}}({\Z[q^{-1}]})).$$
\end{corollary}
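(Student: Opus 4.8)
The plan is to deduce Corollary \ref{invincentIwahori} directly from Proposition \ref{propoTheta}, and then bootstrap from commutation with the generators $i_s^{\flat}$ to centrality in all of $\cH_{I^{\flat}}(\Z[q^{-1}])$. First I would fix $s \in \mathcal{T}_{a_\circ}^{\flat}$ and $m \in \Lambda_M^{\flat}$ and rewrite the identity of Proposition \ref{propoTheta}. That proposition gives
$$\dot\Theta_m^{\flat} *_{I^{\flat}} (i_s^{\flat}+1) - (i_s^{\flat}+1)*_{I^{\flat}} \dot\Theta_{\dot s(m)}^{\flat} = (\dot\Theta_m^{\flat} - \dot\Theta_{\dot s(m)}^{\flat})\,\mathcal{G}(\alpha),$$
and I want to show $\dot\Theta_m^{\flat} + \dot\Theta_{\dot s(m)}^{\flat}$ commutes with $i_s^{\flat}$. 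The key point is that $\dot s$ is an involution on $\Lambda_M^{\flat}$ (since $s$ has order $2$ in $W$ and $c(\cdot,\cdot)$ behaves multiplicatively along $\dot s$), so applying the same identity with $m$ replaced by $\dot s(m)$ yields a second relation with the roles of $m$ and $\dot s(m)$ swapped; adding the two relations makes the right-hand sides cancel (the factor $\mathcal{G}(\alpha)$ is the same, and $(\dot\Theta_m^{\flat} - \dot\Theta_{\dot s(m)}^{\flat}) + (\dot\Theta_{\dot s(m)}^{\flat} - \dot\Theta_m^{\flat}) = 0$). What remains on the left is
$$(\dot\Theta_m^{\flat}+\dot\Theta_{\dot s(m)}^{\flat})*_{I^{\flat}}(i_s^{\flat}+1) - (i_s^{\flat}+1)*_{I^{\flat}}(\dot\Theta_m^{\flat}+\dot\Theta_{\dot s(m)}^{\flat}) = 0,$$
and since the scalar $1$ is central, this is exactly the statement that $\dot\Theta_m^{\flat}+\dot\Theta_{\dot s(m)}^{\flat}$ commutes with $i_s^{\flat}$.

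Next I would pass to the $\dot W$-invariants. An element $r \in {\mathcal{R}^{\flat}}^{\dot W}$ can, by the discussion just before the corollary (the basis $\{r_m^{\flat} : m \in \Lambda_M^{-,\flat}\}$ coming from Lemma \ref{orbitLambda}), be written as a $\Z$-linear combination of the orbit sums $r_m^{\flat} = \sum_{w \in W/W_m} \dot\Theta_{\dot w(m)}^{\flat}$. Since $W$ is generated by the reflections $s \in \mathcal{T}_{a_\circ}^{\flat}$, it suffices to check that each $r_m^{\flat}$ commutes with every such $i_s^{\flat}$; then $r_m^{\flat}$ commutes with every $i_w^{\flat}$ for $w \in W_{a_\circ}^{\flat}$, and since $\cH_{I^{\flat}}(\Z[q^{-1}])$ is generated as an algebra by the $i_w^{\flat}$ ($w \in W_{a_\circ}^{\flat}$) together with the image of $\mathcal{R}^{\flat}$ under $\dot\Theta_{\mathrm{Bern}}^{\flat}$ (which is commutative, hence automatically commutes with $r_m^{\flat} \in \mathcal{R}^{\flat}$), it follows that $r_m^{\flat} \in Z(\cH_{I^{\flat}}(\Z[q^{-1}]))$. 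To see that $r_m^{\flat}$ commutes with a fixed $i_s^{\flat}$: the right multiplication by $s$ on the coset space $W/W_m$ has orbits of size $1$ or $2$; the size-$2$ orbits pair up $w W_m$ with $ws W_m$, contributing $\dot\Theta_{\dot w(m)}^{\flat} + \dot\Theta_{\dot w \dot s(\dot s \cdot{} ?)}$-type pairs — more precisely, pairing $\dot w(m)$ with $\dot{(ws)}(m)$, which after re-indexing are exactly of the form $\dot\Theta_{m'}^{\flat} + \dot\Theta_{\dot s(m')}^{\flat}$ for suitable $m'$, hence commute with $i_s^{\flat}$ by the first part; the size-$1$ orbits correspond to $m'$ with $\dot s(m') = m'$ (up to the cocycle $c$), for which one checks separately (again from Proposition \ref{propoTheta}, where the right-hand side vanishes because $\dot\Theta_{m'}^{\flat} - \dot\Theta_{\dot s(m')}^{\flat} = 0$) that $\dot\Theta_{m'}^{\flat}$ already commutes with $i_s^{\flat}$.

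The main obstacle I anticipate is the bookkeeping in the last step: verifying cleanly that the $\dot s$-action on $W/W_m$ decomposes the orbit sum $r_m^{\flat}$ into pairs $\{\dot\Theta_{m'}^{\flat}+\dot\Theta_{\dot s(m')}^{\flat}\}$ and fixed terms $\dot\Theta_{m'}^{\flat}$ with $\dot s(m') = m'$, compatibly with the cocycle $c(m,w)$ appearing in $r_m^{\flat} = \sum_w c(m,w)\,\dot\Theta_{w(m)}^{\flat}$. One has to be careful that the dot-action $\dot w(m) = c(m,w)\,w(m)$ is a genuine (left) action so that $\dot s \dot w(m) = \dot{(sw)}(m)$, which is what legitimizes the pairing; this is where Lemma \ref{crintegral} and the factorization of $c$ through $\Lambda_M^1 \times W$ get used. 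Everything else — that the scalars $1$ and $\mathcal{G}(\alpha)$ are harmless, that $W$ is generated by $\mathcal{T}_{a_\circ}^{\flat}$, and that $\mathcal{R}^{\flat}$ is commutative — is routine. I expect the whole argument to be short once Proposition \ref{propoTheta} is in hand.
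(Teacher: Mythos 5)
Your first paragraph is exactly the intended reading of Proposition \ref{propoTheta}: writing the identity once for $m$ and once for $\dot s(m)$, using that $\dot s$ is an involution on $\mathcal{R}^{\flat}\otimes\Z[q^{-1}]$ (the cocycle satisfies $c(m,s)\,c(s(m),s)=1$), and adding the two relations kills the right-hand sides and leaves the commutation of $\dot\Theta_m^{\flat}+\dot\Theta_{\dot s(m)}^{\flat}$ with $i_s^{\flat}+1$, hence with $i_s^{\flat}$. Likewise, your reduction of centrality to commutation with the generators $i_s^{\flat}$, $s\in\mathcal{T}_{a_\circ}$ --- via the braid relations, Corollary \ref{HIbasis}, and commutativity of $\dot\Theta_{\text{Bern}}^{\flat}(\mathcal{R}^{\flat})$ --- is correct, and is spelled out more carefully than in the paper's terse conclusion.

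The one step that fails as written is the pairing inside the orbit sum $r_m^{\flat}$. You pair $wW_m$ with $wsW_m$ under \emph{right} multiplication by $s$. First, right multiplication by $s$ does not act on the left coset space $W/W_m$ (and in the degenerate case $s\in W_m$ where it does, every coset is fixed, so your description of size-$1$ versus size-$2$ orbits is also off). More importantly, the resulting pair of exponents is $\dot w(m)$ and $\dot{(ws)}(m)=\dot w(\dot s(m))$; since $\dot w$ and $\dot s$ do not commute, this pair is \emph{not} of the form $\{m',\dot s(m')\}$, and no re-indexing repairs it, because $\{w(m),\,ws(m)\}$ is simply not an orbit of $s$ acting on $W\cdot m$. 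What you need is \emph{left} multiplication by $s$: pair $wW_m$ with $swW_m$ (equivalently, pair $\mu$ with $s(\mu)$ on the orbit $W\cdot m$, noting that $\dot w(m)$ depends only on $w(m)$). This produces $\dot\Theta_{\dot w(m)}^{\flat}+\dot\Theta_{\dot s(\dot w(m))}^{\flat}$, exactly the shape handled by your first part, with the singleton orbits $s(\mu)=\mu$ disposed of as you indicate since the right-hand side of Proposition \ref{propoTheta} then vanishes. This corrected pairing is precisely what the paper does via the decomposition $W=(\mathscr{P}_s^l\cap W)\sqcup s(\mathscr{P}_s^l\cap W)$; with that one-word fix your argument coincides with the paper's.
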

\begin{proof}
The first statement is an immediate consequence of Proposition \ref{propoTheta}. 
For the second statement, 
we observe that for any $s \in\mathcal{T}_{a_\circ}$ (Definition \ref{wallsgeneratingcox}) and any $m\in \Lambda_M^{\flat}$, we have
$$ |W/W^m| r_m^{\flat}= \sum_{w\in \dot{W}}\dot\Theta_{\dot{w}(m)}^{\flat}  = \sum_{w \in  \mathcal{P}_s^l \cap {W}}\dot\Theta_{\dot{w}(m)}^{\flat} + \dot\Theta_{ \dot{s}\dot{w}( m)}^{\flat} .$$
So $r_m^{\flat}$ commutes with $i_{s}^{\flat}$ for all $s\in \cS^{\flat}$. 
This completes the proof since $W$ is generated by $ \cS^{\flat} \cap W_{a_\circ}^{\flat}$.
\end{proof}

\section{Twisted integral Satake morphisms}\label{sectionsatake}
\subsection{Twisted Satake homomorphism: first basic properties}\label{definitionsatake}

In this section, we focus on the level $K^{\flat}$. 
By Proposition \ref{freerank1MK}, 
the $\mathcal{R}^{\flat}$-module $\cM_{K^{\flat}}(\Z)$ is free of rank $1$ (with canonical basis element the spherical vector $v_{1,K}^{\flat}:=v_{1,a_\circ}^{\flat}={\bf 1}_{U^+K^{\flat}}$).
Accordingly, we have a $\Z$-algebra homomorphism $\cH_{K^{\flat}}(\Z)\to \mathcal{R}^{\flat}$, which we denote by $\dot{\mathcal{S}}_M^{G^{\flat}}$. It is called the Satake transform and is characterized by
$$v_{1,K}^{\flat}*_{K^{\flat}} h = \dot{\mathcal{S}}_M^{G^{\flat}}(h)\cdot v_{1,K}^{\flat}, \text{ for all }h\in \cH_{K^{\flat}}(\Z).$$ 
This is actually an embedding of $\Z$-algebras (using the fact that $v_{1,K}^{\flat}=v_{1}^{\flat}*_{I^{\flat}}{\bf 1}_{K^{\flat}}$ and Corollary \ref{injectivebarh}). In particular, the algebra $\cH_{K^{\flat}}(\Z)$ is a commutative.
\begin{proposition}\label{incenter}
The image of $\dot{\mathcal{S}}_M^G$ is $\dot{W}$-invariant, i.e.
$\begin{tikzcd}\dot{\mathcal{S}}_M^{G^{\flat}} \colon \cH_{K^{\flat}}(\Z) \arrow[r,hook]& {\mathcal{R}^{\flat}}^{\dot{W}}.\end{tikzcd}$
\end{proposition}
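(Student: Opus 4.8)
\textbf{Proof plan for Proposition \ref{incenter}.}
The plan is to establish $\dot{W}$-invariance of the image of $\dot{\cS}_M^{G^{\flat}}$ by transporting the problem into the Iwahori--Hecke algebra, where we already know (Corollary \ref{invincentIwahori}) that the dot-invariant part of $\mathcal{R}^{\flat}$ is central. The key identification is the one recorded in Remark \ref{idempI2K}: after inverting $|{K^{\flat}}:I^{\flat}|$ (which is a power of $q$ by Lemma \ref{InIpowerq}, hence harmless since we may work over $\Z[q^{-1}]$ and then descend), $\cH_{K^{\flat}}(\Z[q^{-1}])$ is identified with the corner algebra $e_{K^{\flat}}*_{I^{\flat}}\cH_{I^{\flat}}(\Z[q^{-1}])*_{I^{\flat}}e_{K^{\flat}}$, and $\cM_{K^{\flat}}(\Z[q^{-1}])$ with $\cM_{I^{\flat}}(\Z[q^{-1}])*_{I^{\flat}}e_{K^{\flat}}$. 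Under these identifications the spherical vector is $v_{1,K}^{\flat}=v_1^{\flat}*_{I^{\flat}}e_{K^{\flat}}=v_1^{\flat}*_{I^{\flat}}\1_{K^\flat}$ (up to the invertible scalar $|K^\flat:I^\flat|^{-1}$), and the left $\mathcal{R}^{\flat}$-action on both modules is the one coming from $m\cdot v_1^{\flat}=v_1^{\flat}*_{I^{\flat}}\dot{\Theta}_m^{\flat}$ of \S\ref{decomptwistbern}.

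\textbf{Main computation.} Fix $h\in\cH_{K^{\flat}}(\Z)$ and write $\dot{\cS}_M^{G^{\flat}}(h)=\sum_{m}a_m\, m\in\mathcal{R}^{\flat}$ (finite sum, $a_m\in\Z$). The defining property reads $v_{1,K}^{\flat}*_{K^{\flat}}h=\dot{\cS}_M^{G^{\flat}}(h)\cdot v_{1,K}^{\flat}$, i.e. in $\cM_{I^{\flat}}(\Z[q^{-1}])$,
$$v_1^{\flat}*_{I^{\flat}}h = \Big(\sum_m a_m\,\dot{\Theta}_m^{\flat}\Big)*_{I^{\flat}}v_1^{\flat}\ \text{applied formally,}$$
more precisely $v_1^{\flat}*_{I^{\flat}}h=\dot{\Theta}_{\dot{\cS}_M^{G^{\flat}}(h)}^{\flat}\cdot v_1^{\flat}$ where $\dot{\Theta}^{\flat}_{\bullet}$ is extended $\Z$-linearly. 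Now the right-hand side $v_1^\flat*_{I^\flat}\big(\dot{\Theta}^\flat_{\dot{\cS}(h)}*_{I^\flat}\1_{K^\flat}\big)$: since $h$ is bi-$K^{\flat}$-invariant, $h=h*_{I^\flat}\1_{K^\flat}$ up to scalar, so both sides of the displayed equality lie in $v_1^{\flat}*_{I^{\flat}}\big(\cH_{I^{\flat}}(\Z[q^{-1}])*_{I^{\flat}}\1_{K^\flat}\big)$. Using the isomorphism $\hbar_{I^{\flat}}$ of Theorem \ref{structureMI} (which is injective, "upper-triangular with invertible diagonal"), we may cancel $v_1^{\flat}$ and conclude $h$ and $\dot{\Theta}_{\dot{\cS}_M^{G^{\flat}}(h)}^{\flat}*_{I^{\flat}}\1_{K^{\flat}}$ differ by the scalar $|K^\flat:I^\flat|$, i.e. up to normalization
$$h=\dot{\Theta}_{\dot{\cS}_M^{G^{\flat}}(h)}^{\flat}*_{I^{\flat}}\1_{K^{\flat}}\quad\text{in }\cH_{I^{\flat}}(\Z[q^{-1}]).$$
Since $h$ is bi-$K^{\flat}$-invariant it commutes with every $i_s^{\flat}$, $s\in\mathcal{T}_{a_\circ}^{\flat}$ (as $\1_{K^\flat}$ absorbs $i_s^\flat$ on either side: $i_s^\flat*_{I^\flat}\1_{K^\flat}=\1_{K^\flat}*_{I^\flat}i_s^\flat=q_s\1_{K^\flat}$ when $s\in W_{a_\circ}^\flat$, using Proposition \ref{Iwahori1}). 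Then for each such $s$, commuting the displayed identity with $i_s^{\flat}$ and invoking Proposition \ref{propoTheta} forces the "error term" $(\dot{\Theta}_m^{\flat}-\dot{\Theta}_{\dot{s}(m)}^{\flat})\mathcal{G}(\alpha)$-type contributions to vanish after multiplication by $\1_{K^\flat}$; a cleaner route is to observe directly that $\dot{\Theta}_{\dot{\cS}(h)}^{\flat}*_{I^\flat}\1_{K^\flat}=\dot{\Theta}_{\dot{s}(\dot{\cS}(h))}^{\flat}*_{I^\flat}\1_{K^\flat}$ because multiplying Proposition \ref{propoTheta} on the right by $\1_{K^\flat}$ and using $i_s^\flat*_{I^\flat}\1_{K^\flat}=q_s\1_{K^\flat}$ shows $(\dot{\Theta}_m^\flat-\dot{\Theta}_{\dot{s}(m)}^\flat)(q_s+1-\mathcal{G}(\alpha))*_{I^\flat}\1_{K^\flat}=0$, and $q_s+1-\mathcal{G}(\alpha)$ is a nonzero element of $\mathcal{L}_{\aff}^\flat$, hence a non-zero-divisor by Lemma \ref{341injLaff}. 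Therefore $\dot{\Theta}_{\dot{\cS}(h)}^{\flat}*_{I^\flat}\1_{K^\flat}=\dot{\Theta}_{\dot{s}(\dot{\cS}(h))}^{\flat}*_{I^\flat}\1_{K^\flat}$ for all $s$, and applying the injective map $\hbar_{I^\flat}$ once more (or the injectivity of $m\mapsto \dot{\Theta}^\flat_m*_{I^\flat}\1_{K^\flat}$ on $\mathcal{R}^\flat$, which follows from Proposition \ref{freerank1MK} freeness) gives $\dot{s}(\dot{\cS}_M^{G^{\flat}}(h))=\dot{\cS}_M^{G^{\flat}}(h)$. Since $W=\langle\mathcal{T}_{a_\circ}^{\flat}\rangle$ (Definition \ref{wallsgeneratingcox}), this yields $\dot{\cS}_M^{G^{\flat}}(h)\in{\mathcal{R}^{\flat}}^{\dot{W}}$.

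\textbf{Main obstacle.} The delicate point is the passage from "commutes with $i_s^{\flat}$" to "$\dot{s}$-invariant": naively one wants to quote Corollary \ref{invincentIwahori} in reverse, but that corollary only gives one inclusion. The honest argument must use Proposition \ref{propoTheta} together with the fact that after right-multiplication by $\1_{K^{\flat}}$ the Iwahori generators act by scalars $q_s$, turning the commutator relation into a clean divisibility statement in the domain-like ring $\mathcal{R}_{\aff}^{\flat}$ (non-zero-divisor property from Lemma \ref{341injLaff}); alternatively, one may run the whole argument over the fraction field $\mathcal{L}^{\flat}$ using the skew-group-ring description of Theorem \ref{skewringLaff} and the faithfulness of the action on $\cM_{I^{\flat}}$. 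Either way, the only real content is checking that the "scalar-reduction at $K^{\flat}$" does not collapse too much — i.e. that $m\mapsto\dot{\Theta}_m^{\flat}*_{I^{\flat}}\1_{K^{\flat}}$, while obviously not injective on all of $\mathcal{R}^{\flat}$, \emph{is} injective on the $\dot{W}$-anti-invariant-free part in the sense needed — and this is exactly what Proposition \ref{freerank1MK} (freeness of $\cM_{K^{\flat}}(\Z)$ over $\mathcal{R}^{\flat}$, rank $1$) provides. I expect the write-up to be short once these lemmas are chained correctly.
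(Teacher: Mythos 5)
Your overall route is the same as the paper's first (``algebraic'') proof: pass to the Iwahori level, establish $h=\dot{\Theta}^{\flat}_{\dot{\cS}_M^{G^{\flat}}(h)}*_{I^{\flat}}{\bf 1}_{K^{\flat}}$ via Theorem \ref{structureMI}, and then extract $\dot{s}$-invariance from Proposition \ref{propoTheta} and the freeness of Corollary \ref{HIbasis}. The first half of your write-up is fine. The decisive step, however, is broken. You claim that right-multiplying Proposition \ref{propoTheta} by ${\bf 1}_{K^{\flat}}$ and using $i_s^{\flat}*_{I^{\flat}}{\bf 1}_{K^{\flat}}=q_s{\bf 1}_{K^{\flat}}$ yields $(\dot{\Theta}_m^{\flat}-\dot{\Theta}_{\dot{s}(m)}^{\flat})(q_s+1-\mathcal{G}(\alpha))*_{I^{\flat}}{\bf 1}_{K^{\flat}}=0$ for \emph{all} $m$. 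But in the term $(i_s^{\flat}+1)*_{I^{\flat}}\dot{\Theta}_{\dot{s}(m)}^{\flat}*_{I^{\flat}}{\bf 1}_{K^{\flat}}$ the factor $i_s^{\flat}$ sits to the \emph{left} of $\dot{\Theta}_{\dot{s}(m)}^{\flat}$, and since these do not commute (that non-commutation is precisely the content of Proposition \ref{propoTheta}), you cannot evaluate it as $(q_s+1)\dot{\Theta}_{\dot{s}(m)}^{\flat}*_{I^{\flat}}{\bf 1}_{K^{\flat}}$. The claimed identity is in fact false for general $m$: combined with the injectivity of $m\mapsto\dot{\Theta}_m^{\flat}*_{I^{\flat}}{\bf 1}_{K^{\flat}}$ on $\mathcal{R}^{\flat}$ (which you correctly invoke two lines later, and which does follow from Proposition \ref{freerank1MK}), it would force $\dot{s}(m)=m$ for every $m$. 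Your argument is therefore internally contradictory. A second, separate issue is the cancellation: from $\bigl(y\cdot z\bigr)*_{I^{\flat}}{\bf 1}_{K^{\flat}}=0$ with $z$ a non-zero-divisor you cannot directly deduce $y*_{I^{\flat}}{\bf 1}_{K^{\flat}}=0$; you must first use freeness (the $i_w^{\flat}$-components of $x*_{I^{\flat}}{\bf 1}_{K^{\flat}}=\sum_w x*_{I^{\flat}}i_w^{\flat}$) to conclude $y\cdot z=0$ \emph{in} $\mathcal{L}^{\flat}$, and only then cancel $z$ there.

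The repair must use the hypothesis on $h$, which your ``cleaner route'' never does. Since $h\in\cH_{K^{\flat}}(\Z)$ one has $i_s^{\flat}*_{I^{\flat}}h=h*_{I^{\flat}}i_s^{\flat}\,(=q_sh)$, hence with $f=\dot{\cS}_M^{G^{\flat}}(h)$ the commutator satisfies $\bigl(i_s^{\flat}*_{I^{\flat}}\dot{\Theta}_f^{\flat}-\dot{\Theta}_f^{\flat}*_{I^{\flat}}i_s^{\flat}\bigr)*_{I^{\flat}}{\bf 1}_{K^{\flat}}=0$. Proposition \ref{propoTheta} (applied to $\dot{s}(f)$ and using $\dot{s}^2=\mathrm{id}$) rewrites this commutator as $(\dot{s}(f)-f)*_{I^{\flat}}i_s^{\flat}+(\dot{s}(f)-f)(1-\mathcal{G}(\alpha))$; hitting this with ${\bf 1}_{K^{\flat}}$ and using $i_s^{\flat}*_{I^{\flat}}{\bf 1}_{K^{\flat}}=q_s{\bf 1}_{K^{\flat}}$ (now legitimately, since $i_s^{\flat}$ is adjacent to ${\bf 1}_{K^{\flat}}$) gives $(f-\dot{s}(f))(q_s+1-\mathcal{G}(\alpha))*_{I^{\flat}}{\bf 1}_{K^{\flat}}=0$, and only \emph{then} do freeness and the invertibility of $q_s+1-\mathcal{G}(\alpha)$ in $\mathcal{L}_{\aff}^{\flat}$ yield $f=\dot{s}(f)$. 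This is in substance what the paper does (it also records a genuinely different second proof, via the integral formula $\dot{\cS}_M^G(h)(m)=\int_{U^+}h(um)\,d\mu_{U^+}(u)$ and $W$-invariance of orbital integrals, which you do not attempt).
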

\begin{proof}
We give two proofs in the following two subsubsections; the first is "algebraic" while the second is of "analytical" taste. The first is short while the second is explicit:

First, note that by Corollary \ref{ontoaveraging} and Remark \ref{flatiswequi}, it is sufficient to show$\begin{tikzcd}[column sep= small]\dot{\mathcal{S}}_M^{G} \colon \cH_{K^{\flat}}(\Z) \arrow[r,hook]& {\mathcal{R}}^{\dot{W}}.\end{tikzcd}$
\subsubsection{Algebraic argument}
\begin{proof}
Let $h\in \cH_K(\Z) \subset \cH_I(\Z[q^{-1}])$. 
We have $$v_{1}*_I h= v_{1,K}*_K h= \dot{\mathcal{S}}_M^G (h) \cdot v_{1,K} =  \sum_{w \in W_{a_\circ}} \dot{\mathcal{S}}_M^G (h) \cdot v_w.$$ 
Hence, $h= \dot{\mathcal{S}}_M^G (h) *_I \sum_{w \in W_{a_\circ}} i_w  =\dot{\mathcal{S}}_M^G (h) *_I {\bf 1}_K$. 
Now, because $h$ is $K$-bi-equivariant, we have for any $s\in \mathcal{T}_{a_\circ}$
$$ i_s *_I h=i_s *_I {\bf 1}_K*_Kh= h= h*_I i_s.$$
Therefore, for any $s\in \cS \cap W_{a_\circ}$:
$$ 0=i_s*_I \dot{\mathcal{S}}_M^G (h) *_I {\bf 1}_K-\dot{\mathcal{S}}_M^G (h)*_I i_s *_I {\bf 1}_K= \left(i_s*_I \dot{\mathcal{S}}_M^G (h) -\dot{\mathcal{S}}_M^G (h)*_I i_s\right) *_I   \sum_{w\in W_{a_\circ}} i_w.$$
Therefore, given that $i_s*_I \dot{\mathcal{S}}_M^G (h) -\dot{\mathcal{S}}_M^G (h)*_I i_s \in \mathcal{R}$ by Proposition \ref{propoTheta}, 
by Corollary \ref{HIbasis} we must have
$$i_s*_I \dot{\mathcal{S}}_M^G (h)-\dot{\mathcal{S}}_M^G (h)*_I i_s=0, \forall s \in\mathcal{T}_{a_\circ}.$$
This concludes the first proof of the proposition, since $W_{a_\circ}$ is generated by $\mathcal{T}_{a_\circ}$.
\end{proof}
\subsubsection{Analytical argument}
First, we need an explicit integral formula for the Satake transform:
\begin{lemma}\label{explicitsatake1}
Let $h$ be any function in $\cH_K(\Z)$, then its Satake twisted transform is explicitly given by
$$\dot{\mathcal{S}}_M^G( h)\colon m \mapsto \int_{U^+}  h(um)d\mu_{U^+}(u),
$$
where $d\mu_{U^+}$ is the Haar measure on $U^+$ giving $1$ on $U^+\cap K$.
\end{lemma}
\begin{proof}
We just need to evaluate $v_{1,K }*_Kh = \dot{\mathcal{S}}_M^G(h)\cdot v_{1,K }$ on both sides:
On the one hand, we have for $m\in M$
\begin{align*}(\dot{\mathcal{S}}_M^G(h) \cdot  v_{1,K })(m)
=\dot{\mathcal{S}}_M^G(h)(m).
\end{align*}
On the second hand we have\footnote{We refer to \cite[\S 4.1]{Cartier79} for all integration formulas used in the sequel} thanks to the Iwasawa decomposition $G=BK$ (Corollary \ref{Iwasawadeccor}) 
\begin{align*}(v_{1,K }*_Kh)(m)
&=\int_{B}\int_K v_{1,K }(bk)  h(k^{-1}b^{-1}m) d\mu_B(b) \,dk\\
&=\int_{B} v_{1,K }(b) h(b^{-1}m)d\mu_B(b) \quad\quad\quad(K \text{ invariance}) \\
&=\int_{M}\int_{U^+} v_{1,K }(au) h(u^{-1}a^{-1}m)d\mu_M(a)d\mu_{U^+}(u)  \\
&=\int_{U^+}  h(u^{-1}m)d\mu_{U^+}(u)\quad\quad\quad\quad(v_{1,K }(au) \neq 0 \Rightarrow a \in M_1)  \\
&=\int_{U^+}  h(um)d\mu_{U^+}(u)\quad\quad\quad\quad\quad (U^+ \text{ is unimodular})
\end{align*}
Where, $d\mu_B$ (respectively $d\mu_{U^+}$ and $dk$) is the Haar left invariant measure on $B$ (respectively  $U^+$ and $K$) giving volume $1$ to $ B\cap K$ (respectively $U^+ \cap K$ and $K$).
\end{proof}
\begin{remark}\label{theregulars}\label{regulardef} 
The function $ m \mapsto  \det(\text{Ad}_{\text{Lie} (U^+)}(m)-\text{Id}_{\text{Lie} (U^+)}) $ is polynomial and nonzero. 
The set of regular elements is, by definition, the dense set of elements of $M$ which do not annihilate the nonzero polynomial function $ m \mapsto  \det(\text{Ad}_{\text{Lie} (U^+)}(m)-\text{Id}_{\text{Lie} (U^+)}) $.
\end{remark} 
\begin{lemma}\label{explicitsatake2}
Let $h$ be any function in $\cH_K(\Z)$ and $m\in M$ any regular elements (Definition \ref{theregulars}). Then
$$\int_{U^+}  h(um)d\mu_{U^+}(u)=\Delta(m)\int_{G/S} h(gmg^{-1})\frac{d\mu_K(g)}{ds}$$
where, the Haar measure $ds$ on $S=\Sbf(F)$ is normalized by $\int_{M/S}\frac{d\mu_{M_1}}{ds}=1$.
\end{lemma}
\begin{proof}
The same reasoning of \cite[Lemma 4.1]{Cartier79} proves the lemma.
\end{proof}
\begin{remark}\label{independenceB}As opposed to \cite[Remark 10.1.1]{HR10}, the twisted Satake transform defined here, is dependent on the choice of the minimal parabolic $\mathbf{B}$ which contains $\Mbf$ as a Levi factor. 
\end{remark}
Given Lemma \ref{explicitsatake1}, we need to prove that $$\dot{\mathcal{S}}_M^G (h)(m)= \dot{w} \left(   \dot{\mathcal{S}}_M^G(h) (m)\right)$$
for all functions $h \in \cH_K(\Z)$, $m \in M$ and $w\in W$. 
By continuity, it suffices to show this equality for all $m$ in the dense set of elements of regular (Remark \ref{regulardef}) which are semi-simple as elements in $G$. 
For these elements, we know by Lemma \ref{explicitsatake2} that
$$\dot{\mathcal{S}}_M^G (h)(m)=\Delta(m)\int_{G/S} h(gmg^{-1})\frac{d\mu_K(g)}{ds}$$
On the one hand, by \cite[(23)]{Cartier79}, one easily shows that $\Delta(\dot{w}(m))=\Delta(m)$. 
On the other hand, since $h$ is $K$-bi-invariant, one has $h(n_w g n_w^{-1})= h(g)$ for all $g \in G$. 
The compact subgroup $N \cap K$ acts by inner automorphisms on $G$ and on $S$ and hence leaves invariant the $G$-measure on $G/S$. 
Accordingly, 
$$\dot{w}\left(  \dot{\mathcal{S}}_M^G(h) (m)\right) = \Delta(w\bullet m) \int_{G/S} h(g (n_wmn_w^{-1}) g^{-1})\frac{d\mu_K(g)}{ds}=\Delta(m)\int_{G/S} h(gmg^{-1})\frac{d\mu_K(g)}{ds} =  \dot{\mathcal{S}}_M^G(h) (m).$$
This ends the "analytical" argument for Proposition \ref{incenter}. 
\end{proof}
\begin{remark}\label{transitionhecke}
The following two diagrams are commutative
$$
\begin{tikzcd}
\cH_{K}(\Z) \arrow[hook]{r}{\dot{\cS}_M^{G}}\arrow[two heads, swap]{d}{\iota^{\flat}=-*_{K_\cF}\mathbf{1}_{K_\cF^{\flat}}} & \mathcal{R}^{\dot{W}}\arrow[two heads]{d}{ \square^{\flat}}& \cH_{K}(\Z) \arrow[hook]{r}{\dot{\cS}_M^{G}} & \mathcal{R}^{\dot{W}} \\
\cH_{K^{\flat}}(\Z) \arrow[hook]{r}{\dot{\cS}_M^{G^{\flat}}} & {\mathcal{R}^{\flat}}^{\dot{W}}&\cH_{K^{\flat}}(\Z) \arrow[hook]{r}{\dot{\cS}_M^{G^{\flat}}}\arrow[hook]{u}& {\mathcal{R}^{\flat}}^{\dot{W}}\arrow[hook]{u}
\end{tikzcd}$$
Here, the last right vertical map is the natural inclusion $\Lambda_M^{\flat} \to \Lambda_M$ given by $mM^{\flat} \mapsto \sum_{m'\in M^{\flat}/M_1} mm'M_1$.
\end{remark}

\subsection{Twisted Satake isomorphism}\label{untwistsatakeK}
Motivated by arithmetic problems for Shimura varieties, Haines and Rostami already established in \cite{HR10}, $\cS_M^G\colon \cH_K(\C)\iso \mathcal{R}^W\otimes_\Z \C$. In this section, we extend their result to $\Z$-coefficients provided by the twisted Satake homomorphism $\dot{\cS}_M^G$. 
\begin{theorem}\label{Ztwistedsatakeisomorphism}\label{twistedsatakeisomorphism}The twisted Satake homomorphism $\dot{\cS}_M^{G^{\flat}}\colon \cH_{K^{\flat}}(\Z) \to {\mathcal{R}^{\flat}}^{\dot{W}}$ is a canonical isomorphism of ${\Z}$-algebras.
\end{theorem}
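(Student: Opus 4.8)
Since the $\flat=1$ reduction (Remark \ref{transitionhecke} with $\square^\flat$) allows us to recover the general statement from the parahoric case $M^\flat=M_1$, and conversely the commutative diagrams of Remark \ref{transitionhecke} exhibit $\dot{\cS}_M^{G^\flat}$ as obtained from $\dot{\cS}_M^G$ by the $W$-equivariant surjection $\mathcal{R}^{\dot W}\twoheadrightarrow {\mathcal{R}^\flat}^{\dot W}$ and the averaging map $\iota^\flat$, it suffices to prove the theorem for a single convenient $M^\flat$ and transport. In fact the cleanest route is to prove injectivity and surjectivity directly for arbitrary $M^\flat$, so I will do that. Injectivity is already in hand: $v_{1,K}^\flat=v_1^\flat *_{I^\flat}{\bf 1}_{K^\flat}$, so $\dot{\cS}_M^{G^\flat}$ factors through the injective map $\hbar_{e_{K^\flat}}$ of Corollary \ref{injectivebarh} (with $e=e_{K^\flat}$), as already noted just after the definition of the Satake transform in \S\ref{definitionsatake}. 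Together with Proposition \ref{incenter}, which places the image inside ${\mathcal{R}^\flat}^{\dot W}$, the only remaining point is surjectivity.

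\textbf{Surjectivity — the algebraic route.} The plan is to exhibit an explicit preimage for each basis element $r_m^\flat$, $m\in\Lambda_M^{-,\flat}$, of ${\mathcal{R}^\flat}^{\dot W}$ (recall from \S\ref{weyldotsect} and Corollary \ref{invincentIwahori} that $\{r_m^\flat : m\in\Lambda_M^{-,\flat}\}$ is a $\Z$-basis of ${\mathcal{R}^\flat}^{\dot W}$). Consider $h_m^\flat={\bf 1}_{K^\flat m K^\flat}\in\cH_{K^\flat}(\Z)$. I claim $\dot{\cS}_M^{G^\flat}(h_m^\flat)$ equals $r_m^\flat$ plus a $\Z$-linear combination of $r_{m'}^\flat$ with $m'\in\Lambda_M^{-,\flat}$ strictly smaller than $m$ in the dominance order $\succsim$ of Theorem \ref{positivitythm}. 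Granting this triangularity, surjectivity follows by induction on $\succsim$ (the minimal elements being handled first), exactly as in the unramified case. The triangularity itself is most transparent via the explicit description $K^\flat m K^\flat=\bigsqcup_{(w_1,w_2)\in J_{m,\a}} I^\flat w_1 m w_2 I^\flat$ from Lemma \ref{KmK}, combined with the expansion $v_1^\flat *_{I^\flat} i_w^\flat=\sum_{x\le w} c_{x,w} v_x^\flat$ from the proof of Theorem \ref{structureMI} and the commutation rules of Proposition \ref{relationsiwahori}: writing $h_m^\flat$ in the Iwahori basis and pushing $v_{1,K}^\flat$ through it, the leading term (coefficient of $v_m^\flat$, then symmetrized over $W$ via $K^\flat$-biinvariance) is a unit times $r_m^\flat$ by Lemma \ref{powerq}/Corollary \ref{WInInI}, while all other contributions involve $v_{m'w}^\flat$ with $m'\prec m$. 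A second, perhaps slicker, way to see the leading coefficient is via the modulus computation: $\dot{\cS}_M^{G^\flat}(h_m^\flat)$ should have its "top" coefficient equal to $\delta_B(m)^{1/2}\cdot[\text{something}]$ combining with $c(m,w)$ to give the dot-twisted sum, and Lemma \ref{qmdeltam} together with the remark on $q_{a_\circ,m}$ controls exactly which power of $q$ appears; the twist in the dot-action (Definition \ref{dotaction}) is precisely what makes this coefficient lie in $\Z$ rather than $\Z[q^{\pm 1/2}]$, by Lemma \ref{crintegral}.

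\textbf{Main obstacle.} The genuine work is the triangularity claim, i.e.\ identifying $\dot{\cS}_M^{G^\flat}(h_m^\flat)=\sum_{m'\preceq m}s_{m,m'}r_{m'}^\flat$ with $s_{m,m}$ a (unit) power of $q$ and tracking integrality of the lower coefficients; this is where one must marry the Bruhat-theoretic bookkeeping (Corollaries \ref{triangularitiy}, \ref{uppertriangulariy}, Lemma \ref{KmK}) with the $q$-volume computations (Lemmas \ref{powerq}, \ref{qmdeltam}, \ref{crintegral}). Once the matrix $(s_{m,m'})$ is known to be unitriangular over $\Z$ with respect to $\succsim$, it is invertible over $\Z$, giving surjectivity onto ${\mathcal{R}^\flat}^{\dot W}$ and hence, with the injectivity above, the isomorphism. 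Finally, canonicity: the construction uses only the fixed data $(\Sbf,\B,a_\circ,\a,M^\flat)$ already chosen, and the diagrams of Remark \ref{transitionhecke} show the isomorphisms for different $M^\flat$ are compatible, so no further choices enter. \emph{(For the second, combinatorial proof promised in the introduction, one would instead invoke Schwer's positivity \cite{Schwer06} as in Theorem \ref{positivitythm} to get the change-of-basis matrix directly; I would present the algebraic argument as the main one and remark that the combinatorial one gives an alternative with the bonus of positivity.)}
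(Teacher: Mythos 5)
Your strategy coincides with the paper's second (``combinatorial'') proof of Theorem \ref{twistedsatakeisomorphism}: reduce to $M^{\flat}=M_1$ via Remark \ref{transitionhecke}, note that injectivity is already contained in Corollary \ref{injectivebarh} and Proposition \ref{incenter}, and then show that the matrix $(s_{x,m})$ expressing $\dot{\cS}_M^G(h_x)$ in the basis $\{r_m\}$ is unitriangular for a total order refining the dominance order $\precsim$. The difference is that the two load-bearing facts are exactly the ones you set aside as the ``main obstacle'', and the paper does not obtain them by the Iwahori-level bookkeeping you gesture at. Instead it proves them directly at level $K$ by two convexity results of Bruhat--Tits applied to $G_1$: Lemma \ref{Proposition444ii} (from \cite[4.3.6 Corollaire]{BT72}) gives $U^+m^{-1}K\cap Km^{-1}K=m^{-1}K$ for $m\in\Lambda_M^-$, hence $s_{x,x}=|U^+x^{-1}K\cap Kx^{-1}K|_K=1$ exactly; and Lemma \ref{Proposition444} (from \cite[4.3.16 Corollaire]{BT72}) gives $U^+m'\cap KmK\neq\emptyset\Rightarrow m'\precsim m$. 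Your proposed substitute for the support condition --- Lemma \ref{KmK} plus Corollary \ref{uppertriangulariy} --- only yields $m'\le w_1mw_2$ in the Chevalley--Bruhat order on $\widetilde{W}$, and passing from that to the dominance order $\precsim$ on $\Lambda_M$ is an additional (true but unproved) step; similarly, Corollary \ref{WInIhI} is not a label, and Corollary \ref{WInInI} is an Iwahori-level statement that does not by itself compute the $K$-level volume $s_{x,x}$ after symmetrization.

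One point needs correcting rather than just completing: you describe the diagonal entry as ``a (unit) power of $q$''. A nontrivial power of $q$ is a unit in $\Z[q^{-1}]$ but not in $\Z$, so that would only give the isomorphism after inverting $q$; the integral statement of the theorem requires $s_{x,x}=1$ on the nose, which is precisely what Lemma \ref{Proposition444ii} delivers and what your sketch does not yet establish. Finally, note that the paper also gives a first, purely ``algebraic'' proof which you do not consider: using the compatibility $h=\dot{\Theta}_{\text{Bern}}^{\flat}(\dot{\cS}_M^{G^{\flat}}(h))*_{I^{\flat}}{\bf 1}_{K^{\flat}}$ (Lemma \ref{Zcompatibility}) one exhibits $(-*_{I^{\flat}}{\bf 1}_{K^{\flat}})\circ\dot{\Theta}_{\text{Bern}}^{\flat}$ as an explicit two-sided inverse of the Satake map, with no triangularity analysis at all; that route buys brevity and an explicit inverse, while the combinatorial route buys the positivity and support information recorded in Theorem \ref{positivitythm}.
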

As for Proposition \ref{incenter}, we give in the following two subsubsections two proofs; the first is "algebraic" while the second is of "combinatorial" taste. 
We will see in \S \ref{compressiblesection}, yet a third method which will give simultaneously this type of isomorphisms for all parahoric subgroup $K_\cF$ with $\cF \subset \a$.

\subsubsection{Algebraic argument}

\begin{proof}
\begin{lemma}[Integral compatibility]\label{Zcompatibility}
The Satake and Bernstein twisted $\Z$-homorphisms are compatible, i.e.,
the following diagram (of $\Z$-modules) is commutative:
$$\begin{tikzcd}  \cH_{K^{\flat}}(\Z) \otimes_\Z\Z[q^{-1}]\arrow[hook]{rr}{\dot{\cS}_M^{G^{\flat}} \otimes \text{Id}}&& {\mathcal{R}^{\flat}}^{\dot{W}}\otimes_\Z\Z[q^{-1}]\arrow[hook]{dl}{\dot\Theta_{Bern}^{\flat}}\\
&Z(\cH_{{I^{\flat}}}(\Z[q^{-1}]))\arrow{lu}{-*_{I^{\flat}}{\bf 1}_{K^{\flat}}}&
\end{tikzcd}$$
\end{lemma}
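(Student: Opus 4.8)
\textbf{Proof plan for Lemma \ref{Zcompatibility}.}
The statement is really just a bookkeeping assertion: the square asserts that for every $h \in \cH_{K^{\flat}}(\Z)\otimes_\Z\Z[q^{-1}]$ one has
$$\dot\Theta_{\text{Bern}}^{\flat}\bigl(\dot{\cS}_M^{G^{\flat}}(h)\bigr) *_{I^{\flat}} {\bf 1}_{K^{\flat}} = h,$$
so the whole content is to verify this identity on the canonical $\Z$-basis $\{h_m^{\flat} = {\bf 1}_{K^{\flat}mK^{\flat}} : m\in\Lambda_M^{-,\flat}\}$ of $\cH_{K^{\flat}}(\Z)$, after first checking that $\dot\Theta_{\text{Bern}}^{\flat}(\dot{\cS}_M^{G^{\flat}}(h))$ indeed lands in $Z(\cH_{I^{\flat}}(\Z[q^{-1}]))$ (which is immediate from Corollary \ref{invincentIwahori} together with Proposition \ref{incenter}, since $\dot\cS_M^{G^\flat}(h) \in {\mathcal R^\flat}^{\dot W}$). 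So the arrow $-*_{I^{\flat}}{\bf 1}_{K^{\flat}}$ makes sense with target $\cH_{K^{\flat}}(\Z[q^{-1}]) \subset \cH_{I^{\flat}}(\Z[q^{-1}])$ via the identification of Remark \ref{idempI2K}.

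The plan is first to trace the defining characterizations through the module $\cM_{I^{\flat}}$. By the definition of $\dot\cS_M^{G^\flat}$, the element $r := \dot\cS_M^{G^\flat}(h) \in \mathcal R^\flat$ satisfies $v_{1,K}^{\flat} *_{K^{\flat}} h = r \cdot v_{1,K}^{\flat}$ in $\cM_{K^{\flat}}(\Z)$. Applying the isomorphism $\psi^\flat\colon \cM_{I^\flat}(\Z[q^{-1}]) *_{I^\flat} e_{K^\flat} \xrightarrow{\sim} \cM_{K^\flat}(\Z[q^{-1}])$ of Remark \ref{transition313} / Remark \ref{idempI2K}, and using $v_{1,K}^{\flat} = v_1^{\flat} *_{I^{\flat}} {\bf 1}_{K^{\flat}}$ together with $\mathbf 1_{K^\flat} = |K^\flat : I^\flat|\, e_{K^\flat}$, this reads $v_1^{\flat} *_{I^{\flat}} ({\bf 1}_{K^{\flat}} *_{K^\flat} h) = r \cdot v_1^{\flat} *_{I^\flat} {\bf 1}_{K^\flat}$. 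On the other hand, by the defining property of $\dot\Theta_{\text{Bern}}^{\flat}$ one has $r \cdot v_1^{\flat} = v_1^{\flat} *_{I^{\flat}} \dot\Theta_r^{\flat}$, so the right-hand side equals $v_1^{\flat} *_{I^{\flat}} \dot\Theta_r^{\flat} *_{I^{\flat}} {\bf 1}_{K^{\flat}}$ (using that ${\bf 1}_{K^{\flat}}$ is central in $\cH_{I^{\flat}}(\Z[q^{-1}])$, Lemma \ref{flatKcenter}). Hence $v_1^{\flat} *_{I^{\flat}} ({\bf 1}_{K^{\flat}} *_{K^\flat} h) = v_1^{\flat} *_{I^{\flat}} (\dot\Theta_r^{\flat} *_{I^{\flat}} {\bf 1}_{K^{\flat}})$, and since $\hbar_{I^{\flat}}$ is injective (Theorem \ref{structureMI}) we conclude $\dot\Theta_r^{\flat} *_{I^{\flat}} {\bf 1}_{K^{\flat}} = {\bf 1}_{K^{\flat}} *_{K^\flat} h$. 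Under the identification $\cH_{K^{\flat}}(\Z[q^{-1}]) \cong e_{K^\flat} *_{I^\flat} \cH_{I^\flat}(\Z[q^{-1}]) *_{I^\flat} e_{K^\flat}$ of Lemma \ref{identificationsubsuphecke}, the element ${\bf 1}_{K^{\flat}} *_{K^\flat} h$ corresponds (up to the invertible scalar $|K^\flat:I^\flat|$, which is absorbed in the identification) precisely to $h$ itself, which gives the commutativity.

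The only genuinely delicate point is to be careful about normalizations: one must check that the identification of Lemma \ref{identificationsubsuphecke} sends $h \mapsto |K^{\flat}|_{I^{\flat}}^{-1} h$ and that $v_{1,K}^{\flat} = v_1^{\flat} *_{I^{\flat}} {\bf 1}_{K^{\flat}}$ (not $v_1^\flat *_{I^\flat} e_{K^\flat}$), so that the powers of $|K^{\flat}:I^{\flat}|$ cancel correctly; once this is pinned down, the argument is a formal diagram chase using only the injectivity of $\hbar_{I^\flat}$, the centrality of $\mathbf 1_{K^\flat}$, and the two defining equalities for $\dot\cS_M^{G^\flat}$ and $\dot\Theta_{\text{Bern}}^\flat$. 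I do not expect any serious obstacle; the main risk is purely notational. An alternative, even more pedestrian route is to compute both $\dot\cS_M^{G^\flat}(h_m^\flat)$ and $\dot\Theta_{\text{Bern}}^\flat(r_m^\flat) *_{I^\flat} \mathbf 1_{K^\flat}$ directly on basis elements $m \in \Lambda_M^{-,\flat}$ using Lemma \ref{explicittheta} and the Cartan decomposition of $K^\flat m K^\flat$ (Lemma \ref{KmK}), but the module-theoretic argument above is cleaner and avoids those sums.
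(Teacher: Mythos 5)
Your argument is correct and is essentially the paper's own proof: both reduce the square to the identity $h=\dot\Theta_{\text{Bern}}^{\flat}(\dot{\cS}_M^{G^{\flat}}(h))*_{I^{\flat}}{\bf 1}_{K^{\flat}}$, pass through $v_{1,K}^{\flat}=v_1^{\flat}*_{I^{\flat}}{\bf 1}_{K^{\flat}}$ and the idempotent $e_{K^{\flat}}$ to compare the two defining characterizations inside $\cM_{I^{\flat}}$, and conclude by the injectivity of $\hbar_{I^{\flat}}$ (Theorem \ref{structureMI}). One small correction: your parenthetical claim that ${\bf 1}_{K^{\flat}}$ is central in $\cH_{I^{\flat}}(\Z[q^{-1}])$ is false (Lemma \ref{flatKcenter} only gives centrality in $\cH_{K_\cF}(\Z)$), but the step it decorates needs nothing beyond right-multiplying the bimodule identity $r\cdot v_1^{\flat}=v_1^{\flat}*_{I^{\flat}}\dot\Theta_r^{\flat}$ by ${\bf 1}_{K^{\flat}}$, so the proof stands.
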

The injectivity of the horizontal and right diagonal maps is Corollary \ref{invincentIwahori} and Proposition \ref{incenter}. 
Let $h\in \cH_{K^{\flat}}({\Z})$, we have
\begin{align*} v_{1} *_{I^{\flat}} h&= v_{1}*_{I^{\flat}} e_{K^{\flat}} *_{{I^{\flat}}} h   &\quad(e_{K^{\flat}}*_{I^{\flat}} h =h)\\\hspace*{\fill}
&={[K^{\flat}:I^{\flat}]} (v_1^{\flat}*_{I^{\flat}} e_{K^{\flat}} )*_{K^{\flat}} h   &\quad(\text{passing from $*_{I^{\flat}}$ to $*_{K^{\flat}}$})\\\hspace*{\fill}
&= {[K^{\flat}:I^{\flat}]}\dot{\mathcal{S}}_M^{G^{\flat}}(h)\cdot( v_1^{\flat}*_{I^{\flat}}e_{K^{\flat}})  &(\text{Definition of $\dot{\mathcal{S}}_M^{G^{\flat}}$})\\
&= {[K^{\flat}:I^{\flat}]}(\dot{\mathcal{S}}_M^{G^{\flat}}(h)\cdot v_1^{\flat})*_{I^{\flat}}e_{K^{\flat}}  \\
&= {[K^{\flat}:I^{\flat}]}  (v_1^{\flat}*_{I^{\flat}}\dot{\Theta}_{\text{\emph{Bern}}}^{\flat}({\dot{\mathcal{S}}_M^{G^{\flat}}(h)}))*_{I^{\flat}}e_{K^{\flat}} & (\text{Definition of $\dot{\Theta}_{\text{\text{Bern}}}^{\flat}$})\\
&=v_1^{\flat}*_{I^{\flat}}\dot{\Theta}_{\text{\emph{Bern}}}\circ {\dot{\mathcal{S}}_M^{G^{\flat}}(h)}*_{I^{\flat}}\mathbf{1}_{K^{\flat}}
\end{align*}
Hence, $h=\dot{\Theta}_{\text{\emph{Bern}}}^{\flat}\circ {\dot{\mathcal{S}}_M^{G^{\flat}}(h)}*_{I^{\flat}}\mathbf{1}_{K^{\flat}}=\mathbf{1}_{K^{\flat}}*_{I^{\flat}}\dot{\Theta}_{\text{\emph{Bern}}}^{\flat}\circ {\dot{\mathcal{S}}_M^{G^{\flat}}(h)}$.\qed

Accordingly, Theorem \ref{twistedsatakeisomorphism}, follows readily from:
\begin{lemma}\label{explicithsatake1k}
For any $h\in \cH_{K^{\flat}}(\Z)$, and any $f \in {\mathcal{R}^{\flat}}^{\dot{W}}$ we have $$h=\dot\Theta_{\text{Bern}}^{\flat}(\dot{\cS}_M^{G^{\flat}}(h))*_{I^{\flat}}{\bf 1}_{K^{\flat}} \text{ and } \dot{\cS}_M^{G^{\flat}}(\dot\Theta_{\text{Bern}}^{\flat}(f)*_{I^{\flat}}{\bf 1}_{K^{\flat}} )=f.$$ 
In other words, $(-*_{I^{\flat}} {\bf}_{I^{\flat}})\circ \dot{\Theta}_{\text{Bern}}^{\flat}$ is the inverse of $\dot{\cS}_M^{G^{\flat}}$. 
\end{lemma}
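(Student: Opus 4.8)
The two identities in Lemma~\ref{explicithsatake1k} are exactly the two triangle identities that exhibit $(-*_{I^{\flat}}{\bf 1}_{K^{\flat}})\circ\dot\Theta_{\text{Bern}}^{\flat}$ as a two-sided inverse to $\dot{\cS}_M^{G^{\flat}}$, so once they are established Theorem~\ref{twistedsatakeisomorphism} is immediate. The first identity $h=\dot\Theta_{\text{Bern}}^{\flat}(\dot{\cS}_M^{G^{\flat}}(h))*_{I^{\flat}}{\bf 1}_{K^{\flat}}$ is precisely the last displayed equation in the proof of Lemma~\ref{Zcompatibility} just above, so there is nothing further to do for it: it follows from the chain of equalities computing $v_1^{\flat}*_{I^{\flat}}h$ two ways, using the definitions of $\dot{\cS}_M^{G^{\flat}}$ and $\dot\Theta_{\text{Bern}}^{\flat}$, the passage between $*_{I^{\flat}}$ and $*_{K^{\flat}}$ from Lemma~\ref{identificationsubsuphecke}, and the fact (Lemma~\ref{flatKcenter}, Remark~\ref{idempI2K}) that $e_{K^{\flat}}*_{I^{\flat}}h=h$ when $h\in\cH_{K^{\flat}}$. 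I will simply restate that computation and cite it.

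For the second identity I would argue as follows. First note that $\dot\Theta_{\text{Bern}}^{\flat}(f)\in Z(\cH_{I^{\flat}}(\Z[q^{-1}]))$ for $f\in{\mathcal{R}^{\flat}}^{\dot W}$ by Corollary~\ref{invincentIwahori}, so $\dot\Theta_{\text{Bern}}^{\flat}(f)*_{I^{\flat}}{\bf 1}_{K^{\flat}}$ is $K^{\flat}$-bi-invariant (it commutes with ${\bf 1}_{K^{\flat}}$ and with every $i_s^{\flat}$, $s\in\mathcal{T}_{a_\circ}^{\flat}$, hence is fixed under left and right convolution by ${\bf 1}_{K^{\flat}}$), so it genuinely lies in $\cH_{K^{\flat}}(\Z[q^{-1}])$; one checks it lies in $\cH_{K^{\flat}}(\Z)$ because $r_m^{\flat}*_{I^{\flat}}{\bf 1}_{K^{\flat}}$ has integral coefficients, $\{r_m^{\flat}\}$ being a $\Z$-basis of ${\mathcal{R}^{\flat}}^{\dot W}$ by Lemma~\ref{crintegral} and Corollary~\ref{orbitLambda}. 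Then I evaluate $v_1^{\flat}*_{I^{\flat}}\bigl(\dot\Theta_{\text{Bern}}^{\flat}(f)*_{I^{\flat}}{\bf 1}_{K^{\flat}}\bigr)$ in two ways: on the one hand, since $v_1^{\flat}*_{I^{\flat}}\dot\Theta_{\text{Bern}}^{\flat}(f)=f\cdot v_1^{\flat}$ by the defining property of $\dot\Theta_{\text{Bern}}^{\flat}$, and $f\cdot(-)$ commutes with $*_{I^{\flat}}{\bf 1}_{K^{\flat}}$ (the $\mathcal{R}^{\flat}$-action and the $\cH$-action commute), this equals $f\cdot\bigl(v_1^{\flat}*_{I^{\flat}}{\bf 1}_{K^{\flat}}\bigr)=f\cdot v_{1,K}^{\flat}$ up to the scalar $[K^{\flat}:I^{\flat}]$; on the other hand, by the defining property of $\dot{\cS}_M^{G^{\flat}}$ applied to $h:=\dot\Theta_{\text{Bern}}^{\flat}(f)*_{I^{\flat}}{\bf 1}_{K^{\flat}}\in\cH_{K^{\flat}}(\Z)$, we get $[K^{\flat}:I^{\flat}]\,\dot{\cS}_M^{G^{\flat}}(h)\cdot v_{1,K}^{\flat}$. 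Comparing, and using that $\cM_{K^{\flat}}(\Z)$ is free of rank $1$ over $\mathcal{R}^{\flat}$ with basis $v_{1,K}^{\flat}$ (Proposition~\ref{freerank1MK}), I conclude $\dot{\cS}_M^{G^{\flat}}(h)=f$, which is the desired identity.

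Having both triangle identities, injectivity of $\dot{\cS}_M^{G^{\flat}}$ (already known, Proposition~\ref{incenter} together with Corollary~\ref{injectivebarh}) plus surjectivity (which the second identity gives, since every $f\in{\mathcal{R}^{\flat}}^{\dot W}$ is hit) yields that $\dot{\cS}_M^{G^{\flat}}$ is a bijective $\Z$-algebra homomorphism onto ${\mathcal{R}^{\flat}}^{\dot W}$, hence an isomorphism; canonicity is inherited from the canonicity of the spherical vector $v_{1,K}^{\flat}$. I expect the only delicate point to be the bookkeeping of the factor $[K^{\flat}:I^{\flat}]$ and the careful justification that $\dot\Theta_{\text{Bern}}^{\flat}(f)*_{I^{\flat}}{\bf 1}_{K^{\flat}}$ genuinely has \emph{integral} (not merely $\Z[q^{-1}]$) coefficients; both are handled by the remarks already in place (Remark~\ref{idempI2K}, Lemma~\ref{flatKcenter}) and by expressing everything in the basis $\{r_m^{\flat}\}$.
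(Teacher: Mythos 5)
Your proposal matches the paper's own argument essentially step for step: the first identity is exactly the conclusion of the computation in Lemma~\ref{Zcompatibility}, and the second is obtained, as in the paper, by evaluating $v_1^{\flat}*_{I^{\flat}}\bigl(\dot\Theta_{\text{Bern}}^{\flat}(f)*_{I^{\flat}}{\bf 1}_{K^{\flat}}\bigr)$ in two ways and invoking freeness of $\cM_{K^{\flat}}(\Z)$ over $\mathcal{R}^{\flat}$. The one point you flag as delicate --- that $\dot\Theta_{\text{Bern}}^{\flat}(f)*_{I^{\flat}}{\bf 1}_{K^{\flat}}$ has coefficients in $\Z$ rather than merely $\Z[q^{-1}]$ --- is asserted as ``clear'' in the paper too, and your proposed justification via the elements $r_m^{\flat}*_{I^{\flat}}{\bf 1}_{K^{\flat}}$ is somewhat circular; a complete argument is really supplied by the combinatorial proof of \S\ref{combarg}, where the matrix of $\dot{\cS}_M^{G}$ in the bases $\{h_x\}$ and $\{r_m\}$ is shown to be unipotent upper-triangular over $\Z$.
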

Let $f \in {\mathcal{R}^{\flat}}^{\dot{W}}$ so $\dot\Theta_{\text{Bern}}^{\flat}(f)\in Z(\cH_{I^{\flat}}(\Z[q^{-1}]))$. 
Then, clearly $h_f=\dot\Theta_{\text{Bern}}^{\flat}(f)*_{I^{\flat}} {\bf 1}_{K^{\flat}}= {\bf 1}_{K^{\flat}}*_{I^{\flat}}\dot\Theta_{\text{Bern}}^{\flat}(f) \in \cH_{K^{\flat}}(\Z)$. 
Now, given that $v_{1,K}^{\flat}*_{K^{\flat}} h_f= v_1^{\flat}*_{I^{\flat}} h_f= f\cdot v_1^{\flat}*_{I^{\flat}} {\bf 1}_{K^{\flat}}$, one concludes that $\dot{\cS}_M^{G^{\flat}}(h_f)=f.$
 
If we start with any element $h\in \cH_{K^{\flat}}(\Z)$, the same reasoning shows that
$$\dot{\cS}_M^{G^{\flat}}(h)=\dot{\cS}_M^{G^{\flat}}( \dot\Theta_{\text{Bern}}^{\flat} \circ\dot{\cS}_M^{G^{\flat}}(h)*_{I^{\flat}} {\bf 1}_{K^{\flat}})$$ and Proposition \ref{incenter} then implies $h= \dot\Theta_{\text{Bern}}^{\flat} \circ\dot{\cS}_M^{G^{\flat}}(h)*_{I^{\flat}} {\bf 1}_{K^{\flat}}.$
\end{proof}

\subsubsection{Combinatorial argument}\label{combarg}
Thanks to Remark \ref{transitionhecke}, it is sufficient to show that $\dot{\mathcal{S}}_M^{G} \colon \cH_{K}(\Z) \iso {\mathcal{R}}^{\dot{W}}$ ($M^{\flat}=M_1$) to prove Theorem \ref{Ztwistedsatakeisomorphism}. So, we focus on this case in this subsection.
\begin{definition}\label{presimorder}
Let $\precsim^{\flat}$ be the partial order on $\Lambda_M^{\flat}$ defined as follows
$$m\succsim^{\flat}m' \text{ if and only if }m-m' \in \Lambda_{\aff}^{\flat}  \text{ and } \nu(m-m')= \sum_{\Delta^\vee} n_\alpha \alpha^\vee, \text{ with } n_\alpha \in \N.$$
\end{definition}
\begin{remark}\label{presimtrans}
(i) When $M^{\flat}=M^1$, this is the usual order on the lattice $\Lambda_M^1$. 
(ii) Note that if $m,m'\in \Lambda_M$ then $m'\precsim m$ if and only if $m'^{\flat}\precsim^{\flat} m^{\flat}$, because (i) $\nu$ factors through $M/\ker \nu$ and a fortiori through $\Lambda_M^1$, and (ii) because $\square^{\flat}\colon \Lambda_M \twoheadrightarrow \Lambda_M^{\flat}$ restricts to an isomorphism $\Lambda_{\aff} \to \Lambda_{\aff}^{\flat}$.
\end{remark}!
\begin{lemma}\label{Proposition444ii}
If $m\in \Lambda_M^-$, then $U^+m^{-1}K \cap Km^{-1}K=m^{-1}K$.
\end{lemma}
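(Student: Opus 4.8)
The statement to prove is Lemma~\ref{Proposition444ii}: for $m\in \Lambda_M^-$ one has $U^+m^{-1}K \cap Km^{-1}K = m^{-1}K$. Since $m$ is antidominant, $m^{-1}$ is dominant, so I will exploit the contraction/expansion behaviour of conjugation by dominant elements on the root groups $U_\Upomega^{\pm}$ recorded in Lemma~\ref{normalizingI}. The inclusion $\supseteq$ is trivial: $m^{-1}\in U^+m^{-1}K$ obviously, and $m^{-1}\in Km^{-1}K$, so the whole coset $m^{-1}K$ lies in the intersection. The work is the inclusion $\subseteq$.

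\textbf{Key steps.} First I would use the Iwasawa-type/Cartan factorisation of $K$ from Corollary~\ref{Iwahoridecomp} (resp.\ Proposition~\ref{decomppara} applied to $\Upomega=\{a_\circ\}$): $K = U_{a_\circ}^+ U_{a_\circ}^- M_1$, with all factors as in the special-parahoric decomposition. Write an arbitrary element of $Km^{-1}K$ on one side and of $U^+m^{-1}K$ on the other, and set them equal. Concretely, take $g\in U^+m^{-1}K \cap Km^{-1}K$; from the $U^+m^{-1}K$ side write $g = u\,m^{-1}k$ with $u\in U^+$, $k\in K$. From the $Km^{-1}K$ side, expand the left $K$ using $K = U_{a_\circ}^- M_1 U_{a_\circ}^+$ and push the $U_{a_\circ}^+$ part through: by Lemma~\ref{normalizingI}, since $m^{-1}$ is dominant, $m^{-1}U_{a_\circ}^+ m \subset U_{a_\circ}^+ \subset K$, so that $Km^{-1}K = U_{a_\circ}^- M_1 m^{-1} K$. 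Hence $g = u_-\, m_1\, m^{-1} k'$ with $u_-\in U_{a_\circ}^-$, $m_1\in M_1$, $k'\in K$. Comparing the two expressions for $g$ in $U^+\backslash G$: the class of $g$ in $U^+\backslash G / K$ is represented by $m^{-1}$, and on the other side by $u_-m_1 m^{-1}$; using the Iwasawa decomposition $G = U^+ N U_{a_\circ}^{-}\! \cdot\! M_1$ (Proposition~\ref{bruhatdecomposition}) together with the bijection $U^+\backslash G/K \cong \widetilde W/W_{a_\circ}$ and uniqueness of the double coset, I can force $u_- \in U^+\cap$ (something small) — more cleanly, I intersect: $u_- m_1 \in U^+ m^{-1} K m$. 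Now $U^+ m^{-1} K m$: again by Lemma~\ref{normalizingI}, $m^{-1}(K\cap U^-)m = m^{-1}U_{a_\circ}^- m \subset U_{a_\circ}^-$, and $m^{-1}(K\cap U^+)m \supset U_{a_\circ}^+$; combining with $K = U_{a_\circ}^+U_{a_\circ}^- M_1$ gives $m^{-1}Km = (m^{-1}U_{a_\circ}^+m)(m^{-1}U_{a_\circ}^- m) M_1 \subset U^+ \cdot U_{a_\circ}^- \cdot M_1$, so $U^+ m^{-1}Km \subset U^+ U_{a_\circ}^- M_1$. Therefore $u_- m_1 \in (U^+ U_{a_\circ}^- M_1)\cap (U_{a_\circ}^- M_1)$; projecting to $U^+\backslash$, uniqueness in the product $U^+ \times U_{a_\circ}^- \times M_1 \iso$ its image forces $u_- m_1 \in U_{a_\circ}^- M_1 \subset K$. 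Feeding this back, $g = (u_- m_1) m^{-1} k' \in K \cdot$ — wait, this only puts $g$ in $Km^{-1}K$ again; the point is rather that on the $U^+m^{-1}K$ side we now get $g = u' m^{-1} k''$ with $u' = u (u_-m_1)^{-1}\cdot(\text{stuff})$, and tracking carefully the equality $u m^{-1} k = u_- m_1 m^{-1} k'$ yields $u^{-1}u_- m_1 \in m^{-1} K m^{-1 -1}\cdots$. I will instead streamline: from $u m^{-1}k = u_- m_1 m^{-1}k'$ we get $m\,u^{-1}u_-m_1\,m^{-1} \in K k'^{\,-1}\cdots$; rather, $u^{-1} u_- m_1 = m^{-1} k k'^{-1} m \in m^{-1} K m$. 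By the computation just made $m^{-1}Km \subset U^+ U_{a_\circ}^- M_1$, and also $u^{-1}u_- m_1 \in U^+ U_{a_\circ}^- M_1$ already; so this is consistent but I need to pin down that $u\in U^+\cap K$. The decisive refinement: project the identity to $U^-\backslash G / K$ or use $N\cap U^+U^- = \{1\}$ to isolate the $M$-part, concluding $k k'^{-1}\in M_1\cap$ something, hence $m^{-1}kk'^{-1}m\in M_1$, hence $u = u_- m_1 (m^{-1}(kk'^{-1})m)^{-1} \in U^+\cap(U_{a_\circ}^- M_1 \cdot M_1) = \{1\}$ by $U^+\cap U^-M = \{1\}$... this gives $u=1$, whence $g = m^{-1}k \in m^{-1}K$, as desired.

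\textbf{Main obstacle.} The essential mechanism is clean — antidominance of $m^{-1}$ makes conjugation contract $U_{a_\circ}^-$ and expand $U_{a_\circ}^+$, so all the "extra" factors produced when moving group elements past $m^{-1}$ stay inside $K$ — but the bookkeeping of which side each factor ends up on, and invoking the right uniqueness-of-factorisation statement (Proposition~\ref{decomppara}, the homeomorphism $U_\Upomega^- \times M_1 \times U_\Upomega^+ \iso K_\Upomega$, together with $\Nbf(F)\cap U^+U^- = \{1\}$ and the Bruhat/Iwasawa bijections of Proposition~\ref{bruhatdecomposition}) at each step, is where care is needed. I expect the cleanest route is actually the short one modelled on Lemma~\ref{normalizingI} plus Corollary~\ref{WInInI}: reduce to showing that if $um^{-1}k \in Km^{-1}K$ with $u\in U^+$ then $u\in K$, and then $u\in U^+\cap K = I^+ \subset K$ forces, via $m^{-1}(U^+\cap K)m \subset U^+\cap K$ and the disjointness $U^+ m^{-1} K m^{-1} \cap$ (the relevant coset) $= \{1\}$-type statement, that $u$ lies in the intersection whose only element compatible with the double coset $m^{-1}K$ is $u \in U_{a_\circ}^+$; then absorb $u$ into $k$. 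The whole argument parallels, one parahoric level up, the Iwahori-level Lemma~\ref{powerq}/Corollary~\ref{WInInI} and should be no longer than a short paragraph once the factorisations are lined up.
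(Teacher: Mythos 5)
Your overall strategy (contract/expand root groups by conjugating with the dominant element $m^{-1}$, then isolate factors by uniqueness in a cell decomposition) has a genuine gap at its very first step: the factorization $K=U_{a_\circ}^{-}M_1U_{a_\circ}^{+}$ that you use to write $Km^{-1}K=U_{a_\circ}^{-}M_1m^{-1}K$ is false for the special maximal parahoric $K$. The two-factor Iwahori factorization holds for the Iwahori subgroup $I$ (Corollary \ref{Iwahoridecomp}), but for $K$ the paper only gives $K=U_{a_\circ}^{+}U_{a_\circ}^{-}U_{a_\circ}^{+}M_1=U_{a_\circ}^{+}U_{a_\circ}^{-}N_{1,a_\circ}$ (Proposition \ref{decomppara}), where $N_{1,a_\circ}$ surjects onto the \emph{whole} Weyl group $W$ (Lemma \ref{repofWinK}). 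Already for $\SL_2(\Q_p)$ the element $\smallmtwo{0}{1}{-1}{0}\in \SL_2(\Z_p)$ does not lie in $U^{+}U^{-}M$. With the correct decomposition one gets $Km^{-1}K=U_{a_\circ}^{+}U_{a_\circ}^{-}m^{-1}K$, and the subsequent comparison breaks down: $m^{-1}Km$ is contained in $U^{+}U_{a_\circ}^{-}U^{+}M_1$, not in the big cell $U^{+}U^{-}M$, so the uniqueness statement $U^{+}\cap U^{-}M=\{1\}$ can no longer be invoked to pin down $u$. The extra unipotent factor (equivalently, the Weyl-group part of $N\cap K$) is exactly the new phenomenon at a special vertex compared with the Iwahori level, and it is precisely what your bookkeeping does not control. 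A secondary symptom of the confusion is your final conclusion ``$u=1$'': this is not even the right target, since elements of $m^{-1}K$ are of the form $um^{-1}k$ with $u$ ranging over the group $m^{-1}U_{a_\circ}^{+}m\supsetneq U_{a_\circ}^{+}$ (which is not contained in $K$); what must be shown is $mum^{-1}\in K$, not $u=1$.

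For comparison, the paper's proof does not attempt any such factorization argument: it rewrites the claim as $U^{+}K\cap K_{a_\circ+\nu(m)}K=K$ and quotes \cite[4.3.6 Corollaire]{BT72}, whose hypothesis $a_\circ\in\{a_\circ+\nu(m)\}+\overline{\cC}^{+}$ is exactly the antidominance of $m$. That corollary is a genuinely geometric statement about the building (compatibility of the retraction from infinity in the direction of $\cC^{+}$ with the retraction centred at the point $a_\circ+\nu(m)$), and it is where the real content of the lemma lives; it is not recoverable from the group-theoretic decompositions of $K$ alone. If you want a proof internal to the paper's toolkit, you would have to route through the Iwahori level, e.g.\ via $Km^{-1}K=\bigsqcup Iw_1m^{-1}w_2I$ (Lemma \ref{KmK}) together with Corollary \ref{uppertriangulariy} and length estimates — but that is a different argument from the one you sketch.
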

\begin{proof}
We intend to use \cite[4.3.6 Corollaire]{BT72} for $G_1$ by taking (with {\em loc. cit.} notations)  $\Upomega=\{a_\circ\}$ and $\Upomega'=\{a_\circ+\nu(m)\}$ and $n=1$. Since $\Upomega \subset \Lambda_M^-$, we have $a_\circ \subset \Upomega' + \overline{\cC}^+$, so the condition of \cite[4.3.6 Corollaire]{BT72} is fulfilled and we have $U^+K \cap K_{a_\circ + \nu(m)}K=K$. This shows the lemma.
\end{proof}
\begin{lemma}\label{Proposition444}
Let $m,m'\in \Lambda_M$, if $U^+m' \cap KmK\neq \emptyset$ then $m' \precsim m$.
\end{lemma}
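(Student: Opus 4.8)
\textbf{Proof plan for Lemma \ref{Proposition444}.}
Thanks to Remark \ref{presimtrans}(ii), we are free to reduce everything to the case $M^{\flat}=M_1$, which we now assume. The plan is to reduce the hypothesis $U^+m' \cap KmK\neq\emptyset$, via the Iwahori decompositions of Propositions \ref{Iwahori1} and \ref{Iwahori2}, to the purely Iwahori-level statement that $U^+m'\cap Iw_1mw_2I\neq\emptyset$ for some $w_1,w_2\in W_{a_\circ}$, and then to run a two-step argument: first use Corollary \ref{uppertriangulariy} to control things in the Chevalley--Bruhat order on $\widetilde W$, and second convert that combinatorial inequality into the dominance inequality $m'\precsim m$. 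The conversion is where I expect the real content to sit.

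First I would write $KmK=\bigsqcup_{(w_1,w_2)\in J_{m}}Iw_1mw_2I$ using Lemma \ref{KmK} (with $\cF=\a$), so that $U^+m'\cap KmK\neq\emptyset$ forces $U^+m'\cap Iw_1mw_2I\neq\emptyset$ for some $w_1,w_2\in W_{a_\circ}$. Since $w_1\in W_{a_\circ}$ has representatives in $K$ (Lemma \ref{repofWinK}) and $w_1$ normalizes $U^+$ only up to the unipotent pieces, I would instead left-translate: $U^+m'\cap Iw_1mw_2I\neq\emptyset$ means there is $u\in U^+$ with $um'\in Iw_1mw_2I$. Conjugating by a suitably antidominant $m_\circ\in\Lambda_M^-$ (so that $m_\circ u m_\circ^{-1}\in U^+\cap I$, as in the proof of Corollary \ref{uppertriangulariy}) and absorbing, I get $I m_\circ m' I\subset I m_\circ I\, w_1 m w_2\, I$, whence by Corollary \ref{triangularitiy} the element $m_\circ m'$ lies in $\bigsqcup_{z\le w_1 m w_2}I m_\circ z I$; comparing the $\Lambda_M$-parts (which are unchanged under $\le$ in the decomposition $\widetilde W=W_{\aff}\rtimes\widetilde\Omega$ refined through $\Lambda_M\rtimes W$), one deduces $m'\le w_1 m w_2$ in the Chevalley--Bruhat order on $\widetilde W$. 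Alternatively and more cleanly, Corollary \ref{uppertriangulariy} applied directly to $um'\in Iw_1mw_2I$ gives $m'\le w_1mw_2$ at once.

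The main obstacle is the final step: deducing $m'\precsim m$ (i.e. $m'^{-1}m\in\Lambda_{\aff}$ with $\nu(m^{-1}m')=\sum_{\Delta^\vee}n_\alpha\alpha^\vee$, $n_\alpha\le 0$ — equivalently $m\succsim m'$) from the existence of $w_1,w_2\in W_{a_\circ}$ with $m'\le w_1 m w_2$ in $\widetilde W$. For this I would first note that $\le$ preserves the image in $\Lambda_G=G/G_1$ (the "$\widetilde\Omega$-part"), which gives $m'^{-1}m\in\Lambda_{\aff}$. Then I would invoke the standard fact for the extended affine Weyl group $\widetilde W_{\aff}$ relating Bruhat order to the dominance order on translations: if $\lambda\le x$ in $\widetilde W_{\aff}$ with $\lambda$ a translation and $x=w_1\mu w_2$ (translation part $\mu$, up to $W$), then the translation part of $\lambda$ is $\precsim$ the translation part of $x$ in the partial order generated by positive coroots; this is essentially \cite[\S 6.2]{Lu89} or can be extracted from properties of the $\Theta$-basis, and it passes to $\widetilde W$ via the projection $\square^{\flat}$ being order- and length-preserving (Remark \ref{commuwaffs}). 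Applying this with $\lambda=m'$ and $\mu=m$ yields $m'\precsim m$. One technical point to handle carefully is that $w_1mw_2$ need not be a translation, so I would replace it by a dominant/antidominant translation in its double coset $W_{a_\circ}\backslash\widetilde W/W_{a_\circ}$ (using Proposition \ref{cartanK}) without changing the relevant inequality, and track that the passage $m\mapsto$ (antidominant representative of $W_{a_\circ}mW_{a_\circ}$) is compatible with $\precsim$.

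If the extended-affine Bruhat-vs-dominance input turns out to be awkward to cite in the exact form needed, a fallback is to argue directly on root groups: from $um'\in Iw_1mw_2I$ expand $u$ in the product of affine root subgroups $U_{\alpha+f_{\nu(n)\cdot\a}(\alpha)}$ as in Lemma \ref{powerq}, and read off which affine roots are forced to appear; the "discrepancy" between $m'$ and $m$ is then a sum of the corresponding coroots with controlled signs, which unwinds to $\nu(m^{-1}m')\in\sum_{\Delta^\vee}\N\alpha^\vee$. I expect the cleanest writeup to be the Bruhat-order route, with this root-group computation as the proof of the needed lemma about $\widetilde W_{\aff}$.
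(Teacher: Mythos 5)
Your reduction steps are sound, and your route is genuinely different from the paper's: you pass through the Iwahori level (Lemma \ref{KmK}) and Corollary \ref{uppertriangulariy} to arrive at the inequality $m'\le w_1mw_2$ in the Chevalley--Bruhat order on $\widetilde{W}$, and your observation that $\le$ preserves the $\widetilde\Omega$-component does give $m'-m\in\Lambda_{\aff}$. The paper does not take this route at all: it conjugates the unipotent element $u$ into an Iwahori $I_{\a'}$ by a suitable $m_0\in M$ and then invokes \cite[4.3.16 Corollaire]{BT72} directly, which is a ready-made building-theoretic statement that outputs exactly the inequality $a_\circ+\nu(m')\le_D a_\circ+\nu(m)$ for $D=\cC^-$, i.e.\ the dominance relation $m'\precsim m$. (Note the paper's argument uses $m\in\Lambda_M^-$ to verify the hypothesis $y\in x+\cC^-$ of that corollary, which is the only case needed for the Satake theorem.)

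The gap in your proposal is the final conversion step, which is where all the content of the lemma lives: you need that $t_{m'}\le w_1t_mw_2$ in the Bruhat order of $\widetilde{W}$ forces $\nu(m-m')$ to lie in the cone spanned by the simple coroots. This is true, but it is a genuine theorem --- essentially the translation-element case of ``admissible implies permissible'' for the extended affine Weyl group attached to the \'echelonnage root system $\Sigma$ --- and your citations do not deliver it: \cite[\S 6.2]{Lu89} does not contain this statement, ``properties of the $\Theta$-basis'' is circular in this context (the support/triangularity properties of the $\dot\Theta_m$ are being established in this very section), and in the generality needed here (general reductive $\G$, non-reduced $\Phi$, $\Lambda_{\aff}=\Z[\Sigma^\vee]$ rather than $\Z[\Phi^\vee]$) one cannot simply quote the split literature. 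Your fallback ``expand $u$ in affine root subgroups and read off which coroots appear'' is precisely the computation that \cite[4.3.16 Corollaire]{BT72} packages, and it is only sketched. So as written the proposal reduces the lemma to an equivalent unproved combinatorial statement; to close it you would either have to prove that statement (e.g.\ by the retraction/folding argument underlying the Bruhat--Tits corollary) or cite \cite[4.3.16]{BT72} --- at which point the detour through the Iwahori decomposition becomes unnecessary and you recover the paper's proof.
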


\begin{proof}
If the above intersection is nonempty, i.e. $u \in U^+ \cap {m'}^{-1}KmK$. 
There exists then $m_0 \in M$ such that $m_0um_0^{-1} \in I$, hence, the alcove $\a'=\nu(m_0)(\a)$ verifies 
$$I_{\a'}m'm^{-1} \cap KK_{a_\circ+ \nu(m)}\neq \emptyset$$
So in particular, $m-m' \in \Lambda_{\aff}$. We may then apply \cite[4.3.16 Corollaire]{BT72} for $G_1$, by taking (with {\em loc. cit.} notations) $y=a_\circ+ \nu(m)$, $x=a_\circ, n=m'-m\in \Lambda_{\aff}$, $C=\a'$ and $D= \cC^-$ our fixed negative chamber. Then, the assumption $y\in x + \cC^-$ is fulfilled because $m \in \Lambda_M^-$. 
Therefore, we have
$$y+\nu(m'-m)= a_\circ +\nu(m') \le_D y = a_\circ +\nu(m).$$
The order $\le_D$ in {\em loc. cit.}\footnote{We refer to \cite[$n^\circ$1 \S 6 p. 155]{BourbakiLieV} for more details on the order used in \cite[4.3.16 Corollaire]{BT72}.} translates here to $\nu(m-m') = \sum_{\alpha \in \Delta} n_\alpha\alpha^\vee, n_\alpha \ge 0$. 
\end{proof}

\begin{proof}[Second proof of Theorem \ref{twistedsatakeisomorphism}]
This resembles the proof in \cite[\S 10.2]{HR10} for the untwisted Satake transform with coefficients in $\C$, which remains morally valid in our twisted context, although we deal with the "lexicographic" order on  $\Lambda_M^-$ in a different manner.  
We need to show that the following morphism of $\Z$-modules
$$\begin{tikzcd}[column sep=small, row sep=small]\cH_K({\Z}) \arrow[hook]{r} & \mathcal{R}^{\dot{W}}\cdot v_{1,K}, h\arrow[r, mapsto] &\dot{\mathcal{S}}_M^G(h) *_Kv_{1,K}\end{tikzcd}$$ is "upper triangular with respect to some total order, with invertible diagonals". 
The proof is similar to the proof of Theorem \ref{structureMI}. 

Recall that $\{h_{x}\colon x\in \Lambda_M^-\}$ forms a $\Z$-basis for $\cH_K(\Z)$, we also have a $\Z$-basis for $\mathcal{R}^{\dot{W}}\cdot v_{1,K}$ given by  $\{r_m \cdot v_{1,K} \colon m \in \Lambda_M^-\}$. 
Fix an element $x \in \Lambda_M^-$, we have
$$v_{1,K}*_Kh_x=\dot{\mathcal{S}}_M^G(h_x)\cdot v_{1,K}= \sum_{m\in  \Lambda_M^-} s_{x,m} r_m, \text{ where } s_{x,m}=|U^+m^{-1}K \cap Kx^{-1}K|_K \ge 0.$$ 
For $m \in \Lambda_M^-$, $r_m$ appears in the above summation, i.e. $s_{x,m}\neq 0$ if and only if $U^+m \cap KxK\neq \emptyset$. 
Accordingly, by Lemma \ref{Proposition444}, 
we obtain
$$v_{1,K}*_Kh_x= \sum_{m \precsim x} s_{x,m} r_m.$$ 
Therefore, since the monoid $\Lambda_M^-$ is countable and any element $x\in \Lambda_M^-$ has only finitely many predecessors with respect to the  partial order $\precsim$, there exists a lexicographic (total) ordering $x_1,x_2 , \cdots$ for the elements of $\Lambda_M^-$. 
The above discussion shows then, that the matrix of the transformation $h \mapsto \dot{\mathcal{S}}_M^G(h)$ with respect to the bases $\{h_{x_i}\}_1^\infty$ and $\{{x_i}\}_1^\infty$ is upper triangular.

Finally, note that the diagonals $s_{x,x}=|U^+x^{-1}K \cap Kx^{-1}K|_K$ are equal to $1$ by Lemma \ref{Proposition444ii}. 
This observation shows that the triangular matrix $\cS=\left(s_{x,m}\right)_{x,m \in \Lambda_M^-}$ is indeed invertible, which concludes the combinatorial proof of Theorem \ref{Ztwistedsatakeisomorphism}. 
\end{proof}
\begin{remark}\label{genkotwitz}
By the previous proof, for any $m\in \Lambda_M^-$, we have $\dot{\cS}_M^G(h_m)= \sum_{m'\precsim m} s_{m,m'} r_{m'}$. 
In particular, if $m$ is minuscule, hence minimal with respect to the partial order $\precsim$, then $\dot{\cS}_M^G(h_m)=r_m$. 
This yields an alternative proof (and generalizes) for Kottwitz's lemmas \cite[Lemma 2.3.3 \& Lemma 2.3.7 (b)]{Ko1}. 
\end{remark}

\begin{remark}[Comparison of the untwisted and twisted Satake isomorphisms]\label{untwistedtotwisted}
Consider the $\C$-linear map
$$\begin{tikzcd}\eta_B\colon \mathcal{R}\otimes_\Z \C \arrow{r}&  \mathcal{R}\otimes_\Z \C; m\otimes c \arrow[r,mapsto]& \delta_B(m)^{1/2} m\otimes c.\end{tikzcd}$$
Observe that for any $m\in \Lambda_M$, we have $\eta_B( \sum_{w\in W/W_m} w(m)={\delta(m)^{1/2}} r_m$. 
Hence, $\eta_B\colon \mathcal{R}\otimes_\Z \C \to  \mathcal{R}\otimes_\Z \C$ is actually an isomorphism of $\C$-algebras with $W$-action where the target is endowed with the dot-action (Definition \ref{dotaction}) and the source with the usual action. Let $\mathcal{S}_M^G\colon \cH_K(\C) \to  \mathcal{R}\otimes_\Z \C$ be the "standard" Satake homomorphism \cite[\S 9.2]{HR10}, then we have
$$ \eta_B \circ  \mathcal{S}_M^G= \dot{ \mathcal{S}}_M^G\otimes \text{id}_\C.\qedhere$$
\end{remark}
\subsection{Positivity properties of the Satake isomorphism}
In this subsection we show that the natural generalization of Rapoport's positivity result for unramified groups \cite[Theorem 1.1]{Ra} and Haines listed properties \cite[Theorem 1.2]{Haines00} remains true in general. 
\begin{theorem}\label{positivitythm}
For any $x\in \Lambda_M^-$, write $\dot{\cS}_M^G(h_x)= \sum_{m \in \Lambda_M^-} s_{x,m} r_{m}$. The following statements are equivalent
\begin{enumerate}
\item $s_{x,m}>0$,
\item $U^+m \cap KxK \neq \emptyset$,
\item $U^+m \cap Iwxw'I \neq \emptyset$, for some $w,w' \in W_{a_\circ}$,
\item ${x} \succsim {m}$.
\end{enumerate}
\end{theorem}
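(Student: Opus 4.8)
\textbf{Proof plan for Theorem \ref{positivitythm}.}

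The plan is to prove the cycle of implications $(2)\Leftrightarrow(3)$, $(2)\Rightarrow(4)$, $(4)\Rightarrow(1)$, and $(1)\Rightarrow(2)$; together these give the full equivalence. The implication $(1)\Rightarrow(2)$ is already essentially contained in the combinatorial proof of Theorem \ref{Ztwistedsatakeisomorphism}: there we showed $s_{x,m}=|U^+m^{-1}K\cap Kx^{-1}K|_K$, which is $>0$ precisely when $U^+m^{-1}K\cap Kx^{-1}K\neq\emptyset$, i.e. (applying the inverse map) when $U^+m\cap KxK\neq\emptyset$; so this direction is immediate. The implication $(2)\Rightarrow(4)$ is exactly Lemma \ref{Proposition444} (with the observation from Remark \ref{presimtrans} identifying $\precsim$ with $\precsim^\flat$ under $\square^\flat$; but here $M^\flat=M_1$ so $\precsim^\flat=\precsim$ directly). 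So the two genuinely new pieces are $(2)\Leftrightarrow(3)$ and $(4)\Rightarrow(1)$.

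For $(2)\Leftrightarrow(3)$: using the Iwahori decomposition $K=\bigsqcup_{w\in W_{a_\circ}}U^+wI$ of $K$ (Proposition \ref{Iwahori2} with $\cF=\{a_\circ\}$, $M^\flat=M_1$), one writes $KxK=\bigsqcup_{w,w'\in W_{a_\circ}}U^+wIxIw'I$ — using that $I$-double cosets multiply as in Lemma \ref{bnrelation} and Corollary \ref{triangularitiy}, and that representatives of $W_{a_\circ}$ lie in $K\subset N$. Since $U^+$ is a group and each representative $w$ lies in $K\subset G_1$ (normalizing nothing dangerous but absorbable into $U^+$ after conjugation by antidominant elements, cf. Lemma \ref{actionwonI} and Lemma \ref{normalizingI}), one checks $U^+wIxIw'I=U^+IwxIw'I=U^+(IwxIw'I)$ up to the Iwahori-triangularity of Corollary \ref{triangularitiy}. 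Hence $U^+m\cap KxK\neq\emptyset$ iff $U^+m\cap Iw_1xw_2I\neq\emptyset$ for some $w_1,w_2\in W_{a_\circ}$ (here absorbing the leftmost $I$ into $U^+$ via $U^+I\cap(\cdots)=(\cdots)$ arguments as in the proof of Lemma \ref{powerq}). This establishes $(2)\Leftrightarrow(3)$.

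The implication $(4)\Rightarrow(1)$ is the \emph{main obstacle} and the heart of the positivity statement. The strategy is to combine the compatibility between Satake and Bernstein (Lemma \ref{Zcompatibility}) with positivity at the Iwahori level. Concretely: $\dot\Theta_{\mathrm{Bern}}\circ\dot{\cS}_M^G(h_x)=\mathbf{1}_K*_I h_x=\mathbf{1}_K$-translate, so $s_{x,m}$ is the coefficient of $r_m$ when $h_x$ is expanded in the basis $\{r_m\}$ of $Z(\cH_K(\Z))$. One then transports the question into $\cH_I(\Z[q^{-1}])$, where $h_x=\mathbf{1}_{KxK}$ expands in the Iwahori basis $\{i_w\}$ with \emph{nonnegative} coefficients (by Lemma \ref{relativeheckerule}, structure constants $c^I(\cdot,\cdot,\cdot)$ are cardinalities, hence $\ge 0$), and each $r_m$, written via the $\dot\Theta$-elements as $\sum_{w\in W/W_m}c(m,w)\dot\Theta_{w(m)}$, has $c(m,w)$ a positive power of $q$ by Lemma \ref{crintegral}(ii). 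The delicate point is to show that when $x\succsim m$ the coefficient $s_{x,m}$ is \emph{strictly} positive, not merely nonnegative: here one wants to exhibit an explicit element in $U^+m\cap KxK$. The cleanest route is to run Schwer's argument \cite[Corollary 4.8]{Schwer06} for the extended affine case (i.e. $M^\flat=M^1$), which gives exactly the nonvanishing statement there, and then descend: by Remark \ref{presimtrans} the relation $x\succsim m$ is detected on $\Lambda_M^1$, by Remark \ref{transitionhecke} the Satake transforms are compatible with $\square^\flat$, and the coefficient $s_{x,m}$ maps to the corresponding coefficient for $\cH_{K^1}$; strict positivity upstairs forces strict positivity downstairs since $\square^\flat$ is surjective with the relevant coefficients being sums of \emph{nonnegative} terms one of which is positive. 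I expect the bookkeeping in this descent — matching the orbit sums $r_m$ versus $r_{m^1}$ under the projection $\Lambda_M\twoheadrightarrow\Lambda_M^1$ and tracking the multiplicities $|W/W_m|$ versus $|W/W_{m^1}|$ — to be the fiddliest part, and I would treat it carefully but without belaboring the (routine) positivity inequalities themselves.
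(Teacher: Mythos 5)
Your proposal follows essentially the same route as the paper: $(1)\Leftrightarrow(2)$ and $(2)\Rightarrow(4)$ are read off from the combinatorial proof of Theorem \ref{twistedsatakeisomorphism} and Lemma \ref{Proposition444}, $(2)\Leftrightarrow(3)$ is immediate from Lemma \ref{KmK} (which already gives $KxK=\bigcup_{w,w'}Iwxw'I$, making your longer re-derivation via Proposition \ref{Iwahori2} unnecessary), and $(4)\Rightarrow(1)$ is exactly Schwer's \cite[Corollary 4.8]{Schwer06} for the extended affine case, descended via Remarks \ref{transitionhecke} and \ref{presimtrans}. The bookkeeping you flag as fiddly collapses: since $m\precsim x$ forces $x-m\in\Lambda_{\aff}$ and $\Lambda_{\aff}\iso\Lambda_{\aff}^1$, the map $m\mapsto m^1$ is injective on $\{m\colon m\precsim x\}$, so comparing coefficients yields the exact equality $s_{x,m}=s_{x^1,m^1}$ (the paper also checks this directly from $s_{x^1,m^1}=|(U^+K\cap mKx^{-1}K)K^1|_{K^1}$), and no orbit-multiplicity tracking is required.
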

\begin{proof}
The equivalence between (1) and (2) was already established while proving Theorem \ref{twistedsatakeisomorphism}, in which it is also proved (1) $\Rightarrow$ (4).
The equivalence between (2) and (3) is clear from Lemma \ref{KmK}.

Now, we show (4) $\Rightarrow$ (1): C. Schwer proved in \cite{Schwer06} such an implication for the case of extended affine Weyl groups attached to root data. We shall apply his result to  $\cD:=({\Lambda}_M^1, \Sigma^{1,\vee},{\Lambda}_M^{1,\vee}, \Sigma^1,\Delta^{1,\vee})$. 
As we saw in Remark \ref{transitionhecke}, the following diagram is commutative $\left(\dot{\cS}_M^{G} ( h_x ) \right)^1=\dot{\cS}_M^{G^1} \iota^1(h_x )$, hence
$$\sum_{m\precsim x} s_{x,m} r_{m^1}= \sum_{y\precsim^1 x^1} s_{x^1,y} r_{y} \in {\mathcal{R}^1}^{\dot{W}}$$
By Remark \ref{presimtrans}, $m \precsim x$ if and only if ${m}^1 \precsim {x}^1$. 
Accordingly, $\sum_{m\precsim x} s_{x,m} r_{m^1}= \sum_{m\precsim x} s_{x^1,m^1} r_{m^1} \in {\mathcal{R}^1}^{\dot{W}}$ and so $ s_{x,m}=  s_{x^1,m^1}$, for all $m \precsim x$. 
We can actually see it directly; because $M^1$ normalizes $K$ and $x-m \in \Lambda_{\aff}$, we get
$$ s_{x^1,m^1}=|(U^+K \cap mKx^{-1}K^1) K^1|_{K^1}=|(U^+K \cap mKx^{-1}K) K^1|_{K^1}=s_{x,m}.$$
Thanks to \cite[Corollary 4.8]{Schwer06}, we have the implication $ x^1\precsim m^1 \Rightarrow s_{m^1,x^1} >0$, which ends the proof of the theorem.
\end{proof}

\section{A compressible algebra and centers of parahoric-Hecke algebras}\label{compressiblesection}

\subsection{Normalized Intertwiners}\label{normintertwin}

The dot-action of $W$ on ${\mathcal{R}^{\flat}}$ stabilizes ${\mathcal{R}_{\aff}^{\flat}}$, 
hence it extends to a dot-action of $W$ on $\mathcal{L}_{\aff}^{\flat} \otimes_{\mathcal{R}_{\aff}^{\flat}} \mathcal{R}^{\flat}$. 
Now, since $W$ action's is locally finite, we have by \cite[\S1 n$^{\circ}$ 9, Proposition 23]{BourbakiAlgComV} two canonical isomorphisms:
$$\mathcal{L}^{\flat}:=\mathcal{L}_{\aff}^{\flat} \otimes_{\mathcal{R}_{\aff}^{\flat}} \mathcal{R}^{\flat}  \simeq {\mathcal{L}_{\aff}^{\flat}}^{\dot{W}} \otimes_{{\mathcal{R}_{\aff}^{\flat}}^{\dot{W}}} \mathcal{R}^{\flat}
\text{ and }
{\mathcal{L}^{\flat}}^{\dot{W}}  \simeq {\mathcal{L}_{\aff}^{\flat}}^{\dot{W}} \otimes_{{\mathcal{R}_{\aff}^{\flat}}^{\dot{W}}} {\mathcal{R}^{\flat}}^{\dot{W}}  $$
Tensoring the decomposition provided by Corollary \ref{HIbasis} with ${\mathcal{L}^{\flat}}^{\dot{W}}$ over ${\mathcal{R}_{\aff}^{\flat}}^{\dot{W}}$, yields 
\begin{equation}\label{decompLaff}
{\mathcal{L}_{\aff}^{\flat}}^{\dot{W}} \otimes_{{\mathcal{R}_{\aff}^{\flat}}^{\dot{W}}}  \cH_{I^{\flat}}({\Z[q^{-1}]})= \bigoplus_{w\in W_{a_\circ}^{\flat}}i_w^{\flat}*_{I^{\flat}} \mathcal{L}^{\flat}= \bigoplus_{w\in W} \mathcal{L}^{\flat} *_{I^{\flat}} i_w^{\flat}.
\end{equation}
\begin{lemma}\label{fracpropoTheta}
For any $f \in \mathcal{L}^{\flat}$,  and $s \in \cS^{\flat}$ we have
$$f*_{I^{\flat}} (i_s^{\flat}+1)-(i_s^{\flat}+1) *_{I^{\flat}}  \dot{s}(f)= (f-\dot{s}(f))*_{I^{\flat}} \mathcal{G}(\alpha).$$
\end{lemma}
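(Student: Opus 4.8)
The statement to prove is the extension of \cite[Proposition 3.9]{Lu89} (or rather of Proposition \ref{propoTheta}) from the subring ${\mathcal{R}^{\flat}}$ to its localisation $\mathcal{L}^{\flat}$. The key observation is that all three terms $f*_{I^{\flat}}(i_s^{\flat}+1)$, $(i_s^{\flat}+1)*_{I^{\flat}}\dot{s}(f)$ and $(f-\dot{s}(f))*_{I^{\flat}}\mathcal{G}(\alpha)$ make sense inside ${\mathcal{L}_{\aff}^{\flat}}^{\dot{W}}\otimes_{{\mathcal{R}_{\aff}^{\flat}}^{\dot{W}}}\cH_{I^{\flat}}(\Z[q^{-1}])$: indeed $\mathcal{G}(\alpha)\in \mathcal{L}_{\aff}^{\flat}\subset \mathcal{L}^{\flat}$, and $\mathcal{L}^{\flat}$ embeds into this tensor algebra via \eqref{decompLaff}. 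So the identity is an identity between elements of a $\mathcal{L}^{\flat}$-algebra, and both sides are visibly $\mathcal{L}^{\flat}$-linear (more precisely, additive and compatible with multiplication by $\mathcal{R}_{\aff}^{\flat}$-elements, hence by $\mathcal{L}_{\aff}^{\flat}$-elements after clearing denominators) in the variable $f$.

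First I would reduce to the case $f = \dot{\Theta}_m^{\flat}$ for $m\in \Lambda_M^{\flat}$, i.e. $f\in {\mathcal{R}^{\flat}}$. Since ${\mathcal{R}^{\flat}}$ is free over ${\mathcal{R}_{\aff}^{\flat}}$ with basis the $\dot{\Theta}$-images of a transversal of $\Lambda_{\aff}^{\flat}$ in $\Lambda_M^{\flat}$, and since $\mathcal{L}^{\flat} = \mathcal{L}_{\aff}^{\flat}\otimes_{\mathcal{R}_{\aff}^{\flat}}\mathcal{R}^{\flat}$, every $f\in\mathcal{L}^{\flat}$ is an $\mathcal{L}_{\aff}^{\flat}$-linear combination of elements of ${\mathcal{R}^{\flat}}$. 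Both sides of the claimed identity are additive in $f$; and for $\lambda\in\mathcal{L}_{\aff}^{\flat}$ one has $\dot{s}(\lambda f) = \dot{s}(\lambda)\dot{s}(f)$, but $\lambda\in\mathcal{L}_{\aff}^{\flat}$ is $\dot{W}$-fixed only up to the subtlety that $\mathcal{L}_{\aff}^{\flat}$ is \emph{not} pointwise $\dot{W}$-fixed. The cleaner route is: multiply $f$ by a suitable nonzero $d\in{\mathcal{R}_{\aff}^{\flat}}^{\dot{W}}$ (a common denominator that is $\dot W$-invariant — available since $\mathcal{L}_{\aff}^{\flat} = {\mathcal{L}_{\aff}^{\flat}}^{\dot{W}}\otimes_{{\mathcal{R}_{\aff}^{\flat}}^{\dot{W}}}\mathcal{R}_{\aff}^{\flat}$ and one can scale any denominator into ${\mathcal{R}_{\aff}^{\flat}}^{\dot{W}}$ by multiplying by its $\dot W$-conjugates) so that $df\in{\mathcal{R}^{\flat}}$. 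Since $d$ is central in $\cH_{I^{\flat}}(\Z[q^{-1}])$ (Corollary \ref{invincentIwahori}) and $\dot{W}$-fixed, it commutes with $i_s^{\flat}$ and satisfies $\dot{s}(df)=d\,\dot{s}(f)$ and $\dot{s}(d)=d$; hence the identity for $df$ is exactly $d$ times the identity for $f$, and as $\cH_{I^{\flat}}(\Z[q^{-1}])$ embeds into $\mathcal{L}_{\aff}^{\flat}\otimes_{\mathcal{R}_{\aff}^{\flat}}\cH_{I^{\flat}}(\Z[q^{-1}])$ (Lemma \ref{341injLaff}, where $\mathcal{R}_{\aff}^{\flat}$ acts without zero divisors), cancelling $d$ gives the identity for $f$.

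Finally, for $f=\dot{\Theta}_m^{\flat}$ with $m\in\Lambda_M^{\flat}$, the identity is precisely Proposition \ref{propoTheta}, noting that by definition of the dot-action $\dot{\Theta}_{\dot{s}(m)}^{\flat}=\dot{s}(\dot{\Theta}_m^{\flat})$ (Remark \ref{flatiswequi} together with the identification of $\mathcal{R}^{\flat}$ with its image under $\dot\Theta_{\text{Bern}}^{\flat}$), and that $\mathcal{T}_{a_\circ}^{\flat}$ generates $W_{a_\circ}^{\flat}\simeq W$ whereas $\cS^{\flat}=\cup_{\cF}\mathcal{T}_\cF^{\flat}$ contains also the affine reflections; for the affine reflections $s\in\cS^{\flat}\setminus\mathcal{T}_{a_\circ}^{\flat}$ one uses the conjugacy $s = z s' z^{-1}$ with $s'\in\mathcal{T}_{a_\circ}^{\flat}$ and $z\in\widetilde\Omega^{\flat}$-times-translation, and the behaviour of $\mathcal{G}(\alpha)$ under the affine Weyl group as recorded in \cite[\S 3.8, \S 3.13]{Lu89}, to transport the identity. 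The main obstacle is bookkeeping the $\dot{W}$-equivariance of the common denominator $d$ and checking that $\mathcal{G}(\alpha)$ transforms correctly so that the full set $\cS^{\flat}$ (not just $\mathcal{T}_{a_\circ}^{\flat}$) is covered; the core algebraic content is already contained in Proposition \ref{propoTheta}.
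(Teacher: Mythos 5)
Your argument is correct and is, at its core, the same as the paper's: the paper's entire proof reads ``It suffices to show this statement for $f\in \mathcal{R}^{\flat}$, which follows readily from Proposition \ref{propoTheta}.'' What you add is the actual justification of that reduction, and your mechanism is the right one: clear denominators by a $\dot{W}$-invariant $d\in{\mathcal{R}_{\aff}^{\flat}}^{\dot{W}}$ (e.g.\ the product of the $\dot{W}$-conjugates of any denominator), use that $d$ is central in $\cH_{I^{\flat}}(\Z[q^{-1}])$ by Corollary \ref{invincentIwahori} and fixed by $\dot{s}$, so both sides of the identity for $df$ are $d$ times the corresponding sides for $f$, and then cancel $d$ — which is even simpler than invoking torsion-freeness, since $d$ is a unit of $\mathcal{L}_{\aff}^{\flat}$ and hence acts invertibly on $\mathcal{L}_{\aff}^{\flat}\otimes_{\mathcal{R}_{\aff}^{\flat}}\cH_{I^{\flat}}(\Z[q^{-1}])$. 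Your last paragraph flags a real imprecision in the statement rather than a gap in your proof: Proposition \ref{propoTheta} covers only $s\in\mathcal{T}_{a_\circ}^{\flat}$, and the dot-action $\dot{s}$ is only defined for $s$ in (the image of) $W$; since the lemma is subsequently applied only to $s\in\mathcal{T}_{a_\circ}^{\flat}$ (in the definition of $\mathscr{k}_s^{\flat}$ and in Proposition \ref{propKw}), the ``$s\in\cS^{\flat}$'' in the statement should be read as $s\in\mathcal{T}_{a_\circ}^{\flat}$, and your conjugation detour for the affine reflections is unnecessary. With that reading, your proof is complete.
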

\begin{proof}
It suffices to show this statement for $f\in \mathcal{R}^{\flat}$, which follows readily from Proposition \ref{propoTheta}. 
\end{proof}
For any $s\in \mathcal{T}_{a_\circ}$, set
$$\mathscr{k}_s^{\flat}:=-1+({i}_{s}^{\flat}+1)\mathcal{G}(\alpha)^{-1} \in \mathcal{L}_{\aff}^{\flat}.$$
\begin{proposition}\label{propKw}
\begin{enumerate}[(a)]
\item There exists a unique group homomorphism 
$$\mathcal{K}^{\flat}\colon \dot{W}_{a_\circ} \hra \left(\mathcal{L}_{\aff}^{\flat} \otimes_{\mathcal{R}_{\aff}^{\flat}} \cH_{I^{\flat}}({\Z[q^{-1}]})\right)^\times $$
such that $\mathcal{K}^{\flat}(s)=\mathscr{k}_{s}^{\flat}$ for all $s \in \mathcal{T}_{a_\circ}$. 
\item For any $f \in \mathcal{L}^{\flat}$, we have $f *_{{I}^{\flat}}\mathscr{k}_w^{\flat}= \mathscr{k}_w^{\flat} *_{{I}^{\flat}} \dot{w}^{-1} (f)$.
\item $\mathcal{L}_{\aff}^{\flat} \otimes_{\mathcal{R}_{\aff}^{\flat}}  \mathcal{H}_{{I}^{\flat}}({\Z[q^{-1}]})= \bigoplus_{w\in W}\mathcal{L}^{\flat} *_{I^{\flat}} \mathscr{k}_w^{\flat}=\bigoplus_{w\in W} \mathscr{k}_w^{\flat}*_{I^{\flat}}\mathcal{L}^{\flat} $.
\end{enumerate}
\end{proposition}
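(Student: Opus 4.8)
\textbf{Proof plan for Proposition \ref{propKw}.}

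The plan is to follow the classical strategy for constructing normalized intertwiners à la Lusztig, exploiting the fact that by Corollary \ref{2rootsystems} everything can be checked after passing to $\cH_{I^1}(\Z[q^{-1}])$ and to the affine Hecke algebra $\cH(\cD_{\aff},W_{\aff},v=q,L)$, where the corresponding statements are due to Lusztig \cite[\S 3.8--3.13]{Lu89}. For part (b), I would first establish the identity at the level of a single simple reflection $s\in \mathcal{T}_{a_\circ}$: rearranging Lemma \ref{fracpropoTheta}, for $f\in \mathcal{L}^{\flat}$ one has $f*_{I^{\flat}}(i_s^{\flat}+1)\mathcal{G}(\alpha)^{-1} - (i_s^{\flat}+1)\mathcal{G}(\alpha)^{-1}*_{I^{\flat}}\dot s(f) = f - \dot s(f)$, which upon adding $\dot s(f) - f*_{I^{\flat}}1$ and using $\mathscr{k}_s^{\flat} = -1 + (i_s^{\flat}+1)\mathcal{G}(\alpha)^{-1}$ rearranges exactly to $f*_{I^{\flat}}\mathscr{k}_s^{\flat} = \mathscr{k}_s^{\flat}*_{I^{\flat}}\dot s(f)$ (here one uses that $\mathcal{G}(\alpha)\in\mathcal{L}_{\aff}^{\flat}$ is $\dot s$-invariant, since $\dot s$ fixes $\mathcal{L}_{\aff}^{\flat}$ pointwise — or rather, one uses the precise invariance properties of $\mathcal{G}(\alpha)$ recorded in \cite[\S 3.8]{Lu89}). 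Note $\dot s = \dot s^{-1}$ for a reflection, so this is the $w=s$ case of (b).

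For part (a), I would define $\mathcal{K}^{\flat}(w) := \mathscr{k}_{s_1}^{\flat}*_{I^{\flat}}\cdots *_{I^{\flat}}\mathscr{k}_{s_r}^{\flat}$ for a reduced expression $w = s_1\cdots s_r$ in $(W_{a_\circ},\mathcal{T}_{a_\circ})$, and check this is independent of the chosen reduced word. By Matsumoto's theorem (or Tits' solution to the word problem) it suffices to verify the braid relations $\mathscr{k}_s^{\flat}\mathscr{k}_t^{\flat}\mathscr{k}_s^{\flat}\cdots = \mathscr{k}_t^{\flat}\mathscr{k}_s^{\flat}\mathscr{k}_t^{\flat}\cdots$ (both with $m_{st}$ factors) for each pair $s,t\in\mathcal{T}_{a_\circ}$; this is a rank-two computation inside the rank-two parabolic subalgebra, and it is precisely what Lusztig carries out in \cite[\S 5]{Lu89} for the affine Hecke algebra. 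Since the $\mathscr{k}_s^{\flat}$ involve only $i_s^{\flat}$ with $s\in\mathcal{T}_{a_\circ}=\cS^{\flat}\cap W_{a_\circ}^{\flat}$ and elements of $\mathcal{L}_{\aff}^{\flat}$, the computation takes place inside $\mathcal{L}_{\aff}^{\flat}\otimes_{\mathcal{R}_{\aff}^{\flat}}\cH_{I^{\flat}}^{\aff}(\Z[q^{-1}])$, which by Corollary \ref{2rootsystems} and Lemma \ref{341injLaff} embeds into the analogous object for $\cD_{\aff}$; hence the braid identity transports verbatim from Lusztig's. Injectivity of $\mathcal{K}^{\flat}$ then follows from part (c): distinct $w$ give $\mathscr{k}_w^{\flat}$ landing in distinct summands of the direct sum decomposition, so they are $\mathcal{L}^{\flat}$-linearly independent, a fortiori nonzero and distinct. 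That $\mathscr{k}_s^{\flat}$ is invertible — hence each $\mathcal{K}^{\flat}(w)$ lies in the unit group — can be seen from the quadratic relation: $(\mathscr{k}_s^{\flat})^2$ is computed from $(i_s^{\flat})^2 = q_s + (q_s-1)i_s^{\flat}$ and is a unit in $\mathcal{L}_{\aff}^{\flat}$ (this is again \cite[\S 5.2]{Lu89}), or alternatively by writing $\mathscr{k}_s^{\flat}$ as a product $\mathscr{k}_s^{\flat} = \dot{\Theta}\text{-factor} \cdot i_s^{\flat} \cdot \dot{\Theta}\text{-factor}$ up to units.

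For part (c), I would start from the decomposition \eqref{decompLaff}, $\mathcal{L}_{\aff}^{\flat}{}^{\dot W}\otimes \cH_{I^{\flat}}(\Z[q^{-1}]) = \bigoplus_{w\in W_{a_\circ}^{\flat}} \mathcal{L}^{\flat}*_{I^{\flat}} i_w^{\flat} = \bigoplus_{w}\mathcal{L}^{\flat}*_{I^{\flat}}(i_w^{\flat}+1\text{-type terms})$, and observe that the change of basis from $\{i_w^{\flat}\}_{w\in W_{a_\circ}^{\flat}}$ to $\{\mathscr{k}_w^{\flat}\}_{w\in W_{a_\circ}^{\flat}}$ is "unitriangular" with respect to the Chevalley--Bruhat order on $W_{a_\circ}^{\flat}$ with coefficients in $\mathcal{L}^{\flat}$: indeed $\mathscr{k}_w^{\flat} = \mathscr{k}_{s_1}^{\flat}\cdots\mathscr{k}_{s_r}^{\flat}$ expands, using $\mathscr{k}_{s}^{\flat}= -1 + (i_s^{\flat}+1)\mathcal{G}(\alpha)^{-1}$, as $i_w^{\flat}\cdot(\text{unit of }\mathcal{L}_{\aff}^{\flat}) + \sum_{w'<w} (\text{coeff in }\mathcal{L}^{\flat})\, i_{w'}^{\flat}$, the leading term being nonzero by Remark \ref{Iwahoriaditivity} (reducedness of $s_1\cdots s_r$). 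A unitriangular change of basis with invertible diagonal entries over the ring $\mathcal{L}^{\flat}$ preserves the property of being a free basis, so $\{\mathscr{k}_w^{\flat}\}_{w\in W_{a_\circ}^{\flat}}$ is an $\mathcal{L}^{\flat}$-basis, giving the left decomposition; the right decomposition follows by the same argument reading products on the other side, or by applying the involution $h\mapsto h^\vee$ of Remark \ref{heckeinvolution}. The main obstacle I anticipate is bookkeeping the precise invariance and integrality properties of Lusztig's element $\mathcal{G}(\alpha)$ and making sure the transported braid relation in part (a) is applied to the correct root datum; once the dictionary of Corollary \ref{2rootsystems} and the injectivity of Lemma \ref{341injLaff} are in place, the rest is formal.
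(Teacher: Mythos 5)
Your proposal is correct, and for parts (a) and (b) it is essentially the paper's argument: (b) is proved for a single reflection by rearranging Lemma \ref{fracpropoTheta} and then extended via the homomorphism property, and (a) ultimately rests on Lusztig's \cite[\S 5]{Lu89} after observing that everything takes place inside $\mathcal{L}_{\aff}^{\flat}\otimes_{\mathcal{R}_{\aff}^{\flat}}\cH_{I^{\flat}}^{\aff}(\Z[q^{-1}])$ (the paper simply invokes \cite[Proposition 5.2 (a)]{Lu89} wholesale where you spell out the Matsumoto-type reduction to braid relations). One small correction to your aside in (b): $\dot s$ does \emph{not} fix $\mathcal{L}_{\aff}^{\flat}$ pointwise --- the dot-action merely stabilizes it --- but no invariance of $\mathcal{G}(\alpha)$ is needed anyway; the only fact your computation uses is that $\dot s(f)$ and $\mathcal{G}(\alpha)^{-1}$ both lie in the commutative subalgebra $\mathcal{L}^{\flat}$, hence commute, so the hedge resolves in your favour. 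Where you genuinely diverge is part (c). The paper follows \cite[Proposition 5.5 (a)]{Lu89}: it shows the $\mathscr{k}_w^{\flat}$ \emph{span}, using only that $i_s^{\flat}\in\mathcal{L}^{\flat}(1+\mathscr{k}_s^{\flat})$ and that $W_{a_\circ}$ is generated by $\mathcal{T}_{a_\circ}$, and then concludes they are a basis because $|W|$ generators of a module that is free of rank $|W|$ over the commutative ring $\mathcal{L}^{\flat}$ (by \eqref{decompLaff}) are automatically a basis. Your unitriangularity argument with respect to the Chevalley--Bruhat order also works and has the advantage of exhibiting linear independence directly, but it costs more bookkeeping: you must actually verify that commuting the $\mathcal{G}(\alpha)$-factors past the $i_{s_j}^{\flat}$ (via Proposition \ref{propoTheta}) only contributes terms supported on $w'<w$ and that the leading coefficient is a unit of $\mathcal{L}^{\flat}$. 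Both routes are sound; the paper's is shorter.
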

\begin{proof}
(a) The condition forces the image of $\mathcal{K}$ to be in $\mathcal{L}_{\aff}^{\flat} \otimes_{\Lambda_{\aff}} \cH_{^{\flat}}^{\aff}({\Z[q^{-1}]})=\bigoplus_{w\in W} \mathcal{L}_{\aff}^{\flat} *_{I^{\flat}} i_w^{\flat}$. 
The existence and unicity follows then from \cite[Proposition 5.2 (a)]{Lu89} applied to $\cH_{{I}^{\flat}}^{\aff}({\Z[q^{-1}]}) \simeq \cH({W}_{\aff},\cD_{\aff},v=q, L)$.

(b) Because $\mathcal{K}$ is a morphism of groups, it suffices to check the statement (b) for $w=s\in \mathcal{T}_{a_\circ}^{\flat}$, which follows from Lemma \ref{fracpropoTheta} and the definition of $\mathscr{k}_s^{\flat}$.

(c) We proceed as in the proof of \cite[Proposition 5.5 (a)]{Lu89}. 
Let $X=\sum_{w\in W} \mathcal{L}^{\flat} *_{I^{\flat}} \mathscr{k}_w$. 
Since $W_{a_\circ}$ is generated by $ \mathcal{T}_{a_\circ}$, it suffices to show that $i_s^{\flat}\in X$, for all $s\in \mathcal{T}_{a_\circ}$. 
But, by definition, we have ${i}_{s}^{\flat}\in \mathcal{L}^{\flat} (1+\mathscr{k}_s^{\flat})$. 
Therefore, $\mathcal{L}_{\aff}^{\flat} \otimes_{\Lambda_{\aff}} \cH_{I^{\flat}}^{\flat}({\Z[q^{-1}]}) = X$. 
We saw in (\ref{decompLaff})  p.~\pageref{decompLaff}, that $\mathcal{L}_{\aff}^{\flat} \otimes_{\mathcal{R}_{\aff}^{\flat}} \cH_{I^{\flat}}({\Z[q^{-1}]})$ is free over $\mathcal{L}$ of rank $|W|$. 
Therefore, $\mathcal{L}_{\aff}^{\flat} \otimes_{\mathcal{R}_{\aff}^{\flat}} \cH_{I^{\flat}}({\Z[q^{-1}]}) = \oplus_{w\in W} \mathcal{L}^{\flat} *_{I^{\flat}} \mathscr{k}_w^{\flat}$. 
The other equality is proved in a similar way.
\end{proof}

\subsection{A skew group ring}
\begin{proposition}\label{skewringLaff}
Let $\mathcal{L}^{\flat} *\dot{W}$ be the skew group ring of $\dot{W}$ over $\mathcal{L}^{\flat}$ (See Definition \ref{sgring}). 
We have an isomorphism of $\mathcal{L}^{\flat} $-algebras: 
$$  \mathcal{L}^{\flat} *\dot{W}\to \mathcal{L}_{\aff}^{\flat} \otimes_{\mathcal{R}_{\aff}^{\flat}}  \mathcal{H}_{{I}^{\flat}}({\Z[q^{-1}]}), r \cdot w\mapsto r *_{I^{\flat}} \mathscr{k}_w^{\flat}.$$
\end{proposition}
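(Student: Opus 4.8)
The map $r\cdot w \mapsto r*_{I^\flat}\mathscr{k}_w^\flat$ is well defined because, by Proposition \ref{propKw}(c), the elements $\{\mathscr{k}_w^\flat : w\in W\}$ form an $\mathcal{L}^\flat$-basis (on either side) of $A^\flat:=\mathcal{L}_{\aff}^\flat \otimes_{\mathcal{R}_{\aff}^\flat}\mathcal{H}_{I^\flat}(\Z[q^{-1}])$; in particular the proposed map is an isomorphism of left $\mathcal{L}^\flat$-modules, since the skew group ring $\mathcal{L}^\flat *\dot W$ is by Definition \ref{sgring} the free left $\mathcal{L}^\flat$-module on the symbols $\{w : w\in W\}$. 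So the only thing to check is that it is a ring homomorphism, and since both sides are generated as $\mathcal{L}^\flat$-algebras by $\mathcal{L}^\flat$ together with the $w$'s (resp. the $\mathscr{k}_w^\flat$'s), it suffices to verify the two defining relations of the skew group ring on the image:
\begin{itemize}
\item[(i)] $w\cdot r = \dot w(r)\cdot w$ in $\mathcal{L}^\flat *\dot W$ must map to $\mathscr{k}_w^\flat *_{I^\flat} r = \dot w(r)*_{I^\flat}\mathscr{k}_w^\flat$, i.e. the commutation relation $r *_{I^\flat}\mathscr{k}_w^\flat = \mathscr{k}_w^\flat *_{I^\flat}\dot w^{-1}(r)$;
\item[(ii)] $w\cdot w' = (ww')$ must map to $\mathscr{k}_w^\flat *_{I^\flat}\mathscr{k}_{w'}^\flat = \mathscr{k}_{ww'}^\flat$.
\end{itemize}
Relation (i) is exactly Proposition \ref{propKw}(b) (rewriting $\dot w^{-1}(f)$ applied to the element $r$ under the isomorphism, or conversely), and relation (ii) is the statement that $\mathcal{K}^\flat\colon \dot W_{a_\circ}\to (A^\flat)^\times$ is a group homomorphism, which is Proposition \ref{propKw}(a). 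Hence the map respects all relations and is an algebra homomorphism.

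It remains to see bijectivity. Injectivity: if $\sum_{w} r_w *_{I^\flat}\mathscr{k}_w^\flat = 0$ with $r_w\in\mathcal{L}^\flat$, then by the directness of the decomposition $A^\flat = \bigoplus_{w\in W}\mathcal{L}^\flat *_{I^\flat}\mathscr{k}_w^\flat$ in Proposition \ref{propKw}(c) all $r_w=0$, so the preimage $\sum_w r_w\cdot w$ vanishes in $\mathcal{L}^\flat *\dot W$. Surjectivity: again by Proposition \ref{propKw}(c), every element of $A^\flat$ is of the form $\sum_w r_w *_{I^\flat}\mathscr{k}_w^\flat$, which is the image of $\sum_w r_w\cdot w$. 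Finally, the map is $\mathcal{L}^\flat$-linear for the left $\mathcal{L}^\flat$-structures by construction (the scalar $r'\in\mathcal{L}^\flat$ multiplies on the left of $r\cdot w$, resp. on the left of $r*_{I^\flat}\mathscr{k}_w^\flat$), so it is an isomorphism of $\mathcal{L}^\flat$-algebras.

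\textbf{Main obstacle.} There is essentially no hard step left here: all of the genuine work has been packaged into Proposition \ref{propKw}, whose proof in turn invokes Lusztig's \cite[Proposition 5.2, Proposition 5.5]{Lu89} through the identification $\mathcal{H}_{I^\flat}^{\aff}(\Z[q^{-1}])\simeq \mathcal{H}(\cD_{\aff},W_{\aff},v=q,L)$ of Corollary \ref{2rootsystems}, and Lemma \ref{fracpropoTheta}. The only point requiring a little care in writing is the bookkeeping in (i): one must make sure that the commutation relation in Proposition \ref{propKw}(b), namely $f*_{I^\flat}\mathscr{k}_w^\flat = \mathscr{k}_w^\flat *_{I^\flat}\dot w^{-1}(f)$, matches the convention for the skew group ring (Definition \ref{sgring}) — i.e. whether the group acts on scalars by $\dot w$ or $\dot w^{-1}$ — and to choose the direction of the basis decomposition ($\bigoplus \mathcal{L}^\flat *\mathscr{k}_w^\flat$ versus $\bigoplus \mathscr{k}_w^\flat *\mathcal{L}^\flat$) consistently throughout. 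Once the conventions are pinned down, the proof is a two-line verification of relations plus a one-line appeal to the basis statement.
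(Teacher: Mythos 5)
Your proof is correct and follows essentially the same route as the paper, whose entire proof is the one-line remark that the statement "follows readily from (b) and (c) of Proposition \ref{propKw}"; you have simply spelled out that verification (multiplicativity of $w\mapsto\mathscr{k}_w^{\flat}$ from part (a), the commutation relation from part (b) matched against the skew-product rule $(s\cdot g)(s'\cdot g')=sg(s')\cdot gg'$, and bijectivity from the direct-sum decomposition in part (c)). No gaps.
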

\begin{proof}
This follows readily from (b) and (c) of Proposition \ref{propKw}.
\end{proof}
\subsection{An Azumaya algebra}

\begin{lemma}\label{sgrmatrix}
\begin{enumerate}[(i)]
\item The skew group ring $\mathcal{L}_{\aff} *\dot{W} $ is the matrix algebra $\mathbb{M}_{|W|}(\mathcal{L}_{\aff}^{\dot{W}})$.
\item The two ring extensions $\mathcal{L}_{\aff}/\mathcal{L}_{\aff}^{\dot{W}}$ and $\mathcal{L}^{\flat}/{\mathcal{L}^{\flat}}^{\dot{W}}$ are ${\dot{W}}$-Galois. 
\end{enumerate}
\end{lemma}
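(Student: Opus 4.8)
## Proof plan

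The plan is to prove the two parts in sequence, the second following formally from the first once the right setup is in place. For part (i), I would first recall that $\mathcal{L}_{\aff}$ is the fraction field of $\mathcal{R}_{\aff} = \Z[\Lambda_{\aff}]$, and that $\dot W$ acts on it (the dot-action and the standard action agree on $\mathcal{R}_{\aff}$ up to the locally-finite twist, but in any case $W$ acts faithfully on $\mathcal{L}_{\aff}$ because $\Lambda_{\aff} = \Z[\Sigma^\vee]$ spans $V$ and $W$ acts faithfully on $V$). The key structural input is the classical fact that for a \emph{faithful} action of a finite group $\dot W$ on a field $E$ with fixed field $E^{\dot W}$, the extension $E/E^{\dot W}$ is Galois with group $\dot W$ (Artin), and consequently the skew group ring $E * \dot W$ is isomorphic to $\End_{E^{\dot W}}(E) \cong \mathbb{M}_{[E:E^{\dot W}]}(E^{\dot W})$. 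Here $[E : E^{\dot W}] = |\dot W| = |W|$, which gives precisely $\mathcal{L}_{\aff} * \dot W \cong \mathbb{M}_{|W|}(\mathcal{L}_{\aff}^{\dot W})$. I would spell out the isomorphism $E * \dot W \to \End_{E^{\dot W}}(E)$ sending $r \cdot w \mapsto (x \mapsto r \, w(x))$ and note it is a bijection because $E * \dot W$ and $\End_{E^{\dot W}}(E)$ are both free $E$-modules (via left multiplication, resp. the $E$-structure on the target) of rank $|W|$, and surjectivity is Dedekind's lemma on independence of characters / the normal basis theorem.

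For part (ii), the statement that $\mathcal{L}_{\aff}/\mathcal{L}_{\aff}^{\dot W}$ is $\dot W$-Galois is exactly the Artin fact just invoked, so nothing more is needed there. The genuinely new point is $\mathcal{L}^{\flat}/{\mathcal{L}^{\flat}}^{\dot W}$, where $\mathcal{L}^{\flat} = \mathcal{L}_{\aff}^{\flat} \otimes_{\mathcal{R}_{\aff}^{\flat}} \mathcal{R}^{\flat}$ is in general \emph{not} a field (it is $\mathcal{R}^{\flat}$ localized away from the nonzero elements of $\mathcal{R}_{\aff}^{\flat}$, hence a product of fields — the normalization/localization of a Laurent-polynomial ring over a finite-index subring). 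The cleanest route is to use the canonical isomorphism $\mathcal{L}^{\flat} \cong {\mathcal{L}_{\aff}^{\flat}}^{\dot W} \otimes_{{\mathcal{R}_{\aff}^{\flat}}^{\dot W}} \mathcal{R}^{\flat}$ recorded just before Lemma~\ref{fracpropoTheta}, together with ${\mathcal{L}^{\flat}}^{\dot W} \cong {\mathcal{L}_{\aff}^{\flat}}^{\dot W} \otimes_{{\mathcal{R}_{\aff}^{\flat}}^{\dot W}} {\mathcal{R}^{\flat}}^{\dot W}$: this exhibits $\mathcal{L}^{\flat}$ as obtained from the $\dot W$-Galois extension $\mathcal{L}_{\aff}^{\flat}/{\mathcal{L}_{\aff}^{\flat}}^{\dot W}$ by the base change ${\mathcal{R}_{\aff}^{\flat}}^{\dot W} \to {\mathcal{R}^{\flat}}^{\dot W}$ (using that $\mathcal{R}^{\flat}$ is a free, hence flat, ${\mathcal{R}_{\aff}^{\flat}}$-module, which holds because $\Lambda_M^{\flat}/\Lambda_{\aff}^{\flat} \cong W$ via Lemma~\ref{berndecom} is finite and $\mathcal{R}^{\flat} = \bigoplus_{w} \mathcal{R}_{\aff}^{\flat}\cdot w$). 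Since the property of being a $G$-Galois extension of commutative rings is stable under arbitrary base change (a standard fact: the defining condition, that $E \otimes_{E^G} E \to \prod_{g} E$ is an isomorphism, is preserved by $- \otimes_{E^G} A$), we conclude that $\mathcal{L}^{\flat}/{\mathcal{L}^{\flat}}^{\dot W}$ is $\dot W$-Galois. I should double-check the faithfulness of the $\dot W$-action on $\mathcal{L}^{\flat}$ — it follows since it is already faithful on the subring $\mathcal{R}^{\flat} \supset \Lambda_M$, as $W$ acts faithfully on the lattice $\Lambda_M^1 \subset X_*(\Sbf)$.

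The main obstacle I anticipate is being careful about which "Galois" one means in part (ii): $\mathcal{L}^{\flat}$ is a ring, not a field, so "$\dot W$-Galois" must be taken in the Auslander–Goldman / Chase–Harrison–Rosenberg sense (equivalently: $\dot W$ acts faithfully, $\mathcal{L}^{\flat}$ is a finite separable ${\mathcal{L}^{\flat}}^{\dot W}$-algebra, and $\mathcal{L}^{\flat} \otimes_{{\mathcal{L}^{\flat}}^{\dot W}} \mathcal{L}^{\flat} \xrightarrow{\sim} \operatorname{Map}(\dot W, \mathcal{L}^{\flat})$), and I would state this definition explicitly and verify it either by the base-change argument above or, alternatively, directly: pick a dual basis / separability idempotent for $\mathcal{L}_{\aff}^{\flat}/{\mathcal{L}_{\aff}^{\flat}}^{\dot W}$ and push it forward. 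A secondary subtlety is confirming $[\mathcal{L}_{\aff}^{\flat} : {\mathcal{L}_{\aff}^{\flat}}^{\dot W}] = |W|$ even though $\mathcal{L}_{\aff}^{\flat}$ might be a nontrivial product of fields (if $\Lambda_{\aff}^{\flat}$ has torsion — but here $\Lambda_{\aff}^{\flat} \cong \Lambda_{\aff} \cong \Z[\Sigma^\vee]$ is torsion-free, so $\mathcal{L}_{\aff}^{\flat}$ is genuinely a field and the dimension count is the ordinary one); this should be remarked on so the matrix-algebra identification in (i) is unambiguous. Everything else is bookkeeping with the decompositions of Proposition~\ref{propKw} and the isomorphisms already established.
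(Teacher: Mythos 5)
Your part (i) is essentially the paper's argument (the map $r\cdot w\mapsto (x\mapsto r\,w(x))$ into $\End_{\mathcal{L}_{\aff}^{\dot W}}(\mathcal{L}_{\aff})$, injectivity by independence of characters, bijectivity by a rank count), and your treatment of $\mathcal{L}_{\aff}/\mathcal{L}_{\aff}^{\dot W}$ in (ii) is fine: you quote Artin directly, whereas the paper deduces it from (i) via the equivalence ``$S*G$ Azumaya over $S^G$ $\Leftrightarrow$ $S/S^G$ is $G$-Galois'' of Theorem \ref{fundgaloiscom}; both are legitimate.

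The gap is in your argument for $\mathcal{L}^{\flat}/{\mathcal{L}^{\flat}}^{\dot W}$. Your base-change strategy needs the identification $\mathcal{L}^{\flat}\cong \mathcal{L}_{\aff}^{\flat}\otimes_{{\mathcal{L}_{\aff}^{\flat}}^{\dot W}}{\mathcal{L}^{\flat}}^{\dot W}$, and this does \emph{not} follow from the two Bourbaki isomorphisms recorded before Lemma \ref{fracpropoTheta}: combining those gives $\mathcal{L}_{\aff}^{\flat}\otimes_{{\mathcal{L}_{\aff}^{\flat}}^{\dot W}}{\mathcal{L}^{\flat}}^{\dot W}\cong \mathcal{L}_{\aff}^{\flat}\otimes_{{\mathcal{R}_{\aff}^{\flat}}^{\dot W}}{\mathcal{R}^{\flat}}^{\dot W}$, while $\mathcal{L}^{\flat}\cong{\mathcal{L}_{\aff}^{\flat}}^{\dot W}\otimes_{{\mathcal{R}_{\aff}^{\flat}}^{\dot W}}\mathcal{R}^{\flat}$; identifying the two amounts to the nontrivial claim that $\mathcal{L}^{\flat}$ is generated by ${\mathcal{L}^{\flat}}^{\dot W}$ together with $\mathcal{L}_{\aff}^{\flat}$, i.e.\ Galois descent for the $\dot W$-semilinear $\mathcal{L}_{\aff}^{\flat}$-module $\mathcal{L}^{\flat}$ (item (4) of Theorem \ref{fundgaloiscom}), which you would have to invoke explicitly — and which already presupposes the Galois property you are trying to transport. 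A subsidiary error: $\Lambda_M^{\flat}/\Lambda_{\aff}^{\flat}$ is \emph{not} $W$ and is typically infinite (it is $M/M^{\flat}M^{\aff}$, e.g.\ $\Z$ for $GL_2$); Lemma \ref{berndecom} gives $\widetilde W^{\flat}=\Lambda_M^{\flat}\rtimes W$, not that quotient. Freeness of $\mathcal{R}^{\flat}$ over $\mathcal{R}_{\aff}^{\flat}$ still holds (take coset representatives), so flatness survives, but your justification is wrong and the extension is not finite, which matters if you want to speak of separability. The paper sidesteps all of this with Corollary \ref{obviouscor}: a $\dot W$-Galois system $\{x_i,y_i\}$ for $\mathcal{L}_{\aff}^{\flat}/{\mathcal{L}_{\aff}^{\flat}}^{\dot W}$, i.e.\ elements with $\sum_i x_i g(y_i)=\delta_{1,g}$, lies in the subring $\mathcal{L}_{\aff}^{\flat}\subset\mathcal{L}^{\flat}$ and verifies criterion (2) of Theorem \ref{fundgaloiscom} verbatim for the overring, since the two rings share the same unit. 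You should either adopt that one-line transport of the Galois system or supply the descent isomorphism explicitly; as written, the step is not justified.
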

\begin{proof}
(i) There is an obvious map of algebras 
$
\mathcal{L}_{\aff}*{\dot{W}} \to \End_{\mathcal{L}_{\aff}^{\dot{W}}} (\mathcal{L}_{\aff})
$ 
where, an element $a \in \mathcal{L}_{\aff}$ is taken to multiplication by $a$ on $\mathcal{L}_{\aff}$, and $w \in {\dot{W}}$ is taken to the obvious map $w: \mathcal{L}_{\aff} \to \mathcal{L}_{\aff}$. 
By an argument similar to the proof of linear independence of characters, one can check that this map is injective. 
Now since each side has dimension $|W|^2$ as vector space over $\mathcal{L}_{\aff}^{\dot{W}}$, this map is an isomorphism. 

(ii) We deduce from (i) that $\mathcal{L}_{\aff}*{\dot{W}}$ is Azumaya over $\mathcal{L}_{\aff}^{\dot{W}}$. 
Accordingly, Theorem \ref{fundgaloiscom} shows that $\mathcal{L}_{\aff}/\mathcal{L}_{\aff}^{\dot{W}} $ 
is ${\dot{W}}$-Galois, and so is $\mathcal{L}^{\flat}/{\mathcal{L}^{\flat}}^{\dot{W}}$ using Corollary \ref{obviouscor}.
\end{proof}
\begin{proposition}\label{Laffgalois}
The ${\Z[q^{-1}]}$-algebra $ A=\mathcal{L}_{\aff}^{\flat} \otimes_{\mathcal{R}_{\aff}^{\flat}}  \mathcal{H}_{{I}^{\flat}}({\Z[q^{-1}]})$ is compressible, that is $$Z(e A e) = eZ(A)=e\mathcal{L}^{\dot{W}}$$
for any idempotent $e$ of $A$.
\end{proposition}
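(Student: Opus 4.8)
The plan is to deduce this from the structural results just established, namely that $A = \mathcal{L}_{\aff}^{\flat}\otimes_{\mathcal{R}_{\aff}^{\flat}} \mathcal{H}_{I^{\flat}}(\Z[q^{-1}])$ is, via Proposition \ref{skewringLaff}, the skew group ring $\mathcal{L}^{\flat}*\dot{W}$, and that by Lemma \ref{sgrmatrix}(ii) the extension $\mathcal{L}^{\flat}/{\mathcal{L}^{\flat}}^{\dot{W}}$ is $\dot{W}$-Galois. First I would recall the general formalism of Galois extensions of commutative rings: when $S/R$ is a $\dot{W}$-Galois extension, the skew group ring $S*\dot{W}$ is an Azumaya algebra over $R = S^{\dot{W}}$, and in fact $S*\dot{W}\simeq \End_R(S)$ as $R$-algebras (this is the standard Galois-descent statement, cf. the cited \cite{BourbakiAlgComV} machinery and whatever Azumaya/Galois references are collected in the appendix that Lemma \ref{sgrmatrix} invokes via Theorem \ref{fundgaloiscom} and Corollary \ref{obviouscor}). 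So $A$ is an Azumaya $\mathcal{L}^{\dot{W}}$-algebra, where I write $\mathcal{L}^{\dot{W}}:={\mathcal{L}^{\flat}}^{\dot{W}}$ for brevity; in particular $Z(A)=\mathcal{L}^{\dot{W}}$.

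Second, I would invoke the defining property of Azumaya algebras that makes "compressibility" automatic: for an Azumaya algebra $A$ over its center $Z=Z(A)$ and any idempotent $e\in A$, the corner ring $eAe$ is again an Azumaya algebra over $Z(eAe)$, and the natural map $Z\to Z(eAe)$, $z\mapsto ze$, is an isomorphism of rings onto $eZ(A)e = eZ(A)$ (note $e$ is central in $eAe$ so $eZ(A)e=eZ(A)$, and $eZ(A)\cong Z(A)$ since $Z(A)$ is a field here, namely $\mathcal{L}^{\dot{W}}$, so multiplication by $e$ is injective on it). This is precisely the assertion $Z(eAe)=eZ(A)=e\mathcal{L}^{\dot{W}}$. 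The key input here is the local characterization of Azumaya algebras (a central separable algebra over a commutative ring, or equivalently an algebra that is faithfully flat and "forms-matrix-algebra" after base change), together with the elementary fact that the class of Azumaya algebras over a fixed base is stable under passage to corner rings $A\mapsto eAe$ provided $eAe$ has the same center up to the canonical map — which is exactly what Morita theory for Azumaya algebras gives us.

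Concretely the steps in order are: (1) identify $A\simeq \mathcal{L}^{\flat}*\dot{W}$ (Proposition \ref{skewringLaff}); (2) using Lemma \ref{sgrmatrix}(ii) that $\mathcal{L}^{\flat}/{\mathcal{L}^{\flat}}^{\dot{W}}$ is $\dot{W}$-Galois, conclude $\mathcal{L}^{\flat}*\dot{W}\simeq \End_{\mathcal{L}^{\dot{W}}}(\mathcal{L}^{\flat})$ is Azumaya over $\mathcal{L}^{\dot{W}}$ with center exactly $\mathcal{L}^{\dot{W}}$ — this also re-proves $Z(A)=\mathcal{L}^{\dot{W}}$, part of the claimed equality; (3) apply the corner-ring stability of Azumaya algebras to an arbitrary idempotent $e\in A$ to get that $eAe$ is Azumaya over its center and that the canonical map $Z(A)e\to Z(eAe)$ is an isomorphism; (4) observe $Z(A)e = \mathcal{L}^{\dot{W}}e=e\mathcal{L}^{\dot{W}}$ since $\mathcal{L}^{\dot{W}}$ is commutative and central, and $e\mathcal{L}^{\dot{W}}=eZ(A)$, finishing the proof.

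The main obstacle I anticipate is purely expository rather than mathematical: making sure the cited general facts about Azumaya algebras — in particular "$eAe$ is Azumaya with center $eZ(A)$ for any idempotent $e$" — are either available in the appendix the paper refers to (around Theorem \ref{fundgaloiscom}), or stated cleanly enough to cite. If that statement is not already in hand, I would insert a short lemma: for $A$ Azumaya over a commutative ring $Z$ and $e=e^2\in A$, the functor $M\mapsto eM$ gives a Morita equivalence between $A$ and $eAe$ compatible with base change to residue fields, whence $eAe$ is Azumaya over $Z$ and $Z(eAe)=eZ$ by reduction to the split (matrix-algebra) case where it is elementary linear algebra. Everything else — the identification of $A$ as a skew group ring, and the Galois property — has already been done in the preceding subsections, so the proof itself should be two or three lines once the Azumaya corner-ring fact is cited.

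\begin{proof}
By Proposition \ref{skewringLaff}, $A \simeq \mathcal{L}^{\flat}*\dot{W}$ as $\mathcal{L}^{\flat}$-algebras. By Lemma \ref{sgrmatrix}(ii), the extension $\mathcal{L}^{\flat}/{\mathcal{L}^{\flat}}^{\dot{W}}$ is $\dot{W}$-Galois, so by the fundamental theorem of Galois theory for commutative rings (Theorem \ref{fundgaloiscom}) the natural map
$$\mathcal{L}^{\flat}*\dot{W}\longrightarrow \End_{{\mathcal{L}^{\flat}}^{\dot{W}}}(\mathcal{L}^{\flat})$$
is an isomorphism; in particular $A$ is an Azumaya algebra over $Z(A)={\mathcal{L}^{\flat}}^{\dot{W}}=\mathcal{L}^{\dot{W}}$.

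Let $e=e^2\in A$ be an idempotent. Since $A$ is Azumaya over $Z:=Z(A)=\mathcal{L}^{\dot{W}}$ and $\mathcal{L}^{\dot{W}}$ is a field, the corner ring $eAe$ is again Azumaya over $Z$, and the canonical homomorphism $Z\to Z(eAe)$, $z\mapsto ze$, is an isomorphism onto its image: indeed, multiplication by $e$ is injective on the field $Z$ (as $e\neq 0$ whenever $eAe\neq 0$, the zero case being trivial), and one reduces to the split case $A\otimes_Z \overline{Z}\simeq \mathbb{M}_n(\overline{Z})$ where the statement that the center of a corner $\bar e\,\mathbb{M}_n(\overline{Z})\,\bar e$ equals $\overline{Z}\bar e$ is immediate. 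Therefore
$$Z(eAe)= Ze = \mathcal{L}^{\dot{W}}e = e\mathcal{L}^{\dot{W}} = eZ(A),$$
using that $\mathcal{L}^{\dot{W}}=Z(A)$ is central and commutative. This is the asserted equality.
\end{proof}
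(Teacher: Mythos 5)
Your route is the same as the paper's: identify $A$ with the skew group ring $\mathcal{L}^{\flat}*\dot{W}$ via Proposition \ref{skewringLaff}, use Lemma \ref{sgrmatrix}(ii) and Theorem \ref{fundgaloiscom} to conclude that $A$ is Azumaya over ${\mathcal{L}^{\flat}}^{\dot{W}}$, and then invoke compressibility of Azumaya algebras. The paper's proof is exactly this chain, with the last step delegated to Theorem \ref{compress} (the Armendariz--Park result recorded in the appendix), so in substance you have reproduced the intended argument.

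The one genuine defect is your assertion that $\mathcal{L}^{\dot{W}}$ is a field, on which your corner-ring argument leans twice (injectivity of $z\mapsto ze$ on a ``field'', and the reduction to the split case $A\otimes_Z\overline{Z}\simeq \mathbb{M}_n(\overline{Z})$). This is false in general: $\mathcal{L}^{\flat}=\mathcal{L}_{\aff}^{\flat}\otimes_{\mathcal{R}_{\aff}^{\flat}}\mathcal{R}^{\flat}$ is a group algebra of $\Lambda_M^{\flat}/\Lambda_{\aff}^{\flat}$ over the field $\mathcal{L}_{\aff}^{\flat}$, and this quotient is a finitely generated abelian group of positive rank whenever $\Lambda_G^{\flat}$ is infinite (already for $\G=\GL_1$ one gets $\mathcal{L}=\Q[t^{\pm1}]$, and $\mathcal{L}^{\dot W}$ is then not a field; torsion in $\Lambda_M^{\flat}$ can moreover introduce idempotents). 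The statement you need is compressibility of an Azumaya algebra over an arbitrary commutative base, which is precisely Theorem \ref{compress}; citing it (or proving the corner-ring fact by localizing at primes of $Z(A)$ and using faithfully flat descent rather than passage to a single algebraic closure) closes the gap without changing your argument's structure.
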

\begin{proof}
By Theorem \ref{fundgaloiscom} and Lemma \ref{sgrmatrix} (ii), the algebra $\mathcal{L}^{\flat}* \dot{W}$ is Azumaya over $\mathcal{L}^{{\flat}}$, so the proposition is a consequence of Theorem \ref{compress}.
\end{proof}
\subsection{The center of parahoric--Hecke algebras}

\begin{theorem}\label{center}\label{Winvariantcenter}
For any facet $\cF\subset \overline{\a}$, we have
$$Z\left(\cH_{K_{\cF}^{\flat}}({\Z[q^{-1}]})\right)=\dot{\Theta}_{\text{Bern}}^{\flat}({\mathcal{R}^{\flat}}^{\dot{W}}\otimes_\Z\Z[q^{-1}])*_{I^{\flat}}{\bf 1}_{K_\cF^{\flat}}.$$
In particular, the set $\{r_m^{\flat}*_{I^{\flat}}{\bf 1}_{K_\cF^{\flat}}: m \in \Lambda_M^{-,\flat}\}$ forms a basis for the $\Z[q^{-1}]$-module $Z\left(\cH_{K_{\cF}^{\flat}}({\Z[q^{-1}]})\right)$.
\end{theorem}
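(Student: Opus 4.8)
The strategy is to combine the compressibility result (Proposition~\ref{Laffgalois}) with the identification of the parahoric Hecke algebra as a corner in the Iwahori--Hecke algebra (Remark~\ref{idempI2K}), and then to descend the resulting statement about $Z(e_{K_\cF^{\flat}} A e_{K_\cF^{\flat}})$ from $A=\mathcal{L}_{\aff}^{\flat}\otimes_{\mathcal{R}_{\aff}^{\flat}}\cH_{I^{\flat}}(\Z[q^{-1}])$ back to the integral algebra $\cH_{I^{\flat}}(\Z[q^{-1}])$ itself. First I would set $e=e_{K_\cF^{\flat}}=[K_\cF^{\flat}:I^{\flat}]^{-1}\mathbf 1_{K_\cF^{\flat}}$, which is a genuine idempotent of $\cH_{I^{\flat}}(\Z[q^{-1}])$ once $[K_\cF^{\flat}:I^{\flat}]$ is a power of $q$ (Lemma~\ref{InIpowerq} applied to the $W_\cF^{\flat}$-coset sum, or Lemma~\ref{qmdeltam}). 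By Remark~\ref{idempI2K} there is a canonical algebra isomorphism $\cH_{K_\cF^{\flat}}(\Z[q^{-1}])\simeq e*_{I^{\flat}}\cH_{I^{\flat}}(\Z[q^{-1}])*_{I^{\flat}}e$, hence also $\cH_{K_\cF^{\flat}}(\Z[q^{-1}])\otimes_{\mathcal{R}_{\aff}^{\flat}}\mathcal{L}_{\aff}^{\flat}\simeq eAe$. Proposition~\ref{Laffgalois} then gives $Z(eAe)=e\mathcal{L}^{\flat,\dot W}$, and under the skew-group-ring identification of Proposition~\ref{skewringLaff} together with the $\dot\Theta$-description, $e\mathcal{L}^{\flat,\dot W}= \dot\Theta_{\text{Bern}}^{\flat}(\mathcal{L}^{\flat,\dot W})*_{I^{\flat}}\mathbf 1_{K_\cF^{\flat}}$ (up to the harmless scalar $[K_\cF^{\flat}:I^{\flat}]^{-1}$).

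The second half is the integrality/descent step: one must show that the central elements of $\cH_{K_\cF^{\flat}}(\Z[q^{-1}])$ produced above actually lie in $\dot\Theta_{\text{Bern}}^{\flat}(\mathcal{R}^{\flat,\dot W}\otimes_\Z\Z[q^{-1}])*_{I^{\flat}}\mathbf 1_{K_\cF^{\flat}}$, and conversely that every such element is central in $\cH_{K_\cF^{\flat}}(\Z[q^{-1}])$. The reverse inclusion is the easy direction: $\mathcal{R}^{\flat,\dot W}\subset Z(\cH_{I^{\flat}}(\Z[q^{-1}]))$ by Corollary~\ref{invincentIwahori}, so for $f\in\mathcal{R}^{\flat,\dot W}\otimes_\Z\Z[q^{-1}]$ the element $\dot\Theta_{\text{Bern}}^{\flat}(f)*_{I^{\flat}}\mathbf 1_{K_\cF^{\flat}}= \mathbf 1_{K_\cF^{\flat}}*_{I^{\flat}}\dot\Theta_{\text{Bern}}^{\flat}(f)$ lies in $\cH_{K_\cF^{\flat}}(\Z[q^{-1}])$ (here one uses that $\mathbf 1_{K_\cF^{\flat}}$ and $\dot\Theta_{\text{Bern}}^{\flat}(f)$ commute, and Lemma~\ref{flatKcenter} to see the product is $K_\cF^{\flat}$-biinvariant), and it commutes with everything in $e\cH_{I^{\flat}}(\Z[q^{-1}])e$ since it already commutes with all of $\cH_{I^{\flat}}(\Z[q^{-1}])$. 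For the forward inclusion: any $z\in Z(\cH_{K_\cF^{\flat}}(\Z[q^{-1}]))$ lies in $Z(eAe)=e\mathcal{L}^{\flat,\dot W}$, so $z=\dot\Theta_{\text{Bern}}^{\flat}(f)*_{I^{\flat}}\mathbf 1_{K_\cF^{\flat}}$ for a unique $f\in\mathcal{L}^{\flat,\dot W}$; it remains to show $f\in\mathcal{R}^{\flat,\dot W}\otimes_\Z\Z[q^{-1}]$, i.e. $f$ has no genuine denominators. For this I would use the basis $\{i_w^{\flat}\colon w\in W_{a_\circ}^{\flat}\}$ of $\cH_{I^{\flat}}(\Z[q^{-1}])$ over $\mathcal{R}^{\flat}\otimes_\Z\Z[q^{-1}]$ from Corollary~\ref{HIbasis}: writing $z=\dot\Theta_{\text{Bern}}^{\flat}(f)*_{I^{\flat}}\mathbf 1_{K_\cF^{\flat}}=\sum_{w}\dot\Theta_{\text{Bern}}^{\flat}(f)*_{I^{\flat}}i_w^{\flat}$ against this basis and using that $z\in\cH_{K_\cF^{\flat}}(\Z[q^{-1}])\subset\cH_{I^{\flat}}(\Z[q^{-1}])$ has coordinates in $\mathcal{R}^{\flat}\otimes_\Z\Z[q^{-1}]$, one reads off that $f\in\mathcal{R}^{\flat}\otimes_\Z\Z[q^{-1}]$; combined with $f\in\mathcal{L}^{\flat,\dot W}$ and the fact that $\dot W$-invariance is detected already over $\mathcal{R}^{\flat}$, this gives $f\in\mathcal{R}^{\flat,\dot W}\otimes_\Z\Z[q^{-1}]$.

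For the final "in particular" clause: by Lemma~\ref{orbitLambda} and Lemma~\ref{crintegral}, $\{r_m^{\flat}\colon m\in\Lambda_M^{-,\flat}\}$ is a $\Z$-basis of $\mathcal{R}^{\flat,\dot W}$, hence a $\Z[q^{-1}]$-basis of $\mathcal{R}^{\flat,\dot W}\otimes_\Z\Z[q^{-1}]$; since $\dot\Theta_{\text{Bern}}^{\flat}$ is injective and the map $g\mapsto g*_{I^{\flat}}\mathbf 1_{K_\cF^{\flat}}$ restricted to $Z(\cH_{I^{\flat}}(\Z[q^{-1}]))$ is injective onto $Z(\cH_{K_\cF^{\flat}}(\Z[q^{-1}]))$ (it is one of the isomorphisms appearing in Theorem~\ref{compatibility}, or can be seen directly via Corollary~\ref{injectivebarh}), the images $\{r_m^{\flat}*_{I^{\flat}}\mathbf 1_{K_\cF^{\flat}}\colon m\in\Lambda_M^{-,\flat}\}$ form a $\Z[q^{-1}]$-basis. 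The main obstacle I anticipate is the descent step in the second paragraph — making rigorous that no denominators survive when passing from $Z(eAe)=e\mathcal{L}^{\flat,\dot W}$ back to the integral corner $e\cH_{I^{\flat}}(\Z[q^{-1}])e$; everything else is bookkeeping with the decompositions already established. One clean way to package this obstacle is to observe that $\cH_{K_\cF^{\flat}}(\Z[q^{-1}])=e\cH_{I^{\flat}}(\Z[q^{-1}])e$ is a free $\mathcal{R}^{\flat}\otimes_\Z\Z[q^{-1}]$-module (it is a direct summand of $\cH_{I^{\flat}}(\Z[q^{-1}])$ as such, by Corollary~\ref{HIbasis} and the $W_\cF^{\flat}$-coset decomposition), so that $\cH_{K_\cF^{\flat}}(\Z[q^{-1}])\cap\dot\Theta_{\text{Bern}}^{\flat}(\mathcal{L}^{\flat})*_{I^{\flat}}\mathbf 1_{K_\cF^{\flat}}=\dot\Theta_{\text{Bern}}^{\flat}(\mathcal{R}^{\flat}\otimes_\Z\Z[q^{-1}])*_{I^{\flat}}\mathbf 1_{K_\cF^{\flat}}$, which is exactly what the argument needs.
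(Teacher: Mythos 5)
Your overall strategy coincides with the paper's: cut out $\cH_{K_\cF^{\flat}}$ as the corner attached to the idempotent $e=e_{K_\cF^{\flat}}$ of $A=\mathcal{L}_{\aff}^{\flat}\otimes_{\mathcal{R}_{\aff}^{\flat}}\cH_{I^{\flat}}(\Z[q^{-1}])$, apply compressibility (Proposition~\ref{Laffgalois}, which the paper supplements with Lemma~\ref{ZeHe}) to compute the center after localization, and then descend to $\Z[q^{-1}]$. One small but genuine correction: $[K_\cF^{\flat}:I^{\flat}]=\sum_{w\in W_\cF^{\flat}}q_w$ is a sum of powers of $q$ (already $1+q$ for a special vertex of $\SL_2$), not a power of $q$, so $e_{K_\cF^{\flat}}$ is \emph{not} an idempotent of $\cH_{I^{\flat}}(\Z[q^{-1}])$. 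This does not derail the argument — since $\Q\subset\mathcal{L}_{\aff}^{\flat}$, $e_{K_\cF^{\flat}}$ is an idempotent of $A$ and of $\cH_{I^{\flat}}(\Q)$, which is exactly where the paper places it — but the identification of Remark~\ref{idempI2K} must be applied over $\Q$, with $\cH_{K_\cF^{\flat}}(\Z[q^{-1}])$ recovered as the integral $K_\cF^{\flat}$-biinvariant functions inside that corner.

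Where you genuinely diverge from the paper is the integrality descent, and this is also where your argument has a gap as written. You expand $z=\dot\Theta_{\text{Bern}}^{\flat}(f)*_{I^{\flat}}\mathbf 1_{K_\cF^{\flat}}=\sum_{w\in W_\cF^{\flat}}f*_{I^{\flat}}i_w^{\flat}$ against the basis of Corollary~\ref{HIbasis} and claim one "reads off" $f$. This is immediate only when $a_\circ\in\overline{\cF}$, so that $W_\cF^{\flat}\subset W_{a_\circ}^{\flat}$ and the $i_w^{\flat}$ are themselves basis vectors. For a general facet of $\overline{\a}$ one has $W_\cF^{\flat}\not\subset W_{a_\circ}^{\flat}$, each $i_w^{\flat}$ must first be rewritten via Corollary~\ref{cortheostructureMI}, and the coordinate of $z$ along a typical basis vector is $f$ times an integer that need not be invertible in $\Z[q^{-1}]$ (so one only gets $f\in\mathcal{R}^{\flat}\otimes\Q$ from it). The repair is available from the paper's own triangularity: the basis vector indexed by the longest element $y_0$ of the finite Coxeter group $W_\cF^{\flat}$ receives the coefficient $f\cdot c_{y_0,y_0}$, and $c_{y_0,y_0}$ is a power of $q$ (proof of Theorem~\ref{structureMI} together with Lemma~\ref{powerq}), hence invertible in $\Z[q^{-1}]$. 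The paper avoids this bookkeeping entirely by descending through the module $\cM_{K_\cF^{\flat}}$ instead of through $\cH_{I^{\flat}}$: there $v_{1,\cF}^{\flat}$ is an honest basis vector of a free $\mathcal{R}^{\flat}$-module (Proposition~\ref{HIRbasis}), so $f\cdot v_{1,\cF}^{\flat}\in\cM_{K_\cF^{\flat}}(\Z[q^{-1}])$ forces $f\in\mathcal{R}^{\flat}\otimes\Z[q^{-1}]$ uniformly in $\cF$, and one concludes with Corollary~\ref{cor6}. Your closing remark that $e\cH_{I^{\flat}}(\Z[q^{-1}])e$ is a "direct summand as an $\mathcal{R}^{\flat}$-module" is likewise unjustified as stated and is really the same assertion in disguise. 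With either repair the proof closes, and your treatment of the easy inclusion and of the "in particular" clause is correct.
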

\begin{lemma}\label{ZeHe}
For any idempotent $e \in \mathcal{H}_{{I}^{\flat}}({\Q})$, we have 
$$Z(e({\mathcal{L}_{\aff}^{\flat}}^{\dot{W}} \otimes_{{\mathcal{R}_{\aff}^{\flat}}^{\dot{W}}}  \mathcal{H}_{{I}^{\flat}}({\Q}))e)= e{\mathcal{L}_{\aff}^{\flat}}^{\dot{W}} \otimes_{e{\mathcal{R}_{\aff}^{\flat}}^{\dot{W}}}  Z( e(\mathcal{H}_{{I}^{\flat}}({\Q}))e).$$
\end{lemma}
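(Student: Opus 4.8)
\textbf{Proof plan for Lemma \ref{ZeHe}.}

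The plan is to reduce the statement to a completely general fact about centers of corners of a base change of algebras, namely: if $B$ is a commutative $R$-algebra that is flat (here free) as a module over its subalgebra $R$, and $C$ is an $R$-algebra, and $e$ is an idempotent of $B\otimes_R C$, then $Z(e(B\otimes_R C)e) = B\otimes_{Z'} Z(eCe)$ in a suitable sense, provided $e$ already lies in $C$ (or, more precisely, in a subalgebra over which the tensor decomposition is transparent). In our situation $R = {\mathcal{R}_{\aff}^{\flat}}^{\dot{W}}$, $B = {\mathcal{L}_{\aff}^{\flat}}^{\dot{W}}$, $C = \mathcal{H}_{I^{\flat}}(\Q)$, and $e\in \mathcal{H}_{I^{\flat}}(\Q)$ is an idempotent of $C$ itself, so the passage is clean. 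The first step is therefore to record that, by Corollary \ref{HIbasis} (tensored up to $\Q$), $\mathcal{H}_{I^{\flat}}(\Q)$ is free over ${\mathcal{R}_{\aff}^{\flat}}^{\dot{W}}\otimes_\Z \Q$ with basis $\{i_w^{\flat}\}_{w\in W_{a_\circ}^{\flat}}$ (via the decomposition $\cH_{I^{\flat}}^{\aff}(\Q)=\oplus_w \mathcal{R}_{\aff}^{\flat}*_{I^{\flat}}i_w^{\flat}$ and Proposition \ref{generalizedIWdecomp}); hence ${\mathcal{L}_{\aff}^{\flat}}^{\dot{W}}\otimes_{{\mathcal{R}_{\aff}^{\flat}}^{\dot{W}}}\mathcal{H}_{I^{\flat}}(\Q)$ is just a localization-type base change and the tensor factor ${\mathcal{L}_{\aff}^{\flat}}^{\dot{W}}$ is central in it.

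The second step is the actual computation of the center. Since ${\mathcal{L}_{\aff}^{\flat}}^{\dot{W}}$ is central in $A_\Q := {\mathcal{L}_{\aff}^{\flat}}^{\dot{W}}\otimes_{{\mathcal{R}_{\aff}^{\flat}}^{\dot{W}}}\mathcal{H}_{I^{\flat}}(\Q)$ and $e$ is an idempotent, multiplication by $e$ carries central elements to central elements of $eA_\Q e$; conversely I must show that any element of $Z(eA_\Q e)$ lies in $e\bigl({\mathcal{L}_{\aff}^{\flat}}^{\dot{W}}\otimes_{{\mathcal{R}_{\aff}^{\flat}}^{\dot{W}}} \mathcal{H}_{I^{\flat}}(\Q)\bigr)e$ and in fact has the product form $\sum_j b_j\otimes c_j$ with the $c_j$ forced into $Z(eCe)$. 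Here I would use the freeness from step one: writing an element of $eA_\Q e$ in the basis $\{e\, i_w^{\flat}\, e\}$ (or rather in a basis adapted to $eCe$ as an ${\mathcal{R}_{\aff}^{\flat}}^{\dot{W}}\otimes_\Z\Q$-module) with coefficients in ${\mathcal{L}_{\aff}^{\flat}}^{\dot{W}}$, centrality against $e\mathcal{H}_{I^{\flat}}(\Q)e$ is tested coefficient-by-coefficient because the $\mathcal{L}$-coefficients are central; this shows each coefficient function lands in $Z(eCe)$ after extension of scalars, which is exactly the asserted equality $e{\mathcal{L}_{\aff}^{\flat}}^{\dot{W}}\otimes_{e{\mathcal{R}_{\aff}^{\flat}}^{\dot{W}}}Z(e\mathcal{H}_{I^{\flat}}(\Q)e)$. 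The compatibility of $\otimes$ with the corner functor $X\mapsto eXe$ (which is exact for flat modules, $e$ being idempotent) makes the two sides literally identical as subspaces of $eA_\Q e$.

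The main obstacle I anticipate is purely bookkeeping: making precise the meaning of the tensor product $e{\mathcal{L}_{\aff}^{\flat}}^{\dot{W}}\otimes_{e{\mathcal{R}_{\aff}^{\flat}}^{\dot{W}}}Z(e\mathcal{H}_{I^{\flat}}(\Q)e)$ on the right-hand side — i.e. identifying $e{\mathcal{R}_{\aff}^{\flat}}^{\dot{W}}$ and $e{\mathcal{L}_{\aff}^{\flat}}^{\dot{W}}$ with honest commutative rings over which $Z(eCe)$ is an algebra, so that ``center commutes with flat base change'' can be invoked cleanly (for instance via \cite[\S1 n$^\circ$ 9, Proposition 23]{BourbakiAlgComV} or a direct argument as in Lemma \ref{sgrmatrix}). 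Once the identification of scalars is in place, the proof is a one-line application of the general principle that $Z(-)$ commutes with faithfully flat base change, combined with the exactness of $X\mapsto eXe$; I would spell this out in a couple of sentences rather than invoke a black box, since the ambient algebra is concrete and finite-dimensional over $\mathcal{L}^{\flat}$ by (\ref{decompLaff}).
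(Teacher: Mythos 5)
Your overall strategy coincides with the paper's: the inclusion $\supset$ comes from the fact that the $\dot{W}$-invariant scalars are central (Corollary \ref{invincentIwahori}), and the inclusion $\subset$ must come from controlling how the center behaves under the base change ${\mathcal{R}_{\aff}^{\flat}}^{\dot{W}}\to{\mathcal{L}_{\aff}^{\flat}}^{\dot{W}}$. But the mechanism you propose for the hard inclusion has a genuine gap. Corollary \ref{HIbasis} makes $\cH_{I^{\flat}}(\Q)$ free with basis $\{i_w^{\flat}\}_{w\in W_{a_\circ}^{\flat}}$ over $\mathcal{R}^{\flat}\otimes_\Z\Q$, \emph{not} over ${\mathcal{R}_{\aff}^{\flat}}^{\dot{W}}\otimes_\Z\Q$, which is the ring you are actually tensoring over. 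Since $\Lambda_M^{\flat}/\Lambda_{\aff}^{\flat}\simeq\widetilde{\Upomega}^{\flat}$ is infinite in general (e.g. $\G=\GL_n$), the algebra $\cH_{I^{\flat}}(\Q)$ has infinite rank over ${\mathcal{R}_{\aff}^{\flat}}^{\dot{W}}$, and for an arbitrary idempotent $e$ the corner $e\cH_{I^{\flat}}(\Q)e$ has no reason to be free, or even finitely generated, over ${\mathcal{R}_{\aff}^{\flat}}^{\dot{W}}\otimes_\Z\Q$. So "testing centrality coefficient-by-coefficient" is not available, and the fallback black box "$Z(-)$ commutes with faithfully flat base change" is itself delicate here: the center is an infinite intersection of kernels of commutators against a generating set, and flat base change does not commute with infinite intersections in general.

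What rescues the argument, and is what the paper actually does, is that the base change is a localization at the fraction field of a domain: ${\mathcal{L}_{\aff}^{\flat}}^{\dot{W}}=\mathrm{Frac}({\mathcal{R}_{\aff}^{\flat}}^{\dot{W}})$, so after clearing denominators every element of ${\mathcal{L}_{\aff}^{\flat}}^{\dot{W}}\otimes_{{\mathcal{R}_{\aff}^{\flat}}^{\dot{W}}}e\cH_{I^{\flat}}(\Q)e$ is a \emph{single} tensor $s^{-1}\otimes h$ with $s\in{\mathcal{R}_{\aff}^{\flat}}^{\dot{W}}$ and $h\in e\cH_{I^{\flat}}(\Q)e$. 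If this element is central, then for every $c\in e\cH_{I^{\flat}}(\Q)e$ one gets $s^{-1}\otimes(hc-ch)=0$, and the absence of ${\mathcal{R}_{\aff}^{\flat}}$-torsion in $\cH_{I^{\flat}}$ established in the proof of Lemma \ref{341injLaff} (equivalently, the injectivity of $e\cH_{I^{\flat}}(\Q)e\to{\mathcal{L}_{\aff}^{\flat}}^{\dot{W}}\otimes_{{\mathcal{R}_{\aff}^{\flat}}^{\dot{W}}}e\cH_{I^{\flat}}(\Q)e$) forces $hc=ch$, i.e. $h\in Z(e\cH_{I^{\flat}}(\Q)e)$. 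Replacing your freeness/basis step by this common-denominator plus torsion-freeness argument closes the gap; the rest of your plan (centrality of the scalar factor, compatibility of $X\mapsto eXe$ with the tensor decomposition because $e$ lies in $\cH_{I^{\flat}}(\Q)$ itself) is sound.
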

\proof Given Corollary \ref{invincentIwahori}, we have an inclusion 
$Z(e({\mathcal{L}_{\aff}^{\flat}}^{\dot{W}} \otimes_{{\mathcal{R}_{\aff}^{\flat}}^{\dot{W}}}  \mathcal{H}_{{I}^{\flat}}({\Q}))e)\supset e{\mathcal{L}_{\aff}^{\flat}}^{\dot{W}} \otimes_{e{\mathcal{R}_{\aff}^{\flat}}^{\dot{W}}}  Z( e(\mathcal{H}_{{I}^{\flat}}({\Q}))e).$
The opposite one is true because: 

(i) The morphism $e\mathcal{H}_{I^{\flat}}({\Z[q^{-1}]})e \to e{\mathcal{L}_{\aff}^{\flat}}^{\dot{W}}  \otimes_{e{\mathcal{R}_{\aff}^{\flat}}^{\dot{W}}}  e\mathcal{H}_{I^{\flat}}({\Z[q^{-1}]})e$, $h \mapsto e\otimes h$ is injective. 
This follows readily from the proof of Lemma \ref{341injLaff}, in which we proved that $\mathcal{H}_{I^{\flat}}(\Z[q^{-1}])$ is ${\mathcal{R}_{\aff}^{\flat}}^{\dot{W}}$-torsion free, so in particular $e\mathcal{H}_{I^{\flat}}({\Q})e$ is also $e{\mathcal{R}_{\aff}^{\flat}}^{\dot{W}}$-torsion free and ${\mathcal{R}_{\aff}^{\flat}}^{\dot{W}} \simeq e {\mathcal{R}_{\aff}^{\flat}}^{\dot{W}}$. 

(ii) Any element in the left hand side can be written as $\ell \otimes h$, for some $\ell \in e{\mathcal{R}_{\aff}^{\flat}}^{\dot{W}}$ and $h\in e\mathcal{H}_{I^{\flat}}({\Z[q^{-1}]})e$, hence we must have $h\in Z(e\mathcal{H}_{I^{\flat}}({\Z[q^{-1}]})e)$. This shows the lemma.
\qed

\begin{proof}[Proof of Theorem \ref{Winvariantcenter}]
Let $\cF\subset \overline{\a}$ be any facet and  $e_{\cF}$ the idempotent $[K_\cF^\flat : I^\flat]{\bf 1}_{K_\cF^\flat}$. 
By Proposition \ref{Laffgalois} and Lemma \ref{ZeHe}, we have
$$e_{\cF}^{\flat} \mathcal{L}^{\dot{W}}=e_{\cF}^{\flat} {\mathcal{L}_{\aff}^{\flat}}^{\dot{W}} \otimes_{e_{\cF}{\mathcal{R}_{\aff}^{\flat}}^{\dot{W}}} e_{\cF}{\mathcal{R}^{\flat}}^{\dot{W}} = e_{\cF}{\mathcal{L}_{\aff}^{\flat}}^{\dot{W}}  \otimes_{e_{\cF}{\mathcal{R}_{\aff}^{\flat}}^{\dot{W}}}  Z(e_{\cF}\mathcal{H}_{I^{\flat}}({\Q})e_{\cF}).$$
For any $h\in Z(e_{\cF}\mathcal{H}_{I^{\flat}}({\Q})e_{\cF})$, there are then $s,r \in  {\mathcal{R}^{\flat}}^{\dot{W}}$ such that $sh- re=0$. 
We have then an equality $s v_{1,\cF}^{\flat}*_{K_{\cF}^{\flat}}h=rv_{1,\cF}^{\flat}$. 
By Propoisition \ref{HIRbasis}, we deduce that $v_{1,\cF}^{\flat}*_{K_{\cF}^{\flat}}h= t v_{1,\cF}^{\flat}$ for some $t \in {\mathcal{R}^{\flat}}^{\dot{W}} \otimes \Q$, this shows $v_{1}^{\flat}*_{I^{\flat}}h=  v_{1}^{\flat} t*_{I^{\flat}} e_{\cF}$ and accordingly $h=t*_{I^{\flat}} e_{\cF}$ by Corollary \ref{cor6}.
In conclusion, 
$${\bf 1}_{K_{\cF}^{\flat}} *_{I}({\mathcal{R}^{\flat}}^{\dot{W}}\otimes_\Z \Q) =Z(e_{\cF}\mathcal{H}_{I^{\flat}}({\Q})e_{\cF})= Z(\mathcal{H}_{K_{\cF}^{\flat}}({\Q})).$$
Let $h\in Z(\mathcal{H}_{K_{\cF}^{\flat}}({\Z[q^{-1}]}))$, by the previous discussion, there exists $a/b \in \Q$ and $r \in {\mathcal{R}^{\flat}}^{\dot{W}}$ such that $h= \frac{a}{b}{\bf 1}_{K_{\cF}^{\flat}} *_{I^{\flat}} \dot{\Theta}_{Bern}^{\flat}(r)$, but by definition the term ${\bf 1}_{K_{\cF}^{\flat}} *_{I^{\flat}} \dot{\Theta}_{Bern}^{\flat}(r) \in \cH_{I^{\flat}}(\Z[q^{-1}])$, hence one must have $\frac{a}{b} \in \Z[q^{-1}]$. Consequently, we have 
\begin{align*} Z(\mathcal{H}_{K_{\cF}^{\flat}}({\Z[q^{-1}]}))&={\bf 1}_{K_{\cF}^{\flat}} *_{I^{\flat}}\dot{\Theta}_{Bern}^{\flat}({\mathcal{R}^{\flat}}^{\dot{W}}\otimes_\Z \Z[q^{-1}]).\qedhere\end{align*}
The linear independence of the elements $r_m^{\flat}*_{I^{\flat}}{\bf 1}_{K_\cF^{\flat}} (m \in \Lambda_M^{-,\flat})$ is clear from the injectivity of $-*_{I^{\flat}}{\bf 1}_{K_\cF^{\flat}}$ and Corollary \ref{HIbasis}.

\end{proof}
\subsection{Compatibility of the twisted  Satake and twisted Bernstein isomorphisms}
From the above, we see that $Z(\mathcal{H}_{K_{\cF}}({\Z[q^{-1}]}))$ is a free $\mathcal{R}^{\dot{W}}\otimes_\Z \Z[q^{-1}]$-module of rank one, with basis the canonical vector ${\bf 1}_{K_{\cF}} $. 
Define $\dot\cS_{\cF}\colon Z(\mathcal{H}_{K_{\cF}}({\Z[q^{-1}]})) \to \mathcal{R}^{\dot{W}}\otimes_\Z \Z[q^{-1}]$, characterized by $\dot\cS_{\cF}(h)*_I{\bf 1}_{K_{\cF}} = h$. 
By Lemma \ref{explicithsatake1k}, we have $\dot\cS_{\cF}=\dot\cS_M^G$ if $\cF=\{a_\circ\}$. 
The Satake and Bernstein twisted isomorphisms are compatible in the following sense:
\begin{theorem}\label{compatibility} 
Let $\cF \subset \overline{\cF}' \subset  \overline{\a}$, be two facets, i.e. $K_{\cF}\supset K_{\cF'}\supset I$. 
The following diagram of $\Z[q^{-1}]$-algebras is commutative:
$$\begin{tikzcd}  Z(\cH_{K_\cF}(\Z[q^{-1}])) \arrow[]{rr}{\dot{\cS}_{\cF} }[swap]{\simeq}&& {\mathcal{R}}^{\dot{W}}\otimes_\Z\Z[q^{-1}]\arrow[]{d}{\dot\Theta_{Bern}}[swap]{\simeq}\\
Z(\cH_{K_{\cF'}}(\Z[q^{-1}])) \arrow[]{u}{-*_{K_{\cF'}}{\bf 1}_{K_\cF}}[swap]{\simeq} &&Z(\cH_I(\Z[q^{-1}]))\arrow{ll}{-*_I{\bf 1}_{K_{\cF'}}}[swap]{\simeq}
\end{tikzcd}$$
\end{theorem}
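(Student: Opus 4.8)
The claim is that the square relating the three isomorphisms $\dot{\cS}_\cF$, $\dot\cS_{\cF'}$ (via the change-of-parahoric maps $-*_{K_{\cF'}}{\bf 1}_{K_\cF}$ and $-*_I{\bf 1}_{K_{\cF'}}$) and $\dot\Theta_{Bern}$ commutes. Since all four maps are isomorphisms of $\Z[q^{-1}]$-algebras by Theorems \ref{Ztwistedsatakeisomorphism}, \ref{center} and the discussion preceding the statement, it suffices to check commutativity on a single element, and it is cleanest to trace the element ${\bf 1}_I$ (the identity of $\cH_I(\Z[q^{-1}])$) around the diagram; equivalently, one may start from an arbitrary $r\in \mathcal{R}^{\dot{W}}\otimes_\Z\Z[q^{-1}]$ and follow the two composites into $Z(\cH_{K_{\cF'}}(\Z[q^{-1}]))$. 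I will take the latter route: compute both $\big(\dot\Theta_{Bern}(r)*_I{\bf 1}_{K_{\cF'}}\big)$ and $\dot\cS_\cF^{-1}$ applied to $r$, then push the first down to $K_\cF$ by $-*_{K_{\cF'}}{\bf 1}_{K_\cF}$ and compare with the second.

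\textbf{Key steps.} First I would recall that by Theorem \ref{center}, $\dot\cS_{\cF'}^{-1}(r)=\dot\Theta_{Bern}^{\flat}(r)*_{I}{\bf 1}_{K_{\cF'}}$ and $\dot\cS_{\cF}^{-1}(r)=\dot\Theta_{Bern}^{\flat}(r)*_{I}{\bf 1}_{K_{\cF}}$, i.e. the maps $\dot\cS_\cF$, $\dot\cS_{\cF'}$ are \emph{by construction} the inverses of $r\mapsto \dot\Theta_{Bern}(r)*_I{\bf 1}_{K_{\cF'}}$ (this is exactly the characterization $\dot\cS_\cF(h)*_I{\bf 1}_{K_\cF}=h$ combined with centrality of $\dot\Theta_{Bern}(r)$ and of ${\bf 1}_{K_\cF}$ in $\cH_{I^\flat}(\Z[q^{-1}])$, Corollary \ref{invincentIwahori} and Lemma \ref{flatKcenter}). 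Second, the change-of-parahoric map $-*_{K_{\cF'}}{\bf 1}_{K_\cF}$ is, under the identification of $\cH_{K_{\cF'}}$ with $e_{K_{\cF'}}*_{I}\cH_I*_I e_{K_{\cF'}}$ (Lemma \ref{identificationsubsuphecke}, Remark \ref{idempI2K}), just right convolution by ${\bf 1}_{K_\cF}$ inside $\cH_{I^\flat}(\Z[q^{-1}])$, up to the invertible scalar $[K_\cF:K_{\cF'}]$ which one tracks with the normalized volumes; the essential input is the associativity/idempotent identity ${\bf 1}_{K_{\cF'}}*_I{\bf 1}_{K_\cF}$ being proportional to ${\bf 1}_{K_\cF}$ when $K_{\cF'}\subset K_\cF$ (since $K_{\cF'}K_\cF=K_\cF$), which follows from Lemma \ref{actionhecke}(i). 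Third, combining these: starting from $r$, go right to $\dot\Theta_{Bern}(r)$, then down to $\dot\Theta_{Bern}(r)*_I{\bf 1}_{K_{\cF'}}=\dot\cS_{\cF'}^{-1}(r)\in Z(\cH_{K_{\cF'}})$, then left via $-*_{K_{\cF'}}{\bf 1}_{K_\cF}$ to $\dot\Theta_{Bern}(r)*_I{\bf 1}_{K_{\cF'}}*_I{\bf 1}_{K_\cF}=\dot\Theta_{Bern}(r)*_I{\bf 1}_{K_\cF}$ (using the idempotent identity and centrality), which is precisely $\dot\cS_\cF^{-1}(r)$; hence the composite left-then-up is $\dot\cS_\cF$ followed by its inverse followed by $\dot\cS_\cF$, i.e. the identity matches. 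Writing this carefully as a diagram chase closes the argument; one must also check the maps are algebra homomorphisms, but that is already contained in Theorem \ref{center} and the fact that $\dot\Theta_{Bern}^\flat$ is an algebra embedding.

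\textbf{Main obstacle.} The only genuine subtlety is bookkeeping of the normalized Haar measures: the maps $-*_I{\bf 1}_{K_{\cF'}}$ and $-*_{K_{\cF'}}{\bf 1}_{K_\cF}$ are defined with respect to \emph{different} normalizations (volume $1$ for $I^\flat$, resp. $K_{\cF'}^\flat$), and one must verify that the indices $[K_{\cF'}^\flat:I^\flat]$, $[K_\cF^\flat:K_{\cF'}^\flat]$, $[K_\cF^\flat:I^\flat]$ cancel correctly so that $({\bf 1}_{K_{\cF'}}*_I-)$ composed with $(-*_{K_{\cF'}}{\bf 1}_{K_\cF})$ genuinely equals $(-*_I{\bf 1}_{K_\cF})$ as algebra maps and not merely up to scalar; this is handled by Lemma \ref{identificationsubsuphecke} together with the multiplicativity of indices and the idempotent relations $e_{K_{\cF'}}*_I e_{K_\cF}=e_{K_\cF}$ (valid since $K_{\cF'}\subset K_\cF$). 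I expect no conceptual difficulty beyond this, since the heavy lifting—that both triangles split off $\dot\Theta_{Bern}$ of a $\dot W$-invariant element times the relevant parahoric idempotent—is exactly the content of Theorem \ref{center}. Finally I would remark, as the paper does, that the whole diagram is moreover compatible with change of $\flat$-avatar by Remark \ref{transitionhecke}, which follows formally from the $\dot W$-equivariance of $\square^\flat$ (Remark \ref{flatiswequi}) and the commutativity of the corresponding squares for $\iota^\flat$.
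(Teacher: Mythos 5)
Your proposal is correct and matches the paper's approach: the paper's proof is a one-line reference to Lemma \ref{Zcompatibility}, whose method is exactly the idempotent/normalization bookkeeping you carry out, and since $\dot{\cS}_{\cF}$ is \emph{defined} (via Theorem \ref{center}) as the inverse of $r\mapsto \dot\Theta_{Bern}(r)*_I{\bf 1}_{K_\cF}$, the whole content reduces, as you identify, to $\dot\Theta_{Bern}(r)*_I{\bf 1}_{K_{\cF'}}*_{K_{\cF'}}{\bf 1}_{K_\cF}=\dot\Theta_{Bern}(r)*_I{\bf 1}_{K_\cF}$, which follows from ${\bf 1}_{K_{\cF'}}*_I{\bf 1}_{K_\cF}=[K_{\cF'}:I]\,{\bf 1}_{K_\cF}$ and the change of normalization. (Only quibble: your opening remark that it "suffices to check commutativity on a single element" is not right in general, but it is harmless since you immediately trace an arbitrary $r$ around the square.)
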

\proof Similar to the proof of Lemma \ref{Zcompatibility}. \qed
\begin{remark}
Given Theorem \ref{twistedsatakeisomorphism}, 
we have, in particular, an inclusion $Z(\cH_{{I^{\flat}}}(\Z)) \subset {\dot{\Theta}_{\text{\emph{Bern}}}}({\mathcal{R}^{\dot{W}}})$.  
\end{remark}

\appendix
\addtocontents{toc}{\protect\setcounter{tocdepth}{1}} 

\section*{Appendix. Skew group rings}
This appendix is devoted to some properties of $G$-Galois extensions of commutative rings.
\subsection{$G$-Galois extensions of commutative rings}
\begin{definition}\label{sgring}
Let $S$ be a commutative ring and $G$ a finite group of ring automorphisms of $S$. Write $S*G$ for the free $S$-module with basis $\{g\in G\}$. 
We make $S*G$ into a ring as follows: for $s,s'\in S$ and $g,g'\in G$, define $(s \cdot g )(s' \cdot g' )= sg(s') \cdot gg'$.
The ring $S*G$ is usually called a skew group ring or a trivial crossed product.
\end{definition}
\begin{definition}
Let $R$ be a commutative ring. Two $R$-algebras homomorphisms $f,g \colon A \to B$ are said to be strongly distinct if for every nonzero idempotent $e \in B$ there exists $x \in A$ such that $f(x)e \neq g(x)e$.
\end{definition}
\begin{remark}
Strong distinctness is equivalent to Dedekind's lemma in the context of separable algebras. The elements of $G$ are all pairwise strongly distinct if and only if $G$ is free over $S$ in $\text{End}_R(S)$ \cite[Proposition 2.1]{FerPaq1997}. 
\end{remark}
\begin{theorem}\label{fundgaloiscom}
Let $S$ be a commutative ring extension and $G$ a finite group of ring automorphisms of $S$. Set $R:=S^G$. 
The ring $S$ is called a Galois extension of $R$ with Galois group $G$ or $S/R$ is a $G$-Galois ring extension, if one of the following equivalent statements is satisfied:
\begin{enumerate}
\item $S$ is separable over $R$ and the elements of $G$ are all pairwise strongly distinct.
\item There exist elements $x_i,y_i$ in $S$, $1\le i \le n$ for some integer $n$, such that $\sum_i x_ig(y_i)=\delta_{1,g}\mathbf{1}_S$, for all $g\in G$. The set $\{x_i,y_i\colon 1\le i \le n\}$ is called a $G$-Galois system for $S$. 
\item $S$ is a finitely generated projective $R$-module and the map $\varphi : S * G \to \text{End}_R(S)$, given by $\varphi(xg)(y) = xg(y)$, is an isomorphism of $S$-modules and $R$-algebras.
\item For any left $S * G$-module $M$ the map $\mu\colon S \otimes_R M^G\to M, (s\otimes m) \to sm$ is an isomorphism of $S$-modules.
\item The map $S\otimes_R S \to \prod_{g\in G} S, (x,y) \mapsto (xg(y))_{g\in G}$
is an isomorphism of $S$-algebras.
\item For each maximal ideal $\mathfrak{m}$ of $S$ and each $1\neq g \in G$ there exists $y \in S$ such that $g(y) - y \not\in \mathfrak{m}$.
\item $S \pi S= S* G$ where $\pi= \sum_{g\in G} g$.
\item $S*G$ is Azumaya over $R$.
\item $S*G$ is H-separable over $S$.
\end{enumerate}
When this is the case, $S$ is a maximal commutative $R$-subalgebra of $S*G$ with $(S*G)\otimes_R S\simeq \mathbb{M}_{|G|}(S) $ and $ S \otimes_R (S*G)\simeq \mathbb{M}_{|G|}(S)$. 
\end{theorem}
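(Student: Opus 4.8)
The final statement to prove is: assuming the equivalent conditions of Theorem~\ref{fundgaloiscom} hold (so $S/R$ is $G$-Galois), then $S$ is a maximal commutative $R$-subalgebra of $S*G$, and there are isomorphisms $(S*G)\otimes_R S \simeq \mathbb{M}_{|G|}(S)$ and $S\otimes_R(S*G)\simeq \mathbb{M}_{|G|}(S)$.

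My plan is as follows. First I would establish the two matrix isomorphisms, since the maximal-commutativity assertion will follow cheaply from the module-theoretic picture. Fix a $G$-Galois system $\{x_i,y_i : 1\le i\le n\}$ from condition~(2), so $\sum_i x_i g(y_i)=\delta_{1,g}\mathbf 1_S$ for all $g\in G$. Condition~(3) gives the $R$-algebra isomorphism $\varphi\colon S*G \xrightarrow{\sim} \End_R(S)$, $\varphi(s\cdot g)(t)=s\,g(t)$; here $\End_R(S)$ is an $S$-algebra via the left regular action of $S$. Since $S$ is a finitely generated projective $R$-module (also from~(3)), base-changing along $R\to S$ yields a canonical isomorphism of $S$-algebras $\End_R(S)\otimes_R S \simeq \End_S(S\otimes_R S)$. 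So it remains to identify $S\otimes_R S$, as a module over the right-hand tensor factor $S$, with $S^{|G|}$: condition~(5) provides precisely the $S$-algebra isomorphism $S\otimes_R S \xrightarrow{\sim}\prod_{g\in G}S$, $x\otimes y\mapsto (x\,g(y))_g$, and a fortiori an isomorphism of $S$-modules $S\otimes_R S\simeq S^{|G|}$ where the $S$-action is through the second factor (the one along which we based change). Combining, $(S*G)\otimes_R S \simeq \End_S(S^{|G|}) \simeq \mathbb{M}_{|G|}(S)$. The isomorphism $S\otimes_R(S*G)\simeq\mathbb{M}_{|G|}(S)$ is obtained the same way, base changing $\varphi$ on the left instead, using that $S\otimes_R S\simeq S^{|G|}$ now as a module over the first tensor factor (via the inverse-twisted map $x\otimes y\mapsto(g^{-1}(x)\,y)_g$, which is again an $S$-algebra isomorphism onto $\prod_g S$, equivalently apply condition~(5) after the flip $x\otimes y\mapsto y\otimes x$).

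Next, for maximal commutativity of $S$ in $\Lambda:=S*G$. Certainly $S$ is a commutative $R$-subalgebra. Suppose $z=\sum_{g\in G}s_g\cdot g\in\Lambda$ centralizes $S$; I must show $s_g=0$ for $g\neq 1$. The relation $z\,t = t\,z$ for all $t\in S$ expands, using $(s_g\cdot g)t = s_g g(t)\cdot g$ and comparing coefficients of $g$ in the free $S$-module $\Lambda$, to $s_g\,g(t) = t\,s_g$ for every $t\in S$ and every $g$; i.e.\ $s_g\,(g(t)-t)=0$ for all $t\in S$. For $g\neq 1$, condition~(6) (localized form: for each maximal ideal $\mathfrak m$ of $S$ there is $y$ with $g(y)-y\notin\mathfrak m$) forces the ideal generated by $\{g(t)-t : t\in S\}$ to be all of $S$, hence $s_g=0$. (Equivalently one can feed the Galois system directly: $s_g = s_g\cdot\mathbf 1_S = s_g\sum_i x_i\,1\cdot y_i$; but cleanly, $\sum_i x_i(g(y_i)-y_i) = \delta_{1,g}-1 = -1$ for $g\neq 1$, so multiplying $s_g\,(g(y_i)-y_i)=0$ by $x_i$ and summing gives $-s_g=0$.) This last line is actually the shortest route and I would use it: it needs only condition~(2). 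Thus $S$ is its own centralizer in $\Lambda$, which is exactly maximal commutativity (any commutative subalgebra containing $S$ lies in the centralizer of $S$, hence equals $S$).

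I expect no serious obstacle here; the work is all bookkeeping once the right equivalent conditions are invoked. The one point deserving care is the base-change identity $\End_R(S)\otimes_R S\simeq\End_S(S\otimes_R S)$ and making sure the $S$-module structures are matched correctly on both sides of each tensor product (left factor versus right factor), since the two matrix isomorphisms in the statement differ precisely in which side one base changes along; I would state explicitly which copy of $S$ carries which action and check compatibility with $\varphi$ by evaluating on the generators $x\otimes y$ and $s\cdot g$. The finite projectivity of $S$ over $R$ (needed for the base-change of $\Hom$) is part of condition~(3), so everything is in place.
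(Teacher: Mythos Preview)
Your proposal addresses only the final sentence of the theorem (maximal commutativity and the matrix isomorphisms), taking the nine equivalences for granted. The paper itself handles the equivalences entirely by citation (Chase--Harrison--Rosenberg for (1)--(7), Ikehata and Alfaro--Szeto for (8) and (9)), with one short explicit computation showing $S\pi S=S*G$ is equivalent to the existence of a Galois system; it does not spell out the final sentence at all, presumably leaving it to the cited sources. So your write-up is in fact more explicit than the paper on this point.

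Your maximal-commutativity argument via the Galois system is correct and is the cleanest route: from $s_g(g(y_i)-y_i)=0$ and $\sum_i x_i(g(y_i)-y_i)=-1$ for $g\neq 1$ one gets $s_g=0$ immediately.

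There is one bookkeeping slip in the matrix-isomorphism part, exactly at the spot you flagged as delicate. The map in condition~(5), $x\otimes y\mapsto (x\,g(y))_g$, is $S$-linear for the action through the \emph{left} tensor factor, not the right: $(sx)\otimes y\mapsto s\cdot(xg(y))_g$, whereas $x\otimes(ys)\mapsto (xg(y)g(s))_g$. So when you base-change $\End_R(S)$ on the right to get $\End_S(S\otimes_R S)$ with the $S$-structure on the second factor, the map you should invoke is the twisted one $x\otimes y\mapsto (g(x)\,y)_g$ (which is indeed an isomorphism, being the map of (5) composed with the flip and a reindexing). You have the two maps reversed between your two cases: the map from (5) as written matches the left base change $S\otimes_R(S*G)$, and the twisted map matches the right base change $(S*G)\otimes_R S$. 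Alternatively, since $R$ is commutative, the flip $a\otimes s\mapsto s\otimes a$ identifies $(S*G)\otimes_R S$ and $S\otimes_R(S*G)$ as $R$-algebras, so one isomorphism yields the other without a second computation. With that correction the argument goes through.
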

\begin{proof}
The equivalence of the first seven statement is \cite[Theorem 1.3]{CHR65}. 
For (8), first note that $S$ can be seen as an $(S;R)$-subbimodule of $S * G$ via $x \to x\pi$, hence the notation $S\pi S$ makes sense. Because $\pi g = \pi$ we see that $S \pi S$ is a two sided ideal of $S * G$, hence $S \pi S= S* G$ if and only if $\mathbf{1}_S \in S \pi S$, which is equivalent to the existence of $x_i,y_i$ in $S$, $1\le i \le n$ for some integer $n$, such that $\mathbf{1}_S=\sum_i x_i\pi y_i=\sum_{ g\in G}(\sum_{i} x_ig(y_i) )g$, i.e. $\sum_i x_ig(y_i)=\delta_{1,g}\mathbf{1}_S$, for all $g\in G$. 
For 8 and 9, this is \cite[Theorem 2]{Ikehata81} or \cite[Corollary 2]{AlfSze95}.
\end{proof}
\begin{remark}
If $S/R$ is $G$-Galois, then $\text{Rank}_RS=|G|$ \cite[Lemma 2.3]{FerPaq1997}.
\end{remark}
\subsection{Properties}
\begin{proposition}\label{Galprop}
For any $G$-Galois extension $S/R$:

(i) $S$ is faithfully flat over $R$.

(ii) $\text{Tr} \colon S \to R$ is surjective.

(iii) The $R$-submodule $R$ of $S$ is a direct summand of $S$.

(iv) For any $R$-algebra $T$, $T \otimes_R S/S$ is a
$G$-Galois extension iff $S/R$ is $G$-Galois.

(v) Let $S/R$ and $S'/ R$ be $G$-Galois. Then every $G$-equivariant $R$-algebra homomorphism $f: S \to S'$ of
$G$-Galois extensions is an isomorphism.
\end{proposition}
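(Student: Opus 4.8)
The final statement is Proposition \ref{Galprop}, listing five standard properties (i)--(v) of a $G$-Galois extension $S/R$ of commutative rings. Let me plan a proof.

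The plan is to treat (i)--(v) as the standard package of consequences of the definition, deriving them in the order (i) $\Rightarrow$ (ii) $\Rightarrow$ (iii), then (iv), then (v). Throughout, the only inputs are the equivalent characterizations collected in Theorem \ref{fundgaloiscom}: the existence of a $G$-Galois system $\{x_i,y_i\}_{i=1}^{n}$ in $S$ with $\sum_i x_i g(y_i)=\delta_{1,g}\mathbf{1}_S$ for all $g\in G$; that $S$ is finitely generated projective over $R$; and that $S\otimes_R S\cong\prod_{g\in G}S$ via $x\otimes y\mapsto (xg(y))_{g}$. No use of separability per se is needed beyond these.

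For (i), since $R=S^G\hookrightarrow S$ we have $\mathrm{Ann}_R(S)=0$, hence $\Supp_R S=V(\mathrm{Ann}_R S)=\Spec R$ because $S$ is finitely generated; a flat module with full support is faithfully flat, and $S$ is flat (being projective), so $S/R$ is faithfully flat. For (ii), summing the Galois-system identity over all $g\in G$ yields $\sum_i x_i\,\Tr(y_i)=\mathbf{1}_S$ with each $\Tr(y_i)\in S^G=R$; thus $\Tr(S)\cdot S=S$, i.e. $S/\Tr(S)S=(R/\Tr(S))\otimes_R S=0$, and faithful flatness from (i) forces $\Tr(S)=R$. For (iii), pick $c\in S$ with $\Tr(c)=\mathbf{1}_R$ (possible by (ii)); then $s\mapsto\Tr(cs)$ is $R$-linear into $S^G=R$ and restricts to the identity on $R$, so it is a retraction of $R\hookrightarrow S$ and $R$ is an $R$-module direct summand.

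For (iv), I would base-change the Galois system: $\{1_T\otimes x_i,\,1_T\otimes y_i\}$ satisfies the system identity in $T\otimes_R S$ over $T$, with $G$ acting on the $S$-factor. The identity $(T\otimes_R S)^G=T$ is faithfully flat descent along $T\to T\otimes_R S$, which is faithfully flat precisely because $S/R$ is, by (i), using $(T\otimes_R S)\otimes_T(T\otimes_R S)\cong T\otimes_R(S\otimes_R S)\cong\prod_{g\in G}(T\otimes_R S)$. This gives the ``if'' direction for every $R$-algebra $T$; the converse holds when $T$ is faithfully flat over $R$ (and is false without such a hypothesis, e.g. for $T=0$), so I would phrase (iv) in that sharpened form.

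For (v), transporting the Galois system through $f$ shows that $\{f(x_i),f(y_i)\}$ is a $G$-Galois system for $S'/R$ — here the $G$-equivariance of $f$ is used — whence every $a\in S'$ equals $\sum_i f(x_i)\,\Tr_{S'}(f(y_i)a)$ with $\Tr_{S'}(f(y_i)a)\in (S')^G=R$; since $f$ is $R$-linear this exhibits $a\in\mathrm{im}\,f$, so $f$ is surjective. Both $S$ and $S'$ are finitely generated projective of constant rank $|G|$ over $R$ (from $S\otimes_R S\cong\prod_g S$ the pointwise rank $n$ satisfies $n^2=n|G|$, and $n\ge 1$ since $\Supp S=\Spec R$), so the exact sequence $0\to\ker f\to S\to S'\to 0$ splits ($S'$ projective) and forces $\ker f$ to be projective of rank $0$, i.e. $\ker f=0$; thus $f$ is an isomorphism. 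The main obstacle I anticipate is purely organizational: making (ii) fall out of (i) without circularity, and handling the fixed-ring identity in (iv) for a non-flat $T$, which is why I route that step through the descent sequence rather than naively tensoring $0\to R\to S\to\prod_g S$ with $T$.
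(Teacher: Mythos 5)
Your proof is correct, but it is worth knowing that the paper does not actually prove this proposition: its ``proof'' is a bare citation to Greither (Chapitre 0, \S 1), so any self-contained argument is necessarily a different route. What you supply is a complete derivation from the equivalences already recorded in Theorem \ref{fundgaloiscom}, and each step checks out: (i) uses that $S$ is finitely generated projective with $\mathrm{Ann}_R(S)=0$, so $\Supp S=\Spec R$ and flat plus full support gives faithful flatness; (ii) sums the Galois-system identity over $G$ to get $\sum_i x_i\Tr(y_i)=1$ and then descends $\Tr(S)S=S$ to $\Tr(S)=R$ by (i); (iii) is the standard retraction $s\mapsto\Tr(cs)$; (v) transports the Galois system through $f$ to get surjectivity and then kills $\ker f$ by the constant-rank count $n^2=n|G|$, $n\ge 1$. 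Two remarks. First, you are right to flag (iv): as printed it is garbled (``$T\otimes_R S/S$'' should be $T\otimes_R S/T$, and the ``iff'' over an arbitrary $R$-algebra $T$ is false in the descent direction, e.g.\ $T=0$); Greither's actual statement restricts the converse to faithfully flat $T$, exactly as you propose. Second, for the fixed-ring identity $(T\otimes_R S)^G=T$ your descent argument via $S_T\otimes_T S_T\cong\prod_{g\in G}S_T$ works, but there is a slightly more economical route that avoids descent altogether: with $c\in S$ satisfying $\Tr(c)=1$ from (ii), the map $u\mapsto\sum_{g}g\bigl((1\otimes c)u\bigr)$ sends $T\otimes_R S$ into $T\otimes_R R=T$ and restricts to the identity on $G$-invariants, giving $(T\otimes_R S)^G\subseteq T$ directly. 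Either way, the argument buys the reader a proof the paper does not contain.
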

\begin{proof}
See \cite[Chapitre 0, \S 1]{Greither1992}.
\end{proof}
\subsection{An obvious corollary}
\begin{corollary}\label{obviouscor}
Let $T$ be a commutative ring with a unity $\mathbf{1}_T$ and $G$ be a finite group $G$ that acts faithfully on it by ring automorphisms. Let $S\subset T$ be a subring such that $\mathbf{1}_T=\mathbf{1}_S$. If $S/S^G$ is $G$-Galois then so is $T/T^G$.
\end{corollary}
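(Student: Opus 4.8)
\textbf{Proof plan for Corollary~\ref{obviouscor}.}
The statement asserts that the Galois property passes from a subring $S \subset T$ (with $\mathbf 1_S = \mathbf 1_T$) to $T$ itself, given that $G$ acts faithfully on $T$. The plan is to exhibit a $G$-Galois system for $T/T^G$ by transporting one from $S/S^G$. Concretely, I would start from the fact (Theorem~\ref{fundgaloiscom}, statement (2)) that $S/S^G$ being $G$-Galois produces elements $x_i, y_i \in S$, $1 \le i \le n$, with $\sum_i x_i\, g(y_i) = \delta_{1,g}\,\mathbf 1_S$ for every $g \in G$. Since $S \subset T$, these same elements lie in $T$; the action of $G$ on $T$ restricts to the action on $S$ (because $S$ is a $G$-stable subring, which is implicit in ``$S/S^G$ is $G$-Galois'' — or can be noted explicitly), so the identities $\sum_i x_i\, g(y_i) = \delta_{1,g}\,\mathbf 1_S = \delta_{1,g}\,\mathbf 1_T$ continue to hold verbatim in $T$. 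This is precisely a $G$-Galois system for $T$ over $T^G$, so by the equivalence (1)$\Leftrightarrow$(2) of Theorem~\ref{fundgaloiscom}, $T/T^G$ is $G$-Galois.

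There is one gap to close: Theorem~\ref{fundgaloiscom} as stated presupposes that $G$ acts on the ring by ring automorphisms and takes $R$ to be the fixed subring; applying the equivalence to $T$ requires that $G$ acts faithfully on $T$, which is part of the hypothesis, and that $T^G$ is indeed ``the'' base — which is automatic since we define $R := T^G$. So the only genuine point to verify is that the criterion ``there exist $x_i,y_i$ with $\sum_i x_i g(y_i) = \delta_{1,g}\mathbf 1$'' is insensitive to enlarging the ambient ring, which is immediate because the defining equations are preserved under the inclusion $S \hookrightarrow T$ and the restriction of the $G$-action.

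I expect the main (minor) obstacle to be bookkeeping about the $G$-action: one must make sure that $S$ is $G$-stable and that the restriction of the $G$-action on $T$ to $S$ is exactly the action for which $S/S^G$ is $G$-Galois — otherwise the phrase ``$S/S^G$ is $G$-Galois'' would not even be meaningful. Once that is granted, the argument is a one-line transport of the Galois system, and no separability or projectivity computations are needed. An alternative, equally short route would be to use statement (7) of Theorem~\ref{fundgaloiscom}: $S\pi S = S*G$ with $\pi = \sum_{g\in G} g$ means $\mathbf 1_S \in S\pi S$, i.e.\ $\mathbf 1_S = \sum_i x_i \pi y_i$ for some $x_i, y_i \in S$; reading this identity inside $T*G$ gives $\mathbf 1_T \in T\pi T$, hence $T\pi T = T*G$, hence $T/T^G$ is $G$-Galois. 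Either phrasing yields the corollary with essentially no computation.
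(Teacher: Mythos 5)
Your proposal is correct and is essentially identical to the paper's own proof: both transport a $G$-Galois system $\{x_i,y_i\}\subset S$ into $T$ and observe that the identities $\sum_i x_i g(y_i)=\delta_{1,g}\mathbf 1_S=\delta_{1,g}\mathbf 1_T$ persist because $\mathbf 1_T=\mathbf 1_S$, then invoke criterion (2) of Theorem~\ref{fundgaloiscom}. Your remark about $G$-stability of $S$ and the alternative via statement (7) are fine additional observations but not needed beyond what the paper already does.
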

\begin{proof}
Thanks to Theorem \ref{fundgaloiscom}, there exist elements $x_i,y_i$ in $S\subset T$, $1\le i \le n$, such that $\sum_i x_ig(y_i)=\delta_{1,g}\mathbf{1}_S$, for all $g\in G$. The corollary follows readily since $\mathbf{1}_T=\mathbf{1}_S$. 
\end{proof}
\subsection{Compressibility}
\begin{theorem}\label{compress}
If $A$ is an Azumaya algebra over a commutative ring $R$ then it is also compressible that is if $Z(e A e) = eZ(A)=eR$ for each idempotent $e$ of $A$.
\end{theorem}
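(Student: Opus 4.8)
\textbf{Proof plan for Theorem \ref{compress}.}

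The statement to prove is that an Azumaya algebra $A$ over a commutative ring $R$ is compressible: for every idempotent $e\in A$ we have $Z(eAe)=eZ(A)=eR$. The plan is to exploit the two fundamental features of Azumaya algebras: first, the centre of $A$ is exactly $R$ (this is part of the definition, or at least an immediate consequence of the definition one uses), and second, Azumaya algebras behave well under the formation of corners $eAe$ — more precisely, if $e$ is a \emph{full} idempotent then $eAe$ is again Azumaya over the same centre $R$, and in general $eAe$ is Azumaya over its own centre, which is a localization-type quotient of $R$. I would proceed in three steps.

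First I would reduce to a local statement. Both $Z(eAe)$ and $eR$ are $R$-modules, and the formation of $eAe$, $Z(eAe)$ commutes with localization at a prime of $R$ because $A$ is finitely generated (projective) over $R$ and $e$ is a single element. So it suffices to prove $Z(eAe)=eR$ after localizing at each maximal ideal $\mathfrak m$ of $R$. Over the local ring $R_{\mathfrak m}$, an Azumaya algebra becomes, after a faithfully flat base change (e.g. to a strictly henselian local ring, or simply by the structure theory), a matrix algebra $\mathbb M_n(R_{\mathfrak m})$; alternatively one invokes that $A_{\mathfrak m}/\mathfrak m A_{\mathfrak m}$ is a central simple algebra over the field $R_{\mathfrak m}/\mathfrak m$. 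The key point I want from this reduction is that idempotents and their corners can be understood concretely.

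Second, the heart of the argument: for any ring $B$ with centre $Z(B)$ and any idempotent $e\in B$, one always has the inclusion $eZ(B)\subseteq Z(eBe)$, since a central element $z$ commutes with every $exe$ and $ez\cdot exe = ezxe = exze = exe\cdot ez$ inside $eBe$ (using $ze=ez$ and $e^2=e$). The content is the reverse inclusion $Z(eBe)\subseteq eZ(B)$ when $B=A$ is Azumaya. Here I would use the standard fact that for an Azumaya algebra the natural map $A\otimes_R A^{\mathrm{op}}\to \mathrm{End}_R(A)$ is an isomorphism, equivalently $A$ is a "separable" $R$-algebra with $Z(A)=R$; one then checks that any $c\in Z(eAe)$ must actually be $R$-central. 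Concretely, after the local/faithfully-flat reduction to $A=\mathbb M_n(R')$, an idempotent $e$ is conjugate to a diagonal idempotent of rank $r$, so $eAe\cong \mathbb M_r(R')$, whose centre is $R'\cdot e = eZ(A)$; this gives the inclusion directly in the split case, and faithfully flat descent brings it back down to $R$. Combining with $Z(A)=R$ (the defining property of Azumaya), we get $Z(eAe)=eR$.

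The main obstacle I anticipate is handling the case where $e$ is \emph{not} a full idempotent, i.e. $AeA\neq A$: then $eAe$ is Azumaya not over $R$ but over a quotient/corner $R/I$ (or a localization), and one has to be careful that the identification $Z(eAe)=eZ(A)$ still reads off the right object, namely $eR$ rather than $R/I$ — these coincide because $eR\cong R/\mathrm{Ann}_R(e)$ and $\mathrm{Ann}_R(e)$ is exactly the ideal cutting out the support. Making this bookkeeping precise, and making sure the faithfully flat descent of centres is valid (centres do not in general commute with arbitrary base change, but they do for Azumaya algebras precisely because of the separability/projectivity, cf. the isomorphism $A\otimes_R A^{\mathrm{op}}\cong\mathrm{End}_R(A)$), is where the real care is needed; everything else is formal. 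I would cite a standard reference (e.g. the Auslander–Goldman or DeMeyer–Ingraham treatment of Azumaya algebras) for the facts that $Z(A)=R$ and that corners by idempotents of Azumaya algebras are Azumaya, rather than reproving them.
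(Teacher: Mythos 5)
Your plan is correct, and it is a genuinely different route from the paper's: the paper proves Theorem \ref{compress} by a one-line citation to \cite[Theorem 2]{ArmPar84}, whose argument is purely ring-theoretic (working directly with the separability idempotent, i.e.\ with the isomorphism $A\otimes_R A^{\mathrm{op}}\cong\mathrm{End}_R(A)$, to show that a corner $eAe$ is again separable over $eR$), whereas you give a self-contained localization--splitting--descent argument. Your reduction steps all go through: $Z(eAe)$ is the kernel of the commutator map $eAe\to\mathrm{Hom}_R(eAe,eAe)$ between finitely presented (indeed, $R$-projective, since $eAe$ is an $R$-module direct summand of $A$) modules, so its formation commutes with any flat base change, as does the image $eR$ of $R\to A$; over the strict henselization $R'$ of a localization $R_{\mathfrak m}$ the algebra splits as $\mathbb{M}_n(R')$ and, $R'$ being local, any idempotent is conjugate to a standard diagonal one, giving $Z(e'\mathbb{M}_n(R')e')=R'e'$ directly; and faithful flatness then forces the a priori inclusion $eR\subseteq Z(eAe)$ to be an equality. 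Note also that localizing first dissolves the ``non-full idempotent'' worry you raise at the end: over a local ring a nonzero idempotent matrix has zero annihilator in the base, so $eR'\cong R'$ and no bookkeeping with $R/I$ is needed. What your approach buys is geometric transparency and independence from the compressibility literature; what the cited Armendariz--Park approach buys is that it never leaves the category of $R$-algebras (no henselization, no descent) and applies verbatim to the skew group ring $\mathcal{L}^{\flat}*\dot{W}$ in Proposition \ref{Laffgalois} once Theorem \ref{fundgaloiscom} identifies it as Azumaya. Either way the statement $eZ(A)=eR$ is immediate from $Z(A)=R$, as you say.
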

\begin{proof}
This is \cite[Theorem 2]{ArmPar84}.
\end{proof}
\begin{example}
If $S/R$ is $G$-Galois then $S*G$ is compressible.
\end{example}
\bibliographystyle{amsalpha}
\bibliography{Reda_library2021}

\providecommand{\bysame}{\leavevmode\hbox to3em{\hrulefill}\thinspace}
\providecommand{\MR}{\relax\ifhmode\unskip\space\fi MR }
\providecommand{\MRhref}[2]{%
  \href{http://www.ams.org/mathscinet-getitem?mr=#1}{#2}
}
\providecommand{\href}[2]{#2}
\begin{thebibliography}{Bou21d}

\bibitem[AP84]{ArmPar84}
E~P Armendariz and J~K Park, \emph{{Compressible Matrix Rings}}, Bull. Austral.
  Math. Soc. \textbf{30} (1984), 295--298.

\bibitem[AS95]{AlfSze95}
R~Alfaro and G~Szeto, \emph{{Skew group rings which are Azumaya}}, Comm. in
  Algebra \textbf{23} (1995), no.~6, 2255--2261.

\bibitem[BD84]{Deligne84}
J~Bernstein and P~Deligne, \emph{{Representations of reductive groups over a
  local field}}, {Travaux en cours}, ch.~{Le "centre" de Bernstein}, pp.~1--32,
  Hermann, Paris, 1984.

\bibitem[Bor79]{Bor79}
A~Borel, \emph{{Automorphic L-functions}}, {Automorphic forms and Automorphic
  representations} Part II (Providence, RI), Proceedings of Symposia in Pure
  Mathematics, vol.~33, Amer. Math. Soc., 1979, pp.~27--61.

\bibitem[Bor91]{Bor91}
\bysame, \emph{Linear algebraic groups}, Graduate Texts in Mathematics, vol.
  126, Springer-Verlag, 1991.

\bibitem[Bou64]{BourbakiAlgComV}
N~Bourbaki, \emph{{Alg\`{e}bre Commutative, Chapitre 5}}, vol.~V, Hermann,
  Paris, 1964.

\bibitem[Bou68]{BourbakiLieV}
\bysame, \emph{{Groupes et Alg{\`e}bres de Lie}}, vol. IV, V and VI, Hermann,
  Paris, 1968.

\bibitem[Bou21a]{Unitarynormrelations2020}
R~Boumasmoud, \emph{{General Horizontal Norm Compatible Systems}}, arXiv
  e-prints (2021), 1--53.

\bibitem[Bou21b]{Vunitarynormrelations2020}
\bysame, \emph{{General Vertical Norm Compatible Systems}}, arXiv e-prints
  (2021).

\bibitem[Bou21c]{ESboumasmoud}
\bysame, \emph{{Generalized Eichler--Shimura relations and Blasius--Rogawski
  conjecture}}, in preparation (2021).

\bibitem[Bou21d]{Seedrelations2020}
\bysame, \emph{{Seed Relations and Hecke polynomials}}, arXiv e-prints (2021),
  1--17.

\bibitem[Bou21e]{UoperatorsII2021}
\bysame, \emph{{The ring of $\mathbb{U}$-operators: Definitions and
  Integrality}}, arXiv e-prints (2021), 1--18.

\bibitem[BT65]{BR65}
A~Borel and J~Tits, \emph{{Groupes r\'eductifs}}, Institut des Hautes \'Etudes
  Scientifiques. Publications Math\'ematiques (1965), no.~27, 55--150.

\bibitem[BT72]{BT72}
F~Bruhat and J~Tits, \emph{Groupes r\'{e}ductifs sur un corps local : {I}.
  donn\'{e}es radicielles valu\'{e}es}, Publications Math\'{e}matiques de
  l'IH\'{E}S \textbf{41} (1972), 5--251.

\bibitem[BT84]{BT84}
\bysame, \emph{Groupes r\'{e}ductifs sur un corps local : {II}. sch\'{e}mas en
  groupes. existence d'une donn\'{e}e radicielle valu\'{e}e}, Publications
  Math\'{e}matiques de l'IH\'{E}S \textbf{60} (1984), 5--184.

\bibitem[Car79]{Cartier79}
P~Cartier, \emph{{Representations of $p$-adic Groups: A survey}}, {Automorphic
  forms and Automorphic representations} Part I (Providence, RI), vol.~33,
  Amer. Math. Soc., 1979, pp.~111--155.

\bibitem[Car93]{Carter93}
R.W. Carter, \emph{{Finite groups of Lie type: Conjugacy classes and complex
  characters}}, Wiley Classics Library, Wiley, London, 1993.

\bibitem[CHR65]{CHR65}
S~U Chase, D~K Harrison, and A~Rosenberg, \emph{{Galois theory and Galois
  cohomology of commutative rings}}, Memoirs of the AMS (1965), no.~52, 1--19.

\bibitem[Con19]{Conrad19}
B~Conrad, \emph{{Reductive groups over fields, Lecture notes taken By T.
  Feng}}, Standford University, Standford, 2019.

\bibitem[CR62]{curtis-reiner}
C~W Curtis and I~Reiner, \emph{{Representation theory of finite groups and
  associative algebras}}, AMS Chelsea Publishing, 1962.

\bibitem[Dat99]{Dat99}
J-F Dat, \emph{{Caracteres a valeurs dans le centre de Bernstein}}, J. Reine
  Angew. Math. (1999), no.~508, 61--83.

\bibitem[Deo77]{Deodhar77}
V~V Deodhar, \emph{{Some characterizations of Bruhat ordering on a Coxeter
  group and determination of the relative M{\"o}bius function}}, Inventiones
  mathematicae \textbf{39} (1977), 187--198.

\bibitem[FP97]{FerPaq1997}
M~Ferrero and A~Paques, \emph{{Galois theory of commutative rings revisited}},
  {Beitrage zur Algebra und Geometrie } \textbf{38} (1997), 299--410.

\bibitem[Gar97]{garrett:buildings}
P~Garrett, \emph{{Buildings and classical groups}}, Chapman {\&} Hall, London,
  1997.

\bibitem[GP00]{Geck-Pfeiffer}
M~Geck and G~Pfeiffer, \emph{{Characters of finite Coxeter groups and
  Iwahori-Hecke algebras}}, Oxford Univ. Press, Oxford, 2000.

\bibitem[Gre92]{Greither1992}
C~Greither, \emph{{Cyclic Galois Extensions of Commutative Rings}}, Lecture
  Notes in Mathematics, no. 1534, Springer, Berlin, Heidelberg, 1992.

\bibitem[Hai00]{Haines00}
T~J Haines, \emph{{On matrix coefficients of the Satake isomorphism:
  complements to the paper of Rapoport}}, manuscripta math. (2000), no.~101,
  167--174.

\bibitem[Hai01]{H01}
\bysame, \emph{{The combinatorics of Bernstein functions}}, Trans. Amer. Math.
  Soc. (2001), no.~353 (3), 1251--1278.

\bibitem[Hai07]{Haines07}
\bysame, \emph{{Intertwiners for unramified groups}}, available at
  \url{https://www.math.umd.edu/~tjh/intertwiners_unram_grps.pdf}, 2007.

\bibitem[Hai09a]{Haines09cor}
\bysame, \emph{{Corrigendum: The base change fundamental lemma for central
  elements in parahoric Hecke algebras}}, arXiv e-prints (2009), 1--4.

\bibitem[Hai09b]{Haines09}
\bysame, \emph{{The base change fundamental lemma for central elements in
  parahoric Hecke algebras}}, Duke Math. J. \textbf{149} (2009), 569--643.

\bibitem[Hai14]{Haines2014}
\bysame, \emph{{The stable Bernstein center and test functions for Shimura
  varieties}}, {Automorphic Forms and Galois Representations} (F~Diamond,
  P~Kassaei, and M~Kim, eds.), London Mathematical Society Lecture Note Series,
  Cambridge University Press, 2014.

\bibitem[HKP10]{HKP10}
T~J Haines, R~E Kottwitz, and A~Prasad, \emph{{Iwahori-Hecke algebras}}, J.
  Ramanujan Math. Soc. \textbf{25} (2010), no.~2, 113--145.

\bibitem[HR08]{HR8}
T~J Haines and M~Rapoport, \emph{{Appendix: On parahoric subgroups}}, Advances
  in Mathematics \textbf{219} (2008), no.~1, 188 -- 198.

\bibitem[HR10]{HR10}
T~J Haines and S~Rostami, \emph{{The Satake isomorphism for special maximal
  parahoric algebras}}, Represent. Theory (2010), no.~14, 264--284.

\bibitem[HV15]{HV15}
G~Henniart and M-F Vign{\'e}ras, \emph{{A Satake isomorphism for
  representations modulo $p$ of reductive groups over local fields}}, J. Reine
  Angew. Math. (2015), no.~701, 33--75.

\bibitem[Ike81]{Ikehata81}
S~Ikehata, \emph{{Note on Azumaya algebras and H-separable extensions}}, Math.
  J. Okayama Univ. (1981), no.~23, 17--18.

\bibitem[Kot84]{Ko1}
R~E Kottwitz, \emph{{Shimura Varieties and Twisted Orbital Integrals}},
  Mathematische Annalen \textbf{269} (1984), 287--300.

\bibitem[Kot97]{kottwitz_1997}
\bysame, \emph{{Isocrystals with additional structure. II}}, Compositio
  Mathematica \textbf{109} (1997), no.~3, 255--339.

\bibitem[Lan96]{Lan96}
E.~Landvogt, \emph{{A compactification of the Bruhat-Tits building}}, Lecture
  Notes in Mathematics, vol. 1619, Springer, 1996.

\bibitem[Lan02]{lansky02}
J~Lansky, \emph{{Parahoric fixed spaces in unramified principal series
  representations}}, Pacific Journal of Mathematics \textbf{204} (2002), no.~2,
  433 -- 443.

\bibitem[Lus83]{Lu83}
G~Lusztig, \emph{{Singularities, character formulas and a q-analog of weight
  multiplicities}}, Ast{\'e}risque (1983), no.~101-102, 208--229.

\bibitem[Lus89]{Lu89}
\bysame, \emph{{Affine Hecke algebras and their graded version}}, J. Amer.
  Math. Soc. \textbf{2} (1989), no.~3, 599--635.

\bibitem[Mac71]{Mac71}
I~G Macdonald, \emph{{Spherical functions on a group of $p$-adic type.}}, Publ.
  Ramanujan Inst. (1971), no.~2, ii--79.

\bibitem[Mil17]{milne_2017}
J~S Milne, \emph{Algebraic groups: The theory of group schemes of finite type
  over a field}, Cambridge Studies in Advanced Mathematics, Cambridge
  University Press, 2017.

\bibitem[Pra17]{Prasad17}
G~Prasad, \emph{{Finite group actions on reductive groups and buildings and
  tamely-ramified descent in Bruhat-Tits theory}}, arXiv e-prints (2017),
  1--28.

\bibitem[PZ13]{papzhu2013}
G~Pappas and X~Zhu, \emph{{Local models of Shimura varieties and a conjecture
  of Kottwitz}}, Inventiones mathematicae \textbf{194} (2013), no.~1, 147--254.

\bibitem[Rap00]{Ra}
M~Rapoport, \emph{{A positivity property of the Satake homomorphism}},
  Manuscripta Math. \textbf{101} (2000), 153--166.

\bibitem[Ric16]{Richardz2016}
T~Richarz, \emph{{On the Iwahori--Weyl group}}, Bull. Soc. Math. France
  \textbf{144} (2016), no.~1, 117--124.

\bibitem[Ros15]{R15}
S~Rostami, \emph{{The Bernstein Presentation for general connected reductive
  groups}}, Journal of the London Mathematical Society \textbf{91} (2015),
  no.~2, 514--536.

\bibitem[Rou77]{rousseau77}
G~Rousseau, \emph{{Immeubles des groupes r{\'e}ductifs sur les corps locaux}},
  Ph.D. thesis, University of Paris, Orsay,, 1977.

\bibitem[Sat63]{Satake1963}
I.~Satake, \emph{Theory of spherical functions on reductive algebraic groups
  over $p$-adic fields}, Publications Math\'{e}matiques de l'IH\'{E}S
  \textbf{18} (1963), 5--69 (eng).

\bibitem[Sch06]{Schwer06}
C~Schwer, \emph{{Galleries, Hall-Littlewood polynomials, and structure
  constants of the spherical Hecke algebra}}, International Mathematics
  Research Notices \textbf{2006} (2006), 1--31, 75395.

\bibitem[SS97]{SchUlr97}
P~Schneider and U~Stuhler, \emph{{Representation theory and sheaves on the
  Bruhat-Tits building}}, Publ. Math. IHES \textbf{85} (1997), 97--191.

\bibitem[Vig96]{Vigneras96}
M-F Vign{\'e}ras, \emph{{Repr\'{e}sentations $\ell$-modulaires d'un groupe
  r\'{e}ductif p-adique avec $\ell \neq p$}}, Boston: Birkh{\"a}user, 1996.

\bibitem[Vig16]{vigneras_2016}
\bysame, \emph{{The pro-$p$-Iwahori Hecke algebra of a reductive $p$ -adic
  group I}}, Compositio Mathematica \textbf{152} (2016), no.~4, 693--753.

\bibitem[Yu15]{Yu16}
J-K Yu, \emph{{Smooth models associated to concave functions in Bruhat--Tits
  theory}}, {Autour des sch{\'e}mas en groupes, {\'E}cole d'{\'e}t{\'e} "
  Sch{\'e}mas en groupes ", Group Schemes, A celebration of SGA3}, vol. III,
  Panoramas et synth{\`e}ses, no.~47, Soc. Math. France, 2015, pp.~227--258.

\end{thebibliography}

\end{document}